\newcommand{\subjclass}[2][2010]{%
  \let\@oldtitle\@title%
  \gdef\@title{\@oldtitle\footnotetext{#1 \emph{Mathematics subject classification.} #2}}%
}
\newcommand{\ep}{\epsilon}
\newcommand{\eps}{\epsilon}
\newcommand{\na}{\nabla}
\newcommand{\grad}{\nabla}
\newcommand{\norm}[1]{\left\| #1 \right\|}
\newcommand{\abs}[1]{\left| #1 \right|}
\newcommand{\set}[1]{\left\{ #1 \right\}}
\newcommand{\brak}[1]{\left\langle #1 \right\rangle} 
\newcommand{\rr}{\mathbb{R}}
\newcommand{\red}[1]{\textcolor{red}{#1}}
\DeclareMathOperator{\supp}{\mathrm{supp}}
\newcommand{\p}{\partial}
\newtheorem{theorem}{Theorem}[section]
\newtheorem{proposition}[theorem]{Proposition}
\newtheorem{corollary}[theorem]{Corollary}
\newtheorem{lemma}[theorem]{Lemma}
\newtheorem*{lemma*}{Lemma}
\newtheorem{claim}[theorem]{Claim}
\theoremstyle{definition}
\newtheorem{definition}[theorem]{Definition}
\newtheorem{remark}[theorem]{Remark}
\def\jacob #1{\textcolor{red}{#1}}
\def\ZZ{\mathbb{Z}}
\def\nn{\nonumber\\}
\def\paren#1{\left(#1\right)}
\def\abs#1{\left|#1\right|}
\newcommand{\n}{\ensuremath{\nonumber}}
\newcommand{\pa}{\ensuremath{\partial}}
\newcommand{\sameer}[1]{\textcolor{magenta}{#1}}
\newcommand{\siming}[1]{}
\def\bb{\bold{a}}
\newcommand{\nnorm}[1]{\left\lVert #1\right\rVert}
\newcommand{\enorm}[1]{\left\lVert #1\right\rVert_{L^2}}
\newcommand{\brk}[1]{\left\langle #1\right\rangle}
\def\ss{{s^{-1}}}
\def\sss{{s}}
\newcommand{\vyi}{{\frac{1}{v_y}}}
\newcommand{\nq}{{\neq}}
\def\rom#1{\MakeUppercase{\romannumeral #1}}
\newcommand{\pv}{{v_y^{-1}\partial_y}}
\newcommand{\pav}{\overline{\partial_v}}
\newcommand{\myb}[1]{\textcolor{blue}{ #1 }}
\newcommand{\myr}[1]{{#1}}
\newcommand{\wt}{\widetilde}
\newcommand{\al}{\alpha}
\newcommand{\de}{\Delta}
\newcommand{\lan}{\langle}
\newcommand{\ran}{\rangle}
\newcommand{\lf}{\left}
\newcommand{\rg}{\right}
\newcommand{\vyn}{v_y^{-1}}
\def\cd{\mathcal{D}}
\newcommand{\phe}{\phi^{(E)}}
\newcommand{\wwe}{\widetilde{w}^{(E)}}
\newcommand{\oi}{\widetilde{\omega}^{(I)}}
\newcommand{\phmn}{{\phi_{m,n;k}^{(E)}}}
\newcommand{\omn}{{\widetilde{\omega}_{m,n;k}^{(E)}}}
\newcommand{\mf}{\mathfrak}
\newcommand{\phn}{\phi_k^{(I)}}
\newcommand{\GG}{{\mathfrak G}}
\newcommand{\wch}{\widetilde\chi_1}
\newcommand{\ci}{\mathbb{C}^{(I)}}
\newcommand{\ina}{\mathbbm{1}_{\substack{a+b+c\leq n\\ a\geq 1}}}
\newcommand{\omnk}{\mathring{\omega}_{m,n;k}}
\newcommand{\mr}{\mathring}
\newcommand{\ww}{w}
\numberwithin{equation}{section}
\begin{document}

\title{Pseudo-Gevrey Smoothing for the \\ Passive Scalar Equations near Couette}
\author{ Jacob Bedrossian\thanks{\footnotesize Department of Mathematics, University of California, Los Angeles, CA 90095, USA \href{mailto:jacob@math.ucla.edu}{\texttt{jacob@math.ucla.edu}}} \and Siming He\thanks{Department of Mathematics, University of South Carolina, Columbia, SC 29208, USA \href{mailto:siming@mailbox.sc.edu}{\texttt{siming@mailbox.sc.edu}}} \and Sameer Iyer\thanks{Department of Mathematics, University of California, Davis, Davis, CA 95616, USA \href{mailto:sameer@math.ucdavis.edu}{\texttt{sameer@math.ucdavis.edu}}} \and Fei Wang\thanks{School of Mathematical Sciences, CMA-Shanghai, Shanghai Jiao Tong University, Shanghai 200240, China \href{mailto:fwang256@sjtu.edu.cn}{\texttt{fwang256@sjtu.edu.cn}}}}

\maketitle

\begin{abstract}
In this article, we study the regularity theory for two linear equations that are important in fluid dynamics: the passive scalar equation for (time-varying) shear flows close to Couette in $\mathbb T \times [-1,1]$ with vanishing diffusivity $\nu \to 0$ and the Poisson equation with right-hand side behaving in similar function spaces to such a passive scalar.
The primary motivation for this work is to develop some of the main technical tools required for our treatment of the (nonlinear) 2D Navier-Stokes equations, carried out in our companion work.  
Both equations are studied with homogeneous Dirichlet conditions (the analogue of a Navier slip-type boundary condition) and the initial condition is taken to be compactly supported away from the walls. 
We develop smoothing estimates with the following three features: 
\begin{itemize}
\item[(1)] Uniform-in-$\nu$ regularity is with respect to $\p_x$ and a time-dependent adapted vector-field $\Gamma$ which approximately commutes with the passive scalar equation (as opposed to `flat' derivatives), and a scaled gradient $\sqrt{\nu} \nabla$; 
\item[(2)] $(\p_x, \Gamma)$-regularity estimates are performed in Gevrey spaces with regularity that depends on the spatial coordinate, $y$ (what we refer to as `pseudo-Gevrey');

\item[(3)] The regularity of these pseudo-Gevrey spaces degenerates to finite regularity near the center of the channel and hence standard Gevrey product rules and other amenable properties do not hold. 
\end{itemize}
Nonlinear analysis in such a delicate functional setting is one of the key ingredients to our companion paper, \cite{BHIW24a}, which proves the full nonlinear asymptotic stability of the Couette flow with slip boundary conditions.
The present article introduces new estimates for the associated linear problems in these degenerate pseudo-Gevrey spaces, which is of independent interest.  
\end{abstract}

\setcounter{tocdepth}{2}
{\small\tableofcontents}


\section{Introduction}

We study two distinct families of linear equations, each closely related to the solutions of the 2D Navier-Stokes solutions near the Couette flow in a periodic channel. 


Our investigation begins with an analysis of the passive scalar equation defined over the channel domain $\mathbb{T} \times [-1, 1]$:
\begin{subequations}
\begin{align}\label{PS}
&\pa_t \omega + (y + U^x_0(t, y))\pa_x\omega  = \nu \Delta \omega+f,\\
&\omega(t,x,y=\pm 1)=0,\\
&\omega(t=0,x,y)=\omega_{\text{in}}(x,y).
\end{align}
\end{subequations}
The perturbation of the background shear flow, $U^x_0(t, y)$, is assumed small in a suitable regularity norm (with smallness independent of $\nu$).
In this paper, $U^x_0(t, y)$ will be a \emph{passive} quantity. However, our eventual application of the methods developed in this paper will be for the nonlinear Navier-Stokes equations in our companion work \cite{BHIW24a}. For this reason, we do not assume arbitrary regularity on $U^x_0(t, y)$. Rather, a central difficulty here is to  track precisely how the results of the present paper depend on the regularity of $U^x_0(t, y)$. In a similar spirit, the external forcing can be considered as nonlinearities in the 2D Navier-Stokes equations.
For this reason, we will frequently refer to $\omega$ as the `vorticity'. 

We make the following assumptions on the initial data $\omega_{\text{in}}(x, y)$ (of course $1/4$ is arbitrary and can be replaced with any number $< 1$)
\begin{align} \label{in:data:1}
&\text{supp}(\omega_{\mathrm{in}}) \subset \lf(-\frac14, \frac14\rg), \\ \label{in:data:2}
&\omega_{\mathrm{in}} \in L^2(\mathbb{T} \times [-1,1]).
\end{align} 

The second equation we consider is the Poisson equation with right-hand side parameterized by $t$, also set on $\mathbb{T} \times [-1,1]$:
\begin{align}\label{BS}
(\pa_{x}^2+\pa_y^2) \psi(t,x,y) &= \ww(t,x,y),\quad  
\psi (t,x,y=\pm 1) =0.
\end{align}
Here the one-parameter family of abstract source terms $\{\ww(t,\cdot)\}_{t\geq 0}$ should be considered to satisfy similar a priori estimates as that of solutions to \eqref{PS}.     
Thanks to the Biot-Savart law, if the source $\ww$ is the vorticity of the fluid, the skew-gradient of the solution $\psi$ (stream function) yields the velocity in the Navier-Stokes. 

\subsection{Passive Scalar Equation}

Our main objective in the study of the passive scalar, \eqref{PS}, is to prove \textit{smoothing estimates}. Smoothing estimates in general refer to a gain in regularity over the assumed initial datum which potentially can become available after a trade-off (for example, integration in time, spatial localization, etc...). In fact for the Navier-Stokes equations, even if we only have a $L^2$ initial datum, the solution becomes analytic immediately for any positive time $t>0$ ( see~\cite{Che04, ben08} and references therein for instance). However, the radius of analyticity is proportional to $\sqrt{\nu}$, too small to be useful for our companion paper~\cite{BHIW24a}. Regarding the smoothing effects we are considering here, more relevant work could be found in~\cite{Maekawa14, KukNguVicWan22,	KukVicWan19,  KukVicWan22,	Wan20}, where basically a local solution analytic near the boundary and Sobolev regular far away is considered. Actually, our initial datum falls into this regime. Compared with these results, we develop a novel scheme to give a better description of this regularity discrepancy in a much larger time scale (up to $\nu^{-1/3}$). For local smoothing effects in Lebesgue spaces, we refer to~\cite{JiaSverak14, MaeMiuPra17} and references therein. We now proceed to indicate the precise type of smoothing we will prove in this work. 

\vspace{2 mm}

\noindent \textsc{\underline{Smoothing Estimates: Inspiration from Heat Equation:}} Of course, since only $L^2$ regularity on $\omega_{\text{in}}$ is assumed we cannot expect higher regularity than $L^\infty_t((0,\infty); L^2_{x,y} (\mathbb T \times [-1,1]))$ over the channel (uniformly in $t$). 
However, we might expect regularity gain as we depart from the support of $\omega_{in}$, which is compactly supported on the interior, as set in \eqref{in:data:1}. To see this, let us consider the heat equation, where simple calculations serve as the starting point for our technique (which is more elaborate for the passive scalar equation and the Navier-Stokes equations due to reasons we will describe).
Consider $\p_t h - \nu \Delta h = 0$ on $\mathbb{T} \times \mathbb{R}$, where $h|_{t = 0} = \omega_{\rm in}$.
One can easily observe from the Greens function formula that $h \in C^\infty( [0,\infty) \times \mathbb T \times (-1/4-\delta,1/4+\delta)^c)  $ for all $\delta > 0$ \emph{uniformly} in $\nu$. 
However, for our purposes, it will be better to see how to deduce this kind of regularization using an energy estimate. For example, consider deducing a uniform-in-$\nu$ $H^1$ estimate:
\begin{itemize}
\item Perform the basic energy estimate to prove $\| \sqrt{\nu} \nabla h \|_{L^2_{tx}}^2 \lesssim \| \omega_{\text{in}} \|_{L^2}^2$. 

\item Differentiate the equation to obtain $\p_t (\p_y h) - \nu \Delta (\p_y h) = 0$. 

\item Fix a cutoff $\chi_E(y)$ such that $\supp(\chi_E) \subset \lf(-\frac14, \frac14\rg)^c$. 

\item Multiply by $\p_y h \chi_E^2$ to obtain 
\begin{align*}
\frac{\p_t}{2} \| \p_y h \chi_E \|_{L^2}^2 + \| \sqrt{\nu} \nabla \p_y h \chi_E \|_{L^2}^2 \lesssim \nu \lf| \int_{\mathbb{T} \times \mathbb{R}} |\p_y h|^2 (\chi_E^2)''\rg|,
\end{align*}
and notice the right-hand side integrates as long as $\omega_{\text{in}} \in L^2$ (the time integral is bounded by $\| \omega_{\text{in}} \|_{L^2}^2$ by the first step). There is no appearance of the $H^1$ norm of the initial datum, $\omega_{in}$, due to the cutoff $\chi_E$. 
\end{itemize}
This is an example of a smoothing estimate which gains regularity pointwise in time (uniformly in $\nu$) due to the spatial localization.
We are insisting on this energy method approach because these are, in general, more robust (and specifically, they can be readily adapted to the full nonlinear setting in our companion work, \cite{BHIW24a}). 

\vspace{2 mm}

\noindent \textsc{\underline{Smoothing Estimates for the Passive Scalar Equation:}}
In this paper, the main goal is to establish analogues of the above smoothing estimates for the passive scalar equations on the bounded domain $\mathbb{T} \times [-1,1]$ (with Dirichlet boundary conditions).
However, doing so for passive scalar equations (versus the heat equation) as well as a domain with boundary creates several major difficulties and presents restrictions on the types of spaces we can hope for smoothing estimates.
We describe these here, and in so doing we introduce the reader to the main ingredients that will appear in our (relatively complicated) functionals.

\begin{itemize}

\item \underline{Gevrey smoothing and the $\set{\chi_n(y)}$:} In contrast to the $H^1$ Sobolev smoothing discussed above for the heat equation, we seek a much stronger type of smoothing, namely in Gevrey regularity. This requires an infinite cascade of cutoff functions which replace the one $\chi_E$, which are defined in \eqref{chi} -- \eqref{chi:prop:3}. Of course due to the infinite gain in regularity in a finite distance $y \in [-1,1]$, we need to decompose the physical domain in a very precise (eventually infinitely small) manner which is achieved through the introduction of $\{x_n, y_n\}$, \eqref{x1} -- \eqref{yn}. The motivation to use Gevrey regularity comes from the high regularity requirements that arise when studying uniform-in-$\nu$ inviscid damping results (see e.g. \cite{BM13,BMV14,HI20,JI20,MZ20,DM23}).

\item \underline{Gevrey coefficients, $B_{m,n}(t)$:} The Gevrey norms we work with are nearly analytic (as quantified by our choice $s > 1$ in \eqref{s:prime}), and are represented by the coefficient defined in \eqref{Bweight}. It is common to use Gevrey or analytic regularity with a time decaying radius that is bounded below. These factors give negative definite `Cauchy-Kovalevskaya' ($\mathcal{CK}$) terms that are useful in nonlinear applications to handle certain critical nonlinearities. In our case, this is the choice of $\lambda = \lambda(t)$ in \eqref{Gev:la}. 

\item \underline{Adapted Vector Fields $(\Gamma, \p_x)$:} Unlike the simple case of the heat equation, flat $\p_y$ derivatives do not commute with the linearized Navier-Stokes equations, and therefore $\p_y$ derivatives of $\omega$ are expected to grow in $t$, making uniform-in-$\nu$ estimates difficult or impossible (depending on precisely what one is after).
Moreover, the nonlinear analysis quantifies regularity in a very specific manner on the Fourier-side and it will be convenient to choose the compatible regularity for the analysis here. 
Therefore, we modify the notion of derivative that we obtain  ``smoothing" estimates for \eqref{PS}.
In the setting of $\mathbb{T} \times [-1,1]$, the Fourier analysis methods of \cite{BM13,HI20,BMV14} will not be readily available close to the boundary, and therefore we are forced to design a physical-side commuting vector field method.
Indeed, one of the core ideas is to design an \textit{approximate commuting vector-field} which is adapted to the transport operator appearing on the left-hand side of \eqref{PS}.
In comparison with the inviscid damping literature \cite{BM13,HI20,BMV14}, our vector field $\Gamma$ mimics the operation of changing to $(z, v)$ coordinates, taking $\p_v$ of the resulting profile, and then transforming back (which ensures the two notions of regularity work together well). 
Vector field methods have been used to quantify uniform mixing estimates in both fluid mechanics and kinetic theory, see e.g. \cite{CotiZelati20,WeiZhangZhu20,ChaturvediLukNguyen23,BedrossianCotiZelatiDolce22} for example. However, previous works have both been in finite regularity and have used vector fields which do not have non-trivial, time-variable coefficients. 

Let us now describe the objects in slightly more detail. Define the quantity $v(t,y)$ to solve the following evolution equation: 
\begin{subequations}
\begin{align} \label{eq:v}
&\pa_t (t (v - y)) = U^x_0(t,  y) + \nu t \pa_y^2 v, \\ \label{BC:v}
&\pa_y v|_{y = \pm 1} = 1.
\end{align}
\end{subequations}
Correspondingly, we define our two (approximately) commuting vector fields:
\begin{align}
\partial_x, \quad \Gamma :=\pav+t\pa_x :=\frac{1}{\partial_y v} \partial_y + t \partial_x.\label{vc_fld}
\end{align}
The vector-field $\Gamma$ is an \emph{approximate commuting vector field} for \eqref{PS}: 
\begin{align} \label{commutator:exp:1}
 [\Gamma, \left(\partial_t + (y +  U_0^x)\partial_x - \nu \Delta\right)] \omega =   {G}\pav \Gamma \omega  - \nu \pav(v_y^2-1)\pav^2\omega,
\end{align}
where the quantity $G$ is given by  
\begin{align} \label{defn:g:cap}
G(t, y)  =&  \pa_t v - \nu \pa_y^2 v = \frac{ U_0^x(t, y) - (v-y) }{t}.
\end{align}
In addition to $G$, we will need two additional quantities which appear in our analysis that measure the background shear flow: 
\begin{subequations} 
\begin{align} \label{defn:h:cap}
H(t, y) = & \pa_y v(t, y) - 1 ,\\ \label{defn:barh:cap}
\overline{H}(t, y) = & \pa_y G = \frac{\pa_y U^x_0(t, y) - (\pa_yv (t, y)- 1)}{t}. 
\end{align}
\end{subequations}
\vspace{2 mm}
The quantities $G, H$ appear in \eqref{commutator:exp:1} (for instance by writing $v_y^2 - 1 = H(H+2)$).
We introduce $\overline{H}$ as it is more convenient to work with $\pa_y G$ instead of $G$ itself.  We refer to the functions $G,H,\overline{H}$ as `coordinate system functions'. 

\item \underline{$\sqrt{\nu} \nabla$:} The presence of the $\sqrt{\nu}\nabla$ derivatives is used to control various $L^\infty$-based Gevrey norms that appear during the analysis. When implementing the classical $\dot{H}_y^1$-energy estimate for transport diffusion-type equations, the transport term typically drives the Sobolev norm to grow linearly. This extra growing term, when properly weighted by $\sqrt{\nu}$, can be absorbed by the dissipation term from the $L^2$-based Gevrey energy estimates.

\item \underline{Weighted Spaces \& Spread of Vorticity, $W(t, y)$:} Another feature we want to quantitatively capture is how quickly the vorticity can spread away from its initial support. For this reason, we design the weight function $W(t, y)$, \eqref{defndW}.  

\item \underline{Co-normal weighted spaces, $q(y)$:} In order to work in Gevrey spaces on $\mathbb{T} \times [-1,1]$ and not control overly-complicated and potentially singular boundary terms at $y = \pm 1$, we need to weaken our Gevrey spaces by including degenerate weights of $q(y)^n$, where $q$, defined in \eqref{q:defn}, vanishes linearly as $y \rightarrow \pm 1$. 

\item \underline{Decaying Anisotropic Regularity:} Introducing the three ingredients of co-normal weights, our adapted vector-field, and Gevrey regularity creates commutator terms which appear very challenging to estimate uniformly in time, essentially due to dangerous combinations of $\frac{1}{q}$, $(n + m)$ (this will imply a kind of derivative loss), and $t$ appearing in commutator terms.
To handle these commutators, we introduce a novel iterative scheme which we refer to as the \emph{Iterated Co-normal Commutator (ICC) method}.
This scheme of estimating commutators applies to both elliptic and parabolic equations.
Therefore, we postpone the discussion of the ICC method to the elliptic subsection below. However, we remark that part of the scheme uses an anisotropic radius of Gevrey regularity which decays like $1/t$ in the $\Gamma$ derivative direction (see \eqref{varphi}). 

\item \underline{Time-varying Background Shear Flow:} In order for our work to have maximal applicability to nonlinear problems (in particular, to our companion paper \cite{BHIW24a}), we do not take the background shear flow, $U^x_0(t, y)$, as having arbitrarily high regularity. Rather, we very precisely quantify functionals, \eqref{E_coord}, \eqref{D_coord} and  \eqref{CK_coord}, and estimates on these functionals that are required for our results to hold. These estimates suffice to close the nonlinear analysis in \cite{BHIW24a}. 

\end{itemize}

Of the many aspects discussed above, the inclusion of the weight $W(t, y)$, the decreasing radius $\lambda(t)$ and obtention of corresponding $\mathcal{CK}$ terms, and the precise quantification of norms on $U^x_0(t, y)$ are choices we have made in order to strengthen our results and to broaden their potential applicability to other problems (in particular, to the nonlinear setting in \cite{BHIW24a}). On the other hand, the use of the approximate vector-field $\Gamma$, the use of co-normal weights $q(y)$, and the implementation of the $ICC$ method with the time decaying weight $\varphi(t)$, are adapted to the study of the passive scalar equation on $\mathbb{T} \times [-1,1]$. 

We now give an informal statement of our main smoothing result on the passive scalar equations. The precise statement requires the definitions of various functionals and can be found below in Theorem \ref{thm:main:para}.

\begin{theorem}[Informal Version of the passive scalar equation theorem] \label{thm:PSinformal}
Assume the initial datum, $\omega_{in}$, satisfies \eqref{in:data:1} -- \eqref{in:data:2}. Assume further that the coordinate system functions satisfy appropriate bounds.
Then, as quantified by the coordinate system functionals and functionals on the source term, $f$, the solution $\omega$ to \eqref{PS} satisfies Gevrey smoothing bounds: 
\begin{itemize}
\item[a)] On $(-\frac12, \frac12)^c$, $\omega(t, \cdot)$, the solution is (uniformly) Gevrey regular in $(\partial_x,\Gamma)$, for any Gevrey class arbitrarily close to analytic. 
\item[b)] On the transition zone: $[-\frac12, -\frac14) \cup (\frac14, \frac12]$, the solution is ``Pseudo-Gevrey regular" (in the sense that the regularity depends on the location within the infinitely small slices as defined by \eqref{x1} -- \eqref{chi}).
\end{itemize}
In all cases, these Gevrey bounds are degenerate, weighted in both space and time. These estimates hold for all $ t \lesssim \nu^{-1/3-\eta}$ for some sufficiently small $\eta > 0$. 
\end{theorem}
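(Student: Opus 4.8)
The plan is to obtain both parts (a) and (b) of Theorem~\ref{thm:PSinformal} from a \emph{single} weighted energy inequality for a master functional built out of the ingredients assembled above, in which the $y$-dependence of the Gevrey exponent is entirely encoded in the coefficients $B_{m,n}(t)$ and the cutoff cascade $\{\chi_n(y)\}$. Concretely, following the precise formulation in Theorem~\ref{thm:main:para}, I would set, for each pair $(m,n)$ counting $\p_x$ and $\Gamma$ derivatives, the localized quantity $\p_x^m\Gamma^n\omega$, weight its $L^2(\mathbb T\times[-1,1])$ norm by $B_{m,n}(t)$ (built on the decreasing radius $\lambda(t)$ of \eqref{Gev:la} and the anisotropic factor $\varphi(t)\sim 1/t$ of \eqref{varphi}), the co-normal factor $q(y)^n$ from \eqref{q:defn}, the spreading weight $W(t,y)$ from \eqref{defndW}, and the cutoff $\chi_n(y)$ from \eqref{chi}, and sum the squares over $m,n$; in parallel I would carry the same family with an extra $\sqrt\nu\nabla$ to supply $L^\infty$-type control. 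The output of the argument is a differential inequality for this master functional that, once integrated, is read off on $(-\tfrac12,\tfrac12)^c$ to give the uniform near-analytic Gevrey regularity of (a), while on the transition zone the $n$-dependence of $\supp\chi_n$ and of the slices $\{x_n,y_n\}$ produces the position-dependent pseudo-Gevrey regularity of (b).

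The first substantive step is to commute the equation. The transport operator in \eqref{PS} commutes exactly with $\p_x$; with $\Gamma=\tfrac{1}{v_y}\p_y+t\p_x$ it commutes only approximately, the defect being \eqref{commutator:exp:1}. Iterating this identity up to level $(m,n)$ produces a hierarchy of error terms of the schematic form (a derivative of $G$, $H$, or $\overline H$)$\,\times\,$(lower-order $\p_x,\Gamma$ derivatives of $\omega$), together with the $\nu$-weighted contributions carrying $(v_y^2-1)=H(H+2)$, plus the terms generated when $\Gamma$ or $\p_y$ fall on $\chi_n$, on $q^n$, or on $W$ in the energy estimate. The bounds on $G,H,\overline H$ needed to estimate the first family of terms at whatever regularity the iteration demands are exactly what the coordinate-system functionals \eqref{E_coord}, \eqref{D_coord}, \eqref{CK_coord} are designed to supply, and the compact support hypothesis \eqref{in:data:1} guarantees that all high-$(m,n)$ pieces of the initial functional vanish (since $\supp\chi_n$ is disjoint from $\supp\omega_{\mathrm{in}}$), so the infinite Gevrey sum starts finite even though only $\omega_{\mathrm{in}}\in L^2$ is assumed.

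The heart of the proof is the estimate of the commutator hierarchy, which is the Iterated Co-normal Commutator step. The dangerous structures are those that simultaneously carry a negative power of $q(y)$ (produced by differentiating the co-normal weight, or by the factor $v_y^{-1}\p_y$ in $\Gamma$ near the wall), a combinatorial factor of order $m+n$ (a genuine derivative loss), and a power of $t$. The scheme is threefold: (i) absorb each $1/q$ loss by pairing it against a co-normal gain $q^n$ and against the geometric decay built into the cutoff cascade, reindexing $(m,n)$ so that the term reappears inside the functional at a slightly smaller radius; (ii) pay the combinatorial factor $m+n$ using the negative-definite $\mathcal{CK}$ terms $-\dot\lambda(\cdot)$ and $-\dot\varphi(\cdot)$ — this is precisely why $\lambda(t)$ is chosen decreasing and $\varphi(t)\sim 1/t$; and (iii) control the $\nu$-terms with the genuine dissipation $\|\sqrt\nu\nabla(\cdot)\|_{L^2}^2$ and the auxiliary $\sqrt\nu\nabla$ functional, at the cost of restricting to $t\lesssim\nu^{-1/3-\eta}$ so that the accumulated $\nu t^3$-type factors remain small.

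Collecting everything, the master functional satisfies an inequality of the form $\frac{d}{dt}(\text{energy}) + (\text{dissipation}) + (\mathcal{CK}\text{ terms}) \lesssim \varepsilon\,(\text{energy} + \mathcal{CK} + \text{dissipation}) + (\text{source contributions})$, where $\varepsilon$ is controlled by the smallness of $U^x_0$ in the coordinate-system norms; one absorbs the $\varepsilon$-terms into the dissipation/$\mathcal{CK}$ reservoir and the energy via Grönwall, with the source contributions handled by the assumed functionals on $f$. I expect step three — making the ICC bookkeeping genuinely close, i.e.\ verifying that \emph{every} commutator term can be reabsorbed with no net loss in radius, in the combinatorial weight $m+n$, or in powers of $t$, uniformly over the whole range $t\lesssim\nu^{-1/3-\eta}$ and uniformly as the pseudo-Gevrey exponent degenerates toward the center — to be the main obstacle.
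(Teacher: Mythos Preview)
Your proposal is correct and follows the paper's route: the precise Theorem~\ref{thm:main:para} is obtained exactly by combining the energy inequality of Proposition~\ref{pro:light:on} with the commutator bounds of Proposition~\ref{pro:rhs:intro}, along the lines you sketch. Two refinements worth noting when you implement it: the $q$-commutator absorption at small $n$ requires the auxiliary monotone weights $\theta_n$ of~\eqref{dl_Drp} (your reindexing step (i) alone does not close there), and the time restriction $t\lesssim\nu^{-1/3-\eta}$ enters not through accumulated $\nu t^3$-type factors but through the localization weight $e^W$, which on $\supp\chi_1$ satisfies $e^{W}\gtrsim\nu^{-N}$ for any $N$ precisely on this interval and thereby supplies the small prefactors $\eps\nu^{99}$ appearing in the transport and viscous commutator estimates.
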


\begin{remark}
We believe with some minor tweaking, the results can be extended $\forall t \lesssim \nu^{-1+\zeta}$ for any $\zeta > 0$. After the time-scale $\nu^{-1}$, the qualitative behavior should be significantly different. We did not pursue this sharper estimate as it is not required for our companion work \cite{BHIW24a}.
See also our earlier work \cite{BHIW23}, which obtains the results used to send $t \gtrsim \nu^{-1+\zeta}$ in the nonlinear problem.
This naturally implies some results at the linear level, however, we do not seek pseudo-Gevrey regularity on such long time-scales. 
\end{remark}

\subsection{Poisson's Equation}


The elliptic system \eqref{BS} is a family of Poisson equations with time dependent source terms. A concrete example that we have in mind is the following vorticity stream function relation in the Navier-Stokes equation
\begin{align*}
-\de\psi=\omega,\quad \psi\big|_{y=\pm 1}=0.
\end{align*}
The skew gradient of the stream function gives rise to the velocity of the fluid. We note that the $w$ in \eqref{BS} corresponds to the $-\omega$ in the companion paper \cite{BHIW24a}. 

The central theme in analyzing the system \eqref{BS} is to derive sharp \emph{inviscid damping} estimates in a \emph{bounded domain} setting. Since the pioneering work of Orr \cite{Orr07}, it has been known that the inviscid damping mechanism is closely related to the hydrodynamic stability problem. To introduce this mechanism, it is the easiest if we assume that the forcing $w$ solves the transport equation on the cylinder $\mathbb{T}\times \rr$:
\begin{align*}
\pa_t w+y\pa_x w=0.
\end{align*}
We observe that the quantities $(\pa_y+t\pa_x)^n w$ are transported by the flow and hence are bounded over time, i.e., $\|(\pa_y+t\pa_x )^nw\|_{L^2}^2\leq \|w_{\text{in}}\|_{\dot H^n}^2. $ 
A simple Fourier transform $(x,y)\rightarrow (k,\eta)$, together with the Plancherel identity yields that 
\begin{align}\label{ID}
\sum_{k\nq 0}\int|\psi(t,k,\eta)|^2d\eta\approx\sum_{k\nq 0}\int\frac{(\eta+kt)^4 |w(t,k,\eta)|^2}{(k^2+\eta^2)^2(\eta+kt)^4}d\eta\lesssim \frac{1}{k^4t^4}\|(\pa_y+t\pa_x) w\|_{H^2}^2\lesssim \frac{\|w_{\text{in}}\|_{H^2}^2}{k^4t^4}.
\end{align} 
Hence, one can obtain decay for the solution $\psi$ by paying regularity.
Such estimates have been justified in the nonlinear Euler equations \cite{BM13,HI20,JI20,MZ20} and the Navier-Stokes equations \cite{BMV14} uniformly-in-$\nu$ without boundaries. 
See \cite{Zillinger2017,WZZ19,WZZ20,Jia2020,J20,WZZ18,ISJ22,J23,beekie2024uniform,ChenWeiZhang23linear,GNRS20,CZEW20,chen2024enhanced,BH20,WZ19,CLWZ20,almog2021stability} and the references therein for some of the many works on the linearized Euler and Navier-Stokes problems. 

In general, if there are no extra assumptions regarding the location of the forcing, the inviscid damping estimate derived in \eqref{ID} is basically sharp. However, if the external forcing $w$ is compactly supported in space, and one is focusing on the solution $\psi$ in the regions away from the support of $w$, better decay estimates can be derived. This type of estimate, in a bounded domain setting in these particular Gevrey spaces we are using, constitutes the first main result of our elliptic results. 

Next, we introduce the set of differential operators that capture various commutator terms that arise when one commutes the vector fields $\Gamma$ \eqref{vc_fld} and the co-normal weights $q$. We call these differential operators the {\bf$ICC$-operators}, which have the following general form:
\begin{align}\label{SJ_Heu}
S^{(a,b,c)}f_k\sim\lf(\frac{m+n}{q}\rg)^a\pa_y^b |k|^c f_k.
\end{align}
The basic intuition is that when commuting one copy of the vector field $\Gamma$ and the weight $q^n$, one naturally introduces an extra factor of the form $\sim\frac{n}{q}$ in the expression. Hence, in the analysis of bounded domains, the boundary weight $\frac{m+n}{q}$ plays an equivalent role as a derivative in $y$. Hence, an $L^2$-norm control over the quantity \eqref{SJ_Heu} is essentially a bound on the $\dot H^{a+b+c}$ norm of the solutions. 

With these preliminaries, we are ready to state the informal elliptic estimates.  
\begin{theorem}[Informal Version of  Elliptic Estimates] Assume that the external forcing $w$ satisfies similar estimates as the solutions studied in Theorem \ref{thm:PSinformal}. 
Assume further that the coordinate system functionals satisfy appropriate bounds. Then the solution $\psi$ to \eqref{BS} satisfies Gevrey smoothing bounds: 
\begin{itemize}
\item[a)] On $\lf(-\frac12, \frac12\rg)^c$, $\psi(t, \cdot)$, is small in a high Gevrey regularity space. It consists of a component that undergoes inviscid damping $\lesssim \frac{1}{t^{N}}$ for any $N > 0$ and a component that is exponentially small in $\nu$, i.e., $\lesssim \exp\{-c\nu^{-1/9}\}$; 
\item[b)] On the transition zone: $[-\frac12, -\frac14) \cup (\frac14, \frac12]$, the solution is ``Pseudo-Gevrey regular" (in the sense that the regularity depends on the location within the infinitely small slices as defined by \eqref{x1} -- \eqref{chi}).
\end{itemize}
In all cases, these Gevrey bounds are degenerate, weighted in both space and time.  These estimates hold for all $t \lesssim \nu^{-1/3-\eta}$ for some sufficiently small $\eta > 0$.  
\end{theorem}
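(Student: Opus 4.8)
Since the statement is informal, I outline the scheme; the rigorous version is carried out with the functionals and the $ICC$ operators \eqref{SJ_Heu} introduced below. The plan is to analyze \eqref{BS} mode-by-mode in $x$. Writing $\psi=\sum_k\psi_k(t,y)e^{ikx}$ and $w=\sum_k w_k(t,y)e^{ikx}$, each nonzero mode solves the one-dimensional Dirichlet problem $(\pa_y^2-k^2)\psi_k=w_k$ on $[-1,1]$, while the $k=0$ mode is part of the coordinate system and is estimated separately. For $y$ in the far region $(-\tfrac12,\tfrac12)^c$ the right-hand side vanishes, so $\psi_k$ is a combination of the homogeneous solutions $\sinh(|k|(1\mp y))$ whose coefficients are the ``fluxes'' $\frac{1}{|k|\sinh(2|k|)}\int_{-1/4}^{1/4}\sinh(|k|(1\pm y'))\,w_k(t,y')\,\mathrm{d}y'$ (the Dirichlet Green's function of $\pa_y^2-k^2$ against the compactly supported source). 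The factor $1/\sinh(2|k|)$ already produces the decay in high $x$-frequencies.

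The decay in $t$ comes from the assumed regularity of $w$ in the adapted vector field $\Gamma$ of \eqref{vc_fld}. In Fourier in $x$, $\Gamma$ acts as $\tfrac1{v_y}\pa_y+ikt$, so control of $\Gamma^n w$ means that $e^{iktv(t,y')}w_k(t,y')$, viewed as a function of $v$, is regular. After the change of variables $y'\mapsto v$ the flux integral becomes an oscillatory integral with phase $e^{-iktv}$ and amplitude built from $\sinh$, the Jacobian $\pa_v y'$, and this regular profile; non-stationary-phase/repeated integration by parts converts each unit of $\Gamma$-regularity of $w$ into a factor $\lesssim(|k|t)^{-1}$, which (using enough $\Gamma$-derivatives) yields the inviscid-damping bound $\lesssim t^{-N}$ from the near-center part of the support where $w$ has only finite regularity. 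On the portion of the support where $w$ is genuinely Gevrey (away from the center) the amplitude is Gevrey, so the oscillatory integral is bounded by $\exp(-c(|k|t)^{\theta})$ with $\theta$ dictated by the worst pseudo-Gevrey index there; evaluating at the maximal time $t\lesssim\nu^{-1/3-\eta}$ and paying for the $ICC$ losses and the shrinking Gevrey radius produces the stated $\exp(-c\nu^{-1/9})$.

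To upgrade these flux/pointwise bounds to the full (pseudo-)Gevrey estimates in $(\pa_x,\Gamma)$, with the co-normal weights $q^n$, the spread weight $W$, and the Gevrey coefficients $B_{m,n}(t)$, I run a weighted elliptic energy estimate: commute $\pa_x^m\Gamma^n$ — and, near the boundary, the $ICC$ operators $S^{(a,b,c)}$ of \eqref{SJ_Heu} — through \eqref{BS}. Because $\Gamma$ is only an \emph{approximate} commuting vector field, $[\Gamma,\de]$ generates error terms built from the coordinate-system functions $G,H,\overline H$ of \eqref{defn:g:cap}--\eqref{defn:barh:cap}; these are absorbed via the assumed coordinate-system functionals together with elliptic regularity ($\de^{-1}$ gaining two derivatives). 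The transition zone $[-\tfrac12,-\tfrac14)\cup(\tfrac14,\tfrac12]$ is handled with the slice decomposition $\{x_n,y_n,\chi_n\}$: on each slice one runs the same estimate but with the Gevrey index appropriate to that slice, the cutoffs $\chi_n$ localizing the commutator errors.

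The main obstacle is this last step: the interaction of the three features — co-normal weights $q^n$, the time-dependent vector field $\Gamma$, and (pseudo-)Gevrey regularity — generates commutators carrying the triply dangerous combination of $\tfrac1q$, $(m+n)$ and $t$, which naively lose regularity and grow in time. Making these estimates close uniformly on $t\lesssim\nu^{-1/3-\eta}$ is exactly what the $ICC$ scheme (with its anisotropic, $1/t$-decaying radius of regularity in the $\Gamma$-direction) is designed to achieve, and verifying it in the degenerate pseudo-Gevrey spaces — where the standard Gevrey product and composition rules fail near the center of the channel — is the technical heart of the argument.
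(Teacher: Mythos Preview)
Your sketch of the inviscid-damping component is essentially correct and matches the paper: the paper writes $\phi_k^{(I)}$ via the explicit Dirichlet Green's function \eqref{greensRep}, exploits the separation of support between $\chi_\ast$ and $\operatorname{supp}\widetilde\chi_1^{\mathfrak c}$, and then integrates by parts against $e^{-iktv'}$ to convert each $\widetilde\Gamma_k$-derivative of $R_k^{(I)}$ into a factor $(|k|t)^{-1}$ (proof of \eqref{phi_I_est_3}). The identification of the $ICC$ scheme as the technical heart of the weighted elliptic estimates is also right.

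Where your proposal goes wrong is the \emph{mechanism} for the exponentially-small component. You attribute the $\exp\{-c\nu^{-1/9}\}$ to an oscillatory-integral bound $\exp(-c(|k|t)^\theta)$ coming from Gevrey regularity of the amplitude, and then evaluate at the maximal time $t\sim\nu^{-1/3-\eta}$. This cannot work: the theorem requires the bound uniformly on $[0,\nu^{-1/3-\eta}]$, and your mechanism gives nothing at small $t$. More importantly, this is not the source of the smallness in the paper. The paper splits $\psi_k=\phi_k^{(I)}+\phi_k^{(E)}$ according to \eqref{d:phi:I:in}--\eqref{d:phi:E:in}, where $\phi_k^{(E)}$ is forced by $\widetilde\chi_1 w_k$ (plus a coordinate correction involving $v_y^2-1$). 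The exponential smallness of $\phi^{(E)}$ then comes from the \emph{spatial localization weight} $W$ already built into $\mathcal E^{(\gamma)}[w]$: on $\operatorname{supp}\widetilde\chi_1$ one has $W\gtrsim\nu^{-2/3+\eta}$, so $\|\widetilde\chi_1 w_k\|_{L^2}^2\lesssim e^{-c\nu^{-2/3+\eta}}\mathcal E^{(\gamma)}[w_k]$, and similarly the coordinate correction is small via \eqref{sml_vy2-1}. Elliptic regularity (Lemma \ref{lem:max_reg}) plus the $ICC$ commutator machinery then transfers this to $\phi^{(E)}$; the degradation to $e^{-\nu^{-1/9}}$ arises from the Gevrey-index conversion in Lemma \ref{lem:ExtToInt}. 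So the two informal components in the theorem correspond precisely to $\phi^{(I)}$ (inviscid damping) and $\phi^{(E)}$ (exponentially small from $W$), not to two regimes of a single oscillatory integral.

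A smaller point: the $ICC$ estimates are run on $\phi^{(E)}$ via the commuted equation \eqref{T123}, not on the full $\psi$; this is what makes the right-hand side manageable, since the principal forcing $\widetilde\omega_k^{(E)}$ is already small and the interior contribution $\mathbb C_{m,n}^{(I)}$ is controlled by the $\phi^{(I)}$ bounds through Lemma \ref{lem:R_I_est}.
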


\section{Main Result: Parabolic Regularity}
Our main result will quantify the regularity gain for the solution, $\omega$, to \eqref{PS} by controlling several energy-dissipation-$\mathcal{CK}$ functionals of $\omega$ in terms of corresponding functionals on $f$, functionals on $U^x_0(t, y)$, and $\| \omega_{\rm in} \|_{L^2}$.  

\subsection{Preliminary Objects}
\noindent \underline{\textsc{Cutoff Functions:}} First, we fix a Gevrey index $s > 1$. We define the following associated parameters, which will arise in our analysis:
\begin{align} \label{s:prime}
s > 1, \qquad 0 <  \sigma < s - 1, \qquad \sigma_{\ast} := (s-1) - \sigma > 0. 
\end{align}
For technical purposes, we assume that $\sigma_\ast\geq 10\sigma$. Given the parameter $\sigma$ defined above, we design now a sequence of cut-off functions $\chi_n(y)$, $n \in \mathbb{N}$. We first choose the following parameters: 
\begin{align} \label{x1}
x_1 = &\frac38, \\ \label{xn}
x_{n+1} = &x_n + \frac{c_{\sigma}}{n^{1+\sigma}}, \qquad n \ge 1, \\ \label{yn}
y_n = & x_n + \frac{c_{\sigma}}{100 n^{1+\sigma}}.
\end{align}
Above, the constant $c_{\sigma}$ is chosen so that $c_{\sigma} \sum_{n \ge 1} \frac{1}{n^{1+\sigma}} < \frac18$. We now define cut-offs, $\chi_n(y)$, adapted to these scales:
\begin{align}\label{chi}
\chi_n(y) = \begin{cases} 0, \qquad - x_n < y < x_n \\ 1, \qquad \{- 1 < y < -y_n\} \cup \{y_n < y < 1\} \end{cases}, \qquad n \ge 1.
\end{align}
The following properties follow from our choice of sequences, $\{x_n\}, \{y_n\}$: 
\begin{subequations}
\begin{align} \label{chi:prop:1}
\cap_{n = 1}^{\infty} \{ \chi_n = 1 \} \supset & \lf(-1, -\frac12\rg) \cup \lf(\frac12, 1\rg), \\ \label{chi:prop:2}
\text{supp}(\nabla^k \chi_{n+1}) \subset & \{ \chi_n = 1 \} \qquad \text{ for all } k \in \mathbb{N}, n \in \mathbb{N}, \\ \label{chi:prop:3}
|\pa_y^{j} \chi_n| \lesssim &n^{j (1 + \sigma)} \chi_{n-1}.
\end{align}
\end{subequations}

\vspace{2 mm}

\noindent \underline{\textsc{Weight Functions:}} We now define our co-normal weight function 
\begin{align} \label{q:defn}
q(y) = & \begin{cases} 99(y +1) , \qquad -1 < y < -1+ \frac{1}{100}; \\ 1,\qquad -1+\frac{1}{50}<y<1-\frac{1}{50}; \\ 99(1 - y), \qquad 1 - \frac{1}{100} < y < 1, \end{cases}
\end{align}
where $q$ is taken to be monotone and smooth in the connecting regions.  
We now define our vorticity localization weight-function:
\begin{align} \label{defndW}
W(t, y)=\frac{(|y|-1/4-L\ep\arctan (t))_+^2}{K\nu(1+t)}.
\end{align}
Above, $K$ is a large but universal constant and the product $L\ep$ is a small but  universal constant. We highlight that the extra damping effect from the term $L\ep\arctan (t)$ is only applied in our companion paper \cite{BHIW24a}. Hence, one can safely drop this term and obtain all the estimates in this paper. 

We define the Gevrey radius as follows:
\begin{align} \label{Gev:la}
\lambda(t) = \lambda_0(1 + (1 + t)^{-\frac{1}{100}}).
\end{align} 
Here $\lambda_0\in(0,1]$ is an order one small number chosen depending on the size of the solution ($\ep$) and the Gevrey index $s$.  
Subsequently, we denote our Gevrey weight:
\begin{align} \label{Bweight}
B_{m,n}(t) :=\lf( \frac{\lambda^{m+n}}{(m+n)!} \rg)^{s}.   
\end{align}
We will also need the following decaying in time weight function 
\begin{align} \label{varphi}
\varphi(t) := \frac{1}{(1 + t^2)^{\frac12}}=\frac{1}{\lan t\ran }.
\end{align}
One simple consequence of this weight is the following: 
\begin{align} \label{ineq:varphi}
\sup_{t} \varphi(t) |t| \lesssim 1. 
\end{align}
However, the more delicate aspect of this weight is it does not decay \textit{too fast} so as to force our exterior norms to lose too many derivatives (measured by an appropriate Gevrey index) over the critical time-scale $t \in [0, \nu^{- \frac13-})$. This will proven rigorously in Lemma \ref{lem:ExtToInt}. Comparing the temporal functions $\lambda(t)$ and $\varphi(t)$, it is useful to record the following 
\begin{align} \label{varphi:yessir}
&\frac{\dot{\varphi}(t)}{\varphi(t)} \gtrsim \frac{1}{\langle t \rangle} \Rightarrow 1 \lesssim \langle t \rangle \frac{\dot{\varphi}(t)}{\varphi(t)}, \\ \label{lambda:yessir}
&\frac{\dot{\lambda}(t)}{\lambda(t)} \gtrsim \frac{1}{\langle t \rangle^2} \Rightarrow 1 \lesssim \langle t \rangle^2 \frac{\dot{\lambda}(t)}{\lambda(t)}. 
\end{align}
We introduce the notations 
\begin{subequations} \label{a:weight}
\begin{align} 
\label{a:weight:neq}
\bold{a}_{m,n}(t) := & B_{m,n}(t)  \varphi(t)^{1+n}, \\
	\bold{a}_{n}(t):  = &\bold{a}_{0, n}(t).
\end{align}
\end{subequations}
The final (minor) set of weights we will use are defined as follows: let $\{ \theta_n\}_{n = 0}^\infty$ be a sequence of positive numbers such that 
\begin{align}
\begin{aligned}
&\theta_{n + 1} \le \theta_n, \qquad n = 0, 1, \dots ,\\
&\frac{\theta_{n+1}}{\theta_n} = \delta_{\text{Drop}}, \qquad n < n_\ast, \label{dl_Drp}\\
&\theta_n = 1, \qquad n \ge n_\ast,
\end{aligned}
\end{align}
where the parameters $n_\ast$ and $\delta_{\text{Drop}}$ will be chosen (precisely in \eqref{shst:1} and \eqref{delta:drop:choice}) based on universal constants. 

\subsection{Parabolic Energy Functionals}

\vspace{2 mm}

\noindent \textsc{Vorticity Norms:} We will have three families of energy-dissipation-$\mathcal{CK}$ functionals to measure $\omega$, the solution to \eqref{PS}: the ``$\gamma$" version corresponding to $L^2$ level of regularity, the ``$\alpha$" version corresponding to $H^1_y$ level of regularity, and the ``$\mu$" version corresponding to $H^1_x$ level of regularity. We note that these $(\gamma, \alpha, \mu)$ functionals are then propagated at the Gevrey level (summing over $(m, n)$ below). We note that our Gevrey spaces \textit{do not} directly correspond to taking standard derivatives $\p_x, \p_y$. This is due to a variety of reasons, chief among which is the inclusion of a weakening weight $q^n$ as well as the sliding cut-offs $\chi_{m + n}$. Therefore,
it is important to note that simply controlling the $\mathcal{E}^{(\gamma)}$ functional yields essentially no useful direct $H^1$ information. 

For an abstract function $f = f(x, y)$ and its $x$-Fourier transform $f_k(y)$, we define the $f_{m,n}$-notation
\begin{align}\label{f_mn}
f_{m,n}:=&\pa_x^m\Gamma^n f,\quad f_{m,n;k}:=(ik)^m\Gamma_k^n f_k.
\end{align}Here, $\Gamma_k$ is defined as
\begin{align}
&\Gamma_k:=v_y^{-1}\pa_y +ikt. \label{defn:Gamma}
\end{align} Then introduce now the energy functionals:\begin{subequations}
\begin{align} \label{ef:a}
\mathcal{E}^{(\gamma)}[f] := & \sum_{m = 0}^\infty \sum_{n =0}^\infty \theta_n^2 \bold{a}_{m,n}^2 \| q^n f_{m,n} e^W \chi_{m + n}  \|_{L^2}^2, \\ \label{ef:b}
\mathcal{E}^{(\alpha)}[f] := & \sum_{m = 0}^\infty \sum_{n =0}^\infty \theta_n^2 \bold{a}_{m,n}^2 \| \sqrt{\nu} \p_y (q^nf_{m,n}) e^W \chi_{m + n}  \|_{L^2}^2, \\ \label{ef:c}
\mathcal{E}^{(\mu)}[f] := & \sum_{m = 0}^\infty \sum_{n =0}^\infty \theta_n^2 \bold{a}_{m,n}^2 \| \sqrt{\nu} \p_x (q^n f_{m,n}) e^W \chi_{m + n}  \|_{L^2}^2.
\end{align}
\end{subequations}\begin{subequations}
Correspondingly, we have the dissipation functionals:
\begin{align} \label{df:a}
\mathcal{D}^{(\gamma)}[f] := & \sum_{m = 0}^\infty \sum_{n =0}^\infty \theta_n^2 \bold{a}_{m,n}^2 \| \sqrt{\nu} \nabla (q^n f_{m,n}) e^W \chi_{m + n}  \|_{L^2}^2, \\ \label{df:b}
\mathcal{D}^{(\alpha)}[f] := &  \sum_{m = 0}^\infty \sum_{n =0}^\infty \theta_n^2 \bold{a}_{m,n}^2 \| \nu \p_y \nabla (q^n f_{m,n}) e^W \chi_{m + n}  \|_{L^2}^2, \\ \label{df:c}
\mathcal{D}^{(\mu)}[f] := & \sum_{m = 0}^\infty \sum_{n =0}^\infty  \theta_n^2 \bold{a}_{m,n}^2 \| \nu \p_x \nabla (q^n f_{m,n}) e^W \chi_{m + n}  \|_{L^2}^2. 
\end{align}
\end{subequations}
Next, we have the $\mathcal{CK}$ functionals, which are generated from a time derivative falling on a decreasing quantity. In our analysis, we have three decreasing quantities $\varphi(t)$ defined in \eqref{varphi}, the Gevrey radius $\lambda(t)$ defined in \eqref{Gev:la}, as well as our weight function $W(t, y)$ defined in \eqref{defndW}. Consequently, we have three families of $\mathcal{CK}$ terms which will be used in different ways:   
\begin{subequations}
\begin{align} \label{CKgammavarphi}
\mathcal{CK}^{(\gamma; \varphi)}[f] := &  \sum_{m = 0}^\infty \sum_{n =0}^\infty (1 + n) \theta_n^2 \bold{a}_{m,n}^2 \frac{\dot{\varphi}}{\varphi} \| q^nf_{m,n} e^W \chi_{m + n}  \|_{L^2}^2,  \\
\mathcal{CK}^{(\alpha; \varphi)}[f] := & \sum_{m = 0}^\infty \sum_{n =0}^\infty (1 + n) \theta_n^2 \bold{a}_{m,n}^2 \frac{\dot{\varphi}}{\varphi} \| \sqrt{\nu} \p_y (q^n f_{m,n}) e^W \chi_{m + n}  \|_{L^2}^2, \\
\mathcal{CK}^{(\mu; \varphi)}[f] := &  \sum_{m = 0}^\infty \sum_{n =0}^\infty (1 + n) \theta_n^2 \bold{a}_{m,n}^2 \frac{\dot{\varphi}}{\varphi} \| \sqrt{\nu} \p_x (q^nf_{m,n}) e^W \chi_{m + n}  \|_{L^2}^2, \\
\mathcal{CK}^{(\gamma; \lambda)}[f] := &  \sum_{m = 0}^\infty \sum_{n =0}^\infty (m + n) \theta_n^2 \bold{a}_{m,n}^2 \frac{\dot{\lambda}}{\lambda} \| q^n f_{m,n} e^W \chi_{m + n}  \|_{L^2}^2,  \\
\mathcal{CK}^{(\alpha; \lambda)}[f] := & \sum_{m = 0}^\infty \sum_{n =0}^\infty (m + n) \theta_n^2 \bold{a}_{m,n}^2 \frac{\dot{\lambda}}{\lambda} \| \sqrt{\nu} \p_y (q^n f_{m,n}) e^W \chi_{m + n}  \|_{L^2}^2, \\
\mathcal{CK}^{(\mu; \lambda)}[f] := &  \sum_{m = 0}^\infty \sum_{n =0}^\infty (m + n) \theta_n^2 \bold{a}_{m,n}^2 \frac{\dot{\lambda}}{\lambda} \| \sqrt{\nu} \p_x (q^n f_{m,n}) e^W \chi_{m + n}  \|_{L^2}^2, \\
\mathcal{CK}^{(\gamma; W)}[f]:=&\sum_{m = 0}^\infty \sum_{n = 0}^\infty \theta_n^2 \bold{a}_{m,n}^2\lf\|\sqrt{-\pa_t W}q^nf_{m,n}e^W\chi_{m+n}\rg\|_{L^2}^2, \label{CK_W_ga} \\
\mathcal{CK}^{(\al; W)}[f]:=& \sum_{m = 0}^\infty \sum_{n = 0}^\infty \theta_n^2 \bold{a}_{m,n}^2\left\|\sqrt{ -\pa_t W  } \sqrt{\nu}\pa_y (q^nf_{m,n})e^W\chi_{m+n}\right\|_{L^2}^2,\label{CK_W_al}\\  
\mathcal{CK}^{(\mu; W)}[f]:=& \sum_{m  = 0}^\infty \sum_{n = 0}^\infty \theta_n^2 \bold{a}_{m,n}^2 \lf\|\sqrt{-\pa_t W} \sqrt{\nu}\p_x(q^nf_{m,n})e^W\chi_{m+n}\rg\|_{L^2}^2.\label{CK_W_mu}
\end{align}
\end{subequations}
Roughly speaking, by comparing $\mathcal{CK}^{(\cdot; \varphi)}$ and $\mathcal{CK}^{(\cdot; \lambda)}$ we see a trade-off between regularity and decay: $(m+n)$ is in general stronger than $(1 + n)$, but $\frac{\dot{\varphi}}{\varphi} \sim \langle t \rangle^{-1}$ which is larger than $\frac{\dot{\lambda}}{\lambda}$. On the other hand, $\mathcal{CK}^{(\cdot, W)}$ encodes more subtle spatial localization information through the quotient $\frac{\p_t W}{W}$ which is absent from both $\mathcal{CK}^{(\cdot; \varphi)}$ and $\mathcal{CK}^{(\cdot; \lambda)}$. 

The aforementioned functionals all have their $k$-by-$k$ counterparts for $f_{m,n;k}$ \eqref{f_mn}. We postpone their definitions to Section \ref{sec:k_by_k_func}.

\vspace{2 mm}

\noindent \textsc{Exterior Coordinate System Functionals:} Next, for $\iota\in\{\alpha, \gamma\}$, 
we introduce the following coordinate system functionals:\begin{subequations}\label{E_coord}
\begin{align}
	\mathcal{E}_{\overline{H}}^{(\iota)} := & 
	\sum_{n\ge0} \nu^{i_{\iota}/2} \theta_{n}^2 (n+1)^{2\sss-2}  \bold{a}_{n+1}^2 \brak{t}^{3+2\ss} \|\partial_y^{i_{\iota}} \overline{H}_n e^{W/2} \chi_n \|_{L^2}^2, \\
	\mathcal{E}_{G}^{(\iota)} := & 
	\theta_{0}^2 \langle t \rangle^{4-2K\eps} \|\partial_y^{i_{\iota}} G \chi_0 \|_{L^2}^2 + 
	\sum_{n\ge1} \theta_{n}^2 
	\bold{a}_{n}^2 \brak{t}^{3+2\ss} 
	\| \partial_y^{i_{\iota}} G_n e^{W/2} \chi_{n-1+i_\iota}\|_{L^2}^2, \\
\mathcal{E}_{{H}}^{(\iota)} := & 
	\sum_{n\ge0} \theta_{n}^2 \nu^{i_{\iota}/2} \theta_{n}^2 \bold{a}_n^2 \| \partial_y^{i_{\iota}} H_n e^{W/2} \chi_n \|_{L^2}^2,
\end{align}
\end{subequations}
with 
\begin{align*}
	i_{\iota} = \begin{cases}
		1, & \ \ \iota = \alpha \\
		0, & \ \ \iota = \gamma
	\end{cases},
\end{align*}
and $K$ is a big constant.
The corresponding dissipation functionals are given as \begin{subequations} \label{D_coord}
\begin{align} 
\mathcal{D}_{\overline{H}}^{(\iota)} & = \sum_{n\ge0} \nu^{1+i_{\iota}/2} \theta_{n}^2 (n+1)^{2\sss-2}  \bold{a}_{n+1}^2 \brak{t}^{3+2\ss}
\|  \partial_y^{1+i_{\iota}} \overline{H}_n e^{W/2} \chi_n \|_{L^2}^2, \\ 
\mathcal{D}_{H}^{(\iota)} & =  \nu \theta_{0}^2 \langle t \rangle^{4-2K\eps}\enorm{\partial_y^{1+i_{\iota}} G\chi_0}^2  + \sum_{n\ge1} 
\nu \theta_{n}^2 \bold{a}_{n}^2 \brak{t}^{3+2\ss} 
\| \partial_y^{1+i_{\iota}} G_n e^{W/2} \chi_{n-1+i_\iota}\|_{L^2}^2,  \\
\mathcal{D}_{G}^{(\iota)} & = \sum_{n\ge0}\theta_{n}^2 \bold{a}_{n}^2 \nu^{1+i_{\iota}/2} \|  \partial_y^{1+i_{\iota}} {H}_n e^{W/2} \chi_n \|_{L^2}^2,
\end{align}
\end{subequations}\begin{subequations}
and the $\mathcal{CK}$ terms are written as
  \begin{align}\label{CK_coord}
	\mathcal{CK}_{\overline{H}}^{(\iota)} :=& \sum_{n\ge0} \theta_{n}^2 \mathcal{CK}_{\overline{H}, n}^{(\iota)}, \\
	\mathcal{CK}_{G}^{(\iota)} :=& \sum_{n\ge0} \theta_{n}^2 \mathcal{CK}_{G, n}^{(\iota)}, \\
	\mathcal{CK}_{H}^{(\iota)} :=& \sum_{n\ge0} \theta_{n}^2 \mathcal{CK}_{H, n}^{(\iota)},
\end{align}
where
\begin{align}
	\mathcal{CK}_{\overline{H},n}^{(\iota)} := & \sum_{j = 1}^2  \mathcal{CK}^{(\iota;j)}_{\overline{H}, n}(t), \\
	\mathcal{CK}_{\overline{H},n}^{(\iota;1)}:= &  \nu^{i_{\iota}/2}(n+1)^{2\sss-2} \bold{a}_{n+1}^2 \brak{t}^{3+2\ss} \|\partial_y^{i_{\iota}} \overline{H}_n 
	\sqrt{- \partial_{t}W} e^{W/2} \chi_n \|_{L^2}^2,
	\label{CK1:defi} \\
	\mathcal{CK}_{\overline{H},n}^{(\iota;2)} :=&  -  \nu^{i_{\iota}/2} (n+1)^{2\sss-2} \bold{a}_{n+1}   \dot{\bold{a}}_{n+1} \brak{t}^{3+2\ss}
	\| \partial_y^{i_{\iota}} \overline{H}_n e^{W/2} \chi_n \|_{L^2}^2, \label{CK2:defi} \\
	\mathcal{CK}_{G,n}^{(\iota)} := & \begin{cases}  \paren{4\langle t \rangle^{4-2K\eps_1}t^{-1} - (4-2K\eps_1)t\brak{t}^{2-2\eps}}
		\enorm{\partial_y^{i_{\iota}} G\myr{\chi_0}}^2 \qquad & n=0
		\\ \sum_{j=1}^{2} \mathcal{CK}_{G, n}^{(\iota; j)}(t)^2 \qquad & n\ge 1, 
	\end{cases}\\
	\mathcal{CK}_{G,n}^{(\iota;1)} :=& \bold{a}_{n}^2 \brak{t}^{3+2\ss} \|\partial_y^{i_{\iota}} G_n \sqrt{- \partial_{t}W} e^{W/2} \chi_{n-1+i_\iota} \|_{L^2}^2,\\
	\mathcal{CK}_{G,n}^{(\iota;2)} :=	&-  \bold{a}_{n} \dot{\bold{a}}_{n} 
	\brak{t}^{3+2\ss}
	\| \partial_y^{i_{\iota}} G_n  \chi_{n-1+i_\iota} \|_{L^2}^2,\\
	\mathcal{CK}_{H,n}^{(\iota)} :=& - \nu^{i_{\iota}/2}  \bold{a}_{n} \dot{\bold{a}}_{n}
	\| \partial_y^{i_{\iota}}{H}_n e^{W/2} \chi_n \|_{L^2}^2
	+ \nu^{i_{\iota}/2}  \bold{a}_{n}^2   \| \partial_y^{i_{\iota}}{H}_n \sqrt{- W_t} e^{W/2} \chi_n \|_{L^2}^2  .
\end{align}
\end{subequations}
We remind the readers that as in the functionals of $\omega$, we have three kinds of $\mathcal{CK}$ terms coming from the functions $\phi, \lambda,$ and $W$. However, to ease the notation, we do not differentiate them and denote the sum as $\mathcal{CK}$ for each coordinate functions $\overline H, H$, and $G$.

\subsection{Main Theorem: Parabolic Estimates} 
Next, we explain the assumptions on the external velocity field $U_0^x$ (mostly stated as assumptions on the coordinate change $y \mapsto v(t,y)$ and the auxiliary unknowns $G,H,\overline{H}$).
Fix $T> 0$ arbitrary. Then we assume that $\forall \lambda > 0$ and $\forall s > 1$, $\exists \eps > 0$ (necessarily uniform as $\lambda \to 0$) such that the following estimates hold:
\begin{subequations}\label{asmp}
\begin{itemize}
\item Interior coordinate system estimates: 
\begin{align}
\sup_{0 \leq t \leq T} \sum_{n=0}^\infty \lf(B_{0,n}^\text{low}\rg)^2\|\wt\chi_1^{\mf c}\pav^n(v_y^2-1)\|^2_{L^2} \leq  \ep^2,\quad  B ^\mathrm{low}_{m,n}:=\lf(\frac{2^{-(m+n)}}{(m+n)!}\rg)^{4},
\label{v_y_sob}
\end{align}
where $\wt \chi_1$ is a Gevrey-$s$ cutoff satisfying
\begin{align}\label{wt_chi_intro}
 \wt \chi_1(\xi)= \begin{cases}1,\qquad |\xi|\geq \frac{3}{8}-\frac{1}{80},\\ 
0,\qquad |\xi|\leq \frac{3}{8}-\frac{1}{40}, \\ \text{monotone}, \quad \text{others}, \end{cases}\qquad \wt \chi_1^\mathfrak{c}(\xi)=1-\wt \chi_1(\xi).
\end{align}
We highlight that the $\wt \chi_1$-cutoff function is defined in the new coordinate system $v$, so the variable $\xi$ takes values in $v(t,-1)$ and $v(t,1). $
\item Exterior coordinate system estimates: 
\begin{align} \label{boot:H}
\max_{\iota \in \set{H,\overline{H},G}} \sup_{0 \le t \le T} \mathcal{E}^{(\gamma)}_{\iota} +  \int_0^T \mathcal{D}^{(\gamma)}_{\iota}(t) dt \leq \eps^2 ,\\
\max_{\iota \in \set{H,\overline{H},G}} \sup_{0 \le t \le T} \mathcal{E}^{(\alpha)}_{\iota} +  \int_0^T \mathcal{D}^{(\alpha)}_{\iota}(t) dt \leq \eps^2.
\end{align}
\item Sobolev regularity estimates up to the boundary:
\begin{align}
\|\pa_yU_0^x\|_{L_t^\infty L_y^\infty}\leq \frac{1}{16},\quad \|v-y\|_{L_t^\infty L_y^\infty}\leq& \frac{1}{160};\label{close_vy}\\ 
\|v_y^{-1}\|_{L_t^\infty L_y^\infty}+\|v_y-1\|_{L_t^\infty H^{3}_{y}} \lesssim & 1.\label{v_y_asmp}
\end{align}
\end{itemize}
\end{subequations}

The main result of this paper for the parabolic equation \eqref{PS} is the following theorem.  
\begin{theorem} \label{thm:main:para}
Assume $\omega_{\mathrm{in}}$ satisfies \eqref{in:data:1} -- \eqref{in:data:2}.
Fix  $s > 1$  arbitrary, $\frac{1}{100} > \eta > 0$, and assume that the hypotheses \eqref{asmp} hold.
There exists a $\lambda_0>0$ and an $\eps_0>0$ such that if $0 < \lambda \leq \lambda_0$ and $0 \leq \eps \leq \eps_0$, the following energy inequalities are valid for all times $t \leq \nu^{-1/3-\eta}$ and $0 < \nu \leq 1$ (with all implicit constants are uniform in $\nu$ and $t$),
 \ifx
\begin{align} \n
&\frac{d}{d t} \mathcal{E}^{(\gamma)}[\omega]  + \mathcal{D}^{(\gamma)}[\omega] + \mathcal{CK}^{(\gamma; \varphi)}[\omega] + \mathcal{CK}^{(\gamma; W)}[\omega] 
 \\ \label{lights:on:1}
 & \qquad \lesssim     |\mathcal{I}^{(\gamma)}_{\mathrm{Source}}| + \eps \nu^{99}{\mathcal{E}^{(\gamma) }}
 + \eps \nu^{99} \sqrt{\mathcal{E}^{(\gamma) }}
 \paren{{\cd^{(\gamma)}} + \mathcal{CK}^{(\gamma)} + \cd_{H}^{(\gamma)} +\cd_{\overline H}^{(\gamma)}}, 
 \end{align}
 \begin{align}
  \n
 &\frac{d}{d t} \mathcal{E}^{(\alpha)}[\omega]  + \mathcal{D}^{(\alpha)}[\omega] + \mathcal{CK}^{(\alpha; \varphi)}[\omega] + \mathcal{CK}^{(\alpha; W)}[\omega] 
 \\ \label{lights:on:2}
 & \qquad \lesssim  \sum_{\iota \in (\gamma, \alpha, \mu)} |\mathcal{I}^{(\iota)}_{\mathrm{Source}}| + \eps\nu^{98} \sum_{\iota \in (\alpha, \gamma)} \mathcal{E}^{(\iota)} 
 \n \\ &\qquad\quad+ \eps \nu^{99}
\sum_{\iota \in (\alpha, \gamma)}  \sqrt{\mathcal{E}^{(\iota) }}
 \sum_{\iota \in (\alpha, \gamma)} \paren{{\cd^{(\iota)}} + \mathcal{CK}^{(\iota)} + \mathcal{D}^{(\iota)}_{H} +\mathcal{D}^{(\iota)}_{\overline{H}}};
 \end{align}
 \begin{align}
  \n
 &\frac{d}{d t} \mathcal{E}^{(\mu)}[\omega]  + \mathcal{D}^{(\mu)}[\omega] + \mathcal{CK}^{(\mu; \varphi)}[\omega] + \mathcal{CK}^{(\mu; W)}[\omega] 
 \\ \label{lights:on:3}
 & \qquad \lesssim  |\mathcal{I}^{(\mu)}_{\mathrm{Source}}| +\eps \nu^{99}
 {\mathcal{E}^{(\mu) }}  + \eps \nu^{99}
 \sqrt{\mathcal{E}^{(\mu) }}
 \paren{{\cd^{(\mu)}} + \mathcal{CK}^{(\mu)}+ \cd_{H}^{(\gamma)} +\cd_{\overline H}^{(\gamma)}}.
\end{align} \fi
 \begin{align} \n
& \mathcal{E}^{(\gamma)}[\omega(t)]  + \int_0^t\mathcal{D}^{(\gamma)}[\omega] + \mathcal{CK}^{(\gamma; \varphi)}[\omega] + \mathcal{CK}^{(\gamma; W)}[\omega]ds 
 \\ \label{lights:on:1}
 & \qquad \lesssim  \mathcal{E}^{(\gamma)}[\omega(0)]   |+\int_0^t|\mathcal{I}^{(\gamma)}_{\mathrm{Source}}|ds + \eps \nu^{99}\int_0^t{\mathcal{E}^{(\gamma) }}
 ds+ \eps \nu^{99} \int^t_0\sqrt{\mathcal{E}^{(\gamma) }}
 \paren{{\cd^{(\gamma)}} + \mathcal{CK}^{(\gamma)} + \cd_{H}^{(\gamma)} +\cd_{\overline H}^{(\gamma)}}ds, \\
  \n
 &\mathcal{E}^{(\alpha)}[\omega(t)]  + \int_0^t\lf(\mathcal{D}^{(\alpha)}[\omega] + \mathcal{CK}^{(\alpha; \varphi)}[\omega] + \mathcal{CK}^{(\alpha; W)}[\omega] \rg)ds
 \\ \label{lights:on:2}
 & \qquad \lesssim \sum_{\iota\in(\al,\gamma,\mu)}\mathcal{E}^{(\iota)}[\omega(0)]+\int_0^t \sum_{\iota \in (\gamma, \alpha, \mu)} |\mathcal{I}^{(\iota)}_{\mathrm{Source}}|ds + \eps\nu^{98}\int_0^t \sum_{\iota \in (\alpha, \gamma)} \mathcal{E}^{(\iota)} ds
 \n \\ &\qquad\quad+ \eps \nu^{99}\int_0^t
\sum_{\iota \in (\alpha, \gamma)}  \sqrt{\mathcal{E}^{(\iota) }}
 \sum_{\iota' \in (\alpha, \gamma)} \paren{{\cd^{(\iota')}} + \mathcal{CK}^{(\iota')} + \mathcal{D}^{(\iota')}_{H} +\mathcal{D}^{(\iota')}_{\overline{H}}}ds;\\
 \n
 &\mathcal{E}^{(\mu)}[\omega(t)]  + \int_0^t\mathcal{D}^{(\mu)}[\omega] + \mathcal{CK}^{(\mu; \varphi)}[\omega] + \mathcal{CK}^{(\mu; W)}[\omega]ds 
 \\ \label{lights:on:3}
 & \qquad \lesssim \mathcal{E}^{(\mu)}[\omega(0)]+ \int_0^t|\mathcal{I}^{(\mu)}_{\mathrm{Source}}| +\eps \nu^{99}\int_0^t
 {\mathcal{E}^{(\mu) }} ds + \eps \nu^{99}\int_0^t
 \sqrt{\mathcal{E}^{(\mu) }}
 \paren{{\cd^{(\mu)}} + \mathcal{CK}^{(\mu)}+ \cd_{H}^{(\gamma)} +\cd_{\overline H}^{(\gamma)}}ds,
 \end{align}
where the source terms are defined as follows:
\begin{align} \label{def:IS:g}
\mathcal{I}^{(\gamma)}_{\mathrm{Source}} := & \sum_{m = 0}^\infty \sum_{n = 0}^\infty \bold{a}_{m,n}^2 \theta_n^2 \mathrm{Re}\lf \langle q^n \p_x^m \Gamma^n f, \omega_{m,n} \chi_{m + n}^2 e^{2W} \rg\rangle, \\ \label{def:IS:a}
\mathcal{I}^{(\alpha)}_{\mathrm{Source}} := &\sum_{m = 0}^\infty \sum_{n = 0}^\infty \bold{a}_{m,n}^2 \theta_n^2 \nu \mathrm{Re} \lf\langle q^n \p_x^m \p_y \Gamma^n f, \p_y \omega_{m,n} \chi_{m + n}^2 e^{2W}\rg \rangle, \\ \label{def:IS:m}
\mathcal{I}^{(\mu)}_{\mathrm{Source}} := & \sum_{m = 0}^\infty \sum_{n = 0}^\infty \bold{a}_{m,n}^2 \theta_n^2 \nu \mathrm{Re} \lf\langle q^n \p_x^m \p_x \Gamma^n f, \p_x \omega_{m,n} \chi_{m + n}^2 e^{2W} \rg\rangle.
\end{align}
\end{theorem}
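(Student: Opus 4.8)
The plan is to run a family of weighted energy estimates for $\omega$ directly on the vector-field-differentiated equations, and to close them via a bootstrap in the smallness parameter $\eps$. First I would fix $(m,n)$, apply $\p_x^m \Gamma^n$ to \eqref{PS}, and use the approximate commutator identity \eqref{commutator:exp:1} to write the evolution of $\omega_{m,n}$ as a transport-diffusion equation with a right-hand side consisting of: (i) the source term $\p_x^m\Gamma^n f$; (ii) the `good' commutator contribution $G \pav \Gamma \omega$ iterated $n$ times, which produces a sum of terms of the schematic form $(\p_y^j G_\cdot)(\text{lower-order }\Gamma\text{-derivatives of }\omega)$; and (iii) the viscous commutator $\nu \pav(v_y^2-1)\pav^2\omega$, controlled using \eqref{v_y_sob} and the $H^3_y$ hypothesis on $v_y-1$ in \eqref{v_y_asmp}. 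I would then multiply by the conjugated test function $q^{2n}\omega_{m,n}\chi_{m+n}^2 e^{2W}$ (for the $\gamma$ estimate; by $-\p_y(\ldots)$ scaled by $\nu$ for the $\alpha$ estimate and $-\p_x(\ldots)$ scaled by $\nu$ for $\mu$), integrate in $x,y$, and sum against the Gevrey-conormal weights $\bold{a}_{m,n}^2\theta_n^2$.

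The second step is to identify and sign the structural terms. The transport term $(y+U_0^x)\p_x$ is skew after integration by parts, modulo a commutator with $q^n e^{2W}\chi_{m+n}^2$; the $e^{2W}$ commutator is exactly what generates the favorable $\mathcal{CK}^{(\cdot;W)}$ term (using $\p_t W \le 0$ away from the support and the precise form \eqref{defndW}), while the $q^n$ and $\chi_{m+n}$ commutators must be absorbed. The viscous term $\nu\Delta$ produces the dissipation $\mathcal{D}^{(\cdot)}$ plus the localization-error terms supported where $\nabla\chi_{m+n}\neq 0$, which by property \eqref{chi:prop:3} and \eqref{chi:prop:2} land on the region $\{\chi_{m+n-1}=1\}$ and cost a factor $(m+n)^{1+\sigma}$ per derivative — these are reabsorbed because the Gevrey weight $B_{m,n}$ with $s>1$ beats any polynomial-in-$(m+n)$ loss (this is where $\sigma_\ast \ge 10\sigma$ enters). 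The time derivatives on $\lambda(t)$, $\varphi(t)^{1+n}$, and $e^{2W}$ produce the three $\mathcal{CK}$ families with the correct signs by construction (all three are decreasing in $t$), and these negative-definite terms are the budget against which the dangerous commutators are paid.

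The third step, and the main obstacle, is controlling the commutator terms coming from the interaction of the three incompatible ingredients: the conormal weight $q^n$, the sliding cutoffs $\chi_{m+n}$, and the adapted vector field $\Gamma = v_y^{-1}\p_y + t\p_x$. Commuting $\Gamma$ past $q^n$ generates the factor $\sim n/q$ — the $S^{(a,b,c)}$ ICC-operators of \eqref{SJ_Heu} — and commuting $\Gamma$ past itself $n$ times brings in the coordinate functions $G,H,\overline H$ with up to $n$ derivatives, multiplied by $t$ (from the $t\p_x$ part) and by $1/q$ (from repeated conormal commutation); the combination $\frac{(m+n)}{q} t$ is precisely the dangerous object. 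The resolution is the ICC iteration: one peels off one conormal commutator at a time, redistributing one power of $B_{m,n}/B_{m,n-1}\sim \lambda/(m+n)$ against each $\frac{m+n}{q}$, using the anisotropic $1/t$-decay of $\varphi(t)^{1+n}$ to absorb each power of $t$, and trading the $G,H,\overline H$ factors into the coordinate-system functionals $\mathcal{E}^{(\iota)}_\iota, \mathcal{D}^{(\iota)}_\iota$ via the assumed bounds \eqref{boot:H}. The product-rule distribution of $(m+n)$ derivatives between $\omega$ and the coordinate functions must be done so that whenever the coordinate function carries a large fraction of the derivatives, $\omega$ is in low norm (controlled by $\sqrt{\mathcal{E}}$) and the coordinate function is in its dissipation norm (hence the $\sqrt{\mathcal{E}}(\mathcal{D}+\mathcal{CK}+\mathcal{D}_H+\mathcal{D}_{\overline H})$ structure on the right-hand side), and vice versa; the $\nu^{99}$ powers appear because the worst interactions are localized where $e^W$ is not exponentially small only in a $\sqrt{\nu}$-thin layer, or involve the viscous commutator carrying explicit powers of $\nu$.

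Finally I would integrate in time from $0$ to $t\le \nu^{-1/3-\eta}$, note the initial data contributes only $\|\omega_{\rm in}\|_{L^2}^2$ since $\omega_{\rm in}$ is supported in $(-\tfrac14,\tfrac14)$ where all $\chi_n$ vanish for $n\ge1$ (so only the $n=0$, $m=0$ term survives at $t=0$, up to the exterior cutoff — actually it vanishes entirely since $\chi_0$ is supported in $\{|y|\ge 3/8\}$), and close the three coupled inequalities \eqref{lights:on:1}--\eqref{lights:on:3} by a standard continuity/bootstrap argument in $\eps$: choosing $\lambda_0,\eps_0$ small forces the quadratic error terms $\eps\nu^{99}\sqrt{\mathcal{E}}(\mathcal{D}+\cdots)$ to be absorbed by the left-hand side dissipation and $\mathcal{CK}$ terms plus the a priori coordinate bounds, on the whole time interval $t\le\nu^{-1/3-\eta}$. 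The restriction to this time-scale enters only through Lemma \ref{lem:ExtToInt} (the $\varphi$-decay must not cost too many Gevrey derivatives) and through the requirement that $\nu t^2 \lesssim$ the Gevrey gain, i.e. that $W$ and the $\sqrt\nu\nabla$ terms still dominate the linear-in-$t$ growth of flat derivatives.
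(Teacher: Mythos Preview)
Your overall architecture is correct and matches the paper's: derive the vector-field-differentiated equation for $\mathring\omega_{m,n;k}=|k|^m q^n\Gamma_k^n\omega_k$, run the weighted energy estimate, identify the $\mathcal{CK}$ terms from $\dot\varphi,\dot\lambda,\dot W$, and bound the three commutator families ($q$-, transport-, viscous-) using the ICC machinery and the coordinate-system hypotheses. The paper organizes this as two black-box propositions (Proposition~\ref{pro:light:on} for the raw differential inequality, Proposition~\ref{pro:rhs:intro} for the commutator bounds) and then assembles Theorem~\ref{thm:main:para} from them in a few lines, whereas you interleave the two; this is only a stylistic difference.

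There are, however, two substantive points where your proposal diverges from what the theorem actually claims and from what the paper does.

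\textbf{(i) No bootstrap is run here.} Your final paragraph says you will ``close the three coupled inequalities \eqref{lights:on:1}--\eqref{lights:on:3} by a standard continuity/bootstrap argument'' by absorbing the $\eps\nu^{99}\sqrt{\mathcal{E}}(\mathcal{D}+\cdots)$ terms into the left-hand side. But absorbing those terms would require an a priori bound on $\sqrt{\mathcal{E}}$, which is exactly what one does not have. The theorem is \emph{not} a closed estimate: the quadratic-in-unknowns terms $\eps\nu^{99}\sqrt{\mathcal{E}^{(\iota)}}(\mathcal{D}^{(\iota)}+\mathcal{CK}^{(\iota)}+\mathcal{D}_H+\mathcal{D}_{\overline H})$ are deliberately left on the right-hand side. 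What \emph{is} absorbed is only the $q$-commutator $|\mathcal{I}^{(\iota)}_q|$, which has the clean form $c_\ast\mathcal{D}^{(\iota)}$ with $c_\ast C_\#\ll1$ (this is the content of \eqref{Im:qu:a}, \eqref{Im:qu:d}, \eqref{Im:qu:g} and the choice $c_\ast\le\tfrac{1}{256C_\#}$). The bootstrap closure is deferred to the companion paper.

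\textbf{(ii) The $\alpha$-level is genuinely coupled to $\gamma,\mu$.} You treat the three estimates symmetrically, but the $\alpha$-estimate cannot be obtained in isolation. Two mechanisms force the coupling: first, the $q$-commutator bound at $\alpha$-level \eqref{Im:qu:d} reads $|\mathcal{I}^{(\alpha)}_q|\le c_\ast(\mathcal{D}^{(\alpha)}+\mathcal{D}^{(\mu)}+\mathcal{D}^{(\gamma)}+\mathcal{CK}^{(\alpha,W)})$, so one needs $\gamma,\mu$ dissipation available to absorb it; second, the raw $\alpha$-energy identity \eqref{alpha:est:le} produces a cross-term $2c_\alpha\|1+\p_yU_0^x\|_\infty\,\mathcal{D}^{(\gamma)}$ on the right (from the $-ik(1+\p_yU_0^x)\mathring\omega$ term generated by $\p_y$ hitting the shear), and a nontrivial boundary contribution at $n=1$ (Lemma~\ref{lemma:BC}). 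The paper therefore works with the \emph{combined} functional $\sum_{\iota\in\{\gamma,\mu\}}\mathcal{E}^{(\iota)}+c_\alpha\mathcal{E}^{(\alpha)}$ (inequality \eqref{outout2}) with $c_\alpha\in[\tfrac{1}{16},\tfrac12]$ chosen so the cross-terms are absorbed, and only afterward extracts the standalone $\alpha$-inequality \eqref{lights:on:2}; this is why the right-hand side of \eqref{lights:on:2} carries $\sum_{\iota\in(\alpha,\gamma,\mu)}\mathcal{E}^{(\iota)}[\omega(0)]$ and all three source terms. Your sketch does not flag this, and a naive attempt to close $\alpha$ alone would fail.

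Two minor corrections: $\chi_0\equiv1$ (not supported in $\{|y|\ge3/8\}$), so the $(m,n)=(0,0)$ term does contribute $\mathcal{E}^{(\gamma)}[\omega(0)]$; and the time-scale restriction $t\le\nu^{-1/3-\eta}$ enters through the pointwise bound $\nu^{-100}\lesssim e^{W/2}$ on $\mathrm{supp}\,\chi_1$ (equation \eqref{nu:to:W}), which is what supplies the $\nu^{99}$ factors in the transport and viscous commutator estimates --- not through Lemma~\ref{lem:ExtToInt}, which belongs to the elliptic section.
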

A few remarks are in order. 
\begin{remark}
The assumption that $\lambda$ is chosen small does not really lose generality as $s$ is arbitrary, however, it does simplify some technicalities in the proof.  
\end{remark}

\begin{remark}
With some additional technical work, we could likely extend the result to all $\eta < 2/3$. After the time $t \gtrsim \nu^{-1}$, the qualitative nature of the estimates should be changed significantly, as $W$ no longer controls any localization. Of course, after this time, the solution decays exponentially; if one wants to obtain sharper decay estimates, one should employ hypocoercivity-based methods that can capture the enhanced dissipation. 
\end{remark}

\begin{remark}
Formally at least, we expect that we could take $\varphi$ also $y$-dependent, namely fixing it $\equiv 1$ for (say) $\abs{y} < \frac{24}{25}$ and decaying like $t^{-1}$ only for $y$ close to the boundary. This is because the decaying $\varphi$ is only used for dealing with commutators that include $q'$. However, as this technical enhancement is not required for our companion work \cite{BHIW24a}, we did not pursue this more precise estimate. 
\end{remark}

Theorem \ref{thm:main:para} follows in two steps, which corresponds to Proposition \ref{pro:light:on} and Proposition \ref{pro:rhs:intro} below. First, we take the Fourier transform in $x$ of~\eqref{PS} to obtain
\begin{align}
\label{M1a}
&\pa_t \omega_k + (y + U^x_0(t, y))ik\omega_k  =\nu \de_k\omega_k+f_k,\\ \n
&\omega_k(t,y=\pm 1)=0,\qquad \
 \omega_k(t=0,y)=\omega_{\text{in};k}(y).
\end{align}
Here we used the notation $\de_k:=- |k|^2+\pa_y^2$. In order to derive higher regularity bounds, we derive the equation for the quantity $\mathring \omega_{m,n;k}$:
\begin{align}\label{f_mn_ring}
\mathring \omega_{m,n;k}:=|k|^m q^n\Gamma_k^n \omega_k.  
\end{align}
Similarly, we will use the notation $\mathring f_{m,n;k}$ for general functions. Here, we highlight that in this notation, we have the co-normal weight $q^n$ and we have $|k|^m$ as opposed to $(ik)^m$ in \eqref{f_mn}. 
The resulting equation reads as follows, 
\begin{align}
\label{om_mnk_intro}\pa_t \mathring{\omega}_{m,n;k} &+ ik(y + U^x_0(t, y))\mathring{\omega}_{m,n;k} - \nu \Delta_k \mathring{\omega}_{m,n;k} =\bold F_k^{(m,n)}+\bold C_{\mathrm{trans},k}^{(m,n)}+\bold C_{\mathrm{visc},k}^{(m,n)}+ \mathbf{C}^{(m,n)}_{q,k},
\end{align}
where the detailed expressions of the right hand side terms are in \eqref{T_Ctvq} in Lemma \ref{lem:eq:comp:a}. The terms $\mathbf{C}^{(m,n)}_{\cdots}$ encode all the commutators arising from various sources.
\begin{proposition}[Section \ref{sec:ext:lin}] \label{pro:light:on} Assume that the hypotheses \eqref{asmp} hold. Let $\iota \in \{\gamma, \alpha, \mu \}$. Then, if $\lambda$ \eqref{Gev:la} is small enough compared to universal constants, there exists universal constant $C_\#$ such that 
\begin{align}   \label{outout1}
&\frac{d}{d t} \mathcal{E}^{(\iota)}[\omega]  + \mathcal{D}^{(\iota)}[\omega] + \sum_{\mf D\in(\varphi,\lambda, W)}\mathcal{CK}^{(\iota; \mf D)}[\omega]   
 \leq C_\#\lf(   |\mathcal{I}^{(\iota)}_{q}| + |\mathcal{I}^{(\iota)}_{\mathrm{trans}} |+  |\mathcal{I}^{(\iota)}_{\mathrm{visc}}| +  |\mathcal{I}^{(\iota)}_{\mathrm{Source}}|\rg),\quad \iota\in\{\gamma,\mu\};\\ 
 &\frac{d}{d t} \lf(\sum_{\iota\in(\gamma,\mu)}\mathcal{E}^{(\iota)}[\omega]+c_\al\mathcal{E}^{(\al)}[\omega] \rg)+\frac{1}{2}\sum_{\iota\in(\gamma,\mu)}\lf(\mathcal{D}^{(\iota)}[\omega] + \sum_{\mathfrak{D}\in(\varphi,\lambda,W)}\mathcal{CK}^{(\iota; \mf D)}[\omega] \rg) \n \\
 &\qquad+ c_\al\lf(\mathcal{D}^{(\al)}[\omega] + \sum_{\mathfrak D\in(\varphi,\lambda, W)}\mathcal{CK}^{(\al; \mf D)}[\omega] \rg)\leq C_\# \sum_{\iota\in(\al,\gamma,\mu)}\lf(|\mathcal{I}^{(\iota)}_{q}| + |\mathcal{I}^{(\iota)}_{\mathrm{trans}} |+  |\mathcal{I}^{(\iota)}_{\mathrm{visc}}| +  |\mathcal{I}^{(\iota)}_{\mathrm{Source}}|\rg).\label{outout2}
\end{align}
Here $c_\al\in[\frac{1}{16}, \frac{1}{2}]$ is a small universal constant. Moreover, the error terms appearing above are defined precisely in \eqref{def:Inn:g} -- \eqref{def:Inn:u}, and can be described as follows:
\begin{subequations}
\begin{align} \n
\mathcal{I}^{(\iota)}_q := & \text{\normalfont commutators of the form $[q, \nu \Delta]$}, \\ \n
\mathcal{I}^{(\iota)}_{ \mathrm{trans}} := & \text{\normalfont commutators of the form $[\Gamma, \p_t + (y + U_0) \p_x]$}, \\ \n
\mathcal{I}^{(\iota)}_{\mathrm{visc}} := & \text{\normalfont commutators of the form $[\Gamma, \nu \Delta]$},
\end{align}
\end{subequations}
and $I_{\mathrm{Source}}^{(\iota)}$ has been defined above in \eqref{def:IS:g} -- \eqref{def:IS:m}. 
\end{proposition}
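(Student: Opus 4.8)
\textbf{Proof plan for Proposition \ref{pro:light:on}.}

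The plan is to perform weighted energy estimates directly on the hierarchy of equations \eqref{om_mnk_intro} for $\mathring\omega_{m,n;k}$, summing the resulting identities against the Gevrey weights $\theta_n^2 \bold{a}_{m,n}^2$ with the cutoffs $\chi_{m+n}^2 e^{2W}$ built in. For the $\gamma$-level, I would multiply \eqref{om_mnk_intro} by $\overline{\mathring\omega_{m,n;k}}\,\chi_{m+n}^2 e^{2W}$, integrate over $y\in[-1,1]$, take real parts, and sum in $(m,n,k)$. The transport term $ik(y+U_0^x)\mathring\omega_{m,n;k}$ is purely imaginary after pairing, so it contributes nothing to the real part (the Dirichlet boundary conditions kill any boundary terms from integration by parts since $\mathring\omega_{m,n;k}$ vanishes at $y=\pm1$ once $q^n$ for $n\ge 1$ is present, and for $n=0$ the original boundary condition applies). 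The viscous term $-\nu\Delta_k\mathring\omega_{m,n;k}$, after integration by parts, produces the dissipation $\mathcal{D}^{(\gamma)}[\omega]$ up to two types of error: the derivative landing on $\chi_{m+n}^2 e^{2W}$. The term where $\p_t$ (or $\p_y$, in the viscous case) hits $e^{2W}$ is where the good $\mathcal{CK}^{(\gamma;W)}$ term comes from --- recall $\p_t W \le 0$, so $-\p_t W \ge 0$ produces a favorable sign; the cross terms involving $\nabla W$ are absorbed into the dissipation by Young's inequality since $\sqrt\nu\nabla W$ is controlled by $\sqrt{-\p_t W}$ pointwise (this is a property of the specific form \eqref{defndW}). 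The terms where $\p_t$ hits $\varphi^{1+n}$ and $\lambda^{m+n}$ inside $\bold{a}_{m,n}^2$ produce $\mathcal{CK}^{(\gamma;\varphi)}$ and $\mathcal{CK}^{(\gamma;\lambda)}$ respectively, again with favorable sign since $\dot\varphi,\dot\lambda<0$. The remaining right-hand side contributions are exactly the commutator pairings $\mathcal{I}^{(\gamma)}_q$, $\mathcal{I}^{(\gamma)}_{\mathrm{trans}}$, $\mathcal{I}^{(\gamma)}_{\mathrm{visc}}$ (obtained by pairing $\bold F_k^{(m,n)}$, $\bold C_{\mathrm{trans},k}^{(m,n)}$, $\bold C_{\mathrm{visc},k}^{(m,n)}$, $\mathbf C^{(m,n)}_{q,k}$ against the weighted $\mathring\omega$) plus $\mathcal{I}^{(\gamma)}_{\mathrm{Source}}$ from pairing $f_k$.

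For the $\mu$-level I would repeat this with the multiplier $\nu\,\p_x$ applied: that is, differentiate \eqref{om_mnk_intro} further, pair $\nu\p_x(\text{equation})$ against $\overline{\p_x(q^n\omega_{m,n;k})}\chi_{m+n}^2e^{2W}$ (equivalently work with $\sqrt\nu\p_x$ of the weighted quantity). The structure is identical --- transport is still skew, viscosity still yields dissipation $\mathcal{D}^{(\mu)}$ and $\mathcal{CK}^{(\mu;\cdot)}$ up to absorbable cross terms, and the RHS is the $\mu$-version of the four commutator functionals. This gives the first inequality \eqref{outout1} for $\iota\in\{\gamma,\mu\}$ directly. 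The $\alpha$-level is more delicate because the multiplier $\nu\p_y$ does not commute cleanly with the transport operator $ik(y+U_0^x)$: the commutator $[\p_y, ik(y+U_0^x)] = ik(1+\p_yU_0^x)$, and the piece $ik\p_y(q^n\omega_{m,n;k})\cdot ik\cdots$ is \emph{not} automatically sign-definite after pairing. This is the standard phenomenon that the $\dot H^1_y$-norm grows linearly under transport; here it is handled by the $\sqrt\nu$ weighting --- the growth term carries a factor $\nu$, and $\nu|k|\,\|\sqrt\nu\p_y(\cdots)\|\,\|(\cdots)\|$ can be dominated by a small multiple of $\mathcal{D}^{(\gamma)} + \mathcal{D}^{(\alpha)}$ (using $\nu\Delta_k$ and the bound $t\le\nu^{-1/3-\eta}$ to control the time-growth) provided one also has the $\mathcal{E}^{(\gamma)}$-estimate in hand. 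This is precisely why \eqref{outout2} couples all three levels: one takes a linear combination $\sum_{\iota\in(\gamma,\mu)}\mathcal{E}^{(\iota)} + c_\alpha\mathcal{E}^{(\alpha)}$ with $c_\alpha$ small enough that the bad $\alpha$-to-$\gamma$ cross terms are absorbed by half of the $\gamma,\mu$ dissipations, and the bad $\alpha$-self terms by a fraction of $\mathcal{D}^{(\alpha)}$.

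The main obstacle is the treatment of the commutator functionals $\mathcal{I}^{(\iota)}_q$, $\mathcal{I}^{(\iota)}_{\mathrm{trans}}$, $\mathcal{I}^{(\iota)}_{\mathrm{visc}}$ themselves --- but, crucially, in this Proposition they are \emph{not} estimated; they are carried as named quantities on the right-hand side, and the entire content of \eqref{outout1}--\eqref{outout2} is the bookkeeping that (i) the transport term is skew and drops out, (ii) the viscous term yields exactly $\mathcal{D}^{(\iota)}$ plus the three $\mathcal{CK}^{(\iota;\mathfrak D)}$ with the right signs, and (iii) everything else is organized into the four source/commutator functionals with the stated universal constant $C_\#$. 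So within the scope of this Proposition the real work is: carefully accounting for which derivative-hits-which-weight combination lands in $\mathcal{D}$, which lands in $\mathcal{CK}$, and which must be Young-absorbed, verifying the pointwise inequality $\nu|\nabla W|^2 \lesssim -\p_t W$ from \eqref{defndW} so the $W$-cross terms close, and choosing $c_\alpha\in[\tfrac1{16},\tfrac12]$ and using $t\le\nu^{-1/3-\eta}$ to absorb the non-skew part of the $\alpha$-level transport commutator. The genuinely hard estimates --- bounding $\mathcal{I}^{(\iota)}_q$, $\mathcal{I}^{(\iota)}_{\mathrm{trans}}$, $\mathcal{I}^{(\iota)}_{\mathrm{visc}}$ in terms of $\eps\nu^{99}\sqrt{\mathcal E}(\mathcal D + \mathcal{CK} + \cdots)$, which is where the ICC method and the pseudo-Gevrey structure enter --- are deferred to Proposition \ref{pro:rhs:intro}.
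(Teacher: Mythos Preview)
Your approach is essentially the paper's: pair \eqref{om_mnk_intro} against the appropriate weighted multipliers, note the transport term is skew, extract $\mathcal{D}^{(\iota)}$ and the three $\mathcal{CK}^{(\iota;\mathfrak{D})}$ from the viscous term and the time derivative of the weights, and collect everything else into the four named error functionals. Two points need correction, though neither breaks the overall scheme.

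First, the absorption of the $\alpha$-level transport commutator $-ik(1+\partial_y U_0^x)\mathring\omega_{m,n;k}$ does \emph{not} use the time restriction $t\le\nu^{-1/3-\eta}$. After the $\sqrt\nu$ scaling, this term is simply bounded by $c_\alpha\|1+\partial_yU_0^x\|_{L^\infty}\,\mathcal{D}^{(\gamma)}_{m,n;k}$ (pairing $\sqrt\nu|k|\mathring\omega$ with $\sqrt\nu\partial_y\mathring\omega$, both of which live in $\mathcal{D}^{(\gamma)}$). Since $\|1+\partial_yU_0^x\|_{L^\infty}\le 1+\tfrac1{16}$ by \eqref{close_vy}, choosing $c_\alpha=\tfrac18$ makes this absorb into the $\gamma$-dissipation; no time-scale argument is needed.

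Second, you have missed a genuine boundary contribution at the $\alpha$-level. When you integrate $\nu^2\langle\partial_y\mathring\omega_{m,n;k},\partial_y^3\mathring\omega_{m,n;k}\,e^{2W}\chi_{m+n}^2\rangle$ by parts, the boundary term $\mathrm{Re}\,\partial_y\mathring\omega_{m,n;k}\overline{\partial_y^2\mathring\omega_{m,n;k}}\big|_{y=\pm1}$ appears. This is \emph{not} automatically zero from Dirichlet data on $\mathring\omega_{m,n;k}$. The paper handles this in Lemma~\ref{lemma:BC}: it vanishes for $n\ge2$ (two powers of $q$ survive after one derivative) and for $n=0$ (since $\partial_y^2\omega_k|_{y=\pm1}=0$ follows from the equation and the boundary condition), but for $n=1$ it equals $-2v_{yy}v_y^{-3}|q'|^2|\partial_y\mathring\omega_{m,0;k}|^2|_{y=\pm1}$, which is nonzero and must be traced back to $\mathcal{D}^{(\alpha)}_{m,0;k}+\mathcal{CK}^{(\alpha;W)}_{m,0;k}$ via a trace inequality, gaining a factor $\lambda^{2s}$ that lets it be absorbed.

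Finally, you identify but do not close the loop on the error from $\partial_y$ hitting $\chi_{m+n}^2$: in the paper this is bounded via the $S^{(1,0,0)}_{m,n}$ estimate \eqref{S:est:1} by $\frac{\lambda^{2s}}{(m+n)^{2\sigma_*}}\mathcal{D}^{(\gamma)}_{m,n-1;k}$, and it is the smallness of $\lambda$ (not $c_\alpha$) that absorbs the sum of these lower-order dissipations into the left-hand side. Both absorption mechanisms --- $c_\alpha$ small for the $\alpha$-transport cross term and $\lambda$ small for the cutoff-commutator cascade --- are used.
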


\vspace{2 mm}

The next proposition provides estimates on the first three error terms appearing in the above estimate. We have 
\begin{proposition}[Section \ref{sec:ext:comm}] \label{pro:rhs:intro} Assume that the hypotheses \eqref{asmp} hold. There exists a constant $c_\ast\leq \frac{1}{256C_\#}$ (with $C_\#$ being defined in \eqref{outout1},\eqref{outout2}) such that following estimates hold for the $\gamma$-level:\begin{subequations}\label{Im:qu:a-c}
\begin{align} \label{Im:qu:a}
|\mathcal{I}^{(\gamma)}_q| \leq &c_\ast  \mathcal{D}^{(\gamma)}[\omega],  \\  \label{Im:qu:b}
|\mathcal{I}^{(\gamma)}_{\mathrm{trans}}| \lesssim & \eps \nu^{99}
\sqrt{\mathcal{E}^{(\gamma) }}
\paren{\sqrt{\mathcal{E}^{(\gamma) }} + \sqrt{\cd^{(\gamma)}} + \sqrt{\mathcal{CK}^{(\gamma)}}}\paren{1 + \sqrt{\cd^{(\gamma)}_{ H}} + \sqrt{\cd^{(\gamma)}_{ \overline  H}}},  \\  \label{Im:qu:c}
|\mathcal{I}^{(\gamma)}_{\mathrm{visc}}| \lesssim & \eps  \nu^{100}
\sqrt{\mathcal{E}^{(\gamma)}} \paren{		\sqrt{\cd^{(\gamma)}} + \sqrt{\mathcal{CK}^{(\gamma)}}}  + \eps  \nu^{100}\mathcal{D}^{(\gamma)}.
\end{align}
\end{subequations}
The following estimates are valid for the $\alpha$-level:\begin{subequations}\label{Im:qu:d-f}
\begin{align}  \label{Im:qu:d}
|\mathcal{I}^{(\alpha)}_q| \le & c_\ast  ( \mathcal{D}^{(\alpha)}[\omega] + \mathcal{D}^{(\mu)}[\omega] + \mathcal{D}^{(\gamma)}[\omega]+  \mathcal{CK}^{(\al,W)}),  \\  \label{Im:qu:e}
|\mathcal{I}^{(\alpha)}_{\mathrm{trans}}| \lesssim &\eps  \nu^{98}
\sqrt{\mathcal{E}^{(\alpha) }} \sum_{\iota\in \{\alpha, \gamma \}} 
\paren{\sqrt{\mathcal{E}^{(\iota) }} + \sqrt{\cd^{(\iota)}} + \sqrt{\mathcal{CK}^{(\iota)}}} \sum_{\iota\in \{\alpha, \gamma \}} \paren{1 + \sqrt{\cd^{(\iota)}_{ H}} + \sqrt{\cd^{(\iota)}_{ \overline  H}}}, \\  \label{Im:qu:f}
|\mathcal{I}^{(\alpha)}_{\mathrm{visc}}| \lesssim & 
\eps \nu^{99}
\sqrt{\mathcal{E}^{(\alpha)}} \paren{\sqrt{\mathcal{E}^{(\alpha)}} + \sqrt{\mathcal{D}^{(\alpha)}}+\sqrt{\mathcal{D}^{(\gamma)}}
	+\sqrt{\mathcal{D}^{(\mu)}}+ \sqrt{\mathcal{CK}^{(\alpha)}}}	\paren{\sqrt{\cd_{H}^{(\gamma)}}+\sqrt{\cd_{H}^{(\alpha)}}}.
\end{align}
\end{subequations}
The following estimates are valid for the $\mu$-level:\begin{subequations}\label{Im:qu:g-i}
\begin{align}  \label{Im:qu:g}
|\mathcal{I}^{(\mu)}_q| \le & c_\ast  \mathcal{D}^{(\mu)}[\omega],  \\  \label{Im:qu:h}
|\mathcal{I}^{(\mu)}_{\mathrm{trans}}| \lesssim &\eps
\nu^{99}
\sqrt{\mathcal{E}^{(\mu) }}
\paren{\sqrt{\mathcal{E}^{(\mu) }} + \sqrt{\cd^{(\mu)}} + \sqrt{\mathcal{CK}^{(\mu)}}}\paren{1 + \sqrt{\cd^{(\mu)}_{ H}} + \sqrt{\cd^{(\mu)}_{ \overline  H}}}, \\  \label{Im:qu:i}
|\mathcal{I}^{(\mu)}_{\mathrm{visc}}| \lesssim & \eps  \nu^{100}
\sqrt{\mathcal{E}^{(\mu)}} \paren{		\sqrt{\cd^{(\mu)}} + \sqrt{\mathcal{CK}^{(\mu)}}}  + \eps  \nu^{100}\mathcal{D}^{(\mu)}.
\end{align}
\end{subequations}
\end{proposition}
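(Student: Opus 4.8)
\noindent\emph{Proof strategy.} The plan is to reduce each of the three commutator families to a finite sum of products of a \emph{coordinate-system coefficient} with an \emph{ICC operator} $S^{(a,b,c)}$ (in the sense of \eqref{SJ_Heu}) applied to $\mathring{\omega}$ at a \emph{lower} differentiation index, and then to estimate each such product by placing the coefficient into the coordinate functionals \eqref{E_coord}--\eqref{CK_coord} (all $\lesssim\eps^2$ under \eqref{boot:H}) and absorbing the $\mathring{\omega}$-factor into $\mathcal{D}^{(\iota)},\mathcal{CK}^{(\iota)},\mathcal{E}^{(\iota)}$. Concretely: first I would iterate the commutator identity \eqref{commutator:exp:1} in $\Gamma$ and commute through the weight $q^n$ and the factor $|k|^m$ to obtain the explicit forms of $\bold C^{(m,n)}_{\mathrm{trans},k},\bold C^{(m,n)}_{\mathrm{visc},k},\mathbf C^{(m,n)}_{q,k}$ (as recorded in Lemma~\ref{lem:eq:comp:a}/\eqref{T_Ctvq}). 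The structural facts that drive everything are: (i) each transport commutator carries exactly one copy of $G$ or $\overline H$, each containing a $t^{-1}$ by \eqref{defn:g:cap}, \eqref{defn:barh:cap}; (ii) each viscous commutator carries an explicit $\nu$ together with a factor $v_y^2-1=H(H+2)$; (iii) $\mathbf C^{(m,n)}_{q,k}$ carries an explicit $\nu$ and only the coefficients $n\tfrac{q'}{q}\partial_y$, $(n^2+n)\tfrac{(q')^2}{q^2}$, $n\tfrac{q''}{q}$ acting on $\mathring{\omega}_{m,n;k}$, supported where $q'\neq0$, i.e.\ near $y=\pm1$.

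For the $\mathcal{I}_q$ bounds \eqref{Im:qu:a}, \eqref{Im:qu:d}, \eqref{Im:qu:g} --- which carry no $\eps$ and no spare $\nu$-power, so must be absorbed \emph{directly} into the dissipation with a small universal constant --- the mechanism is: integrate by parts the $\tfrac{q'}{q}\partial_y$-term against $\mathring{\omega}_{m,n;k}e^{2W}\chi_{m+n}^2$ (which yields a favorably-signed $\tfrac{q'}{q}W_y$-term near both walls and lower-order pieces, while the $\sqrt{-\partial_tW}$-terms produced when $\partial_y$ lands on $e^{2W}$ are controlled by $\mathcal{CK}^{(\gamma;W)}$ using $\nu|W_y|^2\lesssim K^{-1}(-\partial_tW)$), and handle the remaining $\big(\tfrac{n}{q}\big)^{j}$-type expressions \emph{not} by a same-level Hardy inequality but by the identity $\tfrac1q\mathring{\omega}_{m,n;k}=v_y^{-1}\partial_y\mathring{\omega}_{m,n-1;k}-(n-1)v_y^{-1}\tfrac{q'}{q}\mathring{\omega}_{m,n-1;k}+ikt\,\mathring{\omega}_{m,n-1;k}$, which \emph{lowers} the index $n\mapsto n-1$. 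Iterating this identity trades each $\tfrac1q$ for a genuine $\partial_y$ at the cost of a polynomial factor in $(m+n)$, defeated by the near-analytic Gevrey gain $\bold{a}_{m,n}^2/\bold{a}_{m,n-1}^2\sim(\lambda/(m+n))^{2s}$ ($s>1$); the $ikt$-terms are controlled through $\varphi$ via \eqref{ineq:varphi} and the drop-weights $\theta_n$ close the low-index base cases. The upshot is a bound by $c_\ast\mathcal{D}^{(\gamma)}$ with $c_\ast\lesssim\lambda^{2s}$. At the $\alpha$-level one applies $\sqrt\nu\,\partial_y$ throughout: a $\partial_y$ landing on $e^{2W}$ produces $\sqrt\nu\,W_y\lesssim\sqrt{-\partial_tW}$, hence $\mathcal{CK}^{(\alpha;W)}$; a $\partial_y$ on the $q$-coefficients gives one-higher-order terms reducible to $\mathcal{D}^{(\gamma)}$; and the $\partial_x\partial_y$ cross term gives $\mathcal{D}^{(\mu)}$ --- which is precisely why \eqref{Im:qu:d} invokes all three dissipations plus $\mathcal{CK}^{(\alpha;W)}$. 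At the $\mu$-level $\partial_x$ commutes with $q(y),\chi_{m+n}(y),e^{W(y)}$, so only $\mathcal{D}^{(\mu)}$ appears.

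For the transport and viscous commutators \eqref{Im:qu:b}--\eqref{Im:qu:c}, \eqref{Im:qu:e}--\eqref{Im:qu:f}, \eqref{Im:qu:h}--\eqref{Im:qu:i}, a generic summand is $\binom{n}{j}(\text{coeff})_j\cdot S^{(a,b,c)}\mathring{\omega}_{m,n-j;k}$ with $a+b+c\le2$ (resp.\ $\le3$). I would split by the size of $j$: for small $j$ the coefficient goes into $L^\infty_y$ directly from the Sobolev hypotheses \eqref{close_vy}, \eqref{v_y_asmp}; for large $j$ a Gevrey product estimate --- convolution of the factorial weights $B_{m,n}$, with the binomial absorbed since $s>1$ --- distributes $\bold{a}_{m,n}$ across the two factors, so the coefficient contributes $\sqrt{\mathcal{E}_\iota},\sqrt{\mathcal{D}_\iota}$ or $\sqrt{\mathcal{CK}_\iota}\lesssim\eps$ while the $\mathring{\omega}$-factor contributes $\sqrt{\mathcal{E}^{(\iota)}},\sqrt{\mathcal{D}^{(\iota)}}$ or $\sqrt{\mathcal{CK}^{(\iota)}}$ after the ICC absorption of $S^{(a,b,c)}$. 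Summands with no derivative on the coefficient are first integrated by parts to move a derivative onto it (turning $G$ into $\overline H$, and $v_y^2-1$ into $H\partial_yH$), which in the viscous case leaves $\sqrt\nu\,\pav\mathring{\omega}\in\sqrt{\mathcal{D}^{(\iota)}}$ and produces the extra $\eps\nu^{100}\mathcal{D}^{(\iota)}$ terms in \eqref{Im:qu:c}, \eqref{Im:qu:i}. The large powers of $\nu$ are \emph{not} arbitrary: the $\mathring{\omega}$-functionals carry the full weight $e^W$ whereas the coordinate functionals carry only $e^{W/2}$, so every such pairing leaves a spare $e^{-W/2}$; since $\{\chi_n\neq0\}\subset\{|y|\ge\tfrac38\}$ one has $W\ge c\nu^{-1}\langle t\rangle^{-1}\gtrsim\nu^{-2/3+\eta}$ on the relevant support for $t\le\nu^{-1/3-\eta}$, whence $e^{-W/2}\le\nu^{100}$ once $\nu$ is small --- this is the origin of the $\nu^{98},\nu^{99},\nu^{100}$ (one power is consumed at the $\alpha$-level in \eqref{Im:qu:e} because its functionals already carry a $\sqrt\nu$), and the $t^{-1}$ from $G,\overline H$ is matched against $\varphi(t)$ via \eqref{ineq:varphi}.

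The heart of the proof, and the step I expect to be the main obstacle, is the \emph{ICC absorption lemma}: for every $S^{(a,b,c)}$ with $a+b+c\le2$ (resp.\ $\le3$), the weighted sums $\sum_{m,n}\bold{a}_{m,n}^2\theta_n^2\|S^{(a,b,c)}\mathring{\omega}_{m,n;k}\,\chi_{m+n}e^W\|_{L^2}^2$ and their $\alpha/\mu$ analogues are dominated by $\mathcal{E}^{(\iota)}[\omega]+\mathcal{D}^{(\iota)}[\omega]+\mathcal{CK}^{(\iota)}[\omega]$ with a small constant, uniformly in $\nu$ and in $t\le\nu^{-1/3-\eta}$. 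Three features make this delicate. First, $\big(\tfrac{m+n}{q}\big)^a$ is a genuine derivative loss near $y=\pm1$ that can only be traded for a true $\partial_y$ by the lowering identity at the price of a power of $(m+n)$, recovered solely from the nearly-analytic gain $(\lambda/(m+n))^s$ --- tying the whole scheme to $s>1$. Second, the trade lowers $n\mapsto n-1$ but simultaneously shifts the Gevrey weight $\bold{a}$, the drop-weight $\theta$, and the cutoff index $m+n\mapsto m+n-1$, and all three shifts must be compatible at once --- which is exactly why the cutoff scales \eqref{xn} are taken super-summable (cf.\ \eqref{chi:prop:2}, \eqref{chi:prop:3}) and why $\chi_{m+n}$, not $\chi_n$, appears in the functionals. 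Third, the argument is genuinely iterative: $\mathbf C^{(m,n)}_{q,k}$ regenerates $\big(\tfrac{n}{q}\big)^2$-terms, so one must run an induction on the number of $\tfrac{m+n}{q}$-factors (equivalently, prove a closed a priori inequality for $\mathcal{I}^{(\iota)}_q$ in terms of $\mathcal{D}^{(\iota)}$). Once this lemma is in place, the rest reduces to summing the geometric-type series in $j$ and in the number of ICC factors, and checking that the decaying weight $\varphi(t)\sim\langle t\rangle^{-1}$ together with \eqref{varphi:yessir}--\eqref{lambda:yessir} never forces a loss over the whole time range, which yields the six displayed inequalities.
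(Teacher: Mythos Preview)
Your proposal is correct and tracks the paper's proof closely. The ``ICC absorption lemma'' you identify as the heart is exactly Lemma~\ref{lem:SLJ}, proved by iterating the lowering identity you wrote down (their \eqref{start1}); the $\theta_n$-weighted closure for low indices is Lemma~\ref{lem:tht_app}; the $l\le n/2$ vs.\ $l>n/2$ split together with the Gevrey product/convolution bounds appear in Lemmas~\ref{visc:comm}--\ref{tran:comm:mu}; and the $\nu^{100}$ mechanism is precisely the $e^{W/2}$ gap you describe, recorded in the paper as $\nu^{-100}\lesssim e^{W/2}$ on $\mathrm{supp}\,\chi_1$ for $t\le\nu^{-1/3-\eta}$.

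One small difference at the $\gamma$-level: for $\mathcal{I}^{(\gamma)}_q$ the paper does \emph{not} integrate the $-2\nu q'\tfrac{n}{q}\partial_y\mathring\omega$ term by parts. It simply applies Cauchy--Schwarz,
\[
\Big|\big\langle \mathring\omega_{m,n;k},\,\nu q'\tfrac{n}{q}\partial_y\mathring\omega_{m,n;k}\,e^{2W}\chi_{m+n}^2\big\rangle\Big|\;\le\;\|\sqrt\nu\,S^{(1,0,0)}_{m,n}\omega_k\,e^W\chi_{m+n}\|_{L^2}\cdot\|\sqrt\nu\,\partial_y\mathring\omega_{m,n;k}\,e^W\chi_{m+n}\|_{L^2},
\]
and then feeds the first factor into the ICC bound \eqref{S:est:1}. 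This avoids generating any $W_y$-contribution and yields the clean bound $|\mathcal{I}^{(\gamma)}_q|\le c_\ast\mathcal{D}^{(\gamma)}$ with no $\mathcal{CK}$-term. Your integration-by-parts route also works (you are right that $q'W_y<0$ on both walls, so that piece can be dropped), but it is one step longer and the remaining $\nu n^2(q')^2/q^2$ piece still needs the same ICC lemma anyway. At the $\alpha$-level, by contrast, one genuinely cannot avoid $\mathcal{CK}^{(\alpha;W)}$ --- it enters through the boundary contribution at $n=1$ (Lemma~\ref{lemma:BC}) rather than through an IBP against $e^{2W}$.
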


We conclude the second section by a rigorous proof of Theorem \ref{thm:main:para} assuming Proposition \ref{pro:light:on}, \ref{pro:rhs:intro}.
\begin{proof}
For the $\mathcal{E}^{(\gamma)}$ and $\mathcal{E}^{(\mu)}$ estimates, one can combine the bounds \eqref{outout1} \eqref{Im:qu:a-c}, \eqref{Im:qu:g-i} and integrate in time  to obtain the bounds \eqref{lights:on:1}, \eqref{lights:on:3} directly. Thanks to the expression \eqref{outout2}, we have that  
\begin{align*}
&\frac{d}{dt}\mathcal{E}^{(\al)}[\omega]+   \mathcal{D}^{(\al)}[\omega] + \sum_{\mathfrak D\in(\varphi,\lambda, W)}\mathcal{CK}^{(\al; \mf D)}[\omega]  +\frac{1}{2c_\al}\sum_{\iota\in(\gamma,\mu)}\lf(  \mathcal{D}^{(\iota)}[\omega] + \sum_{\mathfrak{D}\in(\varphi,\lambda,W)} \mathcal{CK}^{(\iota; \mf D)}[\omega]\rg)   \n \\
 &\leq -\frac{1}{c_\al}\frac{d}{dt}\sum_{\iota\in(\gamma,\mu)}\mathcal{E}^{(\iota)}[\omega]+ \frac{C_\#}{c_\al} \sum_{\iota'\in(\al,\gamma,\mu)}\lf(|\mathcal{I}^{(\iota')}_{q}| + |\mathcal{I}^{(\iota')}_{\mathrm{trans}} |+  |\mathcal{I}^{(\iota')}_{\mathrm{visc}}| +  |\mathcal{I}^{(\iota')}_{\mathrm{Source}}|\rg).
\end{align*} Applying the bounds \eqref{Im:qu:a-c},  \eqref{Im:qu:d-f}, and \eqref{Im:qu:g-i} yields that
\begin{align*}
&\frac{d}{dt}\mathcal{E}^{(\al)}[\omega]+   \mathcal{D}^{(\al)}[\omega] + \sum_{\mathfrak D\in(\varphi,\lambda, W)}\mathcal{CK}^{(\al; \mf D)}[\omega]  +\frac{1}{2c_\al}\sum_{\iota\in(\gamma,\mu)}\lf(  \mathcal{D}^{(\iota)}[\omega] + \sum_{\mathfrak{D}\in(\varphi,\lambda,W)} \mathcal{CK}^{(\iota; \mf D)}[\omega]\rg)   \n \\
 &\leq -\frac{1}{c_\al}\frac{d}{dt}\sum_{\iota\in(\gamma,\mu)}\mathcal{E}^{(\iota)}[\omega]+ \frac{4c_\ast C_\#}{c_\al}\lf( \sum_{\iota\in(\al,\gamma,\mu)}\mathcal{D}^{(\iota)}[\omega] +  \mathcal{CK}^{(\al,W)}\rg)\\
 &\quad+C\sum_{\iota \in (\gamma, \alpha, \mu)} |\mathcal{I}^{(\iota)}_{\mathrm{Source}}| + \eps\nu^{98} \sum_{\iota \in (\alpha, \gamma)} \mathcal{E}^{(\iota)} 
 + C\eps \nu^{99}
\sum_{\iota \in (\alpha, \gamma)}  \sqrt{\mathcal{E}^{(\iota) }}
 \sum_{\iota' \in (\alpha, \gamma)} \paren{{\cd^{(\iota')}} + \mathcal{CK}^{(\iota')} + \mathcal{D}^{(\iota')}_{H} +\mathcal{D}^{(\iota')}_{\overline{H}}}.
\end{align*}
Since $c_\al\in[\frac{1}{16},\frac{1}{2}]$, and $c_\ast\leq \frac{1}{256C_\#}$, we have that the second term can be absorbed by the left hand side. 
Now we integrate in time and obtain the result \eqref{lights:on:2}.
\end{proof}
 \label{sec:OutlinePR}

\section{Main Result: Elliptic Regularity}

\subsection{Preparatory Objects \& Definitions}

\noindent \underline{\textsc{Cutoff Functions:}} To distinguish between the interior and exterior components of various solutions and quantities, we define the following two families of  cutoff functions: 
\\ \noindent
a) the interior/exterior cutoff functions of the first kind $\chi^I/\chi^E$, which are Gevrey-$s$ regular and satisfy
\begin{align} \label{chi:I:def}
\chi^I(\xi) =& \begin{cases} 1, \qquad \xi \in  \lf(-\frac12, \frac12\rg), \\ 0, \qquad \abs{\xi} > \frac{3}{4} ,\\
\text{monotone},\quad \text{others}. \end{cases}\ \qquad
\chi^E(\xi)= 1-\chi^{I}(\xi). 
\end{align}

\noindent
b) the interior/exterior cutoff functions of the second kind $\wt\chi_1/\wt\chi_1^{\mf c}$ defined above in \eqref{wt_chi_intro}, where 
the cutoff $\wt \chi_1$ can be viewed as a fattened version of $\chi_1$ in \eqref{chi}.

\noindent \underline{\textsc{Elliptic Equations:}}  The starting point for our elliptic estimates are the relation with a one-parameter family of forcing $\{w(t)\}_{t\geq0}$:
\begin{align}\label{BSlaw}
\de \psi=w,\quad \psi(t,x,y=\pm1)=0.
\end{align}
In the paper \cite{BHIW24a}, we decompose the solution $\psi(t,x,y)$ in two different ways, denoted by $\psi = \Psi^{(I)} + \Psi^{(E)}$ or $\psi = \phi^{(I)} + \phi^{(E)}$, based on the type of information we need. 
\vspace{2 mm}


\ifx
Recall the definition of the $(z,v)$ coordinates in \eqref{}; originally from \eqref{} in the cylinder case.  
Defining $f^I = \chi_I f$, we obtain the following PDE in the $(z,v)$ coordinates  
\begin{align*}
\partial_t f^I + g \partial_v f^I + v' \grad^\perp \phi_{k \neq 0} \cdot \grad f^I - \nu \widetilde{\Delta_t} f^I = \mathcal{C},  
\end{align*}
with the commutator given by 
\begin{align*}
\mathcal{C} =  g \chi_I' f - v' \chi_I'  \partial_x \phi_{k \neq 0} f - \nu (v')^2 \chi_I'' f - 2 \nu (v')^2 \chi_I' (\partial_v-t\partial_z) f. 
\end{align*}
The left-hand side has now been basically reduced to the profile equation treated in \cite{}, except for the fact that the Biot-Savart law includes contributions from both $\chi_I f$ and $(1-\chi_I)f$.

The goal is hence to view $f^I$ as a function on the cylinder and use the Fourier multiplier methods of \cite{BM13,HI20} to deal with the nonlinear term uniformly-in-$\nu$.
Enhanced dissipation is dealt with as well using these methods, adapting an idea from \cite{BGM15III}.
In order to apply these methods we need to further localize the PDEs for the coordinate system unknowns, $g,h,\overline{h}$, defining
\begin{align}\label{g_Ih_I}
g^I = \chi^I g, \quad h^I  = h \chi^I h, \quad \overline{H}^I = \overline{h} \chi^I,
\end{align}
for which the corresponding PDEs become
\begin{align*}
& \partial_t \overline{h}^I + \frac{2}{t} \overline{h}^I + g \partial_v \overline{h}^I + \left(\chi^I_c v' \grad^\perp \phi_{k \neq 0} \cdot \grad \overline{h}\right)_0 - \nu \widetilde{\Delta_t} \overline{h}^I = \mathcal{C}_{\overline{h}} \\
& \partial_t g^I + \frac{2}{t} g^I + g \partial_v g^I + \left(\chi^I_c v' \grad^\perp \phi_{k \neq 0} \cdot \grad u\right)_0 - \nu \widetilde{\Delta_t} g^I = \mathcal{C}_g \\
& \partial_t h^I + + g \partial_v h^I - \nu \widetilde{\Delta_t} h^I - \overline{h}^I = \mathcal{C}_{h}, 
\end{align*}
where the commutators above are of the form
\begin{align*}
\mathcal{C}_{\ast} = g \chi_I' (\ast) - \nu (v')^2 \chi_I'' (\ast) - 2 \nu (v')^2 \chi_I' \partial_v (\ast). 
\end{align*}
The commutators $\mathcal{C}, \mathcal{C}_{\ast}$ are supported only near the interval $\frac{3}{4} \leq \abs{v}  \leq \frac{31}{40}$, and here the design of the  strong weight $W$ and the higher Gevrey regularity in $\partial_x,\Gamma_t$ derivatives incorporated into $\mathcal{E}^{(\gamma)}$ and $\mathcal{E}^{(\gamma)}_{G,H,\overline{H}}$ will provide strong estimates capable of estimating these terms; see for example Lemma \ref{lem:ExtToInt} below.  
\fi
 
 \ifx
\noindent \textbf{$(\Psi^{(I)}, \Psi^{(E)})$ Decomposition:}  We define the interior cutoff function $\chi_I(v(t,y))$
\begin{align} \label{chi:I:def}
\chi_I(\siming{v(t,y)}) = \begin{cases} 1 \qquad v(t,y) \in  (-\frac12, \frac12), \\ 0 \qquad v(t,y) \in (-\frac34, \frac34)^c . \end{cases}
\end{align} 
We denote $f^I = \chi_I f$. 
In order to apply the ideas of \cite{BM13,HI20} as a nearly black-box, we need to reduce the PDEs to be in a form as close as possible as used therein.
Namely, we divide the effective \siming{stream function?} in the $(z,v)$ variable as the `truly interior' part
\begin{align}
& -\partial_{xx}\psi^I -(1+ h^I)^2 (\partial_v-t\partial_z)^2 \psi^I + (1+h^I)\partial_v h^I (\partial_v - t\partial_z) \psi^I = f^I,\label{Intr_psi_I} \\
& \psi^I(x,v=v(t,\pm 1)) = 0.\n 
\end{align}
and the `exterior' part 
\begin{align}
& -\Delta_t \psi^E = (1-\chi^I)f - \left( (1+h^I)^2 - (1 +h)^2 \right) (\partial_v-t\partial_z)^2 \psi^I \n\\
& \qquad \qquad -  \left((1+h)\partial_v h - (1+h^I)\partial_v h^I\right)(\partial_v - t\partial_z) \psi^I, \label{Intr_psi_E} \\ 
& \psi^E(x,v=v(t,\pm 1)) = 0.\n 
\end{align}
Here $\de_t$ is the Laplace ($\pa_{x}^2+\pa_{y}^2$) in the $(z,v)$ coordinate. 
The estimates on $\psi^I$ are performed as in \cite{BM13,HaoIonescu}; the treatment of $\psi^E$ is discussed in this paper. 

To derive the equation for $\Psi^{(E)}(t,x,y)=\psi^E(t,z(t,x,y),v(t,y))$ in the physical variables, we use the relation $\pa_x=\pa_z,\quad\pa_y=v_y\pav- tv_y\pa_z =v_y(\pav-t\pa_z)$ to derive 
\begin{align}
-\Delta \Psi^{(E)} = &(1-\chi^I)\omega - \left( (1+H^I)^2 - (1 +H)^2 \right) \pav^2 \Psi^{(I)}\n \\
& -  \left((1+H)\pav H - (1+H^I)\pav H^I\right)\pav \Psi^{(I)},\label{dmp_int2_intr}\\
\Psi^{(E)}(x,\pm 1) =& 0.\n
\end{align}
\siming{I am worried about the signs. For example, here it is written in $-\de \psi=\omega. $} 
Here we recall that 
\begin{align}
H^I(t,y)=h^I(t,v(y))=\chi^I (v(y))h(t,v(y))=\chi^I (v(y))H(t,y). 
\end{align} 
\fi

\noindent \textbf{The $(\Psi^{(I)}, \Psi^{(E)})$ Decomposition:}
\ifx \footnote{\siming{
In the $(z,v)$ variable as the `interior' part
\begin{align}
& \partial_{xx}\psi^I +(1+ h^I)^2 (\partial_v-t\partial_z)^2 \psi^I - (1+h^I)\partial_v h^I (\partial_v - t\partial_z) \psi^I = w^I,\label{Intr_psi_I} \\
& \psi^I(x,v=v(t,\pm 1)) = 0.\n 
\end{align}
and the `exterior' part 
\begin{align}
& \Delta_t \psi^E = (1-\chi^I)w + \left( (1+h^I)^2 - (1 +h)^2 \right) (\partial_v-t\partial_z)^2 \psi^I \n\\
& \qquad \qquad +  \left((1+h)\partial_v h - (1+h^I)\partial_v h^I\right)(\partial_v - t\partial_z) \psi^I, \label{Intr_psi_E} \\ 
& \psi^E(x,v=v(t,\pm 1)) = 0.\n 
\end{align}
Here $\de_t$ is the Laplace ($\pa_{x}^2+\pa_{y}^2$) in the $(z,v)$ coordinate.} \myr{Just a recording of the plan to treat the $\psi^{(E)}$-contribution in the interior. We can introduce another cut-off function $\chi_{II}$, which has support strictly contained in the support of $\chi_I$, in the equation for the $\psi^{(E)}$. Now we have that $\de_t (\chi_{II}\psi^{(E)})=\pm [\chi_{II},\de_t ]\psi^{(E)}$. Now the commutator is only present on the support where $\chi_{II}'\neq 0$. And we can choose it such that it is outside the transition region of $\chi_{m+n}$. And hence one can have the high regularity control over the $H.$ Once we derive the bound \eqref{FE_ell}, we can estimate the $\pm [\chi_{II},\de_t ]\psi^{(E)}$ in high Gevrey and hence we have that $\chi_{II}\psi^{(E)}$ is in high Gevrey. This $\chi_{II}$-trick happens before. }} 
\fi
The first decomposition is intended for obtaining estimates of the solution $\psi$ in the interior of the channel. Let us recall the ideas from the companion paper \cite{BHIW24a} without digging into technical details. One uses the cutoffs $\chi_I/\chi_E$ \eqref{chi:I:def} to localized the forcing $w$ and the $H$-quantity \eqref{defn:h:cap} that captures the deviation between $v$ and $y$. Thanks to technical considerations, one ends up with the following
\begin{align}\Delta \Psi^{(I)} = &\ww^I -   \left( (1+H^I)^2 - (1+H)^2 \right) \pav^2 \Psi^{(I)} -  \left((1+H)\pav H - (1+H^I)\pav H^I\right)\pav \Psi^{(I)},\label{dmp_int1_intr}\\
\Delta \Psi^{(E)} = &\ww^E + \left( (1+H^I)^2 - (1+H)^2 \right) \pav^2 \Psi^{(I)} +  \left((1+H)\pav H - (1+H^I)\pav H^I\right)\pav \Psi^{(I)},\label{dmp_int2_intr}\\
&\hspace{1.2 cm}\Psi^{(I)}(x,\pm 1) = 0,\qquad\Psi^{(E)}(x,\pm 1) = 0.\n
\end{align}
Here we used the notations
\begin{align}
F^I(t,y)=\chi^I (v(t,y))F(t,y),\quad F^E(t,y)=\chi^E (v(t,y))F(t,y),
\end{align}
where $F$ is a general function. However, we would highlight that the notations $\Psi^{(I)},\ \Psi^{(E)}$ do not represent truncated functions. The motivation in implementing this decomposition is that when we write the $\Psi^{(I)}$-equation in the special coordinate system (the nonlinear $(z,v)$-coordinate in \cite{BMV14}), the solutions $\Psi^{(I)}$ are easy to estimate. We refer the readers to the companion paper \cite{BHIW24a} and  \cite{BM13,HI20} for further details. The treatment of the $\Psi^{(E)}$-component is discussed in this paper.

\vspace{2  mm}

\noindent
\textbf{The $(\phi^{(I)}, \phi^{(E)})$ Decomposition:} We also consider another decomposition (based on the cutoff functions $\wt\chi_1/\wt\chi_1^\mathfrak c$ \eqref{wt_chi_intro}) to understand the key features of the solutions near the boundary. 
Now we decompose the solution as $\psi_k = \phi_k^{(E)}(t,y) + \phi_k^{(I)}(t,v(t,y))$, where these two quantities satisfy the following elliptic equations\begin{subequations}
\begin{align}   \label{d:phi:I:in}
(\pa_v^2-|k|^2)\ \phi^{(I)}_k(t,v) &=\widetilde{\chi}_1^\mathfrak{c}(v)\ \ww_k(t,y(t,v))+{\wt \chi_1^\mathfrak{c}(v)(\pa_v^2-(v_y \partial_v)^2)\phi_k^{(I)}(t,v)}, \\  \label{d:phi:E:in}
(\pa_y^2-|k|^2)\ \phi^{(E)}_k(t,y) &=\ \widetilde{\chi}_1\lf(v(t,y)\rg) \ww_k(t,y)+{\wt\chi_1(v(t,y))}(\pav^2-\pa_{y}^2)\phi^{(I)}_k(t,v(t,y)),\\
\phi^{(I)}_k(t,v)|_{v=v(t,\pm 1)} & = 0, \qquad
 \phi^{(E)}_k(t,y)|_{y = \pm 1}  =  0. 
\end{align} 
\end{subequations}

\noindent \underline{\textsc{Gevrey Coefficients:}} In order to measure the interior forcing, we define the following Gevrey coefficients 
\begin{align}\label{B_in_mn} B ^\mathrm{low}_{m,n}=\lf(\frac{2^{-(m+n)}}{(m+n)!}\rg)^{4}.
\end{align}
If the generating vorticity is supported in the interior region, we can utilize a smoothing mechanism of the Green's function to derive stronger estimates. Let us consider the new Gevrey index, 
\begin{align}
{ \widehat{\lambda}  :=} & 2\lambda(t), \ \
 \widehat{B}_{m,n} :=  \lf( \frac{ \widehat{\lambda}^{m+n}}{(m+n)!} \rg)^{s},\ \ \widehat{\bold{a}}_{m,n} :=  \widehat{B}_{m,n} \varphi^{n+1}. \label{hat_bf_a_intro}
\end{align}

\subsection{Presentation of Elliptic Energy Functionals}

\noindent \textsc{\underline{Interior Functionals:}} 
\ifx
Here we recall the following energy functional from \cite{BHIW24a}:
\begin{align}
\mathcal{E}_{\text{Int}}(t) := & \norm{\mathfrak{A}(t,\grad)  (\chi_I f) }_{L^2}^2,\\ 
\mathcal{E}_{\text{Int,Coord}}^{(h)} := & \norm{A_R (\chi_Ih)}_{L^2}^2.\label{E_IntCh} 
\end{align}
\fi
{We define the classical Gevrey norms
\begin{align}\label{E_Int}
\mathcal{E}_{\mathrm{Int}}^{\mathrm{low}}[w]:=&\sum_{k\in \mathbb{Z}}\sum_{m+n=0}^\infty \lf(B_{m,n}^{\mathrm{low}}\rg)^2\||k|^m(\pa_v+ikt)^n(\chi_I(t,v)w_k)\|_{L^2}^2=:\sum_{k\in \mathbb{Z}}\mathcal{E}_{\mathrm{Int}}^\mathrm{low}[w_k].
\end{align}


\ifx
We also need the CK terms
\begin{align*}
	\mathcal{CK}_{\overline{H}}^{(\iota)} :=& \sum_{n\ge0} \mathcal{CK}_{\overline{H}, n}^{(\iota)} \\
	\mathcal{CK}_{G}^{(\iota)} :=& \sum_{n\ge0} \mathcal{CK}_{G, n}^{(\iota)} \\
	\mathcal{CK}_{H}^{(\iota)} :=& \sum_{n\ge0} \mathcal{CK}_{H, n}^{(\iota)}
\end{align*}
where
\begin{align}
	\mathcal{CK}_{\overline{H},n}^{(\iota)} := & \sum_{j = 1}^2  \mathcal{CK}^{(\iota;j)}_{\overline{H}, n}(t), \\
	\mathcal{CK}_{\overline{H},n}^{(\iota;1)}:= &  \nu^{i_{\iota}/2}(n+1)^{2\sss-2} \bold{a}_{n+1}^2 \brak{t}^{5+2\ss} \|\partial_y^{i_{\iota}} \overline{H}_n 
	\sqrt{- \partial_{t}W} e^{W/2} \chi_n \|_{L^2}^2,
	\label{CK1:defi} \\
   \mathcal{CK}_{\overline{H},n}^{(\iota;2)} :=&  -  \nu^{i_{\iota}/2} (n+1)^{2\sss-2} \bold{a}_{n+1}   \frac{d}{dt}\paren{\brak{t}\bold{a}_{n+1}} \brak{t}^{4+2\ss}
	\| \partial_y^{i_{\iota}} \overline{H}_n e^{W/2} \chi_n \|_{L^2}^2, \label{CK2:defi} \\
	\mathcal{CK}_{G,n}^{(\iota)} := & \begin{cases}  \paren{4\langle t \rangle^{4-2\eps}t^{-1} - (4-2\eps)t\brak{t}^{2-2\eps}}
		\enorm{\partial_y^{i_{\iota}} G\chi_0}^2 \qquad & n=0
		\\ \sum_{j=1}^{2} \mathcal{CK}_{G, n}^{(\iota; j)}(t)^2 \qquad & n\ge 1.
	\end{cases}\\
	\mathcal{CK}_{G,n}^{(\iota;1)} :=& \bold{a}_{n+1}^2 \brak{t}^{5+2\ss} \|\partial_y^{i_{\iota}} G_n \sqrt{- \partial_{t}W} e^{W/2} \chi_{n-1+i_\iota} \|_{L^2}^2,\\
	\mathcal{CK}_{G,n}^{(\iota;2)} :=	&-  \bold{a}_{n+1} \dot{\bold{a}}_{n+1} 
	\brak{t}^{5+2\ss}
	\| \partial_y^{i_{\iota}} G_n  \chi_{n-1+i_\iota} \|_{L^2}^2,\\
	\mathcal{CK}_{H,n}^{(\iota)} :=& - \nu^{i_{\iota}/2} \brak{t}\bold{a}_{n} \frac{d}{dt}(\brak{t}\bold{a}_{n} )
	\| \partial_y^{i_{\iota}}{H}_n e^{W/2} \chi_n \|_{L^2}^2
	\\&+ \nu^{i_{\iota}/2} \brak{t}^2\bold{a}_{n}^2   \| \partial_y^{i_{\iota}}{H}_n \sqrt{- W_t} e^{W/2} \chi_n \|_{L^2}^2  .
\end{align}
\vspace{2 mm}

\noindent \textsc{\underline{Sobolev Boundary Functionals:}} Due to the degeneracy of the weight, $q$, near the boundaries, we will need to supplement our norms with the following functionals, which do not degenerate at the boundaries, but rather have extra powers of $\nu$. We introduce the following Sobolev energy functionals
\begin{align}
\mathcal{E}_{sob, 0}(t) := & \sum_{k \in \mathbb{Z}} \| \omega_{k,0} e^W\|_{L^2}^2 \\
\mathcal{D}_{sob, 0}(t) := &\sum_{k \in \mathbb{Z}} \|\nu^{\frac12} \nabla_k \omega_{k,0}e^W \|_{L^2}^2 \\
\mathcal{CK}_{sob,0}(t) := & \sum_{k \in \mathbb{Z}}\|   \omega_{k,0}\sqrt{-\pa_t W}    e^{W} \|_{L^2}^2 \\
\mathcal{E}_{sob, n}(t) := & \nu^{2n} \sum_{k \in \mathbb{Z}} \|\nabla \omega_{k,n-1} e^W\|_{L^2}^2, \qquad \qquad n \ge 1 \\
\mathcal{D}_{sob, n}(t) := & \nu^{2n} \sum_{k \in \mathbb{Z}} \|\nu^{\frac12} \nabla_k^2 \omega_{k,n-1}e^W \|_{L^2}^2, \qquad n \ge 1, \\
\mathcal{CK}_{sob, n}(t) := & \nu^{2n} \sum_{k \in \mathbb{Z}} \|\nabla \omega_{k,n-1} \sqrt{-\partial_t W} e^W\|_{L^2}^2, \qquad \qquad n \ge 1
\end{align}
and finally the complete bulk Sobolev norm we propagate: 
\begin{align}
\mathcal{E}_{sob}(t) := \sum_{i = 0}^4 \mathcal{E}_{sob,i}(t), \qquad \mathcal{D}_{sob}(t) := \sum_{i = 0}^4 \mathcal{D}_{sob,i}(t), \qquad \mathcal{CK}_{sob}(t) := \sum_{i = 0}^4 \mathcal{CK}_{sob,i}(t)
\end{align}
\fi
\ifx
It turns out that the bounds on these bulk Sobolev functionals are themselves subtle, and require a delicate interplay between the one-dimensional quantities $\alpha_j^{\pm}(t, x) := \p_y^j \omega(t, x, \pm 1)$. We introduce functionals to measure these one-dimensional quantities as follows:
\begin{align}
\mathcal{E}_{\text{Trace},0}(t) := &\| \nu^{\frac32} \alpha_1 e^W \|_{L^2_x}^2, \qquad \mathcal{D}_{\text{Trace},0}(t) :=  \| \nu^{2} \p_x \alpha_1 e^W\|_{L^2_x}^2, \\
\mathcal{E}_{\text{Trace},1}(t) := &\| \nu^{\frac52} \p_x \alpha_1 e^W \|_{L^2_x}^2, \qquad \mathcal{D}_{\text{Trace},1}(t) :=  \| \nu^2 \p_t \alpha_1 e^W\|_{L^2_x}^2, \\
\mathcal{E}_{\text{Trace},2}(t) := &\| \nu^{\frac72} \p_x^2 \alpha_1 e^W\|_{L^2_x}^2, \qquad \mathcal{D}_{\text{Trace},2}(t) := \| \nu^3 \p_t \p_x \alpha_1 e^W\|_{L^2_x}^2. 
\end{align}
Above, when we use $W$, we mean $W|_{y = \pm 1}(t, x)$. Consequently, we also gain CK functionals: 
\begin{align}
\mathcal{CK}_{\text{Trace},0}(t) := &\| \sqrt{-\p_t W} \nu^{\frac32} \alpha_1 e^W \|_{L^2_x}^2, \\
\mathcal{CK}_{\text{Trace},1}(t) := &\|\sqrt{-\p_t W} \nu^{\frac52} \p_x \alpha_1 e^W \|_{L^2_x}^2,  \\
\mathcal{CK}_{\text{Trace},2}(t) := &\|\sqrt{-\p_t W} \nu^{\frac72} \p_x^2 \alpha_1 e^W\|_{L^2_x}^2. 
\end{align}
We subsequently also need the following maximal regularity functionals: 
\begin{align}
\mathcal{D}_{\text{Trace, Max},1}(t) &:= \| \nu^3 \p_x^2 \alpha_1 e^W \|_{L^2_x}^2, \\
\mathcal{D}_{\text{Trace, Max},2}(t) &:= \| \nu^4 \p_x^3 \alpha_1 e^W \|_{L^2_x}^2. 
\end{align}
Finally, we will need to measure (in $L^2_t$) the quantities $\alpha_4$ and $\p_t \alpha_4$. Hence, we introduce 
\begin{align}
\mathcal{D}_{\text{Trace, Large},0}(t) &:= \| \nu^3 \alpha_4 e^W\|_{L^2_x}^2,\\
\mathcal{D}_{\text{Trace, Large},1}(t) &:= \| \nu^4 \p_x \alpha_4 e^W\|_{L^2_x}^2, \\
\mathcal{D}_{\text{Trace, Large},2}(t) &:= \| \nu^4 \p_t \alpha_4 e^W\|_{L^2_x}^2 + \| \nu^5 \p_{xx} \alpha_4 e^W\|_{L^2_x}^2. 
\end{align}
We will have our full trace norms: 
\begin{align}
\mathcal{E}_{\text{Trace}}(t) := & \sum_{i = 0}^2 \mathcal{E}_{\text{Trace},i}(t), \\
\mathcal{D}_{\text{Trace}}(t) := &\sum_{i = 0}^2 \mathcal{D}_{\text{Trace},i}(t) + \sum_{i = 1}^2 \mathcal{D}_{\text{Trace, Max},i}(t) + \sum_{i = 1}^3 \mathcal{D}_{\text{Trace, Large},i}(t),  \\
\mathcal{CK}_{\text{Trace}}(t) := &\sum_{i = 0}^2 \mathcal{CK}_{\text{Trace},i}(t).
\end{align}

\vspace{2 mm}

\noindent \textsc{Sobolev Cloud Functionals:} It turns out to close our nonlinear analysis, we need to introduce the so-called Sobolev ``Cloud" norms. These norms should be interpreted as an interpolant (even though they cannot be obtained by any interpolation) between interior vorticity and exterior vorticity in the following sense: they retain the weight $e^W$ from the exterior energy functionals, whereas they are localized, due to $\chi_{\text{cloud}}$, to the interior. In particular, we define: 
\begin{align}
\mathcal{E}_{\text{cloud}}(t) := &\sum_{m + n \le 20}  \| \varphi^{n} \p_x^m \Gamma^n \omega e^W \chi_{\text{cloud}} \|_{L^2}^2, \\
\mathcal{D}_{\text{cloud}}(t) := &\sum_{m + n \le 20} \|\varphi^{n} \sqrt{\nu} \nabla \p_x^m \Gamma^n \omega e^W \chi_{\text{cloud}} \|_{L^2}^2 \\
\mathcal{CK}_{\text{cloud}}^{(W)}(t) := &\sum_{m + n \le 20} \|\varphi^{n} \sqrt{-\dot{W}} \p_x^m \Gamma^n \omega e^W \chi_{\text{cloud}} \|_{L^2}^2 \\
\mathcal{CK}_{\text{cloud}}^{(\varphi)}(t) := &\sum_{m + n \le 20}  \|\varphi^{n} \sqrt{-\dot{\varphi}} \p_x^m \Gamma^n \omega e^W \chi_{\text{cloud}} \|_{L^2}^2
\end{align}
\fi
\vspace{2 mm}

\noindent \textsc{\underline{Elliptic Functionals:}}

We measure the two decompositions with different Gevrey norms. 
We consider the following norm for the exterior contribution $\Psi^{(E)}_k(t,x,y)$
\begin{align}\label{FE_ell}
\mathcal{F}_{ell}^{(E)}(t) := &\sum_{k\in \mathbb{Z}\backslash\{0\}}\sum_{m+n=0}^\infty \lf(\frac{(2\widetilde{\lambda})^{2(m+n)/r}}{(m+n)!^{2/r}}\rg)\|\chi_{m+n}|k|^m q^n\Gamma_k^n  \Psi^{(E)}_k\|_{L^2}^2.
\end{align}
{Here $\wt \lambda>0$ is an arbitrary parameter. For the application in our companion paper \cite{BHIW24a}, the Gevrey radius $2\wt \lambda$ will be chosen such that the functional $\mathcal{F}_{ell}^{(E)}$ dominates the $\|A_R (\cdots)\|_{L^2}$ norm.  } 
We now introduce a set of functionals to measure the solution $\phi^{(I)}$, defined in \eqref{d:phi:I:in}: 
\begin{align}
\mathcal{E}_{ell}^{(I, out)}(t) := & \sum_{k\in \mathbb{Z} {\backslash\{0\}}}\sum_{m+n=0}^\infty \widehat{\bold{a}}_{m,n}^2\|\chi_1 |k|^m(\pa_v+ikt)^n\pa_v^\ell \phi^{(I)}_k\|_{L^2_v}^2,\quad \ell\in\{0,1,2\}, \\
\mathcal{E}_{ell}^{(I, full)}(t) := & \sum_{k\in \mathbb{Z} {\backslash\{0\}}}\sum_{m+n=0}^{1000} \widehat{B}_{m,n}^2\| |k|^m(\pa_v+ikt)^n  \phi^{(I)}_k\|_{L^2_v}^2.
\end{align}


\noindent \textsc{\underline{The $ICC$ Pseudo-differential Operators:}}
If the solution is supported near the boundary, one needs to control various commutator terms involving $q$. It turns out that the following $ICC$-operators are crucial to express these commutators 
\begin{subequations}\label{SJ}
\begin{align}
\label{S_nq}
S^{(a,b,c)}_{m,n}f_k= &{\lf(\frac{m+n}{q}\rg)^a}\pa_y^b |k|^{c} (|k|^m q^n\Gamma_k^n f_k)\  \mathbbm{1}_{\mathbb {S}_n^\ell}(a,b,c),\quad a+b+c=\ell\leq 2 ; \\
\label{J_nq}
J^{(a,b,c)}_{m,n}f_k= &{\lf(\frac{m+n}{q}\rg)^a}\pav ^b |k|^{c} ( \chi_{m+n}|k|^m q^n\Gamma_k^n f_k)\  \mathbbm{1}_{\mathbb {S}_n^\ell}(a,b,c),\quad a+b+c=\ell\leq3 ; \\ \label{J_0}
J^{(a,b,0)}_{0,n}f_0= &{\lf(\frac{n}{q}\rg)^a}\pav^b  ( \chi_{n} q^n\pav^n f_0)\ \mathbbm{1}_{\mathring{\mathbb S}_n^{\ell}}(a,b), \quad a+b=\ell\leq 3. 
\end{align}%
\end{subequations}
Here, the parameters $a,b,c$ take values in $\mathbb{N}$ and $\pav$ is defined as follows\begin{align}\pav:=v_y^{-1}\pa_y.\end{align} 
The index sets $\mathbb{S}_n^{\ell}$ and $\mathring {\mathbb {S}}_n^{\ell  }$ are defined as follows
\begin{align}\label{Snell}
\mathbb {S}_n^{\ell}:=&\lf\{(a,b,c)\in\mathbb N^{3}\big|a+b+c=\ell\rg\}\cap\lf\{(a,b,c)\in\mathbb{N}^3\big|a+b+c\leq n\text{ or }a=0\rg\},\\
\n \mathring {\mathbb{S}}_n^{\ell}:=&\lf\{(a,b)\in\mathbb N^{2}\big| a+b=\ell \rg\}\cap\lf\{(a,b)\in\mathbb N^{2}\big|a+b\leq n \text{ or }a=0\rg\}.
\end{align}
Moreover, $S
_{m,n}^{(a,b,c)} f_k= J_{m,n}^{(a,b,c)}f_k\equiv 0$ for $n<0. $ We use the following notations to denote the vectors formed by the operations $S_{m,n}^{(\cdots)},\ J_{m,n}^{(\cdots)}$: \begin{subequations}\label{SJ_vect}
\begin{align}
S_{m,n}^{(\leq\zeta)}f_k:=\{S^{(a,b,c)}_{m,n}f_k\}_{a+b+c\leq \zeta},&\quad S_{m,n}^{(\zeta)}f_k:=\{S^{(a,b,c)}_{m,n}f_k\}_{a+b+c= \zeta};\label{S_vec}\\ 
J_{m,n}^{(\leq\zeta)}f_k:=\{J^{(a,b,c)}_{m,n}f_k\}_{a+b+c\leq \zeta},&\quad J_{m,n}^{(\zeta)}f_k:=\{J^{(a,b,c)}_{m,n}f_k\}_{a+b+c=\zeta};\label{J_vec}\\
J_{0,n}^{(\leq\zeta)}f_0:=\{J^{(a,b,0)}_{0,n}f_0\}_{a+b\leq \zeta},&\quad J_{0,n}^{(\zeta)}f_0:=\{J^{(a,b,0)}_{m,n}f_0\}_{a+b=\zeta}.\label{J0_vec}
\end{align}
\end{subequations}
Moreover, we measure these vectors with the classical $L^2$-norm of vectors:\begin{subequations}\label{SJ_norm}
\begin{align}\lf\|S_{m,n}^{(\leq\zeta)}f_k\rg\|_{L^2}^2:=&\sum_{a+b+c\leq\zeta}\lf\|S_{m,n}^{(a,b,c)}f_k\rg\|_{L^2}^2,\quad \lf\|S_{m,n}^{(\zeta)}f_k\rg\|_{L^2}^2:=\sum_{a+b+c=\zeta}\lf\|S_{m,n}^{(a,b,c)}f_k\rg\|_{L^2}^2;\label{S_vec_n}\\
\lf\|J_{m,n}^{(\leq\zeta)}f_k\rg\|_{L^2}^2:=&\sum_{a+b+c\leq\zeta}\lf\|J_{m,n}^{(a,b,c)}f_k\rg\|_{L^2}^2,\quad\lf\|J_{m,n}^{(\zeta)}f_k\rg\|_{L^2}^2:=\sum_{a+b+c=\zeta}\lf\|J_{m,n}^{(a,b,c)}f_k\rg\|_{L^2}^2,\label{J_vec_n}\\
\lf\|J_{0,n}^{(\leq\zeta)}f_0\rg\|_{L^2}^2:=&\sum_{a+b\leq\zeta}\lf\|J_{0,n}^{(a,b,0)}f_0\rg\|_{L^2}^2,\quad\lf\|J_{0,n}^{(\zeta)}f_0\rg\|_{L^2}^2:=\sum_{a+b=\zeta}\lf\|J_{0,n}^{(a,b,0)}f_0\rg\|_{L^2}^2.\label{J0_vec_n}
\end{align}
\end{subequations}
It is worth highlighting that there are two components to these norms. Part of the norm is the classical Sobolev norm which only involves $\pav=\vyn\pa_y
$ and $|k|$. The other part of the norm involves the boundary weight $\frac{m+n}{q}$. For $a+b+c> n$, we will not consider the boundary weight factor $\frac{m+n}{q}$ because the quantity might not be well-defined, but we can still consider the classical Sobolev part. 

We will derive the estimates for the following functional
\begin{align}
\mathcal{J}_{{ell}}^{(\ell)}(t):=\sum_{k\in\mathbb{Z}\backslash\{0\}}\ \sum_{m,n\geq 0}\ \sum_{a+b+c=\ell}\ {\bf a}_{m,n}^2 \|J_{m,n}^{(a,b,c)}\phe_k\|_{L_y^2}^2,\quad \ell\in\{1,2,3\}.\label{Gj_intro}
\end{align}
We highlight that these norms control the usual $H^{\ell}$ norm of the function. We further highlight that for the $\ell=1,2$ case, we will obtain $L_t^\infty \mathcal{J}_{ell}^{(\ell)}$-estimates whereas for the $\ell=3$ case, we will only have $L_t^2\mathcal{J}_{ell}^{(3)}$ bounds.  
\vspace{2 mm}

\ifx 
\subsection{Bootstraps and Main Propositions}


We will execute a standard bootstrap argument. Specifically, we assume the following estimates hold on some time interval $[0,T]$ and show that the same inequalities hold with `4' replaced with `2' for all $\eps$ sufficiently small, thus permitting the extension of the desired estimates on the coordinate system and vorticity for all time (see \cite{BHIW24a} and Section \ref{} for additional details).
\begin{itemize}
\item Interior profile estimates (see Section \ref{etc} for definition of the Fourier multipliers), 
\begin{align}  \label{boot:Intf}
&\sup_{0 \le t \le T} \mathcal{E}_{\text{Int}}(t) + \int_0^T [ \mathcal{CK}_{\lambda}(t) +  \mathcal{CK}_{w}(t) +  \mathcal{CK}_{M}(t) ] dt + \int_0^T \mathcal{D}_{\text{Int}}(t)  dt \leq 4 \eps^2 
\end{align}

\item Interior coordinate system estimates:
\begin{align} \label{boot:IntH}
\sup_{0 \le t \le T} \mathcal{E}_{\text{Int,Coord}}^{(h)} + \sum_{\iota \in \{\lambda, w \}}  \int_0^T \mathcal{CK}_{\text{Int,Coord}, \iota}^{(h)} dt +  \int_0^T \mathcal{D}_{\text{Int,Coord}}^{(h)} dt \leq 4 \eps^2 \\
\sup_{0 \le t \le T} \mathcal{E}_{\text{Int,Coord}}^{(g)} + \sum_{\iota \in \{\lambda, w \}}  \int_0^T \mathcal{CK}_{\text{Int,Coord}, \iota}^{(g)} dt +  \int_0^T \mathcal{D}_{\text{Int,Coord}}^{(g)} dt \leq 4 \eps^2 \\
\sup_{0 \le t \le T} \mathcal{E}_{\text{Int,Coord}}^{(\bar{h})} + \sum_{\iota \in \{\lambda, w \}}  \int_0^T \mathcal{CK}_{\text{Int,Coord}, \iota}^{(\bar{h})} dt +  \int_0^T \mathcal{D}_{\text{Int,Coord}}^{(\bar{h})} dt \leq 4 \eps^2 
\end{align}
\item Exterior vorticity estimates:
\begin{align} \label{boot:ExtVort}
\sup_{0 \le t \le T} \mathcal{E}^{(\gamma)} + \sum_{\iota \in \{\varphi, W \}} \int_0^T \mathcal{CK}^{(\gamma; \iota)}(t) dt +  \int_0^T \mathcal{D}^{(\gamma)}(t) dt \le 4 \eps^2 \\
\sup_{0 \le t \le T} \mathcal{E}^{(\alpha)} + \sum_{\iota \in \{\varphi, W \}} \int_0^T \mathcal{CK}^{(\alpha; \iota)}(t) dt +  \int_0^T \mathcal{D}^{(\alpha)}(t) dt \le 4 \eps^2\\
\sup_{0 \le t \le T} \mathcal{E}^{(\mu)} + \sum_{\iota \in \{\varphi, W \}} \int_0^T \mathcal{CK}^{(\mu; \iota)}(t) dt +  \int_0^T \mathcal{D}^{(\mu)}(t) dt \le 4 \eps^2.
\end{align}
\item Exterior coordinate system estimates: 
\begin{align} \label{boot:H}
\max_{\iota' \in \set{H,\overline{H},G}} \sup_{0 \le t \le T} \mathcal{E}^{(\gamma)}_{\iota'} + \sum_{\iota \in \{\varphi, W \}} \int_0^T \mathcal{CK}^{(\gamma; \iota)}_{\iota'}(t) dt +  \int_0^T \mathcal{D}^{(\gamma)}_{\iota'}(t) dt \le 2\eps^2 \\
\max_{\iota' \in \set{H,\overline{H},G}} \sup_{0 \le t \le T} \mathcal{E}^{(\alpha)}_{\iota'} + \sum_{\iota \in \{\varphi, W \}} \int_0^T \mathcal{CK}^{(\alpha; \iota)}_{\iota'}(t) dt +  \int_0^T \mathcal{D}^{(\alpha)}_{\iota'}(t) dt \le 2\eps^2.
\end{align}
\item Sobolev boundary estimates: 
\begin{align} \label{boot:sob}
&\sup_{0 \le t \le T} \mathcal{E}_{sob}(t) + \int_0^T \mathcal{CK}_{sob}(t) dt + \int_0^T \mathcal{D}_{sob}(t)  dt \leq 4 \eps^2. 
\end{align}
\item Cloud norm estimates:
\begin{align} \label{boot:cloud}
&\sup_{0 \le t \le T} \mathcal{E}_{\text{cloud}}(t) + \int_0^T \mathcal{CK}_{\text{cloud}}^{(\varphi)}(t) + \mathcal{CK}_{\text{cloud}}^{(W)}(t) dt + \int_0^T \mathcal{D}_{\text{cloud}}(t)  dt \leq 4 \eps^2. 
\end{align}
\end{itemize}

The main step in the proof of Theorem \ref{thm:main} is the following. 
\begin{proposition} \label{prop:boot}
For all $\zeta > 0$ sufficiently small, and all $\lambda_0 > 0$, $\exists \eps_0$ such that for all $\eps \in (0,\eps_0)$ and all $\nu \in (0,1)$, if the bootstrap hypotheses \eqref{boot:Intf},\eqref{boot:IntH},\eqref{boot:ExtVort}, and \eqref{boot:H} hold on $[0,T]$ for some $T < \nu^{-1/3-\zeta}$, then the same inequalities in fact hold with `4' replaced with `2'.
\end{proposition}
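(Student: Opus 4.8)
\noindent\emph{Proof idea.} The plan is to run the usual continuity/bootstrap argument: the hypotheses \eqref{boot:Intf}--\eqref{boot:cloud} hold with constant `4' on a maximal interval $[0,T_\ast]$, each functional involved is continuous in $t$, and the inequalities hold at $t=0$ for free --- by the support condition \eqref{in:data:1} the data lives in $(-1/4,1/4)$, where all the cutoffs $\chi_{m+n}$ entering the functionals vanish, so every vorticity functional and every coordinate-system functional is zero at $t=0$. It therefore suffices to show that on any $[0,T]$ with $T<\nu^{-1/3-\zeta}$ the same inequalities hold with `4' replaced by `2'. The guiding quantitative principle is that each genuinely nonlinear interaction carries an extra factor of $\eps$ (extracted from one of the bootstrapped quantities) together with a large negative power of $\nu$ coming from the weight $e^{W}$ in \eqref{defndW} and from the choice of $\lambda(t)$; since every functional is $O(\eps^2)$ and one integrates only up to $T\lesssim\nu^{-1/3}$, all such contributions are $\lesssim\eps^{3}\nu^{98}$ and are harmlessly absorbed once $\eps_0$ is small.

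The first block of estimates is the exterior vorticity, which is exactly Theorem \ref{thm:main:para}: one applies \eqref{lights:on:1}--\eqref{lights:on:3} with the forcing $f$ equal to the collection of genuinely nonlinear terms of the Navier--Stokes vorticity equation. On the right-hand sides the coordinate-system dissipations $\mathcal{D}^{(\iota)}_{H},\mathcal{D}^{(\iota)}_{\overline H}$ are controlled by the exterior coordinate-system bootstrap, the factors $\mathcal{E}^{(\iota)},\mathcal{D}^{(\iota)},\mathcal{CK}^{(\iota)}$ by the exterior vorticity bootstrap, and $\mathcal{E}^{(\iota)}[\omega(0)]=0$; the $\eps\nu^{99}$ (resp.\ $\eps\nu^{98}$) prefactors then make each of these $\lesssim\eps^{3}\nu^{98}$. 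The only term demanding real work is $\int_0^T|\mathcal{I}^{(\iota)}_{\mathrm{Source}}|$, for which one needs product estimates for $f$ in the degenerate pseudo-Gevrey spaces built from $B_{m,n}$, $q^n$, $\chi_{m+n}$ and $W$, fed by the interior-profile bootstrap and by the velocity bounds produced in the next step. After a Grönwall in the terms linear in $\mathcal{E}^{(\iota)}$, the outcome is $\sup_{[0,T]}\mathcal{E}^{(\iota)}+\int_0^T\big(\mathcal{D}^{(\iota)}+\sum_{\mathfrak D\in(\varphi,\lambda,W)}\mathcal{CK}^{(\iota;\mathfrak D)}\big)\le 2\eps^2$.

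The second block closes everything else. Given the (now improved) vorticity control, the elliptic regularity estimates --- the $\mathcal{F}^{(E)}_{ell}$ bound for $\Psi^{(E)}$ via the ICC operators $S^{(a,b,c)}_{m,n},J^{(a,b,c)}_{m,n}$ of \eqref{SJ}, and the $\mathcal{E}^{(I,out)}_{ell},\mathcal{E}^{(I,full)}_{ell},\mathcal{J}^{(\ell)}_{ell}$ bounds for $\phi^{(I)},\phi^{(E)}$ obtained following \cite{BM13,HI20} --- produce control of $\psi$, hence of the velocity $\nabla^\perp\psi$, in precisely the spaces needed to (i) drive the evolution equations \eqref{eq:v}, \eqref{defn:g:cap}, \eqref{defn:h:cap}, \eqref{defn:barh:cap} for $v,G,H,\overline H$ and thereby improve the interior and exterior coordinate-system bounds, together with the hypotheses \eqref{asmp}, and (ii) bound the Sobolev boundary and cloud functionals, which act as the finite-regularity fallback wherever the co-normal weight $q$ degenerates at $y=\pm1$; those Sobolev norms carry compensating powers of $\nu$, which is why the degeneracy is harmless for $t\lesssim\nu^{-1/3}$. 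Each of these is again an energy estimate whose error is quadratic in the bootstrapped quantities with favorable $\nu$ weights, hence $\le 2\eps^2$ for $\eps_0$ small, and collecting the three blocks closes the bootstrap.

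The main obstacle is the coupling together with the commutator analysis in the degenerate spaces. The velocity that transports the vorticity is recovered from the vorticity through the elliptic problem, while the coordinate change $v$ --- which defines the very vector field $\Gamma$ used to measure regularity --- is itself driven by that velocity, so the blocks above must in fact be closed simultaneously on $[0,T_\ast]$ rather than in sequence. The genuinely delicate estimates, isolated in Propositions \ref{pro:light:on}--\ref{pro:rhs:intro} and in the elliptic section, are the commutators of $\Gamma^n$, the co-normal weights $q^n$, and $\nu\Delta$: near the channel center the Gevrey index degenerates to finite regularity, so the standard Gevrey product and commutator rules are unavailable, and near the boundary the factors $(m+n)/q$ behave like derivatives; both are absorbed by the ICC scheme together with the anisotropic time-decaying radius $\varphi(t)\sim\langle t\rangle^{-1}$ in the $\Gamma$-direction, whose decay rate is calibrated so that only a controlled number of derivatives is lost over $t\in[0,\nu^{-1/3-\zeta})$ (Lemma \ref{lem:ExtToInt}). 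The restriction $T<\nu^{-1/3-\zeta}$ enters precisely here and in the requirement that $W$ still encode spatial localization, which holds only while $t\nu\lesssim1$.
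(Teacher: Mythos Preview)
The statement you are trying to prove does not actually appear in the compiled paper: Proposition~\ref{prop:boot} and the entire ``Bootstraps and Main Propositions'' subsection sit inside an \verb|\ifx ... \fi| block in the source, so they are excised at compile time. This is consistent with the paper's stated scope --- it treats only the \emph{linear} passive scalar equation \eqref{PS} and the Poisson equation \eqref{BS}, and explicitly defers the full nonlinear Navier--Stokes bootstrap to the companion work \cite{BHIW24a}. Consequently there is no proof in this paper to compare your attempt against.

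That said, your outline is structurally sensible and correctly identifies the ingredients that the present paper \emph{does} supply: Theorem~\ref{thm:main:para} (via Propositions~\ref{pro:light:on} and~\ref{pro:rhs:intro}) for the exterior vorticity energy inequalities, and Theorem~\ref{pro:ell:intro} / Proposition~\ref{pro:IE_phi_ext} for the elliptic velocity recovery through the $J^{(a,b,c)}_{m,n}$ operators. Where your sketch becomes hand-wavy is exactly where the paper stops and the companion paper must take over: the genuine product estimates for the nonlinear source $\mathcal{I}^{(\iota)}_{\mathrm{Source}}$ in the pseudo-Gevrey spaces, the closure of the interior profile estimate \eqref{boot:Intf} via the Fourier-multiplier methods of \cite{BM13,HI20}, and the actual energy estimates for the Sobolev boundary, trace, and cloud functionals (which are only \emph{defined} in the excluded block, not estimated anywhere in this paper). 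Your remark that ``the initial data make all exterior functionals vanish at $t=0$'' is also not quite right: $\chi_0\equiv 1$, so $\mathcal{E}^{(\gamma)}_{0,0;k}[\omega(0)]=\|\omega_{\mathrm{in},k}e^{W(0)}\|_{L^2}^2$ is not zero --- the smallness at $t=0$ comes from the size-$\eps$ assumption on the data, not from support considerations.
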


Proposition \ref{prop:boot} along with the standard well-posedness theory implies that the estimates \eqref{boot:Intf},\eqref{boot:IntH},\eqref{boot:ExtVort}, and \eqref{boot:H} hold on $[0,\nu^{-1/3-\zeta}]$. Denote $t_\ast = \nu^{-1/3-\zeta}$.  
At $t_\ast$ we then deduce the estimates
\begin{align*}
\norm{\omega_0(t_\ast)}_{H^4} \lesssim \eps \\
\norm{\omega_{\neq}(t_\ast)}_{H^4} \lesssim \nu^{10} e^{\delta' \nu^{1/3} t_\ast},  
\end{align*}
for some small $\delta' > 0$.
The results of \cite{BHIW23I} then imply the following global-in-time estimates for $t \geq t_\ast$ (possibly after adjusting $\delta'$) 
\begin{align*}
\norm{\omega_0(t)}_{L^2} & \lesssim e^{-\nu(t-t_\ast)}\eps \approx e^{-\nu t} \eps \\ 
\norm{\omega_{\neq}(t)}_{L^2} & \lesssim \nu^{10} e^{\delta' \nu^{1/3} t}; 
\end{align*}
Combined with the estimates proved in Proposition \ref{prop:boot}, these imply Theorem \ref{thm:main}. 

\subsection{Proof of Proposition \ref{prop:boot} \& Sub-Propositions}
In this section we briefly outline the strategy to proving Proposition \ref{prop:boot}.   

There are really four main kinds of estimates in the proof.
The first kind are those that are done in the $(z,v)$ coordinate system and use the Fourier multiplier methods of \cite{BM13,HaoIonescu}; namely, the inequalities \eqref{boot:Intf} and \eqref{boot:IntH}. 
The second kind are high regularity estimates on the exterior vorticity and coordinate system variables which simultaneously quantify the gain in Gevrey index away from the interior, the strong localization away from the boundary, and Gevrey regularity with respect to the almost-commuting vector field $\Gamma_t$ that is adapted to match the $\partial_v$ derivatives. However, these estimates are done with the co-normal weight $q$, and so regularity is not obtained all the way to the boundary.  
These are the family of inequalities \eqref{boot:ExtVort} and \eqref{boot:H}.
The next family are the Sobolev boundary estimates, which propagates strong localization away from the boundary and finite regularity all the way to the boundary.
Hence, these estimates must deal with the boundaries in a more fundamental way, but have the advantage that we only need a few derivatives.
These are \eqref{boot:sob}.
The last family of inequalities are \eqref{boot:cloud}, which are the Sobolev cloud norms. These estimates are used to obtain \red{What exactly?}

\jacob{I am not sure really how to proceed...}
\fi

\subsection{Main Theorem: Elliptic Estimates} \label{sec:elliptic}
Note that as a consequence of the assumptions in made in \eqref{asmp}, there holds 
\begin{align}
\sum_{n=0}^{\infty}B_{0,n}^2\varphi^{2n}\|J^{(\leq 1)}_{0,n}(v_y^2-1) \|_{L^2(\text{supp}\wt\chi_1)}^2\lesssim&\exp\{-\nu^{-1/6}\}.
\end{align}

\begin{theorem}[Section \ref{sec:Elliptic}]  \label{pro:ell:intro} Assume  that the hypotheses \eqref{asmp} hold and recall the definitions \eqref{E_Int}.
 The $\mathcal{J}_{ell}$ functionals satisfy the following bounds: 
\begin{align}
\mathcal{J}_{ell}^{(1)} \lesssim& e^{-\nu^{-1/9}}\lf( \mathcal{E}^{(\gamma)}[\ww] +\lf(\mathcal{E}_{H}^{(\gamma)}+\mathcal{E}_{H}^{(\al)}\rg)\mathcal{E}_{\mathrm{Int}}^{\mathrm{low}}[\ww]\rg), \label{jell:1}\\
\mathcal{J}_{ell}^{(2)} \lesssim& e^{-\nu^{-1/9}}\lf(\mathcal{E}^{(\gamma)}[\ww] +\lf(\mathcal{E}_{H}^{(\gamma)}+\mathcal{E}_{H}^{(\al)}\rg)\mathcal{E}_{\mathrm{Int}}^{\mathrm{low}}[\ww]\rg), \label{jell:2}\\
\mathcal{J}_{ell}^{(3)} \lesssim& e^{-\nu^{-1/9}}  \mathcal{D}^{(\gamma)}[\ww] +e^{-\nu^{-1/9}}\sum_{\iota\in\{\al,\gamma\}}\lf(\mathcal{E}_{H}^{(\iota)}+\mathcal{D}_{H}^{(\iota)}\rg)\lf(\mathcal{E}^{(\gamma)}[\ww]
+\mathcal{E}_{\mathrm{Int}}^{\mathrm{low}}[\ww]\rg). \label{jell:3}
\end{align}
The $\mathcal{E}_{ell}$ functionals satisfy the following bounds: 
\begin{align} 
\mathcal{E}_{ell}^{(I, out)}(t)  \lesssim &  _\mathfrak{n}\frac{1}{ \lan t\ran^2(|k|t)^{2\mathfrak{n}}}\mathcal{E}_{\mathrm{Int}}^{\mathrm{low}}[\ww];\label{eell:out} \\
\mathcal{E}_{ell}^{(I, full)}(t) \lesssim & \frac{1}{\langle t \rangle^4} \mathcal{E}_{\mathrm{Int}}^{\mathrm{low}}[\ww].  \label{eell:sob} 
\end{align}
Finally, the $\mathcal{F}_{ell}^{(E)}$ functional satisfies the following bounds: 
\begin{align} 
\mathcal{F}_{ell}^{(E)}(t) \lesssim &  e^{-\nu^{-1/8}}\lf(\mathcal{E}^{(\gamma)}[\ww] + \mathcal{E}_{\mathrm{Int}}^{\mathrm{low}}[\ww] \right)\lf(1+\lf(\mathcal{E}_H^{(\al)}\rg)^2 +\lf(\mathcal{E}^{(\gamma)}_H\rg)^2\rg). \label{fell:e}
\end{align}
\end{theorem}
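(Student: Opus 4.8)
\emph{Proof strategy.} We will argue decomposition by decomposition: $\psi_k = \phi^{(I)}_k + \phi^{(E)}_k$ for \eqref{jell:1}--\eqref{jell:3} and \eqref{eell:out}--\eqref{eell:sob}, and $\psi = \Psi^{(I)} + \Psi^{(E)}$ for \eqref{fell:e}. In each case the ``interior'' pieces $\phi^{(I)}_k,\Psi^{(I)}$ — governed by the perturbed Helmholtz/Poisson problems \eqref{d:phi:I:in}, \eqref{dmp_int1_intr} with essentially interior-supported forcing — will be controlled by elliptic regularization together with inviscid damping, while the ``exterior'' pieces $\phi^{(E)}_k,\Psi^{(E)}$, solving \eqref{d:phi:E:in}, \eqref{dmp_int2_intr}, will be controlled by a weighted elliptic energy estimate based on the $ICC$-operators \eqref{SJ}. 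The organizing principle is that every source term in \eqref{d:phi:E:in} is supported on $\supp\wt\chi_1$, and every source term in \eqref{dmp_int2_intr} is supported on $\supp\chi^E\subset\{\abs v\ge\tfrac12\}$ (since $H^I-H=-\chi^E H$); on both of these sets one has $W\gtrsim\nu^{-2/3+\eta}$ for all $t\le\nu^{-1/3-\eta}$, so that writing $1=e^{W}e^{-W}$ and absorbing an arbitrary polynomial power of $\nu$ produces the gains $e^{-\nu^{-1/9}}$, resp.\ $e^{-\nu^{-1/8}}$ — this is quantified by Lemma \ref{lem:ExtToInt}. We will also use the preliminary estimate displayed before the theorem, namely that $v_y^2-1$ is exponentially small in the $\bold a$-weighted Gevrey-$s$ norm on $\supp\wt\chi_1$, a consequence of \eqref{asmp} (in particular \eqref{v_y_sob} and the $\mathcal E_H^{(\iota)}$ bounds).

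\emph{Interior estimates \eqref{eell:out}, \eqref{eell:sob}.} Since the effective forcing $\wt\chi_1^{\mf c}w_k$ in \eqref{d:phi:I:in} vanishes on $\{\wt\chi_1\equiv1\}$, a neighbourhood of $\supp\chi_1$ in the $v$-variable, interior elliptic regularity makes $\phi^{(I)}_k$ real-analytic on $\supp\chi_1$ with a fixed positive radius and with norm $\lesssim\|w_k\|_{L^2}$; for $\lambda$ small this places $\phi^{(I)}_k$ in the doubled-radius Gevrey scale $\widehat\lambda=2\lambda$ of $\mathcal E_{ell}^{(I, out)}$. (The variable-coefficient perturbation $\wt\chi_1^{\mf c}\big((1-v_y^2)\pa_v^2-v_y(\pa_v v_y)\pa_v\big)\phi^{(I)}_k$, small by \eqref{v_y_sob}, is handled by a Neumann series at the cost of an arbitrarily small power of $\lambda$.) For the temporal decay we use the factorization
\[
\pa_v^2-\abs k^2=(\pa_v+ikt)(\pa_v-ikt)-\abs k^2\brak{t}^2=\Gamma_k(\pa_v-ikt)-\abs k^2\brak{t}^2,
\]
rewriting \eqref{d:phi:I:in} as $\abs k^2\brak{t}^2\,\phi^{(I)}_k=\Gamma_k(\pa_v-ikt)\phi^{(I)}_k-\wt\chi_1^{\mf c}w_k-\cdots$ and iterating $\mathfrak n+1$ times: each pass gains $(\abs k^2\brak{t}^2)^{-1}$ and costs two co-moving ($\Gamma_k$-) derivatives of $w_k$, which $\mathcal E_{\mathrm{Int}}^{\mathrm{low}}[w]$ affords, the residual high-order term being absorbed via the analyticity just noted; the $\varphi^{n+1}$ weights in $\widehat{\bold a}$ absorb the $(\abs kt)^n$ produced by $\Gamma_k^n$ on $v$-functions, and one is left with $\brak{t}^{-2}(\abs kt)^{-2\mathfrak n}$ (the constant depending on $\mathfrak n$), i.e.\ \eqref{eell:out}. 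The bound \eqref{eell:sob} is the same argument with a fixed number of iterations and no $\chi_1$-cutoff — only $\le1000$ derivatives are needed, so the $(v_y\pa_v)^2$-term is absorbed by smallness alone.

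\emph{Exterior estimates \eqref{jell:1}--\eqref{jell:3} and \eqref{fell:e}.} For \eqref{jell:1}--\eqref{jell:3} we apply $\chi_{m+n}\abs k^m q^n\Gamma_k^n$ to \eqref{d:phi:E:in}, pair with $\chi_{m+n}\abs k^m q^n\Gamma_k^n\phi^{(E)}_k$, and integrate by parts, so the coercive part $\|\pa_y(\cdot)\|_{L^2}^2+\abs k^2\|\cdot\|_{L^2}^2$ controls the $\mathcal J_{ell}^{(\ell)}$-summand. Three families of right-hand side terms appear. (i) The commutators $[\pa_y^2-\abs k^2,\ \chi_{m+n}q^n\Gamma_k^n]$ are, by construction, linear combinations of $ICC$-operators $S^{(a,b,c)}_{m,n},J^{(a,b,c)}_{m,n}$ applied to $\phi^{(E)}_k$, absorbed into the coercive term and the dissipation $\mathcal D^{(\gamma)}$ by Young's inequality — the $ICC$ method. (ii) The direct forcing collapses to $\chi_{m+n}\abs k^m q^n\Gamma_k^n w_k$, since $\supp\chi_{m+n}$ lies, in the $v$-coordinate, inside $\{\wt\chi_1\equiv1\}$, which kills every $\wt\chi_1$-derivative term, and $e^{-W}$-smallness on $\supp\chi_{m+n}$ then yields $e^{-\nu^{-1/9}}\mathcal E^{(\gamma)}[w]$ — replaced by $e^{-\nu^{-1/9}}\mathcal D^{(\gamma)}[w]$ in \eqref{jell:3}, where the third-order level forces a wall-normal derivative $\pa_y w$ that is available only through the dissipation (whose time integral, not its supremum, is controlled), hence the $L^2_t$-type character there. (iii) The correction $\wt\chi_1\big((1-v_y^2)\pa_v^2-v_{yy}\pa_v\big)\phi^{(I)}_k$: Leibniz-expanding $q^n\Gamma_k^n$ and invoking the Gevrey product inequality (favorable because $s>1$, so the combinatorial weights are bounded), we pair the coefficient — exponentially small in the $\bold a$-weighted norm on $\supp\wt\chi_1$, by the preliminary estimate — against $\phi^{(I)}_k$, which on $\supp\chi_{m+n}$ is governed by the doubled-radius $\mathcal E_{ell}^{(I, out)}$; the product lands back in the Gevrey-$\lambda$ scale, $v_{yy}=\pa_y H$ is carried by $\mathcal E_H^{(\alpha)}$, and $\mathcal D_H^{(\iota)}$ enters \eqref{jell:3} for the same $L^2_t$ reason. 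Summing over $(m,n,k)$ and absorbing gives \eqref{jell:1}--\eqref{jell:3}. Estimate \eqref{fell:e} follows from the identical scheme applied to \eqref{dmp_int2_intr}, now in the coarser Gevrey-$r$ scale: all sources sit on $\{\abs v\ge\tfrac12\}$, whence $e^{-\nu^{-1/8}}$; the $H$-dependent coefficients are measured by $\mathcal E_H^{(\alpha)},\mathcal E_H^{(\gamma)}$ (their quadratic occurrence reflecting the product structure of $(1+H^I)^2-(1+H)^2$); and $\Psi^{(I)}$ on $\{\abs v\ge\tfrac12\}$ is made real-analytic by interior elliptic regularity for \eqref{dmp_int1_intr} — which, up to a small perturbation, is Poisson's equation with forcing $\chi^I w\approx w$ concentrated near $\abs y\le\tfrac14$, at a fixed distance from $\{\abs v\ge\tfrac12\}$ — with all derivatives bounded by $\|w\|_{L^2}\lesssim\mathcal E_{\mathrm{Int}}^{\mathrm{low}}[w]$, which dominates the fixed Gevrey-$r$ scale.

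\emph{Main obstacle.} The technical heart is the $ICC$-commutator estimate in (i): commuting $q^n\Gamma_k^n\chi_{m+n}$ through $\pa_y^2-\abs k^2$ accumulates the degenerate combination $\tfrac{m+n}{q}$ (which blows up at the wall) together with $\abs k$ and $t$, and one must show that each resulting $S^{(a,b,c)}_{m,n}$- and $J^{(a,b,c)}_{m,n}$-term — bookkept by the level $a+b+c=\ell\in\{1,2,3\}$ and the index sets $\mathbb S_n^\ell,\mathring{\mathbb S}_n^\ell$ — is absorbable into the coercive/dissipative part uniformly in $(m,n,k)$ and over the whole window $t\lesssim\nu^{-1/3-\eta}$. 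This is compounded by the lack of a clean product rule in the pseudo-Gevrey spaces, whose regularity degenerates toward the channel center: the products of the coefficient with $\phi^{(I)}_k$ and with $\Psi^{(I)}$ in (iii) must be done by hand, and it is precisely the pairing of the doubled Gevrey radius $\widehat\lambda=2\lambda$ with the exponential smallness of the coordinate coefficients on $\supp\wt\chi_1$ (resp.\ $\supp\chi^E$) that makes these close — and with them the theorem.
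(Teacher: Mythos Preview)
Your strategy for \eqref{jell:1}--\eqref{jell:3} is essentially the paper's: the paper derives \eqref{T123}, isolating the direct forcing $\chi_{m+n}|k|^m q^n\Gamma_k^n\wt w_k^{(E)}$, the interior correction $\mathbb{C}_{m,n}^{(I)}$, and the three commutator families $\mathbb{T}_{j;m,n}$, then closes via the $ICC$ machinery (Lemmas~\ref{lem:cm_J} and~\ref{lem:induction}) exactly as in your (i)--(iii). One technical difference: rather than pairing, the paper uses the maximal-regularity Lemma~\ref{lem:max_reg} to pass directly from the $L^2$ forcing to $\dot H_k^2$ control, which is what is needed to reach the $\ell=2,3$ levels of $\mathcal J_{ell}^{(\ell)}$. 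For \eqref{eell:out}--\eqref{eell:sob} your factorization route could be made to work, but the paper instead uses the explicit Green's function \eqref{greensRep}: the strict separation $\mathrm{dist}(\supp R_k^{(I)},\supp\chi_\ast)\ge c>0$ lets every $\pa_v$ hit the real-analytic kernel, and the $(\abs k t)^{-\mathfrak n}$ damping comes from writing $1=(-ikt)^{-\mathfrak n}\pa_{v'}^{\mathfrak n}e^{-iktv'}\cdot e^{iktv'}$ and integrating by parts in $v'$.

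There is, however, a genuine gap in your treatment of \eqref{fell:e}. You claim that $\Psi^{(I)}$ is real-analytic on $\{\abs v\ge\tfrac12\}$ by interior elliptic regularity for \eqref{dmp_int1_intr}, asserting that the forcing ``$\chi^I w\approx w$ [is] concentrated near $\abs y\le\tfrac14$, at a fixed distance from $\{\abs v\ge\tfrac12\}$''. But by \eqref{chi:I:def} one has $\supp\chi^I\subset\{\abs v\le\tfrac34\}$, which \emph{overlaps} $\{\abs v\ge\tfrac12\}$; and the norm $\mathcal E_{\mathrm{Int}}^{\mathrm{low}}[w]$ appearing on the right of \eqref{fell:e} carries no $e^W$ weight, so nothing in the hypotheses forces $w$ to be localized near $\abs y\le\tfrac14$. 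Hence the forcing for $\Psi^{(I)}$ is \emph{not} separated from the region where you want analyticity, and interior elliptic regularity does not apply.

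The paper's repair is to never estimate $\Psi^{(I)}$ directly: it substitutes $\Psi^{(I)}=\psi-\Psi^{(E)}$ into \eqref{dmp_int2_intr}, producing the reformulation \eqref{PhiE} whose source terms involve either $\Psi^{(E)}$ itself (absorbed via smallness of $H^E$) or the full stream function $\psi$ multiplied by $H^E$-type coefficients supported on $\{\abs v\ge\tfrac12\}$. One then controls $\psi$ on $\{\abs v\ge\tfrac12\}$ via the \emph{other} decomposition $\psi_k=\phi_k^{(I)}+\phi_k^{(E)}$ and the already-established bounds \eqref{jell:2}, \eqref{eell:out}; the $\mathcal E_{\mathrm{Int}}^{\mathrm{low}}[w]$ contribution enters precisely through $\phi_k^{(I)}$, whose forcing lives in $\supp\wt\chi_1^{\mf c}\subset\{\abs v\le\tfrac38-\tfrac1{80}\}$ and therefore \emph{is} strictly separated from $\{\abs v\ge\tfrac12\}$.
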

\begin{remark} The bounds above are split into three groups. Informally, the first, \eqref{jell:1} -- \eqref{jell:3} measure information moving from exterior to exterior. The second, \eqref{eell:out} -- \eqref{eell:sob} captures interior to exterior. The third, \eqref{fell:e} captures exterior to interior.  
\end{remark}


 \label{sec:Outline}


\section{Parabolic Regularity Estimates} \label{sec:ext:lin}
In this section, we prove the $(\gamma, \alpha, \mu)$ energy inequalities appearing in Proposition \ref{pro:light:on}.


\subsection{Definitions of the Mode-by-mode Functionals}\label{sec:k_by_k_func}

\ifx\sameer{The ``bootstrap" in the hypocoercivity argument is a bit different: we induct on $n$. Not sure if we should be stating this in the beginning \dots}
\ifx We introduce the following notation: 
\begin{definition} Let $j = 0, 1$, $k \in \mathbb{Z}$, and $n \in \mathbb{N}$. The quantities $\mathcal{D}^{(j)}_{m,n;k}(t)$, $\mathcal{E}^{(j)}_{m,n;k}(t)$ refer to a quantity such that 
\begin{align} \label{defn:I}
\int_0^{T} \sum_{k \in \mathbb{Z}} |\mathcal{D}^{(j)}_{m,n; k}(t)|^2 dt \le& \| \omega \|_{X^j_{ m,n}}^2, \\
\sup_{0 \le t \le T} \sum_{k \in \mathbb{Z}} |\mathcal{E}^{(j)}_{m,n;k}(t)|^2 \le & \| \omega \|_{X_{m, n}^{j}}^2. 
\end{align}
\siming{Siming: I find the definition here a little confusing. Need to clarify the meaning here.}
\end{definition}
\fi
\fi

\ifx
This section will lay out the general hypocoercivity functional adapted to the exterior region and sketch the strategy to derive nonlinear enhanced dissipation. The task is to estimate terms that appear when one takes the time derivative of the hypocoercivity functional. We will only highlight the main estimates in this section to sketch the ideas. However, some estimates are technical and challenging, and we will postpone their proofs to Section \ref{...} and Section {...}.  

\siming{Before diving into the actual hypocoercivity functional \eqref{Hypo_Fnc}, we first use a simple example to motivation our definition. We consider the following passive scalar equation in the channel 
\begin{align} 
\pa_t f_\nq+ \overline{U}_0^x(t,y) \pa_x f_\nq=\nu\de f_\nq,\quad f(t,x,y=\pm 1)=0,\quad f_{\nq}(t=0)=f_{\text{in};\nq}.
\end{align}
Here $\overline{U}^x_0 (t,y)=y+U^x_0(t,y)$. 
The Dirichlet boundary  condition $f(t,x,y=\pm 1)\equiv 0$ yields boundary constraint
\begin{align}
\pa_{yy}f_k(t,y=\pm 1)=0.\label{ext_cnd}
\end{align}
The classical Hypocoercivity functional is defined as follows
\begin{align}\mathcal{F}[f_k]=\alpha\nu^{2/3}|k|^{-2/3}\|\pa_{y}f_k\|_2^2+2\beta\nu^{1/3}|k|^{-4/3}k\mathrm{Re}\langle i (1+\pa_y U_0^x) f_k,\partial_y f_k\rangle+\gamma \|f_k\|_2^2.
\end{align}
Here $\al, \beta,\gamma$ are three parameters to be chosen. The inner product $\lan \cdot, \cdot\ran$ is defined as $\lan f,g\ran=\int_{-1}^1f\overline{g}dy. $ The time-dependent shear flow $y+U_0^x$ is strictly monotone, and $\|\pa_y U_0^x\|_\infty<1/2.$ One choose $\al, \gamma$ large enough so that the following equivalence relation holds
\begin{align}
\mathcal{F}[f_k]\approx \|f_k\|_2^2+\nu^{2/3}|k|^{-2/3}\|\pa_{y}f_k\|_2^2.\label{equiv}
\end{align}
Now we take the time derivative of the above functional and use the boundary conditions $f_k(t,y=\pm 1)=\pa_{yy} f_k(t,y=\pm 1)=0$, \eqref{ext_cnd}, and end up with the following
\begin{align}
\frac{d}{dt}&\mathcal{F}[f_k]\\
\leq&-2 \al \nu^{5/3} |k|^{-2/3}\|\pa_{yy}f_k\|_2^2+2\al \nu^{2/3} |k|^{1/3}\|\pa_y\overline{U}^x_0\|_\infty\| f_k\|_2\|\pa_{y}f_k\|_2\\
&+2\beta \nu^{1/3}|k|^{-4/3}k\mathrm{Re}\lan i (\nu\pa_{yy} f_k-\overline{U}^x_0 ik f_k),\pa_y f_k\ran+2\beta \nu^{1/3}|k|^{-4/3}k\mathrm{Re}\lan i f_k,\pa_y (\nu\pa_{yy} f_k-\overline{U}^x_0ik f_k)\ran\\
&-2\gamma \nu\|\pa_y f_k\|_2^2\\
\leq&-\al 2 |k|^{-2/3}\nu^{5/3}\|\pa_{yy}f_k\|_2^2+2\al \nu^{2/3} \|1+\pa_y{U}_0^x\|_\infty^{1/2}|k|^{1/3}\| \sqrt{1+\pa_y U_0^x}f_k\|_2\|\pa_{y}f_k\|_2\\
&-2\beta \nu^{1/3}|k|^{2/3}\lf\|\sqrt{1+\pa_y{U}_0^x}f_k\rg\|_2^2+4\nu^{4/3}\beta |k|^{-1/3}\|\pa_{yy} f_k\|_2 \|\pa_y f_k\|_2  -2\gamma \nu \|\pa_y f_k\|_2^2\\
\leq&- \al |k|^{-2/3}\nu^{5/3}\|\pa_{yy} f_k\|_2^2- \beta \nu^{1/3}|k|^{2/3}\lf\|\sqrt{1+\pa_y{U}_0^x} f_k\rg\|_2^2-\lf(2\gamma-C\frac{\al^2}{\beta}\|1+\pa_y{U}_0^x\|_\infty-C\frac{\beta^2}{\al}\rg)\nu\|\pa_yf_k\|_2^2.
\end{align} 
Now we choose the $\gamma=\gamma(\|1+\pa_yU_0^x\|_\infty,\al, \beta)$ large enough, recall the relation \eqref{equiv}, and apply the estimate $\|\pa_yU^x_0\|_\infty\leq \frac{1}{2}$ to get decay estimate, i.e., 
\begin{align}
\frac{d}{dt}\mathcal{F}\leq -\frac{1}{C}\nu^{1/3}|k|^{2/3}(\nu^{2/3}|k|^{-2/3}\|\pa_y f_k\|_2^2+\|f_k\|_2^2)\leq- \frac{1}{C}\nu^{1/3}|k|^{2/3}\mathcal{F}.
\end{align}
This is the general Hypocoercivity argument for strictly monotone shear flow in the channel. The remaining part of the section is to apply this idea on a wide spectrum of regularity levels. 
}
\fi

We introduce some notations which will be in use throughout this section. First, compared to \eqref{ef:a} -- \eqref{ef:c}, \eqref{df:a} -- \eqref{df:c}, we want to introduce notations for each fixed index, $(m,n)$ and frequency $k$. Therefore, we supply the following: 
\begin{definition} Let $k \in \mathbb{Z}$, and $m, n \in \mathbb{N}$. Consider the quantity \begin{align}\mathring{\omega}_{m,n;k}:= |k|^m q^n \Gamma_k^n \omega_k,
\end{align} where $q$, $\Gamma_k$ are defined in  \eqref{q:defn} and \eqref{defn:Gamma}, respectively. Further recall the coefficients  $\bold{a}_{m,n}$ \eqref{a:weight}. We define an energy to capture the size of $\mathring{\omega}_{m,n;k}$,
\begin{align}  \n \mathcal{E}_{m,n;k} :=& \myr{c_\al}\mathcal{E}^{(\alpha)}_{m,n;k}+\mathcal{E}^{(\mu)}_{m,n;k} + \mathcal{E}^{(\gamma)}_{m,n;k}   \\ &\hspace{-1.7cm} :=  \myr{c_\al}\bold{a}_{m,n}^2\nu\|\pa_y\mathring{\omega}_{m,n;k} e^W\chi_{m+n}\|_{L^2}^2+\bold{a}_{m,n}^2\nu|k|^{2}\|\mathring{\omega}_{m,n;k} e^W\chi_{m+n}\|_{L^2}^2   
 +\bold{a}_{m,n}^2\|\mathring{\omega}_{m,n;k} e^W\chi_{m+n}\|_{L^2}^2\label{defn:F:1}.  
\end{align}
Here $\chi_{m+n}$ and $W$ are defined in \eqref{chi} and \eqref{defndW}, respectively. The parameter $c_\al\in[\frac{1}{16},\frac{1}{4}]$ is a universal constant chosen in Proposition \ref{pro:prf_pr22}. 
The following quantities naturally appear when one takes the time derivatives of the energy \eqref{defn:F:1} above:

\noindent
a) Diffusion terms $\mathcal{D}$:  
\begin{align}
\n  \mathcal{D}_{m,n;k} :=&   \mathcal{D}^{(\alpha)}_{m,n;k} +   \mathcal{D}^{(\mu)}_{m,n;k}+ \mathcal{D}^{(\gamma)}_{m,n;k} \\ \n
  :=&\bold{a}_{m,n}^2\nu^{2}\|\na_k\pa_y\mathring{\omega}_{m,n;k}e^W\chi_{m+n}\|_{L^2}^2  +\bold{a}_{m,n}^2\nu^{2}|k|^{2}\|\na_k\mathring{\omega}_{m,n;k}e^W\chi_{m+n}\|_{L^2}^2\\
 &+\bold{a}_{m,n}^2\nu\|\na_k \mathring{\omega}_{m,n;k}e^W\chi_{m+n}\|_{L^2}^2.\label{defn:D:1}
\end{align}
Here $\na_k:=(ik,\pa_y).$
\siming{\footnote{The $\mathcal{D}$ terms here and the ones used in the elliptic section are different. But up to lower order terms, they should be the same. (Checked.)}}

\noindent
b) Cauchy-Kovalevskaya terms $\mathcal{CK}$:\begin{subequations}\label{CK_H}
\begin{align}
\mathcal{CK}_{m,n;k}^{(\iota;\varphi)}:=&-\frac{(1+n)\dot \varphi}{\varphi}\ \mathcal{E}^{(\iota)}_{m,n;k},\quad \iota\in\{\al,\mu,\gamma\},\label{CK_phi_B_1}\\
\mathcal{CK}_{m,n;k}^{(\al; W)}:=& \bold{a}_{m,n}^2\nu\left\|\sqrt{ -\pa_t W  }\pa_y \mathring{\omega}_{m,n;k}e^W\chi_{m+n}\right\|_{L^2}^2,\label{CK_W_al_1}\\ 
\mathcal{CK}_{m,n;k}^{(\mu; W)}:=& \bold{a}_{m,n}^2\nu|k|^{2}\lf\|\sqrt{-\pa_t W}\mathring{\omega}_{m,n;k}e^W\chi_{m+n}\rg\|_{L^2}^2,\label{CK_W_mu_1}\\ 
\mathcal{CK}_{m,n;k}^{(\gamma; W)}:=&\bold{a}_{m,n}^2\lf\|\sqrt{-\pa_t W}\mathring{\omega}_{m,n;k}e^W\chi_{m+n}\rg\|_{L^2}^2.\label{CK_W_ga_1}
\end{align}
\end{subequations}
\ifx
\noindent
c) Hypocoercivity gain $\mathcal{G}$:
\begin{align}
\mathcal{G}_{m,n;k}:=\bold{a}_{m,n}^2\nu^{\frac{1}{3}}|k|^{\frac{2}{3}}\lf\|\sqrt{1+\pa_y U_0^x}\mathring{\omega}_{m,n;k}e^W\chi_{m+n}\rg\|_{L^2}^2. \label{G}
\end{align}
\fi
\end{definition} 

This concludes the subsection. The remaining part of the section is organized as follows: In subsection \ref{sec:comstr}, we derive the fundamental equation for the higher conormal derivatives of the solution $\mr\omega_{m,n;k}$; in subsection \ref{sec:amg}, we estimate the time derivative of each component of the energy \eqref{defn:F:1}; in subsection \ref{sec:con}, we conclude the proof of the key proposition; in the last subsection \ref{sec:techlem}, we provide some technical lemmas. 


\subsection{Commutator Structures}\label{sec:comstr}
We derive now the equation satisfied by $\mathring{\omega}_{m,n;k}$.
\ifx{\color{blue}\begin{lemma} The following identity is valid
\begin{align} \label{rep1}
(q(y) \Gamma_k)^n   = q^n \Gamma_k^n + \sum_{m = 1}^{n-2} b_{m}^{n}(y) q^{n-m} \Gamma_k^{n-m} + | q'(y) |^{n-1} q(y) \Gamma_k,
\end{align}
for functions $b_{m,n}(y)$ that are given by 
\begin{align} \label{b:exp}
b_{m}^{n}(y) := & \alpha^n_m |q'(y)|^{m} + \widetilde{b}_m^n(y).
\end{align}
The functions $\widetilde{b}_m^n(y)$ satisfy the bounds: 
\begin{align}
\text{bounds on $\widetilde{b}_m^n(y)$ \dots}
\end{align}  
and the coefficients $\alpha^n_m$ satisfy the asymptotic behavior 
\begin{align} \label{bdsalphanm}
\alpha^n_m \sim \begin{cases} n^{2m} \qquad 1 \le m \le n-2 \\ 1 \qquad m = n-1 \end{cases} 
\end{align}
\end{lemma}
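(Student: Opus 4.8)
The identity is combinatorial, and the plan is to prove it by induction on $n$, isolating first the purely combinatorial part of the coefficients and then bounding the remainder $\widetilde b_m^n$ by a parallel induction. Write $\pav := v_y^{-1}\partial_y$, so that $\Gamma_k = \pav + ikt$ (cf.\ \eqref{defn:Gamma}) and $[\Gamma_k, q]$ is multiplication by the function $\pav q = v_y^{-1}q'$. I would look for an expansion $(q\Gamma_k)^n = \sum_{l=1}^{n} P_l^n(y)\, q^l\Gamma_k^l$ with $P_n^n\equiv 1$ and $P_1^1\equiv 1$; multiplying on the left by $q\Gamma_k$, using $\Gamma_k(Fh) = (\pav F)h + F\,\Gamma_k h$ for a function $F$ together with $\Gamma_k q^l\Gamma_k^l = q^l\Gamma_k^{l+1} + l\, q^{l-1}(\pav q)\,\Gamma_k^l$, and collecting the operators $q^l\Gamma_k^l$, one arrives at the recursion
\[
P_l^{n+1} \;=\; P_{l-1}^n \;+\; l\,(\pav q)\,P_l^n \;+\; q\,\pav P_l^n ,\qquad 1\le l\le n+1 ,
\]
with $P_0^n = P_{n+1}^n = 0$. (An extra factor of $q\le 1$ created by the last term is absorbed harmlessly into the coefficient.) Note that the rightmost $\Gamma_k$ in $(q\Gamma_k)^n$ can never be cancelled, which is why the expansion starts at $l=1$ and not at $l=0$.

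Next I would separate the combinatorics from the coordinate corrections. In the idealized case $v_y\equiv 1$ with $q$ exactly piecewise linear, one has $\pav q\equiv q'$ locally constant and $\pav P_l^n\equiv 0$, and the recursion reduces to $\alpha^{n+1}_{l} = \alpha^n_{l-1} + l\,\alpha^n_l$ --- the recursion for the Stirling numbers of the second kind. Accordingly I would set $\alpha^n_m := S(n,n-m)$ (with the sign of $(q')^m$ absorbed, which is why $|q'|$ appears in the statement) and recall the classical fact that $S(n,n-m) = \tfrac{1}{2^m m!}\,n^{2m} + O_m(n^{2m-1})$ for fixed $m$, while $S(n,1)\equiv 1$; this gives exactly the stated asymptotics for $\alpha^n_m$. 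The lowest term closes on itself, $P_1^{n+1} = (\pav q)P_1^n + q\,\pav P_1^n = \pav(q P_1^n)$, telescoping in the idealized case to $P_1^n = (q')^{n-1}$ --- hence the separately displayed term $|q'|^{n-1}q\,\Gamma_k$. (Equivalently one can expand $(q\Gamma_k)^n$ directly over which of the $n$ copies of $\Gamma_k$ differentiate which of the $q$'s, reproducing the Stirling numbers, but the inductive form is the convenient one for the error bounds.)

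Finally, I would define $\widetilde b_m^n := P_{n-m}^n - \alpha_m^n|q'|^m$ and estimate it by running the same recursion with $\pav q$ split as $q' + (v_y^{-1}-1)q'$. The recursion for $\widetilde b_m^n$ is then driven by: (i) the coordinate correction $(v_y^{-1}-1)q'$, controlled by $H = v_y - 1$ (see \eqref{defn:h:cap}) and small under the hypotheses \eqref{asmp}; (ii) derivatives of the coefficient $v_y^{-1}q'$, which are either proportional to derivatives of $H$ or supported in the fixed transition intervals of $q$ (where $q''\neq 0$); and (iii) the feedback term $q\,\pav P_l^n$, which is of strictly lower order. One checks along the way that $\widetilde b_m^n\equiv 0$ on the bulk $\{q\equiv 1\}$. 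Closing the induction while retaining the combinatorial weight $\lesssim n^{2m}$ --- up to a constant depending on how many $y$-derivatives of $\widetilde b_m^n$ must be propagated in the applications --- and relabeling $l = n-m$ then yields the identity together with the asymptotics.

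The hard part will be this last step: controlling $\widetilde b_m^n$ through the induction while simultaneously propagating bounds on several of its $y$-derivatives (needed because both $\Gamma_k$ and $q^l$ get differentiated further downstream) and not losing more than the combinatorial factor $n^{2m}$, which forces the use of the Stirling asymptotics with explicit constants uniformly in $n$ rather than merely asymptotically. A secondary, purely bookkeeping point is reconciling the true commutator coefficient $v_y^{-1}q'$ with the $|q'|$ in the statement: the sign (note $q'<0$ near $y=1$) and the $v_y$-factor are precisely what $\widetilde b_m^n$ absorbs, and strictly speaking such a remainder must be carried even in the lowest ($l=1$) term once $v_y$ is not constant.
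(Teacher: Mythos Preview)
Your approach is correct and follows essentially the same inductive strategy as the paper: both set up a recursion for the coefficients by multiplying on the left by $q\Gamma_k$, and the recursion you derive, $P_l^{n+1} = P_{l-1}^n + l(\pav q)P_l^n + q\,\pav P_l^n$, is (after the relabeling $l = n-m$) exactly the paper's $\alpha^{n+1}_m = \alpha^n_m + (n+1-m)\alpha^n_{m-1}$ in the idealized case.

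The genuine addition in your proposal is the identification $\alpha^n_m = S(n,n-m)$ with Stirling numbers of the second kind. The paper's proof sketch stops after writing down the recursion (it is explicitly marked ``To finish\dots'') and never carries out the asymptotic analysis, whereas your Stirling identification makes $\alpha^n_m \sim n^{2m}/(2^m m!)$ and $\alpha^n_{n-1} = S(n,1) = 1$ immediate from classical facts. This is cleaner than solving the recursion by hand. Your treatment of the remainder $\widetilde b_m^n$ --- splitting $\pav q = q' + (v_y^{-1}-1)q'$ and tracking the contribution from the transition zone of $q$ and from $H = v_y - 1$ --- is also more explicit than what the paper records; the paper only remarks that $\widetilde b_m^n$ is the ``non-principal'' part coming from $\partial_y^j q$ for $j\ge 2$, without detailing the $v_y$-corrections you flag. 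Your observation about the sign/absolute-value bookkeeping in $|q'|^{n-1}$ versus $(q')^{n-1}$ is a real point that the statement as written sweeps into $\widetilde b_m^n$.
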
}
\begin{remark} The interpretation of \eqref{b:exp} should be that $\widetilde{b}_m^n(y)$ are the ``non-principal" parts of the operator, due to extra vanishing of $\pa_y^j q$, for $j \ge 2$.  
\end{remark}
\begin{proof} We proceed by induction on $n$. We will extract the following inductive formulae for $\alpha^n_m$:
\begin{align} \label{alphasys}
\alpha^{n+1}_m = \begin{cases} n+ \alpha^n_{1}, \qquad m = 1 \\ \alpha^n_m + (n+1 -m) \alpha^n_{m-1}, \qquad 2 \le m \le n-1 \\ \alpha^n_{n-1}, \qquad m = n  \\ \end{cases},
\end{align} 
with initialization 
\begin{align}
\alpha^2_1 = 1, \qquad (m, n) \in \{ n \ge 2, 1 \le m \le n-1 \ \}.
\end{align}
We first of all start the induction with $m = 1$, which upon reading \eqref{alphasys}, we get $\alpha^{n+1}_1 = n + \alpha^n_1$. \sameer{To finish \dots}
\end{proof} 
\fi

\begin{lemma} \label{lem:eq:comp:a} Let $\omega_k$ satisfy the equation \eqref{M1a}. Then $\mathring{\omega}_{m,n;k}=|k|^mq^n\Gamma_k^n \omega_k$ satisfies the following system,
\begin{align}
\label{aff1}\pa_t \mathring{\omega}_{m,n;k} &+ ik(y + U^x_0(t, y))\mathring{\omega}_{m,n;k} - \nu \Delta_k \mathring{\omega}_{m,n;k} =\bold F_k^{(m,n)}+\bold C_{\mathrm{trans},k}^{(m,n)}+\bold C_{\mathrm{visc},k}^{(m,n)}+ \mathbf{C}^{(m,n)}_{q,k},
\end{align}
where the above commutators are defined by \begin{subequations}\label{T_Ctvq}
\begin{align} 
\bold F_k^{(m,n)}:=& |k|^mq^n \Gamma_k^n f_k;\\ \label{ghtyU:1}
\bold C_{\mathrm{trans},k}^{(m,n)}:=&-|k|^mq^n\sum_{\ell = 1}^n \binom{n}{\ell} \pav^\ell G\ \   \Gamma_k^{n-\ell+1} \omega_k;\\ \label{ghtyU:2}
\bold C_{\mathrm{visc},k}^{(m,n)}:=&\nu|k|^mq^n \sum_{\ell = 1}^n \binom{n}{\ell}  \pav^\ell v_y^2 \ \ \pav^2 \Gamma_k^{n-\ell} \omega_k;\\ \n
 \mathbf{C}^{(m,n)}_{q, k} := & { -2 \nu|k|^m q' \frac{n}{q} \pa_y \mathring{\omega}_{m,n;k} +\nu|k|^m (q')^2 \frac{n(n+1)}{q^2} \mathring{\omega}_{m,n;k} -\nu|k|^m q'' \frac{n}{q} \mathring{ \omega}_{m,n;k}} \\  
 =: & \sum_{i = 1}^3 \mathbf{C}_{q,k,i}^{(m,n)}.\label{Cnq}
\end{align}\end{subequations}
\siming{The sign for $\mathbf{C}^{(m,n)}_{q, k}$ is flipped. And I think the second term does not have `$2$'. Double check the $n\leq 2$ case. 
}
\end{lemma}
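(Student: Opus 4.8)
\textbf{Proof plan for Lemma \ref{lem:eq:comp:a}.}

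The plan is to derive the equation for $\mathring{\omega}_{m,n;k}$ by applying the operator $|k|^m q^n \Gamma_k^n$ to \eqref{M1a} and carefully tracking the commutators that arise as the operator passes through each term of the equation. Since $|k|^m$ is a constant multiplier (in $y$), it commutes with everything, so the real work is understanding how $q^n \Gamma_k^n$ interacts with $\p_t$, the transport term $ik(y + U_0^x)$, and the diffusion $\nu\Delta_k = \nu(\p_y^2 - |k|^2)$.

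First I would record the basic commutator identity \eqref{commutator:exp:1}, i.e.
\[
[\Gamma_k, \p_t + ik(y+U_0^x) - \nu\Delta_k] = G \pav \Gamma_k - \nu \pav(v_y^2-1)\pav^2,
\]
specialized to the $k$-th Fourier mode (this is stated in the excerpt). The key structural point is that $\Gamma_k$ commutes with $\p_t + ik(y+U_0^x) - \nu\Delta_k$ \emph{up to} two explicit error operators: one first-order-in-$\Gamma_k$ term weighted by $G$, and one viscous term involving $v_y^2 - 1 = H(H+2)$. Iterating this identity $n$ times, and using $\pav^\ell$ and the binomial theorem to organize the repeated application of the Leibniz-type rule for $\pav$ hitting the coefficient functions $G$ and $v_y^2$, yields the transport commutator $\bold C^{(m,n)}_{\mathrm{trans},k}$ in \eqref{ghtyU:1} and the viscous commutator $\bold C^{(m,n)}_{\mathrm{visc},k}$ in \eqref{ghtyU:2}. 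Here I would be careful that the $\binom{n}{\ell}$ combinatorial factors come precisely from how many ways the $\ell$ "derivative hits" can be distributed among the $n$ copies of $\Gamma_k$; since $\pav$ and $\Gamma_k$ satisfy $[\pav, \Gamma_k] = 0$ is \emph{not} quite true (one must check $[\pav, ikt] = 0$ and $[\pav, \pav] = 0$, so in fact $\Gamma_k = \pav + ikt$ and $\pav$ do commute), the bookkeeping reduces to a clean binomial expansion.

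Second, I would handle the weight $q^n$. Since $q = q(y)$ depends only on $y$, it commutes with $ik(y+U_0^x)$ and with $\p_t$, but \emph{not} with $\nu\Delta_k$; the only genuinely new terms come from $[\nu\p_y^2, q^n]$. Writing $q^n \p_y^2(q^{-n} \cdot)$ and expanding via the product rule gives exactly the three terms
\[
-2\nu q' \tfrac{n}{q}\p_y(\cdot) + \nu (q')^2 \tfrac{n(n+1)}{q^2}(\cdot) - \nu q'' \tfrac{n}{q}(\cdot),
\]
which is $\mathbf C^{(m,n)}_{q,k}$ in \eqref{Cnq}; this requires $n \ge 0$ with the convention that the terms vanish for $n = 0$, and a small separate check for $n = 1$ where $q''$ still appears but $q^{-n}$ is just $q^{-1}$. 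The forcing simply becomes $\bold F_k^{(m,n)} = |k|^m q^n \Gamma_k^n f_k$ since the operator is linear. Assembling all pieces gives \eqref{aff1}.

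The main obstacle is the combinatorial/organizational one: correctly iterating the single-step commutator \eqref{commutator:exp:1} $n$ times while simultaneously commuting $q^n$ through, without generating spurious cross terms or losing track of which $\pav$-derivatives land on $G$ versus $v_y^2$ versus $\omega$. The cleanest route is probably a two-stage induction: first establish $\Gamma_k^n \circ (\text{operator}) = (\text{operator}) \circ \Gamma_k^n + \sum_{\ell=1}^n \binom{n}{\ell}(\pav^\ell G\, \Gamma_k^{n-\ell+1} - \nu \pav^\ell(v_y^2)\,\pav^2\Gamma_k^{n-\ell})$ by induction on $n$ using the base identity, then multiply by $q^n$ and commute $q^n$ past $\nu\Delta_k$ only (all other commutators with $q^n$ vanish). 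One should double-check the sign conventions, particularly that $-\nu\Delta_k = -\nu\p_y^2 + \nu|k|^2$ contributes the stated sign to $\mathbf C_{q,k}$, and verify the edge cases $n \le 2$ explicitly as flagged in the statement, since for small $n$ some of the sums are empty or have only one term and the "non-principal" structure is not yet visible.
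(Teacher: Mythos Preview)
Your proposal is correct and follows exactly the same two-stage strategy as the paper: first commute $\Gamma_k^n$ through the operator $L=\pa_t+ik(y+U_0^x)-\nu\Delta_k$, then handle the $q^n$ weight via $\nu[q^n,\pa_{yy}]$; the only organizational difference is that the paper applies the general iterated-commutator formula of Lemma~\ref{lem:commutator_AB} to $\pa_t$, $ik(y+U_0^x)$, and $\nu\pa_{yy}$ separately (and then observes cancellations between them), whereas you iterate the already-combined single-step commutator for the full operator. One caution: the single-step identity \eqref{commutator:exp:1} as displayed in the introduction reads $G\,\pav\Gamma\omega$, but your stated induction target at $n=1$ is $(\pav G)\,\Gamma_k\omega_k$, and it is the latter that is correct (as the paper's own proof of the lemma confirms); so you should rederive the base case directly rather than quoting \eqref{commutator:exp:1} verbatim, after which your induction goes through cleanly.
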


In order to prove this lemma, we need to develop some general result about commutators which will be useful also in future sections. 
\begin{lemma}\label{lem:commutator_AB}
Let $\mathcal{A}$ and $\mathcal{B}$ be two linear operators. Define the operation 
\begin{align}
\mathrm{ad}_\mathcal{A}^n (\mathcal{B}):=\underbrace{[\mathcal{A},[\mathcal{A},[\mathcal{A},...[\mathcal{A}}_{n},\mathcal{B}]]...],\quad  n\in\{1,2, 3, ...\}.
\end{align} 
Then
\begin{align} \label{comm_rel}
[\mathcal{A}^n, \mathcal{B}]
=\sum_{\ell=0}^{n-1}\binom{n}{\ell} \mathrm{ad}_\mathcal{A}^{n-\ell} (\mathcal{B})\mathcal{A}^\ell. 
\end{align}
\end{lemma}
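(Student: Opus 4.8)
The statement to prove is Lemma~\ref{lem:commutator_AB}, the operator identity
\[
[\mathcal{A}^n, \mathcal{B}] = \sum_{\ell=0}^{n-1}\binom{n}{\ell}\,\mathrm{ad}_\mathcal{A}^{\,n-\ell}(\mathcal{B})\,\mathcal{A}^\ell.
\]

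\medskip

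The plan is to proceed by induction on $n$. First I would record the base case $n=1$: here the right-hand side is just $\binom{1}{0}\mathrm{ad}_\mathcal{A}^1(\mathcal{B})\mathcal{A}^0 = [\mathcal{A},\mathcal{B}]$, which matches the left-hand side, so the identity holds trivially. (One may also note the convention $\mathrm{ad}_\mathcal{A}^0(\mathcal{B}) = \mathcal{B}$ is not needed since the sum runs only up to $\ell = n-1$, but it is harmless to allow it.) The only genuine input is the Leibniz-type relation $[\mathcal{A}^{n+1},\mathcal{B}] = \mathcal{A}[\mathcal{A}^n,\mathcal{B}] + [\mathcal{A},\mathcal{B}]\mathcal{A}^n$, which is immediate from $\mathcal{A}^{n+1}\mathcal{B} = \mathcal{A}(\mathcal{A}^n\mathcal{B})$ and adding and subtracting $\mathcal{A}\mathcal{B}\mathcal{A}^n$; equivalently $[\mathcal{A}^{n+1},\mathcal{B}] = \mathcal{A}[\mathcal{A}^n,\mathcal{B}] + [\mathcal{A}^n, [\mathcal{A}, \mathcal{B}]]\mathcal{A} + \dots$, but the first form is cleanest.

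\medskip

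For the inductive step, assume the formula holds for $n$. Using $[\mathcal{A}^{n+1},\mathcal{B}] = \mathcal{A}[\mathcal{A}^n,\mathcal{B}] + [\mathcal{A},\mathcal{B}]\mathcal{A}^n$, substitute the inductive hypothesis into the first term to get
\[
\mathcal{A}\sum_{\ell=0}^{n-1}\binom{n}{\ell}\mathrm{ad}_\mathcal{A}^{\,n-\ell}(\mathcal{B})\mathcal{A}^\ell + \mathrm{ad}_\mathcal{A}^1(\mathcal{B})\mathcal{A}^n.
\]
The key manipulation is to push $\mathcal{A}$ past $\mathrm{ad}_\mathcal{A}^{\,n-\ell}(\mathcal{B})$ using $\mathcal{A}\,\mathrm{ad}_\mathcal{A}^{j}(\mathcal{B}) = \mathrm{ad}_\mathcal{A}^{j}(\mathcal{B})\,\mathcal{A} + \mathrm{ad}_\mathcal{A}^{j+1}(\mathcal{B})$ (which is just the definition of $\mathrm{ad}_\mathcal{A}$ applied to $\mathrm{ad}_\mathcal{A}^j(\mathcal{B})$). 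Applying this to each term of the sum produces two families: one where the adjoint order stays at $n-\ell$ and the power of $\mathcal{A}$ increases to $\ell+1$, and one where the adjoint order increases to $n-\ell+1$ and the power stays at $\ell$. Re-indexing and collecting, the coefficient of $\mathrm{ad}_\mathcal{A}^{\,n+1-\ell}(\mathcal{B})\mathcal{A}^\ell$ becomes $\binom{n}{\ell} + \binom{n}{\ell-1} = \binom{n+1}{\ell}$ by Pascal's rule, with the boundary cases ($\ell = 0$ coming from the unchanged $\mathrm{ad}^{n+1}$ term, and $\ell = n$ gathering the contribution from $[\mathcal{A},\mathcal{B}]\mathcal{A}^n$ plus the shifted sum) checked directly to confirm they also equal $\binom{n+1}{\ell}$. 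This yields exactly the claimed formula with $n$ replaced by $n+1$.

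\medskip

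I do not anticipate a real obstacle here — this is a standard binomial-type identity for adjoint actions, and the only care needed is bookkeeping of the index ranges and the two boundary terms ($\ell=0$ and $\ell=n$) when applying Pascal's rule. The cleanest writeup is the two-term Leibniz recursion plus the push-past identity for $\mathcal{A}$ and $\mathrm{ad}_\mathcal{A}$; alternatively one could note this is formally the expansion of $\mathcal{A}^n \mathcal{B} = (\mathrm{ad}_\mathcal{A} + R_\mathcal{A})^n \mathcal{B}$ where $R_\mathcal{A}$ denotes right multiplication by $\mathcal{A}$ and $\mathrm{ad}_\mathcal{A}$, $R_\mathcal{A}$ commute, but spelling out the induction is more transparent and self-contained.
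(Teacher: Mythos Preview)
Your proposal is correct and follows essentially the same approach as the paper: induction on $n$ via the Leibniz-type decomposition $[\mathcal{A}^{n},\mathcal{B}] = \mathcal{A}[\mathcal{A}^{n-1},\mathcal{B}] + [\mathcal{A},\mathcal{B}]\mathcal{A}^{n-1}$, then the push-past identity $\mathcal{A}\,\mathrm{ad}_\mathcal{A}^j(\mathcal{B}) = \mathrm{ad}_\mathcal{A}^{j+1}(\mathcal{B}) + \mathrm{ad}_\mathcal{A}^j(\mathcal{B})\,\mathcal{A}$, reindexing, and Pascal's rule. Your closing remark about the alternative $(\mathrm{ad}_\mathcal{A} + R_\mathcal{A})^n$ argument is also apt---this slicker route works because left multiplication and right multiplication commute, so $\mathrm{ad}_\mathcal{A} = L_\mathcal{A} - R_\mathcal{A}$ commutes with $R_\mathcal{A}$---but the paper, like you, opts for the explicit induction.
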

\begin{proof} We prove the relation through induction. First of all, we observe that the relation \eqref{comm_rel} holds for $n=1$ through definition. Next we assume that the relation \eqref{comm_rel} holds for $n-1\geq 1$, and show that the relation \eqref{comm_rel} holds for $n$. We accomplish this task by expressing the commutator $[\mathcal{A}^n, \mathcal{B}]$ as follows:
\begin{align*}
[\mathcal{A}^n, \mathcal{B}]=&\mathcal{A}^n\mathcal{B}-\mathcal{A}\mathcal{B}\mathcal{A}^{n-1}+\mathcal{A}\mathcal{B}\mathcal{A}^{n-1}-\mathcal{B}\mathcal{A}^n=\mathcal{A}[\mathcal{A}^{n-1},\mathcal{B}]+[\mathcal{A},\mathcal{B}]\mathcal{A}^{n-1}\\
=&\mathcal{A}\sum_{\ell=0}^{n-2}\binom{n-1}{\ell}\mathrm{ad}_\mathcal{A}^{n-1-\ell}(\mathcal{B})\mathcal{A}^\ell+[\mathcal{A},\mathcal{B}]\mathcal{A}^{n-1}\\
=&\sum_{\ell=0}^{n-2}\binom{n-1}{\ell}\mathrm{ad}_\mathcal{A}^{n-\ell}(\mathcal{B})\mathcal{A}^\ell+\sum_{\ell=0}^{n-2}\binom{n-1}{\ell}\mathrm{ad}_\mathcal{A}^{n-1-\ell}(\mathcal{B}) \mathcal{A}^{\ell+1}+[\mathcal{A},\mathcal{B}]\mathcal{A}^{n-1}\\
=&\sum_{\ell=0}^{n-2}\binom{n-1}{\ell}\mathrm{ad}_\mathcal{A}^{n-\ell}(\mathcal{B})\mathcal{A}^\ell+\sum_{\ell=1}^{n-1}\binom{n-1}{\ell-1}\mathrm{ad}_\mathcal{A}^{n-\ell}(\mathcal{B}) \mathcal{A}^{\ell}+[\mathcal{A},\mathcal{B}]\mathcal{A}^{n-1}.
\end{align*}Here we have used the induction hypothesis to rewrite $[\mathcal{A}^{n-1},\mathcal{B}]$. Now we note that 
\begin{align*}
\binom{n-1}{\ell}+\binom{n-1}{\ell-1}=\binom{n}{\ell}.
\end{align*} Application of this relation yields that the following relation holds
\begin{align*}
[\mathcal{A}^n, \mathcal{B}]=&\sum_{\ell=0}^{n-1}\binom{n}{\ell}\mathrm{ad}_\mathcal{A}^{n-\ell}(\mathcal{B})\mathcal{A}^\ell+\lf(-\binom{n}{n-1}+\binom{n-1}{n-2}\rg)\mathrm{ad}_\mathcal{A}(\mathcal{B}) \mathcal{A}^{n-1}+[\mathcal{A},\mathcal{B}]\mathcal{A}^{n-1}\\
=&\sum_{\ell=0}^{n-1}\binom{n}{\ell}\mathrm{ad}_\mathcal{A}^{n-\ell}(\mathcal{B})\mathcal{A}^\ell.
\end{align*} 

\ifx
{\bf Proof \# 2: }
\footnote{The second author would like to thank Yiyue Zhang for suggesting this version of the proof. }
Define the left multiplication $\mathcal{L}_A$ and $\mathcal{R}_A$. Moreover, we define the Bracket operator $\mathcal{F}_A(B)=[A,B]=(\mathcal{L}_A -\mathcal{R}_A)( B)$. Now $\mathcal{F}^n_A(B)=\underbrace{[A,[A,[A,...[A}_{n},B]]...]$. Now we observe that 
\begin{align}
\mathcal{L}_A\mathcal{R}_A B=ABA=\mathcal{R}_A\mathcal{L}_A B.
\end{align}
Hence we have the important relation 
\begin{align}
\mathcal{F}_A \mathcal{R}_A=\mathcal{R}_A\mathcal{F}_A.
\end{align}
Now we have that \begin{align}
[A^n, B]=&A^n B-BA^n=(\mathcal{F}_A+\mathcal{R}_A)^n B-\mathcal{R}_A^n B=\sum_{\ell=0}^{n}\left(\begin{array}{rr}n\\ \ell\end{array}\right) \mathcal{F}_A^{n-\ell}\mathcal{R}_A^{\ell } B-\mathcal{R}_A^n B=\sum_{\ell=0}^{n-1}\left(\begin{array}{rr}n\\ \ell\end{array}\right) \mathcal{R}_A^{\ell }\mathcal{F}_A^{n-\ell} B\\
=&\sum_{\ell=0}^{n-1}\left(\begin{array}{rr}n \\ \ell\end{array}\right)\underbrace{[A,[A,[A,...[A,}_{n-\ell}B]]....] A^\ell
\end{align}
\fi

\end{proof}

\begin{lemma} \label{lem:com:BA}The following commutator relations hold:
\begin{subequations}\label{commutators}
\begin{align}[\pa_{y},\Gamma_k^n ]=&-ikt\sum_{\ell=0}^{n-1}\binom{n}{\ell}\pav^{n-\ell}(v_y-1)\Gamma_k^{\ell}+\sum_{\ell=1}^{n}\binom{n}{\ell}\pav^{n-\ell+1}(v_y-1)\Gamma_k^{\ell}\label{cm_py_G_n}\\
[\pa_{yy},\Gamma_k^n ]=&-\sum_{\ell=0}^{n-1}\binom{n}{\ell} \left(2\pav^{n-\ell-1}v''\ \pav^2 +\pav^{n-\ell}v'' \ \pav\right)\Gamma_k^{\ell};\label{cm_pyy_G_n}\\
[\pa_y,q^{n}]f=&q'\lf(\frac{n}{q}q^n f\rg) ;\label{cm_py_qn}\\
[\pa_{yy},q^{n}]f=&-(q')^2\frac{n^2+n}{n^2}\lf(\frac{n^2}{q^2} q^n f\rg)+2q'\lf(\frac{n }{q}\pa_y(q^n f)\rg)+q''\lf(\frac{n}{q}q^n f\rg);\label{cm_pyy_qn}\\
\lf[\pav,q^n\rg] f=&\frac{q'}{v_y}\lf(\frac{n}{q}q^n f\rg);\label{cm_pv_qn}\\ \n
\lf[ \pav^2,q^n\rg]f=&2\frac{q'}{(v_y)^2}\lf(\frac{n}{q}\pa_y\lf(q^n f\rg)\rg)+\frac{q''}{(v_y)^2}\lf(\frac{n}{q}q^n f\rg)-\frac{q'}{(v_y)^2}\lf(\vyi\pa_y v_y\rg)\lf(\frac{n}{q}q^n f\rg)\\ 
&-\frac{(q')^2}{(v_y)^2}\frac{n^2+n}{n^2}\lf(\frac{n^2}{q^2}q^n f\rg);\label{cm_pvv_qn}\\
\lf[\pav^2,q\rg]f=&2\frac{q'}{v_y^2}  \pa_y  f  +\frac{q''}{v_y^2} f  -\frac{q'}{ v_y ^3}v _{yy}   f .\label{cm_pvv_q}
\end{align} 
\end{subequations}
Here we recall the notation $\pav=v_y^{-1}\pa_y.$
\ifx
\myb{Even though the $q'$ is not used here, we can still use the almost orthogonality between $\omega_k$ and $q', q''$, which is a natural consequence of the weighted estimate. We did not use this orthogonality in the previous work.  }
\fi
\end{lemma}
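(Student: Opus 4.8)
The statement to prove is Lemma \ref{lem:com:BA}, a collection of explicit commutator identities between $\partial_y$, $\partial_{yy}$, $\pav = v_y^{-1}\partial_y$, $\pav^2$ on one side and the powers $\Gamma_k^n$, $q^n$, $q$ on the other. The plan is to treat each identity by the same mechanism: write the operator $\Gamma_k = \pav + ikt$ (and similarly $q^n$) in a form to which Lemma \ref{lem:commutator_AB} applies, compute the base commutators $\mathrm{ad}_{\mathcal A}(\mathcal B)$ and iterate. I would organize the proof into three groups.

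\emph{Group 1: the $\Gamma_k^n$ identities \eqref{cm_py_G_n}--\eqref{cm_pyy_G_n}.} Here $\mathcal A = \Gamma_k = v_y^{-1}\partial_y + ikt$ and $\mathcal B \in \{\partial_y, \partial_{yy}\}$. First I would record the single commutators. Since $ikt$ is a function of $t$ only, $[\partial_y, \Gamma_k] = [\partial_y, v_y^{-1}\partial_y]$, which by the product rule equals $(\partial_y v_y^{-1})\partial_y = -v_y^{-2}v_{yy}\partial_y$; it is cleaner to express this as a $\pav$-expression: $[\partial_y,\Gamma_k] = \pav(v_y-1)\cdot\partial_y/\partial_y$... more precisely one checks $[\partial_y, \Gamma_k]f = (ikt + \pav)\big((v_y-1)\cdots\big)$-type term, matching the $\ell = n-1$ term in \eqref{cm_py_G_n}. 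Rather than rederive from scratch, I would apply Lemma \ref{lem:commutator_AB} with $\mathcal A = \Gamma_k$, $\mathcal B = \partial_y$: we get $[\partial_y,\Gamma_k^n] = -[\Gamma_k^n,\partial_y] = -\sum_{\ell=0}^{n-1}\binom{n}{\ell}\mathrm{ad}_{\Gamma_k}^{n-\ell}(\partial_y)\Gamma_k^\ell$, so the whole computation reduces to identifying $\mathrm{ad}_{\Gamma_k}^{j}(\partial_y)$ for each $j\ge 1$. One computes $\mathrm{ad}_{\Gamma_k}(\partial_y) = [\Gamma_k,\partial_y] = [v_y^{-1}\partial_y,\partial_y] = -(\partial_y v_y^{-1})\partial_y$, and then an induction on $j$ shows $\mathrm{ad}_{\Gamma_k}^{j}(\partial_y)$ is a first-order operator whose coefficient involves $\pav^{j}(v_y-1)$ together with the $ikt$-contribution coming from commuting $ikt$-free pieces — this is exactly the split into the two sums (the $-ikt$ sum and the $+1$-shift sum) in \eqref{cm_py_G_n}. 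For \eqref{cm_pyy_G_n} the same scheme applies with $\mathcal B = \partial_{yy}$; now $\mathrm{ad}_{\Gamma_k}(\partial_{yy})$ is the sum of two pieces ($2 v'' \partial_v^2$-type and $v'' \partial_v$-type after converting $\partial_y$ to $\pav$), and iterating produces $\pav^{n-\ell-1}v''$ and $\pav^{n-\ell}v''$ coefficients as written. The only subtlety is keeping track of which derivatives land on $v$ versus pass through; I would do the $j=1,2$ cases explicitly and then assert the induction.

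\emph{Group 2: the $q^n$ identities \eqref{cm_py_qn}--\eqref{cm_pvv_qn}.} These are ordinary multiplication-operator commutators, so they can be done directly by the Leibniz rule without Lemma \ref{lem:commutator_AB}: $[\partial_y, q^n]f = \partial_y(q^n f) - q^n\partial_y f = n q^{n-1}q' f = q'\,\frac{n}{q}\,q^n f$, which is \eqref{cm_py_qn}. Similarly $[\partial_{yy}, q^n]f = \partial_{yy}(q^n f) - q^n\partial_{yy}f = 2 n q^{n-1}q'\,\partial_y f + \big(n(n-1)q^{n-2}(q')^2 + n q^{n-1}q''\big)f$; rewriting $n(n-1) = n^2 - n = n^2(1 - 1/n) = n^2\frac{n^2+n}{n^2} - 2n\cdot$... one must match the stated form $-(q')^2\frac{n^2+n}{n^2}\big(\frac{n^2}{q^2}q^n f\big) + 2q'\big(\frac{n}{q}\partial_y(q^n f)\big) + q''\big(\frac{n}{q}q^n f\big)$; expanding the right side gives $-(q')^2(n^2+n)q^{n-2}f + 2q' n q^{n-1}(\partial_y f + n q^{-1}q' f/\,\cdots)$ — so I would carefully verify that the $\partial_y(q^n f)$ inside the middle term, when expanded, contributes an extra $+2n^2(q')^2 q^{n-2}f$ that combines with $-(n^2+n)(q')^2 q^{n-2}f$ to give the correct $n(n-1)(q')^2 q^{n-2}f = (n^2-n)(q')^2 q^{n-2} f$. (This is precisely the point flagged in the margin note about signs and the coefficient of the middle term, so I would be especially careful here and state the identity in the normalized form the paper uses.) The $\pav$ versions \eqref{cm_pv_qn}--\eqref{cm_pvv_qn} follow by writing $\pav = v_y^{-1}\partial_y$ and using that $v_y^{-1}$ commutes with $q^n$ (both are multiplication operators), so $[\pav, q^n] = v_y^{-1}[\partial_y, q^n]$; then \eqref{cm_pvv_qn} comes from $[\pav^2, q^n] = \pav[\pav,q^n] + [\pav,q^n]\pav$ together with $[\pav, \cdot]$ acting on the resulting coefficients, which generates the $v_y^{-1}\partial_y v_y$ term.

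\emph{Group 3: identity \eqref{cm_pvv_q}.} This is the $n=1$ specialization of the structure in \eqref{cm_pvv_qn} but with $q$ (not $q^n$) and is most transparently done by hand: $[\pav^2, q]f = v_y^{-1}\partial_y\big(v_y^{-1}\partial_y(qf)\big) - q v_y^{-1}\partial_y(v_y^{-1}\partial_y f)$, expand both terms with the Leibniz rule, and cancel the $q\,\pav^2 f$ terms; the surviving terms are $2\frac{q'}{v_y^2}\partial_y f$, $\frac{q''}{v_y^2}f$, and $-\frac{q'}{v_y^3}v_{yy}f$, matching the claim.

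\emph{Main obstacle.} The genuinely delicate part is Group 1: controlling the bookkeeping in the iterated commutators $\mathrm{ad}_{\Gamma_k}^{j}(\partial_y)$ and $\mathrm{ad}_{\Gamma_k}^{j}(\partial_{yy})$, in particular correctly separating the $ikt$-generated terms (which come with the $-ikt\sum$ in \eqref{cm_py_G_n}) from the purely spatial $\pav$-derivative-shift terms, and ensuring the binomial coefficients $\binom{n}{\ell}$ emerge correctly from Lemma \ref{lem:commutator_AB}. A secondary but real pitfall, as the authors' own margin comments indicate, is the precise combinatorial coefficient and sign in \eqref{cm_pyy_qn}/\eqref{cm_pvv_qn} when rewriting $n(n-1)$ in terms of $\frac{n^2+n}{n^2}$; I would double-check these by testing against small $n$ (say $n=1,2$) before asserting the general formula.
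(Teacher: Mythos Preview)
Your proposal is correct and follows essentially the same route as the paper: apply Lemma \ref{lem:commutator_AB} with $\mathcal A=\Gamma_k$ for the $\Gamma_k^n$ identities, and use direct Leibniz expansion (with the rewriting $q^n\partial_y f = \partial_y(q^n f) - nq^{n-1}q' f$) for the $q^n$ identities. One small clarification on Group 1: since $ikt$ is a scalar, $\mathrm{ad}_{\Gamma_k}^j(\partial_y)=\mathrm{ad}_{\pav}^j(\partial_y)$ contains no $ikt$-contribution at all---the clean closed form is $\mathrm{ad}_{\Gamma_k}^j(\partial_y)=-\pav^{j}(v_y-1)\cdot\pav$, and the split into the two sums in \eqref{cm_py_G_n} appears only at the very end when you substitute $\pav\Gamma_k^\ell=(\Gamma_k-ikt)\Gamma_k^\ell$ and reindex.
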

\begin{proof}
We recall Lemma \ref{lem:commutator_AB} and compute the $[\Gamma_k,\pa_y]$
\begin{align*}
[\Gamma_k,\pa_y]f=[\pv,\pa_y]f=\pa_y(\frac{1}{v_y}) v_y (\vyi\pa_y f)=-\frac{1}{v_y}\pa_y (v_y-1) \times \vyi\pa_y f.
\end{align*}
Hence the commutator is a product of the $\pav$-derivative of the $v_y-1, f $. This will help us in the future estimates.  Now all the higher commutators are simple:
\begin{align*}
\mathrm{ad}^n_{\Gamma_k} (\pa_y)f=-(\vyi\pa_y)^n(v_y-1)\times \pv f. 
\end{align*}
Now the commutator is
\begin{align*}
[\pa_y, \Gamma_k^n]=&-\sum_{\ell=0}^{n-1}\binom{n}{\ell} \mathrm{ad}_{\Gamma_k}^{n-\ell}(\pa_y)\Gamma_k^{\ell}=\sum_{\ell=0}^{n-1}\binom{n}{\ell}(\vyi\pa_y)^{n-\ell}(v_y-1)\times \pv\Gamma_k^{\ell}\\
=&-ikt\sum_{\ell=0}^{n-1}\binom{n}{\ell}(\vyi\pa_y)^{n-\ell}(v_y-1)\Gamma_k^{\ell}+\sum_{\ell=1}^{n}\binom{n}{\ell}(\vyi\pa_y)^{n-\ell+1}(v_y-1)\Gamma_k^{\ell}.
\end{align*}
Next we compute the $[\Gamma_k,\pa_{yy}]$ using the $\Gamma_0$-derivative of $v''=\pa_{yy}v=v_y\frac{1}{v_y}\pa_y v_y$:
\begin{align*}
[\pa_{yy},\Gamma_k]f=[\pa_{yy},\pv]f=\pa_{yy} (\vyi\pa_y f)-\vyi\pa_{yyy} f=-2v''(\vyi\pa_y)^2f-(\vyi\pa_y)v''\ (\vyi\pa_y)f.
\end{align*}
\begin{align*}&
\mathrm{ad}^n_{\Gamma_k} (\pa_{yy})f= 2(\vyi\pa_y)^{n-1}v''\cdot (\vyi\pa_y)^2f+(\vyi\pa_y)^nv''\cdot (\pv)f.
\end{align*}
\begin{align*}
[\pa_{yy},\Gamma_k^n ]=&-\sum_{\ell=0}^{n-1}\binom{n}{\ell} \mathrm{ad}_{\Gamma_k}^{n-\ell}(\pa_{yy})\Gamma_k^{\ell}\\
= &-\sum_{\ell=0}^{n-1}\binom{n}{\ell} \left(2(\pv)^{n-\ell-1}v''\ (\pv)^2 +(\pv)^{n-\ell}v'' \ \pv\right)\Gamma_k^{\ell}\\
=&-\sum_{\ell=0}^{n-1}\binom{n}{\ell} \left(2(\pv)^{n-\ell-1}v''\ (\Gamma-ikt)^2 +(\pv)^{n-\ell}v'' \ (\Gamma-ikt)\right)\Gamma_k^{\ell}.
\end{align*}
The proof of the last two commutator relation is more direct. First of all,
\begin{align*}
[\pa_{y},q^n] f=\pa_y(q^n f)-q^n\pa_y f=nq^{n-1} q' f=q'\frac{n}{q}q^n f. 
\end{align*}
Next we consider the commutator
\begin{align*}
[\pa_{yy}, q^n ]f=&\pa_{yy}(q^n f)-q^n\pa_{yy}f=2q'\left(\frac{n}{q} q^{n}\pa_y f\right)+(q')^2\left(\frac{n(n-1)}{q^2}q^{n}f\right)+ q'' \left(\frac{n}{q}q^nf\right)\\
=&2q' \left(\frac{n}{q}[q^n,\pa_y]f\right)+2q'\left(\frac{n}{q}\pa_y(q^n f)\right)+(q')^2\left(\frac{n(n-1)}{q^2}q^n f\right)+q''\left(\frac{n}{q}q^n f\right)\\
=&-2(q')^2\frac{n^2}{q^2} q^nf+(q')^2\frac{n^2-n}{q^2}q^n f +2q'\left(\frac{n}{q}\pa_y(q^n f)\right)+q''\left(\frac{n}{q}q^n f\right)\\
=&-(q')^2\frac{n^2+n}{q^2} q^n f+2q'\lf(\frac{n }{q}\pa_y(q^n f)\rg)+q''\lf(\frac{n}{q}q^n f\rg). 
\end{align*}
The proof of \eqref{cm_pv_qn} is direct. Next we focus on the expression \eqref{cm_pvv_qn},
\begin{align*}
\lf[\lf(\vyi \pa_y\rg)^2, q^n \rg]f=&\lf(\vyi\pa_y\rg)^2(q^n f)-q^n \lf(\vyi\pa_y \rg)^2 f\\ =&2\frac{q'}{ v_y }\frac{n}{q} q^n\frac{1}{v_y}\pa_y f+\frac{1}{v_y}\pa_y\lf(\frac{q'}{v_y}\frac{n}{q}q^n\rg)f\\
=& 2\frac{q'}{(v_y)^2}\lf(\frac{n}{q}q^n\pa_y f\rg)+\frac{1}{v_y}\pa_y\lf(\frac{q'}{v_y}\rg)\lf(\frac{n}{q}q^n f\rg)+\frac{(q')^2}{(v_y)^2}\frac{n(n-1)}{q^2}q^n f\\
=&2\frac{q'}{(v_y)^2}\lf(\frac{n}{q}\pa_y\lf(q^n f\rg)\rg)-2\frac{(q')^2}{(v_y)^2}\lf(\frac{n^2}{q^2}q^n f\rg)+\frac{q''}{(v_y)^2}\lf(\frac{n}{q}q^n f\rg)\\
&-\frac{q'}{(v_y)^2}\lf(\vyi\pa_y v_y\rg)\lf(\frac{n}{q}q^n f\rg)+\frac{(q')^2}{(v_y)^2}\lf(\frac{n(n-1)}{q^2}q^n f\rg)\\
=&2\frac{q'}{(v_y)^2}\lf(\frac{n}{q}\pa_y\lf(q^n f\rg)\rg)+\frac{q''}{(v_y)^2}\lf(\frac{n}{q}q^n f\rg)\\
&-\frac{q'}{(v_y)^2}\lf(\vyi\pa_y v_y\rg)\lf(\frac{n}{q}q^n f\rg)-\frac{(q')^2}{(v_y)^2}\frac{n^2+n}{n^2}\lf(\frac{n^2}{q^2}q^n f\rg).
\end{align*}
\end{proof}

With these commutator identities in hand, we can now prove Lemma \ref{lem:eq:comp:a}.
\begin{proof}[Proof of Lemma \ref{lem:eq:comp:a}] We organize the proof in two steps. In the first step, we commute the vector field $\Gamma_k^n$ with the equation, and then we commute the boundary weight $q(y)^n$ with the resulting equation. 

\vspace{2 mm}

\noindent
\textbf{Step \# 1: Commute $\Gamma_k^n$-vector field.} 
We apply the vector field $\Gamma_k^n$ on the equation \eqref{M1a}, and rewrite the equation in the form:
\begin{align}\n
\pa_t \Gamma_k^n\omega_k + ik(y + U^x_0(t, y)) \Gamma_k^n \omega_k - \nu \Delta_k \Gamma_k^n \omega_k  
  =  & \Gamma_k^n f_k \\ \label{EQ:pat_G_om1}
  &-[\Gamma_k^n, \pa_t+ ik(y + U^x_0(t, y))-\nu\pa_{yy} ]\omega_k .
\end{align} 
The goal is to rewrite the last commutator term. First of all, we recall the equation 
\eqref{eq:v} and compute the commutator as follows
\begin{align*}
\mathrm{ad}_{\Gamma_k}^m (\pa_t) f 
=&\mathrm{ad}_{\Gamma_k}^{m-1}\lf(\lf[v_y^{-1}\pa_y+ikt, \frac{1}{(v_y)^2}( v_{ty})\pa_y-ik\rg]\rg) f\\
=&\mathrm{ad}_{\Gamma_k}^{m-1}\lf(\lf[v_y^{-1}\pa_y+ikt, \frac{1}{(v_y)^2}(\nu v_{yyy}+G_y )\pa_y-ik\rg]\rg) f\\
=&\nu\pav^m(v_{yy})\ \pav f+\pav^m G\ \pav f-ik\mathbbm{1}_{m=1}.
\end{align*}
Next we invoke the relation 
\eqref{comm_rel} to derive the commutator expression 
\begin{align} \n
[\Gamma_k^n,\pa_t]\omega_k=&\sum_{\ell=0}^n\binom{ n}{\ell}\lf(\nu\pav^{n-\ell}(v_{yy})\ \pav\Gamma_k^\ell \omega_k+\pav^{n-\ell} G\ \pav\Gamma_k^\ell \omega_k\rg)\\
&-ik n\Gamma_k^{n-1}\omega_k. \label{Ga_n,pa_t}
\end{align}
Next we compute the commutator $[\Gamma_k^n,  ik(y+U_0^x)]$. To this end, we apply the relation \eqref{comm_rel}, the definition of $G$ \eqref{defn:g:cap} and  calculate that 
\begin{align*}
[\Gamma_k^n,& ik(y+U_0^x)]\omega_k\\
=&\sum_{\ell=0}^{n-1}\binom {n}{ \ell} \mathrm{ad}_{\Gamma_k}^{n-\ell}(ik(y+U_0^x))\Gamma_k^\ell\omega_k
=\sum_{\ell=0}^{n-1}\binom{n}{\ell} \mathrm{ad}_{\Gamma_k}^{n-\ell-1}\lf(\lf[v_y^{-1}\pa_y+ikt,ik(y+U_0^x)\rg]\rg)\Gamma_k^\ell\omega_k\\
=&\sum_{\ell=0}^{n-1}\binom {n}{\ell} \mathrm{ad}_{\Gamma_k}^{n-\ell-1}\lf(\frac{ikt}{v_y}\lf(\pa_y G+\frac{v_y}{t}\rg)\rg)\Gamma_k^\ell\omega_k
=\sum_{\ell=0}^{n-1}\binom {n}{\ell} \lf(\pv\rg)^{n-\ell}G\  ikt \Gamma_k^\ell \omega_k+nik \Gamma_k^{n-1}\omega_k. 
\end{align*}
Combining these two commutator representations and the commutator expression \eqref{cm_py_G_n} and the relation \eqref{cm_pyy_G_n}, we have that
\begin{align*}
[\Gamma_k^n, \pa_t+ik(y+U_0^x)-\nu\pa_{yy}]\omega_k=&\sum_{\ell=0}^{n-1}\binom{n}{ \ell} \lf(\pv\rg)^{n-\ell}G\ \Gamma^{\ell+1}\omega_k\\
&-\nu\sum_{\ell=0}^{n-1}\binom{n}{ \ell}  \lf(2\lf(\pv\rg)^{n-\ell-1}v_{yy}\rg)\ \lf(\pv\rg)^2\Gamma_k^\ell\omega_k.
\end{align*}
When combined with \eqref{EQ:pat_G_om1}, this yields the following
\begin{align}\n
\pa_t \Gamma_k^n& \omega_k + ik(y + U^x_0(t, y)) \Gamma_k^n \omega_k - \nu \Delta_k \Gamma_k^n \omega_k  \\
  =& \Gamma_k^n f_k 
  -  \sum_{\ell = 1}^n \binom{n}{\ell} \pav^\ell G\ \   \Gamma_k^{n-\ell+1} \omega_k   \siming{+ } \nu \sum_{\ell = 1}^n \binom{n}{\ell}  \pav^\ell v_y^2 \ \ \pav^2 \Gamma_k^{n-\ell} \omega_k.\label{EQ:G_om}
\end{align} 

\noindent
\textbf{Step \# 2: Derivation of \eqref{aff1}.}
Next, we insert our co-normal weights, $q(y)^n$. The computation is straight forward. We observe that from the equation \eqref{EQ:G_om}, one derives the following relation
\begin{align*}
\pa_t& q^n\Gamma_k^n\omega_k+ik(y+U_0^x)q^n\Gamma_k^n\omega_k -\nu\de_k(q^n\Gamma^n_k\omega_k)\\
=&q^n\Gamma_k^n f_k 
  -  q^n\sum_{\ell = 1}^n \binom{n}{\ell} \pav^\ell G\ \   \Gamma_k^{n-\ell+1} \omega_k     {+ } \nu q^n\sum_{\ell = 1}^n \binom{n}{\ell}  \pav^\ell v_y^2 \ \ \pav^2 \Gamma_k^{n-\ell} \omega_k+\nu [q^n,\pa_{yy}]\Gamma_k^n\omega_k.
\end{align*}
Multiplying this equation by $|k|^m$ and invoking the commutator relation \eqref{cm_pyy_qn} yields the result \eqref{aff1}.
\end{proof}
\ifx{
\myb{
As a final step in our derivation, we need to take the appropriate linear combination of \eqref{aff1} as indicated by \eqref{rep1}. This produces commutators that are linear combinations of \eqref{Cnt}, \eqref{Cnv}, \eqref{Cnq}.
\begin{lemma} \label{lemma:eq:t}
\begin{align} \n
&\pa_t \mathring{\omega}_{m,n;k} + ik(y + U^x_0(t, y)) \mathring{\omega}_{m,n;k} - \nu \Delta_k \mathring{\omega}_{m,n;k} \\ \label{aff2}
& \qquad = -\bold{T}^{(m,n)}_{main,k} - \bold{T}^{(m,n)}_{b,k} - \bold{C}^{(m,n)}_{trans, k} - \bold{C}^{(m,n)}_{visc, k}  - \bold{C}^{(m,n)}_{q, k}  - \bold{C}^{(m,n)}_{b,k},
\end{align}
where the commutators appearing above are defined as follows:
\begin{subequations}
\begin{align}
\bold{C}^{(m,n)}_{trans, k} := & q^n \mathcal{C}^{(m,n)}_{trans,k} + \sum_{m = 1}^{n-2} b^{(n)}_m(y) q^{n-m} \mathcal{C}^{(n-m)}_{trans,k} + |q'|^{n-1} q\mathcal{C}^{(1)}_{trans, k}  \\
\bold{C}^{(m,n)}_{visc, k} := &q^n \mathcal{C}^{(m,n)}_{visc,k} + \sum_{m = 1}^{n-2} b^{(n)}_m(y) q^{n-m} \mathcal{C}^{(n-m)}_{visc,k} + |q'|^{n-1} q\mathcal{C}^{(1)}_{visc, k} \\ \label{boldCnqk}
 \bold{C}^{(m,n)}_{q, k} := & \mathcal{C}^{(m,n)}_{q,k} + \sum_{m = 1}^{n-2} b^{(n)}_m(y) \mathcal{C}^{(n-m)}_{q,k} + |q'|^{n-1} \mathcal{C}^{(1)}_{q, k}  \\ 
  \bold{C}^{(m,n)}_{b,k} := &\nu \sum_{m = 1}^{n} \pa_y^2 b^n_m(y) \widetilde{\omega}_{m,n;k} + 2 \nu \sum_{m = 1}^{n} \pa_y b^n_m(y) \pa_y \widetilde{\omega}_{m,n;k} ,
\end{align}
\end{subequations}
and the nonlinear terms appearing above are defined as follows:
\begin{subequations}
\begin{align} \label{Tmain}
\bold{T}^{(m,n)}_{main,k} :=  &q^n \Gamma_k^n(\nabla^\perp \psi_{\neq 0} \cdot \nabla  \omega)_k, \\
\bold{T}^{(m,n)}_{b,k} := & \sum_{m=1}^{n-1} b^n_m(y) q^{n-m} \Gamma_k^{n-m} (\nabla^\perp \psi_{\neq 0} \cdot \nabla  \omega)_k.
\end{align}
\end{subequations}
\end{lemma}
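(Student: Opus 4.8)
The plan is to obtain \eqref{aff2} as a purely algebraic consequence of the single-block equations of Lemma~\ref{lem:eq:comp:a}, \eqref{aff1}, together with the operator identity \eqref{rep1}. The crucial point is that \eqref{rep1} writes $|k|^{m}(q\Gamma_k)^n$ as a linear combination, with \emph{time-independent} and smooth-in-$y$ coefficients, of the operators $|k|^{m}q^{j}\Gamma_k^{j}$ for $1\le j\le n$; hence applying the transport--diffusion operator $L:=\partial_t+ik(y+U^x_0)-\nu\Delta_k$ to $\widetilde\omega_{m,n;k}=|k|^{m}(q\Gamma_k)^n\omega_k$ reduces to applying $L$ to each block $\mathring\omega_{m,j;k}$, for which \eqref{aff1} already supplies the answer, up to the error generated when $L$ falls on the coefficients. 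Throughout I would rename the summation index of \eqref{rep1} (also written $m$ there) to $j$, to avoid collision with the $x$-frequency exponent $m$.

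First I would expand, via \eqref{rep1}, $\widetilde\omega_{m,n;k}=\mathring\omega_{m,n;k}+\sum_{j=1}^{n-2}b^n_j(y)\,\mathring\omega_{m,n-j;k}+|q'(y)|^{n-1}\,\mathring\omega_{m,1;k}$, so that every summand is a smooth time-independent coefficient times a single block $\mathring\omega_{m,\cdot;k}$. Then I would use the elementary identity $L(c(y)g)=c(y)\,Lg-\nu\big(2c'(y)\,\partial_y g+c''(y)\,g\big)$, which holds because $c$ commutes with $\partial_t$ and with the multiplication operator $ik(y+U^x_0)$, while $[\partial_y^2,c]=2c'\partial_y+c''$. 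Substituting $Lg=\mathbf F^{(m,j)}_k+\mathbf C^{(m,j)}_{\mathrm{trans},k}+\mathbf C^{(m,j)}_{\mathrm{visc},k}+\mathbf C^{(m,j)}_{q,k}$ from \eqref{aff1} for $g=\mathring\omega_{m,j;k}$, and summing against the coefficients of \eqref{rep1}, the terms regroup into four families: the forcing combination $\sum_j c_j\,\mathbf F^{(m,j)}_k$ reassembles, reading \eqref{rep1} backwards, into $|k|^{m}(q\Gamma_k)^n$ applied to the source on the right-hand side of \eqref{M1a}, i.e.\ into $\mathbf T^{(m,n)}_{\mathrm{main},k}+\mathbf T^{(m,n)}_{b,k}$ (the exceptional $|q'|^{n-1}q\Gamma_k$ term of \eqref{rep1} producing the top-order, single-$\Gamma_k$, contribution to $\mathbf T_b$); the combinations $\sum_j c_j\,\mathbf C^{(m,j)}_{\bullet,k}$ are, by definition, exactly $\mathbf C^{(m,n)}_{\mathrm{trans},k}$, $\mathbf C^{(m,n)}_{\mathrm{visc},k}$ and $\mathbf C^{(m,n)}_{q,k}$ as written in the statement; and the residual $-\nu\sum_j\big(2c_j'\,\partial_y+c_j''\big)\mathring\omega_{m,n-j;k}$, collected over $j$ with $c_j\in\{b^n_j,|q'|^{n-1}\}$, is precisely $\mathbf C^{(m,n)}_{b,k}$. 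Rearranging then gives \eqref{aff2}.

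I expect the main obstacle to be organizational rather than analytic. One has to (i) thread the exceptional term $|q'|^{n-1}q\Gamma_k$ of \eqref{rep1} through every step consistently --- its forcing contribution into $\mathbf T_b$, its $\nu\Delta_k$-commutator into $\mathbf C_b$, and its transport/viscous/weight commutators into the single-$\Gamma_k$ slots of $\mathbf C_{\mathrm{trans}}$, $\mathbf C_{\mathrm{visc}}$, $\mathbf C_q$ --- even though it carries only one power of $\Gamma_k$ and a non-polynomial-in-$q$ coefficient; and (ii) confirm that $\mathbf C_{\mathrm{visc}}$, $\mathbf C_q$ and the new $\mathbf C_b$ do not overlap, which is the case because $\mathbf C_b$ is produced \emph{only} by $\partial_y^2$ hitting the coefficients $b^n_j$ (and $|q'|^{n-1}$), whereas $\mathbf C_{\mathrm{visc}}$ and $\mathbf C_q$ already appear inside \eqref{aff1} at fixed block index. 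I would also record at the outset that $b^n_j$ and $|q'(y)|^{n-1}q(y)$ are $C^\infty$ --- which follows since $q$ is smooth and $q'$ has no sign change on the set where it is nonzero --- so that $\partial_y$ and $\partial_y^2$ of the coefficients are legitimate. The genuinely substantive ingredient here, the existence of \eqref{rep1} with its coefficient growth $\alpha^n_m\sim n^{2m}$ recorded in \eqref{bdsalphanm}, is proved separately by induction on $n$ and is assumed.
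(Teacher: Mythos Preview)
Your proposal is correct and follows exactly the route the paper indicates: the paper's own proof is the single sentence ``The proof comes from using the linear combination \eqref{rep1} and the equations \eqref{aff1},'' and you have simply (and carefully) unpacked what that means --- expand $\widetilde\omega_{m,n;k}$ via \eqref{rep1}, apply $L$ blockwise using \eqref{aff1}, and collect the residual $-\nu(2c'\partial_y+c'')$ commutators from the time-independent coefficients into $\mathbf C^{(m,n)}_{b,k}$. Your renaming of the summation index to avoid the $m$-collision, and your bookkeeping of the exceptional $|q'|^{n-1}q\Gamma_k$ term, are exactly the organizational points that need attention; there is nothing further to add.
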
}
\begin{proof} \sameer{The proof comes from using the linear combination \eqref{rep1} and the equations \eqref{aff1}.}
\end{proof}
\fi

We will now need to differentiate \eqref{aff1} once in $\pa_y$ in order to derive the appropriate system for $\pa_y \mathring{\omega}_{m,n;k}$. Indeed, doing so results in 
\begin{align} \n
\pa_t \pa_y&  \mathring{\omega}_{m,n;k} + ik(y + U^x_0(t, y)) \pa_y  \mathring{\omega}_{m,n;k} - \nu \Delta_k \pa_y  \mathring{\omega}_{m,n;k} \\
 = &  \pa_y \mathbf{T}^{(m,n)}_{k} + \pa_y \mathbf{C}^{(m,n)}_{\text{trans}, k} +\pa_y \mathbf{C}^{(m,n)}_{\text{visc}, k}  +\pa_y \mathbf{C}^{(m,n)}_{q, k} - ik (1 + \pa_y U^x_0)  \mathring{\omega}_{m,n;k}. \label{hghg1}
\end{align}
To estimate the commutator term $\mathbf{C}_{q}^{(m,n)}$, we further decompose it in the following lemma. 
\begin{lemma} The spacial derivative of the $\mathbf{C}^{(m,n)}_{q,k}$ term reads as follows,
\begin{align} 
\pa_y \bold{C}^{(m,n)}_{q, k}= &\nu \Upsilon_n^{(1)}  \frac{n}{q} \pa_y^2 {\omega}_{m,n;k} +\nu \Upsilon_n^{(2)} \frac{n^2}{q^2} \pa_y {\omega}_{m,n;k} +\nu \Upsilon_n^{(3)} \frac{n^3}{q^3} {\omega}_{m,n;k}, \label{IDL12}
\end{align}
where the coefficients are defined as follows:
\begin{subequations}
\begin{align} \label{Ups:1}
\Upsilon_n^{(1)}  := & \myr{-2 q'},\\ \label{Ups:2}
\Upsilon_n^{(2)}  := &\myr{\frac{n+3}{n}(q')^2-\frac{3}{n}q'' q},\\ 
\label{Ups:3}\Upsilon_n^{(3)}  := &  \frac{2n+3}{n^2}q'' q' q -2 \frac{n+1}{n^2}(q')^3-  \frac{1}{n^2}q'''q^2.   
\end{align}
\end{subequations}
\end{lemma}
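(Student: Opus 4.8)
The statement follows by a direct differentiation of the expression for $\mathbf{C}^{(m,n)}_{q,k}$ recorded in \eqref{Cnq}; no new structural input is needed. The plan is as follows. First I would recall from \eqref{Cnq} that
\[
\mathbf{C}^{(m,n)}_{q,k} = -2\nu |k|^m q'\,\frac{n}{q}\,\partial_y \mathring{\omega}_{m,n;k} + \nu |k|^m (q')^2\,\frac{n(n+1)}{q^2}\,\mathring{\omega}_{m,n;k} - \nu |k|^m q''\,\frac{n}{q}\,\mathring{\omega}_{m,n;k},
\]
and apply $\partial_y$ to each summand via the Leibniz rule together with the elementary identities $\partial_y(q^{-1}) = -q' q^{-2}$ and $\partial_y(q^{-2}) = -2 q' q^{-3}$. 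Since $|k|^m$ and $n$ are $y$-independent they simply ride along, so the whole computation reduces to differentiating the $y$-dependent prefactors $q'/q$, $(q')^2/q^2$, $q''/q$ and the factor $\partial_y\mathring{\omega}_{m,n;k}$ or $\mathring{\omega}_{m,n;k}$.

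Second, I would sort the resulting terms into three buckets according to the order of the $y$-derivative falling on $\mathring{\omega}_{m,n;k}$, i.e.\ $\partial_y^2\mathring{\omega}_{m,n;k}$, $\partial_y\mathring{\omega}_{m,n;k}$, and $\mathring{\omega}_{m,n;k}$. Only the first summand of $\mathbf{C}^{(m,n)}_{q,k}$ can produce a $\partial_y^2\mathring{\omega}_{m,n;k}$, and it yields precisely $-2\nu|k|^m q'\,\frac{n}{q}\,\partial_y^2\mathring{\omega}_{m,n;k}$, which identifies $\Upsilon_n^{(1)} = -2q'$ as in \eqref{Ups:1}. The $\partial_y\mathring{\omega}_{m,n;k}$ bucket collects the two terms coming from differentiating the coefficient $q'/q$ of the first summand, plus the one term where $\partial_y$ hits $\mathring{\omega}_{m,n;k}$ in the second summand, plus the one term where $\partial_y$ hits $\mathring{\omega}_{m,n;k}$ in the third summand; these combine to $\big(-3q''\,\frac{n}{q} + (n+3)(q')^2\,\frac{n}{q^2}\big)\nu|k|^m\,\partial_y\mathring{\omega}_{m,n;k}$, and after factoring out $\frac{n^2}{q^2}$ this is exactly $\nu\Upsilon_n^{(2)}\frac{n^2}{q^2}\partial_y\mathring{\omega}_{m,n;k}$ with $\Upsilon_n^{(2)} = \frac{n+3}{n}(q')^2 - \frac{3}{n}q'' q$ as in \eqref{Ups:2}. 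The $\mathring{\omega}_{m,n;k}$ bucket collects the remaining terms arising when $\partial_y$ hits the prefactors of the second and third summands: combining the two $q'q''$-terms into $(2n+3)q'q''\,\frac{n}{q^2}$, keeping the $(q')^3$-term $-2(q')^3\,\frac{n(n+1)}{q^3}$ and the $q'''$-term $-q'''\,\frac{n}{q}$, and factoring $\frac{n^3}{q^3}$ out of each produces $\nu\Upsilon_n^{(3)}\frac{n^3}{q^3}\mathring{\omega}_{m,n;k}$ with $\Upsilon_n^{(3)} = \frac{2n+3}{n^2}q'' q' q - 2\frac{n+1}{n^2}(q')^3 - \frac{1}{n^2}q''' q^2$ as in \eqref{Ups:3}. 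Summing the three buckets is \eqref{IDL12}.

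There is no genuine obstacle: the argument is purely computational. The only point that requires attention is this last bookkeeping step, where several distinct algebraic prefactors — $\frac{n}{q}$, $\frac{n}{q^2}$, $\frac{n(n+1)}{q^2}$, $\frac{n(n+1)}{q^3}$ — must each be rewritten as the \emph{same} target monomial $\frac{n^j}{q^j}$ times a coefficient depending on $q$ and $n$; one has to be consistent about exactly which power of $q/n$ is being pulled out, and to track the minus signs coming from $\partial_y(q^{-j}) = -j q' q^{-j-1}$, since several contributions carry compensating signs. A convenient consistency check is that on the bulk region where $q \equiv 1$ (so $q' = q'' = q''' = 0$) both $\mathbf{C}^{(m,n)}_{q,k}$ and the right-hand side of \eqref{IDL12} vanish identically, as they must, and more generally the powers of $q$ in the denominators and in $\Upsilon_n^{(j)}$ balance so that each term on the right-hand side of \eqref{IDL12} carries an overall $q^{-1}$, matching the homogeneity of $\partial_y\mathbf{C}^{(m,n)}_{q,k}$.
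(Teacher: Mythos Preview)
Your proposal is correct and is precisely the approach the paper takes: the paper's proof consists of the single sentence ``The proof is a straightforward computation,'' and your Leibniz-rule expansion of \eqref{Cnq} followed by sorting into the three derivative buckets is exactly that computation carried out in full. One small caveat: your closing ``consistency check'' about every term carrying an overall $q^{-1}$ is not quite right as stated (for instance $(q')^2\,n^2/q^2$ scales like $q^{-2}$, not $q^{-1}$), but this is only a side remark and does not affect the validity of the argument.
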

\begin{proof} The proof is a straightforward computation. 
\ifx From \eqref{boldCnqk}, we have the linear combination
\begin{align*}
\pa_y  \bold{C}^{(m,n)}_{q, k} = \pa_y \mathcal{C}^{(m,n)}_{q,k} .
\end{align*}
Thus, the result will follow upon obtaining the identity \eqref{Cnq} and the following identity for $\pa_y \mathcal{C}^{(m,n)}_{q,k}$:
\begin{align}
\pa_y \mathcal{C}^{(m,n)}_{q,k} = \nu \Upsilon_n^{(1)}  \frac{n}{q} \pa_y^2 \widetilde{\omega}_{m,n;k} +\nu \Upsilon_n^{(2)} \frac{n^2}{q^2} \pa_y \widetilde{\omega}_{m,n;k} +\nu \Upsilon_n^{(3)} \frac{n^3}{q^3} \widetilde{\omega}_{m,n;k}.
\end{align} 
A direct computation using \eqref{Cnq} results in the identity
\begin{align} \n
-\pa_y \mathcal{C}^{(m,n)}_{q, k} := & - 3 \nu n (n-1) q(y)^{n-2} |q'(y)|^2 \Gamma_k^n \pa_y |k|^m\omega_k - 3 \nu n q(y)^{n-1} q''(y) \Gamma_k^n \pa_y|k|^m \omega_k \\ \label{exp:L1}
& - 3 \nu n (n-1) q(y)^{n-2} q'(y) q''(y) \Gamma_k^n|k|^m \omega_k - \nu n (n-1) (n-2) q(y)^{n-3} q'(y)^3 \Gamma_k^n |k|^m\omega_k \\ \n
& - \nu n q(y)^{n-1} q'''(y) \Gamma_k^n |k|^m\omega_k - 2 \nu n q(y)^{n-1} q'(y) \Gamma_k^n \pa_y^2|k|^m \omega_k = \sum_{i = 1}^6 \mathcal{L}^{(i)}_1.
\end{align}
We proceed term by term through \eqref{exp:L1}. First, 
\begin{align*}
\mathcal{L}_1^{(1)} := & - 3 \nu n(n-1) |q'(y)|^2 q^{n-2} \pa_y \Gamma_k^n \omega_k \\
= & - 3 \nu n(n-1) |q'(y)|^2 \frac{1}{q^2} \pa_y \mathring{\omega}_{m,n;k} + 3 \nu n^2 (n-1) |q'(y)|^3 \frac{1}{q^3} \mathring{\omega}_{m,n;k}.  
\end{align*}
Next,
\begin{align*}
\mathcal{L}_1^{(2)} := &- 3 \nu n q(y)^{n-1} q''(y) \Gamma_k^n \pa_y \omega_k = - 3 \nu q''(y) \frac{n}{q} \pa_y \mathring{\omega}_{m,n;k} + 3 \nu q''(y) \frac{n^2}{q^2} \mathring{\omega}_{m,n;k}, \\
\mathcal{L}_1^{(3)} := &- 3 \nu q''(y) q'(y) \frac{n(n-1)}{q(y)^2} \mathring{\omega}_{m,n;k}, \\
\mathcal{L}_1^{(4)} := & - \nu|q'(y)|^3 \frac{ n (n-1)(n-2) }{q^3} \mathring{\omega}_{m,n;k}, \\
\mathcal{L}_1^{(5)} := & - \nu q'''(y) \frac{n}{q} \mathring{\omega}_{m,n;k}.
\end{align*}
Next, we have the following long identity:
\begin{align} \n
2 \nu n q'(y) \frac{1}{q} \pa_y^2 \{ q(y)^n \Gamma_k^n \omega_k \} = & 2 \nu n q'(y) \frac{1}{q} n(n-1) q(y)^{n-2} |q'(y)|^2 \Gamma_k^n \omega_k + 2 \nu n q'(y) \frac{1}{q} n q^{n-1} q''(y) \Gamma_k^n \omega_k \\ \n
& + 4 \nu n q'(y) \frac{1}{q} n q(y)^{n-1} q'(y) \pa_y \Gamma_k^n \omega_k + 2 \nu n q' \frac{1}{q} q^n \pa_y^2 \Gamma_k^n \omega_k \\ \n
= & 2 \nu  (q')^3  \frac{ n^2 (n-1)}{q(y)^3} \mathring{\omega}_{m,n;k}+ 2 \nu n q'(y) \frac{1}{q} n q^{n-1} q''(y) \Gamma_k^n \omega_k + 4 \nu n^2 |q'|^2 \frac{1}{q^2} \pa_y \mathring{\omega}_{m,n;k} \\ \n
& - 4 \nu n^3 (q')^3 \frac{1}{q^3} \mathring{\omega}_{m,n;k}+ 2 \nu n q' \frac{1}{q} q^n \pa_y^2 \Gamma_k^n \omega_k.
\end{align}
Rearranging this, we obtain 
\begin{align*}
\mathcal{L}_1^{(6)} := &2 \nu n q(y)^{n-1} q'(y) \Gamma_k^n \pa_y^2 \omega_k \\
= & 2 \nu n q'(y) \frac{1}{q} \pa_y^2\mathring{\omega}_{m,n;k} -  4 \nu n^2 |q'|^2 \frac{1}{q^2} \pa_y \mathring{\omega}_{m,n;k} +  4 \nu n^3 (q')^3 \frac{1}{q^3}\mathring{\omega}_{m,n;k} -  2 \nu  (q')^3  \frac{ n^2 (n-1)}{q(y)^3} \mathring{\omega}_{m,n;k} \\
& - 2 \nu  q'(y) \frac{n^2}{q^2} q''(y) \mathring{\omega}_{m,n;k}.
\end{align*}
Summing together the above identities gives the desired identity, \eqref{IDL12}. Specifically, there is only one contribution to $\Upsilon^{(1)}_n$. Next, we have 
\begin{align*}
\Upsilon^{(2)}_n = - 3 \frac{n-1}{n} |q'|^2 - 3 q'' \frac{q}{n} - 4|q'|^2 = - (7 - \frac3n) |q'|^2 - 3 q'' \frac{q}{n}.
\end{align*}
Third, we have by collecting terms and simplifying
\begin{align*}
\Upsilon^{(3)}_n = &3 \frac{n-1}{n} |q'|^3 + 3 q'' \frac{q}{n} - 3 q'' q' \frac{q}{n} \frac{n-1}{n} - |q'|^3 \frac{(n-1)(n-2)}{n^2} - q''' \frac{q^2}{n^2} \\
& + 4 |q'|^2 - 2 |q'|^3 \frac{n-1}{n}  - 2 q' q'' \frac{q}{n} \\
= & (4 + \frac2n - \frac{2}{n^2}) |q'|^3 + (3q'' - (5 - \frac3n)q'' q') \frac{q}{n} - q''' (\frac{q}{n})^2.
\end{align*}
This concludes the proof of the lemma. 

\fi
\end{proof}

\subsection{The $\alpha/\mu/ \gamma$ Estimates}\label{sec:amg}
In this section we obtain energy bounds for frozen $(m, n;k)$. We start with the $L^2$-component of the functional. Throughout the lemmas of this section, $\mathcal{I}_{m,n;k}^{(\cdot),(\cdot)}$ will be local variables: for the proof of each lemma, we use these notations to represent various terms appeared in the decomposition. However, after the proof is accomplished, these variables will be refreshed to represent other things.

\begin{lemma}[The $\gamma$-term Estimates] The time evolution of the $\gamma$-component of the functional $\mathcal{E}_{m,n;k}$ \eqref{defn:F:1} can be estimated  as follows:
\begin{align}  \n
\frac{d}{dt}&\lf(\bold{a}_{m,n}^2\| \mathring{\omega}_{m,n;k} e^W  \chi_{m+n} \|_{L^2}^2\rg)  +{\mathcal{D}^{(\gamma)}_{m,n;k}+2\mathcal{CK}_{m,n;k}^{(\gamma;\varphi)}+\mathcal{CK}_{m,n;k}^{(\gamma;W)}}\\
\n &\myr{\lesssim \frac{\lambda^{2s}}{(m+n)^{2\sigma_{\ast}}}  \mathcal{D}^{(\gamma)}_{m,n-1;k}\mathbbm{1}_{n\geq 1}+ \frac{\lambda^{2s}}{m^{2\sigma_{\ast}}} \mathcal{D}^{(\gamma)}_{m-1,0;k}\mathbbm{1}_{n=0,m+n\geq 1} }\\
&\quad +\bold{a}_{m,n}^2 \mathrm{Re}\lf\langle \mathring{\omega}_{m,n;k}, \lf(\bold{T}^{(m,n)}_{k} +\bold{C}^{(m,n)}_{\mathrm{trans};k} + \bold{C}^{(m,n)}_{\mathrm{visc};k}  +\bold{C}^{(m,n)}_{q,k} \rg)e^{2W} \chi_{m+n}^2 \rg\rangle . \label{gam:est:le1}
\end{align}
\siming{Double check the $m+n=0$ case, I felt that the linear term can be absorbed by the $\mathcal{D}$ and $\mathcal{CK}$. So we don't need to worry? }The dissipation term $\mathcal{D}^{(\gamma)}$ is defined in \eqref{defn:D:1} and the $\mathcal{CK}^{(\gamma)}$ terms are defined in \eqref{CK_H}. 
Moreover, the parameter $\sigma_\ast$ is defined in \eqref{s:prime} and the terms $\bold{T}^{(m,n)}_{k},\ \bold{C}^{(m,n)}_{\mathrm{trans};k}, \ \bold{C}^{(m,n)}_{\mathrm{visc};k} , \ \bold{C}^{(m,n)}_{q,k}$ are defined in \eqref{aff1}. 
\end{lemma}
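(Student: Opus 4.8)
The plan is to perform the standard weighted $L^2$ energy estimate on equation \eqref{aff1} with the multiplier $\bold{a}_{m,n}^2 \mathring{\omega}_{m,n;k} e^{2W} \chi_{m+n}^2$, and then to identify the dissipation, the Cauchy--Kovalevskaya terms, and the loss-of-regularity commutator terms that come from moving the cutoff $\chi_{m+n}$ and the weight $e^W$ past the Laplacian. First I would multiply \eqref{aff1} by $\bold{a}_{m,n}^2 \overline{\mathring{\omega}_{m,n;k}} e^{2W} \chi_{m+n}^2$, integrate over $y \in [-1,1]$, and take the real part. The transport term $ik(y+U_0^x)\mathring{\omega}_{m,n;k}$ is purely imaginary after pairing (since $y + U_0^x$ is real), so it drops out of the real part. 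The time-derivative term produces $\frac{1}{2}\frac{d}{dt}\big(\bold{a}_{m,n}^2 \|\mathring\omega_{m,n;k} e^W \chi_{m+n}\|_{L^2}^2\big)$ together with two favorable terms: one from $\frac{d}{dt}\bold{a}_{m,n}^2$, which since $\bold{a}_{m,n} = B_{m,n}\varphi^{1+n}$ and both $B_{m,n}$ (through $\lambda(t)$) and $\varphi$ are decreasing, contributes $-\frac{\dot{\bold a}_{m,n}}{\bold a_{m,n}} \bold a_{m,n}^2 \|\cdots\|^2 \geq 0$, giving (a piece of) $2\mathcal{CK}^{(\gamma;\varphi)}_{m,n;k}$ and $\mathcal{CK}^{(\gamma;\lambda)}_{m,n;k}$; and one from $\partial_t(e^{2W})= 2 (\partial_t W) e^{2W}$, which since $\partial_t W \le 0$ on the relevant set produces $\mathcal{CK}^{(\gamma;W)}_{m,n;k} = \bold a_{m,n}^2 \|\sqrt{-\partial_t W}\,\mathring\omega_{m,n;k}e^W\chi_{m+n}\|_{L^2}^2$ (note $\chi_{m+n}$ is time-independent so no term arises from it). The factor $2$ on $\mathcal{CK}^{(\gamma;\varphi)}$ and the single copy of $\mathcal{CK}^{(\gamma;W)}$ in \eqref{gam:est:le1} should be traced to exactly these algebraic contributions.

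Next I would handle the viscous term $-\nu\Delta_k \mathring\omega_{m,n;k}$, which is where the dissipation and the main commutator (loss-of-derivative) terms are generated. Writing $\Delta_k = \partial_y^2 - |k|^2$ and integrating by parts against $\bold a_{m,n}^2 \overline{\mathring\omega_{m,n;k}} e^{2W}\chi_{m+n}^2$, the leading piece is $\nu \bold a_{m,n}^2 \|\nabla_k \mathring\omega_{m,n;k} e^W \chi_{m+n}\|_{L^2}^2$, which is precisely the $\mathcal{D}^{(\gamma)}_{m,n;k}$ contribution from \eqref{defn:D:1}. The remaining pieces from integration by parts are: (i) a term where $\partial_y$ hits $\chi_{m+n}^2$, producing $\nu$ times an integral supported on $\mathrm{supp}(\chi_{m+n}')$, and (ii) a term where $\partial_y$ hits $e^{2W}$. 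For (i), one uses property \eqref{chi:prop:3}, namely $|\partial_y \chi_{m+n}| \lesssim (m+n)^{1+\sigma}\chi_{m+n-1}$, so that after Cauchy--Schwarz and Young's inequality this term is absorbed partly into the current dissipation $\mathcal D^{(\gamma)}_{m,n;k}$ and partly bounded by $\nu (m+n)^{2(1+\sigma)} \bold a_{m,n}^2 \|\cdots \chi_{m+n-1}\|^2$; the ratio $\bold a_{m,n}^2/\bold a_{m,n-1}^2 \sim (\lambda^{s}/(m+n)^s)^2 \varphi^2$, combined with $(m+n)^{2(1+\sigma)}$, yields a factor $\lambda^{2s}/(m+n)^{2(s-1-\sigma)} = \lambda^{2s}/(m+n)^{2\sigma_\ast}$ against $\mathcal D^{(\gamma)}_{m,n-1;k}$ (when $n\ge 1$; when $n=0, m\ge 1$ one lowers the $m$-index instead, using that $q^0 = 1$ so the weight is trivial, giving the $\mathcal D^{(\gamma)}_{m-1,0;k}$ term). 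This is the origin of the two $\mathbbm 1$-indexed terms on the right of \eqref{gam:est:le1}. For (ii), the term with $\partial_y e^{2W}$ is handled by the standard trick $2\mathrm{Re}\langle \partial_y u, u \,\partial_y(e^{2W})\rangle$-type manipulation: one rewrites $\partial_y(u e^W) = (\partial_y u) e^W + u (\partial_y W)e^W$ and completes squares, so that the $|\partial_y W|^2 e^{2W}$ piece is controlled by $\mathcal{CK}^{(\gamma;W)}$ using the pointwise bound $|\partial_y W|^2 \lesssim -\partial_t W$ (which follows from the explicit form \eqref{defndW}: $|\partial_y W| \sim \frac{(|y|-1/4-\cdots)_+}{K\nu(1+t)}$ while $-\partial_t W \gtrsim \frac{(|y|-1/4-\cdots)_+^2}{K\nu(1+t)^2}$, so $|\partial_y W|^2 \lesssim (-\partial_t W) \cdot \frac{1}{K\nu}$ and the extra $\nu$ from the dissipation closes it), and the cross term is absorbed into $\mathcal D^{(\gamma)}_{m,n;k}$ and $\mathcal{CK}^{(\gamma;W)}_{m,n;k}$. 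The $-|k|^2$ part of $\Delta_k$ integrates by parts trivially (no derivative on the weights) and contributes the $\nu|k|^2$ piece of $\mathcal D^{(\gamma)}_{m,n;k}$; actually since $\mathring\omega$ is real-paired, $-\nu|k|^2$ times the $L^2$ norm is already a good (negative) term and gets folded into $\mathcal D^{(\gamma)}$.

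Finally, the right-hand side forcing and commutator terms $\bold F_k^{(m,n)} + \bold C_{\mathrm{trans},k}^{(m,n)} + \bold C_{\mathrm{visc},k}^{(m,n)} + \mathbf C_{q,k}^{(m,n)}$ pair directly against $\bold a_{m,n}^2 \overline{\mathring\omega_{m,n;k}} e^{2W}\chi_{m+n}^2$ to give exactly the inner-product term on the last line of \eqref{gam:est:le1} (with $\bold F_k^{(m,n)}$ contributing the ``Source'' piece, which in the lemma statement is presumably subsumed or handled separately — I would double-check whether $\bold F$ belongs in this line or is peeled off as $\mathcal I^{(\gamma)}_{\mathrm{Source}}$; from the phrasing it seems the lemma keeps $\bold T^{(m,n)}_k := \bold F_k^{(m,n)}$ inside). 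No estimation of these commutators is done at this stage — they are simply named and deferred to subsequent lemmas (the $\mathcal I^{(\gamma)}_q, \mathcal I^{(\gamma)}_{\mathrm{trans}}, \mathcal I^{(\gamma)}_{\mathrm{visc}}$ estimates of Proposition \ref{pro:rhs:intro}). The main obstacle — and the only genuinely delicate bookkeeping — is step (i) above: correctly tracking the interplay of the three factors $(m+n)^{2(1+\sigma)}$ (from the cutoff derivative), the weight ratio $\bold a_{m,n}^2/\bold a_{m,n-1}^2$, and the shift from $\chi_{m+n}$ to $\chi_{m+n-1}$, to land precisely on the clean factor $\lambda^{2s}(m+n)^{-2\sigma_\ast}$ and on the correctly-indexed dissipation $\mathcal D^{(\gamma)}_{m,n-1;k}$ rather than something weaker; and separately, being careful that when $m+n = 0$ there is no commutator at all (the cutoff $\chi_0$ causes no loss since there is nothing to lower to) so the indicator functions $\mathbbm 1_{n\ge 1}$ and $\mathbbm 1_{n=0, m+n\ge 1}$ are exactly right and the $m=n=0$ case is simply the plain energy identity with only $\mathcal D, \mathcal{CK}$, and the RHS terms.
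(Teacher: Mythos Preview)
Your overall strategy is correct and matches the paper's proof: pair equation \eqref{aff1} against $\bold a_{m,n}^2\,\overline{\mathring\omega_{m,n;k}}\,e^{2W}\chi_{m+n}^2$, drop the transport term by reality, extract the $\mathcal{CK}^{(\gamma;\varphi)}$ and $\mathcal{CK}^{(\gamma;W)}$ contributions from $\partial_t(\bold a_{m,n}^2 e^{2W})$, integrate $\nu\Delta_k$ by parts to produce $\mathcal D^{(\gamma)}_{m,n;k}$ plus two error terms (one from $\partial_y e^{2W}$, one from $\partial_y\chi_{m+n}^2$), and leave the right-hand side commutators as the stated inner product. Your treatment of the $\partial_y W$ error via $\nu|\partial_y W|^2\lesssim -K^{-1}\partial_t W$ is exactly the paper's \eqref{W_prop}/\eqref{I_21'}.

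There is, however, a genuine gap in your step (i). You assert that the residual $\nu(m+n)^{2(1+\sigma)}\bold a_{m,n}^2\|\mathring\omega_{m,n;k}e^W\chi_{m+n-1}\|^2$ reduces to $\lambda^{2s}(m+n)^{-2\sigma_\ast}\mathcal D^{(\gamma)}_{m,n-1;k}$ via the weight ratio $\bold a_{m,n}^2/\bold a_{m,n-1}^2$ alone. But $\mathcal D^{(\gamma)}_{m,n-1;k}$ involves $\nabla_k\mathring\omega_{m,n-1;k}$, not $\mathring\omega_{m,n;k}$; unpacking one $\Gamma_k$ gives
\[
\mathring\omega_{m,n;k}=q\,v_y^{-1}\partial_y\mathring\omega_{m,n-1;k}+ikt\,q\,\mathring\omega_{m,n-1;k}-(n-1)v_y^{-1}q'\,\mathring\omega_{m,n-1;k}.
\]
The first two pieces are absorbed by your weight-ratio argument (using $|q|\le 1$ and $\varphi t\lesssim 1$), but the third carries an extra factor $(n-1)$ with no gradient, leaving $\sim\lambda^{2s}(m+n)^{2-2\sigma_\ast}\nu\,\bold a_{m,n-1}^2\|\mathring\omega_{m,n-1;k}\chi_{m+n-1}\|^2$, which is not $\mathcal D^{(\gamma)}_{m,n-1;k}$ and does not close in one step. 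The paper handles this by first writing $(m+n)\|\mathring\omega_{m,n;k}\chi_{m+n-1}\|\le \|q\|_\infty\big\|\tfrac{m+n}{q}\mathring\omega_{m,n;k}\chi_{m+n-1}\big\|=\|S^{(1,0,0)}_{m,n}\omega_k\,\chi_{m+n-1}\|$ and then invoking the separately-proved iterated estimate \eqref{S:est:1} (Lemma~\ref{lem:SLJ}), whose inductive proof is precisely the repeated absorption of this $(n-1)$-commutator. Once you insert that lemma at this point, your outline is complete and coincides with the paper's argument.
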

\begin{proof} 
 We freeze $(m,n;k)$ and assume that $m+n\geq 1$. Thanks to the definition of $\mathcal{CK}^{(\gamma)}$ \eqref{CK_phi_B_1}, \eqref{CK_W_ga_1}, we observe that 
\begin{align}
\frac{d}{dt}\lf(\bold{a}_{m,n}^2\| \mathring{\omega}_{m,n;k} e^W  \chi_{m+n} \|_{L^2}^2\rg)=\bold{a}_{m,n}^2\frac{d}{dt}\|\mathring{\omega}_{m,n;k}e^W\chi_{m+n}\|_{L^2}^2-2\mathcal{CK}^{(\gamma;\varphi)}_{m,n;k}-2\mathcal{CK}^{(\gamma;B)}_{m,n;k}.\label{dt_ga}
\end{align} Recalling the weight \eqref{Bweight} and the equation \eqref{aff1}, we compute the identity (letting $\overline{U}^x_0(t, y) := y + U^x_0(t, y)$), 
\begin{align}\n
&\hspace{-1cm}\bold{a}_{m,n}^2 \frac{ 1}{2} \frac{d}{dt}\|  \mathring{\omega}_{m,n;k} e^W  \chi_{m+n} \|_{L^2}^2\\ \n
=&  \bold{a}_{m,n}^2\mathrm{Re}\lf\langle \mathring{\omega}_{m,n;k}, \pa_t \mathring{\omega}_{m,n;k} e^{2W}  \chi_{m+n}^2  \rg\rangle  + \bold{a}_{m,n}^2 \mathrm{Re}\lf\langle \mathring{\omega}_{m,n;k},  \mathring{\omega}_{m,n;k}  \pa_t( e^{2W} )\chi_{m+n}^2 \rg \rangle\\ \n
= & - \bold{a}_{m,n}^2\mathrm{Re} \lf\langle \mathring{\omega}_{m,n;k}, i k \overline{U}^x_0 \mathring{\omega}_{m,n;k} e^{2W} \chi_{m+n}^2\rg \rangle +\bold{a}_{m,n}^2 \mathrm{Re}\lf\langle \mathring{\omega}_{m,n;k}, \nu \Delta_k \mathring{\omega}_{m,n;k} e^{2W}  \chi_{m+n}^2\rg\rangle  \\ \n
&+ \bold{a}_{m,n}^2\mathrm{Re}\lf\langle \mathring{\omega}_{m,n;k},  \mathring{\omega}_{m,n;k} \pa_t \lf( e^{2W} \rg) \chi_{m+n}^2\rg\rangle   \\ \n
& + \bold{a}_{m,n}^2 \mathrm{Re}\lf\langle \mathring{\omega}_{m,n;k}, \lf(\bold{T}^{(m,n)}_{k} +\bold{C}^{(m,n)}_{\mathrm{trans};k} + \bold{C}^{(m,n)}_{\mathrm{visc};k}  +\bold{C}^{(m,n)}_{q,k} \rg)e^{2W} \chi_{m+n}^2 \rg\rangle  \\ 
 =: & \sum_{i = 1}^{4} \mathcal{I}_{m,n;k}^{(i)}.\label{I_i}
\end{align}
First of all, we note that integration by parts together with the boundary condition $\mathring{\omega}_{m,n;k}(t,y=\pm 1)=0$ yields that $  \mathcal{I}^{(1)}_{m,n;k} = 0$. We thus turn to the remaining terms above.  

Next we estimate the terms $\mathcal{I}^{(2)}_{m,n;k} + \mathcal{I}^{(3)}_{m,n;k}$ in \eqref{I_i}. We recall the boundary condition $\mathring{\omega}_{m,n;k}(t,y=\pm 1)=0$ and integrate by parts to get 
\begin{align}\n
\mathcal{I}^{(2)}_{m,n;k}& +\mathcal{I}^{(3)}_{m,n;k}  \\ \n
= &\bold{a}_{m,n}^2\mathrm{Re}\lf\langle \mathring{\omega}_{m,n;k}, \nu \pa_y^2 \mathring{\omega}_{m,n;k} e^{2W} \chi_{m+n}^2 \rg \rangle -  \nu  \bold{a}_{m,n}^2  |k|^2 \lf\| \mathring{\omega}_{m,n;k} e^W \chi_{m+n} \rg \|_{L^2}^2 \\ \n 
&+   \bold{a}_{m,n}^2 \mathrm{Re}\lf\langle \mathring{\omega}_{m,n;k},  \mathring{\omega}_{m,n;k} \pa_t (e^{2W} )\chi_{m+n}^2\rg\rangle \\
\n = &  - \bold{a}_{m,n}^2\nu \| \nabla_k \mathring{\omega}_{m,n;k} e^W \chi_{m+n} \|_{L^2}^2-\sqrt{2}\bold{a}_{m,n}^2\lf\|\mathring{\omega}_{m,n;k}\sqrt{-\pa_t W}    e^{W} \chi_{m+n}\rg\|_{L^2}^2  \\ \n
& -\bold{a}_{m,n}^2\mathrm{Re}\lf \langle   \mathring{\omega}_{m,n;k},    {\nu} \pa_y \mathring{\omega}_{m,n;k}  \pa_y ( e^{2W} )\chi_{m+n}^2\rg\ran -   \bold{a}_{m,n}^2\nu\Re \lf\langle \mathring{\omega}_{m,n;k},\pa_y \mathring{\omega}_{m,n;k} e^{2W} \pa_y  (\chi_{m+n}^2) \rg\rangle  \\
 =:&- \bold{a}_{m,n}^2\nu \| \nabla_k \mathring{\omega}_{m,n;k} e^W \chi_{m+n} \|_{L^2}^2-\sqrt{2}\bold{a}_{m,n}^2\lf\|   \mathring{\omega}_{m,n;k}  \sqrt{-\pa_t W}    e^{W} \chi_{m+n}\rg\|_{L^2}^2+   \sum_{i = 1}^2 \mathcal{I}^{(2,i)}_{m,n;k}.\label{I_2i} 
\end{align}  The term $\mathcal{I}^{(2,1)}_{m,n;k}$ can be estimated with \eqref{W_prop} as follows
\begin{align}\n
\mathcal{I}^{(2,1)}_{m,n;k}\leq &\bold{a}_{m,n}^2\nu\|| \pa_y W |\ \mathring{\omega}_{m,n;k} e^W \chi_{m+n}\|_2\|\pa_y\mathring{\omega}_{m,n;k}e^W\chi_{m+n}\|_2\\
\leq&\frac{1}{4} \bold{a}_{m,n}^2 \nu \|\pa_y\mathring{\omega}_{m,n;k}e^W\chi_{m+n}\|_2^2+\frac{1}{8}\bold{a}_{m,n}^2\lf\|   \mathring{\omega}_{m,n;k}  \sqrt{-\pa_t W}    e^{W} \chi_{m+n}\rg\|_2^2.\label{I_21'}
\end{align}
Here we have chosen $K$ large enough in \eqref{W_prop}. 


Next, we recall the definitions of $\bold{a}_{m,n}, \, B_{m,n}$ \eqref{a:weight}, \eqref{Bweight}, the parameter choice \eqref{s:prime} and the property of the cutoff $\chi_{m+n}$ \eqref{chi:prop:3}
\begin{align*}
|\mathcal{I}_{m,n;k}^{(2, 2)}| \lesssim & (m+n)^{1+ \sigma } \bold{a}_{m,n}^2  \| \sqrt{\nu} \pa_y\mathring{\omega}_{m,n;k} e^W \chi_{m+n}\|_{L_y^2}\| \sqrt{\nu} \mathring{\omega}_{m,n;k} e^W \chi_{m+n-1} \|_{L_y^2}\myr{\mathbbm{1}_{n\geq 1}\mathbbm{1}_{m+n\geq 1}} \\
&+  m^{1+ \sigma } \bold{a}_{m,0}^2  \| \sqrt{\nu} \pa_y\mathring{\omega}_{m,0;k} e^W \chi_{m}\|_{L_y^2}\| \sqrt{\nu} \mathring{\omega}_{m,0;k} e^W \chi_{m-1} \|_{L_y^2}\myr{\mathbbm{1}_{m\geq 1}}\\
\lesssim& (m+n)^{\sigma } \bold{a}_{m,n}^2   \| \sqrt{\nu} \pa_y\omega_{m,n;k} e^W \chi_{m+n}\|_{L_y^2}\lf\| \sqrt{\nu}\lf(\frac{m+n}{q} \mathring{\omega}_{m,n;k}\rg) e^W \chi_{m+n-1} \rg\|_{L_y^2}\myr{\mathbbm{1}_{n\geq 1}}\\
&+\bold{a}_{m,0}  \| \sqrt{\nu} \pa_y\mathring{\omega}_{m,0;k} e^W \chi_{m}\|_{L_y^2}\ \bold{a}_{m-1,0} \lambda^{s}m^{-\sigma_\ast } \| \sqrt{\nu}|k| \omega_{m-1,0;k} e^W \chi_{m-1} \|_{L_y^2}\myr{\mathbbm{1}_{m\geq 1}}.
\end{align*} 
In the next section, we will develop several technical estimates concerning the quantity $S_{m,n}^{(1,0,0)}\omega_k:=\frac{m+n}{q}\mathring{\omega}_{m,n;k}\mathbbm{1}_{\mathbb{S}_n^1}(1,0,0)$ \eqref{S_nq}. Without digging into the details, we directly invoke \eqref{S:est:1} to obtain that 
 \begin{align} \n
|\mathcal{I}_{m,n;k}^{(2, 2)}| \lesssim &(m+n)^{\sigma } \bold{a}_{m,n }   \| \sqrt{\nu} \pa_y\mathring{\omega}_{m,n;k} e^W \chi_{m+n}\|_{L_y^2} \lambda^{s}(m+n)^{-\sigma-\sigma_\ast}\sqrt{\mathcal{D}_{n-1, m;k}^{(\gamma)}}\\
\leq &  \frac{1}{8} \bold{a}_{m ,n}^2   \| \sqrt{\nu} \pa_y\mathring{\omega}_{m,n;k} e^W \chi_{m+n}\|_{L_y^2}^2  +C \frac{\lambda^{2s}}{(m+n)^{2\sigma_{\ast}}} \mathcal{D}^{(\gamma)}_{n-1,m;k}.\label{I_22} 
\end{align}
Combining \eqref{dt_ga}, \eqref{I_i}, \eqref{I_2i}, \eqref{I_21'},  and \eqref{I_22} yields \eqref{gam:est:le1}. 

\siming{(Check!)} Finally, we observe that the estimates above applied to the $m+n=0$ case with mild modification.   
This concludes the proof of the lemma. 
\end{proof}

We now want to commute our $\gamma$ estimate by a factor of $\nu|k|$.
\begin{lemma}[The $\mu$-term Estimate]
The time evolution of the $\mu$-component of the functional $\mathcal{E}_{m,n;k}$ \eqref{defn:F:1} can be estimated as follows:
\begin{align} \n
\frac{d}{dt}&\lf(\bold{a}_{m,n}^2 \nu|k|^2\| \mathring{\omega}_{m,n;k} e^W  \chi_{m+n} \|_{L^2}^2\rg) +  \mathcal{D}_{m,n;k}^{(\mu)}+{2} \mathcal{CK}_{m,n;k}^{(\mu;\varphi)}+\mathcal{CK}_{m,n;k}^{(\mu;W)}\\
&\lesssim \frac{\lambda^{2s}}{(m+n)^{2\sigma_{\ast}}}  \mathcal{D}^{(\mu)}_{m,n-1;k}\mathbbm{1}_{n\geq 1} + \frac{\lambda^{2s}}{m^{2\sigma_{\ast}}}  \mathcal{D}^{(\mu)}_{m-1,0;k}\myr{\mathbbm{1}_{n=0}}\mathbbm{1}_{m\geq 1}\n \\
&\quad+\bold{a}_{m,n}^2 \nu|k|^2\mathrm{Re}\lf\langle \mathring{\omega}_{m,n;k}, \lf(\bold{T}^{(m,n)}_{k} +\bold{C}^{(m,n)}_{\mathrm{trans};k} + \bold{C}^{(m,n)}_{\mathrm{visc};k}  +\bold{C}^{(m,n)}_{q,k} \rg)e^{2W} \chi_{m+n}^2 \rg\rangle . \label{mu:est:le}
\end{align}
The dissipation term $\mathcal{D}^{(\mu)}$ is defined in \eqref{defn:D:1} and the Cauchy-Kovalevskaya terms $\mathcal{CK}^{(\mu)}$ are defined in \eqref{CK_phi_B_1} and \eqref{CK_W_mu_1}. The parameter $\sigma_\ast$ is defined in \eqref{s:prime}. The terms $\bold{T}^{(m,n)}_{k},\ \bold{C}^{(m,n)}_{\mathrm{trans};k}, \ \bold{C}^{(m,n)}_{\mathrm{visc};k} , \ \bold{C}^{(m,n)}_{q,k}$ are defined in \eqref{aff1}. 
\end{lemma}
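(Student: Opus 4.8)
Here is the plan. The argument follows the proof of the $\gamma$-term estimate \eqref{gam:est:le1} essentially verbatim, inserting the extra weight $\nu|k|^2$ into every inner product. Fix $(m,n;k)$ and assume $m+n\geq1$ (the case $m+n=0$ being an easy modification). First I would expand $\tfrac{d}{dt}\big(\bold{a}_{m,n}^2\nu|k|^2\|\mathring\omega_{m,n;k}e^W\chi_{m+n}\|_{L^2}^2\big)$ using $\bold{a}_{m,n}^2=B_{m,n}^2\varphi^{2(1+n)}$: the derivative falling on $\varphi^{2(1+n)}$ produces exactly $-2\mathcal{CK}^{(\mu;\varphi)}_{m,n;k}$ by \eqref{CK_phi_B_1}, while the derivative falling on $B_{m,n}^2$ produces the sign-definite good term $2s(m+n)\tfrac{\dot\lambda}{\lambda}\mathcal{E}^{(\mu)}_{m,n;k}\leq0$, which I simply discard. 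What remains is $\bold{a}_{m,n}^2\nu|k|^2\cdot\tfrac12\tfrac{d}{dt}\|\mathring\omega_{m,n;k}e^W\chi_{m+n}\|_{L^2}^2$, into which I substitute the evolution equation \eqref{aff1}.

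Pairing \eqref{aff1} against $\nu|k|^2\mathring\omega_{m,n;k}e^{2W}\chi_{m+n}^2$ and taking real parts yields the same four contributions as in \eqref{I_i}. The transport term $-\bold{a}_{m,n}^2\nu|k|^2\mathrm{Re}\langle\mathring\omega_{m,n;k},ik\overline{U}^x_0\mathring\omega_{m,n;k}e^{2W}\chi_{m+n}^2\rangle$ vanishes because the integrand is purely imaginary ($\overline{U}^x_0$ being real). For the viscous term $\bold{a}_{m,n}^2\nu|k|^2\mathrm{Re}\langle\mathring\omega_{m,n;k},\nu\Delta_k\mathring\omega_{m,n;k}e^{2W}\chi_{m+n}^2\rangle$ I integrate by parts once in $y$; the boundary contributions drop because $\mathring\omega_{m,n;k}(t,\pm1)=0$, and what survives is $-\mathcal{D}^{(\mu)}_{m,n;k}$ (the $-|k|^2$ and $\pa_y^2$ parts of $\Delta_k$ together reconstituting $\bold{a}_{m,n}^2\nu^2|k|^2\|\nabla_k\mathring\omega_{m,n;k}e^W\chi_{m+n}\|_{L^2}^2$) plus the two cross terms in which $\pa_y$ hits $e^{2W}$, respectively $\chi_{m+n}^2$. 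The term coming from $\pa_t(e^{2W})$ equals $-2\mathcal{CK}^{(\mu;W)}_{m,n;k}$ by \eqref{CK_W_mu_1}.

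The two cross terms are treated exactly as $\mathcal{I}^{(2,1)}_{m,n;k}$ and $\mathcal{I}^{(2,2)}_{m,n;k}$ in the $\gamma$-proof, now carrying the $\nu|k|^2$ weight. For the first, I use the pointwise control of $|\pa_y W|$ in \eqref{W_prop} (taking $K$ large) to absorb it into a small fraction of $\mathcal{D}^{(\mu)}_{m,n;k}$ and $\mathcal{CK}^{(\mu;W)}_{m,n;k}$; this is precisely why only one copy of $\mathcal{CK}^{(\mu;W)}_{m,n;k}$ survives on the left. For the second, \eqref{chi:prop:3} gives $|\pa_y\chi_{m+n}|\lesssim(m+n)^{1+\sigma}\chi_{m+n-1}$; using $q\leq1$ I rewrite $(m+n)^{1+\sigma}|\mathring\omega_{m,n;k}|\leq(m+n)^\sigma q\,|S^{(1,0,0)}_{m,n}\omega_k|$ when $n\geq1$, while for $n=0$ I use directly $\bold{a}_{m,0}\,m^{1+\sigma}=\bold{a}_{m-1,0}\lambda^s m^{-\sigma_\ast}$ (since $s=1+\sigma+\sigma_\ast$). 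Then I apply the technical estimate \eqref{S:est:1} on $S^{(1,0,0)}$ — multiplied through by $|k|$ and rescaled from $\mathcal{D}^{(\gamma)}$ to $\mathcal{D}^{(\mu)}$, which is legitimate because $|k|$ is a scalar multiplier commuting with $S^{(1,0,0)}_{m,n}$ and $\pa_y$ — followed by Young's inequality, to produce the stated $\tfrac{\lambda^{2s}}{(m+n)^{2\sigma_\ast}}\mathcal{D}^{(\mu)}_{m,n-1;k}\mathbbm{1}_{n\geq1}$ and $\tfrac{\lambda^{2s}}{m^{2\sigma_\ast}}\mathcal{D}^{(\mu)}_{m-1,0;k}\mathbbm{1}_{n=0}\mathbbm{1}_{m\geq1}$ terms, together with a further small multiple of $\mathcal{D}^{(\mu)}_{m,n;k}$ to be absorbed on the left. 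Collecting the four contributions, moving the $\mathcal{CK}$ terms to the left, and carrying the commutator/source pieces $\bold{T}^{(m,n)}_k+\bold{C}^{(m,n)}_{\mathrm{trans};k}+\bold{C}^{(m,n)}_{\mathrm{visc};k}+\bold{C}^{(m,n)}_{q,k}$ to the right as the advertised inner product, gives \eqref{mu:est:le}; the $m+n=0$ case is handled as in the $\gamma$-lemma. The only delicate point is the constant bookkeeping ensuring the two cross terms are strictly dominated by $\mathcal{D}^{(\mu)}_{m,n;k}+\mathcal{CK}^{(\mu;W)}_{m,n;k}$, but since this was already carried out at the $\gamma$-level and nothing changes besides a uniform $\nu|k|^2$ factor, there is no new analytic obstacle.
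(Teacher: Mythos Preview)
Your proposal is correct and follows the same underlying logic as the paper. The paper's proof is more concise: rather than re-running each step of the $\gamma$-argument with the extra weight, it simply multiplies the already-established inequality \eqref{gam:est:le1} by the scalar $\nu|k|^2$ (which is constant for fixed $k$), observes that $\nu|k|^2\mathcal{E}^{(\gamma)}_{m,n;k}=\mathcal{E}^{(\mu)}_{m,n;k}$, $\nu|k|^2\mathcal{D}^{(\gamma)}_{m,n;k}=\mathcal{D}^{(\mu)}_{m,n;k}$, and similarly for the $\mathcal{CK}$ terms, and then uses $\nu|k|^2\mathcal{D}^{(\gamma)}_{m,n-1;k}\lesssim\mathcal{D}^{(\mu)}_{m,n-1;k}$ on the right-hand side. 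Your last sentence essentially captures this observation, so the only difference is that you chose to spell out the intermediate steps rather than invoke the $\gamma$-lemma as a black box.
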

\begin{proof} This follows by scaling \eqref{gam:est:le1} by $\nu|k|^2 $, and using the bound 
\begin{align*}
\nu |k|^2 \mathcal{D}^{(\gamma)}_{m,n-1;k}(t) \lesssim \mathcal{D}^{(\mu)}_{m,n-1;k}(t).
\end{align*}
\end{proof}

\begin{lemma}[The $\alpha$-term Estimate]
The time evolution of the $\al$-component of the functional $\mathcal{E}_{m,n;k}$ \eqref{defn:F:1} can be estimated as follows:
\begin{align} \n
\frac{d}{dt}&\lf( c_\al \bold{a}_{m,n}^2\nu   \|  \pa_y \mathring{\omega}_{m,n;k} e^{W} \chi_{m+n} \|_{L^2}^2\rg) +c_\al\mathcal{D}_{m,n;k}^{(\al)}+2c_\al\mathcal{CK}_{m,n;k}^{(\al;\varphi)} +c_\al\mathcal{CK}_{m,n;k}^{(\al;W)} \\
 \leq & C\frac{\lambda^{2s}}{(m+n)^{2\sigma_{\ast}}}\mathcal{D}_{m,n-1;k}\mathbbm{1}_{n\geq 1}   +C\frac{\lambda^{2s}}{m^{2\sigma_{\ast}}}\mathcal{D}_{m-1,0;k}\mathbbm{1}_{m\geq 1,n=0} + 2 c_\al \|1+\pa_y U^x_0 \|_{L^\infty_{t,y}}\mathcal{D}^{(\gamma)}_{m,n;k}\n \\
 &+ \mathbbm{1}_{n=1} \frac{C\lambda^{2s}}{(m+1)^{2\sigma_\ast}}\lf(\mathcal{D}_{m,0;k}^{(\al)}+\mathcal{CK}_{m,0;k}^{(\al,W)}\rg)  \n \\ 
 &+  C\bold{a}_{m,n}^2 \nu\mathrm{Re} \lf\langle  \pa_y \mathring{\omega}_{m,n;k}, \lf(\pa_y \bold{T}^{(m,n)}_{k}+\pa_y \bold{C}^{(m,n)}_{\mathrm{trans},k} +\pa_y \bold{C}^{(m,n)}_{\mathrm{visc},k}+\pa_y \bold{C}^{(m,n)}_{q,k}\rg)  e^{2W} \chi_{m+n}^2\rg\rangle .\label{alpha:est:le}
\end{align}
Here the $\mathcal{D}^{(\al)},\, \mathcal{CK}^{(\al)}$ terms are defined in \eqref{defn:D:1}, \eqref{CK_phi_B_1}, \eqref{CK_W_al_1}.
Here we also recall the definitions in \eqref{hghg1}. 
\end{lemma}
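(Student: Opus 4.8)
\emph{Approach.} The plan is to repeat the energy scheme of the $\gamma$- and $\mu$-estimates, but now starting from the once-differentiated equation \eqref{hghg1} for $\pa_y\mathring\omega_{m,n;k}$ and pairing it against $c_\al\bold{a}_{m,n}^2\nu\,\pa_y\mathring\omega_{m,n;k}e^{2W}\chi_{m+n}^2$. Two features are genuinely new compared with the $\gamma$-case: $\pa_y\mathring\omega_{m,n;k}$ no longer satisfies a homogeneous Dirichlet condition at $y=\pm1$ (it vanishes there only when $n\ge2$, since $q^n$ vanishes to order $n$ and $q$ is linear near the wall), and the differentiated commutator $\pa_y\bold C^{(m,n)}_{q,k}$, written out in \eqref{IDL12}, carries the more singular weights $q^{-1},q^{-2},q^{-3}$.

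First I would differentiate $c_\al\bold{a}_{m,n}^2\nu\|\pa_y\mathring\omega_{m,n;k}e^W\chi_{m+n}\|_{L^2}^2$ in time. As in \eqref{dt_ga}, the derivative of $\bold{a}_{m,n}^2=B_{m,n}^2\varphi^{2(1+n)}$ produces $-2c_\al\mathcal{CK}^{(\al;\varphi)}_{m,n;k}$ together with a further sign-definite $\dot\lambda$-contribution. Substituting \eqref{hghg1}: the transport term $ik\,\overline U^x_0\,\pa_y\mathring\omega_{m,n;k}$ disappears after taking real parts ($\overline U^x_0$, $e^{2W}$, $\chi_{m+n}$ all real), with no boundary integration needed. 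Integrating by parts the diffusion term $\nu\Delta_k\pa_y\mathring\omega_{m,n;k}$ yields $-c_\al\mathcal{D}^{(\al)}_{m,n;k}$, a $-c_\al\mathcal{CK}^{(\al;W)}_{m,n;k}$-type term coming from $\pa_t(e^{2W})$ (after absorbing the $\pa_y e^{2W}$ cross term as in $\mathcal{I}^{(2,1)}$ of \eqref{I_21'}, once $K$ is taken large), cross terms where $\pa_y$ lands on $\chi_{m+n}$, and boundary terms. For the $\chi$-cross terms I would use $|\pa_y^j\chi_{m+n}|\lesssim(m+n)^{j(1+\sigma)}\chi_{m+n-1}$ and, as in the $\gamma$-proof, rewrite the surplus powers of $(m+n)$ as the co-normal weight $\tfrac{m+n}{q}$; the technical bound \eqref{S:est:1} on the $S$-operators then converts them into $\lambda^{s}(m+n)^{-\sigma-\sigma_\ast}\sqrt{\mathcal{D}_{m,n-1;k}}$ --- the full second-order dissipation $\mathcal{D}_{m,n-1;k}$ is needed here, not merely its $\gamma$-part, since a $\pa_y^2\mathring\omega$ now appears and $\tfrac{m+n}{q}$ raises the effective order --- so that Young's inequality gives $\tfrac{C\lambda^{2s}}{(m+n)^{2\sigma_\ast}}\mathcal{D}_{m,n-1;k}$ for $n\ge1$, using $1+\sigma-s=-\sigma_\ast$; the $n=0$ instance of the same cross term, using $|k|^m\to|k|^{m-1}$ in place of the co-normal reduction, produces $\tfrac{C\lambda^{2s}}{m^{2\sigma_\ast}}\mathcal{D}_{m-1,0;k}\mathbbm{1}_{m\ge1}$.

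Next, the new linear contribution $-ik(1+\pa_yU^x_0)\mathring\omega_{m,n;k}$ of \eqref{hghg1}: paired against $\pa_y\mathring\omega_{m,n;k}$, Cauchy--Schwarz and Young bound it by a multiple of $\bold{a}_{m,n}^2\nu\,\|\nabla_k\mathring\omega_{m,n;k}e^W\chi_{m+n}\|^2=\mathcal{D}^{(\gamma)}_{m,n;k}$; keeping track of constants and of $\|\pa_yU^x_0\|_{L^\infty}\le\tfrac1{16}$ from \eqref{close_vy} gives precisely the $2c_\al\|1+\pa_yU^x_0\|_{L^\infty_{t,y}}\mathcal{D}^{(\gamma)}_{m,n;k}$ term of \eqref{alpha:est:le} --- note this leaks an $O(1)$ multiple of $\mathcal{D}^{(\gamma)}$ into the $\al$-estimate, which is exactly why $c_\al$ must be chosen small so that it can later be reabsorbed by the $\gamma$-dissipation in Proposition \ref{pro:light:on}. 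The terms $\pa_y\bold T^{(m,n)}_k$, $\pa_y\bold C^{(m,n)}_{\mathrm{trans},k}$, $\pa_y\bold C^{(m,n)}_{\mathrm{visc},k}$ and $\pa_y\bold C^{(m,n)}_{q,k}$ are left verbatim on the right-hand side of \eqref{alpha:est:le}, to be estimated afterwards in Proposition \ref{pro:rhs:intro}.

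\emph{Main obstacle.} The delicate point is the boundary terms from the integration by parts, together with the special case $n=1$. For $n\ge2$ the trace $\pa_y\mathring\omega_{m,n;k}|_{y=\pm1}$ vanishes, so all boundary integrals are zero; for $n=0,1$ they are not, and since $e^{2W}|_{y=\pm1}$ is exponentially large as $\nu\to0$ these must be controlled carefully. The key identity is obtained by restricting \eqref{M1a} to $y=\pm1$: since $\omega_k|_{y=\pm1}\equiv0$ (hence also $\pa_t\omega_k|_{y=\pm1}\equiv0$), one gets $\nu\pa_y^2\omega_k|_{y=\pm1}=-f_k|_{y=\pm1}$, which trades the most dangerous boundary factor for a trace of the source; the remaining boundary traces of $\pa_y\omega_k$ (and of $v_{yy}$) are then bounded by bulk quantities through a one-dimensional trace inequality localized near the wall. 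For $n=1$, using $\mathring\omega_{m,1;k}=|k|^m q\,\Gamma_k\omega_k$ and $q''|_{y=\pm1}=0$, the boundary values $\pa_y\mathring\omega_{m,1;k}|_{y=\pm1}$ and $\pa_y^2\mathring\omega_{m,1;k}|_{y=\pm1}$ reduce to boundary --- hence, via the trace inequality and the identity above, to bulk --- expressions in $\pa_y\omega_k$ and $\pa_y^2\omega_k$, i.e.\ $n=0$-level quantities; the weight ratio $\bold{a}_{m,1}/\bold{a}_{m,0}\lesssim\lambda^s(m+1)^{-s}\le\lambda^s(m+1)^{-\sigma_\ast}$ then yields exactly the $\mathbbm{1}_{n=1}\tfrac{C\lambda^{2s}}{(m+1)^{2\sigma_\ast}}\big(\mathcal{D}^{(\al)}_{m,0;k}+\mathcal{CK}^{(\al,W)}_{m,0;k}\big)$ term of \eqref{alpha:est:le}, while the $n=0$ boundary trace, reduced in the same way, is absorbed into the source contribution. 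Collecting all of these, and checking the degenerate case $m+n=0$ directly with the obvious modifications, gives \eqref{alpha:est:le}.
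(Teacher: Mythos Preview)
Your overall scheme matches the paper's proof closely: differentiate the $\alpha$-energy, extract the $\mathcal{CK}^{(\al;\varphi)}$ and $\mathcal{CK}^{(\al;W)}$ terms, kill the transport term by taking real parts, integrate the diffusion by parts to produce $-c_\al\mathcal{D}^{(\al)}_{m,n;k}$ plus a $W$-cross term (absorbed via \eqref{W_prop} for $K$ large), a $\chi$-cross term (dropped to level $n-1$), a boundary term, and bound the new linear term $-ik(1+\pa_yU^x_0)\mathring\omega$ by $c_\al\|1+\pa_yU^x_0\|_{L^\infty}\mathcal{D}^{(\gamma)}_{m,n;k}$.

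The one place that needs correction is the boundary contribution. You write that the identity $\nu\pa_y^2\omega_k|_{y=\pm1}=-f_k|_{y=\pm1}$ ``trades the most dangerous boundary factor for a trace of the source,'' and that the resulting $n=0$ boundary term ``is absorbed into the source contribution.'' This does not match the statement: the source term in \eqref{alpha:est:le} is a fixed bulk inner product, with no boundary piece, and there is no mechanism to hide a nonzero $f_k$-trace there. Worse, if one keeps $\pa_y^2\omega_k|_{\pm1}\neq0$ in the $n=1$ computation, the boundary product acquires a $\pa_y\omega_k\,\overline{\pa_y^2\omega_k}$ piece whose trace estimate costs a derivative too many to land in $\mathcal{D}^{(\al)}_{m,0;k}$.

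The paper resolves this through Lemma~\ref{lemma:BC}, which relies on $\pa_{yy}\omega_k|_{y=\pm1}=0$ (your identity with the implicit hypothesis $f_k|_{y=\pm1}=0$). With this in hand the $n=0$ boundary term vanishes identically; and for $n=1$, using $q|_{\pm1}=q''|_{\pm1}=0$, $\omega_k|_{\pm1}=0$, $\pa_{yy}\omega_k|_{\pm1}=0$ and $\mathrm{Re}(ikt\,|\pa_y\omega_k|^2)=0$, the boundary term collapses to a multiple of $v_{yy}v_y^{-3}|q'|^2\,|\pa_y\mathring\omega_{m,0;k}|^2\big|_{y=\pm1}$. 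A trace estimate on $|\pa_y\mathring\omega_{m,0;k}|\,e^W$ (localized by $\chi_{m+1}$) then produces exactly $\tfrac{C\lambda^{2s}}{(m+1)^{2\sigma_\ast}}\big(\mathcal{D}^{(\al)}_{m,0;k}+\mathcal{CK}^{(\al,W)}_{m,0;k}\big)$.

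A minor point: for the $\chi$-cross term $\mathcal{I}^{(2,3)}$ you invoke \eqref{S:est:1}, but that bound only delivers $\mathcal{D}^{(\gamma)}$; the quantity appearing here is $S^{(1,1,0)}_{m,n}\omega_k=\tfrac{m+n}{q}\pa_y\mathring\omega_{m,n;k}$, and one needs the second-order estimate \eqref{S:est:2}, which indeed yields the full $\mathcal{D}^{(\al)}_{m,n-1;k}+\mathcal{D}^{(\mu)}_{m,n-1;k}$ you (correctly) claim is required.
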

\begin{proof} We freeze $( m,n;k)$ and compute upon invoking \eqref{CK_H}, \eqref{hghg1} (below we define $\wt{U}^x_0 := y + U^x_0$),
\begin{align}
\n\frac{c_\al}{2} & \frac{d}{dt} \lf(\bold{a}_{m,n}^2 \nu\|  \pa_y \mathring{\omega}_{m,n;k} e^{W} \chi_{m+n} \|_{L^2}^2\right)  \\
\n = & \frac{c_\al}{2}   \nu\mathrm{Re} \lf\langle   \pa_y \mathring{\omega}_{m,n;k} ,   \pa_y \mathring{\omega}_{m,n;k}  \pa_t(\bold{a}_{m,n}^2 e^{2W})  \chi_{m+n}^2\rg\rangle  + c_\al\bold{a}_{m,n}^2 \nu \mathrm{Re}\lf\langle  \pa_t \pa_y \mathring{\omega}_{m,n;k} ,   \pa_y \mathring{\omega}_{m,n;k}   e^{2W} \chi_{m+n}^2 \rg \rangle \\
\n \leq & -c_\al\sum_{\iota\in\{\varphi,W\}}\mathcal{CK}_{m,n;k}^{(\al;\iota)}-    c_\al\bold{a}_{m,n}^2\nu\mathrm{Re}\lf \langle   \pa_y \mathring{\omega}_{m,n;k},  i k \wt{U}^x_0 \pa_y \mathring{\omega}_{m,n;k} e^{2W}\chi_{m+n}^2 \rg\rangle \\ 
\n &+ c_\al\bold{a}_{m,n}^2 \nu\mathrm{Re} \lf\langle \pa_y \mathring{\omega}_{m,n;k},  \nu \Delta_k \pa_y \mathring{\omega}_{m,n;k}  e^{2W} \chi_{m+n}^2\rg\rangle \\
\n &  -   c_\al\bold{a}_{m,n}^2 \nu\mathrm{Re}\lf \langle   \pa_y \mathring{\omega}_{m,n;k},  i k \pa_y \wt U^x_0 \mathring{\omega}_{m,n;k} e^{2W}\chi_{m+n}^2\rg \rangle\\
\n &+  c_\al\bold{a}_{m,n}^2 \nu\mathrm{Re} \lf\langle  \pa_y \mathring{\omega}_{m,n;k}, \lf(\pa_y \bold{T}^{(m,n)}_{k}+\pa_y \bold{C}^{(m,n)}_{\mathrm{trans},k} +\pa_y \bold{C}^{(m,n)}_{\mathrm{visc},k}+\pa_y \bold{C}^{(m,n)}_{q,k}\rg)  e^{2W} \chi_{m+n}^2\rg\rangle  \\
=: & -c_\al\sum_{\iota\in\{\varphi,W\}}\mathcal{CK}_{m,n;k}^{(\al;\iota)}+ \sum_{i = 1}^{4} \mathcal{I}_{m,n;k}^{(i)}.\label{al_I_i}
\end{align}
We first note that the term $\mathcal{I}_{m,n;k}^{(1)}$ in \eqref{al_I_i} is zero, 
\begin{align} \mathcal{I}_{m,n;k}^{(1)} = - c_\al\bold{a}_{m,n}^2 \nu\mathrm{Re} \int_{-1}^1  | \pa_y \mathring{\omega}_{m,n;k}|^2  i k (y + U^x_0(t, y)) e^{2W}\chi_{m+n}^2 dy = 0.\label{al_I_1}
\end{align}  
Next we address the term $\mathcal{I}_{m,n;k}^{(2)}$ in \eqref{al_I_i}. Integration by parts yields the following decomposition
\begin{align} \n
\mathcal{I}_{m,n;k}^{(2)}:=  & c_\al\bold{a}_{m,n}^2\nu\mathrm{Re}\lf\langle \pa_y \mathring{\omega}_{m,n;k}, \nu \pa_y^2 (\pa_y \mathring{\omega}_{m,n;k})  e^{2W}   \chi_{m+n}^2 \rg\rangle-c_\al\bold{a}_{m,n}^2\nu^{2}|k|^{2}  \lf\|  \pa_y \mathring{\omega}_{m,n;k}e^{W}\chi_{m+n}\rg\|_{L_y^2}^2   \\ \n
= & -  c_\al\bold{a}_{m,n} ^2\nu^{2} \|  \pa_y^2 \mathring{\omega}_{m,n;k} e^W \chi_{m+n} \|_{L_y^2}^2  -  c_\al \bold{a}_{m,n} ^2\nu^{2}|k|^{2} \| \pa_y \mathring{\omega}_{m,n;k}e^W \chi_{m+n} \|_{L_y^2}^2 \\ \n
&- 2c_\al \bold{a}_{m,n}^2 \nu^{2} \mathrm{Re} \lf\langle \pa_y \mathring{\omega}_{m,n;k},   \pa_y^2 \mathring{\omega}_{m,n;k} ( \pa_yW) e^{2W} \chi_{m+n}^2 \rg\rangle\\
\n  &+ c_\al\bold{a}_{m,n}^2\nu^{2} \mathrm{Re}\lf(  \pa_y \mathring{\omega}_{m,n;k} \overline{\pa_y^2 \mathring{\omega}_{m,n;k}}\rg) e^{2W} \chi_{m+n}^2\bigg|_{y = \pm 1} \\ \n
& - c_\al\bold{a}_{m,n}^2 \nu^{2} \mathrm{Re} \lf\langle  \pa_y \mathring{\omega}_{m,n;k}, \pa_y^2 \mathring{\omega}_{m,n;k}  e^{2W} \pa_y (\chi_{m+n}^2)\rg \rangle \\ 
= :& - c_\al \bold{a}_{m,n}^2 \nu^{2} \|  \pa_y \nabla_k \mathring{\omega}_{m,n;k} e^W  \chi_{m+n} \|_{L_y^2}^2+  \sum_{i = 1}^{3}  \mathcal{I}_{m,n;k}^{(2,i)}. \label{al_I_2i}
\end{align}
We recall the negative $\mathcal{CK}$ term in \eqref{al_I_i} and the dissipative term in \eqref{al_I_2i}, and apply the property of the $W$-weight \eqref{W_prop} to estimate the $\mathcal{I}_{m,n;k}^{(2,1)}$-term as follows
\begin{align}\n 
|\mathcal{I}_{m,n;k}^{(2,1)}| \leq &2\bold{a}_{m,n}^2  \nu^{2} \|  |\pa_yW|   \pa_y \mathring{\omega}_{m,n;k}e^W \chi_{m+n} \|_{L^2_y} \|  \pa_y^2 \mathring{\omega}_{m,n;k}e^W \chi_{m+n}  \|_{L^2_y} \\
\n \leq & \bold{a}_{m,n}^2   \frac{8}{\sqrt{K}}  \lf \|  \sqrt{-\pa_t W}   \nu^{1/2}\pa_y \mathring{\omega}_{m,n;k} e^W \chi_{m+n}\rg\|_{L^2_y} \| \nu  \pa_y^2 \mathring{\omega}_{m,n;k} e^W \chi_{m+n}\|_{L^2_y} \\
 \leq & \frac{C}{K^{\frac12}}\mathcal{CK}_{m,n;k}^{(\al;W)}+\frac{C}{K^{\frac{1}{2}}} \bold{a}_{m,n}^2\nu^{2}\|  \nabla_k \pa_y \mathring{\omega}_{m,n;k} e^W  \chi_{m+n} \|_{L_y^2}^2. \label{al_I_21'}
\end{align}
If $K$ is chosen larger than some universal constants, these two terms can be absorbed by the negative terms in \eqref{al_I_i} and \eqref{al_I_2i}.  

Next we address the important boundary contribution from above, $\mathcal{I}_{m,n;k}^{(2,2)}$, for which we invoke Lemma \ref{lemma:BC}, identity \eqref{id:BC}, which shows that 
\begin{align} \label{al_I_22}
|\mathcal{I}^{(2, 2)}_{m,n;k} |\lesssim \mathbbm{1}_{n=1} \frac{\lambda^{2s}}{(m+1)^{2\sigma_\ast}}\lf(\mathcal{D}_{m,0;k}^{(\al)}+\mathcal{CK}_{m,0;k}^{(\al)}\rg). 
\end{align}

Next, we treat the term $\mathcal{I}_{m,n;k}^{(2,3)}$ in \eqref{al_I_2i}. The idea is similar to the one applied in \eqref{I_22}. However, this time, we need to invoke a \myr{technical estimate \eqref{S:est:2}} on the function $S_{m,n}^{(1,1,0)}f_k:=\frac{m+n}{q}\pa_y (|k|^mq^n\Gamma_k^n\omega_{k})\mathbbm{1}_{\mathbb S_n^2}(1,1,0)$ \eqref{S_nq}. Combining it with the bounds on the cut-off functions \eqref{chi:prop:3},  we have that
\begin{align}
|\mathcal{I}_{m,n;k}^{( 2,3)}| \leq & Cc_\al\Big(\bold{a}_{m,n} \| \nu\pa_y^2 \mathring{\omega}_{m,n;k} e^W \chi_{m+n} \|_{L^2_y} \Big)  \Big(\bold{a}_{m,n}  (m+n)^{1+\sigma} \| \nu\pa_y \mathring{\omega}_{m,n;k} e^W \chi_{m+n-1} \|_{L^2_y} \Big) \n \\
\leq& \frac{c_\al}{8} \bold{a}_{m,n}^2\nu^{2 }\| \pa_y^2 \mathring{\omega}_{m,n;k} e^W \chi_{m+n} \|_{L^2_y} ^2\n \\
&+C\bold{a}_{m-1,0;k}^2 \nu^{2} \frac{\lambda^{2s}}{m^{2\sigma}} \lf\| \pa_y |k|\omega_{m-1,0;k} e^W \chi_{m-1} \rg\|_{L^2_y}^2\mathbbm{1}_{m\geq 1, n=0} \n \\
&+C\bold{a}_{m,0;k}^2 \nu^{2 } \frac{\lambda^{2s}\varphi^2}{(m+1)^{2\sigma_\ast}} \lf\| \pa_y (q(\vyn\pa_y+ikt)\mathring{\omega}_{m,0;k}) e^W \chi_{m} \rg\|_{L^2_y}^2\mathbbm{1}_{m\geq 0,n=1} \n \\
&+C\bold{a}_{m,n}^2 \nu^{2 }(m+n)^{2\sigma} \lf\| \frac{m+n}{q}\pa_y \mathring{\omega}_{m,n;k} e^W \chi_{m+n-1} \rg\|_{L^2_y}^2\mathbbm{1}_{n\geq 2} \n \\
\leq & \frac{c_\al}{8} \bold{a}_{m,n}^2\nu^{2} \| \pa_y^2 \mathring\omega_{\myr{m, n ;k}} e^W \chi_{m+n} \|_{L^2_y} ^2+ \frac{C\lambda^{2s}}{(m+n)^{2 \sigma_{\ast}}} \lf(\mathcal{D}^{(\alpha)}_{m,n-1;k}\mathbbm{1}_{n\geq 1} + \mathcal{D}^{(\mu)}_{m,n-1;k}\mathbbm{1}_{n\geq 1} \rg)\n\\
&+\frac{C\lambda^{2s}}{m^{2 \sigma_{\ast}}}\mathcal{D}^{(\alpha)}_{m-1,0;k}\mathbbm{1}_{m\geq 1}\mathbbm{1}_{n=0}. \label{al_I_23}
\end{align}

Finally, we estimate the term $\mathcal{I}^{(3)}_{m,n;k}$ in \eqref{al_I_i} as follows, 
\begin{align}\n
|\mathcal{I}^{(3)}_{m,n;k}| \leq & c_\al\bold{a}_{m,n}^2\|  {1 + \pa_y U^x_0 }\|_{L^\infty_{t,y}} \|\nu^{1/2}  |k|  \mathring{\omega}_{m,n;k} e^W \chi_{m+n} \|_{L^2} \|  \nu^{\frac12} \pa_y \mathring{\omega}_{m,n;k} e^W \chi_{m+n}\|_{L^2} \\
\leq & c_\al\bold{a}_{m,n}^2\| {1 + \pa_y U^x_0 }\|_{L_{t,y}^\infty}\mathcal{D}^{(\gamma)}_{m,n;k}. \label{al_I_3}
\end{align}Combining the decomposition \eqref{al_I_i}, \eqref{al_I_2i}, and the estimates  \eqref{al_I_1}, \eqref{al_I_21'}, \eqref{al_I_23}, \eqref{al_I_3}, we have obtained \eqref{alpha:est:le}.  
This concludes the proof of the lemma. 
\end{proof}
\ifx
\begin{lemma}[The $\beta$-term Estimate] 
The time evolution of the $\beta$-component of the hypocoercivity functional $\bold{E}_{m,n;k}$ \eqref{defn:E:1} can be estimated as follows:
\begin{align} \n
\frac{d}{dt} &\lf(\bold{a}_{m,n}^2\nu^{\frac13}|k|^{-\frac43}  \mathrm{Re} \lf\langle i k \mathring{\omega}_{m,n;k} , \pa_y \mathring{\omega}_{m,n;k} e^{2W} \chi_{m+n}^2  \rg\rangle\rg) +\mathcal G_{m,n;k} \\ \n 
\leq &  \siming{\frac{c_\al}{8c_\beta}}\sum_{\iota\in \{\varphi,B,W\}}\mathcal{CK}^{(\al;\iota)}_{m,n;k}+\siming{\frac{C c_\al}{ c_\beta}}\sum_{\iota\in \{\varphi,B,W\}}\mathcal{CK}^{(\gamma;\iota)}_{m,n;k}\\
\n &+ \siming{\frac{c_\al}{8c_\beta}}\mathcal{D}^{(\alpha)}_{m,n;k}+C\siming{\frac{ c_\beta}{c_\al}}\mathcal{D}^{(\gamma)}_{m,n;k}+ \frac{C\lambda^{2s}}{ (m+n)^{2\sigma_\ast}} \sum_{\iota \in \{ \alpha, \gamma,  \mu\}} \mathcal{D}^{(\iota)}_{m,n-1;k} \mathbbm{1}_{n\geq 1}\myr{+\frac{\lambda^{2s}}{m^{2\sigma_\ast}} {\mathcal{D}^{(\gamma)}_{m-1,0;k}}\mathbbm{1}_{m\geq1,n= 0}}\\
&+ \bold{a}_{m,n}^2\nu^{\frac13}|k|^{-\frac43}\mathrm{Re} \lf \langle i k\lf( \bold{T}^{(m,n)}_{ k}+ \bold{C}^{(m,n)}_{\mathrm{trans},k}+\bold{C}^{(m,n)}_{\mathrm{visc},k}+\bold{C}^{(m,n)}_{q,k}\rg), \pa_y \mathring{\omega}_{m,n;k} e^{2W} \chi_{m+n}^2\rg \rangle \n \\
&+\bold{a}_{m,n}^2\nu^{\frac13}|k|^{-\frac43}  \mathrm{Re} \lf\langle i k \mathring{\omega}_{m,n;k}, \lf ( \pa_y \bold{T}^{(m,n)}_{k}+ \pa_y\bold{C}^{(m,n)}_{\mathrm{trans},k}+\pa_y\bold{C}^{(m,n)}_{\mathrm{visc},k}+\pa_y \bold{C}^{(m,n)}_{q,k} \rg) e^{2W} \chi_{m+n}^2\rg\rangle. \label{bgty}
\end{align}
Here we recall the definitions in \eqref{defn:D:1}, \eqref{CK_H}, \eqref{G}, \eqref{aff1} and \eqref{hghg1}. 
\end{lemma}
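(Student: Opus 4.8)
The plan is to follow the Villani-type hypocoercivity scheme already visible in the shape of \eqref{bgty}: the $\beta$-pairing is a small, $c_\beta$-weighted correction to $\mathcal{E}^{(\gamma)}_{m,n;k}+\mathcal{E}^{(\alpha)}_{m,n;k}$ whose time derivative produces on the left the enhanced-dissipation gain $\mathcal{G}_{m,n;k}$ of \eqref{G}, and on the right only quantities already controlled by the $\gamma$- and $\alpha$-estimates. First I would freeze $(m,n;k)$ with $m+n\geq 1$ (the case $m+n=0$ being a trivial modification), expand $\tfrac{d}{dt}$ of the weighted pairing by the product rule, and substitute \eqref{aff1} for $\pa_t\mathring{\omega}_{m,n;k}$ and \eqref{hghg1} for $\pa_t\pa_y\mathring{\omega}_{m,n;k}$. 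The time derivative that lands on $\bold{a}_{m,n}^2 e^{2W}$ ($\chi_{m+n}$ being time-independent) produces the $\mathcal{CK}^{(\beta;\varphi)}$-, $\mathcal{CK}^{(\beta;B)}$- and $\mathcal{CK}^{(\beta;W)}$-type contributions; since the pairing is not sign-definite, these are only \emph{bounded}, after Cauchy--Schwarz and the weight bound $|\pa_yW|\lesssim K^{-1/2}\sqrt{-\pa_tW}$ from \eqref{W_prop}, by a small multiple of the $\alpha$-Cauchy--Kovalevskaya terms plus a larger but harmless multiple of the $\gamma$-ones — exactly as displayed in \eqref{bgty}.

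The heart of the argument is the transport term $ik\overline U^x_0\mathring{\omega}_{m,n;k}$ with $\overline U^x_0 := y + U^x_0$. Inserting the transport piece $-ik\overline U^x_0\mathring{\omega}_{m,n;k}$ of \eqref{aff1} into the first slot of $\mathrm{Re}\langle ik\,\cdot\,,\pa_y\mathring{\omega}_{m,n;k}\rangle$ and the transport piece $-ik\overline U^x_0\pa_y\mathring{\omega}_{m,n;k}$ of \eqref{hghg1} into the second slot of $\mathrm{Re}\langle ik\mathring{\omega}_{m,n;k},\,\cdot\,\rangle$, the two resulting bulk integrals are algebraically identical with opposite signs and cancel \emph{exactly}, with no integration by parts required. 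The remaining transport contribution from \eqref{hghg1}, namely $-ik(1+\pa_yU^x_0)\mathring{\omega}_{m,n;k}$, pairs with $ik\mathring{\omega}_{m,n;k}$ to give precisely $-\mathcal{G}_{m,n;k}$ once the prefactor $\nu^{1/3}|k|^{-4/3}$ is restored; this is genuinely coercive because $1+\pa_yU^x_0$ is bounded below by a constant close to $1$ thanks to \eqref{close_vy}. Transferring $\mathcal{G}_{m,n;k}$ to the left-hand side yields the gain term in \eqref{bgty}.

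It then remains to estimate the two viscous pairings $\mathrm{Re}\langle ik\nu\Delta_k\mathring{\omega}_{m,n;k},\pa_y\mathring{\omega}_{m,n;k}\rangle$ and $\mathrm{Re}\langle ik\mathring{\omega}_{m,n;k},\nu\Delta_k\pa_y\mathring{\omega}_{m,n;k}\rangle$ (against the weight $\bold{a}_{m,n}^2\nu^{1/3}|k|^{-4/3}e^{2W}\chi_{m+n}^2$) and the commutator/forcing pieces. For the viscous pairings I would integrate by parts in $y$: the boundary contributions at $y=\pm1$ vanish, because $\mathring{\omega}_{m,n;k}$ obeys a homogeneous Dirichlet condition there (for $n\geq1$ since $q$ vanishes linearly at the walls, for $n=0$ from the Dirichlet condition on $\omega_k$), and the surviving bulk cross terms either have zero real part or are dispatched by Cauchy--Schwarz. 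The $\nu^{1/3}|k|^{-4/3}$ scaling together with the powers of $\nu$ from $\Delta_k$ makes the derivative count balance, so that each such term is bounded by $\tfrac{c_\al}{8c_\beta}\mathcal{D}^{(\alpha)}_{m,n;k}+C\tfrac{c_\beta}{c_\al}\mathcal{D}^{(\gamma)}_{m,n;k}$, together with the derivative-loss terms $\tfrac{\lambda^{2s}}{(m+n)^{2\sigma_\ast}}\sum_{\iota\in\{\alpha,\gamma,\mu\}}\mathcal{D}^{(\iota)}_{m,n-1;k}\mathbbm{1}_{n\geq1}$ and the $n=0$ analogue $\tfrac{\lambda^{2s}}{m^{2\sigma_\ast}}\mathcal{D}^{(\gamma)}_{m-1,0;k}$, which arise from the $\chi_{m+n}$-commutators and are handled exactly as in the $\alpha$-lemma via \eqref{chi:prop:3} and the $S$-operator bounds \eqref{S:est:1}, \eqref{S:est:2}. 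The terms $\bold{T}^{(m,n)}_{k},\bold{C}^{(m,n)}_{\mathrm{trans},k},\bold{C}^{(m,n)}_{\mathrm{visc},k},\bold{C}^{(m,n)}_{q,k}$ and their $\pa_y$-derivatives are simply carried over to the right-hand side of \eqref{bgty}, to be estimated together with the analogous terms from the $\gamma/\alpha/\mu$-lemmas.

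The step I expect to be the main obstacle is the bookkeeping of the hypocoercivity constant $c_\beta$: it must be chosen small enough relative to $c_\al$ and the universal constants (roughly $c_\beta^2\ll c_\al$) that, once $c_\beta$ times \eqref{bgty} is added to the $\gamma$- and $\alpha$-estimates and summed over $(m,n;k)$, every debt the $\beta$-pairing generates — the $C\tfrac{c_\beta^2}{c_\al}\mathcal{D}^{(\gamma)}$ term, the $\mathcal{CK}^{(\gamma)}$ terms, and the $\tfrac{c_\al}{8}\mathcal{D}^{(\alpha)}$, $\tfrac{c_\al}{8}\mathcal{CK}^{(\alpha)}$ contributions — is absorbed by the coercive quantities already present on the left, while $c_\beta\mathcal{G}_{m,n;k}$ survives with a clean positive constant; and it must at the same time keep the $\beta$-pairing a controlled perturbation, so that the augmented functional remains equivalent to $\mathcal{E}_{m,n;k}$ pointwise in time. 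Threading this consistently with the already-fixed $c_\al$, and keeping track of the various $\mathbbm{1}_{n\geq1}/\mathbbm{1}_{n=0}$ branchings in the derivative-loss terms, is the delicate accounting; the analytical content — the exact transport cancellation, the sign of $\mathcal{G}_{m,n;k}$, and the vanishing of the boundary terms — is comparatively routine given the $\gamma$- and $\alpha$-lemmas.
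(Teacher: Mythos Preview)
Your proposal is correct and follows essentially the same route as the paper: product-rule expansion into three pieces $\mathcal I^{(1)},\mathcal I^{(2)},\mathcal I^{(3)}$, Cauchy--Schwarz on $\mathcal I^{(1)}$ to get the mixed $\mathcal{CK}^{(\alpha)}/\mathcal{CK}^{(\gamma)}$ terms, the exact cancellation $\mathcal I^{(2,1)}+\mathcal I^{(3,1)}=0$ of the two transport pieces, and the identification $\mathcal I^{(3,2)}=-\mathcal G_{m,n;k}$ from the extra $-ik(1+\partial_yU^x_0)\mathring\omega_{m,n;k}$ in \eqref{hghg1}. The $\chi_{m+n}$- and $W$-commutators from the viscous integration by parts are handled exactly as you say, via \eqref{chi:prop:3}, \eqref{S:est:1} and \eqref{W_prop}.

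One small point of difference: the paper does \emph{not} integrate by parts on the first viscous pairing $\mathrm{Re}\langle ik\nu\Delta_k\mathring\omega_{m,n;k},\partial_y\mathring\omega_{m,n;k}e^{2W}\chi_{m+n}^2\rangle$; it simply applies Cauchy--Schwarz directly, since $\nu\partial_y^2\mathring\omega_{m,n;k}$ is already in $\mathcal D^{(\alpha)}_{m,n;k}$, and obtains $\lesssim\sqrt{\mathcal D^{(\alpha)}_{m,n;k}\mathcal D^{(\gamma)}_{m,n;k}}$. Integration by parts is only needed on the $\partial_y^3$ piece of the second viscous pairing, where the boundary term $ik\,\mathring\omega_{m,n;k}\overline{\nu\partial_y^2\mathring\omega_{m,n;k}}\,e^{2W}\chi_{m+n}^2\big|_{y=\pm1}$ vanishes by the Dirichlet condition on $\mathring\omega_{m,n;k}$, as you note. (If you did insist on integrating the first pairing by parts, the resulting boundary contribution $ik\nu|\partial_y\mathring\omega_{m,n;k}|^2e^{2W}\chi_{m+n}^2\big|_{y=\pm1}$ is purely imaginary rather than zero by Dirichlet, so your stated reason would not apply there --- but this is moot since direct estimation is cleaner.) Your final paragraph on the choice of $c_\beta$ concerns how the lemma is assembled with the others, not the lemma itself.
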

\begin{proof} We compute for frozen $(m,n, k)$, the identity
\begin{align} \n
 \frac{d}{dt}\lf(\bold{a}_{m,n}^2\nu^{\frac13}|k|^{-\frac43}   \mathrm{Re} \lf\langle i k \mathring{\omega}_{m,n;k} , \pa_y \mathring{\omega}_{m,n;k} e^{2W} \chi_{m+n}^2  \rg \rangle\rg) 
= &  \nu^{\frac13} |k|^{-\frac43}  \mathrm{Re} \lf\langle i k  \mathring{\omega}_{m,n;k}, \pa_y \mathring{\omega}_{m,n;k} \pa_t(  \bold{a}_{m,n}^2 e^{2W} )\chi_{m+n}^2 \rg \rangle \\ &+\bold{a}_{m,n}^2 \nu^{\frac13} |k|^{-\frac43}  \mathrm{Re}\lf \langle i k \pa_t \mathring{\omega}_{m,n;k}, \pa_y \mathring{\omega}_{m,n;k} e^{2W} \chi_{m+n}^2\rg\rangle \n \\ \n
& +  \bold{a}_{m,n}^2\nu^{\frac13}|k|^{-\frac43}  \mathrm{Re}\lf \langle i k \mathring{\omega}_{m,n;k}, \pa_y \pa_t \mathring{\omega}_{m,n;k} e^{2W} \chi_{m+n}^2\rg \rangle \\ 
 =: &\sum_{i=1}^3\mathcal{I}_{m,n;k}^{(i)}.\label{beta_I_i}
\end{align}
We first bound the $\mathcal{I}_{m,n;k}^{(1)}$ from above by a simple application of Cauchy-Schwartz:
\begin{align}
|\mathcal{I}_{m,n;k}^{(1)}&| =2\bold{a}_{m,n}^2\nu^{\frac13}|k|^{-\frac13}\lf|\mathrm{Re}\lf\lan i\mathring{\omega}_{m,n;k},\pa_{y}\omega_{k, n}\lf(\frac{(m+n)\dot \varphi}{\varphi}+\frac{\dot B_{m,n}}{B_{m,n}}+\dot W\rg)e^{2W}\chi_{m+n}^2\rg\ran\rg| \\
\leq&C\lf(\bold{a}_{m,n}\sqrt{(m+n)\frac{|\dot \varphi|}{\varphi}}\lf\| \mathring{\omega}_{m,n;k}  e^W \chi_{m+n}\rg \|_{L^2_y} \rg)\lf(\bold{a}_{m,n}\sqrt{(m+n)\frac{|\dot \varphi|}{\varphi}}\lf\| \nu^{\frac13} |k|^{-\frac13} \pa_y \mathring{\omega}_{m,n;k} e^W \chi_{m+n} \rg\|_{L^2_y}\rg)\\
&+C\lf(\bold{a}_{m,n}\sqrt{\frac{|\dot B_{m,n}|}{B_{m,n}}}\lf\| \mathring{\omega}_{m,n;k}  e^W \chi_{m+n}\rg \|_{L^2_y} \rg)\lf(\bold{a}_{m,n}\sqrt{\frac{|\dot B_ {m,n}|}{B_{m,n}}}\lf\| \nu^{\frac13} |k|^{-\frac13} \pa_y \mathring{\omega}_{m,n;k} e^W \chi_{m+n} \rg\|_{L^2_y}\rg)\\
&+ C\lf(\bold{a}_{m,n}\lf\| \mathring{\omega}_{m,n;k} \sqrt{-\pa_t W} e^W \chi_{m+n}\rg \|_{L^2_y}\rg)\lf(\bold{a}_{m,n} \lf\| \nu^{\frac13} |k|^{-\frac13} \pa_y \mathring{\omega}_{m,n;k} \sqrt{-\pa_t W} e^W \chi_{m+n} \rg\|_{L^2_y} \rg) \\
\leq& \siming{\frac{c_\al}{16c_\beta}}\sum_{\iota\in \{\varphi,B,W\}}\mathcal{CK}^{(\al;\iota)}_{m,n;k}+\siming{\frac{Cc_\beta}{c_\al}}\sum_{\iota\in \{\varphi,B,W\}}\mathcal{CK}^{(\gamma;\iota)}_{m,n;k} .\label{beta_I_1}
\end{align}
Here we recall the definitions \eqref{CK_H}.

We now handle the $\mathcal{I}^{(2)}_{m,n;k}$ term, for which we recall equation \eqref{aff1} to produce the identity 
\begin{align} 
\n \mathcal{I}_{m,n;k}^{(2)} = & \bold{a}_{m,n}^2 \nu^{\frac13}  |k|^{\frac23}  \mathrm{Re} \lf \langle (y + U^x_0) \mathring{\omega}_{m,n;k}, \pa_y \mathring{\omega}_{m,n;k} e^{2W} \chi_{m+n}^2 \rg \rangle  \\ \n
&+   \bold{a}_{m,n}^2\nu^{\frac43}  |k|^{ -\frac43}  \mathrm{Re} \lf \langle i k  \Delta_k \mathring{\omega}_{m,n;k}, \pa_y \mathring{\omega}_{m,n;k} e^{2W} \chi_{m+n}^2\rg\rangle \\
&+  \bold{a}_{m,n}^2\nu^{\frac13}|k|^{-\frac43}\mathrm{Re} \lf \langle i k\lf( \bold{T}^{(m,n)}_{ k}+ \bold{C}^{(m,n)}_{\mathrm{trans},k}+\bold{C}^{(m,n)}_{\mathrm{visc},k}+\bold{C}^{(m,n)}_{q,k}\rg), \pa_y \mathring{\omega}_{m,n;k} e^{2W} \chi_{m+n}^2\rg \rangle \n \\
  =: &  \sum_{i = 1}^{3}\mathcal{I}^{(2,i)}_{m,n;k} .\label{beta_I_2i}
\end{align}
We bound each of these contributions now. First, we will see that $\mathcal{I}_{m,n;k}^{(2,1)}$ cancels exactly with $\mathcal{I}_{m,n;k}^{(3,1)}$ below, so we do not need to estimate them at this stage. Turning to the diffusive term, we control it from above via 
\begin{align}
|\mathcal{I}_{m,n;k}^{(2,2)} | \lesssim &\bold{a}_{m,n}^2\nu^{\frac43} |k|^{ -\frac43} \lf |\mathrm{Re} \lf \langle i k (\pa_y^2 - |k|^2)\mathring{\omega}_{m,n;k}, \pa_y \mathring{\omega}_{m,n;k} e^{2W} \chi_{m+n}^2\rg \rangle\rg |\n  \\
 \lesssim & \bold{a}_{m,n}^2 \| \nu^{\frac56}  |k|^{-\frac13}\pa_y^2 \mathring{\omega}_{m,n;k} e^W \chi_{m+n} \|_{L_y^2} \|  \nu^{\frac12}\pa_y \mathring{\omega}_{m,n;k} e^W \chi_{m+n} \|_{L^2_y} \n \\
 & +  \bold{a}_{m,n}^2\| \nu^{\frac56}  |k|^{\frac53}\pa_y \mathring{\omega}_{m,n;k} e^W \|_{L_y^2} \| \nu^{\frac12} |k|\mathring{\omega}_{m,n;k} e^W\chi_{m+n} \|_{L^2_y} \n \\
 \lesssim & \sqrt{\mathcal D^{(\alpha)}_{m,n;k}\mathcal{D}_{m,n;k}^{(\gamma)}}.\label{beta_I_22}
\end{align} 
Here $ \mathcal D^{(\alpha)}_{m,n;k}, \ \mathcal{D}_{m,n;k}^{(\gamma)}$ are defined in \eqref{defn:D:1}.

Upon invoking \eqref{hghg1}, we have that
\begin{align}\n 
&\mathcal{I}_{m,n;k}^{(3)}\\
  &= -\bold{a}_{m,n}^2 \nu^{\frac13}|k|^{ \frac23}  \mathrm{Re} \lf \langle   \mathring{\omega}_{m,n;k},  (y + U^x_0) \pa_y \mathring{\omega}_{m,n;k} e^{2W} \chi_{m+n}^2\rg \rangle\n  \\
&  \quad-\bold{a}_{m,n}^2 \nu^{\frac13}|k|^{\frac23} \mathrm{Re} \lf\langle    \mathring{\omega}_{m,n;k},    (1 + \pa_y U^x_0) \mathring{\omega}_{m,n;k} e^{2W} \chi_{m+n}^2\rg\rangle + \bold{a}_{m,n}^2\nu^{\frac13}|k|^{-\frac43} \mathrm{Re} \lf\langle i k \mathring{\omega}_{m,n;k}, \nu \Delta_k \pa_y \mathring{\omega}_{m,n;k}  e^{2W} \chi_{m+n}^2\rg\rangle  \n \\ 
& \quad + \bold{a}_{m,n}^2\nu^{\frac13}|k|^{-\frac43}  \mathrm{Re} \lf\langle i k \mathring{\omega}_{m,n;k}, \lf ( \pa_y \bold{T}^{(m,n)}_{k}+ \pa_y\bold{C}^{(m,n)}_{\mathrm{trans},k}+\pa_y\bold{C}^{(m,n)}_{\mathrm{visc},k}+\pa_y \bold{C}^{(m,n)}_{q,k} \rg) e^{2W} \chi_{m+n}^2\rg\rangle \n \\
&=:   \sum_{i = 1}^{4} \mathcal{I}_{m,n;k}^{(3,i)}.\label{beta_I_3i}
\end{align}
Clearly, we have 
\begin{align}
\mathcal{I}^{(2,1)}_{ m,n;k} + \mathcal{I}^{(3,1)}_{ m,n;k} = 0.\label{beta_I_23_1}
\end{align} We now extract the main favorable contribution from this estimate, which is coming from $\mathcal{I}^{(3,2 )}_{m, n;k}$:
\begin{align}
\mathcal{I}^{(3,2)}_{ m,n;k} = - \bold{a}_{m,n}^2\nu^{\frac13}|k|^{\frac23}  \lf \| \sqrt{1 + \pa_y U^x_0} \mathring{\omega}_{m,n;k} e^W \chi_{m+n} \rg\|_{L^2_y}^2.\label{beta_I_32}
\end{align}
We now move to the diffusive terms, which can be split into 
\begin{align}
\mathcal{I}^{(3,3)}_{ m,n;k} = &  \bold{a}_{m,n}^2 \nu^{\frac13}|k|^{-\frac43} \mathrm{Re} \lf\langle i k \mathring{\omega}_{m,n;k}, \nu \pa_y^3 \mathring{\omega}_{m,n;k}  e^{2W} \chi_{m+n}^2\rg\rangle  -  \bold{a}_{m,n}^2 \nu^{\frac13}|k|^{\frac23} \mathrm{Re}\lf \langle i k \mathring{\omega}_{m,n;k}, \nu  \pa_y \mathring{\omega}_{m,n;k}  e^{2W} \chi_{m+n}^2\rg \rangle \\
=:&  \mathcal{I}^{(3, 3, 1)}_{m,n;k} + \mathcal{I}^{(3,3, 2)}_{m,n;k}.\label{beta_I33i}
\end{align}
We can immediately estimate the latter term via 
\begin{align}
| \mathcal{I}^{(3,3, 2)}_{ m,n;k}| \lesssim &  \bold{a}_{m,n}^2 \| \nu^{\frac12}|k| \mathring{\omega}_{m,n;k} e^W \chi_{m+n} \|_{L^2_y}  \| \nu^{\frac56}|k|^{\frac23} \pa_y \mathring{\omega}_{m,n;k} e^W \chi_{m+n} \|_{L^2_y} \\
\lesssim & \sqrt{\mathcal{D}_{m,n;k}^{(\alpha)}\mathcal{D}_{m,n;k}^{(\gamma)} }.\label{beta_I332}
\end{align}
Here $ \mathcal D^{(\alpha)}_{m,n;k}, \ \mathcal{D}_{m,n;k}^{(\gamma)}$ are defined in \eqref{defn:D:1}.

Next, we recall the $ \mathcal{I}^{(3,3,1)}_{m,n;k}$-term in \eqref{beta_I33i}, and integrate by parts to obtain 
\begin{align}
 \mathcal{I}^{(3,3, 1)}_{ m,n;k} = & \bold{a}_{m,n}^2 \nu^{\frac13}|k|^{-\frac43}   \mathrm{Re} \lf\langle i k \mathring{\omega}_{m,n;k}, \nu \pa_y^3 \mathring{\omega}_{m,n;k}  e^{2W} \chi_{m+n}^2\rg\rangle \n \\
 = & - \bold{a}_{m,n}^2  \nu^{\frac43}|k|^{-\frac43} \mathrm{Re}\lf \langle i k \pa_y \mathring{\omega}_{m,n;k},  \pa_y^2 \mathring{\omega}_{m,n;k}  e^{2W} \chi_{m+n}^2\rg\rangle\n \\
 &- 2 \bold{a}_{m,n}^2 \nu^{\frac43}|k|^{-\frac43} \mathrm{Re} \lf\langle i k \mathring{\omega}_{m,n;k},  \pa_y^2 \mathring{\omega}_{m,n;k}  (\pa_y W)  e^{2W} \chi_{m+n}^2\rg\rangle \n \\
 & -  \bold{a}_{m,n}^2 \nu^{\frac43}|k|^{-\frac43} \mathrm{Re}\lf \langle i k \mathring{\omega}_{m,n;k},  \pa_y^2 \mathring{\omega}_{m,n;k}   e^{2W} \pa_y( \chi_{m+n}^2)\rg\rangle =: \sum_{i = 1}^3  \mathcal{I}^{(3,3, 1, i)}_{m,n;k}.\label{beta_I331i} 
\end{align}
We bound the $\mathcal{I}_{m,n;k}^{(3,3,1,1)}$ term  contributions via 
\begin{align}
| \mathcal{I}^{(3,3, 1, 1)}_{n, m;k} | \lesssim &  \bold{a}_{m,n}^2 \| \nu^{\frac12} \pa_y \mathring{\omega}_{m,n;k} e^W \chi_{m+n} \|_{L^2_y} \| \nu^{\frac56}|k|^{-\frac13} \pa_y^2 \mathring{\omega}_{m,n;k} e^W \chi_{m+n} \|_{L^2_y} \\
\lesssim & \sqrt{\mathcal{D}^{(\alpha)}_{m,n;k}(t) \mathcal{D}^{(\gamma)}_{m,n;k}(t)}.\label{beta_I3311} 
\end{align}Next, we estimate the $\mathcal{I}_{m,n;k}^{(3,3,1,2)}$ term with the property of $W$ \eqref{W_prop} as follows,
\begin{align}
| \mathcal{I}^{(3,3, 1, 2)}_{m,n;k} | \leq & C\bold{a}_{m,n}^2 \lf\| \sqrt{\nu}\mathring{\omega}_{m,n;k}  (\pa_y W) e^W \chi_{m+n} \rg\|_{L^2_y} \lf\| \nu^{\frac56}|k|^{-\frac13}\pa_y^2 \mathring{\omega}_{m,n;k} e^W \chi_{m+n}\rg \|_{L^2_y} \n \\
\leq &\frac{C}{K^{\frac12}} \bold{a}_{m,n}^2 \lf\| \mathring{\omega}_{m,n;k} \sqrt{-\pa_t W} e^W \chi_{m+n} \rg\|_{L^2_y} \lf\| \nu^{\frac56}|k|^{-\frac13}\pa_y^2 \omega_{m,n;k} e^W \chi_{m+n} \rg\|_{L^2_y} \n \\
\leq & \frac{C}{K^{\frac12}}\siming{\frac{c_\beta}{c_\al}}\bold{a}_{m,n}^2 \lf\| \mathring{\omega}_{m,n;k} \sqrt{-\pa_t W} e^W \chi_{m+n} \rg\|_{L^2_y}^2+  \siming{\frac{c_\al}{16c_\beta}}\mathcal{D}^{(\al)}_{m,n;k}. \label{beta_I3312}
\end{align}
 Lastly, we bound the $\mathcal{I}_{m,n;k}^{(3,3,1,3)}$-term in \eqref{beta_I331i} with the bounds on the cut-off functions \eqref{chi:prop:3}, \myr{the fact that one needs $m+n\geq 1$ for the term to be nonzero}, and a technical estimate \eqref{S:est:1} on the function $S_{m,n;k}:=\frac{m+n}{q} \mathring{\omega}_{m,n;k}\mathbbm{1}_{n\geq 1}$ as follows, 
\begin{align}\n
| &\mathcal{I}^{(3,3, 1,3)}_{n, m;k}| \\
\n \lesssim & \mathbf{a}_{m,n;k}^2\lf\| \nu^{\frac56} |k|^{-\frac13} \pa_y^2 \mathring{\omega}_{m,n;k}e^W \chi_{m+n} \rg\|_{L^2_y} \bigg( (m+n)^{\sigma} \lf\| \nu^{\frac12} \frac{m+n}{q} \mathring{\omega}_{m,n;k} e^W\chi_{m+n-1} \rg\|_{L^2_y}\mathbbm{1}_{n\geq 1} \\
\n&\quad\qquad+m^{1+\sigma}\|\nu^{\frac12}|k| \omega_{m-1,0;k} e^W\chi_{m-1} \|_{L_y^2}\mathbbm{1}_{m\geq 1,n=0} \bigg)\\ \n
\lesssim&\mathbf{a}_{m,n;k}    \lf\| \nu^{\frac56} |k|^{-\frac13} \pa_y^2 \mathring{\omega}_{m,n;k} e^W\chi_{m+n} \rg\|_{L^2_y}\\
\n&\times \lf( \mathbf{a}_{m,n-1;k}\frac{\lambda^{1/s'}}{(m+n)^{\sigma_\ast}}\lf\| \nu^{\frac12} S_{m,n;k} e^W\chi_{m+n-1} \rg\|_{L^2_y}+ \mathbf{a}_{m-1,0;k}\frac{\lambda^{1/s'}}{m^{\sigma_\ast}} \lf\| \nu^{1/2} |k| \omega_{m-1,0;k} e^W\chi_{m-1} \rg\|_{L^2_y}\mathbbm{1}_{m\geq 1,n=0}\rg)\\
\lesssim & \sqrt{\mathcal{D}^{(\alpha)}_{m,n;k}}\lf(\frac{\lambda^{2s}}{(m+n)^{2\sigma_\ast}} {\mathcal{D}^{(\gamma)}_{m,n-1;k}}\mathbbm{1}_{n\geq 1}+\frac{\lambda^{2s}}{m^{2\sigma_\ast}} {\mathcal{D}^{(\gamma)}_{m-1,0;k}}\mathbbm{1}_{m\geq1,n= 0}\rg)^{1/2} .\label{beta_I3313}
\end{align} 
Combining the decomposition \eqref{beta_I_i}, \eqref{beta_I_2i}, \eqref{beta_I_3i}, \eqref{beta_I33i}, \eqref{beta_I331i}, and the estimates \eqref{beta_I_1}, \eqref{beta_I_22}, \eqref{beta_I_23_1}, \eqref{beta_I_32}, \eqref{beta_I332}, \eqref{beta_I3311}, \eqref{beta_I3312}, \eqref{beta_I3313} yields \eqref{bgty}. 
 The proof is complete. 
\end{proof}
\fi

\subsection{Proof of Proposition \ref{pro:light:on}}\label{sec:con}

\ifx\myr{I have changed some lower order terms in the previous argument, so the computation here needs to be adjusted.}
\siming{
\begin{proposition} If the constants $c_{\gamma}, c_{\mu}, c_{\beta}$, and $c_{\alpha}$ are chosen as follows
\begin{align}
c_\al=c_\mu=\frac12,\quad c_\beta=1, \quad c_\gamma\geq c_{\gamma;0}(\|\sqrt{1+\pa_y U_0^x}\|_{L^\infty_{t,y}}),
\end{align}
then there exist constants $\delta \in (0,1),\quad  C>1$  such that 
\begin{align} \label{en:in:b}
\frac{d}{ dt}  \bold{E}_{m,n;k} + \delta \nu^{\frac13}|k|^{\frac23} \bold{E}_{m,n;k} + \mathcal{D}_{m,n;k} +\sum_{\zeta\in \{\al ,\mu,\gamma\}}\sum_{\iota\in \{\varphi,B,W\}}\mathcal{CK}^{(\zeta;\iota)}_{m,n;k} \leq C n^{-2\sigma_\ast} \sum_{\iota \in \{\gamma, \mu, \alpha \}} (\mathcal{D}^{(\iota)}_{ m,n-1;k})+... .
\end{align}
\end{proposition}
}\fi
 Proposition \ref{pro:light:on} is formalized as follows:
\begin{proposition}\label{pro:prf_pr22} 
If $\lambda$ \eqref{Gev:la} is small enough compared to universal constants and $c_\al=\frac{1}{8}$ in \eqref{defn:F:1}, the following estimate holds,
\begin{align}   \n
&\frac{d}{d t} \sum_{m+n=0}^\infty\mathcal{E}_{m,n;k}   +\frac{1}{9}\sum_{m+n=0}^\infty\sum_{\iota\in\{\al,\mu,\gamma\}}\mathcal{D}_{m,n;k}^{(\iota)} + \frac{1}{9}\sum_{m+n=0}^\infty\sum_{\iota\in\{\al,\mu,\gamma\}}\sum_{\zeta\in\{\varphi,W\}}\mathcal{CK}_{m,n;k}^{(\iota,\zeta)}  \\ \label{en:in:b}
& \lesssim   \sum_{m+n=0}^\infty\sum_{\iota \in \{ \alpha,  \mu, \gamma\}} \mathcal{I}^{(\iota)}_{m,n; k},
\end{align}
where we define the following inner products 
\begin{subequations}
\begin{align} \label{def:Inn:g}
\mathcal{I}^{(\gamma)}_{m,n, k}:= & \bold{a}_{m,n}^2 \mathrm{Re} \lf\langle \lf(\bold{T}^{(m,n)}_{k} +\bold{C}^{(m,n)}_{\mathrm{trans};k} + \bold{C}^{(m,n)}_{\mathrm{visc};k}  +\bold{C}^{(m,n)}_{q,k} \rg), \mathring\omega_{m,n;k}  e^{2W} \chi_{m + n}^2\rg \rangle, \\  \label{def:Inn:a}
\mathcal{I}^{(\alpha)}_{m,n, k} := & \bold{a}_{m,n}^2 \nu\mathrm{Re}\lf \langle \p_y \lf(\bold{T}^{(m,n)}_{k} +\bold{C}^{(m,n)}_{\mathrm{trans};k} + \bold{C}^{(m,n)}_{\mathrm{visc};k}  +\bold{C}^{(m,n)}_{q,k} \rg), \p_y \mathring\omega_{m,n;k}  e^{2W} \chi_{m + n}^2\rg \rangle, \\  \label{def:Inn:u}
\mathcal{I}^{(\mu)}_{m,n, k} := & { \bold{a}_{m,n}^2 \nu\mathrm{Re} \lf\langle |k|\lf(\bold{T}^{(m,n)}_{k} +\bold{C}^{(m,n)}_{\mathrm{trans};k} + \bold{C}^{(m,n)}_{\mathrm{visc};k}  +\bold{C}^{(m,n)}_{q,k} \rg), |k|\mathring \omega_{m,n;k}   e^{2W} \chi_{m + n}^2 \rg\rangle.}
\end{align}
\end{subequations}
\end{proposition}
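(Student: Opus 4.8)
\textbf{Proof plan for Proposition \ref{pro:prf_pr22}.}

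The plan is to sum the three frozen-index estimates \eqref{gam:est:le1}, \eqref{mu:est:le}, \eqref{alpha:est:le} over $(m,n)$ with $m+n \geq 0$ and show that, after this summation, all the "bad" commutator-type dissipation terms on the right-hand sides can be absorbed into the corresponding positive dissipation $\mathcal{D}^{(\iota)}_{m,n;k}$ and Cauchy--Kovalevskaya $\mathcal{CK}^{(\iota,\zeta)}_{m,n;k}$ terms on the left, for $\lambda$ chosen small. First I would fix the choice $c_\al = \tfrac18$ (consistent with the range $[\tfrac1{16},\tfrac14]$ demanded in \eqref{defn:F:1}) and form the linear combination $\mathcal{E}_{m,n;k} = c_\al \mathcal{E}^{(\alpha)}_{m,n;k} + \mathcal{E}^{(\mu)}_{m,n;k} + \mathcal{E}^{(\gamma)}_{m,n;k}$. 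Adding \eqref{gam:est:le1}, \eqref{mu:est:le}, and \eqref{alpha:est:le} produces, on the left, $\tfrac{d}{dt}\mathcal{E}_{m,n;k}$ together with the full dissipation $\sum_{\iota} \mathcal{D}^{(\iota)}_{m,n;k}$ (note the $\alpha$-estimate already has its internal boundary/quadratic losses bounded by $\mathcal{D}^{(\alpha)},\mathcal{D}^{(\mu)},\mathcal{D}^{(\gamma)}$ with small constants via \eqref{al_I_21'}, \eqref{al_I_22}, \eqref{al_I_23}) and the $\mathcal{CK}^{(\iota;\varphi)},\mathcal{CK}^{(\iota;W)}$ terms; on the right, besides the genuinely inhomogeneous inner products $\mathcal{I}^{(\iota)}_{m,n;k}$, one collects the "derivative-loss" terms of the form $C \tfrac{\lambda^{2s}}{(m+n)^{2\sigma_\ast}}\mathcal{D}^{(\iota)}_{m,n-1;k}\mathbbm{1}_{n\geq 1}$, $C\tfrac{\lambda^{2s}}{m^{2\sigma_\ast}}\mathcal{D}^{(\iota)}_{m-1,0;k}\mathbbm{1}_{n=0}$, and the stray $\mathbbm{1}_{n=1}$-boundary term.

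The key mechanism is the shift of index: summing $\sum_{m+n\geq 1} \tfrac{\lambda^{2s}}{(m+n)^{2\sigma_\ast}} \mathcal{D}^{(\iota)}_{m,n-1;k}$ over $(m,n)$ is, after relabeling $n-1 \mapsto n'$, equal to $\sum_{m+n'\geq 0}\tfrac{\lambda^{2s}}{(m+n'+1)^{2\sigma_\ast}}\mathcal{D}^{(\iota)}_{m,n';k}$, and since $(m+n'+1)^{-2\sigma_\ast} \leq 1$ and $\lambda^{2s}$ is as small as we please (by hypothesis $0<\lambda\le\lambda_0$), the entire re-summed quantity is $\le \tfrac{1}{100}\sum \mathcal{D}^{(\iota)}_{m,n;k}$, which is absorbed by the left-hand dissipation with room to spare, leaving a coefficient like $\tfrac19$ on the surviving $\mathcal{D}$'s (and likewise for the $\mathcal{CK}$'s, which only appear in the $\mathbbm{1}_{n=1}$ term). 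The same relabeling handles the $n=0$ chain $\tfrac{\lambda^{2s}}{m^{2\sigma_\ast}}\mathcal{D}^{(\iota)}_{m-1,0;k}$. For the leftover $\mathbbm{1}_{n=1}$ term $\tfrac{C\lambda^{2s}}{(m+1)^{2\sigma_\ast}}(\mathcal{D}^{(\alpha)}_{m,0;k} + \mathcal{CK}^{(\alpha,W)}_{m,0;k})$, summing in $m$ again gives a $\lambda^{2s}$-small multiple of $\sum_m(\mathcal{D}^{(\alpha)}_{m,0;k}+\mathcal{CK}^{(\alpha;W)}_{m,0;k})$, absorbed similarly. One must keep careful track of the weight ratio: this works precisely because $\bold{a}_{m,n} = B_{m,n}\varphi^{1+n}$ with $B_{m,n} = (\lambda^{m+n}/(m+n)!)^s$ satisfies $\bold{a}_{m,n}/\bold{a}_{m,n-1} \lesssim \lambda^s \varphi (m+n)^{-s} \lesssim \lambda^s (m+n)^{-\sigma-\sigma_\ast}$ after using $\sigma+\sigma_\ast = s-1 < s$; these are exactly the factors already extracted inside \eqref{I_22}, \eqref{al_I_23}, etc., so the summation is bookkeeping rather than new analysis.

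The main obstacle I anticipate is not any single estimate but the combinatorial care needed to verify that \emph{every} term generated across the three lemmas falls into one of the absorbable categories, with a uniform small constant, and in particular that the $m+n=0$ base cases (flagged repeatedly with "\siming{(Check!)}" in the source) do not produce uncontrolled linear-in-$t$ or boundary contributions — for $m+n=0$ there is no co-normal weight ($q^0=1$), the commutators $\bold{C}^{(0,0)}_{\mathrm{trans}},\bold{C}^{(0,0)}_{\mathrm{visc}},\bold{C}^{(0,0)}_{q}$ all vanish, and the indicator-gated loss terms are absent, so the estimate is a clean energy identity and the claim reduces to the observation that the surviving inner product is exactly $\mathcal{I}^{(\iota)}_{0,0;k}$. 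Once these pieces are assembled, one chooses $\lambda_0$ so that $C\lambda_0^{2s} \le \tfrac{1}{100}$ relative to the universal constants appearing in \eqref{gam:est:le1}--\eqref{alpha:est:le}, and the stated inequality \eqref{en:in:b} follows, with the $\mathcal{I}^{(\iota)}_{m,n;k}$ identified via \eqref{def:Inn:g}--\eqref{def:Inn:u} by regrouping the inner-product terms from each lemma (noting $\mathcal{I}^{(\mu)}$ carries the extra $\nu|k|^2$ and $\mathcal{I}^{(\alpha)}$ the extra $\nu$ and the $\p_y$ on both slots, exactly as written). Proposition \ref{pro:light:on} then follows from \eqref{en:in:b} by summing over $k$ and matching the definitions of the aggregate functionals $\mathcal{E}^{(\iota)},\mathcal{D}^{(\iota)},\mathcal{CK}^{(\iota;\mathfrak D)}$ and source terms $\mathcal{I}^{(\iota)}_{\mathrm{Source}},\mathcal{I}^{(\iota)}_q,\mathcal{I}^{(\iota)}_{\mathrm{trans}},\mathcal{I}^{(\iota)}_{\mathrm{visc}}$.
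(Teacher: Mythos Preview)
Your proposal is correct and follows the same approach as the paper: sum the three frozen-index lemmas \eqref{gam:est:le1}, \eqref{mu:est:le}, \eqref{alpha:est:le} over $(m,n)$, then absorb the index-shifted $\lambda^{2s}$-small loss terms back into the left-hand dissipation and $\mathcal{CK}$ terms by relabeling and choosing $\lambda$ small.

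One point you should make explicit: among the right-hand terms of \eqref{alpha:est:le} there is $2c_\al\|1+\pa_y U_0^x\|_{L^\infty_{t,y}}\mathcal{D}^{(\gamma)}_{m,n;k}$, coming from \eqref{al_I_3}. This term carries no $\lambda^{2s}$ factor and is \emph{not} index-shifted, so it cannot be absorbed by taking $\lambda$ small. Instead, it is absorbed directly by the $\mathcal{D}^{(\gamma)}_{m,n;k}$ on the left using the assumption \eqref{close_vy} ($\|\pa_yU_0^x\|_{L^\infty}\le 1/16$) together with the choice $c_\al=1/8$, which gives $2c_\al\|1+\pa_yU_0^x\|_\infty \le 2\cdot\tfrac18\cdot\tfrac{17}{16}<\tfrac12$. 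This is precisely the reason the proposition singles out the value $c_\al=1/8$; your write-up lumps this term in with the ``small-constant'' losses handled by \eqref{al_I_21'}--\eqref{al_I_23}, but those references do not cover it.
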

\begin{proof} 
To obtain the estimate \eqref{en:in:b}, we sum over \eqref{gam:est:le1}, \eqref{mu:est:le} and \eqref{alpha:est:le}
\begin{align*}
\frac{d}{dt}&\sum_{m+n=0}^\infty\mathcal{E}_{m,n;k}+c_\al\sum_{m+n=0}^\infty\mathcal{D}_{m,n;k}^{(\al)} + c_\al\sum_{\zeta\in\{\varphi,W\}}\mathcal{CK}_{m,n;k}^{(\al,\zeta)} \\
&+\sum_{m+n=0}^\infty\sum_{\iota\in\{\mu,\gamma\}}\mathcal{D}_{m,n;k}^{(\iota)} + \sum_{\iota\in\{\mu,\gamma\}}\sum_{\zeta\in\{\varphi,W\}}\mathcal{CK}_{m,n;k}^{(\iota,\zeta)} \\
\leq &2c_\al\|1+\pa_yU_0^x\|_{L_{t,y}^\infty}\mathcal D_{m,n;k}^{(\gamma
)}+C\lambda^{2s}\sum_{m+n=0}^\infty\sum_{\iota\in\{\al,\mu,\gamma\}}\lf(\mathcal{D}_{m,n;k}^{(\iota)} + C\sum_{\zeta\in\{\varphi,W\}}\mathcal{CK}_{m,n;k}^{(\iota,\zeta)}\rg)\\
&+C\sum_{m+n=0}^\infty \sum_{\iota\in\{\al,\mu,\gamma\}}\mathcal{I}_{m,n;k}^{(\iota)}.
\end{align*}
Then we recall the assumption \eqref{close_vy},  choose $c_\al=\frac{1}{8}$ and $\lambda$ \eqref{Gev:la} small enough so that the first two terms on the right hand side are absorbed by the corresponding terms on the left hand side. This concludes the proof.  
\ifx 
We record:
\begin{align} \n
&\frac{\pa_t}{2}c_{\gamma} \varphi^{2(n+m)}|B_{m,n}|^2  e^{2\delta_E \nu^{\frac13}t}  \| |k|^m \mathring{\omega}_{m,n;k} e^W  \chi_{m+n} \|_{L^2}^2 \\ \n
= & c_{\gamma} (n+m) \varphi^{2n+2m-1} \dot{\varphi}(t) |B_{m,n}|^2 |k|^{2m} e^{\delta_E \nu^{\frac13}t} \| \mathring{\omega}_{m,n;k} e^W \chi_{m+n} \|_{L^2}^2 \\ \n
& + c_{\gamma} \varphi^{2(n+m)}B_{m,n} \dot{B}_{m,n}  e^{2\delta_E \nu^{\frac13}t}  \| |k|^m \mathring{\omega}_{m,n;k} e^W  \chi_{m+n} \|_{L^2}^2 \\ \n
& + c_{\gamma} \varphi^{2(n+m)}|B_{m,n}|^2 \delta_E \nu^{\frac13}e^{2\delta_E \nu^{\frac13}t}  \| |k|^m \mathring{\omega}_{m,n;k} e^W  \chi_{m+n} \|_{L^2}^2 \\ \n
& + c_{\gamma} \varphi^{2(n+m)}|B_{m,n}|^2  e^{2\delta_E \nu^{\frac13}t}  \frac{\pa_t}{2} \| |k|^m \mathring{\omega}_{m,n;k} e^W  \chi_{m+n} \|_{L^2}^2 \\ \n
\le & c_{\gamma} (n+m) \varphi^{2n+2m-1} \dot{\varphi}(t) |B_{m,n}|^2 |k|^{2m} e^{\delta_E \nu^{\frac13}t} \| \mathring{\omega}_{m,n;k} e^W \chi_{m+n} \|_{L^2}^2 \\ \n
 & + c_{\gamma} \varphi^{2(n+m)}B_{m,n} \dot{B}_{m,n}  e^{2\delta_E \nu^{\frac13}t}  \| |k|^m \mathring{\omega}_{m,n;k} e^W  \chi_{m+n} \|_{L^2}^2 \\ \n
& + c_{\gamma} \varphi^{2(n+m)}|B_{m,n}|^2 \delta_E \nu^{\frac13}e^{2\delta_E \nu^{\frac13}t}  \| |k|^m \mathring{\omega}_{m,n;k} e^W  \chi_{m+n} \|_{L^2}^2 \\ \n
&-c_{\gamma} \nu \varphi^{2(n+m)}  |B_{m,n}|^2  |k|^{2m} e^{2\delta_E \nu^{\frac13}t} \| \pa_y \mathring{\omega}_{m,n;k} e^W \chi_{m+n} \|_{L^2}^2 \\ \label{gam:est:le}
&-c_{\gamma}\nu  \varphi^{2(n+m)} |B_{m,n}|^2 |k|^{2m}e^{2\delta_E \nu^{\frac13}t} |k|^2 \| \mathring{\omega}_{m,n;k} e^W \chi_{m+n} \|_{L^2}^2 + \frac{C c_{\gamma}}{n^{2\sigma}}  (\mathcal{D}^{(\gamma)}_{n-1,m;k}(t))^2,
\end{align}
where we have invoked our $\gamma$-estimate, \eqref{gam:est:le}. To compactify the notations in \eqref{gam:est:le}, define the energy, CK, and the diffusion/ damping terms via 
\begin{align}  \n
CK^{(\gamma)}_{m,n;k} := & -  (n+m) \varphi^{2n+2m-1} \dot{\varphi}(t) |B_{m,n}|^2 |k|^{2m} e^{\delta_E \nu^{\frac13}t} \| \mathring{\omega}_{m,n;k} e^W \chi_{m+n} \|_{L^2}^2\\ \label{def:CK:gam}
& -   \varphi^{2(n+m)}B_{m,n} \dot{B}_{m,n}  e^{2\delta_E \nu^{\frac13}t}  \| |k|^m \mathring{\omega}_{m,n;k} e^W  \chi_{m+n} \|_{L^2}^2, \\ \n
\bold{D}^{(\gamma)}_{m,n;k} := & -  \varphi^{2(n+m)}|B_{m,n}|^2 \delta_E \nu^{\frac13}e^{2\delta_E \nu^{\frac13}t}  \| |k|^m \mathring{\omega}_{m,n;k} e^W  \chi_{m+n} \|_{L^2}^2 \\ \n
&+\nu \varphi^{2(n+m)}  |B_{m,n}|^2  |k|^{2m} e^{2\delta_E \nu^{\frac13}t} \| \pa_y \mathring{\omega}_{m,n;k} e^W \chi_{m+n} \|_{L^2}^2 \\ \label{def:D:gam:bld}
&+\nu  \varphi^{2(n+m)} |B_{m,n}|^2 |k|^{2m}e^{2\delta_E \nu^{\frac13}t} |k|^2 \| \mathring{\omega}_{m,n;k} e^W \chi_{m+n} \|_{L^2}^2 
\end{align}
We may therefore express \eqref{gam:est:le} by
\begin{align} \label{gam:sum}
&\frac{\pa_t}{2}c_{\gamma} \mathcal{E}^{(\gamma)}_{m,n;k} + c_{\gamma} CK^{(\gamma)}_{m,n;k} + c_{\gamma} \bold{D}^{(\gamma)}_{m,n;k} \le \frac{C c_{\gamma}}{(m+n)^{2\sigma}} \mathcal{D}^{(\gamma)}_{n-1,m;k}.
\end{align}
A rescaling of \eqref{gam:est:le} by a factor of $\frac{c_{\mu}}{c_{\gamma}} \nu^{\frac23} |k|^{\frac43}$ produces the estimate
\begin{align} \label{mu:sum}
&\frac{\pa_t}{2}c_{\mu} \mathcal{E}_{m,n;k}^{(\mu)}+ c_{\mu} CK^{(\mu)}_{m,n;k} + c_{\mu} \bold{D}^{(\mu)}_{m,n;k} \le \frac{ C c_{\mu}}{(m+n)^{2\sigma}} \mathcal{D}^{(\mu)}_{n-1,m;k},
\end{align}
where we have defined 
\begin{align}
\mathcal{E}_{m,n;k}^{(\mu)} = \nu^{\frac23} |k|^{\frac43} \mathcal{E}_{m,n;k}^{(\gamma)}, \qquad CK^{(\mu)}_{m,n;k} :=\nu^{\frac23} |k|^{\frac43} CK^{(\gamma)}_{m,n;k}, \qquad  \bold{D}^{(\mu)}_{m,n;k} =  \nu^{\frac23} |k|^{\frac43}\bold{D}^{(\gamma)}_{m,n;k}.
\end{align}
Next, we compute the contribution of the $\alpha$ estimate:  
\begin{align} \n
&\frac{\p_t}{2}  c_{\alpha}  \nu^{\frac23} |B_{m,n}|^2 |k|^{2m-\frac23}  \varphi^{2(n+m)} e^{2\delta_E \nu^{\frac 13}t}  \|  \pa_y \mathring{\omega}_{m,n;k} e^{W} \chi_{m+n}^2 \|_{L^2}^2 \\ \n
= & c_{\alpha} \nu^{\frac23} B_{m,n} \dot{B}_{m,n} |k|^{2m - \frac23} \varphi^{2(m+n)} e^{2\delta_E \nu^{\frac13}t} e^{2\delta_E \nu^{\frac13}t} \| \pa_y \mathring{\omega}_{m,n;k} e^W \chi_{m+n} \|_{L^2}^2 \\ \n
& + c_{\alpha} \nu^{\frac23} B_{m,n}^2 |k|^{2m- \frac23} (n+m) \varphi^{2n+2m-1} \dot{\varphi} e^{2\delta_E \nu^{\frac13}t} \| \pa_y \mathring{\omega}_{m,n;k} e^W \chi_{m+n} \|_{L^2}^2 \\ \n
& + c_{\alpha} \nu^{\frac23}B_{m,n}^2 |k|^{2m- \frac23} \varphi^{2(n+m)} \delta_E \nu^{\frac13} e^{2\delta_E \nu^{\frac13}t} \| \pa_y \mathring{\omega}_{m,n;k} e^W \chi_{m+n} \|_{L^2}^2 \\ \n
& + c_{\alpha} \nu^{\frac23}B_{m,n}^2 |k|^{2m- \frac23} \varphi^{2(n+m)}e^{2\delta_E \nu^{\frac13}t} \frac{\pa_t}{2} \| \pa_y \mathring{\omega}_{m,n;k} e^W \chi_{m+n} \|_{L^2}^2 \\ \n
\le & c_{\alpha} \nu^{\frac23} B_{m,n} \dot{B}_{m,n} |k|^{2m - \frac23} \varphi^{2(m+n)} e^{2\delta_E \nu^{\frac13}t} e^{2\delta_E \nu^{\frac13}t} \| \pa_y \mathring{\omega}_{m,n;k} e^W \chi_{m+n} \|_{L^2}^2 \\ \n
& + c_{\alpha} \nu^{\frac23} B_{m,n}^2 |k|^{2m- \frac23} (n+m) \varphi^{2n+2m-1} \dot{\varphi} e^{2\delta_E \nu^{\frac13}t} \| \pa_y \mathring{\omega}_{m,n;k} e^W \chi_{m+n} \|_{L^2}^2 \\ \n
& + c_{\alpha} \nu^{\frac23}B_{m,n}^2 |k|^{2m- \frac23} \varphi^{2(n+m)} \delta_E \nu^{\frac13} e^{2\delta_E \nu^{\frac13}t} \| \pa_y \mathring{\omega}_{m,n;k} e^W \chi_{m+n} \|_{L^2}^2 \\ \n
& - c_{\alpha} B_{m,n}^2 |k|^{2m- \frac23} \varphi^{2(n+m)}e^{2\delta_E \nu^{\frac13}t} \| \nu^{\frac56} \nabla_k \pa_y \mathring{\omega}_{m,n;k}  e^W \chi_{m+n} \|_{L^2}^2 + \frac{C c_{\alpha}}{(m+n)^{2\sigma}} \sum_{\iota \in \{ \gamma, \mu, \alpha \}} \mathcal{D}_{m,n-1;k}^{(\iota)} \\ \label{dalpha}
& + C  c_{\alpha}\nu^{\frac16}|k|^{\frac13}\sqrt{ \mathcal{E}^{(\gamma)}_{m,n;k} \mathcal{D}^{(\gamma)}_{m,n;k}},
\end{align}
where we have invoked our $\alpha$-estimate, \eqref{alpha:est:le}. We again introduce: 
\begin{align} \n
CK^{(\alpha)}_{m,n;k} := & - \nu^{\frac23} B_{m,n} \dot{B}_{m,n} |k|^{2m - \frac23} \varphi^{2(m+n)} e^{2\delta_E \nu^{\frac13}t} e^{2\delta_E \nu^{\frac13}t} \| \pa_y \mathring{\omega}_{m,n;k} e^W \chi_{m+n} \|_{L^2}^2 \\ 
& -  \nu^{\frac23} B_{m,n}^2 |k|^{2m- \frac23} (n+m) \varphi^{2n+2m-1} \dot{\varphi} e^{2\delta_E \nu^{\frac13}t} \| \pa_y \mathring{\omega}_{m,n;k} e^W \chi_{m+n} \|_{L^2}^2, \\ \n
\bold{D}^{(\alpha)}_{m,n;k} := &   B_{m,n}^2 |k|^{2m- \frac23} \varphi^{2(n+m)}e^{2\delta_E \nu^{\frac13}t} \| \nu^{\frac56} \nabla_k \pa_y \mathring{\omega}_{m,n;k}  e^W \chi_{m+n} \|_{L^2}^2\\
& - \nu^{\frac23}B_{m,n}^2 |k|^{2m- \frac23} \varphi^{2(n+m)} \delta_E \nu^{\frac13} e^{2\delta_E \nu^{\frac13}t} \| \pa_y \mathring{\omega}_{m,n;k} e^W \chi_{m+n} \|_{L^2}^2,
\end{align}
after which \eqref{dalpha} can be rewritten as 
\begin{align}  \label{alph:sum}
&\frac{\pa_t}{2} c_{\alpha}  \mathcal{E}^{(\alpha)}_{m,n;k}+ c_{\alpha} CK^{(\alpha)}_{m,n;k} +c_{\alpha}\bold{D}^{(\alpha)}_{m,n;k} \le C   c_{\alpha} \nu^{\frac16}|k|^{\frac13}\sqrt{ \mathcal{E}^{(\gamma)}_{m,n;k} \mathcal{D}^{(\gamma)}_{m,n;k}} + \frac{C c_{\alpha}}{(m+n)^{2\sigma}}  \sum_{\iota \in \{ \gamma, \mu, \alpha \}} \mathcal{D}_{n-1,m;k}^{(\iota)}.
\end{align}
Finally, we move to our $\beta$-estimate. We compute 
\begin{align} \n
&\frac{\pa_t}{2} c_{\beta} \nu^{\frac13}|k|^{2m-\frac43} \varphi^{2(n+m)} e^{2\delta_E \nu^{\frac13}t} |B_{m,n}|^2 \mathrm{Re} \langle i k \mathring{\omega}_{m,n;k} e^W, \pa_y \mathring{\omega}_{m,n;k} e^W \chi_{m+n}^2  \rangle \\ \n
= & c_{\beta} (n+m) \nu^{\frac13} |k|^{2m - \frac 43} \varphi^{2n+2m-1} \dot{\varphi} e^{2\delta_E \nu^{\frac13}t} |B_{m,n}|^2 \mathrm{Re} \langle i k \mathring{\omega}_{m,n;k} e^W, \pa_y \mathring{\omega}_{m,n;k} e^W \chi_{m+n}^2  \rangle \\ \n
 & + c_{\beta} \nu^{\frac13} |k|^{2m - \frac 43} \varphi^{2n+2m} e^{2\delta_E \nu^{\frac13}t} B_{m,n} \dot{B}_{m,n} \mathrm{Re} \langle i k \mathring{\omega}_{m,n;k} e^W, \pa_y \mathring{\omega}_{m,n;k} e^W \chi_{m+n}^2  \rangle \\ \n
 & +  c_{\beta}  \nu^{\frac13} |k|^{2m - \frac 43} \varphi^{2n+2m} \delta_E \nu^{\frac13} e^{2\delta_E \nu^{\frac13}t} |B_{m,n}|^2 \mathrm{Re} \langle i k \mathring{\omega}_{m,n;k} e^W, \pa_y \mathring{\omega}_{m,n;k} e^W \chi_{m+n}^2  \rangle \\ \n
 & +  c_{\beta} \nu^{\frac13} |k|^{2m - \frac 43} \varphi^{2n+2m}  e^{2\delta_E \nu^{\frac13}t} |B_{m,n}|^2 \frac{ \pa_t}{2} \mathrm{Re} \langle i k \mathring{\omega}_{m,n;k} e^W, \pa_y \mathring{\omega}_{m,n;k} e^W \chi_{m+n}^2  \rangle \\ \n
 \le &c_{\beta} (n+m) \nu^{\frac13} |k|^{2m - \frac 43} \varphi^{2n+2m-1} \dot{\varphi} e^{2\delta_E \nu^{\frac13}t} |B_{m,n}|^2 \mathrm{Re} \langle i k \mathring{\omega}_{m,n;k} e^W, \pa_y \mathring{\omega}_{m,n;k} e^W \chi_{m+n}^2  \rangle \\ \n
 & + c_{\beta} \nu^{\frac13} |k|^{2m - \frac 43} \varphi^{2n+2m} e^{2\delta_E \nu^{\frac13}t} B_{m,n} \dot{B}_{m,n} \mathrm{Re} \langle i k \mathring{\omega}_{m,n;k} e^W, \pa_y \mathring{\omega}_{m,n;k} e^W \chi_{m+n}^2  \rangle \\ \n
 & +  c_{\beta}  \nu^{\frac13} |k|^{2m - \frac 43} \varphi^{2n+2m} \delta_E \nu^{\frac13} e^{2\delta_E \nu^{\frac13}t} |B_{m,n}|^2 \mathrm{Re} \langle i k \mathring{\omega}_{m,n;k} e^W, \pa_y \mathring{\omega}_{m,n;k} e^W \chi_{m+n}^2  \rangle  \\ \label{bhg}
 &- c_{\beta} \nu^{\frac13}|k|^{2m+\frac23} e^{2\delta_E \nu^{\frac13}t} |B_{m,n}|^2 \| \mathring{\omega}_{m,n;k} e^W \chi_{m+n} \|_{L^2_y}^2 + C c_{\beta}\sqrt{ \mathcal{D}^{(\gamma)}_{m,n;k} \mathcal{D}^{(\alpha)}_{m,n;k}} + \frac{C c_{\beta}}{(m+n)^{2\sigma}} \sum_{\iota  \in \{ \gamma, \mu, \alpha \}} \mathcal{D}^{(\iota)}_{n-1,m;k},
\end{align}
where we have invoked \eqref{bgty}. We now define a few quantities: 
\begin{align*}
\mathcal{I}^{(\beta)}_{m,n;k} :=& \nu^{\frac13}|k|^{2m-\frac43} \varphi^{2(n+m)} e^{2\delta_E \nu^{\frac13}t} |B_{m,n}|^2 \mathrm{Re} \langle i k \mathring{\omega}_{m,n;k} e^W, \pa_y \mathring{\omega}_{m,n;k} e^W \chi_{m+n}^2  \rangle, \\ \n
CK^{(\beta)}_{m,n;k}(t) := & -  \nu^{\frac13} |k|^{2m - \frac 43} \varphi^{2n+2m-1} \dot{\varphi} e^{2\delta_E \nu^{\frac13}t} |B_{m,n}|^2 \mathrm{Re} \langle i k \mathring{\omega}_{m,n;k} e^W, \pa_y \mathring{\omega}_{m,n;k} e^W \chi_{m+n}^2  \rangle \\ 
 & -  (n+m) \nu^{\frac13} |k|^{2m - \frac 43} \varphi^{2n+2m} e^{2\delta_E \nu^{\frac13}t} B_{m,n} \dot{B}_{m,n} \mathrm{Re} \langle i k \mathring{\omega}_{m,n;k} e^W, \pa_y \mathring{\omega}_{m,n;k} e^W \chi_{m+n}^2  \rangle, \\ \n
\bold{D}^{(\beta)}_{n, m;k} := & \nu^{\frac13}|k|^{2m+\frac23} e^{2\delta_E \nu^{\frac13}t} |B_{m,n}|^2 \| \mathring{\omega}_{m,n;k} e^W \chi_{m+n} \|_{L^2_y}^2 \\
& - \nu^{\frac13} |k|^{2m - \frac 43} \varphi^{2n+2m} \delta_E \nu^{\frac13} e^{2\delta_E \nu^{\frac13}t} |B_{m,n}|^2 \mathrm{Re} \langle i k \mathring{\omega}_{m,n;k} e^W, \pa_y \mathring{\omega}_{m,n;k} e^W \chi_{m+n}^2  \rangle.
\end{align*}
Using these definitions, the estimate \eqref{bhg} reads 
\begin{align} \label{sum:beta}
\frac{\pa_t}{2} c_{\beta} \mathcal{I}^{(\beta)}_{n, m;k}  +  c_{\beta} CK^{(\beta)}_{m,n;k} +c_{\beta}\bold{D}^{(\beta)}_{m,n;k} \le C c_{\beta} \sqrt{\mathcal{D}^{(\gamma)}_{m,n;k}\mathcal{D}^{(\alpha)}_{m,n;k}} +  \frac{C c_{\beta}}{(m+n)^{2\sigma}}  \sum_{\iota \in \{ \gamma, \mu, \alpha \}} \mathcal{D}_{n-1,m;k}^{(\iota)}.
\end{align}
We now sum up the bounds \eqref{gam:sum}, \eqref{mu:sum}, \eqref{alph:sum}, \eqref{sum:beta} to get 
\begin{align} \n
\frac{\pa_t}{2} \bold{E}_{m,n;k} + CK_{m,n;k} + \bold{D}_{m,n;k} \le & C_{\ast} c_{\alpha} \nu^{\frac16}|k|^{\frac13} \sqrt{\mathcal{E}^{(\gamma)}_{m,n;k} \mathcal{D}^{(\gamma)}_{m,n;k} }+  C_{\ast} c_{\beta} \sqrt{\mathcal{D}^{(\gamma)}_{m,n;k}\mathcal{D}^{(\alpha)}_{m,n;k}} \\
&+ \frac{C_{\ast}}{(m+n)^{2\sigma}}\sum_{\iota \in \{ \gamma, \mu, \alpha \}} \mathcal{D}_{n-1,m;k}^{(\iota)},
\end{align}
for a universal constant $C_{\ast} < \infty$, and where we define 
\begin{align}
\bold{E}_{m,n;k} := &c_{\gamma}  \mathcal{E}^{(\gamma)}_{m,n;k}+  c_{\mu}  \mathcal{E}^{(\mu)}_{m,n;k} + c_{\alpha} \mathcal{E}^{(\alpha)}_{m,n;k} +  c_{\beta}\mathcal{I}^{(\beta)}_{m,n;k}\\
\bold{D}_{m,n;k} := &c_{\gamma} \bold{D}^{(\gamma)}_{m,n;k}+  c_{\mu} \bold{D}^{(\mu)}_{m,n;k} + c_{\alpha} \bold{D}^{(\alpha)}_{m,n;k} +  c_{\beta}\bold{D}^{(\beta)}_{m,n;k}, \\
CK_{m,n;k} := &c_{\gamma} CK^{(\gamma)}_{m,n;k}+  c_{\mu} CK^{(\mu)}_{m,n;k} + c_{\alpha} CK^{(\alpha)}_{m,n;k} +  c_{\beta}CK^{(\beta)}_{m,n;k}.
\end{align}
We will now choose the weights $c_{\alpha}, c_{\gamma}, c_{\beta}$ so as to satisfy the following constraints: 
\begin{align}
&C_{\ast} c_{\alpha} c_{\gamma}^{\frac12} \le \frac{c_{\gamma}}{100}, \qquad \frac{C_{\ast} c_{\alpha}}{4 c_{\gamma}^{\frac12}} \le \frac{c_{\beta}}{100}, \qquad C_{\ast} c_\beta c_\gamma^{\frac12} \le \frac{c_{\gamma}}{100}, \qquad \frac{C_{\ast} c_{\beta}}{4 c_{\gamma}^{\frac12}} \le \frac{c_{\alpha}}{100}.
\end{align}
It is clear that such a choice exists, for instance by choosing $c_{\alpha} = c_{\beta} = 1$, and $c_{\gamma}$ large relative only to $C_{\ast}$. An application of the standard Young's inequality for products then produces
\begin{align} \n
\frac{\pa_t}{2} \bold{E}_{m,n;k} + CK_{m,n;k} + \bold{D}_{m,n;k} \le & C_{\ast} c_{\alpha} ( \frac{1}{4 c_{\gamma}^{\frac12}} \nu^{\frac13} |k|^{\frac23} \mathcal{E}_{m,n;k}^{(\gamma)} + c_{\gamma}^{\frac12}  \mathcal{D}_{m,n;k}^{(\gamma)}  ) \\ \n
& + C_{\ast} c_{\beta} \Big( c_{\gamma}^{\frac12} \mathcal{D}_{m,n;k}^{(\gamma)} + \frac{1}{4 c_{\gamma}^{\frac12}} \mathcal{D}^{(\alpha)}_{m,n;k} \Big) +  \frac{C_{\ast}}{(m+n)^{2\sigma}}\sum_{\iota \in \{ \gamma, \mu, \alpha \}} \mathcal{D}_{n-1,m;k}^{(\iota)} \\ \n
\le & \frac{c_{\beta}}{100} \nu^{\frac13} |k|^{\frac23} \mathcal{E}_{m,n;k}^{(\gamma)} + \frac{c_{\gamma}}{50} \mathcal{D}^{(\gamma)}_{m,n;k} + \frac{c_{\alpha}}{100} \mathcal{D}^{(\alpha)}_{m,n;k} \\ \label{mhg1}
& + \frac{C_{\ast}}{(m+n)^{2\sigma}}\sum_{\iota \in \{ \gamma, \mu, \alpha \}} \mathcal{D}_{n-1,m;k}^{(\iota)}.
\end{align}
We now want to prove several bounds on the quantities appearing on the left-hand side of \eqref{mhg1}. The most involved bounds are on $\bold{D}_{m,n;k}$, where we need to split out terms we assign for ``damping" and terms we assign for ``diffusion". In particular, we establish
\begin{align}
\bold{D}_{m,n;k} \ge c_{\beta} \nu^{\frac13} |k|^{\frac23} \mathcal{E}_{m,n;k} + \frac{c_{\gamma}}{2} \mathcal{D}^{(\gamma)}_{m,n;k} + c_{\alpha} \mathcal{D}^{(\alpha)}_{m,n;k} + c_{\mu}  \mathcal{D}^{(\mu)}_{m,n;k}.
\end{align}
Inserting this inequality into \eqref{mhg1}, absorbing the first three terms on the right-hand side of \eqref{mhg1} into the left-hand side, and letting 
\begin{align}
c_{\ast} := \frac{1}{500} \min \{c_{\gamma}, c_{\beta}, c_{\mu}, c_{\alpha} \},
\end{align}
we obtain the inequality 
\begin{align} \n
\frac{\pa_t}{2} \bold{E}_{m,n;k} + CK_{m,n;k} + c_{\ast} \nu^{\frac13} |k|^{\frac23} \mathcal{E}_{m,n;k} + c_{\ast} \mathcal{D}_{m,n;k} \lesssim & \frac{1}{(m+n)^{2\sigma}}\sum_{\iota \in \{ \gamma, \mu, \alpha \}} \mathcal{D}_{n-1,m;k}^{(\iota)}.
\end{align}
We can now use the definitions \eqref{defn:E:1}, \eqref{defn:F:1} directly to see that there exists a $c^{\ast}$ such that $\mathcal{E}_{m,n;k} \ge c^{\ast} \bold{E}_{m,n;k}$, which then gives
\begin{align} \label{ult:en}
\frac{\pa_t}{2} \bold{E}_{m,n;k} + CK_{m,n;k} + c_{\ast} c^{\ast} \nu^{\frac13} |k|^{\frac23} \bold{E}_{m,n;k} + c_{\ast} \mathcal{D}_{m,n;k} \lesssim & \frac{\lambda^{2s}}{(m+n)^{2\sigma}}\sum_{\iota \in \{ \gamma, \mu, \alpha \}} \mathcal{D}_{m,n-1;k}^{(\iota)}\myr{\mathbbm{1}_{n\geq1}+\mathcal{D}_{m-1,0;k}^{(\iota)}\mathbbm{1}_{m\geq1,n=0}?}.
\end{align}  
Upon noting that $CK_{m,n;k} \ge 0$, we have proven \eqref{en:in:b}. 
\fi
\end{proof}
\subsection{Technical Lemmas: Auxiliary Bounds for Energy Estimates}\label{sec:techlem}

We first record the following important inequality relating the losses we incur from our Gevrey cut-off scale, $\chi_{m+n}$ \eqref{chi}, with the Gevrey weights $B_{m,n}$:
\begin{lemma}  Fix $\ell\in\{1, 2\}$.  The weights $\{B_{m,n} \}_{ m+n\geq \ell}$ \eqref{Bweight} satisfy the following bound, 
\begin{align} \label{gevbd1212}
(m+n)^{\ell(1 + \sigma)}B_{m,n}  \leq  & C(s) \lambda^{  \ell s}(m+n)^{- \ell \sigma_{\ast}} B_{m-\ell_1,n-\ell_2},\quad \ell_1+\ell_2=\ell.
\end{align}
Here the parameters $\sigma,\, \sigma_\ast$ are defined in \eqref{s:prime} and the constant $C(s)\leq 4$ for $s\in [1,2]$. Moreover, the $(\ell_1,\ell_2)$ is chosen such that $m-\ell_1\geq 0$, and $\, n-\ell_2\geq 0. $ 
\end{lemma}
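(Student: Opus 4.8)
The plan is a direct computation from the definition of $B_{m,n}$ in \eqref{Bweight}, relying only on the algebraic identity $\sigma+\sigma_\ast = s-1$ from \eqref{s:prime} together with an elementary bound on ratios of factorials; no real analysis is involved. First I would record that $B_{m,n}$ depends on $(m,n)$ only through the sum $m+n$. Writing $N := m+n$ and noting that $(m-\ell_1)+(n-\ell_2)=N-\ell$ for any admissible splitting $\ell_1+\ell_2=\ell$ (with $m-\ell_1\ge 0$, $n-\ell_2\ge 0$), the relevant ratio is
\begin{align*}
\frac{B_{m,n}}{B_{m-\ell_1,n-\ell_2}}
= \left(\frac{\lambda^{N}/N!}{\lambda^{N-\ell}/(N-\ell)!}\right)^{s}
= \lambda^{\ell s}\left(\frac{(N-\ell)!}{N!}\right)^{s}
= \frac{\lambda^{\ell s}}{\big(N(N-1)\cdots(N-\ell+1)\big)^{s}}.
\end{align*}
In particular this is independent of the choice of $(\ell_1,\ell_2)$, which is precisely why the statement of the lemma can allow any such splitting.

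Next, I would divide the claimed inequality \eqref{gevbd1212} by $B_{m-\ell_1,n-\ell_2}>0$ and cancel the common factor $\lambda^{\ell s}$, reducing the whole statement to
\begin{align*}
N^{\ell(1+\sigma)+\ell\sigma_\ast} \le C(s)\,\big(N(N-1)\cdots(N-\ell+1)\big)^{s}.
\end{align*}
The key observation is that the left exponent collapses: $\ell(1+\sigma)+\ell\sigma_\ast = \ell\big(1+\sigma+(s-1)-\sigma\big)=\ell s$ by the definition of $\sigma_\ast$. Taking $s$-th roots, it therefore suffices to prove $N^{\ell}\le C(s)^{1/s}\,N(N-1)\cdots(N-\ell+1)$, that is, $\prod_{j=0}^{\ell-1}\frac{N}{N-j}\le C(s)^{1/s}$.

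Finally I would treat the two cases. For $\ell=1$ the product equals $1$, so any $C(s)\ge 1$ works. For $\ell=2$ the product equals $\frac{N}{N-1}$, and since the lemma hypothesis is $m+n\ge\ell$ we have $N\ge 2$, hence $\frac{N}{N-1}\le 2$; thus $C(s)^{1/s}=2$, i.e.\ $C(s)=2^{s}$, suffices, and $2^{s}\le 4$ for $s\in[1,2]$. This proves the bound with $C(s)=2^{s}$. There is no genuine obstacle here beyond bookkeeping: the only points requiring (mild) care are the exponent identity $\sigma+\sigma_\ast=s-1$ which makes the powers of $m+n$ cancel exactly, and the fact that the hypothesis $m+n\ge\ell$ is exactly what is needed to control $\tfrac{N}{N-1}$ in the $\ell=2$ case.
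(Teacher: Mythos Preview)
Your proof is correct and follows essentially the same approach as the paper: both reduce to the identity $1+\sigma+\sigma_\ast=s$ and the elementary bound $\bigl(\tfrac{N}{N-1}\bigr)^{s}\le 2^{s}\le 4$ for $N\ge 2$, $s\in[1,2]$. The only cosmetic difference is that the paper first proves the $\ell=1$ case (where the constant is exactly $1$) and then iterates it to get $\ell=2$, whereas you compute the ratio $B_{m,n}/B_{m-\ell_1,n-\ell_2}$ directly for both cases; this is a minor stylistic variation with no substantive difference.
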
\siming{HS: Double check the $m+n$ small case. Check: Here one will not be able to select the $\ell_1,\ell_2$ as in the lemma. But in this case, the coefficients are simple. }
\begin{remark}
For $m+n<\ell$, the coefficients $B_{m,n}$ are of order $1$ and one can estimate the coefficient directly.
\end{remark} 
\begin{proof} Recalling the $B_{m,n}$-definition \eqref{Bweight} and the relation $ \sigma+ \sigma_\ast= s-1$, which is a consequence of the definition \eqref{s:prime}, we have that
\begin{align*} \n
(m+n)^{1 +  \sigma} B_{m,n}  = & (m+n)^{1 + \sigma} \lf( \frac{\lambda^{m+n}}{(m+n)!} \rg)^{s} =(m+n)^{{s}-\sigma_{\ast}} \lf( \frac{\lambda^{m+n}}{(m+n)!} \rg)^{s} \\ 
= & (m+n)^{-\sigma_{\ast}}\lambda^{s} \lf( \frac{\lambda^{m+n-1}}{(m+n-1)!} \rg)^{s} =(m+n)^{-\sigma_{\ast}} \lambda^{ {s}} B_{m-\ell_1,n-\ell_2}.
\end{align*}
Here $\ell_1+\ell_2=1.$ 
By invoking the estimate obtained again, we have that for $m+n\geq 2$,
\begin{align*}\n 
(m+n)^{2+2\sigma}B_{m,n} &\leq \lambda^{s}(m+n)^{1+\sigma}(m+n)^{-\sigma_\ast}B_{m-\ell_1',n-\ell_2'}\mathbbm{1}_{\ell_1'+\ell_2'=1}\\ 
&\leq \lambda^{2s}\lf(\frac{m+n}{m+n-1}\rg)^{1+\sigma+\sigma_\ast} (m+n)^{-2\sigma_\ast}B_{m-\ell_1,n-\ell_2}\mathbbm{1}_{\ell_1+\ell_2=2}\\
&\leq \frac{C(s)\lambda^{2s}}{(m+n)^{2\sigma_\ast}}B_{m-\ell_1,n-\ell_2}\mathbbm{1}_{\ell_1+\ell_2=2}.
\end{align*}
This concludes the proof of the lemma. 
\end{proof}

\begin{lemma}
Recall the function $W$ from \eqref{defndW}, 
\begin{align}
W(t,y)=\frac{(|y|-1/4-L\ep\arctan(t))_+^2}{K\nu(1+ t)}.
\end{align}  The function $W$ satisfies the following estimate:
\begin{align}\label{W_prop}
    \pa_t W+\frac{1}{8}K\nu|\pa_y W|^2  \leq& -\frac{(|y|-1/4-L\ep\arctan(t))_+^2}{2K\nu(1+t)^2}  -\frac{2L\epsilon(|y|-1/4-L\ep\arctan(t))_+}{K\nu(1+t)(1+t^2)} \leq  0.
\end{align}
Here $K,L\geq 1$ are large constants.  Alternatively, there exists a constant $C$, independent of the parameters $K, L$ such that 
\begin{subequations}
\begin{align} \label{wdot:est:a}
\nu |\p_y W|^2 \le & - \frac{C}{K} \pa_t{W}; \\  \label{wdot:est:b}
\frac{\eps}{(1 + t)^2}|\p_y W| \le & - \frac{C}{L}\pa_t{W}.
\end{align}
\end{subequations} 
\end{lemma}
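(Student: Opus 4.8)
The plan is a direct computation, isolating the ``active region'' where the positive part in \eqref{defndW} is nonzero. Set
\[
g(t,y):=\big(|y|-\tfrac14-L\ep\arctan t\big)_+, \qquad \mathcal R:=\{(t,y)\in[0,\infty)\times[-1,1]: g(t,y)>0\},
\]
so that $W=g^2/(K\nu(1+t))$. First I would record that $W\in C^1\big([0,\infty)\times[-1,1]\big)$: away from $y=0$ this is the chain rule, using that $s\mapsto(s_+)^2$ is $C^1$ with derivative $2s_+$ while $\arctan$ and $|y|$ are smooth there; and in a neighborhood of $\{y=0\}$ one has $|y|-\tfrac14-L\ep\arctan t<0$ for all $t\ge0$, hence $W\equiv0$ there. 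On the complement of $\mathcal R$ we have $g\equiv0$, so $W$, $\pa_tW$ and $\pa_yW$ all vanish and every term appearing in \eqref{W_prop}, \eqref{wdot:est:a}, \eqref{wdot:est:b} is zero; thus it suffices to argue on $\mathcal R$.

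On $\mathcal R$ one has $|y|>\tfrac14>0$, so $\pa_y|y|=\mathrm{sgn}(y)$ is well defined, and with $\tfrac{d}{dt}\arctan t=\tfrac1{1+t^2}$ the chain rule gives $\pa_yg=\mathrm{sgn}(y)$, $\pa_tg=-L\ep/(1+t^2)$, and hence
\begin{align*}
\pa_yW=\frac{2g\,\mathrm{sgn}(y)}{K\nu(1+t)},\qquad
\pa_tW=-\frac{2L\ep\,g}{K\nu(1+t)(1+t^2)}-\frac{g^2}{K\nu(1+t)^2}.
\end{align*}
Then $\tfrac18K\nu|\pa_yW|^2=\tfrac18K\nu\cdot\tfrac{4g^2}{K^2\nu^2(1+t)^2}=\tfrac{g^2}{2K\nu(1+t)^2}$, and adding this to $\pa_tW$ the two multiples of $g^2/(K\nu(1+t)^2)$ combine to $-\tfrac{g^2}{2K\nu(1+t)^2}$. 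This is precisely the middle expression in \eqref{W_prop}; in fact \eqref{W_prop} holds with \emph{equality} on $\mathcal R$, and since $g\ge0$ and $K,L,\ep,\nu>0$ both terms there are manifestly $\le0$, which closes \eqref{W_prop}.

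For \eqref{wdot:est:a}--\eqref{wdot:est:b} I would feed the explicit formula for $\pa_tW$ back in: on $\mathcal R$,
\begin{align*}
-\pa_tW\ \ge\ \max\!\Big\{\ \frac{g^2}{K\nu(1+t)^2},\ \ \frac{2L\ep\,g}{K\nu(1+t)(1+t^2)}\ \Big\}.
\end{align*}
Since $\nu|\pa_yW|^2=\tfrac{4g^2}{K^2\nu(1+t)^2}=\tfrac4K\cdot\tfrac{g^2}{K\nu(1+t)^2}$, the first lower bound yields \eqref{wdot:est:a} with $C=4$; and since $\tfrac{\ep}{(1+t)^2}|\pa_yW|=\tfrac{2\ep\,g}{K\nu(1+t)^3}\le\tfrac{2\ep\,g}{K\nu(1+t)(1+t^2)}$ (using $(1+t)^2\ge1+t^2$ for $t\ge0$), the second lower bound gives \eqref{wdot:est:b} with $C=1$; both constants are clearly independent of $K$ and $L$. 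I do not expect any genuine obstacle here — the computation is elementary — and the only point that needs a sentence of care is the $C^1$ regularity of $W$ across the free boundary $\{g=0\}$ and at $y=0$, which is exactly what the opening remark provides so that the pointwise identities used above are legitimate everywhere.
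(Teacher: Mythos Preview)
Your proof is correct and follows essentially the same direct computation as the paper's proof. You add some extra care the paper omits: the $C^1$ regularity of $W$ across $\{g=0\}$ and near $y=0$, and an explicit derivation of \eqref{wdot:est:a}--\eqref{wdot:est:b} (which the paper states but does not write out).
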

\begin{proof}
We take the time derivative and spatial derivative to obtain that
\begin{align*}
\pa_t   W +\frac{K}{8}\nu |\pa_y W|^2 =&-\frac{ (|y|-1/4-L\ep\arctan(t))_+^2}{K\nu(1+t)^2} -\frac{2L\epsilon(|y|-1/4-L\ep\arctan(t))_+}{K\nu(1+t)(1+t^2)} \\
&+ K\nu \frac{(|y|-1/4-L\ep\arctan(t))_+^2}{2K^2\nu^2(1+t)^2} \\
\leq &-\frac{(|y|-1/4-L\ep\arctan(t))_+^2}{2K\nu(1+t)^2} -\frac{2L\epsilon(|y|-1/4-L\ep\arctan(t))_+}{K\nu(1+t)(1+t^2)} \leq 0.
\end{align*}
\end{proof}

\begin{lemma}The following two estimates hold for arbitrary $M\in \mathbb{N}$,
\begin{align}
\sum_{m+n=0}^M{\bf a}_{m,n}^2\nu\|\pa_y(\chi_{m+n}\mathring{\omega}_{m,n;k})e^W\|_{L^2}^2\lesssim& \sum_{m+n=0}^M\mathcal{D}_{m,n;k}^{(\gamma)};\label{Dga_switch}\\
\sum_{m+n=0}^M{\bf a}_{m,n}^2\nu^2\|\na_k\pa_y(\chi_{m+n}\mathring{\omega}_{m,n;k})e^W\|_{L^2}^2\lesssim &\sum_{m+n=0}^M\mathcal{D}_{m,n;k}^{(\al)}+\sum_{m+n=0}^M\mathcal{D}_{m,n;k}^{(\mu)}.\label{Dal_mu_switch}
\end{align}
\end{lemma}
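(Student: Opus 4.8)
The plan is to prove both inequalities by the same mechanism: apply the Leibniz rule to $\pa_y(\chi_{m+n}\mathring\omega_{m,n;k})$ (resp. to $\na_k\pa_y(\chi_{m+n}\mathring\omega_{m,n;k})$), keep as the ``good'' summand the one in which every derivative falls on $\mathring\omega_{m,n;k}$ — which is literally a piece of $\mathcal D^{(\gamma)}_{m,n;k}$ (resp. is dominated by $\mathcal D^{(\alpha)}_{m,n;k}+\mathcal D^{(\mu)}_{m,n;k}$ via \eqref{defn:D:1}) — and show that every commutator summand, in which a $\pa_y$ strikes the cutoff, is reabsorbed into the right-hand side at conormal indices $(m,n-1)$, $(m,n-2)$ or $(m-1,0)$ with a gain $\lambda^s(m+n)^{-\sigma_\ast}$; squaring, summing over $m+n\le M$ and reindexing then gives $\lesssim\sum_{m+n\le M}\mathcal D^{(\cdot)}_{m,n;k}$ with constants independent of $M$, $k$, $\nu$, $t$.

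For \eqref{Dga_switch} I would write $\pa_y(\chi_{m+n}\mathring\omega_{m,n;k})=\chi_{m+n}\pa_y\mathring\omega_{m,n;k}+\chi_{m+n}'\mathring\omega_{m,n;k}$, bound the first term by $\mathcal D^{(\gamma)}_{m,n;k}$, and for the second use $|\chi_{m+n}'|\lesssim(m+n)^{1+\sigma}\chi_{m+n-1}$ from \eqref{chi:prop:3}. When $n\ge1$, using $q\le1$ and \eqref{S_nq}, $(m+n)^{1+\sigma}|\chi_{m+n-1}\mathring\omega_{m,n;k}|=(m+n)^{\sigma}q\,|\chi_{m+n-1}S^{(1,0,0)}_{m,n}\omega_k|\le(m+n)^{\sigma}|\chi_{m+n-1}S^{(1,0,0)}_{m,n}\omega_k|$, so the technical $S$-operator estimate \eqref{S:est:1} yields $\bold{a}_{m,n}\sqrt\nu\|\chi_{m+n}'\mathring\omega_{m,n;k}e^W\|_{L^2}\lesssim\lambda^s(m+n)^{-\sigma_\ast}\sqrt{\mathcal D^{(\gamma)}_{m,n-1;k}}$. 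When $n=0$, I would instead use $\mathring\omega_{m,0;k}=|k|\mathring\omega_{m-1,0;k}$, the Gevrey weight inequality \eqref{gevbd1212} (with $\ell=1$, $\ell_1=1$), and $\mathcal D^{(\gamma)}_{m-1,0;k}\ge\bold{a}_{m-1,0}^2\nu|k|^2\|\mathring\omega_{m-1,0;k}e^W\chi_{m-1}\|^2$ to get the same bound with $\mathcal D^{(\gamma)}_{m-1,0;k}$; the case $m+n=0$ is immediate from \eqref{defn:D:1}. Squaring and summing, the commutator part is $\lesssim\lambda^{2s}\sum_{m+n\le M}(m+n)^{-2\sigma_\ast}\big(\mathcal D^{(\gamma)}_{m,n-1;k}+\mathcal D^{(\gamma)}_{m-1,0;k}\mathbbm{1}_{n=0}\big)\lesssim\sum_{m+n\le M}\mathcal D^{(\gamma)}_{m,n;k}$ after the bijective reindexings $(m,n-1)\mapsto(m,n)$ and $(m-1,0)\mapsto(m,0)$. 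For \eqref{Dal_mu_switch} I would expand $\na_k\pa_y(\chi_{m+n}\mathring\omega_{m,n;k})$, keep $\chi_{m+n}\na_k\pa_y\mathring\omega_{m,n;k}$ (bounded by $\mathcal D^{(\alpha)}_{m,n;k}$) as the good term, and handle the commutators of schematic type $ik\chi_{m+n}'\mathring\omega_{m,n;k}$, $\chi_{m+n}'\pa_y\mathring\omega_{m,n;k}$, $\chi_{m+n}''\mathring\omega_{m,n;k}$ (with $|\chi_{m+n}''|\lesssim(m+n)^{2(1+\sigma)}\chi_{m+n-1}$) exactly as above: using $q\le1$ they are dominated by $(m+n)^{\sigma}$ (resp. $(m+n)^{2\sigma}$) times the $ICC$-operators $S^{(1,0,0)}_{m,n}\omega_k$, $S^{(1,1,0)}_{m,n}\omega_k$, $S^{(2,0,0)}_{m,n}\omega_k$ of \eqref{SJ}, to which \eqref{S:est:1}, \eqref{S:est:2} (and the evident $(2,0,0)$-analogue) apply, converting $\gamma$-level control into $\alpha/\mu$-level control through the identity $\nu|k|^2\mathcal D^{(\gamma)}_{m,n';k}=\mathcal D^{(\mu)}_{m,n';k}$ and $|k|\ge1$; the small cases $n\le1$ reduce to $\mathcal D^{(\cdot)}_{m,0;k}$ or $\mathcal D^{(\cdot)}_{m-1,0;k}$ directly. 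Summing and reindexing gives \eqref{Dal_mu_switch}.

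The only genuine obstacle is the degeneracy of the conormal weight $q$ at the walls $y=\pm1$: because the cutoffs $\chi_{m+n-1}$ are supported all the way up to $y=\pm1$, the powers $(m+n)^j$ produced when a derivative hits $\chi_{m+n}$ cannot be absorbed directly by the $q^n$-weighted norms defining $\mathcal D^{(\cdot)}$ (one cannot freely move $1/q$ past the derivatives), and this is precisely what forces the detour through the $S^{(a,b,c)}_{m,n}$ operators and their estimates \eqref{S:est:1}--\eqref{S:est:2}, which handle the boundary behaviour by Hardy/trace-type arguments. A secondary subtlety is the linear-in-$t$ growth carried by $\Gamma_k$ (the $ikt$ in \eqref{defn:Gamma}), which enters through the commutation $\Gamma_k^n=\Gamma_k\Gamma_k^{n-1}$ implicit in comparing $\mathring\omega_{m,n;k}$ with $\mathring\omega_{m,n-1;k}$; it is tamed by $\sup_t\varphi(t)|t|\lesssim1$ from \eqref{ineq:varphi} together with the extra power of $\varphi$ in $\bold{a}_{m,n}$ relative to $\bold{a}_{m,n-1}$, and in the present lemma it is already packaged inside \eqref{S:est:1}--\eqref{S:est:2}.
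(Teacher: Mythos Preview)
Your approach is correct and reaches the same conclusion, but it takes a genuinely longer route than the paper's proof. The paper exploits a structural fact you overlook: the support of $\chi_{m+n}'$ (and $\chi_{m+n}''$) lies in the transition region $\{x_{m+n}\le |y|\le y_{m+n}\}\subset\{3/8\le|y|\le 1/2\}$, and on this set $q\equiv 1$ and $q'\equiv 0$ by \eqref{q:defn}. Consequently, on the support of the commutator one may drop all $q^n$ factors for free and simply unfold one (or two) copies of $\Gamma_k=\pav+ikt$, passing directly from $\mathring\omega_{m,n;k}$ to $(\pav+ikt)\mathring\omega_{m,n-1;k}$ (or $(\pav+ikt)^2\mathring\omega_{m,n-2;k}$); the weight relation \eqref{gevbd1212} and $\varphi t\lesssim 1$ then convert this to $\mathcal D^{(\gamma)}_{m,n-1;k}$ (resp.\ $\mathcal D^{(\alpha)}_{m,n-2;k}+\mathcal D^{(\mu)}_{m,n-2;k}$) at once, and summing and reindexing finishes. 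Your route through the $S^{(a,b,c)}_{m,n}$ operators and \eqref{S:est:all} is valid---after multiplying by the residual $(m+n)^\sigma$ or $(m+n)^{2\sigma}$ factor, the decay in \eqref{S:est:1}--\eqref{S:est:2} still allows the Fubini/reindexing of \eqref{sqr_upgrade} thanks to $\sigma_\ast\ge 10\sigma$---but it is a detour: the ``genuine obstacle'' you identify (degeneracy of $q$ near $y=\pm1$) is simply absent for these particular commutator terms, because they are supported away from the walls. In short, the $S$-operator machinery is not needed here; it was designed for situations where the support genuinely reaches the boundary (as in the estimates of $\mathbf{C}_{q,k}^{(m,n)}$ later in the paper).
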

\begin{proof}We start by proving \eqref{Dga_switch}. We
 recall that $\chi_0\equiv1$, ${\bf a}_{m,n}=B_{m,n}\varphi^{n+1}$ and the $\chi_{m+n}'$ estimate \eqref{chi:prop:3} to obtain 
\begin{align*}
\sum_{m+n=0}^M&{\bf a}_{m,n}^2\nu\|\pa_y(\chi_{m+n}\mathring{\omega}_{m,n;k})e^W\|_{L^2}^2\\
\lesssim&\sum_{m+n=0}^M{\bf a}_{m,n}^2\nu\|\chi_{m+n}(\pa_y\mathring{\omega}_{m,n;k})e^W \|_{L^2}^2+\sum_{m+n=0}^M{\bf a}_{m,n}^2\nu\| [\pa_y,\chi_{m+n}]\mathring{\omega}_{m,n;k}e^W\|_{L^2}^2\\
\lesssim&\sum_{m+n=0}^M\mathcal{D}_{m,n;k}^{(\gamma)}+\sum_{m+n=0}^M\mathbbm{1}_{n\geq 1}{\bf a}_{m,n}^2(m+n)^{2+2\sigma}\nu\| \chi_{m+n-1}(\pa_v+ikt)\omega_{m,n-1;k}e^W\|_{L^2}^2\\
&+\sum_{m=1}^M {\bf a}_{m,0;k}^2m^{2+2\sigma}\nu\| \chi_{m-1}|k|\omega_{m-1,0;k}e^W\|_{L^2}^2.
\end{align*}
Here we apply the definition $\Gamma=\pa_v+ikt$ in the last inequality. Now, we invoke a technical relation of $B_{m,n}$ \eqref{gevbd1212} and end up with the result: 
\begin{align*}
\sum_{m+n=0}^M {\bf a}_{m,n}^2\nu\|\pa_y(\chi_{m+n}\mathring{\omega}_{m,n;k})e^W\|_{L^2}^2\lesssim \sum_{m+n=0}^M\mathcal{D}_{m,n;k}^{(\gamma)}+\lambda^{2s}\sum_{m+n=0}^{M-1}(m+n)^{-2\sigma_\ast}\mathcal{D}_{m,n;k}^{(\gamma)}
\end{align*} proving \eqref{Dga_switch}. 

Next we focus on \eqref{Dal_mu_switch}. First of all, we observe that the $\sum {\bf a}_{m,n}^2\nu\|k\pa_y (\chi_{m+n}\mathring{\omega}_{m,n;k})e^W\|_{L^2}^2$ can be estimated in the same fashion as in \eqref{Dga_switch}. \siming{(Check!)} Now we focus on the following 
\begin{align}\n
\sum_{m+n=0}^M&{\bf a}_{m,n}^2\nu^2\|\pa_{yy}(\chi_{m+n}\mathring{\omega}_{m,n;k})e^W\|_{L^2}^2\\ \n 
\lesssim&\sum_{m+n=0}^M{\bf a}_{m,n}^2\nu^2\|\chi_{m+n}(\pa_{yy}\mathring{\omega}_{m,n;k})e^W \|_{L^2}^2+\sum_{m+n=0}^M{\bf a}_{m,n}^2\nu^2\|[ \chi_{m+n},\pa_{yy}]\mathring{\omega}_{m,n;k}e^W\|_{L^2}^2\\ \n
\lesssim&\sum_{m+n=0}^M\mathcal{D}_{m,n;k}^{(\al)}+\sum_{m+n=2}^M{\bf a}_{m,n}^2(m+n)^{4+4\sigma}\nu^2\| \chi_{m+n-2}\mathring{\omega}_{m,n;k}e^W\|_{L^2}^2\\
&+\sum_{m+n=2}^M{\bf a}_{m,n}^2(m+n)^{2+2\sigma}\nu^2\| \chi_{m+n-1}\pa_y\mathring{\omega}_{m,n;k}e^W\|_{L^2}^2+\sum_{m+n=1}{\bf a}_{m,n}^2\nu^2\|[\chi_{m+n},\pa_{yy}]\mathring{\omega}_{m,n;k}e^W\|_{L^2}^2 \n \\
=:&\sum_{j=1}^4T_j.\label{switch_T} 
\end{align}
The $T_1$ term is consistent with the result. We focus on the $T_2$-term. By invoking the definition ${\bf a}_{m,n}=B_{m,n}\varphi^{n+1}$, the property \eqref{ineq:varphi}, the bound $\|\vyn\|_{L_{t,y}^\infty}+\|v_{y}\|_{L_t^\infty W_y^{1,\infty}}\leq C,$ and the technical relation \eqref{gevbd1212}, we obtain that there exists an $\ell\in\{0,1,2\}$ such that the following bound holds 
\begin{align*}
T_2\lesssim&\sum_{m+n=0}^M{\bf a}_{m,n}^2(m+n)^{4+4\sigma}\mathbbm{1}_{\{m-\ell\geq0,\, n-2+\ell\geq 0\}}\nu^2\| \chi_{m+n-2}(\pa_v+ikt)^{2-\ell}|k|^\ell\omega_{m-\ell,n-2+\ell;k}e^W\|_{L^2}^2\\
\lesssim&\lambda^{4s}\sum_{m+n=0}^M{\bf a}_{m-\ell,n-2+\ell;k}^2\mathbbm{1}_{\{m-\ell\geq0,\, n-2+\ell\geq 0\}}(m+n)^{-4\sigma_\ast}\varphi^{4-2\ell}\nu^2\\
& \times \| \chi_{m+n-2}(\pa_v+ikt)^{2-\ell}|k|^\ell\omega_{m-\ell,n-2+\ell;k}e^W\|_{L^2}^2\\
\lesssim&\lambda^{4s}\sum_{m+n=0}^{M-2}{\bf a}_{m,n}^2   \nu^2\| \chi_{m+n}\na_k^2\mathring{\omega}_{m,n;k}e^W\|_{L^2}^2\lesssim\lambda^{4s}\sum_{m+n=0}^{M-2}(\mathcal{D}_{m,n;k}^{(\al)}+\mathcal{D}_{m,n;k}^{(\mu)}).
\end{align*} 
The estimate for the $T_3$-term is similar and we obtain that \siming{(Check!)} 
\begin{align*}
T_3\lesssim\lambda^{2s}\sum_{m+n=0}^{M-1}(\mathcal{D}_{m,n;k}^{(\al)}+\mathcal{D}_{m,n;k}^{(\mu)}).
\end{align*}
For the $T_4$-term, we have that 
\begin{align*}
T_4\lesssim& \sum_{m+n=1}{\bf a}_{m,n}^2\nu^2\|[\chi_1,\pa_{yy}]\mathring{\omega}_{m,n;k}e^W\|_2^2=\sum_{m+n=1}{\bf a}_{m,n}^2\nu^2(\|\pa_y\mathring{\omega}_{m,n;k}e^W\|_2^2+\|\mathring{\omega}_{m,n;k}e^W\|_{2}^2)\\
\lesssim& \lambda^{2s} (\mathcal{D}_{0,0;k}^{(\al)}+\mathcal{D}_{0,0;k}^{(\mu)}).
\end{align*}
Combining the estimates above with the decomposition \eqref{switch_T}, we obtain \eqref{Dal_mu_switch}. \ifx
\begin{align*}
\lesssim&\sum_{m+n=0}^M\mathcal{D}_{m,n;k}^{(\al)}+\sum_{m+n=0}^M{\bf a}_{m,n}^2(m+n)^{4+4\sigma}\nu^2\| \chi_{m+n-2}(\pa_v+ikt)^2\omega_{m,n-2;k}e^W\|_{L^2}^2\\
&+\sum_{m+n=0}^M{\bf a}_{m,n}^2(m+n)^{2+2\sigma}\nu^2\| \chi_{m+n-1}\pa_y(\pa_v+ikt)\omega_{m,n-1;k}e^W\|_{L^2}^2\\
&+\sum_{m=0}^M {\bf a}_{m,0;k}^2m^{4+4\sigma}\nu^2\| \chi_{m-2}|k|^2\omega_{m-2,0;k}e^W\|_{L^2}^2\\\lesssim&\sum_{m+n=0}^M\mathcal{D}_{m,n;k}^{(\al)}+\sum_{m+n=0}^{M-1}\mathcal{D}_{m,n;k}^{(\mu)}. 
\end{align*}\fi
\end{proof}

\ifx
\begin{proof}Here we prove the following combinatorial relation
$\binom{a+b}{c+d}\geq\binom{a}{c}\binom{b}{d}$.
We consider the expression
\begin{align*}
(1+x)^{a+b}=(1+x)^a(1+x)^b.\label{rlt}
\end{align*}
Now we expand the left hand side with the binomial formula
\begin{align}
(1+x)^{a+b}=\sum_{m=0}^{a+b}\binom{a+b}{m}x^m.
\end{align}
Now we expand the right hand side of \eqref{rlt}:
\begin{align}
(1+x)^a(1+x)^b=\lf(\sum_{c=0}^a\binom{a}{c}x^c\rg)\lf(\sum_{d=0}^b\binom{b}{d} x^d\rg)=\sum_{m=0}^{a+b}x^{m}\sum_{c=0}^m\binom{a}{c}\binom{b}{m-c} . 
\end{align}
By comparing the coefficients of $x^m$, we have that 
\begin{align}
\binom{a+b}{m}=\sum_{c=0}^m\binom{a}{c}\binom{b}{m-c}. 
\end{align}
Now choose $m=c+d$ and fix an arbitrary $c\in\{0,...,a+b\}$, we have that 
\begin{align}
\binom{a+b}{c+d}\geq\binom{a}{c}\binom{b}{d}.
\end{align}
\end{proof}
\fi
We record here a lemma which establishes that the top order boundary contribution from our $\alpha$ estimate, vanishes. 
\begin{lemma} \label{lemma:BC} For all $n \in \mathbb{N}$, $k \in \mathbb{Z}$, we have  \begin{align} 
\mathrm{Re} \pa_y \mathring{\omega}_{m,n;k} \overline{\pa_y^2 \mathring{\omega}_{m,n;k}}|_{y = \pm 1} = 0,\quad n\neq 1.\label{id:BC}
\end{align}
For $n=1$, the boundary contribution can be estimated as follows
\begin{align}\n
\bold{a}_{m,1}^2&\nu^{2}\big|\mathrm{Re} \pa_y(\mathring{\omega}_{m,1;k})\overline{\pa_{y}^2(\mathring{\omega}_{m,1;k}} )e^{2W}\chi_{m+1}^2\big|_{y=\pm 1}\big|\lesssim 
\bold{a}_{m,1}^2\nu^{2}|\pa_y\mathring\omega_{m,0;k} |^2 e^{2 W}\bigg|_{y=\pm 1}\\
\lesssim &\frac{\lambda^{2s}}{(m+1)^{2\sigma_\ast}}\mathcal{D}_{m,0;k}^{(\al)}+\frac{\lambda^{2s}}{(m+1)^{2\sigma_\ast}}\mathcal{CK}_{m,0;k}^{(\al, W)}.\label{BC_n=1}
\end{align}
Here,  the $\mathcal{D}$-term and the $\mathcal{CK}$-term are defined in \eqref{defn:D:1}, \eqref{CK_W_al_1}.
\end{lemma}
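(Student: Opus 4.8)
\textbf{Proof plan for Lemma \ref{lemma:BC}.}
The starting point is the boundary condition. Since $\omega_k(t,\pm1)=0$ for all $t$, and the operator $\Gamma_k = v_y^{-1}\pa_y + ikt$ applied to a function vanishing at the boundary still involves only $\pa_y$ and multiplication, one checks by induction that $\mathring\omega_{m,n;k} = |k|^m q^n \Gamma_k^n\omega_k$ vanishes at $y=\pm1$ whenever $q^n$ vanishes there, i.e. for $n\ge1$; and for $n=0$ it vanishes because $\omega_k$ does. More importantly, we need the second derivative: from the equation \eqref{M1a} evaluated at $y=\pm1$, using $\omega_k(t,\pm1)=0$ and $\pa_t\omega_k(t,\pm1)=0$, we get $\nu\pa_y^2\omega_k(t,\pm1) = -f_k(t,\pm1)$ — but since in our setting the relevant structure is that $\mathring\omega_{m,n;k}$ and its even-order normal derivatives are controlled, the cleanest route is: (i) for $n=0$, $\mathring\omega_{m,0;k}=|k|^m\omega_k$ vanishes at the wall, so $\pa_y^2$ of it is what survives; (ii) for $n\ge2$, the factor $q^n$ and its first derivative $n q^{n-1}q'$ both vanish at $y=\pm1$ (since $q$ vanishes linearly, $q^{n-1}=0$ there for $n\ge2$), hence $\pa_y\mathring\omega_{m,n;k}(t,\pm1)=0$, which immediately gives \eqref{id:BC}. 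Thus the only nonzero boundary term is $n=1$.

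For $n=1$, we have $\mathring\omega_{m,1;k} = |k|^m q \Gamma_k\omega_k = |k|^m q (v_y^{-1}\pa_y\omega_k + ikt\,\omega_k)$. At $y=\pm1$, $\omega_k=0$ and $q=0$, so $\mathring\omega_{m,1;k}(t,\pm1)=0$. Differentiating once, $\pa_y\mathring\omega_{m,1;k} = |k|^m q' v_y^{-1}\pa_y\omega_k + |k|^m q \pa_y(v_y^{-1}\pa_y\omega_k) + ikt(\cdots)$; at the wall the only surviving term is $|k|^m q'(\pm1) v_y^{-1}(\pm1)\pa_y\omega_k(t,\pm1)$, so $\pa_y\mathring\omega_{m,1;k}(t,\pm1)$ is a (universal) constant multiple of $|k|^m\pa_y\omega_k(t,\pm1) = \pa_y\mathring\omega_{m,0;k}(t,\pm1)$ (using $|q'|=99$, $v_y=1$ at the boundary from \eqref{BC:v}). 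Then the product $\pa_y\mathring\omega_{m,1;k}\,\overline{\pa_y^2\mathring\omega_{m,1;k}}$ at $y=\pm1$ is estimated by Cauchy–Schwarz, but the key observation is that it suffices to bound it by $|\pa_y\mathring\omega_{m,0;k}|^2 e^{2W}|_{y=\pm1}$: indeed $\pa_y^2\mathring\omega_{m,1;k}$ at the wall, after using the PDE and the vanishing of $q$, $\omega_k$, $\pa_t\omega_k$, reduces to terms involving $\pa_y\omega_k(t,\pm1)$ and lower, multiplied by boundary values of the smooth coefficients — again a constant multiple of $\pa_y\mathring\omega_{m,0;k}$ up to terms absorbable into the same quantity. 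This gives the first inequality in \eqref{BC_n=1}.

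For the second inequality, I would use a one-dimensional trace/interpolation estimate: for a function $g(y)$ on $[-1,1]$ with $g(\pm1)$ the quantity of interest, $|g(\pm1)|^2 \lesssim \|g\|_{L^2}\|g'\|_{L^2} + \|g\|_{L^2}^2$, applied to $g = \pa_y\mathring\omega_{m,0;k} e^{W}$ (noting $\chi_{m+1}=1$ near the boundary so it can be inserted freely, and $W=0$ near $y=\pm1$ as long as $t$ is not too large, or more carefully $e^{2W}|_{\pm1}$ is handled by the monotonicity of $W$). This yields
\[
\bold{a}_{m,1}^2\nu^2|\pa_y\mathring\omega_{m,0;k}|^2e^{2W}\big|_{y=\pm1}\lesssim \bold{a}_{m,1}^2\nu^2\|\pa_y\mathring\omega_{m,0;k}e^W\|_{L^2}\big(\|\nabla_k\pa_y\mathring\omega_{m,0;k}e^W\|_{L^2}+\|\pa_y\mathring\omega_{m,0;k}e^W\|_{L^2}\big),
\]
and then a weighted Young's inequality together with the relation \eqref{gevbd1212} between $\bold{a}_{m,1}$ and $\bold{a}_{m,0}$ (which produces the gain $\lambda^{2s}(m+1)^{-2\sigma_\ast}$) converts the right-hand side into $\frac{\lambda^{2s}}{(m+1)^{2\sigma_\ast}}(\mathcal{D}^{(\al)}_{m,0;k}+\mathcal{CK}^{(\al,W)}_{m,0;k})$, recalling the definitions \eqref{defn:D:1} and \eqref{CK_W_al_1} — the $\mathcal{CK}$ term arises because $\|\pa_y\mathring\omega_{m,0;k}e^W\|_{L^2}$ alone must be paired against $\sqrt{-\pa_t W}$, using \eqref{W_prop}, to be integrable in time. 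The main obstacle is bookkeeping: carefully identifying exactly which boundary terms survive the vanishing of $q$, $q'$, $\omega_k$, $\pa_t\omega_k$ at the wall in $\pa_y^2\mathring\omega_{m,1;k}$ (this requires using the PDE \eqref{M1a} to trade $\pa_t$ for $\nu\pa_y^2$), and then organizing the trace estimate so that the powers of $\nu$ and the Gevrey weights line up precisely with $\mathcal{D}^{(\al)}_{m,0;k}$ and $\mathcal{CK}^{(\al,W)}_{m,0;k}$ rather than some slightly different combination.
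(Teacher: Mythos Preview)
Your arguments for $n\geq 2$ (vanishing of $q^{n-1}$) and the outline of the trace estimate are in line with the paper. But there is a real gap in the $n=1$ step.

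You propose to bound $\pa_y\mathring\omega_{m,1;k}\,\overline{\pa_y^2\mathring\omega_{m,1;k}}\big|_{y=\pm1}$ by Cauchy--Schwarz, claiming each factor is a bounded multiple of $\pa_y\mathring\omega_{m,0;k}$. The first factor is: at the wall, $\pa_y\mathring\omega_{m,1;k}=|k|^m q'\,v_y^{-1}\pa_y\omega_k$. The second factor is not. Using $q(\pm1)=q''(\pm1)=0$, $\omega_k(\pm1)=\pa_y^2\omega_k(\pm1)=0$, one finds
\[
\pa_y^2\mathring\omega_{m,1;k}\big|_{y=\pm1}=2|k|^m q'\big(-v_y^{-2}v_{yy}+ikt\big)\pa_y\omega_k\big|_{y=\pm1}.
\]
The coefficient $ikt$ is not bounded; it grows linearly in $t$. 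So Cauchy--Schwarz produces an extra factor $(1+|k|t)$, and even after trading one power of $t$ for the extra $\varphi$ hidden in $\bold a_{m,1}$, you are left with a stray $|k|$ that neither $\mathcal D^{(\alpha)}_{m,0;k}$ nor $\mathcal{CK}^{(\alpha,W)}_{m,0;k}$ can absorb. Invoking the PDE does not help here: the $ikt$ originates from $\Gamma_k$, not from the equation.

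What the paper does instead is compute the \emph{real part} of the product directly. Since $\pa_y\mathring\omega_{m,1;k}|_{\pm1}$ is a real scalar times $\pa_y\omega_k$, the cross term satisfies $\mathrm{Re}\big(ikt\,|\pa_y\omega_k|^2\big)=0$, and one gets exactly
\[
\mathrm{Re}\big(\pa_y\mathring\omega_{m,1;k}\,\overline{\pa_y^2\mathring\omega_{m,1;k}}\big)\big|_{\pm1}=-2\,v_y^{-3}v_{yy}\,|q'|^2\,|\pa_y\mathring\omega_{m,0;k}|^2\big|_{\pm1},
\]
which is the first inequality in \eqref{BC_n=1}. This cancellation is the essential mechanism; without it the claimed bound is simply false.

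Two smaller points. For $n=0$, the vanishing of $\mathring\omega_{m,0;k}$ at the wall is not by itself enough; you need $\pa_y^2\omega_k(\pm1)=0$, which is what makes the second factor vanish (the paper states this explicitly). And your remark that ``$W=0$ near $y=\pm1$'' is wrong: $W$ is largest at the boundary. The paper keeps $e^W$ inside the trace function and, in the $k=0$ case, uses the lower bound $\sqrt{-\pa_tW}\gtrsim \nu^{-1/3+\eta}$ on $\mathrm{supp}\,\chi_1$ to convert the undifferentiated $\|\pa_y\mathring\omega_{0,0;0}e^W\|_{L^2}^2$ term into $\mathcal{CK}^{(\alpha,W)}_{0,0;0}$.
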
\begin{proof}
For $n>1$, the result is a direct consequence of the fact that 
\begin{align}
\pa_y \mathring{\omega}_{m,n;k}\bigg|_{y=\pm 1}=\pa_y (q^n\Gamma_k^n \omega_k)\bigg|_{y=\pm 1}=\lf((n-1)q^{n-1}q'\Gamma_k^n\omega_k+q^n\pa_y\Gamma^{n}\omega_k\rg) \bigg|_{y=\pm 1}=0. 
\end{align}
For $n=0$, we recall that the boundary condition $\omega_k(t, y=\pm1)=0$ implies $\pa_{yy}\omega_k(t,y=\pm 1)=0$ and guarantees that the left hand side of \eqref{id:BC} is zero.
The $n=1$ case is the most delicate one, we invoke the properties $q(\pm 1)=q''(\pm 1)=0$, $\omega_k(t, y=\pm1)=0$ and $\pa_{yy}\omega_k(t,y=\pm 1)=0$ to obtain that
\begin{align*}
\mathrm{Re} &\pa_y\mathring{\omega}_{m,1;k}\overline{\pa_{yy}\mathring{\omega}_{m,1;k}}e^{2W}\chi_{m+1}^2\bigg|_{y=\pm1}\\ 
=&2\mathrm{Re} q'\lf(v_y^{-1}\pa_y+ikt\rg)(|k|^m\omega_k)\overline{q'\pa_y\lf(v_y^{-1}\pa_y+ikt\rg)(|k|^m\omega_k)}e^{2W}\chi_{m+1}^2\bigg|_{y=\pm1}\\ 
=&2\mathrm{Re} q'v_y^{-1}\pa_y(|k|^m\omega_k)\overline{q'\lf(\frac{-v_{yy}}{v_y^2}\pa_y(|k|^m\omega_k)+v_y^{-1}\pa_{yy}(|k|^m\omega_k)+ikt\pa_y(|k|^m\omega_k)\rg)}e^{2W}\chi_{m+1}^2\bigg|_{y=\pm1}\\
=&-2\frac{v_{yy}}{v_y^3}|q'|^2|\pa_y(|k|^m\omega_k)|^2e^{2W}\chi_{m+1}^2\bigg|_{y=\pm1}=-2\frac{v_{yy}}{v_y^3}|q'|^2|\pa_y\mathring\omega_{m,0;k}|^2e^{2W}\chi_{m+1}^2\bigg|_{y=\pm1}.
\end{align*}  
Here we have used the fact that $\mathrm{Re}\lf(ikt |\pa_y(|k|^m\omega_k)|^2\rg)=0$ in the last line. Combining this relation and the fact that $\|\vyi\|_\infty+\|v_{yy}\|_\infty\leq C$ yields the first inequality in  \eqref{BC_n=1}. 

Now we develop the second inequality in \eqref{BC_n=1}. 
We use the following trace theorem, which is a natural consequence of integration by parts:
\begin{align*}
\bold{a}_{m,1}^2&\nu^{2} |\pa_y\mathring{\omega}_{m,0;k}|^2 e^{2 W}\chi_{m+1}^2\bigg|_{y=\pm 1}\\
\lesssim & \bold{a}_{m,1}^2\nu^{2} \|\pa_y\mathring{\omega}_{m,0;k}e^{W}\chi_{m+1}\|_{L^2}^2+\bold{a}_{m,1}^2\nu^{2} \|\pa_y(\pa_y\mathring{\omega}_{m,0;k}e^{W}\chi_{m+1})\|_{L^2}^2. 
\end{align*}
We further expand the second term with the property \eqref{chi:prop:3} as follows
\begin{align*}
\bold{a}_{m,1}^2&\nu^{2} |\pa_y\mathring{\omega}_{m,0;k}|^2 e^{2 W}\chi_{m+1}^2\bigg|_{y=\pm 1}\\
\lesssim &\bold{a}_{m,1}^2\nu^{2} \| \pa_y^2\mathring{\omega}_{m,0;k}e^{W}\chi_{m+1}\|_{L^2}^2+\bold{a}_{m,1}^2\nu^{2}(m+1)^{2+2\sigma}\| \pa_y\mathring{\omega}_{m,0;k}e^{W}\|_{L^2(\text{support}\chi_{m+1})}^2\\
&+\bold{a}_{m,1}^2\nu\|\pa_y \mathring{\omega}_{m,0;k} |\nu^{1/2}\pa_y W| e^W\chi_m\|_2^2=:T_1+T_2+T_3.
\end{align*}
Now we distinguish between two cases: $k\neq 0$ and $k=0$. 
If $k\neq 0$, the first two terms can be bounded by $\mathcal{D}^{(\al)}$. Then the relation \eqref{W_prop} yields that
\begin{align*}
\bold{a}_{m,1}^2\nu^{2} |\pa_y\mathring{\omega}_{m,0;k}|^2 e^{2 W}\chi_{m+1}^2\bigg|_{y=\pm 1}\lesssim&\frac{\lambda^{2s}}{(m+1)^{2\sigma_\ast}}\mathcal{D}_{m,0;k}^{(\al)}+\bold{a}_{m,1}^2\nu\|\pa_y\mathring{\omega}_{m,0;k} \sqrt{-\pa_t W} e^{W}\chi_m\|_2^2\\
\lesssim&{\frac{\lambda^{2s}}{(m+1)^{2\sigma_\ast}}\mathcal{D}_{m,0;k}^{(\al)}+\frac{\lambda^{2s}}{(m+1)^{2\sigma+2\sigma_\ast}}\mathcal{CK}_{m,0;k}^{(\al, W)}}.
\end{align*} 
On the other hand, if $k=0$, then $m=0$ and we observe that by \eqref{W_prop},
\begin{align*}
\sqrt{-\pa_t W}\geq \frac{1}{CK\nu(1+t)^2}\geq \frac{1}{CK\nu^{1/3-2\eta}},\quad y\in \text{support}\chi_1,\quad t\leq \nu^{-1/3-\eta}.
\end{align*}
Hence,
\begin{align*}
T_2\lesssim \bold{a}_{0,1}^2\nu^{2}\| \pa_y\mathring{\omega}_{0,0;0}\sqrt{-\pa_t W}e^{W}\|_{L^2(\text{support}\chi_{1})}^2\lesssim \nu\lambda^{2s}\mathcal{CK}_{0,0;0}^{(\al,W)}.
\end{align*}
The $T_1,\ T_3$ terms are estimated in an identical fashion as before. Combining the estimates in the  cases $k\neq 0$ and $k=0$, we obtain the second inequality in  \eqref{BC_n=1}. Hence the proof is concluded. 
\end{proof}
\ifx
\sameer{
\begin{proof} We first treat the case $n = 0$. In this case, the identity \eqref{id:BC} will follow upon noting that $\pa_y^2 \omega_{k,0}|_{y = \pm 1} = 0$. To see this, we evaluate \eqref{M1a} at $y = \pm 1$ which gives 
\begin{align} \n
\nu \pa_y^2 \omega_k|_{y = \pm 1} = & \pa_t \omega_k|_{y = \pm 1} + i k(\pm 1 + U^x_0(t, \pm 1)) \omega_k|_{y= \pm 1} + |k|^2 \omega_k|_{y = \pm 1} \\ \n
&+ (U^x_{\neq 0}\pa_x \omega)_k|_{y = 0} + (U^y \pa_y \omega)_k|_{y = \pm 1}  = 0,
\end{align}
upon invoking $\omega_k|_{y = \pm 1} = 0$, \eqref{M1c}, and $U^y|_{y = \pm 1} = 0$, \eqref{}. \sameer{We need to write this condition somewhere \dots} 
We now treat by induction the cases when $n \ge 1$. Fix $n \ge 1$. Our inductive hypothesis will be 
\begin{align} \label{ind:BC:1}
\mathrm{Re} \pa_y \omega_{k,m} \pa_y^2 \omega_{k,m}|_{y = \pm 1} = 0 \text{ for } 0 \le m \le n-1. 
\end{align}
The following identity follows from the definitions of our vector-fields,
\begin{align*}
\pa_y \mathring{\omega}_{m,n;k} \pa_y^2 \mathring{\omega}_{m,n;k} = & \pa_y ( q(y) \Gamma \omega_{k,n-1}) \pa_y^2 (q(y) \Gamma \omega_{k,n-1}) \\
= & 2|q'(y)|^2 \Gamma_k \omega_{k,n-1} \pa_y \Gamma_k \omega_{k,n-1} + q'(y) q''(y) \Gamma_k \omega_{k,n-1} \Gamma_k \omega_{k,n-1} \\ 
&+ q(y) q'(y) \Gamma_k \omega_{k,n-1} \pa_y^2 \Gamma_k \omega_{k,n-1}  + q(y) q''(y) \pa_y \Gamma_k \omega_{k,n-1} \Gamma_k \omega_{k,n-1} \\
&+ 2 q(y) q'(y) \pa_y \Gamma_k \omega_{k,n-1} \pa_y \Gamma_k \omega_{k,n-1} + q(y)^2 \pa_y \Gamma_k \omega_{k,n-1} \pa_y^2 \Gamma_k \omega_{k,n-1} 
\end{align*}
Evaluating the above identity at $y = \pm 1$ and using $q(\pm 1) = q''(\pm 1) = 0$, we see only the first term survives. Invoking now the definition of our vector-field $\Gamma_k$, we obtain 
\begin{align*}
\frac{1}{2|q'(\pm 1)|^2}\pa_y \mathring{\omega}_{m,n;k} \pa_y^2 \mathring{\omega}_{m,n;k}|_{y = \pm 1} = & \Gamma_k \omega_{k,n-1} \pa_y \Gamma_k \omega_{k,n-1}|_{y = \pm 1} \\
= &  (\frac{\pa_y}{v_y} + i k t) \omega_{k,n-1} \pa_y (\frac{\pa_y}{v_y} + i k t) \omega_{k,n-1}|_{y = \pm 1} \\
= & - \frac{v''}{v_y^3} |\pa_y \omega_{k,n-1}|^2 |_{y = \pm 1}+ \frac{1}{v_y^2} \pa_y \omega_{k,n-1} \pa_y^2 \omega_{k,n-1}|_{y = \pm 1} + \frac{ikt}{v_y} |\pa_y \omega_{k,n-1}|^2|_{y = \pm 1} \\
&- \frac{v''}{v_y^2} i kt \omega_{k,n-1} \pa_y \omega_{k,n-1}|_{y = \pm 1} + \frac{ikt}{v_y} \omega_{k,n-1}\pa_y^2 \omega_{k,n-1} |_{y = \pm 1}\\
&- |kt|^2 \omega_{k,n-1} \pa_y \omega_{k,n-1}|_{y = \pm 1}.
\end{align*}
We now take the real part which eliminates the third term above. Subsequently, we use that $v''(\pm 1) = 0$, \eqref{}, (\sameer{how do we enforce this? localization of $H$?}), which eliminates the first and fourth terms. We then use that $\mathring{\omega}_{m,n;k} = 0$ for all $n \in \mathbb{N}$ which eliminates the fifth and sixth terms above. This then leaves only the second term, for which we invoke the inductive hypothesis, \eqref{ind:BC:1}. The lemma is proven. 
\end{proof} }
{\color{blue} We note that $\pa_{yy}\mathring{\omega}_{m,n;k}=\pa_y^2(q^n)\lf(v_y^{-1}\pa_y+ikt\rg)\omega_k+ 2(\pa_y q^n)\pa_y\lf(v_y^{-1}\pa_y+ikt\rg)\omega_k+2q^n\pa_{yy}\lf(v_y^{-1}\pa_y+ikt\rg)\omega_k.$ The first level is nontrivial.
\begin{align}
\mathrm{Re} \pa_y\omega_{k,1}\overline{\pa_{yy}\omega_{k,1}}\big|_{y=\pm1}=&2\mathrm{Re} q'\lf(v_y^{-1}\pa_y+ikt\rg)\omega_k\overline{q'\pa_y\lf(v_y^{-1}\pa_y+ikt\rg)\omega_k}\big|_{y=\pm1}\\
=&2\mathrm{Re} q'v_y^{-1}\pa_y\omega_k\overline{q'\lf(\frac{-v_{yy}}{v_y^2}\pa_y\omega_k+v_y^{-1}\pa_{yy}\omega_k+ikt\pa_y\omega_k\rg)}\bigg|_{y=\pm1}=-2\frac{v_{yy}}{v_y^3}|q'|^2|\pa_y\omega_k|^2\bigg|_{y=\pm1}.
\end{align}
It seems to me that $\eta_{k,1}=q(\pa_y+iv_y kt)\omega_k$ is better. 
\begin{align}
\mathrm{Re} \pa_y\eta_{k,1}\overline{\pa_{yy}\eta_{k,1}}\big|_{y=\pm1}=&2\mathrm{Re} q'\lf(\pa_y+ikt{v_y}\rg)\omega_k\overline{q'\pa_y\lf(\pa_y+ikt{v_y}\rg)\omega_k}\big|_{y=\pm1}\\
=&2\mathrm{Re} q'\pa_y\omega_k\overline{q'\lf(v_y^{-1}\pa_{yy}\omega_k+iktv_y\pa_y\omega_k +iktv_{yy}\omega_k\rg)}=0.
\end{align}  
Hence at least for $n=1$, we might want to switch to this new definition. 

}
\fi

\ifx
\begin{lemma}The Navier boundary condition $\omega(y=\pm 1)=0$ naturally gives rise to the following conditions
\begin{align}\label{BC_n}
\pa_{y}^{2n}\omega\big|_{y=\pm 1}=0,\quad \forall n\in \mathbb{N}. 
\end{align}
\end{lemma}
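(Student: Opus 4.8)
The natural route is an induction on $n$ that, via the equation \eqref{PS}, trades each pair of transverse derivatives for one time derivative and one pair of tangential derivatives. The base case $n=0$ is precisely the imposed condition $\omega|_{y=\pm1}=0$; since it holds for every $t$ and every $x$, it immediately propagates to all tangential and temporal derivatives of the boundary trace, i.e.\ $\pa_t^a\pa_x^b\omega|_{y=\pm1}=0$ for all $a,b\in\mathbb N$. For the inductive step I would assume $\pa_y^{2\ell}\omega|_{y=\pm1}=0$ for every $0\le\ell\le n-1$ and hence, applying the previous remark to each of the vanishing boundary traces $\pa_y^{2\ell}\omega|_{y=\pm1}$, also $\pa_t^a\pa_x^b\pa_y^{2\ell}\omega|_{y=\pm1}=0$.

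Passing to the $x$-Fourier side, rewrite \eqref{M1a} as $\nu\pa_y^2\omega_k=\pa_t\omega_k+ik(y+U^x_0)\omega_k+\nu k^2\omega_k-f_k$, apply $\pa_y^{2n-2}$, expand by Leibniz in the coefficient term, and evaluate at $y=\pm1$:
\begin{align*}
\nu\,\pa_y^{2n}\omega_k\big|_{y=\pm1}
={}& \pa_t\pa_y^{2n-2}\omega_k\big|_{y=\pm1}
+\nu k^2\pa_y^{2n-2}\omega_k\big|_{y=\pm1}
-\pa_y^{2n-2}f_k\big|_{y=\pm1} \\
&+ik\sum_{j=0}^{2n-2}\binom{2n-2}{j}\pa_y^j(y+U^x_0)\,\pa_y^{2n-2-j}\omega_k\Big|_{y=\pm1}.
\end{align*}
By the inductive hypothesis the first two terms vanish, and $\pa_y^{2n-2}f_k|_{y=\pm1}=0$ because in the present application the source is supported (or vanishes to infinite order) away from the walls, cf.\ \eqref{in:data:1}; this hypothesis on $f$ should of course be recorded explicitly. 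In the Leibniz sum, every term in which $2n-2-j$ is \emph{even} is a smooth coefficient times $\pa_y^{2\ell}\omega_k|_{y=\pm1}$ with $\ell\le n-1$, hence vanishes by induction.

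The genuinely delicate point — and the one I expect to be the main obstacle — is the collection of terms with $2n-2-j$ \emph{odd}, in which an odd-order transverse derivative $\pa_y^{2\ell+1}\omega_k$ is evaluated at the wall; these are \emph{not} controlled by the even-order induction and are \emph{not} killed by the equation alone (already for $n=2$ the term $2ik(1+\pa_yU^x_0)\pa_y\omega_k|_{y=\pm1}$ survives). Establishing \eqref{BC_n} in full therefore requires invoking the parity/reflection structure behind the Navier (free-slip) conditions: the perturbation vorticity is the restriction of a function odd across each wall (the tangential perturbation velocity being even there and the normal one odd), a symmetry which must be shown to be preserved by the evolution and which in turn forces the relevant odd-order transverse derivatives of $U^x_0$ and $f$ to vanish at $y=\pm1$, so that precisely the odd-$j$ contributions above drop out. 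Once that parity input is available the right-hand side vanishes, giving $\pa_y^{2n}\omega_k|_{y=\pm1}=0$, and summing over $k$ yields \eqref{BC_n}. Absent such additional structure only $n=0$ (unconditional) and $n=1$ (needing merely $f|_{y=\pm1}=0$) follow from the equation alone, which is why this statement is delicate and must be invoked with care.
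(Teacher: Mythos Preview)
Your diagnosis of the obstacle is exactly right: the even-order induction alone cannot kill the terms where an odd number of $\partial_y$'s land on $\omega$, and for the \emph{passive scalar} equation \eqref{PS} with a generic shear $U_0^x$ the statement indeed fails beyond $n=1$. Your instinct that some parity structure tied to Navier slip must enter is also correct. Where your proposal diverges from the paper is in \emph{which equation} the lemma is about and \emph{how} that parity is extracted.

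The paper's proof works not with \eqref{PS} but with the full Navier--Stokes vorticity equation, whose transport term is $u^1\partial_x\omega+u^2\partial_y\omega$ with $(u^1,u^2)$ divergence-free and $\omega=\partial_xu^2-\partial_yu^1$. After applying $\partial_y^{2n}$ and evaluating at $y=\pm1$, the surviving terms involve $\partial_y^{2n-2k-1}u^1|_{y=\pm1}$ and $\partial_y^{2n-2k}u^2|_{y=\pm1}$. Rather than appealing to an abstract odd-extension argument, the paper uses the algebraic identities
\[
\partial_y u^1=\partial_x u^2-\omega,\qquad \partial_{yy}u^2=\partial_x\omega-\partial_{xx}u^2,
\]
together with $u^2|_{y=\pm1}=0$ and the induction hypothesis $\partial_y^{2m}\omega|_{y=\pm1}=0$, to reduce each of these boundary traces iteratively to $(-\partial_{xx})^{j}u^2|_{y=\pm1}=0$. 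This is the same parity content you describe (odd $u^1$-derivatives and even $u^2$-derivatives vanish at the wall), but obtained by a concrete recursion on the PDE relations rather than by constructing a reflected extension and arguing it is preserved by the flow. Your route would work too, but you would have to verify that the symmetry is preserved by every term of the nonlinear equation---which is exactly what the paper's identity-chasing does, one derivative at a time.
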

\begin{proof}
Assume that for $m\leq n$, the result \eqref{BC_n} is proven, i.e.,
\begin{align}\label{Induction_hyp}
\pa_y^{2m}\omega\big|_{y=\pm 1}=0, \quad 0\leq m\leq n. 
\end{align}
Now we take the $\pa_{y}^{2n}$-derivative of the equation and evaluate the terms on the boundary and end up with
\begin{align*}
\pa_t \pa_y^{2n}\omega+\sum_{k=0}^{2n}\lf(\begin{array}{cc}2n\\ k\end{array}\rg)\pa_{y}^{2n-k}u^1\pa_x\pa_y^{k}\omega+\sum_{k=0}^{2n}\lf(\begin{array}{cc}2n\\ k\end{array}\rg)\pa_{y}^{2n-k}u^2 \pa_y^{k+1}\omega=\nu\lf(\pa_{y}^{2n+2}+\pa_{xx}\pa_y^{2n}\rg)\omega, \quad y=\pm1. 
\end{align*}
Now we use the fact that $\pa_x^m\omega(y=\pm 1)=0, \ \pa_x^m u^2(y=\pm1)=0$ and the hypothesis \eqref{Induction_hyp} to filter out the obvious zero terms:
\begin{align*}T_1+T_2:=\sum_{k=0}^{n-1}\lf(\begin{array}{cc}2n\\ 2k+1\end{array}\rg)\pa_{y}^{2n-2k-1}u^1\ \pa_x\pa_y^{2k+1}\omega+\sum_{k=0}^{n-1}\lf(\begin{array}{cc}2n\\ 2k\end{array}\rg)\pa_{y}^{2n-2k}u^2\ \pa_y^{2k+1}\omega=\nu\pa_{y}^{2n+2}\omega, \quad y=\pm1. \label{T12}
\end{align*}
Now we use the divergence free condition and the vorticity definition to get
\begin{align*}
\pa_{y} u^1=\pa_x u^2-\omega, \quad \pa_{yy} u^2=-\pa_{xy} u^1=\pa_x (\omega-\pa_x u^2)=\pa_x\omega-\pa_{xx}u^2. 
\end{align*}
Now we use this to calculate each term in the $T_1$ term of \eqref{T12}
\begin{align}
\pa_y^{2n-2k-2}\pa_y u^1=&\pa_x \pa_y^{2n-2k-2}u^2-\pa_y^{2n-2k-2}\omega=\pa_x \pa_y^{2n-2k-4}\pa_{yy}u^2\\
=&\pa_x(-\pa_{xx}) (\pa_{yy})^{n-k-2} u^2=...=\pa_x(-\pa_{xx})^{n-k-1}u^2=0,\quad y=\pm 1. 
\end{align}
Hence $T_1=0.$ By the same argument, we saw that each term in $T_2$ vanishes
\begin{align}
\pa_y^{2n-2k}u^2 \big|_{y=\pm 1}=\pa_y^{2n-2k-2}\lf(\pa_x\omega-\pa_{xx}u^2\rg)\big|_{y=\pm 1}=-\pa_{xx}\pa_{y}^{2n-2k-4}\lf(\pa_x\omega-\pa_{xx}u^2\rg)\big|_{y=\pm 1}=\lf(-\pa_{xx}\rg)^{n-k}u^2\big|_{y=\pm 1}=0. 
\end{align}
Since every term in the sum is zero, $T_2=0. $ Hence, combining the computations and \eqref{T12}, we obtain
\begin{align}
(\pa_{yy})^{n+1}\omega\big|_{y=\pm1}=0. 
\end{align}
\end{proof}
\fi

\section{Commutator Estimates} \label{sec:ext:comm}

In this section, we collect all of the commutator bounds we need in order to close our energy scheme. In particular, the main result of this section will be the proof of Proposition \ref{pro:rhs:intro}.

\subsection{Bounds on the Building Blocks $S_{m,n}^{(a,b,c)}\omega_k$}
\ifx
Given the decompositions \eqref{Cnq} and \eqref{IDL12}, we need to estimate the following crucial quantities: 
\begin{align} \label{SnLnJn}
S_{m,n;k} := &\frac{m+n}{q} \mathring{\omega}_{m,n;k},\\ 
\n  J_{m,n;k} :=& {\frac{m+n}{q}} \pa_y \mathring{\omega}_{m,n;k},\quad K_{m,n;k}:=\frac{m+n}{q}|k|\mathring{\omega}_{m,n;k},\qquad L_{m,n;k} := \frac{(m+n)^2}{q^2}\mathring{\omega}_{m,n;k}, \quad n\geq 2;\\
S_{m,1;k}=&(m+1)|k|^m\Gamma_k\omega_k,\quad \underline{J_{m,1;k}=K_{m,1;k}=L_{m,1;k}=0},\quad S_{m,0;k}=J_{m,0;k}=K_{m,0;k}=L_{m,0;k}=0.\n
\end{align}Here $\mathring{\omega}_{m,n;k}=|k|^mq^n\Gamma^n\omega_k$.
\siming{
We see that using the definition \eqref{S_vec}, the above are 
\begin{align}
S^{(1,0,0)}_{m,n}\omega_k=S_{m,n;k},\quad S^{(2,0,0)}_{m,n}\omega_k=L_{m,n;k},\quad S^{(1,1,0)}_{m,n}\omega_k=J_{m,n;k},\quad S^{(1,0,1)}_{m,n}\omega_k=K_{m,n;k}.
\end{align}
\footnote{Note that the definition is different from the $J^{(a,b,c)}$ in the elliptic section in two aspects. First, we have $\pa_y^b$ here instead of $\pav^b$. Second, we always assume that $a\geq 1$ here. 
Moreover, we have that the quantities that we are interested in are the $a\geq 1,\, a+b+c\leq 2$ case. }}

\fi
This subsection is devoted to the proof of the following Lemma \ref{lem:SLJ}. Along the way, we need a technical combinatorial Lemma \ref{lem:comb_sqr}.   
\begin{lemma} \label{lem:SLJ}If $1>\lambda>0$ is smaller than a universal constant, the quantities $S_{m,n}\omega_k, $ defined in \eqref{S_vec} satisfies the following bounds:
\begin{subequations}\label{S:est:all}
\begin{align} \label{S:est:1}
\bold{a}_{m,n}^2 \nu  \lf\| S_{m,n}^{(1,0,0)} \omega_k e^W  \chi_{m+n-1}\rg\|_{L^2} ^2 \lesssim & \sum_{\ell=0}^{n-1}(\mathfrak C\lambda^s)^{2(n-\ell)}\lf(\frac{(m+n)!}{(m+\ell)!}\rg)^{-2(\sigma+\sigma_\ast)}  \mathcal{D}^{(\gamma)}_{m,\ell; k},\\ \label{SnBd1sum} 
\quad \sum_{m+n=0}^M\bold{a}_{m,n}^2 \nu  \lf\| S_{m,n}^{(1,0,0)}\omega_k e^W  \chi_{m+n -1}\rg\|_{L^2} ^2 \lesssim & \lambda^{2s} \sum_{m+n=0}^{M-1} \mathcal{D}^{(\gamma)}_{m,\ell;k}.
\end{align}
For the $j=2$ level,
\begin{align}
\bold{a}_{m,n}^2 \nu^2 \lf \| S_{m,n}^{(2)} \omega_k e^W \chi_{m+n -1}\rg \|_{L^2} ^2\lesssim &\sum_{\ell=0}^{n-1}(\mathfrak{C}\lambda^s)^{2n-2\ell}\lf(\frac{(m+n)!}{(m+\ell)!}\rg)^{-2\sigma - 2\sigma_{\ast}} (\mathcal{D}^{(\alpha)}_{m,\ell;k} + \mathcal{D}^{(\mu)}_{m,\ell;k}), 
\label{S:est:2}\\ 
\sum_{m+n=0}^M\bold{a}_{m,n}^2 \nu^2 \lf \| S_{m,n}^{(2)}\omega_k e^W \chi_{m+n -1}\rg \|_{L^2}^2 \lesssim & \lambda^{2s}\sum_{m+n=0}^{M-1} \lf(\mathcal{D}^{(\alpha)}_{m,n;k} + \mathcal{D}^{(\mu)}_{m,n;k} \rg).
\label{SnBd2sum} 
\end{align}
\ifx
\siming{Previous:
\begin{align} 
\label{LnBound}
\bold{a}_{m,n}^2  \nu^2 \lf \| L_{m,n;k} e^W \chi_{m+n-1} \rg\|_{L^2}^2  \lesssim &  \sum_{\ell=0}^{n-1}(\mathfrak{C}\lambda^s)^{2n-2\ell}\lf(\frac{(m+n)!}{(m+\ell)!}\rg)^{-2\sigma - 2\sigma_{\ast}} ( \mathcal{D}^{(\alpha)}_{m,\ell;k}+   \mathcal{D}^{(\mu)}_{m,\ell;k} ),   \\ 
\label{LnBoundsum}
\sum_{m+n=0}^M\bold{a}_{m,n} ^2 \nu^2 \lf \| L_{m,n;k} e^W \chi_{m+n -1} \rg\|_{L^2}^2  \lesssim & \lambda^{2s} \sum_{m+n=0}^{M-1}\lf( \mathcal{D}^{(\alpha)}_{m,n;k} + \mathcal{D}^{(\mu)}_{m,n;k}\rg) ,   \\ 
\label{JnBound}
\bold{a}_{m,n}^2 \nu^2 \lf \| J_{m,n;k} e^W \chi_{m+n -1}\rg \|_{L^2} ^2\lesssim &\sum_{\ell=0}^{n-1}(\mathfrak{C}\lambda^s)^{2n-2\ell}\lf(\frac{(m+n)!}{(m+\ell)!}\rg)^{-2\sigma - 2\sigma_{\ast}} (\mathcal{D}^{(\alpha)}_{m,\ell;k} + \mathcal{D}^{(\mu)}_{m,\ell;k}),\\
\label{JnBoundsum}\sum_{m+n=0}^M\bold{a}_{m,n}^2 \nu^2 \lf \| J_{m,n;k} e^W \chi_{m+n -1}\rg \|_{L^2}^2 \lesssim & \lambda^{2s}\sum_{m+n=0}^{M-1} \lf(\mathcal{D}^{(\alpha)}_{m,n;k} + \mathcal{D}^{(\mu)}_{m,n;k} \rg).
\end{align}}
\fi
\end{subequations}
Here $\mathcal{D}$-terms are defined in \eqref{defn:D:1}. The implicit constants and the constant $\mathfrak{C}$'s depend only on $s\in(1,2]$. 
\end{lemma}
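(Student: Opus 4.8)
\textbf{Proof proposal for Lemma \ref{lem:SLJ}.}

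The plan is to control the quantities $S_{m,n}^{(a,b,c)}\omega_k = \big(\tfrac{m+n}{q}\big)^a \pa_y^b |k|^c \mathring\omega_{m,n;k}\,\mathbbm{1}_{\mathbb S_n^\ell}(a,b,c)$, which carry $a\ge 1$ copies of the boundary weight $\tfrac{m+n}{q}$, by trading each such copy against a conormal derivative acting on a lower-$n$ object. The starting observation is the algebraic identity $q^n\Gamma_k^n = \big(q\,\Gamma_k - \tfrac{q'}{v_y}(n-1)\cdot\big)\cdots$; more precisely one uses the commutator relations \eqref{cm_pv_qn}, \eqref{cm_pvv_qn} from Lemma \ref{lem:com:BA}, which say that $[\pav, q^n]f = \tfrac{q'}{v_y}\big(\tfrac{n}{q}q^n f\big)$. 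Iterating, $q^n\Gamma_k^n\omega_k$ can be written as $q\,\Gamma_k(q^{n-1}\Gamma_k^{n-1}\omega_k)$ plus an error of the form $\tfrac{q'}{v_y}\cdot(\text{lower order in }\tfrac{m+n}{q})$, so that $\tfrac{m+n}{q}\mathring\omega_{m,n;k}$ is controlled by $(m+n)\,\Gamma_k(q^{n-1}\Gamma_k^{n-1}\omega_k)\chi_{m+n-1}$ (using $q\ge c>0$ on $\supp\chi_{m+n-1}$, which holds by \eqref{q:defn} and \eqref{chi} since $\chi_{m+n-1}$ is supported away from the boundary) plus strictly lower-order terms. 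Writing $\Gamma_k = v_y^{-1}\pa_y + ikt$ and using $\|v_y^{-1}\|_{L^\infty}\lesssim 1$ from \eqref{v_y_asmp}, this reduces $\nu^{1/2}\,S_{m,n}^{(1,0,0)}\omega_k$ to $(m+n)\nu^{1/2}(\pa_y + \text{bdd})\mathring\omega_{m,n-1;k}$, i.e. to quantities appearing in $\mathcal D^{(\gamma)}_{m,n-1;k}$ and $\mathcal D^{(\mu)}_{m,n-1;k}$. The factor $(m+n)$ is then absorbed by the Gevrey weight ratio via \eqref{gevbd1212}: $(m+n)^{1+\sigma}B_{m,n}\le C(s)\lambda^s(m+n)^{-\sigma_\ast}B_{m,n-1}$. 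Iterating this reduction $n-\ell$ times down to level $\ell$ produces exactly the geometric factor $(\mathfrak C\lambda^s)^{2(n-\ell)}\big(\tfrac{(m+n)!}{(m+\ell)!}\big)^{-2(\sigma+\sigma_\ast)}$ claimed in \eqref{S:est:1}, since each descent contributes one $\lambda^s$ and one factorial ratio from \eqref{gevbd1212}, together with a harmless $O(1)$ constant $\mathfrak C = \mathfrak C(s)$ coming from $\|v_y^{-1}\|_\infty$, $\|q'\|_\infty$, and the binomial coefficients in the commutator expansions.

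For the summed bound \eqref{SnBd1sum}, one sums \eqref{S:est:1} over $m+n\le M$ and interchanges the order of summation: $\sum_{m+n\le M}\sum_{\ell=0}^{n-1}(\mathfrak C\lambda^s)^{2(n-\ell)}(\cdots)\mathcal D^{(\gamma)}_{m,\ell;k} \lesssim \sum_{m+\ell\le M-1}\mathcal D^{(\gamma)}_{m,\ell;k}\sum_{j\ge 1}(\mathfrak C\lambda^s)^{2j}$, which converges and is $O(\lambda^{2s})$ once $\lambda$ is small enough that $\mathfrak C\lambda^s < 1$ (the factorial ratio is $\le 1$ and only helps). This is where the hypothesis "$\lambda$ smaller than a universal constant" is used.

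The $j=2$ estimates \eqref{S:est:2}, \eqref{SnBd2sum} follow the same scheme but now $S_{m,n}^{(2)}\omega_k$ comprises the terms $\tfrac{(m+n)^2}{q^2}\mathring\omega$, $\tfrac{m+n}{q}\pa_y\mathring\omega$, and $\tfrac{m+n}{q}|k|\mathring\omega$ (the cases $a+b+c=2$ with $a\ge 1$ in $\mathbb S_n^2$). For $\tfrac{(m+n)^2}{q^2}\mathring\omega$ one applies the level-one reduction twice, using \eqref{cm_pvv_qn} to handle the second-order commutator $[\pav^2,q^n]$; for $\tfrac{m+n}{q}\pa_y\mathring\omega$ one commutes once, picking up $\pa_y$ of the lower object (hence the appearance of $\pa_y\nabla_k$-type terms, controlled by $\mathcal D^{(\alpha)}$) or $|k|\pa_y$-type (controlled by $\mathcal D^{(\mu)}$); similarly for $\tfrac{m+n}{q}|k|\mathring\omega$. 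Each commutation again costs one $\lambda^s$ and one factorial ratio via \eqref{gevbd1212} (now applied with $\ell=2$), yielding the claimed geometric series; the summed version \eqref{SnBd2sum} then follows by the same Fubini-plus-geometric-series argument as above. The technical combinatorial input, recorded as Lemma \ref{lem:comb_sqr} (to be stated next), is precisely the bookkeeping that the product of binomial coefficients generated by iterating the commutator expansion \eqref{comm_rel} stays bounded by the factorial ratios $\big(\tfrac{(m+n)!}{(m+\ell)!}\big)^{\pm}$ up to a constant depending only on $s$.

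\textbf{Main obstacle.} The delicate point is not any single reduction step but the uniform control of the \emph{accumulated} combinatorial constants: iterating \eqref{comm_rel} $n-\ell$ times generates nested sums of products of binomial coefficients and powers of $q',q'',q'''$, and one must verify these do not overwhelm the factorial gains from \eqref{gevbd1212}. This is exactly what the auxiliary combinatorial lemma is designed to absorb, and getting its statement in a form strong enough to be applied at every level of the descent (while keeping the constant $\mathfrak C$ independent of $m,n,\ell,k$ and depending only on $s\in(1,2]$) is the crux of the argument. A secondary subtlety is the boundary region: one must keep careful track that all the weights $\tfrac{1}{q}$ are evaluated only on $\supp\chi_{m+n-1}$, where $q$ is bounded below by a universal constant, so that no genuine boundary singularity appears — this is guaranteed by the nesting property \eqref{chi:prop:2} of the cutoffs, but it must be invoked at each commutation since $\pa_y\chi_{m+n}$ is supported on $\{\chi_{m+n-1}=1\}$.
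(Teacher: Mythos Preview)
Your overall strategy matches the paper's: open up copies of $\Gamma_k$ to reduce $S_{m,n}^{(a,b,c)}\omega_k$ to level $n-1$ objects plus a recursive $S_{m,n-1}$ term, absorb the $(m+n)$ factors via \eqref{gevbd1212}, iterate, and obtain the summed bounds by Fubini and summing the resulting geometric series in $(\mathfrak C\lambda^s)^{2(n-\ell)}$.

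However, you repeat twice a claim that is false and reflects a genuine misconception about the mechanism. You assert that $q\ge c>0$ on $\supp\chi_{m+n-1}$ and that this is what tames the $\tfrac{1}{q}$ weights. By \eqref{chi}, $\chi_{m+n-1}$ is supported on $\{|y|\ge x_{m+n-1}\}$, which \emph{includes} a full neighborhood of the boundary $y=\pm 1$ where $q$ vanishes linearly by \eqref{q:defn}. The real reason the $\tfrac{1}{q}$ factors cause no trouble is purely algebraic: since $\mathring\omega_{m,n;k}=|k|^m q^n\Gamma_k^n\omega_k$ already carries $n$ powers of $q$, one has exactly
\[
\tfrac{m+n}{q}\,\mathring\omega_{m,n;k}=(m+n)\,|k|^m q^{n-1}\Gamma_k^n\omega_k,
\]
and each further $\tfrac{1}{q}$ cancels another stored $q$-factor. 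This is precisely why the index set $\mathbb S_n^\ell$ in \eqref{Snell} restricts to $a\le n$: you never divide by $q$ more times than there are $q$-factors to absorb. Your ``secondary subtlety'' paragraph is therefore addressing a nonexistent boundary issue.

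A minor remark: the combinatorial Lemma~\ref{lem:comb_sqr} you anticipate is not about nested binomials from \eqref{comm_rel}. The paper's recursion is a simple one-step descent (cf.\ \eqref{start1}), so no products of binomials arise; the lemma just records that $\sum_{\ell=0}^{n-1}\mathfrak C^{n-\ell}\big(\tfrac{n!}{\ell!}\big)^{-(\sigma+\sigma_\ast)}$ is bounded uniformly in $n$.
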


\begin{proof} We decompose the proof into several components. 

\noindent
{\bf Step \# 1: Proof of \eqref{S:est:1}, \eqref{SnBd1sum}.}   We proceed by induction. For the base case, $n = 0$, we trivially have $S_{m,0}^{(1,0,0)}\omega_k = 0$. We now record the inductive identity, valid for all $n \ge 1$,  
\begin{align} \n
S_{m,n}^{(1,0,0)}\omega_k := &\frac{m+n}{q} \mathring{\omega}_{m,n;k} = (m+n) q^{n-1}|k|^m \Gamma_k^n \omega_k  = (m+n) q^{n-1} \lf({\vyn\pa_y} + i k t\rg) \Gamma_k^{n-1} |k|^m\omega_k \\ \n
= & (m+n) v_y^{-1}{\pa_y} \lf( q^{n-1} \Gamma_k^{n-1}|k|^m \omega_k \rg) - {(m+n)}(n-1){\vyn}{q'}  q^{n-2} \Gamma_k^{n-1}|k|^m \omega_k \\ \n
&+ (ikt) (m+n) q^{n-1} \Gamma_k^{n-1}|k|^{m} \omega_k \\ \label{start1}
= &{(m+n)}{\vyn} \pa_y \omega_{m,n-1;k} - {\frac{(m+n)}{m+n-1} }(n-1) {\vyn}q'  S_{m,n-1}^{(1,0,0)}\omega_k +  (ikt) (m+n) \omega_{m,n-1;k}.
\end{align}
We recall the definition of $\bold{a}_{m,n}$ \eqref{a:weight}, the bound $t\varphi\leq C$ \eqref{ineq:varphi}, with the bound \eqref{gevbd1212} and estimate the right hand side above as follows,
\begin{align} \n
 \bold{a}_{m,n} &\nu^{\frac12} \|  S_{m,n}^{(1,0,0)}\omega_k\ e^W \chi_{m+n-1} \|_{L^2}  \\ \n
 \lesssim &  (m+n) \bold{a}_{m,n}\lf\|v_y^{-1} \rg\|_{L^\infty} \lf\| \nu^{\frac12}\pa_y\mathring{ \omega}_{m,n-1;k} e^W \chi_{m+n-1}\rg \|_{L^2}  \\ \n
 & + (m+n) \bold{a}_{m,n} \lf\|{\vyn}{q'} \rg\|_{L^\infty} \lf\| \nu^{\frac12}S_{m,n-1;k} ^{(1,0,0)}e^W \chi_{m+n-1} \rg\|_{L^2} \\ \n
  & +(m+ n) B_{m,n}  \varphi^{m+n - 1}   (\varphi  t)  \lf\| \nu^{\frac12} |k| \omega_{m,n-1;k} e^W \chi_{m+n-1} \rg \|_{L^2} \\
\label{ye1}  \lesssim & \lambda^s (m+n)^{-\sigma-\sigma_\ast}\sqrt{\mathcal{D}^{(\gamma)}_{ m,n-1; k}}+ \lambda^s(m+ n)^{-\sigma-\sigma_\ast}\bold{a}_{m,n - 1;k}  \lf\| \nu^{\frac12}S_{m,n-1}^{(1,0,0)}\omega_k\ e^W \chi_{m+n-2}\rg \|_{L^2}.
\end{align}
Here we have invoked the bound \eqref{v_y_asmp} to control $\displaystyle\lf\|{}v_y^{-1}\rg \|_{L^\infty}$. Now we have that there exists a constant $\mathfrak C$, independent of $m,n$, such that iteration of  the above inequality yields that for $\lambda$ small compared to universal constants,
\begin{align*}
 \bold{a}_{m,n}^2& \nu  \|  S_{m,n}^{(1,0,0)}\omega_k\  e^W \chi_{m+n-1} \|_{L^2}^2
\leq  \sum_{\ell=0}^{n-1}(\mathfrak C\lambda^s)^{2(n-\ell)}\lf(\frac{(m+n)!}{(m+\ell)!}\rg)^{-2(\sigma+\sigma_\ast)} \mathcal{D}^{(\gamma)}_{m,\ell; k}.
\end{align*}This is \eqref{S:est:1}. Next, we have that by Lemma \ref{lem:comb_sqr},
\begin{align}
\sum_{m+n=0}^M \bold{a}_{m,n}^2& \nu  \|  S_{m,n}^{(1,0,0)}\omega_k\ e^W \chi_{m+n-1} \|_{L^2}^2\lesssim \sum_{m=0}^M\sum_{n=0}^{M-m}\sum_{\ell=0}^{n-1}(\mathfrak C\lambda^s)^{2(n-\ell)}\lf(\frac{(m+n)!}{(m+\ell)!}\rg)^{-2(\sigma+\sigma_\ast)} \mathcal{D}^{(\gamma)}_{m,\ell; k}\n \\
\lesssim&\sum_{m=0}^M\sum_{\ell=0}^{M-m-1}\sum_{n=\ell+1}^{M-m}(\mathfrak C\lambda^s)^{2(n-\ell)} \mathcal{D}^{(\gamma)}_{m,\ell; k} 
\lesssim\lambda^{2s}\sum_{m+n=0}^{M-1} \mathcal{D}^{(\gamma)}_{m,n; k}.\label{sqr_upgrade}
\end{align} 
This is \eqref{SnBd1sum}. Hence the first step is concluded. 
\siming{(Double check the $\ell=0$ case!) For checking purpose only 
\begin{align*}
{\bf a}_{m,1;k}^2&\nu\|S_{m,1}^{(1,0,0)}\omega_k\ e^W\chi_{m}\|_{L^2}={\bf a}_{m,1;k}^2\nu\||k|^m (\pav +ikt)\omega_ke^W\chi_{m}\|_{L^2}^2\\
\leq& C\frac{\lambda^{2s}}{(m+1)^{2s}}{\bf a}_{m,0;k}^2\nu(\|\vyn\|_\infty^2\|\pa_y \mathring\omega_{m,0;k}e^W\chi_{m}\|_2^2+(\varphi^2t^2)\||k|\omega_{m,0;k}e^W\chi_{m}\|_2^2)\leq C\frac{\lambda^{2s}}{(m+1)^{2s}}\mathcal{D}_{m,0;k}^{(\gamma)}. 
\end{align*}
Hence the $\ell=0,1$ is consistent with the estimate above. 
}

\noindent
{\bf Step \# 2: Estimate of $S_{m,n}^{(1,0,1)}\omega_k$.}
These estimates are natural consequences of multiplying the bounds \eqref{S:est:1}, \eqref{SnBd1sum} by  $\nu|k|$, and applying the definitions \eqref{defn:D:1} $\nu|k|\mathcal{D}^{(\gamma)}_{m,n;k}= \mathcal{D}^{(\mu)}_{m,n;k}$. The explicit estimate is as follows
\ifx
\begin{align*} \n
 \bold{a}_{m,n} &\nu  \|  K_{m,n;k} e^W \chi_{m+n-1} \|_{L^2}  \\ \n
 \lesssim &  (m+n) \bold{a}_{m,n} \nu \lf\||k| \pa_y \omega_{m,n-1;k} e^W \chi_{m+n-1}\rg \|_{L^2}  \\ \n
 & + (m+n) \bold{a}_{m,n}\nu   \lf\|  K_{m,n-1;k} e^W \chi_{m+n-1} \rg\|_{L^2} \\ \n
  & +(m+n) B_{m,n}   \varphi^{m+n - 1}   (\varphi  t) \nu \lf\|   |k|^2 \omega_{m,n-1;k} e^W \chi_{m+n-1} \rg \|_{L^2} \\
  \lesssim & \lambda^s (m+n)^{-\sigma-\sigma_\ast}\sqrt{\mathcal{D}^{(\mu)}_{ m,n-1; k}}+ \lambda^s(m+ n)^{-\sigma-\sigma_\ast}\bold{a}_{m,n - 1;k} \nu   \lf\| K_{m,n-1;k} e^W \chi_{m+n-2}\rg \|_{L^2}.
\end{align*}
Hence, by summing up in $m+n$ and taking $\lambda $ to be small, we have that 
\begin{align}
\sum_{m+n=0}^M \bold{a}_{m,n}^2\nu^2 \|K_{m,n;k}e^W\chi_{m+n-1}\|_{L^2}^2\lesssim \lambda^{2s}\sum_{m+n=0}^{M-1}\mathcal{D}_{m,n;k}^{(\mu)}. 
\end{align}\fi
\begin{align}
\label{Kest}\bold{a}_{m,n}^2  \nu^2 \lf \| S_{m,n}^{(1,0,1)}\omega_k\  e^W \chi_{m+n-1} \rg\|_{L^2}^2  \lesssim &  \sum_{\ell=0}^{n-1}(\mathfrak C\lambda^s)^{2(n-\ell)}\lf(\frac{(m+n)!}{(m+\ell)!}\rg)^{-2(\sigma+\sigma_\ast)}  \mathcal{D}^{(\mu)}_{m,\ell; k},\\ 
\label{Kest_sum}
\sum_{m+n=0}^M\bold{a}_{m,n}^2 \nu^2 \lf \| S_{m,n}^{(1,0,1)}\omega_k\  e^W  \chi_{m+n -1} \rg\|_{L^2}^2  \lesssim & \lambda^{2s} \sum_{m+n=0}^{M-1}\mathcal{D}^{(\mu)}_{m,\ell;k}.
\end{align}
This concludes Step \# 2.

\noindent
{\bf Step \# 3: Estimate of $S^{(2,0,0)}_{m,n} \omega_k$. }
We proceed inductively. The base case is $n=2$, we estimate the term as follows:
\begin{align*}
 S_{m,2}^{(2,0,0)} \omega_k=&(m+2)^2\lf(\pav^2+2ikt\pav-k^2t^2\rg)|k|^m\omega_k. 
\end{align*}
Now we have that by the definition of ${\bf a}_{m,2}$ \eqref{a:weight}, the Gevrey coefficient bound \eqref{gevbd1212}, the $ {v_y}$-bound  \eqref{v_y_asmp},  $t\varphi\leq C$ \eqref{ineq:varphi}, and the definition of $\mathcal{D}_{m,n;k}^{(\al)},\ \mathcal{D}_{m,n;k}^{(\mu)}$ \eqref{defn:D:1}, the following estimate holds
\begin{align}\n
{\bf a}_{m,2}&\nu\lf\|S_{m,2}^{(2,0,0)} \omega_k\ e^W\chi_{m+1}\rg\|_2\\ \n 
  \lesssim& {\bf a}_{m,2}(m+2)^2 \nu \lf\|\na_k\pa_{y}|k|^{m}\omega_k e^W\chi_{m+1}\rg\|_2 
 +B_{m,2}(m+2)^2\varphi^2   \nu (t\varphi)\||k|\pa_y|k|^m\omega_k e^W\chi_{m+1}\|_2\\ 
&+B_{m,2}(m+2)^2 \varphi\nu(t\varphi )^2\||k|^{2+m}\omega_k e^W\chi_{m+1}\|_2
 \lesssim \lf(\mathcal{D}^{(\al)}_{m,0;k}+\mathcal{D}^{(\mu)}_{m,0;k}\rg)^{1/2}.\label{L_est_2}
\end{align}
For $n > 2$, we open up two $\Gamma_k$-derivatives to obtain the expression
 \begin{align}\n
 S_{m,n}^{(2,0,0)} \omega_k =&(m+n)^2q^{n-2}\lf( \pav^2+2ikt\pav-k^2t^2\rg)\Gamma_k^{n-2}|k|^m\omega_k\\
\n =&(m+n)^2\pav^2(q^{n-2}\Gamma_k^{n-2}|k|^m\omega_k)+(m+n)^2\lf[q^{n-2}, \pav^2\rg]\Gamma_k^{n-2}|k|^m\omega_k\\ \n
\n &+2{(m+n)^2}ikt\pav\lf(q^{n-2}\Gamma_k^{n-2}|k|^m\omega_k\rg)+2(m+n)^2ikt\lf[q^{n-2},\pav\rg]\Gamma_k^{n-2}|k|^m\omega_k \\
& -(m+n)^2|k|^2t^2\lf(q^{n-2}\Gamma_k^{n-2}|k|^m\omega_k\rg)\n \\
 =:&\sum_{j=1}^5\mathcal{T}_{3;j}.\label{LI1-5}
 \end{align}
Here, `$3$' stands for Step \# 3 and `$j$' denotes the `j'-th term.  
We start by estimating the $\mathcal{T}_{3;1}$ term in \eqref{LI1-5}. By further expanding the expression, we have that
\begin{align*}
\mathbf{a}_{m,n}&\nu \|\mathcal{T}_{3;1}e^W\chi_{m+n-1}\|_2
=\mathbf{a}_{m,n}\nu \lf\|(m+n)^2\lf(\frac{1}{v_y^2}\pa_{yy}\omega_{m,n-2;k}-\frac{v_{yy}}{v_y^3}\pa_y\mathring\omega_{m,n-2;k}\rg)e^W\chi_{m+n-1}\rg\|_2\\
\lesssim&(m+n)^2\bold{a}_{m,n}\nu \lf(\|\pa_{yy}\mathring\omega_{m,n-2;k}e^W\chi_{m+n-2}\|_2+ \|\pa_{y}\mathring\omega_{m,n-2;k}e^W\chi_{m+n-2}\|_2\rg).
\end{align*}Note that the definition of $\bold{a}_{m,n}$ \eqref{a:weight} and the estimate \eqref{gevbd1212} yields that 
\begin{align*}(m+n)^2\bold{a}_{m,n}=&(m+n)^2B_{m,n} \varphi^{m+n} \leq \frac{C\lambda^{2s}B_{m,n-2}\varphi^{m+n-2}}{(m+n)^{2(\sigma+\sigma_\ast)}}  
\leq  \frac{C\lambda^{2s}}{(m+n)^{2(\sigma+\sigma_\ast)}} {\bf a}_{m,n-2;k}.
\end{align*}
Combining this estimate and the ${v_y}$-bound \eqref{v_y_asmp}, and the definition of the dissipation $\mathcal{D}^{(\al)}_{m,n;k}$ \eqref{defn:D:1} yields that 
\begin{align}
\mathbf{a}_{m,n}\nu \|\mathcal{T}_{3;1}e^W\chi_{m+n-1}\|_2\leq &  \frac{C \lambda^{2s}\mathbf{a}_{m,n-2} }{(m+n)^{2(\sigma+\sigma_\ast)}}\nu \|\na_k\pa_{y}\mathring\omega_{m,n-2;k}e^W\chi_{m+n-2}\|_2
\leq  \frac{C\lambda^s\mathcal{D}^{(\al)}_{m,n-2;k}}{(m+n)^{2(\sigma+\sigma_\ast)}}. \label{LI1_est}
\end{align}

To estimate the $\mathcal{T}_{3;2}$ term, we apply the commutator relation \eqref{cm_pvv_qn} $(n-2\geq 2)$, \eqref{cm_pvv_q} $(n-2=1)$, and the definition of $\bold{a}_{m,n}$ \eqref{a:weight}, the bound \eqref{gevbd1212},  the bound of $\|v_y^{-1}\|_\infty,\, \|v_{yy}\|_\infty$, and the bound $\varphi t\leq C$ \eqref{ineq:varphi}, to derive that,
\ifx \begin{align}\n
-\frac{\mathcal{I}_{L;2}\mathbbm{1}_{n\neq 3}}{n(n-1)} =&2\frac{q'}{(v_y)^2}\lf(\frac{n-2}{q}\pa_y\omega_{k,n-2}\rg)+\frac{q''}{(v_y)^2}\lf(\frac{n-2}{q}\omega_{k,n-2}\rg)\\
\n &-\frac{q'}{(v_y)^2}\lf(\vyi\pa_y v_y\rg)\lf(\frac{n-2}{q}\omega_{k,n-2} \rg)-\frac{(q')^2}{(v_y)^2}\frac{(n-2)^2+n-2}{(n-2)(n-3)}\lf(\frac{(n-2)(n-3)}{q^2}\omega_{k,n-2}\rg)\\
\n =&\frac{2q'}{v_y^2}J_{k,n-2}+\frac{q''}{v_y^2}S_{k,n-2}-\frac{q'}{v_y^3}v_{yy}S_{k,n-2}-\frac{(q')^2}{v_y^2}\frac{n-1}{n-3}L_{k,n-2},\quad n\nq 3.
 \end{align}
 For $n=3$, the $\mathcal{I}_{L;2}$ term has a simpler form \siming{Double the proof of \eqref{cm_pvv_qn} for $n=1,2$! } 
 \begin{align}\n
-\frac{1}{6} \mathcal{I}_{L;2}\mathbbm{1}_{n=3}=&\frac{q''}{v_y^2}\Gamma_k \omega_k-\frac{q'v_{yy}}{v_y^3}\Gamma_k \omega_k+\frac{2q'}{v_y^2}\pa_y\Gamma_k \omega_k\\
 \n =&\lf(\frac{q''}{v_y^3}-\frac{q'v_{yy}}{v_y^4}-\frac{2q'v_{yy}}{v_y^4}\rg)\pa_y\omega_k +\lf(\frac{q''}{v_y^2}-\frac{q'v_{yy}}{v_y^3}\rg)ikt\omega_k+\frac{2q'}{v_y^2}ikt\pa_y\omega_k+\frac{2q'}{v_y^3}\pa_{yy}\omega_k.
 \end{align}\fi 
 \begin{align}
 \n \bold{a}_{m,n} &\nu \|\mathcal{T}_{3;2}e^W\chi_{m+n-1}\|_2\\ \n
  \lesssim& \mathbbm{1}_{n\geq 4}(m+n)^2\bold{a}_{m,n}\nu \sum_{\substack{a+b+c=2,\\ a\nq 0}}\|S_{m,n-2}^{(a,b,c)}\omega_{k} e^W\chi_{m+n-1}\|_2+\mathbbm{1}_{n=3}(m+3)^2\bold{a}_{m,3}\nu \|\Gamma_k\omega_{k} e^W\chi_{m+2}\|_{H^1} \\
 \lesssim & \mathbbm{1}_{n\geq 4}\frac{\lambda^{2s}{\bf a}_{m,n-2}\nu}{(m+n)^{2(\sigma+\sigma_\ast)}}  \sum_{\substack{a+b+c=2,\\ a\nq 0}}\|S_{m,n-2}^{(a,b,c)}\omega_{k} e^W\chi_{m+n-1}\|_2+\frac{\lambda^{2s}\mathbbm{1}_{n=3}}{(m+3)^{2\sigma+2\sigma_\ast}}\sqrt{\mathcal{D}^{(\al)}_{m,0;k}+\mathcal{D}^{(\mu)}_{m,0;k}}.  \label{LI2est}
 \end{align} 
\ifx For $n=3$, we have that by the definition of $\bold{a}_{m,n}$ \eqref{a:weight}, the bound of $\|v_y^{-1}\|_\infty,\, \|v_{yy}\|_\infty$, and the bound $\varphi t\leq C$ \eqref{ineq:varphi}, the following estimate holds
 \begin{align}
\n \bold{a}_{m,n}\mathbbm{1}_{n=3}&\nu \|\mathcal{I}_{L;2}e^W\chi_{m+n-1}\|_2\\
 \n \lesssim&{\bf a}_{m,3}\nu \|\pa_y\omega_k e^W\chi_{m+2} \|_2+B_{m,3}\varphi^2  \nu |k|^{m }(t\varphi)\||k|\omega_k e^W\chi_{m+2}\|_2\\
 \n &+B_{m,3}\varphi^2  \nu|k|^{m }(t\varphi)\||k|\pa_y\omega_k e^W\chi_{m+2}\|_2+{\bf a}_{m,3}\nu \|\pa_{yy}\omega_k e^W\chi_{m+2} \|_2\\
 \lesssim &\mathcal{D}^{(\al)}_{m,0;k}+\mathcal{D}^{(\mu)}_{m,0;k}.\label{LI2estn=3}
 \end{align}\fi
Next we recall the definition of $\bold{a}_{m,n}$ \eqref{a:weight}, the bound \eqref{gevbd1212},  the bound of $\|v_y^{-1}\|_\infty$ \eqref{v_y_asmp}, and the bound $\varphi t\leq C$ \eqref{ineq:varphi}, and then estimate the $\mathcal{T}_{3;3}$ as follows
\begin{align}\n 
& \bold{a}_{m,n} \nu \|\mathcal{T}_{3;3}e^W\chi_{m+n-1}\|_2 
    \lesssim (m+n)^2{ B}_{m,n}\varphi^{m+n}\nu  (\varphi t)\||k|\pa_{y}\mathring\omega_{m, n-2;k}e^W\chi_{m+n-2}\| _2\\ 
   & \lesssim \frac{\lambda^{2s}B_{m,n-2}\varphi^{m+n}}{(m+n)^{2(\sigma+\sigma_\ast) }}\nu \||k|\pa_{y}\mathring\omega_{m, n-2;k}e^W\chi_{m+n-2}\|_2 
     \lesssim \frac{\lambda^{2s}}{(m+n)^{2(\sigma+\sigma_\ast) }}\sqrt{\mathcal{D}^{(\al)}_{m,n-2;k}}.\label{LI3_est} 
\end{align} 
To estimate the $\mathcal{T}_{3;4}$ term, we invoke the commutator relation \eqref{cm_pv_qn} to get that
\begin{align*}
\mathcal{T}_{3;4}= &-2(m+n)^2ikt\frac{q'}{v_y}\lf(\frac{n-2}{q}|k|^m q^{n-2}\Gamma ^{n-2}\omega_k\rg)\\
=&-2\frac{(m+n)^2(n-2)}{m+n-2}i\frac{k}{|k|}t\frac{q'}{v_y}\ S_{m,n-2}^{(1,0,1)}\omega_k\ \mathbbm{1}_{n\geq 4}-2(m+3)^2i k  t \frac{q'}{v_y}(\pav +ikt )|k|^m\omega_k\ \mathbbm1_{n=3}.
\end{align*}
Then we have that by the bound of $\|{v_y^{-1}}\|_\infty, \, \|q'\|_\infty$, the estimate \eqref{Kest}, the definition of $\bold{a}_{m,n}$ \eqref{a:weight}, the bound \eqref{gevbd1212},  and the bound $\varphi t\leq C$ \eqref{ineq:varphi}, the following estimate holds
\begin{align}\n
 \bold{a}_{m,n}&\nu \|\mathcal{T}_{3;4}e^W\chi_{m+n-1}\|_2\\ \n
 \lesssim& (m+n)^2\bold{a}_{m,n}\nu  t\|S_{m,n-2}^{(1,0,1)}\omega_k e^W\chi_{m+n-1}\|_2\mathbbm{1}_{n\geq 4}+(m+3)^2\bold{a}_{m,3}\nu\|k\Gamma_k |k|^m \omega_k e^W \chi_{m}\|_{2}\mathbbm{1}_{n=3}\\
    &\lesssim\frac{\lambda^{2s}{\bf a}_{m,n-2}}{(m+n)^{2(\sigma+\sigma_\ast)}}\nu \|S_{m,n-2}^{(1,0,1)}\omega_k e^W\chi_{m+n-1}\|_2\mathbbm{1}_{n\geq 4}+ \frac{\lambda^{2s}}{(m+3)^{2(\sigma+\sigma_\ast)}}\sqrt{\mathcal{D}_{m,0;k}^{(\al)}+\mathcal{D}^{(\mu)}_{m,0;k}}\mathbbm{1}_{n=3} .\label{LI4_est}
\end{align}
\siming{Maybe this is the reason why the $\mu$ is needed.} The estimate of the $\mathcal{T}_{3;5}$ contribution is similar
\begin{align}
\n \bold{a}_{m,n}&\nu \|\mathcal{T}_{3;5}e^W\chi_{m+n-1}\|_2\lesssim (m+n)^2\bold{a}_{m,n}\nu  t^2\||k|^2\omega_{m,n-2;k}e^W\chi_{m+n-1}\|_2\\
   \lesssim&\frac{\lambda^{2s}{\bf a}_{m,n-2}}{(m+n)^{2(\sigma+\sigma_\ast)}}\nu \||k|^2\omega_{m,n-2;k}e^W\chi_{m+n-2}\|_2\lesssim \frac{\lambda^{2s}{\bf a}_{m,n-2}}{(m+n)^{2(\sigma+\sigma_\ast)}}(\mathcal{D}_{m,n-2;k}^{(\mu)})^{1/2}.\label{LI5_est}
\end{align}
Summarizing the decomposition \eqref{LI1-5}, the  estimates \eqref{L_est_2}, \eqref{LI1_est}, \eqref{LI2est}, \eqref{LI3_est}, \eqref{LI4_est} and \eqref{LI5_est}, we obtain that 
\begin{align}\n &
{\bf a}_{m,2}\nu \lf\|S_{m,2}^{(2,0,0)}\omega_k\ e^W\chi_{m+1}\rg\|_2 \lesssim \lambda^{2s} \sqrt{\mathcal{D}^{(\al)}_{m,0;k}+\mathcal{D}_{m,0;k}^{(\mu)}}; \\ 
&\n \bold{a}_{m,n}\nu \lf\|S_{m,n}^{(2,0,0)}\omega_k\ e^W\chi_{m+n-1}\rg\|_2\mathbbm{1}_{n>2}\\
\n &\lesssim \frac{\lambda^{2s}}{(m+n)^{2\sigma+2\sigma_\ast}}\lf(\mathcal{D}_{m,n-2;k}^{(\al)}+\mathcal{D}_{m,n-2;k}^{(\mu)}\rg)^{1/2}\mathbbm{1}_{n>2}+ \frac{\lambda^{2s} }{(m+3)^{2\sigma+2\sigma_\ast}}\lf(\mathcal{D}_{m,0;k}^{(\al)}+\mathcal{D}_{m,0;k}^{(\mu)}\rg)^{1/2}\mathbbm{1}_{n=3}\\
&\quad+\frac{\lambda^{2s}{\bf a}_{m,n-2}}{(m+n)^{2\sigma+2\sigma_\ast}}\nu \sum_{\substack{a+b+c=2\\ a\nq 0}}\|S_{m,n-2}^{(a,b,c)}\omega_k e^W\chi_{m+n-3}\|_2 \mathbbm{1}_{n>2}.\label{Itrtn}
\end{align}
 \ifx
{\bf Previous:} We proceed inductively. The base case of $n = 1$ is trivial as $L_{k,1} = 0$. We fix $n \ge 2$, and assume that \eqref{LnBound} is true up to $n - 1$. A long computation then gives the following commutator identity:
\begin{align} \nonumber
L_{n, k} := & n(n-1) q^{n-2} \Gamma_k^{n} \omega_k \\  \nonumber
= & n(n-1)  q^{n-2} (\pa_y + i k t)^2 \Gamma_k^{n-2} \omega_k \\  \nonumber
= & n(n-1) \pa_y^2 \omega_{k,n-2} + 2 n(n-1) (ikt) \pa_y \omega_{k,n-2} + (ikt)^2 n(n-1) \omega_{k,n-2} \\  \nonumber
& - q' n(n-1) (n-2) (ikt) q^{n-3} \Gamma_k^{n-2} \omega_k - n(n-1)(n-2) q'' q^{n-3} \Gamma_k^{n-2} \omega_k \\  \nonumber
& - n(n-1)(n-2) q' \pa_y \{ q^{n-3} \Gamma_k^{n-2} \omega_k \} - n(n-1)(n-2) q' q^{n-3} \Gamma_k^{n-1} \omega_k \\  \nonumber
= & n(n-1) \pa_y^2 \omega_{k,n-2} + 2 n(n-1) (ikt) \pa_y \omega_{k,n-2} + (ikt)^2 n(n-1) \omega_{k,n-2} \\  \nonumber
& - q' n(n-1) (n-2) (ikt) q^{n-3} \Gamma_k^{n-2} \omega_k - n(n-1)(n-2) q'' q^{n-3} \Gamma_k^{n-2} \omega_k \\  \nonumber
& - n(n-1)(n-2)(n-3) |q'|^2 q^{n-4} \Gamma_k^{n-2} \omega_k - n(n-1) (n-2) q' q^{n-3} \Gamma_k^{n-1} \omega_k \\  \nonumber
&+ (ikt) n(n-1) (n-2) q' q^{n-3} \Gamma_k^{n-2} \omega_k - n q'L_{n-1, k} \\  \nonumber
= & n(n-1) \pa_y^2 \omega_{k,n-2} + 2 n(n-1) (ikt) \pa_y \omega_{k,n-2} + (ikt)^2 n(n-1) \omega_{k,n-2} \\  \nonumber
& - q' n(n-1) (n-2) (ikt) q^{n-3} \Gamma_k^{n-2} \omega_k - n(n-1)(n-2) q'' q^{n-3} \Gamma_k^{n-2} \omega_k \\ \n
& - n(n-1) |q'|^2 L_{n-2} - 2n q' L_{n-1} + (ikt) n(n-1) (n-2) q' q^{n-3} \Gamma_k^{n-2} \omega_k \\ \n
=:&  \sum_{i = 1}^8 L_{n, k}^{(i)}.
\end{align}
The primary term above is the first term, $L^{(1)}_{n, k}$, which we estimate here
\begin{align*}
 B_{m,n} \varphi^{m+n} |k|^{m - \frac13} \nu^{\frac56} \| L^{(1)}_{k,n} e^W e^{\delta_E \nu^{\frac13}t}\chi_n \|_{L^2}   \lesssim & B_{n,  m} n^2  \varphi^{m+n} |k|^m \nu^{\frac56} \|    \pa_y^2 \omega_{k,n-2}  e^W e^{\delta_E \nu^{\frac13}t}\chi_n \|_{L^2} \\
 \lesssim & B_{n-2, m}  \varphi^{m+n} |k|^m \nu^{\frac56} \|  \pa_y^2 \omega_{k,n-2}  e^W e^{\delta_E \nu^{\frac13}t}\chi_n \|_{L^2} \\
 \lesssim & \mathcal{D}^{(\alpha)}_{n-2,m, k},
\end{align*} 
where we have invoked the bound \eqref{gevbd1212}. The bound on $L_n^{(5)}$ is trivial. The terms with $(ikt)$ factors, $L_n^{(i)}, i = 2,3,4,8$ are estimated now. First,
\begin{align*}
 &B_{n, m} \varphi^{m+n} |k|^{m - \frac13} \nu^{\frac56} \| L^{(2)}_{k,n} e^W e^{\delta_E \nu^{\frac13}t}\chi_n \|_{L^2}  \\
 \lesssim &  B_{m,n} n^2 \varphi^{m+n} |k|^{m+1 - \frac13} |t| \nu^{\frac56} \| \p_y \omega_{k,n-2}  e^W e^{\delta_E \nu^{\frac13}t}\chi_n \|_{L^2}  \\
 \lesssim & B_{n-2, m} \varphi^{m+n - 1} |k|^m \| \nu^{\frac56} |k|^{\frac23} \pa_y \omega_{k,n-2} e^W e^{\delta_E \nu^{\frac13}t} \chi_n \|_{L^2}  \lesssim \mathcal{D}^{(\alpha)}_{n-2, m, k}(t). 
\end{align*}
Next, we estimate the $L_{n,k}^{(3)}$ term, 
\begin{align*}
&B_{m,n} \varphi^{m+n} |k|^{m - \frac13} \nu^{\frac56} e^{\delta_E \nu^{\frac13}t} \| L_{n,k}^{(3)} e^W \chi_n \|_{L^2} \\
 \lesssim & B_{m,n} n^2 \varphi^{m+n-2} (\varphi^2 t^2) |k|^{m} e^{\delta_E \nu^{\frac13}t} \| \nu^{\frac56}  |k|^{\frac53}  \omega_{n-2,k} e^W \chi_n \|_{L^2} \\
  \lesssim & B_{n-2, m} \varphi^{m+n-2} |k|^{m} e^{\delta_E \nu^{\frac13}t} \| \nu^{\frac56}  |k|^{\frac53}  \omega_{n-2,k} e^W \chi_n \|_{L^2} \lesssim \mathcal{D}_{n-2,m,k}^{(\mu)}(t)
\end{align*}
Next, we estimate the $L_{n,k}^{(4)}$ term, for which we will need to invoke our already established bounds on $S_{m,n;k}$,
\begin{align*}
&B_{m,n} \varphi^{m+n} |k|^{m - \frac13} \nu^{\frac56} e^{\delta_E \nu^{\frac13}t} \| L_{n,k}^{(4)} e^W \chi_n \|_{L^2} \\
 \lesssim & B_{m,n} n^2 \varphi^{m+n} t |k|^{m + \frac23} \nu^{\frac56} e^{\delta_E \nu^{\frac13}t} \| \frac{n-2}{q} \omega_{k,n-2} e^W \chi_n \|_{L^2} \\
  \lesssim & B_{n-2, m} \varphi^{m+n-2}  |k|^{m + \frac23} \nu^{\frac56} e^{\delta_E \nu^{\frac13}t} \| S_{k,n-2} e^W \chi_n \|_{L^2} \lesssim \mathcal{D}^{(\mu)}_{n-2, m, k}(t). 
\end{align*}
Next, we estimate the $L_{n,k}^{(6)}$ term, for which we use the inductive hypothesis on $L_{m,n;k}$:
\begin{align*}
&B_{m,n} \varphi^{m+n} |k|^{m - \frac13} \nu^{\frac56} e^{\delta_E \nu^{\frac13}t} \| L_{n,k}^{(6)} e^W \chi_n \|_{L^2} \\
 \lesssim &B_{m,n} n^2 \varphi^{m+n} |k|^{m - \frac13} \nu^{\frac56} e^{\delta_E \nu^{\frac13}t} \|  L_{k,n-2} e^W \chi_n \|_{L^2} \\
\lesssim & B_{n-2, m} \varphi^{m+n} |k|^{m - \frac13} \nu^{\frac56} e^{\delta_E \nu^{\frac13}t} \|  L_{k,n-2} e^W \chi_n \|_{L^2} \lesssim \mathcal{D}^{(\alpha)}_{n-2,m,k} + \mathcal{D}^{(\mu)}_{n-2,m,k} + \mathcal{D}^{(\gamma)}_{n-2,m,k}.
\end{align*}
The remaining terms are estimated in a nearly identical fashion to what has already been done.
\fi

\noindent
{\bf Step 4: Iteration relation of $S_{m,n}^{(1,1,0)}\omega_k$.} First of all, we recall that $S_{m,1}^{(1,1,0)}\omega_k=S_{m,0}^{(1,1,0)}\omega_k=0$. For $n=2$, we recall the commutator relation \eqref{cm_pv_qn} and derive the following 
\begin{align}\n
S_{m,2}^{(1,1,0)}\omega_k=& \frac{(m+2)}{q}\pa_y\lf( q^2\Gamma_k^2|k|^m\omega_k\rg)=(m+2){q}^{-1}\pa_y\lf( q[q,\Gamma_k]\Gamma_k|k|^m\omega_k+q(\pav+ikt)(q\Gamma_k|k|^m\omega_{k})\rg)\\
\n =&(m+2){q}^{-1}\pa_y\lf(-(q'\vyn q)\Gamma_k\omega_k+ (q\vyn)\pa_{y}(q\Gamma_k|k|^m\omega_k)+(m+2)qikt (q\Gamma_k|k|^m\omega_k)\rg)\\
\n=&(m+2)\lf(-q^{-1}\pa_y(q'\vyn q)\Gamma_k|k|^m\omega_k-q'\vyn \pa_{y}(\pav+ikt)|k|^m\omega_k\rg)\\
\n &+(m+2)\lf(\vyn\pa_{yy}(\mathring{\omega}_{m,1;k})+q^{-1}\pa_y(q\vyn)(q'\Gamma_k|k|^m\omega_k+q\pa_y(\pav +ikt)|k|^m\omega_{k})\rg) \\
&+(m+2)\lf( q'ikt (q^{-1}\mathring{\omega}_{m,1;k})+ ikt\pa_y\mathring{\omega}_{m,1;k}\rg)=:\sum_{j=1}^3\mathcal{T}_{4;j}.\label{IJ_crc1-3}
\end{align}
We invoke the bound of $\|{v_y^{-1}}\|_\infty,\, \| v_{yy}\|_\infty,  \, \|q'\|_\infty$, the estimate \eqref{Kest}, the definitions of $\bold{a}_{m,n}$ \eqref{a:weight} and $\mathcal{D}$ \eqref{defn:D:1}, and the bounds \eqref{ineq:varphi},  \eqref{gevbd1212}, \eqref{S:est:1}, to obtain the following estimates
\begin{align*}\n 
{\bf a}_{m,2}&\nu \|\mathcal{T}_{4;1} e^W\chi_{m+1}\|_2\\
\n \lesssim & 
(m+2){\bf a}_{m,2}\nu \bigg(\|\pa_y(q'\vyn q)\|_\infty \|S_{m,1}^{(1,0,0)}\omega_k e^W\chi_{m+1}\|_2+\|q'v_y^{-3}v_{yy}\|_\infty\|\pa_y |k|^m\omega_ke^W\chi_{m+1}\|_2\\
&\quad\quad\quad\quad\quad\quad\quad +\|q'v_y^{-2}\|_\infty\|\pa_{yy}|k|^m\omega_ke^W\chi_{m+1}\|_2+t \|q'v_y^{-1}\|_\infty\||k|\pa_y|k|^m\omega_k e^W\chi_{m+1}\|_2\bigg) \\
\lesssim &\frac{\lambda^{s}{\bf a}_{m,1}}{(m+2)^{\sigma+\sigma_\ast}}\nu \|S_{m,1}^{(1,0,0)}\omega_k e^W\chi_{m+1}\|_2+\myr{\frac{\lambda^{2s}\sqrt{\mathcal{D}^{(\al)}_{m,0;k}}}{(m+2)^{2\sigma+2\sigma_\ast}}}+\frac{\lambda^{2s}B_{m,0}\varphi^2  (t\varphi) }{(m+2)^{2\sigma+2\sigma_\ast}}\nu \||k|\pa_y\mathring\omega_{m,0;k}e^W\chi_{m+1}\|_2\\
\lesssim &\myr{\frac{\lambda^s}{(m+2)^{\sigma+\sigma_\ast}}}(\mathcal{D}^{(\al)}_{m,0;k})^{1/2}.
\end{align*}Application of similar arguments yields the estimates for the other two terms in \eqref{IJ_crc1-3}, i.e., 
\begin{align*}
&\n {\bf a}_{m,2}\nu \|\mathcal{T}_{4;2} e^W\chi_{m+1}\|_2\\ 
\n &  \lesssim (m+2)
{\bf a}_{m,2}\nu \bigg(\|\pa_{yy}\mathring{\omega}_{m,1;k}e^W\chi_{m+1}\|_2+\|S_{m,1}^{(1,0,0)}\omega_k \ e^W\chi_{m+1}\|_2 + \|\pa_y|k|^m\omega_k e^W\chi_{m+1}\|_2\\
&\qquad\qquad\qquad\qquad+\|\pa_y(q\vyn)\|_\infty\|\vyn\|_\infty\|\pa_{yy}|k|^m\omega_k e^W\chi_{m+1}\|_2+ t\||k|\pa_{y}|k|^m\omega_k e^W\chi_{m+1}\|_2\bigg) \\ 
\n &\lesssim(\mathcal{D}^{(\al)}_{m,1;k}+\mathcal{D}^{(\al)}_{m,0;k}+\mathcal{D}^{(\mu)}_{m,0;k})^{1/2};
\end{align*}\begin{align*}
\n {\bf a}_{m,2}\nu \|\mathcal{T}_{4;3} e^W\chi_{m+1}\|_2 \lesssim&
(m+2){\bf a}_{m,2}\nu \bigg(t \||k|S_{m,1}^{(1,0,0)}\omega_k\ e^W\chi_{m+1}\|_2+t\||k|\pa_y\mathring{\omega}_{m,1;k}e^W\chi_{m+1}\|_2\bigg) \\
\n &\lesssim(\mathcal{D}^{(\mu)}_{m,1;k}+\mathcal{D}^{(\al)}_{m,0;k})^{1/2}.
\end{align*}
Combining the estimates we developed and the decomposition \eqref{IJ_crc1-3}, we obtain that 
\begin{align}
{\bf a}_{m,2}^2\nu^{2}\|S_{m,2}^{(1,1,0)}\omega_k\ e^W\chi_{m+1}\|_2^2\lesssim\sum_{\ell=0}^1\sum_{\iota\in\{\al,\mu\}}\mathcal D^{(\iota)}_{m,\ell;k}.\label{J_2} 
\end{align}
For $n>2$, we have that    
\begin{align}
S_{m,n}^{(1,1,0)}\omega_k=&\frac{m+n}{q}\pa_y \lf(q^n\Gamma_k^n|k|^m\omega_k\rg)=\frac{m+n}{q}\pa_y\lf( q[q^{n-1},\Gamma_k]\Gamma_k^{n-1}|k|^m\omega_k+q\Gamma_k(q^{n-1}\Gamma_k^{n-1}|k|^m\omega_{k})\rg).\n
\end{align}
Hence we recall the commutator relation \eqref{cm_pv_qn} and obtain that 
\begin{align}
\n S_{m,n;k}^{(1,1,0)}\omega_k=&\frac{m+n}{q}\pa_y\lf(-\frac{q'}{v_y}{(n-1)} q^{n-1}\Gamma_k^{n-1}|k|^m\omega_k+q\lf(\pav+ikt\rg)(q^{n-1}\Gamma_k^{n-1}|k|^m\omega_k)\rg)\\
\n =&-\frac{(m+n)(n-1)}{(m+n-1)^2}(q \pa_y\lf( {q'}\vyn\rg)) \ \underbrace{\frac{(m+n-1)^2}{q^2}\mathring\omega_{m,n-1;k}}_{=S_{m,n-1}^{(2,0,0)}\omega_k}\\ \n
&-\frac{(m+n)(n-1)q'}{(m+n-1)v_y} \ \underbrace{\frac{(m+n-1)}{q}\pa_y\mathring\omega_{m,n-1;k}}_{=S_{m,n-1}^{(1,1,0)}\omega_k}\\
\n &+\frac{m+n}{m+n-1}\pa_y\lf(q\vyn\rg)\underbrace{\frac{m+n-1}{q}\pa_y\mathring\omega_{m,n-1;k}}_{=S_{m,n-1}^{(1,1,0)}\omega_k}+(m+n)v_y^{-1}\pa_{yy}\mathring\omega_{m,n-1;k}\\
 & +({m+n}) ikt\pa_{y}\mathring\omega_{m,n-1;k}+\frac{m+n}{m+n-1}q'i\frac{k}{|k|}t\underbrace{\frac{(m+n-1)}{q}|k|\mathring\omega_{m,n-1;k}}_{=S_{m,n-1}^{(1,0,1)}\omega_k}.\label{IJ_1-6} 
\end{align} The treatment of these terms are similar to the treatment in Step $1$. We invoke the bound of $\|{v_y^{-1}}\|_\infty,\, \|q v_{yy}\|_\infty,  \, \|q'\|_\infty$, the estimate \eqref{Kest}, the definition of $\bold{a}_{m,n}$ \eqref{a:weight}, and the bound \eqref{gevbd1212}, to obtain the following estimates for $n\geq 3$, 
\ifx
\begin{align}
\n \bold{a}_{m,n}\nu \|\mathcal{I}_{J;1}e^W\chi_{n-1}\|_2\lesssim &
\bold{a}_{m,n}\nu \|q\pa_y(q'\vyn)\|_\infty\|L_{k,n-1}e^W\chi_{n-1}\|_2\\
\lesssim & n^{-\sigma-\sigma_\ast}{\bf a}_{m,n-1;k}\nu \|L_{k,n-1}e^W\chi_{n-2}\|_2;\label{IJ_1}\\
\n \bold{a}_{m,n}\nu \|\mathcal{I}_{J;2}e^W\chi_{n-1}\|_2\lesssim &
 n \bold{a}_{m,n}\nu \|q'\vyn\|_\infty\|J_{k,n-1}e^W\chi_{n-1}\|_2\\
\lesssim & n^{-\sigma-\sigma_\ast}{\bf a}_{m,n-1;k}\nu \|L_{k,n-1}e^W\chi_{n-2}\|_2;\label{IJ_2}\\
\n \bold{a}_{m,n}\nu \|\mathcal{I}_{J;3}e^W\chi_{n-1}\|_2\lesssim &
\bold{a}_{m,n}\nu \|\pa_y(q'\vyn)\|_\infty\|J_{k,n-1}e^W\chi_{n-1}\|_2\\
\lesssim & n^{-\sigma-\sigma_\ast}{\bf a}_{m,n-1;k}\nu \|J_{k,n-1}e^W\chi_{n-2}\|_2\label{IJ_3}. 
\end{align}
To estimate the remaining three terms in \eqref{IJ_1-6}, we further recall the the bound $\varphi t\leq C$ \eqref{ineq:varphi} and the definition of the dissipation terms $\mathcal{D}$ \eqref{defn:D:1}, to obtain the following
\begin{align}
\n \bold{a}_{m,n}\nu \|\mathcal{I}_{J;4}e^W\chi_{n-1}\|_2\lesssim &
n\bold{a}_{m,n}\nu \|\vyn\|_\infty\|\pa_{yy}\omega_{k,n-1}e^W\chi_{n-1}\|_2\\
\lesssim & n^{-\sigma-\sigma_\ast}{\bf a}_{m,n-1;k}\mathcal{D}_{m,n-1;k}^{(\al)};\label{IJ_4}\\
\n \bold{a}_{m,n}\nu \|\mathcal{I}_{J;5}e^W\chi_{n-1}\|_2\lesssim &
n B_{m,n}|k|^{m} \varphi^{m+n-1} \nu (t \varphi)\||k|\pa_y\omega_{k,n-1}e^W\chi_{n-1}\|_2\\
\lesssim & n^{-\sigma-\sigma_\ast}\nu \||k|\pa_y\omega_{k,n-1}e^W\chi_{n-2}\|_2\lesssim n^{-\sigma-\sigma_\ast}\mathcal{D}_{m,n-1;k}^{(\al)};\label{IJ_5}\\
\n \bold{a}_{m,n}\nu \|\mathcal{I}_{J;6}e^W\chi_{n-1}\|_2\lesssim &
{B}_{m,n;k}\varphi^{m+n-1} \nu^{\frac{5}{6}}|k|^{m+\frac{2}{3}}(t\varphi)\|q'\|_\infty\|S_{k,n-1}e^W\chi_{n-1}\|_2\\
\lesssim & n^{-\sigma-\sigma_\ast}{\bf a}_{m,n-1;k}\nu \||k|S_{k,n-1}e^W\chi_{n-2}\|_2\label{IJ_6}. 
\end{align}
Combining the estimates \eqref{J_2}, \eqref{IJ_1}, \eqref{IJ_2}, \eqref{IJ_3}, \eqref{IJ_4}, \eqref{IJ_5}, \eqref{IJ_6}, and the decomposition \eqref{IJ_1-6},  we obtain the following recurrence relation for $n\geq 3$:
$\n {\bf a}_{m,2}&\nu \|J_{k,2}e^W\chi_{m+1}\|_2\lesssim\sum_{\ell=0}^1\sum_{\iota\in\{\al,\mu\}}\sqrt{\mathcal D^{(\iota)}_{m,\ell;k}};\\ $
\fi
\begin{align}
\n &\bold{a}_{m,n}\nu \|S_{m,n}^{(1,1,0)}\omega_k\ e^W\chi_{m+n-1}\|_2\\
& \lesssim \frac{\lambda^{s}}{(m+n)^{\sigma+\sigma_\ast}}(\mathcal{D}^{(\al)}_{m,n-1;k}+\mathcal{D}^{(\mu)}_{m,n-1;k})^{1/2}+ \frac{\lambda^{s}{\bf a}_{m,n-1}}{(m+n)^{\sigma+\sigma_\ast}}\nu \sum_{a+b=1}\|S_{m,n-1}^{(1+a,b,0)}\omega_k e^W\chi_{m+n-2}\|_2.\label{Itrtn2}
\end{align}

\noindent 
{\bf Step 5: Iteration. }Application of \eqref{Kest}, \eqref{Kest_sum}, \eqref{Itrtn}, \eqref{J_2} and \eqref{Itrtn2} gives us \eqref{S:est:2}. Now an argument as in \eqref{sqr_upgrade} yields \eqref{SnBd2sum}. \siming{Check!?}
\ifx {\bf Previous: }We compute the following identity: 
\begin{align} \nonumber
J_{n,k} := & n \frac{1}{q} \pa_y \mathring{\omega}_{m,n;k} \\ \nonumber
= & n \frac{1}{q} \pa_y \{ q^n \Gamma_k^n \omega_k \} = n \frac{1}{q} \pa_y \{ q q^{n-1} \Gamma_k^n \omega_k \} \\ \nonumber
= & n \pa_y \{ q^{n-1} \Gamma_k^n \omega_k \} + n  q' q^{n-2} \Gamma_k^n \omega_k \\ \nonumber
= & n \pa_y \{ q^{n-1} (\pa_y + i k t) \Gamma_k^{n-1} \omega_k \} + n  q' q^{n-2} \Gamma_k^n \omega_k \\ \nonumber
= & n \pa_y^2\{ q^{n-1} \Gamma_k^{n-1} \omega_k \} - n(n-1) \pa_y \{ q' q^{n-2} \Gamma_k^{n-1} \omega_k \} + i k t n \pa_y \omega_{k,n-1} + n q' q^{n-2} \Gamma_k^n \omega_k \\ \nonumber
= & n \pa_y^2 \omega_{k,n-1} - n(n-1) q'' q^{n-2} \Gamma_k^{n-1} \omega_k - n(n-1)(n-2) |q'|^2 q^{n-3} \Gamma_k^{n-1} \omega_k \\ \nonumber
& - n(n-1) q' q^{n-2} \Gamma_k^n \omega_k + n(n-1) q' q^{n-2} (ikt) \Gamma_k^{n-1} \omega_k + i k t n \pa_y \omega_{k,n-1} + n q' q^{n-2} \Gamma_k^n \omega_k \\ \nonumber
= &  n \pa_y^2 \omega_{k,n-1} - n|q'|^2 L_{n-1} - q' L_n + n(n-1) q' q^{n-2} (ikt) \Gamma_k^{n-1} \omega_k \\ \n
&   + i k t n \pa_y \omega_{k,n-1} + \frac{1}{n-1} q' L_{n}- n(n-1) q'' q^{n-2} \Gamma_k^{n-1} \omega_k \\ \n
= & n \pa_y^2 \omega_{k,n-1}- n |q'|^2 L_{n-1,k}  + \Big( \frac{2-n}{n-1} \Big) |q'|^2 L_{n,k} + q' (ikt) \frac{n(n-1)}{q} \omega_{k,n-1} \\ \label{Jn:exp}
& + (ikt) n \pa_y \omega_{k,n-1}-\frac{ n(n-1)}{q} q''  \omega_{k,n-1} = \sum_{i = 1}^6 J_{n,k}^{(i)}.
\end{align}
We estimate each of these terms individually. The primary term is the first, which we bound as follows:
\begin{align*}
&B_{m,n} \varphi^{m+n} |k|^{m-\frac13} \nu^{\frac56} \| J_{n,k}^{(1)} e^W e^{\delta_E \nu^{\frac13}t} \chi_n\|_{L^2} \\
 \lesssim & B_{n-1, m} \varphi^{m+n} |k|^{m-\frac13} \nu^{\frac56} \|  \pa_y^2 \omega_{k,n-1} e^W e^{\delta_E \nu^{\frac13}t} \chi_n\|_{L^2} \lesssim \mathcal{D}^{(\alpha)}_{n-1, m, k}(t). 
\end{align*}
The $L_{n-1,k}, L_{n,k}$ contributions are easily bounded upon invoking \eqref{LnBound}. For the contribution $L_{n,k}^{(4)}$, we need to invoke our $S_{n,k}$ bound, \eqref{SnBd2} via, 
\begin{align*}
&B_{m,n} \varphi^{m+n} |k|^{m-\frac13} \nu^{\frac56} e^{\delta_E \nu^{\frac13}t}  \| J^{(4)}_{k,n} e^W \chi_n\|_{L^2} \\
\lesssim & B_{m,n} n \varphi^{m+n-1} (\varphi t) |k|^{m+\frac13} \nu^{\frac56} e^{\delta_E \nu^{\frac13}t} \|   \frac{n-1}{q} \omega_{k,n-1} e^W  \chi_n\|_{L^2} \\
\lesssim & B_{n-1, m}  \varphi^{m+n-1}  |k|^{m+\frac13} \nu^{\frac56}  e^{\delta_E \nu^{\frac13}t} \|  S_{n-1,k} e^W \chi_n\|_{L^2} \lesssim  \mathcal{D}^{(\mu)}_{n-1, m, k}(t).
\end{align*} 
For the contribution of $L^{(5)}_{n,k}$, we have 
\begin{align*}
&B_{m,n} \varphi^{m+n} |k|^{m-\frac13} \nu^{\frac56} e^{\delta_E \nu^{\frac13}t}  \| L^{(5)}_{k,n} e^W \chi_n\|_{L^2} \\
\lesssim & B_{n-1, m} \varphi^{m+n-1} |k|^{m+\frac23} \nu^{\frac56} e^{\delta_E \nu^{\frac13}t}  \| \nu^{\frac56} \pa_y \omega_{k,n-1} e^W \chi_n\|_{L^2} \lesssim \mathcal{D}^{(\alpha)}_{m,n-1;k}.
\end{align*}
The final term $L^{(6)}_{n,k}$ follows in a straightforward manner: it is lower order and can be controlled easily using \eqref{S:est:1}. The proof is complete. \fi
\end{proof}

\subsection{Commutator Estimates, $\bold{C}^{(m,n)}_{q,k}$}

We now obtain commutator bounds on the various contributions of $\bold{C}^{(m,n)}_{q,k}$ which appear in our $\gamma/\mu/\alpha$ energy scheme. First, we have 
\begin{lemma} \label{lem:tm:1} The following bound is valid:
\begin{align} \label{CommCnqbd}
\bold{a}_{m,n}^2 \lf|\mathrm{Re} \lf\langle \mathring{\omega}_{m,n;k}, \bold{C}^{(m,n)}_{q,k} e^{2W}  \chi_{m+n}^2\rg \rangle\rg| \leq \frac{1}{B}\mathcal{D}^{(\gamma)}_{m,n;k}+CB\sum_{\ell=0}^{n-1}(\mathfrak C\lambda^{s})^{2n-2\ell} \lf(\frac{(m+n)!}{(m+\ell)!}\rg)^{-2\sigma-2\sigma_\ast}  \mathcal{D}^{(\gamma)}_{m,\ell;k}.
\end{align}
Here $B>1$ is an arbitrary constant and $C$ is universal constant. 
\ifx 
Moreover, for arbitrary $1\leq M\in \mathbb{N},$ and $\lambda$ chosen small enough compared to constants that only depend on $s$, the following estimate holds
\begin{align}\label{CommCnqbdsum}
 \sum_{m+n=0}^M\bold{a}_{m,n}^2   \lf|\mathrm{Re} \lf \langle   \mathring{\omega}_{m,n;k},  \bold{C}^{(m,n)}_{q,k} e^{2W} \chi_{m+n}^2 \rg\rangle\rg|  \leq \lambda\sum_{m+n=0}^{M}  \mathcal{D}^{(\gamma)}_{m,n;k}.
\end{align}
\fi 
\end{lemma}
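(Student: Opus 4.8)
\textbf{Proof strategy for Lemma \ref{lem:tm:1}.}
The plan is to expand $\bold{C}^{(m,n)}_{q,k} = \sum_{i=1}^3 \mathbf{C}_{q,k,i}^{(m,n)}$ according to the definition \eqref{Cnq} and estimate the contribution of each of the three terms to the inner product $\bold{a}_{m,n}^2\mathrm{Re}\langle \mathring{\omega}_{m,n;k}, \mathbf{C}_{q,k,i}^{(m,n)} e^{2W}\chi_{m+n}^2\rangle$ separately. The crucial observation is that each $\mathbf{C}_{q,k,i}^{(m,n)}$ carries a factor $\nu$ together with one or two factors of $\tfrac{n}{q}$ (and derivatives or powers of $\mathring{\omega}_{m,n;k}$), so that after recognizing $\tfrac{n}{q}\mathring{\omega}_{m,n;k}$, $\tfrac{n}{q}\pa_y\mathring{\omega}_{m,n;k}$, $\tfrac{n^2}{q^2}\mathring{\omega}_{m,n;k}$ as (components of) the building blocks $S^{(1,0,0)}_{m,n}\omega_k$, $S^{(1,1,0)}_{m,n}\omega_k$, $S^{(2,0,0)}_{m,n}\omega_k$ from \eqref{S_vec}, we can invoke the bounds \eqref{S:est:all} of Lemma \ref{lem:SLJ}.

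Concretely: for the first term $\mathbf{C}_{q,k,1}^{(m,n)} = -2\nu |k|^m q' \tfrac{n}{q}\pa_y\mathring{\omega}_{m,n;k}$, I would pair it against $\mathring{\omega}_{m,n;k}$, use $\|q'\|_\infty \lesssim 1$, and Cauchy--Schwarz to bound the contribution by
$C\bold{a}_{m,n}^2 \nu^{1/2}\|\sqrt{\nu}\,S^{(1,1,0)}_{m,n}\omega_k\, e^W\chi_{m+n-1}\|_{L^2}\cdot \bold{a}_{m,n}\nu^{1/2}\|\mathring{\omega}_{m,n;k}e^W\chi_{m+n}\|_{L^2}$; the second factor is $\lesssim (\mathcal{D}^{(\gamma)}_{m,n;k})^{1/2}$ by \eqref{defn:D:1}, and Young's inequality (with the free parameter $B$) splits off $\tfrac1B\mathcal{D}^{(\gamma)}_{m,n;k}$ plus $CB$ times the $S$-bound, which by \eqref{S:est:2} (the $\nu^2\|S^{(2)}\|^2$ estimate, used at the $\nu\|S^{(1,1,0)}\|^2$ level — i.e. one uses it after dividing by $\nu$, or more directly one notes $\nu\|S^{(1,1,0)}_{m,n}\omega_k e^W\chi_{m+n-1}\|^2 \lesssim \nu^{-1}\cdot\nu^2\|S^{(1,1,0)}\|^2$ is not quite what is wanted, so instead I would pair the whole thing differently) — let me restate. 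The cleaner route: write $\mathbf{C}_{q,k,1}^{(m,n)}$'s contribution as $-2\nu\,\mathrm{Re}\langle q'\tfrac{n}{q}\pa_y\mathring\omega_{m,n;k},\mathring\omega_{m,n;k}e^{2W}\chi_{m+n}^2\rangle$ and bound it by $C\nu\|\tfrac{n}{q}\mathring\omega_{m,n;k}e^W\chi_{m+n}\|_{L^2}\|\pa_y\mathring\omega_{m,n;k}e^W\chi_{m+n}\|_{L^2}$ after integrating by parts to move one derivative — actually simplest is to not integrate by parts and directly estimate $\nu\|\pa_y\mathring\omega_{m,n;k}e^W\chi_{m+n}\|_{L^2}\cdot\|\tfrac{n}{q}\mathring\omega_{m,n;k}e^W\chi_{m+n-1}\|_{L^2}$ (using \eqref{chi:prop:3} to replace $\chi_{m+n}$ by $\chi_{m+n-1}$ on the $\tfrac nq$ factor up to harmless constants), which is $\lesssim \nu^{1/2}(\mathcal{D}^{(\gamma)}_{m,n;k})^{1/2}\cdot\nu^{-1/2}\cdot\nu^{1/2}\|\sqrt\nu S^{(1,0,0)}_{m,n}\omega_k e^W\chi_{m+n-1}\|_{L^2}$, so by Young and \eqref{S:est:1} we get $\tfrac1B\mathcal{D}^{(\gamma)}_{m,n;k} + CB\sum_{\ell=0}^{n-1}(\mathfrak C\lambda^s)^{2(n-\ell)}\big(\tfrac{(m+n)!}{(m+\ell)!}\big)^{-2\sigma-2\sigma_\ast}\mathcal{D}^{(\gamma)}_{m,\ell;k}$. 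The second and third terms $\mathbf{C}_{q,k,2}^{(m,n)} = \nu|k|^m(q')^2\tfrac{n(n+1)}{q^2}\mathring\omega_{m,n;k}$ and $\mathbf{C}_{q,k,3}^{(m,n)} = -\nu|k|^m q''\tfrac nq\mathring\omega_{m,n;k}$ are estimated the same way but are in fact better: $\mathbf{C}_{q,k,3}^{(m,n)}$ pairs to give $\nu\|\tfrac nq\mathring\omega e^W\chi\|\cdot\|\mathring\omega e^W\chi\|$ which is $\lesssim \nu^{1/2}\cdot\nu^{-1/2}(\mathcal{D}^{(\gamma)}_{m,n;k})^{1/2}$ times the $S^{(1,0,0)}$ bound again; and $\mathbf{C}_{q,k,2}^{(m,n)}$ gives a term involving $\|\tfrac{n^2}{q^2}\mathring\omega e^W\chi\| = \|S^{(2,0,0)}_{m,n}\omega_k e^W\chi\|$ paired against $\nu\|\mathring\omega e^W\chi\| \lesssim \nu^{1/2}(\mathcal{D}^{(\gamma)}_{m,n;k})^{1/2}$, and then \eqref{S:est:2} controls $\nu^2\|S^{(2,0,0)}\|^2$ so $\nu\|S^{(2,0,0)}\|^2 = \nu^{-1}\cdot\nu^2\|S^{(2,0,0)}\|^2$ — here one must be careful that a single power of $\nu$ from $\mathbf{C}_{q,k,2}$ suffices; indeed the product is $\nu\|S^{(2,0,0)}_{m,n}\omega_k e^W\chi\|\cdot\|\mathring\omega_{m,n;k}e^W\chi\|$, so writing $\nu = \nu^{1/2}\cdot\nu^{1/2}$ and distributing, this is $(\nu\|S^{(2,0,0)}\|_{L^2})\cdot(\nu^{1/2}\|\mathring\omega e^W\chi\|_{L^2})\cdot\nu^{-1/2}$; but $\nu^{1/2}\|\mathring\omega e^W\chi\|_{L^2}$ itself has a prefactor making it $\mathcal{D}^{(\gamma)}$-controlled only after multiplying by $\nu^{1/2}|k|$, so the safe bookkeeping is to keep both copies of $S$ with their full $\nu$ weights and absorb the leftover $\nu^{1/2}$ into the smallness — this is routine but needs the explicit powers checked, and is the one spot where care is required.

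Assembling the three contributions, the $\tfrac1B\mathcal{D}^{(\gamma)}_{m,n;k}$ pieces add up (rename $3/B \to 1/B$ by enlarging $B$), and the $S$-bound pieces all take the form $CB\sum_{\ell=0}^{n-1}(\mathfrak C\lambda^s)^{2(n-\ell)}\big(\tfrac{(m+n)!}{(m+\ell)!}\big)^{-2\sigma-2\sigma_\ast}\mathcal{D}^{(\gamma)}_{m,\ell;k}$, which is exactly the right-hand side of \eqref{CommCnqbd} (the $\mathbf{C}_{q,k,2}$ and $\mathbf{C}_{q,k,3}$ contributions produce the same geometric-decay structure from \eqref{S:est:2}, and $\nu^2\|S^{(2)}\|^2$ controls more than $\nu\|S^{(1,0,0)}\|^2$ so the exponent structure matches). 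I expect the main obstacle to be precisely the $\nu$-power bookkeeping in the $\mathbf{C}_{q,k,2}^{(m,n)}$ term: one has only a single factor of $\nu$ available from the Laplacian commutator, whereas the natural pairing wants $\nu^{1/2}$ with each of the two $S$-type/energy factors, and one must verify that the residual half-power of $\nu$ (or the absence of a spare one) is absorbed either by the smallness of $\nu\le 1$ or — more honestly — by noting that $\|S^{(2,0,0)}_{m,n}\omega_k e^W\chi_{m+n-1}\|_{L^2}$ with its full $\nu^2$-weight dominated by $\mathcal{D}^{(\alpha)}+\mathcal{D}^{(\mu)}$ still reduces, after the division, to a sum of $\mathcal{D}^{(\gamma)}$ terms only because $\mathcal{D}^{(\mu)}_{m,\ell;k} = \nu|k|\mathcal{D}^{(\gamma)}_{m,\ell;k}$ and $\mathcal{D}^{(\alpha)}$ likewise carries a compensating $\nu$; so one should route the $\mathbf{C}_{q,k,2}$ estimate through the $\mathcal{D}^{(\gamma)}$-only statement \eqref{S:est:1} applied to $\tfrac nq\mathring\omega$ twice (i.e. use $\tfrac{n(n+1)}{q^2}\mathring\omega \sim \tfrac nq S^{(1,0,0)}_{m,n}\omega_k$ and iterate the $S^{(1,0,0)}$ bound), which keeps everything at the $\gamma$-level and avoids importing $\mathcal{D}^{(\alpha)},\mathcal{D}^{(\mu)}$ into a $\gamma$-level estimate. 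Everything else is a direct application of Cauchy--Schwarz, Young's inequality, the cutoff bound \eqref{chi:prop:3}, the coordinate bounds $\|v_y^{-1}\|_\infty + \|q'\|_\infty + \|q''\|_\infty \lesssim 1$, and Lemma \ref{lem:SLJ}.
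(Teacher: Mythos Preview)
Your approach is the paper's: split $\bold{C}^{(m,n)}_{q,k}$ into its three pieces \eqref{Cnq}, apply Cauchy--Schwarz, and reduce everything to the $\gamma$-level $S^{(1,0,0)}$ bound \eqref{S:est:1}. Your treatment of $\mathbf{C}_{q,k,1}^{(m,n)}$ (after the restart) is exactly the paper's.

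Where you struggle---the $\nu$-bookkeeping for $\mathbf{C}_{q,k,2}^{(m,n)}$ and $\mathbf{C}_{q,k,3}^{(m,n)}$---the paper resolves far more simply than by invoking $S^{(2,0,0)}$, $\mathcal{D}^{(\alpha)}$, $\mathcal{D}^{(\mu)}$, or any ``iteration'' of \eqref{S:est:1}. For $\mathbf{C}_{q,k,2}^{(m,n)}$ the inner product is a \emph{symmetric} quadratic form, so
\[
\bold{a}_{m,n}^2\,\nu\,\Big\langle \mathring\omega_{m,n;k},\,(q')^2\tfrac{n(n+1)}{q^2}\,\mathring\omega_{m,n;k}\,e^{2W}\chi_{m+n}^2\Big\rangle
\ \lesssim\ \bold{a}_{m,n}^2\,\nu\,\big\|S^{(1,0,0)}_{m,n}\omega_k\,e^W\chi_{m+n}\big\|_{L^2}^2,
\]
and \eqref{S:est:1} applies directly---one $\nu$, two copies of $S^{(1,0,0)}$, no leftover powers. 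For $\mathbf{C}_{q,k,3}^{(m,n)}$ your bound $\nu\|\tfrac{n}{q}\mathring\omega\|\cdot\|\mathring\omega\|$ is correct, but the next step (``$\lesssim\nu^{1/2}\cdot\nu^{-1/2}(\mathcal{D}^{(\gamma)})^{1/2}$'') is not: $\nu^{1/2}\|\mathring\omega\|$ is not controlled by $(\mathcal{D}^{(\gamma)}_{m,n;k})^{1/2}$ in general. The paper instead uses the pointwise inequality $|\mathring\omega_{m,n;k}|=\big|\tfrac{q}{m+n}\,S^{(1,0,0)}_{m,n}\omega_k\big|\lesssim|S^{(1,0,0)}_{m,n}\omega_k|$ (valid for $n\geq1$), which gives $\nu\|S^{(1,0,0)}\|\cdot\|\mathring\omega\|\lesssim\nu\|S^{(1,0,0)}\|^2$ and lands again on \eqref{S:est:1} alone. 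The paper also handles $n=1$ separately, since $S^{(2,0,0)}_{m,1}$ is not defined; this is minor.
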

\begin{proof} We distinguish between the $n=1$ case and the $n\geq 2$ case. 

For the $n=1$ case, we invoke the definition of $S_{m,1}^{(1,0,0)}\omega_k$ \eqref{S_nq} and have that 
\begin{align}
{\bf C}_{q,k}^{(m,1)}=\nu[q,\pa_{yy}]\Gamma_k|k|^m\omega_k=&-2\nu q'\pa_y \Gamma_k|k|^m\omega_k-\nu q''\Gamma_k|k|^m\omega_k
=-2\nu q'\pa_y\Gamma_k|k|^m\omega_k-\nu q'' S_{m,1}^{(1,0,0)}\omega_k. \label{Cq1}
\end{align}
Next we invoke the definition $\mathcal{D}$ \eqref{defn:D:1}, together with the bound of $S_{m,1}^{(1,0,0)}\omega_k$ \eqref{S:est:1}, to estimate the left hand side of \eqref{CommCnqbd} as follows
\begin{align*}
\bold{a}_{m,1}^2& \lf|\mathrm{Re} \lf\langle \mathring{\omega}_{m,1;k}, \bold{C}^{(m,1)}_{q,k} e^{2W}  \chi_{m+1}^2\rg \rangle \rg|\\
 =& {\bf a}_{m,1;k}^2\nu \lf|\mathrm{Re} \lf\lan \mathring{\omega}_{m,1;k}, \lf(-2\nu q'\pa_y\lf(q^{-1}\mathring{\omega}_{m,1;k}\rg)-\nu q'' S_{m,1}^{(1,0,0)}\omega_k\rg) e^{2W}\chi_{m+1}^2\rg\ran\rg| \\
\lesssim&{\bf a}_{m,1}^2\nu\|q^{-1}\mathring{\omega}_{m,1;k}e^W\chi_{m+1}\|_2 \|\pa_{y}\mathring{\omega}_{m,1;k}e^W\chi_{m+1}\|_2+{\bf a}_{m,1;k}^2\nu\|q^{-1}\mathring{\omega}_{m,1;k}e^W\chi_{m+1}\|_2^2 \\ 
\lesssim&\mathcal{D}_{m,0;k}^{(\gamma)}+{\bf a}_{m,1;k}^2\nu \|S_{m,1}^{(1,0,0)}\omega_k\ e^{W}\chi_{m+1}\|_2^2 \lesssim \mathcal{D}_{m,0;k}^{(\gamma)}. 
\end{align*}
This is estimate \eqref{CommCnqbd} for $n=1.$

We invoke $\eqref{Cnq}_{n\geq 2}$ to produce the identity 
\begin{align}\n
 & \bold{a}_{m,n}^2  \mathrm{Re} \lf\langle \mathring{\omega}_{m,n;k}, \bold{C}^{(m,n)}_{q,k} e^{2W}  \chi_{m+n}^2 \rg\rangle   \\ \n
  &= \bold{a}_{m,n}^2\mathrm{Re}\lf\langle \mathring{\omega}_{m,n;k},\lf(-2 \nu q' \frac{n}{q} \pa_y {\omega}_{m,n;k} +\nu (q')^2 \frac{n(n+1)}{q^2} {\omega}_{m,n;k} -\nu q'' \frac{n}{q} { \omega}_{m,n;k}\rg) e^{2W}  \chi_{m+n}^2\rg \rangle \\
&= :  \sum_{j = 1}^3 \mathcal{T}_{j}.\label{I_Cq} 
\end{align}
We now estimate each of the terms appearing above. First,  thanks to the definition \eqref{S_nq}, the estimate \eqref{S:est:1} and the combinatorial estimate \eqref{sum_comb}, we observe that
\begin{align*}
|\mathcal{T}_{1}| \lesssim & \bold{a}_{m,n}^2 \| \nu^{\frac12} \pa_y \mathring{\omega}_{m,n;k} e^W \chi_{m+n} \|_{L^2}  \| \nu^{\frac12} {S}_{m,n;k} e^W \chi_{m+n} \|_{L^2} \\
\lesssim & \sqrt{\mathcal{D}_{m,n;k}^{(\gamma)}}\bold{a}_{m,n} \| \nu^{\frac12} S_{m,n}^{(1,0,0)}\omega_k e^W \chi_{m+n} \|_{L^2}  \\
  \lesssim &  \sqrt{\mathcal{D}^{(\gamma)}_{m,n;k}} \lf(\sum_{\ell=0}^{n-1}( \mathfrak C\lambda^{s})^{2n-2\ell}\lf(\frac{(m+n)!}{(m+\ell)!}\rg)^{-2\sigma-2\sigma_\ast}  \mathcal{D}^{(\gamma)}_{m,\ell;k}\rg)^{\frac{1}{2}}.
\end{align*}
An application of the Young's inequality yields that the first term can be estimated as in \eqref{CommCnqbd}. 
The second and last terms are estimated in a similar fashion, 
\begin{align*}
|\mathcal{T}_{2}| \lesssim &\bold{a}_{m,n}^2 \| \nu^{\frac12} S_{m,n;k} e^W \chi_{m+n} \|_{L^2}^2\lesssim \sum_{\ell=0}^{n-1}( \mathfrak C\lambda^{s})^{2n-2\ell} \lf(\frac{(m+n)!}{(m+\ell)!}\rg)^{-2\sigma-2\sigma_\ast} \mathcal{D}^{(\gamma)}_{m,\ell;k};\\
|\mathcal{T}_{3}| \lesssim & \bold{a}_{m,n}^2 \| \nu^{\frac12} \mathring{\omega}_{m,n;k} e^W \chi_{m+n} \|_{L^2}  \| \nu^{\frac12} {S}_{m,n}^{(1,0,0)}\omega_k e^W \chi_{m+n} \|_{L^2} \\
\lesssim & \bold{a}_{m,n}^2 \| \nu^{\frac12} S_{m,n}^{(1,0,0)}\omega_k e^W \chi_{m+n} \|_{L^2}^2    \lesssim \sum_{\ell=0}^{n-1}( \mathfrak C\lambda^{s})^{2n-2\ell} \lf(\frac{(m+n)!}{(m+\ell)!}\rg)^{-2\sigma-2\sigma_\ast}  \mathcal{D}^{(\gamma)}_{m,\ell;k}.
\end{align*}
Combining the estimates above and the decomposition \eqref{I_Cq} yields  \eqref{CommCnqbd} for $n\geq 2$.
\ifx 
Finally, we derive the estimate \eqref{CommCnqbdsum} with a similar argument as in \eqref{sqr_upgrade}. By setting $B=\lambda^{-s},\, s>1$ in the expression \eqref{CommCnqbd}, and sum in the index $m,n$, we have that
\begin{align*}
&\sum_{m+n=0}^M\bold{a}_{m,n}^2 \lf|\mathrm{Re} \lf\langle \mathring{\omega}_{m,n;k}, \bold{C}^{(m,n)}_{q,k} e^{2W}  \chi_{m+n}^2\rg \rangle\rg| \\
&\leq \lambda^s\mathcal{D}^{(\gamma)}_{m,n;k}+C\mathfrak C^2\lambda^{s}\sum_{m=0}^M\sum_{n=1}^{M-m} \sum_{\ell=0}^{n-1}(\mathfrak C\lambda^{s})^{2(n- \ell-1)} \lf(\frac{(m+n)!}{(m+\ell)!}\rg)^{-2\sigma-2\sigma_\ast}  \mathcal{D}^{(\gamma)}_{m,\ell;k}\\
&\leq \lambda^s\mathcal{D}^{(\gamma)}_{m,n;k}+C\mathfrak C^2\lambda^{s}\sum_{m=0}^M\sum_{\ell=0}^{M-m-1} \mathcal{D}^{(\gamma)}_{m,\ell;k}\lf(\sum_{n=\ell+1}^{M-m} (\mathfrak C\lambda^{s})^{2(n- \ell-1)}\rg) .
\end{align*}
Now we can choose the $\mathfrak C\lambda^s\leq \frac{1}{2}$, and $\lambda^{s-1}$ to be small enough depending on $\mathfrak{C},\ s$ and constants to get
\begin{align}
\sum_{m+n=0}^M\bold{a}_{m,n}^2 \lf|\mathrm{Re} \lf\langle \mathring{\omega}_{m,n;k}, \bold{C}^{(m,n)}_{q,k} e^{2W}  \chi_{m+n}^2\rg \rangle\rg|\leq \lambda^s \sum_{m=0}^M\sum_{\ell=0}^{M-m} \mathcal{D}^{(\gamma)}_{m,\ell;k}\leq \lambda^s \sum_{m+n=0}^M \mathcal{D}^{(\gamma)}_{m,n;k}.\label{sqr_upgrd}
\end{align}
\fi 
As a result, the proof of the lemma is completed. 

\end{proof}

\begin{lemma} The following bound is valid:  
\begin{align} \n
   \bold{a}_{m,n}^2&\nu\lf |\mathrm{Re} \lf \langle \pa_y \mathring{\omega}_{m,n;k},  \pa_y \bold{C}^{(m,n)}_{q,k} e^{2W} \chi_{m+n}^2 \rg\rangle\rg| \\
   \leq&\frac{1}{B}\mathcal{D}_{m,n;k}^{(\al)} +CB \sum_{\ell=0}^{n-1}\sum_{\iota \in \{ \alpha, \mu \}}  ( \mathfrak C\lambda^{s})^{2n-2\ell}\lf(\frac{(m+n)!}{(m+\ell)!}\rg)^{-\sigma_\ast} \mathcal{D}^{(\iota)}_{m,\ell;k}+C\lambda^{2s}\mathcal{CK}_{m,0;k}^{(\al,W)}\myr{\mathbbm{1}_{n=1}}.\label{CommCnqBd1} 
\end{align}
Here $B>1$ is an arbitrary constant and $C$ is universal constant. 
\ifx 
 Moreover, for arbitrary $M\in \mathbb{N},$ and $\lambda$ chosen small enough compared to constants that only depend on $s$, the following estimate holds
\begin{align}\label{CommCnqBd1sum}
 \sum_{m+n=0}^M\bold{a}_{m,n}^2 \nu \lf|\mathrm{Re} \lf \langle \pa_y \mathring{\omega}_{m,n;k},  \pa_y \bold{C}^{(m,n)}_{q,k} e^{2W} \chi_{m+n}^2 \rg\rangle\rg|  \leq \lambda\sum_{m+n=0}^{M}\sum_{\iota \in \{ \alpha, \mu \}}  \mathcal{D}^{(\iota)}_{m,n;k}+\lambda\sum_{m=0}^M\mathcal{CK}_{m,0;k}^{(\al,W)}.
\end{align}\fi 
\end{lemma}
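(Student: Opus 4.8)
The plan is to mirror the proof of Lemma \ref{lem:tm:1} (the $\gamma$-level commutator bound \eqref{CommCnqbd}), but now at the level of one more $\p_y$ derivative and one more power of $\nu$. The starting point is the identity \eqref{IDL12} for $\p_y \bold{C}^{(m,n)}_{q,k}$, which expresses this quantity as a sum of three terms of schematic form $\nu \Upsilon_n^{(i)} (n/q)^i \p_y^{3-i} \mathring{\omega}_{m,n;k}$ for $i = 1,2,3$, where the coefficients $\Upsilon_n^{(i)}$ are bounded (uniformly in $n$, by \eqref{Ups:1}--\eqref{Ups:3}) and involve only $q,q',q'',q'''$. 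Pairing against $\nu \p_y \mathring{\omega}_{m,n;k} e^{2W} \chi_{m+n}^2$, we get three inner products; the key observation is that each factor $(n/q)^i \p_y^{3-i} \mathring{\omega}_{m,n;k}$ matches exactly one of the building-block operators $S_{m,n}^{(a,b,c)}\omega_k$ with $a+b+c = 3-i+i \le 3$... more precisely, after we integrate by parts once in the worst term to move a $\p_y$ off of $\p_y^2 \mathring\omega$, the surviving quantities are controlled by $S^{(1,1,0)}_{m,n}\omega_k$, $S^{(2,0,0)}_{m,n}\omega_k$, $S^{(1,0,0)}_{m,n}\omega_k$ and their $\p_y$-companions, for which Lemma \ref{lem:SLJ} (estimates \eqref{S:est:1}, \eqref{S:est:2}) provides the needed bounds in terms of $\mathcal{D}^{(\gamma)}, \mathcal{D}^{(\alpha)}, \mathcal{D}^{(\mu)}$ at lower indices. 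The factor $\nu \p_y \mathring\omega_{m,n;k} e^W \chi_{m+n}$ itself is $\sqrt{\mathcal{D}^{(\alpha)}_{m,n;k}}$ up to constants, so each term is bounded by $\sqrt{\mathcal D^{(\alpha)}_{m,n;k}}$ times $\big(\sum_{\ell<n}(\mathfrak{C}\lambda^s)^{2(n-\ell)}(\cdots)\mathcal D^{(\iota)}_{m,\ell;k}\big)^{1/2}$, and a Young's inequality with weight $B$ gives \eqref{CommCnqBd1}.

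More concretely, the steps are: (1) substitute \eqref{IDL12} and split into $\mathcal{T}_1 + \mathcal{T}_2 + \mathcal{T}_3$ according to $i = 1,2,3$; (2) in $\mathcal{T}_1$ (the term with $\p_y^2 \mathring\omega_{m,n;k}$, which is the most dangerous since it contains a third $y$-derivative once combined with the outer $\p_y \mathring\omega$), integrate by parts in $y$ to rewrite $\langle \p_y\mathring\omega, \nu \Upsilon^{(1)}_n (n/q)\p_y^2\mathring\omega\,e^{2W}\chi^2\rangle$ as $-\tfrac12\langle \p_y\mathring\omega, \nu\p_y(\Upsilon^{(1)}_n (n/q) e^{2W}\chi_{m+n}^2)\,\p_y\mathring\omega\rangle$, up to boundary terms; handle the resulting $\p_y W$ factor via \eqref{W_prop}/\eqref{wdot:est:a} (absorbing into $\mathcal{D}^{(\alpha)}$ and an arbitrarily small multiple of a $\mathcal{CK}$ term after choosing $K$ large), handle the $\p_y(n/q)$ factor which produces $(n/q)^2 \mathring\omega = S^{(2,0,0)}$-type quantities controlled by \eqref{S:est:2}, and handle the $\p_y\chi_{m+n}^2$ factor using \eqref{chi:prop:3} together with \eqref{gevbd1212}; (3) in $\mathcal{T}_2$ and $\mathcal{T}_3$, which carry $(n/q)^2\p_y\mathring\omega = S^{(1,1,0)}_{m,n}$-type and $(n/q)^3\mathring\omega = S^{(a,b,c)}_{m,n}$-type factors respectively, pair directly against $\sqrt{\mathcal{D}^{(\alpha)}_{m,n;k}}$ and apply \eqref{S:est:1}--\eqref{S:est:2}; (4) for the special case $n = 1$ treat it separately exactly as in the $n=1$ analysis in Lemma \ref{lem:tm:1}, using \eqref{Cq1}-type identities and the refined trace/$\mathcal{CK}$ bound in Lemma \ref{lemma:BC} — this is where the isolated $\mathcal{CK}^{(\alpha,W)}_{m,0;k}\mathbbm{1}_{n=1}$ term on the right of \eqref{CommCnqBd1} is generated; (5) collect, apply Young's inequality with the free parameter $B$ to split off $\tfrac1B\mathcal{D}^{(\alpha)}_{m,n;k}$, and relabel summation indices ($\ell$ ranging over $0,\dots,n-1$, together with $\iota\in\{\alpha,\mu\}$) to arrive at \eqref{CommCnqBd1}.

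The main obstacle I expect is the bookkeeping in step (2): the term $\mathcal{T}_1$, after integration by parts, splits into several subterms, and one must carefully check that the subterm where $\p_y$ lands on the remaining $\p_y\mathring\omega$ — producing $\nu \langle \Upsilon^{(1)}_n (n/q) e^{2W}\chi^2 \p_y\mathring\omega, \p_y^2\mathring\omega\rangle$, i.e. essentially the original term again — gets genuinely absorbed (this is a self-improving/Cauchy–Schwarz structure, bounded by $\sqrt{\mathcal{D}^{(\alpha)}_{m,n;k}}$ times a $(n/q)$-weighted quantity that is again $S^{(1,1,0)}$-controlled, hence \eqref{S:est:2} applies and no circular dependence arises). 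One must also be attentive that the powers of $\lambda^s$ and the factorial ratios $((m+n)!/(m+\ell)!)^{-\sigma_\ast}$ in the final bound are the ones that actually emerge from \eqref{S:est:1}--\eqref{S:est:2} — note the exponent on the right of \eqref{CommCnqBd1} is $-\sigma_\ast$ (not $-2\sigma-2\sigma_\ast$), which is a slightly weaker, safely implied, consequence. The boundary terms from the integration by parts in $\mathcal{T}_1$ vanish identically for $n \neq 1$ because $q(\pm1) = 0$ kills all but the top $q$-free contribution and then, arguing as in Lemma \ref{lemma:BC}, the surviving real part vanishes; for $n = 1$ they are precisely what Lemma \ref{lemma:BC} estimates. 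Everything else is routine once the $S$-operator dictionary is set up.
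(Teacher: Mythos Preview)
Your proposal is mostly sound, but for the main term $\mathcal{T}_1$ (the $\Upsilon_n^{(1)}(n/q)\p_y^2\mathring\omega_{m,n;k}$ contribution with $n\geq 2$) you take an unnecessarily complicated route and, as written, produce a term not present in the stated bound. The paper does \emph{not} integrate by parts here: it simply applies Cauchy--Schwarz directly, grouping the weight with the lower-order factor,
\[
\bold{a}_{m,n}^2\nu^2\bigl|\langle\p_y\mathring\omega_{m,n;k},\,\Upsilon_n^{(1)}\tfrac{n}{q}\p_y^2\mathring\omega_{m,n;k}\,e^{2W}\chi_{m+n}^2\rangle\bigr|
\;\lesssim\;\bigl(\bold{a}_{m,n}\|\nu\,S^{(1,1,0)}_{m,n}\omega_k\,e^W\chi_{m+n}\|_{L^2}\bigr)\cdot\sqrt{\mathcal{D}^{(\alpha)}_{m,n;k}},
\]
since $\tfrac{n}{q}\p_y\mathring\omega_{m,n;k}$ is (up to harmless $(m+n)/n$ factors) exactly $S^{(1,1,0)}_{m,n}\omega_k$, and $\|\nu\p_y^2\mathring\omega\,e^W\chi\|_{L^2}\leq\sqrt{\mathcal{D}^{(\alpha)}_{m,n;k}}$. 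Then \eqref{S:est:2} and Young's inequality with parameter $B$ finish. The same pattern handles $\mathcal{T}_2$ (both factors are $S^{(1,1,0)}$) and $\mathcal{T}_3$ (factors are $S^{(1,1,0)}$ and $S^{(2,0,0)}$).

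Your half-derivative integration by parts, by contrast, forces $\p_y$ onto $e^{2W}$, producing a $\p_yW$ contribution that after \eqref{W_prop} and Cauchy--Schwarz yields a $\mathcal{CK}^{(\alpha,W)}_{m,n;k}$ term for every $n\geq 2$. The statement \eqref{CommCnqBd1} has such a term only when $n=1$ (coming from the genuine boundary effect in Lemma~\ref{lemma:BC}). Your route would still suffice for the downstream bound \eqref{Im:qu:d}, where a global $\mathcal{CK}^{(\alpha,W)}$ is permitted, but it does not prove the lemma as stated. Also, the subterm you worried about (``$\p_y$ landing on the remaining $\p_y\mathring\omega$, producing the original term again'') does not actually arise in the half-derivative trick, since $\mathrm{Re}(\p_y\mathring\omega\cdot\overline{\p_y^2\mathring\omega})=\tfrac12\p_y|\p_y\mathring\omega|^2$ exactly; and when $\p_y$ hits $(n/q)$ you get $(n/q^2)|\p_y\mathring\omega|^2=\tfrac{1}{n}|S^{(1,1,0)}|^2$, not an $S^{(2,0,0)}$-quantity. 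Your treatment of the $n=1$ case is correct and matches the paper: there an integration by parts \emph{is} needed (on $\p_y^2\Gamma_k\omega_k$, not on $\p_y^2\mathring\omega_{m,1;k}$), and the boundary contribution is handled via Lemma~\ref{lemma:BC}, which is precisely the source of the isolated $\mathcal{CK}^{(\alpha,W)}_{m,0;k}\mathbbm{1}_{n=1}$ term.
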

\begin{proof} We distinguish between the $n=1$ case and the $n\geq 2$ case. 

To estimate the $n=1$ case, we expand the $\pa_y {\bf C}_{q,k}^{(1)}$ with \eqref{Cq1} as follows
\begin{align}
\pa_y {\bf C}_{q,k}^{(1)}=-2\nu q'\pa_{yy}\Gamma_k\omega_k-3\nu q''\pa_y\Gamma_k\omega_k-\nu q'''\Gamma_k\omega_k.\label{pyCq1}
\end{align}
Based on this, we decompose the left hand side as follows
\begin{align}
{\bf a}_{m,1;k}^2&\nu^2\lf|\mathrm{Re} \lf \lan \pa_y\mathring{\omega}_{m,1;k},\pa_y{\bf C}_{q,k}^{(m,1)}\chi_{m+1}^2\rg\ran \rg| \n \\ \n
\leq &
{\bf a}_{m,1;k}^2\nu^2\lf|\mathrm{Re} \lf \lan \pa_y\mathring{\omega}_{m,1;k},2\nu q'\pa_{yy}\Gamma_k|k|^m\omega_k e^{2W}\chi_{m+1}^2\rg\ran\rg|\\ 
&+{\bf a}_{m,1;k}^2\nu^2\lf|\mathrm{Re} \lf \lan \pa_y\mathring{\omega}_{m,1;k},3\nu q''\pa_y\Gamma_k|k|^m\omega_ke^{2W}\chi_{m+1}^2\rg\ran\rg|\n \\ 
&+ {\bf a}_{m,1;k}^2\nu^2\lf|\mathrm{Re} \lf \lan \pa_y\mathring{\omega}_{m,1;k},\nu q'''\Gamma_k|k|^m\omega_k e^{2W}\chi_{m+1}^2\rg\ran\rg| 
=: \sum_{j=1}^3\mathcal{T}_{1;j}.\label{al_pyCq}
\end{align}
To estimate the first term, we implement an integration by parts  to estimate the $\mathcal{T}_{1;1}$-term in \eqref{al_pyCq} as follows
\begin{align}\n
\mathcal{T}_{1;1}\leq& C{\bf a}_{m,1}^2\nu^2 \\ \n
&\times \bigg(\lf|\Re\lf\lan -q'\pa_{yy}\mathring{\omega}_{m,1;k}-q''\pa_y|k|^m\omega_k,\pa_y\Gamma_k|k|^m\omega_k e^{2W}\chi_{m+1}^2 \rg\ran\rg|\\
\n &+\lf|\mathrm{Re} \lf(\pa_y\mathring{\omega}_{m,1;k} q'\overline{\pa_y\Gamma_k|k|^m\omega_k}e^{2W}\chi_{m+1}^2\rg)\bigg|_{y=-1}^{y=1}\rg|+\lf|\mathrm{Re} \lf\lan  q'\pa_y|k|^m\omega_k,\pa_y\Gamma_k|k|^m\omega_k( \pa_y W)e^{2W}\chi_{m+1}^2 \rg\ran\rg|\n\\
\n &+\lf|\mathrm{Re} \lf\lan  q'\pa_y|k|^m\omega_k,\pa_y\Gamma_k|k|^m\omega_k( \chi' _{m+1})e^{2W}\chi_{m+1} \rg\ran\rg|\bigg)\\
=:&\sum_{j=1}^4\mathcal{T}_{1;1j}.\label{al_pyCq1}
\end{align}
The estimate of the first term is a direct consequence of the definitions \eqref{defn:D:1}, \eqref{a:weight} and the bounds that $t\varphi\leq C$ \eqref{ineq:varphi}, $\|\vyn\|_\infty+\|v_{yy}\|_\infty\leq C$ \eqref{v_y_asmp}, 
\begin{align}
\n \mathcal{T}_{1;11}\leq& C{\bf a}_{m,1}^2\nu^2\\
\n &\times\lf|\mathrm{Re} \lf\lan q'\pa_{yy}\mathring{\omega}_{m,1;k}+q''\pa_y\mathring{\omega}_{m,1;k},(-v_y^{-2}v_{yy}\pa_y|k|^m\omega_k+\vyn \pa_{yy}|k|^m\omega_k+ikt\pa_y|k|^m\omega_k) e^{2W}\chi_{m+1}^2\rg\ran\rg|\\
\leq&\frac{1}{B} \mathcal{D}_{m,1;k}^{(\al)}+CB\lambda^{2s} \mathcal{D}_{m,0;k}^{(\al)}.\label{al_pyCq11} 
\end{align}
Here $B>0$ is an arbitrary constant. To estimate the second term in \eqref{al_pyCq1}, we apply 
 the boundary conditions $\omega_k(t,y=\pm 1)=\pa_{yy}\omega_k(t, y=\pm 1)=0$ and the estimate \eqref{BC_n=1}, $\|\vyn\|_\infty+\|v_{yy}\|_\infty\leq C$, to obtain that
\begin{align} 
\n \mathcal{T}_{1;12}\lesssim& {\bf a}_{m,1}^2\nu^2|k|^{2m}\lf|\mathrm{Re}  |q'|^2\lf(\vyn\pa_{y}\omega_k+ikt\omega_k\rg)\overline{\lf(\vyn\pa_{yy}\omega_k-v_y^{-2}v_{yy}\pa_y \omega_k +ikt\pa_y\omega_k\rg)}e^{2W}\chi_{m+1}^2\bigg|_{y=-1}^{y=1}\rg|\\
\lesssim& {\bf a}_{m,1}^2\nu^2|k|^{2m}\lf|\mathrm{Re}  \lf(\vyn\pa_{y}\omega_k\rg)\lf(\overline{v_y^{-2}v_{yy}\pa_y \omega_k} +ikt\overline{\pa_y\omega_k}\rg)e^{2W}\chi_{m+1}^2\bigg|_{y=-1}^{y=1}\rg| \n\\ 
\lesssim&{\bf a}_{m,1}^2\nu^2\bigg|v_y^{-3} v_{yy} |\pa_y\mathring\omega_{m,0;k}|^2e^{2W}\chi_{m+1}^2\bigg|_{y=-1}^{y=1}\bigg| \lesssim \lambda^{2s}\mathcal{D}_{m,0;k}^{(\al)}+\lambda^{2s}\mathcal{CK}_{m,0;k}^{(\al,W)}.
\end{align}
Upon invoking the estimates \eqref{W_prop}, \eqref{ineq:varphi}, and the definitions \eqref{a:weight}, \eqref{defn:D:1}, we have that the third term in \eqref{al_pyCq1} can be estimated as follows,
\begin{align}
\n \mathcal{T}_{1;13}\lesssim& \lambda^{2s}\mathcal{D}_{m,0;k}^{(\al)}+\lambda^{2s}\mathcal{CK}_{m,0;k}^{(\al,W)}. 
\end{align} 
For the fourth term in \eqref{pyCq1}, we invoke the relations \eqref{s:prime}, \eqref{chi:prop:3} and \eqref{gevbd1212} to obtain
\begin{align}
\n\mathcal{T}_{1;14}\lesssim& {\bf a}_{m,1}^2\nu^{}\| \pa_y \mathring\omega_{m,0;k}e^{W}\chi_m\|_2({m+1})^{1+\sigma}\|\pa_y(\pav+ikt)\mathring\omega_{m,0;k}e^{W}\chi_{m}\|_2\\ \n
\lesssim& \lambda^{s}{\bf a}_{m,0} \nu^{1/2}\| \pa_y \mathring\omega_{m,0;k}e^{W}\chi_m\|_2\frac{\lambda^{s}{\bf a}_{m,0} \nu^{1/2}}{({m+1})^{\sigma_\ast}}(\|\pa_y \pav \mathring\omega_{m,0;k}e^{W}\chi_{m}\|_2+\|\pa_y|k|\mathring\omega_{m,0;k}e^{W}\chi_{m}\|_2)\\
\lesssim& \frac{\lambda^{2s}}{({m+1})^{\sigma_\ast}}\mathcal{D}_{m,0;k}^{(\al)}. 
\end{align}
This concludes the proof of the estimate \eqref{CommCnqBd1} in the $n=1$ case.

In the $n\geq 2$ case, we decompose the integrand on the left-hand side of \eqref{CommCnqBd1} according to the identity \eqref{IDL12}, which gives 
\begin{align} \n
  \bold{a}_{m,n}^2& \nu\lf| \mathrm{Re} \lf\langle \pa_y \mathring{\omega}_{m,n;k},  \pa_y \bold{C}^{(m,n)}_{q,k} e^{2W} \chi_{m+n}^2 \rg\rangle\rg| \\ \n
\leq &  \bold{a}_{m,n} ^2 \nu^2 \lf|\mathrm{Re} \lf\langle \pa_y \mathring{\omega}_{m,n;k}, \frac{n}{q} \Upsilon^{(1)}_n \pa_y^2  {\omega}_{m,n;k} e^{2W} \chi_{m+n}^2 \rg\rangle \rg|\\ \n
&+ \bold{a}_{m,n}^2 \nu^2 \lf|\mathrm{Re} \lf\langle \pa_y \mathring{\omega}_{m,n;k}, \frac{n^2}{q^2} \Upsilon^{(2)}_n \pa_y  {\omega}_{m,n;k} e^{2W} \chi_{m+n}^2 \rg\rangle\rg |\\ 
&+ \bold{a}_{m,n} ^2 \nu^2 \lf| \mathrm{Re} \lf\langle \pa_y \mathring{\omega}_{m,n;k}, \frac{n^3}{q^3} \Upsilon^{(3)}_n  {\omega}_{m,n;k} e^{2W} \chi_{m+n}^2\rg \rangle\rg| 
=: \sum_{j = 1}^{3} \mathcal{T}_{2;j}.\label{I_paC}
\end{align}
Here, the $\{\Upsilon^{(i)}\}_{i=1}^3$ are defined in \eqref{IDL12} and \eqref{Ups:1}--\eqref{Ups:3}. We bound each of these contributions individually, starting with $\mathcal{T}_{2;1}$. By invoking  the definition of $\mathcal{D}^{(\al)}$ \eqref{defn:D:1} and \eqref{SJ}, the $S_{m,n}^{(1,1,0)}\omega_k$-estimate \eqref{S:est:2} and the combinatorial estimate \eqref{sum_comb}, we obtain that
\begin{align*}
\mathcal{T}_{2;1} \lesssim &  \bold{a}_{m,n}^2 \| \Upsilon^{(1)}_n \|_{L^\infty} \lf \|  \nu \frac{n}{q} \pa_y \mathring{\omega}_{m,n;k}e^W \chi_{m+n} \rg\|_{L^2}\| \nu  \pa_y^2 \mathring{\omega}_{m,n;k} e^W \chi_{m+n} \|_{L^2} \\
\lesssim &  \lf(\bold{a}_{m,n} \|  \nu S_{m,n}^{(1,1,0)}\omega_k e^W \chi_{m+n} \|_{L^2}\rg)\sqrt{ \mathcal{D}_{m,n;k}^{(\alpha)}} \\
\lesssim &  \lf(\sum_{\ell=0}^{n-1}(\mathfrak C \lambda^{s})^{2n-2\ell}\lf(\frac{(m+n)!}{(m+\ell)!}\rg)^{-2\sigma-2\sigma_\ast} \lf(\mathcal{D}^{(\alpha)}_{m,\ell;k} +\mathcal{D}^{(\mu)}_{m,\ell;k}\rg)\rg)^{\frac{1}{2}} \sqrt{\mathcal{D}_{m,n;k}^{(\alpha)}}.
\end{align*}
Next, we apply similar argument to obtain that
\begin{align*}
  \mathcal{T}_{2;2}  \lesssim &  \bold{a}_{m,n}^2 \| \Upsilon^{(2)}_n \|_{L^\infty} \lf \|  \nu \frac{n}{q} \pa_y \mathring{\omega}_{m,n;k}e^W \chi_{m+n}\rg \|_{L^2} ^2
\lesssim  \bold{a}_{m,n} ^2 \| \Upsilon^{(2)}_n \|_{L^\infty}  \|  \nu S_{m,n}^{(1,1,0)}\omega_k  e^W \chi_{m+n} \|_{L^2} ^2 \\
\lesssim & \sum_{\ell=0}^{n-1}(\mathfrak C \lambda^{s})^{2n-2\ell}\lf(\frac{(m+n)!}{(m+\ell)!}\rg)^{-2\sigma-2\sigma_\ast} \lf(\mathcal{D}^{(\alpha)}_{m,\ell;k} + \mathcal{D}^{(\mu)}_{m,\ell;k}\rg). 
\end{align*}
Finally, by invoking  the definition of $\mathcal{D}^{(\al)}$ \eqref{defn:D:1} and $S_{m,n}^{(a,b,c)}\omega_k$ \eqref{S_nq}, the estimates  \eqref{S:est:1}, \eqref{S:est:2} and the combinatorial estimate \eqref{sum_comb}, we obtain that 
\begin{align*}
 \mathcal{T}_{2;3} \lesssim & \bold{a}_{m,n}^2 \| \Upsilon^{(3)}_n \|_{L^\infty}\lf \| \nu  \frac{n}{q} \pa_y \mathring{\omega}_{m,n;k}e^W \chi_{m+n} \rg\|_{L^2} \lf \| \nu  \frac{n^2}{q^2}   {\omega}_{m,n;k} e^W \chi_{m+n}\rg \|_{L^2} \\
\lesssim & \bold{a}_{m,n}^2 \| \Upsilon^{(3)}_n \|_{L^\infty} \| \nu  S_{m,n}^{(1,1,0)}\omega_k e^W \chi_{m+n} \|_{L^2} \| \nu  {S}_{m,n}^{(2,0,0)}\omega_k e^W \chi_{m+n} \|_{L^2} \\
\lesssim & \sum_{\ell=0}^{n-1}(\mathfrak C \lambda^{s})^{2n-2\ell}\lf(\frac{(m+n)!}{(m+\ell)!}\rg)^{-2\sigma-2\sigma_\ast} \lf(\mathcal{D}^{(\alpha)}_{m,\ell;k} + \mathcal{D}^{(\mu)}_{m,\ell;k}\rg).
\end{align*}
Combining all the estimates developed thus far, we obtain the result \eqref{CommCnqBd1}. 
\end{proof}

We now need the $\mu$ version, which follows largely in an analogous manner to the $\gamma$ term. We have 
\begin{lemma} The following bound is valid:
\begin{align}\label{beed:mu}
 \bold{a}_{m,n}^2 \nu \lf|\mathrm{Re} \lf\langle |k| \mathring{\omega}_{m,n;k}, |k| \bold{C}^{(m,n)}_{q,k} e^{2W}  \chi_{m+n}^2\rg \rangle\rg|  \leq \frac{1}{B}\mathcal{D}^{(\mu)}_{m,n;k}+CB\sum_{\ell=0}^{n-1}(\mathfrak C\lambda^s)^{2n-2\ell} \lf(\frac{(m+\ell)!}{(m+n)!}\rg)^{2\sigma_\ast}  \mathcal{D}^{(\mu)}_{m,\ell;k}.
\end{align}
Here $B>1$ is an arbitrary constant and $C,\ \mf C$ are universal constants.\ifx 
 Moreover, for arbitrary $M\in \mathbb{N},$ and $\lambda$ chosen small enough compared to constants that only depend on $s$, the following estimate holds
\begin{align}\label{beedsum:mu}
 \sum_{m+n=0}^M\bold{a}_{m,n}^2 \nu \lf|\mathrm{Re} \lf\langle |k| \mathring{\omega}_{m,n;k}, |k| \bold{C}^{(m,n)}_{q,k} e^{2W}  \chi_{m+n}^2\rg \rangle\rg|  \leq \lambda \sum_{m+n=0}^M\mathcal{D}^{(\mu)}_{m,n;k}.
\end{align}\fi 
\end{lemma}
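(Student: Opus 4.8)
The plan is to reduce \eqref{beed:mu} to the already-established $\gamma$-level estimate \eqref{CommCnqbd} of Lemma~\ref{lem:tm:1} by carrying one extra factor of $\nu|k|^2$ through the same argument. Observe first that the left-hand side of \eqref{beed:mu} equals $\nu|k|^2$ times the $\gamma$-integrand, since $\mathrm{Re}\langle |k|\mathring{\omega}_{m,n;k}, |k|\mathbf{C}^{(m,n)}_{q,k}e^{2W}\chi_{m+n}^2\rangle = |k|^2\,\mathrm{Re}\langle \mathring{\omega}_{m,n;k}, \mathbf{C}^{(m,n)}_{q,k}e^{2W}\chi_{m+n}^2\rangle$, and that by \eqref{defn:D:1} one has $\nu|k|^2\mathcal{D}^{(\gamma)}_{m,\ell;k}=\mathcal{D}^{(\mu)}_{m,\ell;k}$. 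Hence it suffices to rerun the proof of Lemma~\ref{lem:tm:1} with the $\mu$-flavored building-block bounds \eqref{Kest}, \eqref{Kest_sum} for $S^{(1,0,1)}_{m,n}\omega_k$ (which already produce $\mathcal{D}^{(\mu)}$-terms) in place of the $S^{(1,0,0)}_{m,n}\omega_k$ bounds \eqref{S:est:1}, \eqref{SnBd1sum}.

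Concretely, I would split into the cases $n=1$ and $n\geq 2$ exactly as in Lemma~\ref{lem:tm:1}. For $n=1$, insert the explicit identity \eqref{Cq1} for $\mathbf{C}^{(m,1)}_{q,k}$ into the $\mu$-inner product, integrate by parts once, and apply Cauchy--Schwarz: the factor $\nu^{1/2}|k|\,\partial_y\mathring{\omega}_{m,1;k}$ is controlled by $\sqrt{\mathcal{D}^{(\mu)}_{m,0;k}}$ via \eqref{defn:D:1}, while $\bold{a}_{m,1}\nu|k|\,\|S^{(1,0,0)}_{m,1}\omega_k\,e^W\chi_{m+1}\|_{L^2}$ is controlled by the $\nu|k|^2$-rescaled form of \eqref{S:est:1}, i.e. by $\mathcal{D}^{(\mu)}_{m,0;k}$; this yields only the $\ell=0$ term on the right of \eqref{beed:mu}, with constant $CB$ after Young's inequality. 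For $n\geq 2$, use the decomposition \eqref{Cnq}, so that $|k|\mathbf{C}^{(m,n)}_{q,k}$ splits into $-2\nu q'\tfrac{n}{q}\,\partial_y(|k|\mathring{\omega}_{m,n;k})$, $\nu(q')^2\tfrac{n(n+1)}{q^2}(|k|\mathring{\omega}_{m,n;k})$, and $-\nu q''\tfrac{n}{q}(|k|\mathring{\omega}_{m,n;k})$; pair each with $|k|\mathring{\omega}_{m,n;k}$, recognize $\tfrac{m+n}{q}(|k|\mathring{\omega}_{m,n;k})=S^{(1,0,1)}_{m,n}\omega_k$ and $\nu^{1/2}|k|\nabla_k\mathring{\omega}_{m,n;k}$ as a $\mathcal{D}^{(\mu)}_{m,n;k}$-controlled quantity, and estimate by Cauchy--Schwarz using \eqref{Kest} together with the combinatorial estimate \eqref{sum_comb}, precisely as in the proof of \eqref{CommCnqbd}. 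The first piece produces a factor $\sqrt{\mathcal{D}^{(\mu)}_{m,n;k}}$, which is absorbed via Young's inequality into $\tfrac1B\mathcal{D}^{(\mu)}_{m,n;k}$ at the cost of $CB$ on the $\ell$-sum; the remaining two pieces land directly on the $\ell$-sum.

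A final bookkeeping remark: this procedure naturally yields the weight $\big(\tfrac{(m+n)!}{(m+\ell)!}\big)^{-2\sigma-2\sigma_\ast}=\big(\tfrac{(m+\ell)!}{(m+n)!}\big)^{2\sigma+2\sigma_\ast}$ on the $\ell$-sum, which is pointwise dominated by $\big(\tfrac{(m+\ell)!}{(m+n)!}\big)^{2\sigma_\ast}$ because $(m+\ell)!/(m+n)!\leq 1$ for $\ell\leq n$ and $2\sigma+2\sigma_\ast\geq 2\sigma_\ast$; hence the stated inequality \eqref{beed:mu} follows a fortiori. I do not expect a genuine obstacle here --- the argument is a routine $\nu|k|^2$-rescaling of Lemma~\ref{lem:tm:1}; the only point requiring care is that the quantity one slates for absorption is a bona fide $\mathcal{D}^{(\mu)}_{m,n;k}$-term rather than a $\mathcal{D}^{(\gamma)}$ or mixed term, which is automatic once the $\mu$-flavored bounds \eqref{Kest}, \eqref{Kest_sum} are used throughout.
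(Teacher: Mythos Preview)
Your proposal is correct and follows essentially the same approach as the paper, whose proof is a single sentence stating that the argument is identical to Lemma~\ref{lem:tm:1} with the $S_{m,n}^{(1,0,0)}\omega_k$ bounds replaced by the $\mu$-flavored bound \eqref{Kest} on $S_{m,n}^{(1,0,1)}\omega_k$. Your additional bookkeeping remark about the exponent $2\sigma+2\sigma_\ast$ versus $2\sigma_\ast$ is a correct and useful observation that the paper leaves implicit.
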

\begin{proof} This follows in an identical manner to Lemma \ref{lem:tm:1}, with the bounds on $S_{m,n}^{(1,0,0)}\omega_k$ being replaced by the bound \eqref{Kest}.
\end{proof}

\ifx 
To conclude this section, we observe that the estimates \eqref{CommCnqbdsum}, \eqref{CommCnqBd1sum}, \eqref{beedsum:mu} are the desired bounds \eqref{Im:qu:a}, \eqref{Im:qu:d} and \eqref{Im:qu:g}.\fi 
\subsection{Commutator Estimates, $\bold{C}^{(m,n)}_{\mathrm{visc}, k}$}
\label{visc:comm:sect}
\ifx\siming{\begin{align*}
(\frac{\pa_y}{v_y})\Rightarrow&\pav,\quad {\bf a}_{n,m}\Rightarrow {\bf a}_{m,n},\quad\bold{C}^{(n)}_{visc, k}\Rightarrow\bold{C}^{(m,n)}_{\mathrm{visc}, k},\quad \bold{C}^{(n)}_{trans, k}\Rightarrow\bold{C}^{(m,n)}_{\mathrm{trans}, k}\\
\omega_{m,n;k}\Rightarrow& \omnk=q^n|k|^m\Gamma_k^n\omega_k,\quad n,m\Rightarrow m,n,\quad \tilde{\chi}_1\Rightarrow\widetilde{\chi}_1,\quad D_k\Rightarrow q\Gamma_k,\quad D_0^l\Rightarrow q^l\pav^l.
\end{align*}I hope there will be no terms like $\omega_{n,m-\ell+1}$ be changed to $\omega_{m,n-\ell+1}$.  
You can search for the $\mr{\cdots}$ symbol to find the new $\omnk. $} \siming{Change: $m\rightarrow \ell$}\fi
Recall that the commutator coming from the viscous term \eqref{ghtyU:2} reads as follows  
\begin{align*} 
	 \bold{C}^{(m,n)}_{{\rm visc}, k} :=   \nu {q^n}\sum_{\ell = 1}^n \binom{n}{\ell}  \pav^\ell v_y^2 \pav^2 \Gamma_k^{n-\ell} |k|^m\omega_k 
	=   \nu {q^n}\sum_{\ell = 1}^n \binom{n}{\ell}  \pav^\ell \paren{v_y^2-1} \pav^2 \Gamma_k^{n-\ell}|k|^m \omega_k.
\end{align*}
Then we have the following lemma for the $\gamma$ level estimate. 
\begin{lemma}[Viscous commutator for $\gamma$ estimate] 
	Let $\bold{C}^{(m,n)}_{\mathrm{visc}, k}$ be defined as above. It holds   
	\label{visc:comm}
	\begin{align}
		\label{visc:comm:esti}
			& \sum_{m,n\ge 0}\sum_{k\neq 0} \abs{\brk{ |\bold{a}_{m,n}|^2    \omnk, \bold{C}^{(m,n)}_{\mathrm{visc}, k} e^{2W}\chi_{m+n}^2}}
			\nn&\qquad
			\lesssim 	\eps  \nu^{100}
			\sqrt{\mathcal{E}^{(\gamma)}} \paren{		\sqrt{\cd^{(\gamma)}} + \sqrt{\mathcal{CK}^{(\gamma)}}}  + \eps  \nu^{100}
			\mathcal{D}^{(\gamma)}.
	\end{align}
\end{lemma}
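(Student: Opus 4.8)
The plan is to estimate the viscous commutator $\bold{C}^{(m,n)}_{\mathrm{visc},k}$ by exploiting a crucial feature of its structure: the factor $\pav^\ell(v_y^2-1)$ with $\ell \geq 1$ means every term carries at least one $\pav$-derivative of the coordinate-system function $v_y^2-1$. On the support of $\chi_{m+n}$ (which lives in $\{|y|>x_{m+n}\}$), the function $v_y^2 - 1 = H(H+2)$ is controlled by the coordinate functionals $\mathcal{E}_H^{(\iota)}, \mathcal{E}_{\overline H}^{(\iota)}$, which by hypothesis \eqref{boot:H} are $\lesssim \eps^2$ and come with the strong localization weight $e^{W/2}$ and high powers of $\langle t\rangle$. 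Because the interior part of $v_y^2 - 1$ enjoys the doubly-exponential smallness \eqref{v_y_sob} (via the $\wt\chi_1^{\mf c}$-localized interior estimate), and because the weight $W$ is extremely strong, one extracts the large negative power $\nu^{100}$ of $\nu$ by trading off powers of $\nu$ coming from $\mathcal{D}^{(\gamma)}$ (which has a full $\nu$) against the $\langle t\rangle$-growth and the fact that $t \lesssim \nu^{-1/3-\eta}$.

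First I would write, using $\mathring\omega_{m,n;k} = |k|^m q^n \Gamma_k^n\omega_k$, the pairing as a sum over $\ell$ from $1$ to $n$ of
\begin{align*}
\nu |\bold{a}_{m,n}|^2 \binom{n}{\ell}\left|\brk{\mathring\omega_{m,n;k},\, q^n \pav^\ell(v_y^2-1)\, \pav^2\Gamma_k^{n-\ell}|k|^m\omega_k\, e^{2W}\chi_{m+n}^2}\right|,
\end{align*}
and integrate by parts one $\pav$ (equivalently $v_y^{-1}\partial_y$) to balance derivatives, being careful that the boundary terms vanish because $\omega_k|_{y=\pm1}=0$ and $q|_{y=\pm1}=0$ (for $n\geq 1$; the $n\geq 1$ restriction is automatic since $\ell\geq 1$). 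The key algebraic step is a Leibniz-type splitting: distribute the $\ell$ derivatives in $\pav^\ell(v_y^2-1)$ against the remaining structure, producing terms of the schematic form (coordinate factor)$\times$(a quantity controlled by $\mathcal{D}^{(\gamma)}_{m,n';k}$ or $\mathcal{CK}^{(\gamma)}_{m,n';k}$ or $\sqrt{\mathcal{E}^{(\gamma)}}$ for $n' \leq n$). Here the combinatorial factor $\binom{n}{\ell}$ is absorbed by the super-factorial decay built into $B_{m,n}$ versus $B_{m,n-\ell}$ via the Gevrey inequality \eqref{gevbd1212} (iterated $\ell$ times), exactly as in the proof of Lemma \ref{lem:SLJ}, and the $q^n$ versus $q^{n-\ell}$ mismatch is handled by the $S^{(a,b,c)}$-operator bounds \eqref{S:est:all}, since one copy of $q$ can always be spared.

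The quantitative $\nu$-power accounting is the heart of the matter. Each term has one explicit $\nu$ from the equation; the functional $\mathcal{D}^{(\gamma)}$ supplies $\nu^{1/2}\nabla$ per slot; and the coordinate functionals $\mathcal{E}_H,\mathcal{E}_{\overline H}$ carry $\langle t\rangle^{3+2\ss}$ weights and the interior smallness $\exp\{-\nu^{-1/6}\}$ (or $\exp\{-\nu^{-1/9}\}$ from Theorem \ref{pro:ell:intro}), which beats any polynomial power of $\nu^{-1}$ on the time interval $t\lesssim\nu^{-1/3-\eta}$; combining these with the strong weight relation \eqref{W_prop} (which lets us pay $|\partial_y W|$ factors against $\sqrt{-\partial_t W}$, i.e. against $\mathcal{CK}^{(\gamma;W)}$) yields the claimed $\nu^{100}$. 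After summing in $\ell$, then in $k$ and $(m,n)$ — using a Cauchy–Schwarz in $(m,n)$ and the Schur-type bound \eqref{sum_comb}/\eqref{gevbd1212} to control the geometric series in $\lambda^s$, exactly as in \eqref{sqr_upgrade} — the right-hand side collapses to $\eps\nu^{100}\sqrt{\mathcal{E}^{(\gamma)}}(\sqrt{\mathcal{D}^{(\gamma)}}+\sqrt{\mathcal{CK}^{(\gamma)}}) + \eps\nu^{100}\mathcal{D}^{(\gamma)}$ after a final Young's inequality to separate the pure-$\mathcal{D}^{(\gamma)}$ contribution from the cross terms.

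The main obstacle I anticipate is the bookkeeping for the terms where many of the $\ell$ derivatives land on $v_y^2-1$ near the transition zone of the cutoffs $\chi_{m+n}$: there the coordinate functional is evaluated on $\supp(\nabla^k\chi_{m+n})\subset\{\chi_{m+n-1}=1\}$ and one must carefully match the Gevrey index loss $(m+n)^{j(1+\sigma)}$ from \eqref{chi:prop:3} with the $(n+1)^{2\ss-2}\bold{a}_{n+1}^2$ weights appearing in $\mathcal{E}_{\overline H}^{(\iota)}$ — i.e. verify that the coordinate functionals were defined with enough room (the extra $(n+1)^{2\ss-2}$ and the shift $\bold{a}_{n+1}$ versus $\bold{a}_n$) to absorb these losses. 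A secondary technical point is ensuring that the split between interior smallness (doubly exponential) and exterior coordinate control ($\eps^2$ with $\langle t\rangle$-weights) is done along the $\wt\chi_1/\wt\chi_1^{\mf c}$ partition in a way compatible with the $\chi_{m+n}$ partition; this is where one invokes Lemma \ref{lem:ExtToInt}-type transfer estimates. Everything else is a routine, if lengthy, iteration of the commutator identities of Lemma \ref{lem:com:BA} together with the $S$-operator bounds.
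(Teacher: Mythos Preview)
Your overall strategy matches the paper's: integrate one $\pav$ by parts to reduce $\pav^2\Gamma_k^{n-\ell}\omega_k$ to $\pav\Gamma_k^{n-\ell}\omega_k$, then estimate trilinearly using Gevrey combinatorics and the coordinate hypotheses. However, your account of the $\nu^{100}$ factor is confused. It does not come from any ``interior smallness $\exp\{-\nu^{-1/6}\}$'' carried by $\mathcal{E}_H$ --- the hypothesis \eqref{boot:H} only gives $\mathcal{E}_H^{(\iota)}\lesssim\eps^2$ --- nor from Lemma~\ref{lem:ExtToInt}, which transfers regularity in the opposite direction and is not used here. The mechanism is a pure weight mismatch: the coordinate functionals $\mathcal{E}_H^{(\iota)}$ are defined with $e^{W/2}$, while the raw factor $\|q^{\ell-1}\pav^\ell(v_y^2-1)\chi_{\ell-1}\|_{L^2}$ carries none. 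On $\supp\chi_{\ell-1}$ one has $|y|\geq x_{\ell-1}\geq 3/8$, so for $t\lesssim\nu^{-1/3-\eta}$ the weight satisfies $W\gtrsim c\nu^{-2/3+\eta}$, whence $e^{-W/2}\lesssim\nu^{100}$; this is the relation \eqref{nu:to:W} in the paper. One inserts $e^{W/2}\cdot e^{-W/2}$, pulls out $\nu^{100}$, and the weighted norm that remains is controlled by $\eps$ via \eqref{boot:H} and the product lemmas.

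The paper's organizational device you omit is a high--low split in $\ell$: for $\ell>n/2$ the coordinate factor goes in $L^2$ and the $\omega$-factor in $L^\infty$ via Sobolev embedding; for $\ell\leq n/2$ the roles reverse. This is what makes the coefficient $\bold{a}_{m,n}\binom{n}{\ell}\bold{a}_{m,n-\ell+1}^{-1}\bold{a}_{0,\ell}^{-1}$ summable through Corollary~\ref{comb:boun}. Iterating \eqref{gevbd1212} $\ell$ times, as you suggest, does not give the right combinatorics for $\ell$ comparable to $n$. Finally, the integration by parts generates six terms (the derivative lands on $\mathring\omega_{m,n;k}$, on $e^{2W}$, on each of the two $q$-factors, on $\chi_{m+n}$, and on $v_y^{-1}$), and the $\partial_yW$ term in particular is handled via the crude bound $\partial_yW\lesssim\nu^{-1}$ combined with \eqref{nu:to:W}, not via \eqref{W_prop}.
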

\begin{proof}
For each fixed $ m,n\in \mathbb{N}$, we integrate by parts to obtain
\begin{align}
	\label{visc:comm:eq01}
	& {\nu^{-1}}\brk{ |\bold{a}_{m,n}|^2    \omnk,   \bold{C}^{(m,n)}_{\mathrm{visc}, k} e^{2W}\chi_{m+n}^2}
	\nn&\qquad\qquad\qquad=
	\sum_{l=1}^{n} \binom{n}{l}
	\brk{	  |\bold{a}_{m,n}|^2  \omnk,  |k|^mq^{n}\pav^l  (v_y^2 - 1) \pav^2 \Gamma_k^{n-l} \omega_k e^{2W}\chi_{m+n}^2}
	\nn&\qquad\qquad\qquad=
	-\sum_{l=1}^{n} \binom{n}{l}
	\brk{|\bold{a}_{m,n}|^2  \pav\omnk,   |k|^mq^{n}\pav^l  (v_y^2 - 1) \pav \Gamma_k^{n-l} \omega_k e^{2W}\chi_{m+n}^2  }
	\nn&\qquad\qquad\qquad\quad
	-\sum_{l=1}^{n} \binom{n}{l}
	\brk{|\bold{a}_{m,n}|^2  \omnk,   |k|^mq^{n}\pav^{l}  (v_y^2 - 1) \frac{\pa_y}{v_y} \Gamma_k^{n-l} \omega_k \frac{2\pa_yW}{v_y}e^{2W}\chi_{m+n}^2  }
	\nn&\qquad\qquad\qquad\quad
	-\sum_{l=1}^{n} \binom{n}{l}
	\brk{|\bold{a}_{m,n}|^2  \omnk,   |k|^m \pav\paren{q^{l}\pav^{l}  (v_y^2 - 1) }q^{n-l}\pav \Gamma_k^{n-l} \omega_k e^{2W}\chi_{m+n}^2  }
	\nn&\qquad\qquad\qquad\quad
	-\sum_{l=1}^{n} \binom{n}{l}
	\brk{|\bold{a}_{m,n}|^2  \omnk,   |k|^m q^{l}\pav^{l}  (v_y^2 - 1)  \pav q^{n-l}\pav \Gamma_k^{n-l} \omega_k e^{2W}\chi_{m+n}^2  }
	\nn&\qquad\qquad\qquad\quad
	-\sum_{l=1}^{n} \binom{n}{l}
	\brk{|\bold{a}_{m,n}|^2  \omnk,   |k|^m q^{n}\pav^{l}  (v_y^2 - 1) \pav \Gamma_k^{n-l} \omega_k e^{2W}
		\frac{2\pa_y\chi_{m+n}}{v_y}\chi_{m+n}  }
	\nn&\qquad\qquad\qquad\quad
	-\sum_{l=1}^{n} \binom{n}{l}
	\brk{|\bold{a}_{m,n}|^2  \omnk,  |k|^mq^{n} \partial_y\paren{\frac{1}{v_y}}\pav^{l}  (v_y^2 - 1) \pav \Gamma_k^{n-l} \omega_k e^{2W}\chi_{m+n}^2  }
	\nn&\qquad\qquad\qquad= 	V_1+V_2+V_3+V_4+ V_5 + V_6.
\end{align}
\textbf{The term $V_1$:}
	We divide the sum into two cases: 
	\begin{align*}
		V_1&=-\sum_{l=1}^{n} \binom{n}{l}
		\brk{|\bold{a}_{m,n}|^2  \pav\omnk,   |k|^mq^{n} \pav^l  (v_y^2 - 1) \pav \Gamma_k^{n-l} \omega_k e^{2W}\chi_{m+n}^2  }
		\nn&=-
		\paren{\sum_{n/2\le l \le n} + 	\sum_{0<l <n/2}} \binom{n}{l}
		\brk{|\bold{a}_{m,n}|^2  \pav\omnk,  |k|^mq^{n} \pav^l  (v_y^2 - 1) \pav \Gamma_k^{n-l} \omega_k e^{2W}\chi_{m+n}^2  }
	\end{align*}
	For the first piece, we use H\"older's inequality to arrive at 
	\begin{align*}
		V_{11}: & = \sum_{n/2\le l \le n} \binom{n}{l}
		\brk{|\bold{a}_{m,n}|^2  \pav\omnk,  |k|^mq^{n} \pav^l  (v_y^2 - 1) \pav \Gamma_k^{n-l} \omega_k e^{2W}\chi_{m+n}^2  } \\&
		 \le 
		\sum_{n/2\le l \le n} \binom{n}{l} 
		|\bold{a}_{m,n}|^2 \enorm{ \pav\omnk e^W\chi_{m+n}} \enorm{ q^{l-1}\pav^l  (v_y^2 - 1)   \chi_{l-1}\widetilde{\chi}_1 } 
		\\&\quad\times
		\norm{  |k|^mq^{n-l+1} \pav \Gamma_k^{n-l} \omega_k e^{W} \chi_{m+n}}_{L^\infty}.
	\end{align*}
	By the Sobolev inequality and $|v_y| \gtrsim 1$, it follows
	\begin{align*}
		&\norm{ |k|^mq^{n-l+1} \pav \Gamma_k^{n-l} \omega_k  e^{W} \chi_{m+n}}_{L^\infty}
		\\&\qquad\lesssim 
		\enorm{ |k|^mq^{n-l+1} \pav \Gamma_k^{n-l} \omega_k e^{W}  \chi_{m+n-l+1}} 
				\\&\qquad\quad+ 	\enorm{ |k|^m \partial_y \paren{q^{n-l+1} \pav \Gamma_k^{n-l} \omega_k e^{W}  \chi_{m+n-l+1}}}
		\\&\qquad\lesssim
		\enorm{ |k|^mq^{n-l+1} \pav \Gamma_k^{n-l} \omega_k e^{W}  \chi_{m+n-l+1}} 
						\\&\qquad\quad+ 	
		\enorm{ |k|^m \partial_y \paren{q^{n-l+1} \pav \Gamma_k^{n-l} \omega_k  } e^{W} \chi_{m+n-l+1}}
		\\&\qquad\quad 
		+
		\enorm{ |k|^m q^{n-l+1} \pav \Gamma_k^{n-l} \omega_k e^{W} \partial_y \chi_{m+n-l+1}}
							\\&\qquad\quad+ 	
		\enorm{  |k|^m q^{n-l+1} \pav \Gamma_k^{n-l} \omega_k  \partial_y W e^{W} \chi_{m+n-l+1}}
		\\&\qquad
		= \rom{1}_{1}+\rom{1}_{2}+\rom{1}_{3}+\rom{1}_{4}.
	\end{align*}
	The $\rom{1}_{1}$ term is estimated by
	\begin{align*}
		\rom{1}_{1} 
		&\lesssim
		\enorm{ |k|^m q^{n-l+1}\Gamma_k^{n-l+1} \omega_k e^{W}   \chi_{m+n-l+1}} 
						\\&\qquad+ 	\enorm{ |k|^m |kt| q^{n-l+1}\Gamma_k^{n-l} \omega_k e^{W}   \chi_{m+n-l+1}}
		\\&\lesssim
		\enorm{ |k|^m q^{n-l+1}\Gamma_k^{n-l+1} \omega_k e^{W}   \chi_{m+n-l+1}} 
		\\&\qquad+ 	t\enorm{ |k|^{m+1}  q^{n-l}\Gamma_k^{n-l} \omega_k e^{W}   \chi_{m+n-l+1}}
		\\&\lesssim
		\enorm{ |k|^m q^{n-l+1}\Gamma_k^{n-l+1} \omega_ke^{W}   \chi_{m+n-l+1}} 
		\\&\qquad+ 	\nu^{-1/3-\eta} \enorm{ |k|^{m+1}  q^{n-l}\Gamma_k^{n-l} \omega_k  e^W \chi_{m+n-l+1}}
	\end{align*}
	where we used the definition of $\Gamma$, the boundedness of $q$, and $t\le \nu^{-1/3-\eta}$. 
	The treatment of $\rom{1}_{2}$ is slightly different:
	\begin{align}
		\rom{1}_{2} &=
		\enorm{ |k|^m \partial_y \paren{q^{n-l+1} (\Gamma_k-ikt) \Gamma_k^{n-l} \omega_k } e^{W}  \chi_{m+n-l+1}}
		\nn&\le
		\enorm{ |k|^m \partial_y \paren{q^{n-l+1} \Gamma_k^{n-l+1} \omega_k}  e^{W}  \chi_{m+n-l+1}}
		\n \\ &\qquad + 		\enorm{ |k|^m \partial_y \paren{q^{n-l+1} kt\Gamma_k^{n-l} \omega_k}  e^{W}  \chi_{m+n-l+1}}
		\nn&\le
		\enorm{ |k|^m \partial_y \paren{q^{n-l+1} \Gamma_k^{n-l+1} \omega_k}  e^{W}  \chi_{m+n-l+1}}
		\n \\  &\qquad+ 		\enorm{ |k|^m \partial_yq q^{n-l} kt\Gamma_k^{n-l} \omega_k  e^{W}  \chi_{m+n-l+1}}
		\nn&\qquad
		+ 		t\enorm{ |k|^{m+1} q \partial_y\paren{q^{n-l} \Gamma_k^{n-l} \omega_k } e^{W}  \chi_{m+n-l+1}}
		\nn&\le
	    \enorm{ |k|^m \partial_y \paren{q^{n-l+1} \Gamma_k^{n-l+1} \omega_k}  e^W  \chi_{m+n-l+1}}
		\n \\  &\qquad+ 	\nu^{-1/3-\eta}	\enorm{ |k|^{m+1} q^{n-l} \Gamma_k^{n-l} \omega_k    \chi_{m+n-l+1}}
		\nn&\qquad
		+ 	\nu^{-1/3-\eta}\enorm{ |k|^{m+1} q \partial_y\paren{q^{n-l} \Gamma_k^{n-l} \omega_k }e^W  \chi_{m+n-l+1}}
		\label{V:1:2}
	\end{align}
	where we used the boundedness of $\partial_yq$ as well as \eqref{nu:to:W} in the last step.
	For the term $\rom{1}_{3}$, from the definition of $\chi_{m+n}$ with $\Gamma$, $v_y \gtrsim 1$, and~\eqref{nu:to:W}, we have
	\begin{align*}
		\rom{1}_{3} &=	\enorm{ |k|^m q^{n-l+1} \pav \Gamma_k^{n-l} \omega_k  e^W \partial_y\chi_{m+n-l+1}}
		\\&\lesssim (m+n-l)^{1+\sigma} \enorm{ |k|^m q^{n-l+1} \pa_y\Gamma_k^{n-l} \omega_k e^W  \chi_{m+n-l} \widetilde{\chi}_1}
		\\&\lesssim
		(m+n-l)^{1+\sigma} \enorm{ |k|^m \partial_{y} (q^{n-l} \Gamma_k^{n-l} \omega_k ) e^W \chi_{m+n-l}\widetilde{\chi}_1 }
		\\&\qquad+
		 (m+n-l)^{1+\sigma}(n-l) \enorm{ |k|^{m+1} q^{n-l} \Gamma_k^{n-l} 
		 	\omega_k e^W\chi_{m+n-l}\widetilde{\chi}_1 }
	\end{align*}
where $ {\widetilde{\chi}_1}$ is a fattened version of $\chi_1$ while $\supp {\widetilde{\chi}_1} \subset [-1, -1/2]\cup[1/2, 1]$.  	Next we turn to the last term $\rom{1}_{4}$. 
From the definition of 
 the weight $W$, it is straightforward to check that
 \begin{align*}
 	\partial_{y} W \lesssim \frac{1}{\nu},
 \end{align*}
 from where using the bound for $\rom{1}_1$ we arrive at 
\begin{align*}
		\abs{\rom{1}_4} \lesssim& \frac{1}{\nu} \enorm{  |k|^m q^{n-l+1} \pav \Gamma_k^{n-l} \omega_k e^{W} \chi_{m+n-l+1}}
		\\ \lesssim & 
		\nu^{-1}\enorm{ |k|^m q^{n-l+1}\Gamma_k^{n-l+1} \omega_ke^{W}   \chi_{m+n-l+1}} 
		\\&\quad+ 	\nu^{-4/3-\delta} \enorm{ |k|^{m+1}  q^{n-l}\Gamma_k^{n-l} \omega_k  e^W \chi_{m+n-l+1}}.
\end{align*}
Therefore, we obtain
\begin{align*}
	\nu \sum_{m,n\ge 0}\sum_{k\neq 0} V_{11} &\lesssim
	\nu \sum_{m,n\ge 0}\sum_{k\neq 0}\sum_{n/2\le l \le n} \binom{n}{l} 
	\bold{a}_{m,n}\bold{a}_{0,l}^{-1}\bold{a}_{m,n-l+1}^{-1}|\bold{a}_{m,n}| \enorm{ \pav\omnk e^W\chi_{m+n}} 
	\\&\quad\times
	\bold{a}_{0,l}\enorm{ q^{l-1}\pav^l  (v_y^2 - 1)  \chi_{l}} \bold{a}_{m,n-l+1}
	\\&\quad \times
	\paren{
		\nu^{-1}\enorm{ \mr\omega_{m, n-l+1; k} e^{W}   \chi_{m+n-l+1}} 
		+ \nu^{-4/3-\delta}  \enorm{\mr \omega_{m+1, n-l; k}  e^W \chi_{m+n-l+1}} \right.
		\\&\quad\quad\quad\left.
		+
		\enorm{  \partial_y \mr\omega_{m, n-l+1; k}  e^W \chi_{m+n-l+1}}
		+ 	\nu^{-1/3-\eta}\enorm{  q \partial_y\mr\omega_{m+1, n-l; k}e^W  \chi_{m+n-l+1}}
			\right.	\nn&\quad \quad\quad
		\left.
+(m+n-l)^{1+\sigma} \enorm{  \partial_{y} \mr\omega_{m, n-l; k} e^{W} \chi_{m+n-l}\widetilde{\chi}_1} \right. 
		\\&\quad\quad\quad\left.
+
(m+n-l)^{1+\sigma}(n-l) \enorm{ \mr\omega_{m+1, n-l; k} e^{W} \chi_{m+n-l}\widetilde{\chi}_1 } }.
\end{align*}
Next we use Corollary~\ref{comb:boun}, \eqref{comb:boun:rem}, Young's inequality, and H\"older's inequality to obtain
\begin{align*}
	\nu \sum_{m,n\ge 0}\sum_{k\neq 0} V_{11} &\lesssim 	\sqrt{\mathcal{D}^{(\gamma)}} \paren{		\sqrt{\mathcal{E}^{(\gamma)}}
		+ \sqrt{\mathcal{D}^{(\gamma)}}} 
		\paren{	\sum_{n\ge1} \bold{a}_{0,n}^2 \enorm{ q^{n-1}\pav^n  (v_y^2 - 1) \chi_{m+n}}^2}^{1/2}
\end{align*}
In view of that
	\begin{equation}
		\label{nu:to:W}
		\nu^{-100} \le C e^{W/2}\ \ \ \ \mbox{for}\ \ \ \ t\le \nu^{-1/3-\eta}, 
	\end{equation}
by an argument similar to~\eqref{S:est:1}, we further get
\begin{align}
	\label{est:V:11}
\nu \sum_{m,n\ge 0}\sum_{k\neq 0} V_{11} &\lesssim  \nu^{100}
\sqrt{\mathcal{D}^{(\gamma)}} \paren{		\sqrt{\mathcal{E}^{(\gamma)}} + \sqrt{\mathcal{D}^{(\gamma)}}}
\paren{ \sum_{n\ge1}\sum_{\ell\le n-1} \mathfrak C^{n-\ell} \paren{\frac{n!}{\ell!}}^{-2\sigma}
\bold{a}_{0,\ell}^2 \right.
\nn&\qquad \left.
\times\enorm{ \partial_y \paren{q^{\ell-1}\pav^{\ell-1}  (v_y^2 - 1)}   e^{W/2}\chi_{\ell}}^2}^{1/2}
\nn&\lesssim \nu^{100}
	\sqrt{\mathcal{D}^{(\gamma)}} \paren{		\sqrt{\mathcal{E}^{(\gamma)}} + \sqrt{\mathcal{D}^{(\gamma)}}}
 \paren{\sum_{n\ge0} \bold{a}_n^2 \| \partial_y(q^n\pav^n
(v_y^2-1)) e^{W/2} \chi_n \|_{L^2}^2}^{1/2},
\end{align} 
where we have used Fubini in the last step. Using the Product Lemma~\ref{pro:1} 
leads to
\begin{align*}
	\nu \sum_{m,n\ge 0}\sum_{k\neq 0} V_{11}  &\lesssim 
	\nu^{99}
	\sqrt{\mathcal{D}^{(\gamma)}} \paren{		\sqrt{\mathcal{E}^{(\gamma)}} + \sqrt{\mathcal{D}^{(\gamma)}}}
	 \paren{\sqrt{\mathcal{E}_{H}^{(\alpha)}} + \mathcal{E}_{H}^{(\alpha)} }
	\\&\lesssim
	\eps \nu^{99}
	\sqrt{\mathcal{D}^{(\gamma)}} \paren{		\sqrt{\mathcal{E}^{(\gamma)}} + \sqrt{\mathcal{D}^{(\gamma)}}}.
\end{align*}
	Now we consider the case $0<l<n/2$:
	\begin{align*}
		V_{1,2}:&=\sum_{0<l <n/2} \binom{n}{l}
		\brk{|\bold{a}_{m,n}|^2  \pav\omnk,  |k|^mq^{n} \pav^l  (v_y^2 - 1) \pav \Gamma_k^{n-l} \omega_k e^{2W}\chi_{m+n}^2  }
		\\&\lesssim
		\sum_{0<l <n/2} \binom{n}{l}  |\bold{a}_{m,n}|^2
		\enorm{ \pav\omnk\chi_{m+n}}  \norm{q^{l}\pav^l  (v_y^2 - 1) \chi_{m+n}}_{L^\infty} 
		\\&\quad \times	\enorm{ |k|^mq^{n-l}  \pav \Gamma_k^{n-l} \omega_k e^{W}\chi_{m+n-l}\widetilde{\chi}_1}.
	\end{align*}
	For the last term in the above expression, 
	noting 
	that 
	\begin{equation}
		\label{comm:q:deri}
		[q^\ell, \partial_y] = - \ell q^{\ell-1}\partial_yq
	\end{equation}
	and 
	\begin{equation}
		\label{q:deri}
		q^\ell\partial_y f= \partial_y(q^\ell f) +  [q^\ell, \partial_y]f
	\end{equation} 
	for any $\ell\in\mathbb{N}$,
	we get
	\begin{align*}
		&\enorm{ |k|^mq^{n-l}  \pav \Gamma_k^{n-l} \omega_k e^{W}\chi_{m+n}} 
		\nn&\qquad\lesssim
		\enorm{ |k|^m  \pa_y \paren{q^{n-l}\Gamma_k^{n-l}\omega_k}  e^{W}\chi_{m+n}} 
		\\&\qquad\qquad+
		(n-l) \enorm{ |k|^mq^{n-l-1}\partial_yq  \Gamma_k^{n-l} \omega_k e^{W}\chi_{m+n}} 
		\nn&\qquad\lesssim
		\enorm{ |k|^m  \pa_y \paren{q^{n-l}\Gamma_k^{n-l}\omega_k}  e^{W}\chi_{m+n}} 
		+
		\enorm{ |k|^m S_{m,n-l}^{(1,0,0)}\omega_k e^{W}\chi_{m+n}}
	\end{align*} 
	where we used the boundedness of $\partial_yq$ and~\eqref{S_nq} 
	   in the last step.
	Similar as before, the main difficulty is to estimate
	\begin{align}
		\label{l:inft:coor}
		&\norm{q^{l}\pav^l  (v_y^2 - 1) \chi_{m+n}}_{L^\infty} \lesssim 	
		\norm{q^{l}\pav^l  (v_y^2 - 1) \chi_{ l+1}}_{L^2}
		+ \norm{\partial_y\paren{q^{l}\pav^l  (v_y^2 - 1) \chi_{l+1}}}_{L^2}
		\nn&\qquad
		\lesssim
		\norm{q^{l}\pav^l  (v_y^2 - 1) \chi_{ l+1}}_{L^2}
		+ \norm{\partial_y(q^{l}\pav^l  (v_y^2 - 1)) \chi_{ l+1}}_{L^2}
		\nn&\qquad\qquad+
		( l+1)^{1+\sigma} \norm{q^{l}\pav^l  (v_y^2 - 1) \chi_{l}}_{L^2}
		\nn&\qquad
		\lesssim
		\paren{ l^{1+\sigma} + 1}\norm{q^{l}\pav^l  (v_y^2 - 1) \chi_{l}}_{L^2}
		+ \norm{\partial_y(q^{l}\pav^l  (v_y^2 - 1)) \chi_{ l}}_{L^2}.
	\end{align}
Collecting the estimates about $V_{12}$, by Corollary~\ref{comb:boun}, \eqref{comb:boun:rem}, Young's inequality, and H\"older's inequality, similarly as~\eqref{est:V:11}, we have
\begin{align*}
	\nu \sum_{n,m\ge 0}\sum_{k\neq 0} V_{12} &\lesssim \nu^{100}
	\sqrt{\mathcal{D}^{(\gamma)}} \paren{		\sqrt{\mathcal{E}^{(\gamma)}} + \sqrt{\mathcal{D}^{(\gamma)}}}
	\norm{v_y^2-1}_{Y_{1,0}}
	\lesssim \eps  \nu^{100}
	\sqrt{\mathcal{D}^{(\gamma)}} \paren{		\sqrt{\mathcal{E}^{(\gamma)}} + \sqrt{\mathcal{D}^{(\gamma)}}}.
\end{align*}
\textbf{The term $V_2$:}
	Like in the previous situation, we consider two cases:
	\begin{align}
		\label{V:2:1}
		V_2&=-\sum_{l=1}^{n} \binom{n}{l}
		\brk{|\bold{a}_{m,n}|^2  \omnk,   |k|^mq^{n}\pav^{l}  (v_y^2 - 1) \pav \Gamma_k^{n-l} \omega_k \frac{2\pa_yW}{v_y}e^{2W}\chi_{m+n}^2  }
		\nn&= 
		-\paren{\sum_{n/2\le l \le n} + 	\sum_{0<l <n/2}} \binom{n}{l} \lf(\cdots\rg) =: V_{21} + V_{22}.
	\end{align}	
	For the first piece, we use Cauchy-Schwarz inequality to obtain
	\begin{align*}
		V_{21}:&=\sum_{n/2\le l \le n} \binom{n}{l} 
		\brk{|\bold{a}_{m,n}|^2  \omnk,   |k|^mq^{n}\pav^{l}  (v_y^2 - 1) \pav \Gamma_k^{n-l} \omega_k \frac{2\pa_yW}{v_y}e^{2W}\chi_{m+n}^2  }
		\nn& \lesssim
		\sum_{n/2\le l \le n} \binom{n}{l} 
		|\bold{a}_{m,n}|^2 \enorm{ \omnk e^W\chi_{m+n}}   \norm{q^{l-1}\pav^{l}  (v_y^2 - 1) e^{W/2} \chi_{l-1}\widetilde{\chi}_1}_{ L^2}
		\nn&\quad\times
		\norm{  |k|^m q^{n-l+1}\pav \Gamma_k^{n-l} \omega_k \frac{2\pa_yW}{v_y} e^{W/2} \chi_{m+n-l+1}}_{ L^\infty}.
	\end{align*}
For the second term in the above bound, we invoke~\eqref{S:est:1}.
	For the third term, by Sobolev embedding, we arrive at
	\begin{align*}
		&\norm{  |k|^m q^{n-l+1}\pav \Gamma_k^{n-l} \omega_k \frac{2\pa_yW}{v_y}e^{W/2} \chi_{m+n-l+1}    }_{ L^\infty}
		\\&\qquad\lesssim	
		\norm{  |k|^m q^{n-l+1}\pav \Gamma_k^{n-l} \omega_k \pa_yW e^{W/2} \chi_{m+n-l+1}    }_{ L^\infty}
		\\&\qquad\lesssim
		\norm{  |k|^m q^{n-l+1}\pav \Gamma_k^{n-l} \omega_k \pa_yW e^{W/2} \chi_{m+n-l+1}    }_{ L^2}
		\\&\qquad\qquad+	\norm{  |k|^m\partial_y\paren{ q^{n-l+1}\pav \Gamma_k^{n-l} \omega_k \pa_yW  e^{W/2} \chi_{m+n-l+1}   } }_{ L^2}
		\\&\qquad\lesssim	
		\norm{  |k|^m q^{n-l+1}\pav \Gamma_k^{n-l} \omega_k \pa_yW e^{W/2} \chi_{m+n-l+1}    }_{ L^2}
		\\&\qquad\qquad+		\norm{  |k|^m\partial_y\paren{q^{n-l+1}\pav \Gamma_k^{n-l}} \omega_k \pa_yW e^{W/2} \chi_{m+n-l+1}   }_{ L^2}		
		\\&\qquad\qquad
		+	\norm{  |k|^mq^{n-l+1}\pav \Gamma_k^{n-l} \omega_k \partial_y \paren{\pa_yW e^{W/2}} \chi_{m+n-l+1}    }_{ L^2}		
		\\&\qquad\qquad+		\norm{  |k|^m   q^{n-l+1}\pav \Gamma_k^{n-l} \omega_k \pa_yW e^{W/2} \partial_y\paren{\chi_{m+n-l+1} } }_{ L^2}
		\\&\qquad
		=\rom{2}_{1} + \rom{2}_{2} + \rom{2}_{3} + \rom{2}_{4}.
	\end{align*}
	For the $\rom{2}_{1}$ term, we use 
	\begin{equation}
		\label{prop:W}
	\nu^{-100}	\partial_yW \lesssim e^{W/2}, \ \ \ \ \ \ \ 		\nu^{-100} \partial_y^2W \lesssim e^{W/2},\quad \forall t\leq \nu^{-1/3-\eta},
	\end{equation}
	and \eqref{nu:to:W}
	to obtain
	\begin{align*}
		\rom{2}_{1} &\lesssim
		\norm{  |k|^m q^{n-l+1} \Gamma_k^{n-l+1} \omega_k \pa_yW e^{W/2} \chi_{m+n-l+1}    }_{ L^2}
		\\&\qquad   
		+
		t \norm{  |k|^{m+1} q^{n-l+1} \Gamma_k^{n-l} \omega_k \pa_yW e^{W/2} \chi_{m+n-l+1}    }_{ L^2}
		\\&\lesssim 	\nu^{100}
		\norm{ \mr \omega_{m, n-l+1; k}  e^{W}  \chi_{m+n-l+1}    }_{ L^2}
		+ \nu^{99}
          \norm{ \mr \omega_{m+1, n-l; k}e^{W}  \chi_{m+n-l+1}    }_{ L^2}
	\end{align*}
	for $t\le\nu^{-1/2}$. The second term $\rom{2}_{2}$ is treated in a similar fashion
	\begin{align*}
		\rom{2}_{2} 
		&\lesssim
		\norm{  |k|^m\partial_y\paren{q^{n-l+1} \Gamma_k^{n-l+1}} \omega_k \pa_yW e^{W/2} \chi_{m+n-l+1}   }_{ L^2}		
		\\&\qquad
		+
		t\norm{  |k|^{m+1}\partial_y\paren{q^{n-l+1} \Gamma_k^{n-l}} \omega_k \pa_yW e^{W/2} \chi_{m+n-l+1}   }_{ L^2}		
		\\&\lesssim \nu^{100}
		\norm{  \partial_y\mathring\omega_{m, n-l+1, k} e^{W} \chi_{m+n-l+1}   }_{ L^2}		
		+
\nu^{99} \norm{  \partial_y\mr\omega_{m+1, n-l; k} e^{W} \chi_{m+n-l+1}   }_{ L^2}
		\\&\qquad
		+ \nu^{99} \norm{  \mr\omega_{m+1, n-l; k} e^{W} \chi_{m+n-l+1}   }_{ L^2}.
	\end{align*}
	By a similar argument as that of $\rom{2}_{2} $, 
	we arrive at
	\begin{align*}       
		\rom{2}_{3} \lesssim
		\nu^{100}\norm{\mr  \omega_{m, n-l+1; k} e^W \chi_{m+n-l+1}    }_{ L^2}
		+
		\nu^{99} \norm{\mr  \omega_{m+1, n-l; k} e^W \chi_{m+n-l+1}    }_{ L^2}.
	\end{align*}
	The last piece $\rom{2}_{4} $ is estimated in an analogous way
	\begin{align*}
		\rom{2}_{4} 
		&\lesssim
		(m+n-l)^{ {1+\sigma}}\norm{  |k|^m   q^{n-l+1}\pav \Gamma_k^{n-l} \omega_k \partial_yW e^{W/2}\chi_{m+n-l}  }_{ L^2}
		\\&\lesssim \nu^{100}
		(m+n-l)^{1+\sigma}\norm{  \partial_{y}\mr \omega_{m, n-l; k} e^W \chi_{m+n-l} }_{ L^2}
		\\&\qquad
		+ \nu^{100}(m+n-l)^{1+\sigma}(n-l)  \norm{ \mr \omega_{m, n-l; k} e^{W}\chi_{m+n-l}  }_{ L^2}.
	\end{align*}
As in the treatment of $V_1$, gathering the estimates, by Corollary~\ref{comb:boun}, \eqref{comb:boun:rem}, Young's inequality, and H\"older's inequality, we again arrive at
\begin{align*}
	\nu \sum_{m,n\ge 0}\sum_{k\neq 0} V_{21} \lesssim 	\eps  \nu^{100}
	\sqrt{\mathcal{E}^{(\gamma)}} \paren{		\sqrt{\cd^{(\gamma)}} + \sqrt{\mathcal{CK}^{(\gamma)}}}.
\end{align*}
The second term in~\eqref{V:2:1} may be estimated by
\begin{align*}
	V_{22}:&=\sum_{0< l < n/2} \binom{n}{l} 
	\brk{|\bold{a}_{m,n}|^2  \omnk,   |k|^mq^{n}\pav^{l}  (v_y^2 - 1) \pav \Gamma_k^{n-l} \omega_k \frac{2\pa_yW}{v_y}e^{2W}\chi_{m+n}^2  }
	\nn& \lesssim
	\sum_{0< l < n/2} \binom{n}{l} 
	|\bold{a}_{m,n}|^2 \enorm{ \omnk e^W\chi_{m+n}}   \norm{q^{l}\pav^{l}  (v_y^2 - 1) \frac{2\pa_yW}{v_y} \chi_{m+n}}_{ L^\infty}
	\nn&\quad\times
	\norm{  |k|^m q^{n-l}\pav \Gamma_k^{ n-l} \omega_k  e^W\chi_{m+n-l} \widetilde{\chi}_1 }_{ L^2}.
\end{align*}
Following the arguments in the treatment of $V_{12}$ step by step, we may arrive at
\begin{align*}
	\nu \sum_{m,n\ge 0}\sum_{k\neq 0} V_{22} \lesssim 
	\eps  \nu^{100}
	\sqrt{\mathcal{E}^{(\gamma)}} \paren{		\sqrt{\cd^{(\gamma)}} + \sqrt{\mathcal{CK}^{(\gamma)}}}.
\end{align*}
\textbf{The term $V_3$:}
	We recall
	\begin{align*}
		V_3
		=   		-\sum_{l=1}^{n} \binom{n}{l}
		\brk{|\bold{a}_{m,n}|^2  \omnk,   |k|^m \pav\paren{q^{l}\pav^{l}  (v_y^2 - 1) }q^{n-l}\pav \Gamma_k^{n-l} \omega_k e^{2W}\chi_{m+n}^2  }
	\end{align*}
	and get by H\"older's inequality 
	\begin{align*}
		\abs{V_3}
		&\lesssim
		\sum_{l=1}^{n} \binom{n}{l}
		|\bold{a}_{m,n}|^2 \norm{ \omnk  e^{W/2}   \chi_{m+n}}_{L^\infty}   \enorm{\pav\paren{q^{l}\pav^{l}  (v_y^2 - 1) } e^{W/2} \chi_{l}} 
		\\&\qquad
		\times \enorm{ |k|^m q^{n-l}\pav \Gamma_k^{n-l} \omega_k e^{W}\chi_{m+n-l} \widetilde{\chi}_1}.
	\end{align*}
	We use Sobolev's inequality to obtain
	\begin{align*}
		&\norm{ \omnk e^{W/2}\chi_{m+n}}_{L^\infty}
		\lesssim
		\enorm{ \omnk  e^{W/2} \chi_{m+n}} + 	\enorm{ \partial_y\paren{\omnk  e^{W/2} \chi_{m+n}}}
		\\	&\qquad\lesssim
		\enorm{ \omnk  e^{W/2} \chi_{m+n}} + 	\enorm{ \partial_y\omnk  e^{W/2} \chi_{m+n}}
						\\	&\qquad\quad
		+ 	\enorm{ \omnk  \partial_y e^{W/2} \chi_{m+n}}
		+ 	(m+n)^{1+\sigma}\enorm{ \omnk   e^{W/2}
			\chi_{m+n-1} \widetilde{\chi}_1 }.
	\end{align*}
	For the last term in the above expression, we have
	\begin{align*}
		(m+n)^{1+\sigma}&\enorm{ \omnk  	 \chi_{m+n-1} \widetilde{\chi}_1 }
		\lesssim
		(m+n)^{1+\sigma}\enorm{  |k|^mq^n  \pav \Gamma^{n-1}\omega_{k}e^{W/2}	 \chi_{m+n-1} \widetilde{\chi}_1 }
		\\&\qquad +
	t   (m+n)^{1+\sigma}\enorm{  |k|^{m+1}q^{n-1}  \Gamma^{n-1}\omega_{k}e^{W/2}	 \chi_{m+n-1} \widetilde{\chi}_1 }
		\\&\quad 		\lesssim
		(m+n)^{1+\sigma}\enorm{   \pa_y \mr\omega_{m, n-1;k} e^{W/2}	 \chi_{m+n-1} \widetilde{\chi}_1 }
				\\&\qquad+
		(m+n)^{1+\sigma}n \enorm{ \mr \omega_{m, n-1;k} e^{W/2}	 \chi_{m+n-1} \widetilde{\chi}_1 }
		\\&\qquad +
        t (m+n)^{1+\sigma}\enorm{ \mr \omega_{m+1, n-1;k} e^{W/2}	 \chi_{m+n-1} \widetilde{\chi}_1 }
		\\&\quad 		\lesssim \nu^{100}
		(m+n)^{1+\sigma}\enorm{   \pa_y \mr\omega_{m, n-1;k} e^{W}	 \chi_{m+n-1} \widetilde{\chi}_1 }
				\\&\qquad+
		\frac{\nu^{99}}{t}(m+n)^{1+\sigma}n \enorm{ \mr \omega_{m, n-1;k} e^{W}	 \chi_{m+n-1} \widetilde{\chi}_1 }
		\\&\qquad +
		 \nu^{99} (m+n)^{1+\sigma}\enorm{  \mr\omega_{m+1, n-1; k} e^{W}	 \chi_{m+n-1} \widetilde{\chi}_1 }.
	\end{align*}
Then again by Corollary~\ref{comb:boun}, \eqref{comb:boun:rem}, Young's inequality, and H\"older's inequality, similar as~\eqref{est:V:11}, we arrive at
\begin{align*}
	\nu \sum_{m,n\ge 0}\sum_{k\neq 0} V_{3} \lesssim \eps  \nu^{100}
	\sqrt{\mathcal{D}^{(\gamma)}} \paren{		\sqrt{\mathcal{E}^{(\gamma)}} + \sqrt{\mathcal{D}^{(\gamma)}}}.
\end{align*}
\textbf{The term $V_4$:}
	Like in the treatment of $V_3$ term, we have
	\begin{align*}
		\abs{V_4} &\lesssim \sum_{l=1}^{n} \binom{n}{l}
		|\bold{a}_{m,n}|^2 
		\norm{	 \omnk e^{W/2}\chi_{m+n} }_{L^\infty}   
		\enorm{q^{l-1}\pav^{l}  (v_y^2 - 1)e^{W/2}\chi_l } 	
		\\&\qquad \times(n-l)\enorm{	 |k|^m q^{n-l}\pav \Gamma_k^{n-l} \omega_k e^{W}  \chi_{m+n-l} \widetilde{\chi}_1  },
	\end{align*}
	from where we follow the treatment of $V_3$ term line by line.\\
\textbf{The term $V_5$:}
	We recall that
	\begin{align*}
		V_5 = 	-\sum_{l=1}^{n} \binom{n}{l}
		\brk{|\bold{a}_{m,n}|^2  \omnk,   |k|^m q^{n}\pav^{l}  (v_y^2 - 1) \pav \Gamma_k^{n-l} \omega_k e^{2W}
			\frac{2\pa_y\chi_{m+n}}{v_y}\chi_{m+n}  }
	\end{align*}
from where applying H\"older's inequality gives
\begin{align*}
\abs{V_5} & \le 
	\sum_{l=1}^{n} \binom{n}{l}
	|\bold{a}_{m,n}|^2 (m+n)^{1+\sigma	}
	\norm{	 \omnk\chi_{m+n} }_{L^\infty}   
	\enorm{q^{l}\pav^{l}  (v_y^2 - 1)e^{W}\chi_l } 	
	\\&\qquad \times\enorm{	 |k|^m q^{n-l}\pav \Gamma_k^{n-l} \omega_k e^{W} \chi_{m+n-l} \widetilde{\chi}_1   } 
\end{align*}
\textbf{The term $V_6$:}
	We recall that
	\begin{align*}
		V_6 = 		-\sum_{l=1}^{n} \binom{n}{l}
		\brk{|\bold{a}_{m,n}|^2  \omnk,  |k|^mq^{n} \partial_y\paren{\frac{1}{v_y}}\pav^{l}  (v_y^2 - 1) \pav \Gamma_k^{n-l} \omega_k e^{2W}\chi_{m+n}^2  }
	\end{align*}
	from where applying H\"older's inequality gives
	\begin{align*}
		\abs{V_6} & \le 
		\sum_{l=1}^{n} \binom{n}{l}
		|\bold{a}_{m,n}|^2 
		\norm{	 \omnk\chi_{m+n} }_{L^\infty}   
		\norm{\partial_y\paren{\frac{1}{v_y}}}_{L^\infty}
		\enorm{q^{l}\pav^{l}  (v_y^2 - 1)e^{W}\chi_l } 	
		\\&\qquad \times\enorm{	 |k|^m q^{n-l}\pav \Gamma_k^{n-l} \omega_k e^{W} \chi_{m+n-l} \widetilde{\chi}_1 } .
		\end{align*}
Hence the proof is finished by summing up the estimates of $V_1$--$V_6$ in \eqref{visc:comm:eq01} and we omit further details.
\end{proof}
For the $\alpha$ and $\mu$ level estimate, we get the following two lemmas.
\begin{lemma}[Viscous commutator for $\alpha$ estimate] 
  It holds
	\label{visc:comm:alph}
	\begin{align*}
		& {\nu} \sum_{m,n\ge 0}\sum_{k\neq 0} \abs{\brk{ |\bold{a}_{m,n}|^2    \partial_y \omnk,  \partial_y \paren{ \bold{C}^{(m,n)}_{\mathrm{visc}, k}} e^{2W}\chi_{m+n}^2}}
		\nn&\qquad
		\lesssim
		\eps \nu^{99}
		\sqrt{\mathcal{E}^{(\alpha)}} \paren{\sqrt{\mathcal{E}^{(\alpha)}} + \sqrt{\mathcal{D}^{(\alpha)}}+\sqrt{\mathcal{D}^{(\gamma)}}
			+\sqrt{\mathcal{D}^{(\mu)}}+ \sqrt{\mathcal{CK}^{(\alpha)}}}	{\paren{\sqrt{\cd_{H}^{(\gamma)}}+\sqrt{\cd_{H}^{(\alpha)}}}}.
	\end{align*}
\end{lemma}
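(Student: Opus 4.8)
The proof of Lemma \ref{visc:comm:alph} will follow the template already established in the proof of Lemma \ref{visc:comm}, upgraded by one spatial derivative. The starting point is to differentiate the expression for $\bold{C}^{(m,n)}_{\mathrm{visc},k}$ in $\partial_y$ and then integrate by parts once against $\partial_y \mr\omega_{m,n;k}$, producing a decomposition analogous to $V_1,\dots,V_6$ in \eqref{visc:comm:eq01} but now with second derivatives of $\mr\omega_{m,n;k}$ appearing (which will be absorbed by $\mathcal{D}^{(\alpha)}$ or $\mathcal{D}^{(\mu)}$) and with coordinate-function factors of the form $\pav^{\ell}(v_y^2-1)$ and their first derivatives (which, after extracting the $q^{\ell}$ or $q^{\ell-1}$ weight, will be estimated by the coordinate functionals $\mathcal{E}_H^{(\gamma)}, \mathcal{E}_H^{(\alpha)}$ and their dissipative analogues $\mathcal{D}_H^{(\gamma)}, \mathcal{D}_H^{(\alpha)}$ via the Product Lemma \ref{pro:1}). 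As before I would split each sum over $\ell$ according to whether $\ell \ge n/2$ (so the high-derivative count lands on the vorticity factor, which we control in $L^2$ by the energy/dissipation functionals, while the coordinate factor is placed in $L^\infty$ and then in $H^1$ via Sobolev) or $\ell < n/2$ (the opposite roles, with the coordinate factor now in $L^\infty$ and the vorticity factor in $L^2$ after applying \eqref{S:est:1}--\eqref{S:est:2} to convert $\frac{m+n}{q}$-weights into dissipation).

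The combinatorial bookkeeping is identical in structure to the $\gamma$ case: one uses $\binom{n}{\ell}\bold{a}_{m,n}\lesssim \mathfrak C^{\ell}(\ell!)^{-\sigma}\bold{a}_{0,\ell}\bold{a}_{m,n-\ell}$-type inequalities (Corollary \ref{comb:boun} and \eqref{comb:boun:rem}) to distribute the Gevrey weights between the coordinate factor and the remaining vorticity factor, then Fubini plus the summability of $\sum_\ell \mathfrak C^\ell (\ell!)^{-\sigma}$ to reduce to a product of two fully-summed quantities. The one genuinely new feature relative to Lemma \ref{visc:comm} is that after the extra $\partial_y$, some terms have $\pa_y$ hitting $q^\ell \pav^\ell(v_y^2-1)$, so one needs the dissipation-level coordinate control; this is precisely why $\mathcal{D}_H^{(\gamma)}$ and $\mathcal{D}_H^{(\alpha)}$ appear on the right-hand side rather than only $\mathcal{E}_H$. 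Throughout, the conversion $\nu^{-100}\lesssim e^{W/2}$ from \eqref{nu:to:W} and the bounds $\nu^{-100}|\partial_y W|, \nu^{-100}|\partial_y^2 W|\lesssim e^{W/2}$ from \eqref{prop:W}, together with $t\le \nu^{-1/3-\eta}$, are used to trade the negative powers of $\nu$ generated by $\partial_y W$ and by the factors of $t$ (from the definition $\Gamma_k = \vyn\partial_y + ikt$) against a fraction of the Gaussian weight, leaving a net large positive power $\nu^{99}$.

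The main obstacle — as in the $\gamma$ case but more acute here — is controlling the boundary/transition terms where $\partial_y$ falls on the cutoff $\chi_{m+n}$ (the analogues of $V_5$ and the $\rom{1}_3$, $\rom{2}_4$ pieces). After the extra derivative these involve $(m+n)^{2(1+\sigma)}$ growth, and one must absorb one factor of $(m+n)^{1+\sigma}$ into a Gevrey-weight shift using \eqref{gevbd1212} and the other into the fattened cutoff $\widetilde\chi_1$ supported away from the center of the channel, where the coordinate-system estimate \eqref{v_y_sob} and the exponential smallness it entails are available; care is needed that the derivative count stays $\le 2$ on the vorticity side so that $\mathcal{D}^{(\alpha)} + \mathcal{D}^{(\mu)}$ genuinely suffices. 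I would also need to double-check the genuine boundary contributions $\mr\omega_{m,n;k}\overline{\partial_y^2\mr\omega_{m,n;k}}|_{y=\pm1}$ that could arise from integration by parts; these vanish for $n\ne 1$ by Lemma \ref{lemma:BC} and are estimated by $\mathcal{D}^{(\alpha)}_{m,0;k}+\mathcal{CK}^{(\alpha,W)}_{m,0;k}$ via \eqref{BC_n=1} when $n=1$. Once every piece is bounded by $\eps\nu^{99}\sqrt{\mathcal{E}^{(\alpha)}}$ times a sum of square roots of dissipation/$\mathcal{CK}$ functionals times $(\sqrt{\mathcal{D}_H^{(\gamma)}}+\sqrt{\mathcal{D}_H^{(\alpha)}})$, summing over $k\ne 0$ and $m,n\ge 0$ and applying Cauchy--Schwarz in $k$ yields the claimed estimate; I omit the routine details.
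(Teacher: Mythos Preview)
Your proposal is broadly correct and identifies all the essential ingredients: the $\ell\ge n/2$ versus $\ell<n/2$ split, the Gevrey combinatorics via Corollary~\ref{comb:boun}, the trading of $\nu^{-100}$ for $e^{W/2}$ via \eqref{nu:to:W}--\eqref{prop:W}, and the crucial observation that the extra $\partial_y$ forces $\mathcal{D}_H$-level (rather than $\mathcal{E}_H$-level) control on the coordinate factor.

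The paper, however, takes a slightly different initial route. Rather than integrating by parts, it simply applies the Leibniz rule to $\partial_y\bigl(q^n\pav^l(v_y^2-1)\,\pav^2\Gamma_k^{n-l}\omega_k\bigr)$, producing three pieces $V^\alpha_1,V^\alpha_2,V^\alpha_3$ according to whether $\partial_y$ lands on $q^n$, on the coordinate factor $\pav^l(v_y^2-1)$, or on $\pav^2\Gamma_k^{n-l}\omega_k$. For $V^\alpha_1$ and $V^\alpha_2$ it estimates directly (after the usual $\ell\le n/2$ versus $\ell>n/2$ split), using Sobolev interpolation $\|f\|_{L^\infty}\lesssim\|f\|_{L^2}^{1/2}\|\partial_yf\|_{L^2}^{1/2}$ on whichever factor is placed in $L^\infty$; this produces $1/4$-powers of coordinate norms that are then bounded by $\sqrt{\mathcal{D}_H^{(\gamma)}}+\sqrt{\mathcal{D}_H^{(\alpha)}}$ via the bootstrap and Lemma~\ref{pro:1}. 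For $V^\alpha_3$ it writes $\pav=\Gamma_k-ikt$ to raise one $\pav$ to a $\Gamma_k$ and then refers back to the $V_2$-type argument of Lemma~\ref{visc:comm}. Your integration-by-parts would also work but spawns additional corrector terms from $\partial_y(e^{2W})$ and $\partial_y(\chi_{m+n}^2)$ that the paper's product-rule decomposition avoids.

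One minor over-caution in your write-up: the boundary contribution you flag via Lemma~\ref{lemma:BC} does not actually arise here. The commutator $\bold{C}^{(m,n)}_{\mathrm{visc},k}$ carries an explicit $q^n$ factor and the sum starts at $l=1$, so it is nonzero only for $n\ge 1$ and vanishes at $y=\pm1$; any integration by parts in $\partial_y$ therefore produces no boundary term.
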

\begin{proof}
	For fixed $m,n\ge0$, note
	\begin{align*}
	&{\nu }\brk{ |\bold{a}_{m,n}|^2    \partial_y \omnk,  \partial_y \paren{\bold{C}^{(m,n)}_{\mathrm{visc}, k}} e^{2W}\chi_{m+n}^2}
	\nn&\quad
	=
    \nu^{2} 
	\sum_{l=1}^{n} \binom{n}{l}
	\brk{	  |\bold{a}_{m,n}|^2  \partial_y\omnk,  |k|^m
		\partial_y \paren{q^{n}\pav^l  (v_y^2 - 1) \pav^2 \Gamma_k^{n-l} \omega_k} e^{2W}\chi_{m+n}^2}
		\nn&\quad
	=
	\nu^{2} 
	\sum_{l=1}^{n} \binom{n}{l}\bigg( 
	\brk{	  |\bold{a}_{m,n}|^2  \partial_y\omnk,  |k|^m
	n\partial_yq q^{n-1}\pav^l  (v_y^2 - 1) \pav^2 \Gamma_k^{n-l} \omega_k e^{2W}\chi_{m+n}^2}
		\nn&\qquad
	+
	\brk{	  |\bold{a}_{m,n}|^2  \partial_y\omnk,  |k|^m
   q^{n}\partial_y \pav^l  	(v_y^2 - 1) 
   \pav^2 \Gamma_k^{n-l} \omega_k e^{2W}\chi_{m+n}^2}
		\nn&\qquad
	+
	\brk{	  |\bold{a}_{m,n}|^2  \partial_y\omnk,  |k|^m
		q^{n}\pav^l  (v_y^2 - 1) \partial_y\paren{\pav^2 \Gamma_k^{n-l} \omega_k} e^{2W}\chi_{m+n}^2}\bigg)
		\nn&\quad
      = V^\alpha_1 + V^\alpha_2 + V^\alpha_3.
    \end{align*}
%
\textbf{The term $V^\alpha_1$:}
By the boundedness of $\partial_yq$, it is easy to obtain
\begin{align*}
	\abs{V^\alpha_1} &\lesssim
		\nu^{2} 
	\sum_{l\le n/2} \binom{n}{l}
		  |\bold{a}_{m,n}|^2 \enorm{ \partial_y\omnk e^{W}\chi_{m+n}}  
		   n \norm{ q^{l-1}\pav^l  (v_y^2 - 1) \chi_{n-1}}_{L^\infty}
		\\&\quad  
		\times\enorm{|k|^m q^{n-l}\pav^2 \Gamma_k^{n-l} \omega_k e^{W}\chi_{m+n}}
		\\&\quad     +
     	\nu^{2} \sum_{n\ge l> n/2} \binom{n}{l}
     |\bold{a}_{m,n}|^2 \enorm{ \partial_y\omnk e^{W}\chi_{m+n}}  
     \enorm{n q^{l-2}\pav^l  (v_y^2 - 1) e^{W/2}\chi_{l-1} \widetilde{\chi}_1 }
     		\\&\quad
     \times\norm{|k|^m q^{n-l+1}\pav^2 \Gamma_k^{n-l} \omega_k e^{W/2} \chi_{m+n-l+1}}_{L^\infty}
     \\& = V^\alpha_{1,1} + V^\alpha_{1,2}.
\end{align*}
By Sobolev embedding, we have
\begin{align}
	\label{v:term}
	&\norm{ q^{l-1}\pav^l  (v_y^2 - 1) \chi_{n-1}}_{L^\infty}
	\lesssim 
	\enorm{ q^{l-1}\pav^l  (v_y^2 - 1) \chi_{l}}^{1/2}
	\enorm{ \partial_y\paren{q^{l-1}\pav^l  (v_y^2 - 1) \chi_{l}}}^{1/2}
	\nn&\qquad
	\lesssim 
	\enorm{ q^{l-1}\pav^l  (v_y^2 - 1) \chi_{l}}^{1/2}
	\bigg((l-1)\enorm{ q^{l-2}\pav^l  (v_y^2 - 1) \chi_{l}}
	\nn &\qquad\quad
	+
	\enorm{ q^{l-1}\partial_y\paren{\pav^l  (v_y^2 - 1) }\chi_{l}}
	+
	l^{1+\sigma} \enorm{ q^{l-1}\pav^l  (v_y^2 - 1) \chi_{l-1}
}\bigg)^{1/2}.
\end{align}
For a term of the following type
\begin{align*}
	q^{l-1}\pav^l  (v_y^2 - 1)
\end{align*}
we invoke estimate~\eqref{S:est:1}. While for terms of type
\begin{equation*}
q^{l-2}\pa_y\pav^{l-1}  (v_y^2 - 1) 
= 
q^{l-2}\frac{\pa_y^2}{v_y}\pav^{l-2}  (v_y^2 - 1) + 
q^{l-2}\pa_y\paren{\frac{1}{v_y}}\pa_y\pav^{l-2}  (v_y^2 - 1),
\end{equation*}
and \begin{align*}
	\enorm{|k|^m q^{n-l}\pav^2 \Gamma_k^{n-l} \omega_k e^{W}\chi_{m+n}}
\end{align*}
we perform an argument similar to~ \eqref{S:est:1} and \eqref{S:est:2}.  
Collecting the estimates about $V^\alpha_{1,1}$, by Corollary~\ref{comb:boun}, \eqref{comb:boun:rem}, Young's inequality, H\"older's inequality, and Fubini's theorem, we have
\begin{align*}
	\sum_{m,n\ge 0}\sum_{k\neq 0} V^\alpha_{1,1} &\lesssim \nu^{100}
	\sqrt{\mathcal{E}^{(\alpha)}} \paren{		\sqrt{\mathcal{E}^{(\alpha)}} + \sqrt{\mathcal{D}^{(\alpha)}}+\sqrt{\mathcal{D}^{(\gamma)}}+\sqrt{\mathcal{D}^{(\mu)}}}
	 \brak{t} \paren{\sum_{n\ge0} \bold{a}_n^2 \|{S^{(0,1,0)}_{0,n}(v_y^2-1)}e^{W/2} \chi_n \|_{L^2}^2}^{1/4}
	\\&\qquad\qquad\times
	\paren{\sum_{n\ge0} \bold{a}_n^2 \| {S^{(0,2,0)}_{0,n}(v_y^2-1)} e^{W/2} \chi_n \|_{L^2}^2}^{1/4}
	\\&\lesssim
	\eps \nu^{99}
	\sqrt{\mathcal{E}^{(\alpha)}} \paren{\sqrt{\mathcal{E}^{(\alpha)}} + \sqrt{\mathcal{D}^{(\alpha)}}+\sqrt{\mathcal{D}^{(\gamma)}}+\sqrt{\mathcal{D}^{(\mu)}}}
	\paren{\sqrt{\cd_{H}^{(\gamma)}}+\sqrt{\cd_{H}^{(\alpha)}}}.
\end{align*}
where we also used assumptions \eqref{asmp} and Product Lemma~\ref{pro:1}.
For the term $V^\alpha_{1,2}$, we note by Sobolev inequality
\begin{align*}
	&\norm{|k|^m q^{n-l+1}\pav^2 \Gamma_k^{n-l} \omega_k e^{W/2}\chi_{m+n-l+1}}_{L^\infty}
	\\&\qquad
	\lesssim
	\enorm{|k|^m q^{n-l+1}\pav^2 \Gamma_k^{n-l+1} \omega_k e^{W/2} \chi_{m+n-l+1}}^{1/2}
		\enorm{|k|^m \partial_y\paren{q^{n-l+1}\pav^2 \Gamma_k^{n-l} \omega_k e^{W/2} \chi_{m+n-l+1}}}^{1/2}
		\\&\qquad
	\lesssim
	\enorm{|k|^m q^{n-l+1}\pav^2 \Gamma_k^{n-l} \omega_k e^{W/2} \chi_{m+n-l+1}}^{1/2}
	\Bigg(\enorm{|k|^m \partial_y\paren{q^{n-l+1}\pav^2 \Gamma_k^{n-l} \omega_k} e^{W/2}
	 \chi_{m+n-l+1}}
	\\&\qquad\quad
	+ \enorm{|k|^m q^{n-l+1}\pav^2 \Gamma_k^{n-l} \omega_k \partial_{y} e^{W/2}
		\chi_{m+n-l+1}}
	\\&\qquad\quad
	+(m+n-l+1)\enorm{|k|^{m} q^{n-l+1}\pav^2 \Gamma_k^{n-l} \omega_k \chi_{m+n-l}}^{1/2}\Bigg)
	^{1/2}.
\end{align*}
By the definition of $\Gamma$, we obtain
\begin{align*}
	&\enorm{|k|^m q^{n-l+1}\pav^2 \Gamma_k^{n-l} \omega_k  e^{W/2} \chi_{m+n-l+1} }
	\\&\qquad
		\le
	\enorm{|k|^m q^{n-l+1}\pav \Gamma_k^{n-l+1} \omega_k  e^{W/2} \chi_{m+n-l+1} }
	+ 
	t\enorm{|k|^m k q^{n-l+1}\pav \Gamma_k^{n-l} \omega_k  e^{W/2} \chi_{m+n-l+1} }
	\\&\qquad
	\lesssim
	\enorm{|k|^m q^{n-l+1}\pav \Gamma_k^{n-l+1} \omega_k  e^{W/2} \chi_{m+n-l+1} }
	+ 
	t\enorm{|k|^m k q^{n-l+1}\pav \Gamma_k^{n-l} \omega_k    e^{W/2} \chi_{m+n-l+1} }
		\\&\qquad
	\lesssim 
    \enorm{|k|^m q^{n-l+1}\pav \Gamma_k^{n-l+1} \omega_k  e^{W/2} \chi_{m+n-l+1} }
    + 
	t \enorm{|k|^m k \pa_y \paren{q^{n-l+1} \Gamma_k^{n-l} \omega_k}    e^{W/2} \chi_{m+n-l+1} }
    	\\&\qquad\quad + 
    (n-l+1) t\enorm{|k|^m k \pa_yq q^{n-l} \Gamma_k^{n-l} \omega_k    e^{W/2} \chi_{m+n-l+1} }
		\\&\qquad
\lesssim 
\enorm{|k|^m q^{n-l+1}\pav \Gamma_k^{n-l+1} \omega_k  e^{W/2} \chi_{m+n-l+1} }
+ 
	\nu^{-1/3-\eta}\enorm{ \pa_y \mr\omega_{m+1, n-l;k}    e^{W/2} \chi_{m+n-l+1} }
\\&\qquad\quad + 
(n-l+1)	\nu^{-1/3-\eta} \enorm{  \mr \omega_{m+1, n-l;k}   e^{W/2} \chi_{m+n-l+1} }
\end{align*}
where we used~\eqref{nu:to:W}.
Similarly, we have
\begin{align*}
	&\enorm{|k|^m \partial_y\paren{q^{n-l+1}\pav^2 \Gamma_k^{n-l} \omega_k}
		 e^{W/2} \chi_{m+n-l+1} }
	\\&\qquad
	\le
	\enorm{|k|^m \partial_y\paren{q^{n-l+1}\pav \Gamma_k^{n-l+1} \omega_k}  e^{W/2} \chi_{m+n-l+1} }
	\\&\qquad \quad + 
	t\enorm{|k|^m k \partial_y\paren{q^{n-l+1}\pav \Gamma_k^{n-l} \omega_k}  e^{W/2} \chi_{m+n-l+1} }
	\\&\qquad
	\lesssim 
	\enorm{ \pa_y^2		\mr 
	\omega_{m, n-l+1;k} e^{W/2} \chi_{m+n-l+1}  } 
		+ 
	(n-l+1)\enorm{  \pa_y\paren{\frac{\partial_yq}{q}\mr \omega_{m, n-l+1;k}} e^{W/2} \chi_{m+n-l+1} } 
	\\&\qquad\quad+ 
	\enorm{|k|^m q^{n-l+1}\pa_y \Gamma_k^{n-l+1} \omega_k  e^{W/2} \chi_{m+n-l+1} }
	+
	t\enorm{ \partial_y\mr\omega_{m+1, n-l+1;k}  e^{W/2} \chi_{m+n-l+1} }
		\\&\qquad\quad+
	t^2\enorm{ \partial_y\paren{q\mr\omega_{m+2, n-l;k}}  e^{W/2} \chi_{m+n-l+1} }
	\\&\qquad
	\lesssim 
	\enorm{ \pa_y^2	\mr	 
		\omega_{m, n-l+1;k} e^{W/2} \chi_{m+n-l+1}  } 
			\\&\qquad\quad+ 
	(n-l+1)\enorm{ |k|^m \pa_y\paren{q^{n-l}\pav \Gamma_k^{n-l} \omega_{k}} e^{W/2} \chi_{m+n-l+1} } 
	\\&\qquad\quad+ 
	(n-l+1) t \enorm{  \pa_y\paren{ \mr\omega_{m, n-l;k}} e^{W/2} \chi_{m+n-l+1} } 
	+
	\enorm{|k|^m q^{n-l+1}\pa_y \Gamma_k^{n-l+1} \omega_k  e^{W/2} \chi_{m+n-l+1} }
	\\&\qquad\quad	
	+
	t\enorm{ \partial_y\mr\omega_{m+1, n-l+1;k}  e^{W/2} \chi_{m+n-l+1} }
	+
	t^2\enorm{ \partial_y\paren{q\mr\omega_{m+2, n-l;k}}  e^{W/2} \chi_{m+n-l+1} }
\end{align*}
where we used~\eqref{nu:to:W} in the last step.
Therefore, as for $V^\alpha_{1,2}$, we deduce
\begin{align*}
	\sum_{m,n\ge 0}\sum_{k\neq 0} V^\alpha_{1,2}
		\lesssim
	\eps \nu^{99}
	\sqrt{\mathcal{E}^{(\alpha)}} \paren{\sqrt{\mathcal{E}^{(\alpha)}} + \sqrt{\mathcal{D}^{(\alpha)}}+\sqrt{\mathcal{D}^{(\gamma)}}+\sqrt{\mathcal{D}^{(\mu)}}}
	\paren{\sqrt{\cd_{H}^{(\gamma)}}+\sqrt{\cd_{H}^{(\alpha)}}}.
\end{align*}
\\
\textbf{The term $V^\alpha_2$:}
We recall
\begin{align*}
	V^\alpha_2=
		\nu^{2} 
	\sum_{l=1}^{n} \binom{n}{l}
	\brk{	  |\bold{a}_{m,n}|^2  \partial_y\omnk,  |k|^m
		q^{n}\partial_y \pav^l  (v_y^2 - 1) 
		\pav^2 \Gamma_k^{n-l} \omega_k e^{2W}\chi_{m+n}^2}.
\end{align*}
Upon dividing the sum into two parts: $l\le n/2$ and $l>n/2$, we follow the treatment of 
$V^\alpha_1$ to get
\begin{align*}
	\abs{V^\alpha_2} \lesssim
	V^\alpha_{2,1}+V^\alpha_{2,2}
\end{align*}
where $V^\alpha_{2,i}$ are given as
\begin{align*}
	V^\alpha_{2,1} &= 
	\nu^{2} 
	\sum_{l\le n/2} \binom{n}{l}
	  |\bold{a}_{m,n}|^2 \enorm{ \partial_y\omnk e^{W}\chi_{m+n}}
	    \norm{|k|^m q^{l}\partial_y \pav^l  (v_y^2 - 1) \chi_{l+1}}_{L^\infty}
	    \\&\qquad\times
		\enorm{    |k|^m q^{n-l} \pav^2 \Gamma_k^{n-l} \omega_k e^{W}\chi_{m+n-l}}
\end{align*}
and
\begin{align*}
	V^\alpha_{2,2} 
	&= 
	\nu^{2} 
	\sum_{l> n/2} \binom{n}{l}
	|\bold{a}_{m,n}|^2 \enorm{ \partial_y\omnk e^{W}\chi_{m+n}}
	\norm{|k|^m q^{l-1}\partial_y \pav^l  (v_y^2 - 1) e^{W/2}\chi_{l-1}\widetilde{\chi}_1}_{L^2}
	\\&\qquad\times
	\norm{  q^{n-l+1} \pav^2 \Gamma_k^{n-l} \omega_k e^{W/2} \chi_{m+n-l+1}}_{L^\infty}.
\end{align*}
To estimate the $V^\alpha_{2,1}$ term, we note by Sobolev inequality and Minkowski inequality,
\begin{align*}
	&\norm{|k|^m q^{l}\partial_y \pav^l  (v_y^2 - 1) \chi_{l+1}}_{L^\infty}
	\\&\qquad\lesssim
	\enorm{|k|^m q^{l}\partial_y \pav^l  (v_y^2 - 1) \chi_{l+1}}^{1/2}
	\enorm{|k|^m \partial_y \paren{q^{l}\partial_y \pav^l  (v_y^2 - 1)\chi_{l+1}} }^{1/2}
	\\&\qquad\lesssim
	\enorm{|k|^m q^{l}\partial_y \pav^l  (v_y^2 - 1) \chi_{l+1}}^{1/2}
	\Bigg(\enorm{|k|^m \partial_y \paren{q^{l}\partial_y \pav^l  (v_y^2 - 1)}\chi_{l+1} }
	\\&\qquad\quad+
	\enorm{|k|^m q^{l}\partial_y \pav^l  (v_y^2 - 1)\partial_y \chi_{l+1} }\Bigg)^{1/2}
\end{align*}
The treatment of $\norm{  q^{n-l+1} \pav^2 \Gamma_k^{n-l} \omega_k \chi_{m+n}}_{L^\infty}$ in the term
	$V^\alpha_{2,2}$ is the same as in $V^\alpha_{1,2}$ and we omit further details. 
Using  Corollary~\ref{comb:boun}, \eqref{comb:boun:rem}, Young's inequality, and H\"older's inequality, similarly as~\eqref{est:V:11}, we conclude the estimate of $V^\alpha_2$. \\
\textbf{The term  $V^\alpha_3$:}
Using $\Gamma_k = \Gamma_0 + ikt$$=\pav+ikt$, $V^\alpha_3$ could be further rewritten as 
\begin{align*}
	V^\alpha_3&=
		\nu^{2} 
	\sum_{l=1}^{n} \binom{n}{l}
	\brk{	  |\bold{a}_{m,n}|^2  \partial_y\omnk,  |k|^m
		q^{n}\pav^l  (v_y^2 - 1) \partial_y\paren{\pav \Gamma_k^{n-l+1} \omega_k} e^{2W}\chi_{m+n}^2}
		\\&\quad + 
		\nu^{2} 
		\sum_{l=1}^{n} \binom{n}{l}
		\brk{	  |\bold{a}_{m,n}|^2  \partial_y\omnk,  |k|^m
			q^{n}\pav^l  (v_y^2 - 1) \partial_y\paren{\pav \Gamma_k^{n-l} ikt \omega_k} e^{2W}\chi_{m+n}^2},
\end{align*}
then this term could be treated almost the same as the $V_2$ term in Lemma~\ref{visc:comm}, except the $\gamma$ norms are replaced by the correspond $\alpha$ norms as well as some commutator estimates elaborated in Section~\ref{sec:Elliptic}, and  we conclude the proof without giving more details.
\end{proof}
\begin{lemma}[Viscous commutator for $\mu$ estimate] 
	Let $\bold{C}^{(m,n)}_{\mathrm{visc}, k}$ be defined as above. It holds   
	\label{visc:comm:mu}
	\begin{align}
		& {\nu} \sum_{m,n\ge 0}\sum_{k\neq 0} \abs{\brk{ |\bold{a}_{m,n}|^2    ik\omnk, ik\bold{C}^{(m,n)}_{\mathrm{visc}, k} e^{2W}\chi_{m+n}^2}}
		\nn&\qquad \lesssim
		\eps  \nu^{100}
		\sqrt{\mathcal{E}^{(\mu)}} \paren{		\sqrt{\cd^{(\mu)}} + \sqrt{\mathcal{CK}^{(\mu)}}}  + \eps  \nu^{100}
		\mathcal{D}^{(\mu)}.
	\end{align}
\end{lemma}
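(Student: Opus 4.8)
The strategy is to follow the same integration-by-parts scheme used in the proof of Lemma~\ref{visc:comm} (the $\gamma$-level viscous commutator estimate), but after inserting the extra factor $|k|^2$ that distinguishes the $\mu$-norm from the $\gamma$-norm. Concretely, for fixed $m,n\ge0$ and $k\neq0$ I would write
\begin{align*}
{\nu}^{-1}\brk{ |\bold{a}_{m,n}|^2 ik\omnk,\, ik\bold{C}^{(m,n)}_{\mathrm{visc},k}\, e^{2W}\chi_{m+n}^2}
= \sum_{l=1}^n\binom{n}{l}\brk{|\bold{a}_{m,n}|^2\, |k|^2\omnk,\ |k|^m q^n\pav^l(v_y^2-1)\pav^2\Gamma_k^{n-l}\omega_k\, e^{2W}\chi_{m+n}^2},
\end{align*}
and then integrate by parts in $\partial_y$ exactly as in \eqref{visc:comm:eq01}, producing the same six families of terms $V_1,\dots,V_6$, each now carrying an additional $|k|^2$. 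The bookkeeping is identical: split each sum into $l\ge n/2$ and $l<n/2$, use the Sobolev embedding to put the $\omega$-factor in $L^\infty$ (or, in the complementary range, the coordinate factor $q^{l}\pav^l(v_y^2-1)$ in $L^\infty$), and absorb one of the two resulting $\Gamma_k$-derivatives using $\Gamma_k=\pav+ikt$ together with $t\le\nu^{-1/3-\eta}$ and the smoothing inequalities \eqref{nu:to:W}, \eqref{prop:W}. The only genuine difference is that wherever the $\gamma$-proof produced a factor $\mathring\omega_{m+1,\cdot;k}$ or $\partial_y\mathring\omega_{m+1,\cdot;k}$, the present computation produces $\mathring\omega_{m+2,\cdot;k}$ or $\partial_y\mathring\omega_{m+2,\cdot;k}$; these are still controlled by $\mathcal{E}^{(\mu)}$, $\mathcal{D}^{(\mu)}$, $\mathcal{CK}^{(\mu)}$ after applying the combinatorial bound of Corollary~\ref{comb:boun} (and \eqref{comb:boun:rem}) to convert $(m+2)$-indexed quantities back to $(m+n)$-indexed ones.

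After these manipulations, each $V_j$ is bounded by a product of three factors: a $\sqrt{\mathcal{E}^{(\mu)}}$-type norm of $\omnk$, a $\sqrt{\mathcal{D}^{(\mu)}}$- or $\sqrt{\mathcal{CK}^{(\mu)}}$-type norm of the $\omega$-side, and a square-summed coordinate-system quantity of the form $\big(\sum_{n\ge0}\bold{a}_n^2\|S^{(\le1)}_{0,n}(v_y^2-1)e^{W/2}\chi_n\|_{L^2}^2\big)^{1/2}$, exactly as in \eqref{est:V:11}. At this point I would invoke the interior coordinate-system hypothesis \eqref{v_y_sob} together with the Product Lemma~\ref{pro:1} to bound the coordinate factor by $\eps$ times (lower-order combinations of) $\nu^{-1}$-powers, then use the crude smoothing estimate $\nu^{-100}\le Ce^{W/2}$ (\eqref{nu:to:W}) to trade the negative powers of $\nu$ into the exponential weight, leaving the advertised $\eps\,\nu^{100}$ prefactor. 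Summing the bounds on $V_1$ through $V_6$ over $m,n\ge0$ and $k\neq0$, and applying Young's inequality to split the mixed products, yields
\begin{align*}
{\nu}\sum_{m,n\ge0}\sum_{k\neq0}\big|\brk{|\bold{a}_{m,n}|^2 ik\omnk,\ ik\bold{C}^{(m,n)}_{\mathrm{visc},k}\,e^{2W}\chi_{m+n}^2}\big|
\lesssim \eps\,\nu^{100}\sqrt{\mathcal{E}^{(\mu)}}\big(\sqrt{\cd^{(\mu)}}+\sqrt{\mathcal{CK}^{(\mu)}}\big)+\eps\,\nu^{100}\mathcal{D}^{(\mu)},
\end{align*}
which is the claim.

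The main obstacle I anticipate is the same one that appears in Lemma~\ref{visc:comm:alph}: keeping track of the proliferation of terms generated when $\partial_y$ (or here the commuted $|k|$) lands on the various factors $q^n$, $v_y^{-1}$, $W$, $\chi_{m+n}$, and $\pav^2\Gamma_k^{n-l}\omega_k$, and in particular verifying that every term in which one of the two $\pav$-derivatives of $\omega$ is opened up via $\Gamma_k=\pav+ikt$ still closes in the $\mu$-hierarchy rather than requiring an $\alpha$-level norm. For $V_1$ and $V_2$ (where the differentiated factor sits on the $\Gamma_k^{n-l}\omega_k$ block) this needs care, since the extra $|k|^2$ pushes us to $\mathring\omega_{m+2,\cdot;k}$; but because $\mathcal{D}^{(\mu)}$ and $\mathcal{CK}^{(\mu)}$ already contain two powers of $\nu^{1/2}|k|$ and a full $\nabla_k$, these are exactly the norms available, so no new functional is required — the verification is routine but lengthy, and I would present it by analogy to Lemmas~\ref{visc:comm} and~\ref{visc:comm:alph} rather than repeating every line.
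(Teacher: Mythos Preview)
Your proposal is correct and follows essentially the same integration-by-parts scheme as the paper. The paper's proof is in fact even shorter: it simply observes that since $ik$ is a scalar that commutes with $q^n$, $\Gamma_k^n$, $\pav$, and $\chi_{m+n}$, one has $ik\,\bold{C}^{(m,n)}_{\mathrm{visc},k} = \nu q^n|k|^m\sum_{\ell}\binom{n}{\ell}\pav^\ell(v_y^2-1)\,\pav^2\Gamma_k^{n-\ell}(ik\omega_k)$, so the entire $\gamma$-level argument of Lemma~\ref{visc:comm} applies verbatim with $\omega_k$ replaced by $ik\omega_k$; since the $\gamma$-functionals of $ik\omega_k$ are (up to the factor $\nu$) exactly the $\mu$-functionals of $\omega_k$, the closure in the $\mu$-hierarchy is automatic and the lengthy verification you anticipate is unnecessary.
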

\begin{proof}
	Noting
	\begin{align*}
		ik \bold{C}^{(m,n)}_{{\rm visc}, k} :=   \nu q^n|k|^m\sum_{\ell = 1}^n \binom{n}{\ell}  \pav^\ell v_y^2\ \pav^2 \Gamma_k^{n-\ell} ik \omega_k 
		=   \nu \sum_{\ell = 1}^n \binom{n}{\ell} q^{\ell} \pav^\ell \paren{v_y^2-1} q^{n-\ell} \pav^2 ik \Gamma_k^{n-\ell} |k|^m\omega_k,
	\end{align*}
	the proof follows almost identically as that of Lemma~\ref{visc:comm}.
\end{proof}

\subsection{Commutator Estimates, $\bold{C}^{(m,n)}_{\mathrm{trans}, k}$}
We recall the transport commutator is defined as 
\begin{align*}
	\bold{C}^{(m,n)}_{\mathrm{trans}, k} 
	:=  q^n \sum_{l = 1}^n \binom{n}{l} \pav^l G \ |k|^m\Gamma_k^{n-l+1} \omega_k.
\end{align*}
Then we have the following estimates.
\begin{lemma}[Transport commutator for $\gamma$ estimate] 
	Let $\bold{C}^{(m,n)}_{\mathrm{trans}, k}$ be defined as above. It holds
	\label{tran:comm}
	\begin{align}
		\label{tran:comm:esti}
		\sum_{m,n\ge 0}\sum_{k\neq 0} & \theta_{n}^2 |\bold{a}_{m,n}|^2 \abs{\brk{    \omnk,   \bold{C}^{(m,n)}_{\mathrm{trans}, k} e^{2W}\chi_{m+n}^2}}
		\nn & \lesssim 
\eps
\nu^{99}
\sqrt{\mathcal{E}^{(\gamma) }}
\paren{\sqrt{\mathcal{E}^{(\gamma) }} + \sqrt{\cd^{(\gamma)}} + \sqrt{\mathcal{CK}^{(\gamma)}}}\paren{1 + \sqrt{\cd^{(\gamma)}_{ H}} + \sqrt{\cd^{(\gamma)}_{ \overline  H}}}.
	\end{align}
\end{lemma}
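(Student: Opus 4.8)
\textbf{Proof proposal for Lemma \ref{tran:comm}.}

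The plan is to estimate the inner product mode-by-mode in $(m,n;k)$ by expanding $\bold{C}^{(m,n)}_{\mathrm{trans},k} = q^n \sum_{l=1}^n \binom{n}{l} \pav^l G\,|k|^m \Gamma_k^{n-l+1}\omega_k$ and then distributing the weight $q^n$ between the coordinate factor and the vorticity factor. The key combinatorial device is to split the sum according to whether $l \le n/2$ (many derivatives land on $\omega$, few on $G$) or $l > n/2$ (many derivatives on $G$, few on $\omega$). In either regime, one H\"older-estimates the product in $L^2 \times L^\infty \times L^2$: the $L^\infty$ factor is handled by a one-dimensional Sobolev embedding (as in the estimates $V_{11}$, $V_{12}$ of Lemma \ref{visc:comm}), converting it into an $H^1$-type norm and introducing at most one extra $\py$ and one extra factor of $\chi_{m+n-l}$ together with the fattened cutoff $\widetilde{\chi}_1$. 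The degenerate weight $q^n$ is reassembled into the building blocks $S^{(a,b,c)}_{m,\cdot}\omega_k$ from \eqref{S_nq} using the commutator identities \eqref{cm_pv_qn}, \eqref{cm_py_qn}, and the relation $q^\ell \py f = \py(q^\ell f) + [q^\ell,\py]f$; Lemma \ref{lem:SLJ} then bounds the resulting $S$-quantities by $\mathcal{D}^{(\gamma)}$ (plus lower-order $\mathcal{D}^{(\mu)}$, which for $k\neq 0$ is comparable, cf.\ $\nu|k|^2 \mathcal{D}^{(\gamma)}_{m,n-1;k} \lesssim \mathcal{D}^{(\mu)}_{m,n-1;k}$). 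For the coordinate factor, one recognizes $q^{\ell-1}\pav^\ell G$ (and its one $\py$-derivative, which equals $q^{\ell-1}\pav^{\ell}\overline{H}$ up to harmless terms, by \eqref{defn:barh:cap}) as the objects controlled by $\mathcal{D}^{(\gamma)}_G$ and $\mathcal{D}^{(\gamma)}_{\overline H}$ in \eqref{D_coord}; here the time weight $\langle t\rangle^{3+2\ss}$ sitting inside those functionals is essential to absorb the growing factors $|k|t \lesssim \nu^{-1/3-\eta}$ produced when one rewrites $\Gamma_k^{n-l+1} = (\Gamma_k - ikt)\Gamma_k^{n-l} + ikt\,\Gamma_k^{n-l}$.

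The next step is to reorganize the double sum. Having bounded the $(m,n;k)$ term by a product of three sums — one over the vorticity indices giving $\sqrt{\mathcal{E}^{(\gamma)}} + \sqrt{\cd^{(\gamma)}} + \sqrt{\mathcal{CK}^{(\gamma)}}$, one over the coordinate indices giving $1 + \sqrt{\cd^{(\gamma)}_H} + \sqrt{\cd^{(\gamma)}_{\overline H}}$, and one ``coupling'' sum over $l$ — I would apply the combinatorial bounds of Corollary \ref{comb:boun} together with \eqref{comb:boun:rem} to show that the Gevrey coefficients satisfy $\bold{a}_{m,n} \lesssim \mathfrak{C}^{l}\bigl(\tfrac{(m+n)!}{(m+n-l)!}\bigr)^{-\sigma}\bold{a}_{m,n-l+1}\,\bold{a}_{0,l-1}$ (schematically), so that the $l$-sum converges geometrically once $\lambda$ is small. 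Fubini in $l$ then decouples the vorticity and coordinate sums. Finally one uses \eqref{nu:to:W}, namely $\nu^{-100} \lesssim e^{W/2}$ on $t \le \nu^{-1/3-\eta}$, to extract the large power $\nu^{99}$ and to trade the $e^{W}$ weights appearing in the vorticity functionals against the $e^{W/2}$ weights in the coordinate functionals; and the smallness $\sqrt{\mathcal{E}^{(\gamma)}_G} + \sqrt{\mathcal{E}^{(\gamma)}_{\overline H}} \lesssim \eps$ from hypothesis \eqref{boot:H} supplies the factor $\eps$. Assembling these gives exactly the right-hand side of \eqref{tran:comm:esti}.

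The main obstacle I anticipate is the careful bookkeeping of the cutoff functions and the boundary-weight powers when $n-l+1$ is small (the cases $l = n$ and $l = n-1$), where the generic reassembly of $q^n$ into $S$-blocks breaks down because $q^{n-l+1}$ carries too few powers of $q$ to absorb the $\frac{n}{q}$ factors; there one must argue directly as in the base cases of Lemma \ref{lem:SLJ} (e.g.\ the $n=2,3$ analysis), keeping the raw $\py$-derivatives and the factor $v_y^{-1}$ bounded via \eqref{v_y_asmp}. A secondary subtlety is that the Sobolev embedding for the $L^\infty$ factor produces a term with $\partial_y \chi_{m+n-l+1}$, costing $(m+n-l)^{1+\sigma}$ by \eqref{chi:prop:3}; this must be reabsorbed using \eqref{gevbd1212}, which is exactly the point of the Gevrey margin $\sigma_\ast$, so one has to check that the total loss never exceeds what $\sigma_\ast$ and the convergence factor $\mathfrak{C}\lambda^s$ can pay for — this is where choosing $\lambda$ small (relative to $s$ only) is used.
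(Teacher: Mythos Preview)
Your skeleton is correct and matches the paper's proof: split the $l$-sum at $n/2$, estimate each piece by $L^2\times L^\infty\times L^2$ H\"older with one-dimensional Sobolev embedding on the $L^\infty$ factor, use Corollary~\ref{comb:boun} / \eqref{comb:boun:rem} to decouple the sums, and invoke $\nu^{-100}\lesssim e^{W/2}$ together with the bootstrap to extract $\eps\,\nu^{99}$.

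The paper does differ from your route in one structural way that buys a real simplification. Before anything else it rewrites
\[
\pav^{\,l} G \;=\; \pav^{\,l-1}\!\Bigl(\tfrac{\overline{H}}{v_y}\Bigr),
\]
so that after an index shift the summand becomes $q^l\pav^{\,l}(\overline{H}/v_y)\cdot q^{n-l}\Gamma_k^{\,n-l}|k|^m\omega_k$. Two things follow. First, the vorticity factor is now \emph{exactly} $\mr\omega_{m,n-l;k}$, so there is no need to open $\Gamma_k^{\,n-l+1}$ as $\pav+ikt$ and no stray $|k|t$ factor appears; correspondingly Lemma~\ref{lem:SLJ} is never invoked here --- the paper bounds the vorticity factor directly by $\mathcal{E}^{(\gamma)}$ or $\mathcal{D}^{(\gamma)}$. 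Second, the coordinate factor is the product $\overline{H}\cdot v_y^{-1}$, which the paper handles by the geometric expansion $v_y^{-1}=\sum(-H)^j$ and the product Lemmas~\ref{pro:-s}, \ref{pro:1:-s}; this is how the terms $\sqrt{\cd^{(\gamma)}_H}$ and $\sqrt{\cd^{(\gamma)}_{\overline H}}$ enter, rather than via direct $G$-functionals. Your plan to work with $G$ and $\overline{H}$ separately and to absorb $|k|t$ through the $\langle t\rangle^{3+2\ss}$ weight can be made to close, but it is a detour; the $\overline{H}/v_y$ rewriting aligns the $q$-powers with the derivative counts on both factors from the start and removes the need for the base-case analysis you flagged as an obstacle.
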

\begin{proof}
	The transport commutator term could be written as
	\begin{align}
		\label{comm:tran}
		\bold{C}^{(m,n)}_{\mathrm{trans}, k} 
		&= 
		q^n \sum_{l = 1}^n \binom{n}{l} \pav^{l-1}
		\paren{\frac{\overline{H}}{v_y}} \Gamma_k^{n-l+1} |k|^m\omega_k.
	\end{align}
	Hence, we have
	\begin{align*}
		&\brk{ |\bold{a}_{m,n}|^2    \omnk, 
			\bold{C}^{(m,n)}_{ {\rm trans},k} e^{2W}\chi_{m+n}^2}
		\\&\qquad\qquad\qquad=
		\sum_{l=0}^{n-1} \binom{n}{l}
		\brk{	  |\bold{a}_{m,n}|^2  \omnk, q^{n}\pav^{l} \frac{\overline{H}}{v_y}\mr \omega_{m,n-l;k} e^{2W}\chi_{m+n}^2 }.
	\end{align*}
	Similar as the treatment of the viscous commutator term in section~\ref{visc:comm:sect}, we split the sum into two parts:
	\begin{align*}
		\paren{\sum_{l\le n/2} + \sum_{n/2<l\le n-1} }
		\binom{n}{l}
		\brk{	  |\bold{a}_{m,n}|^2  \omnk, 
			|k|^m q^l\pav^{l} \frac{\overline{H}}{v_y} q^{n-l}\Gamma_k^{n-l}\omega_{k} e^{2W}\chi_{m+n}^2 }
		=
		T_{\gamma,1}^{m,n} +T_{\gamma,2}^{m,n}.
	\end{align*}
	The first term is estimated by
	\begin{align*}
		\abs{T_{\gamma,1}^{m,n}}&  \le 	
		\sum_{l\le n/2} |\bold{a}_{m,n}|^2
		\binom{n}{l}
		\nnorm{    \omnk e^{W}\chi_{m+n}}_{L^2}
		\nnorm{ q^l\pav^{l} \frac{\overline{H}}{v_y}\tilde \chi_{l+1}}_{L^\infty_y}
		\enorm{  |k|^m  q^{n-l}\Gamma_k^{n-l}\omega_{k} e^{W}\chi_{m+n-l}}
	\end{align*}
where we denote
\begin{align*}
	\widetilde{\chi}_{l+1} = 
	\begin{cases}
		\chi_{l+1}, & l\ge 1 \\
		\widetilde{\chi}_{1}, & l =0
	\end{cases}
\end{align*}
with $	\widetilde{\chi}_{1}$ being a fattened version of $\chi_1$ while $\supp 	\widetilde{\chi}_{1} \subset [-1, -7/16]\cup[7/16, 1]$. 
	The trouble term $\nnorm{ q^l\pav^{l} \frac{\overline{H}}{v_y}\widetilde{\chi}_{l+1}}_{L^\infty_y}$ is treated as in~\eqref{l:inft:coor}
	\begin{align*}
		\nnorm{ q^l\pav^{l} \frac{\overline{H}}{v_y}\widetilde{\chi}_{l+1}}_{L^\infty_y}
		&\lesssim
		\paren{ l^{1+\sigma} + 1}\norm{q^l\pav^{l} \frac{\overline{H}}{v_y}\tilde{\widetilde{\chi}}_{l}}_{L^2}
		+ \norm{\partial_y( q^l\pav^{l} \frac{\overline{H}}{v_y}) \tilde{\widetilde{\chi}}_{l}}_{L^2}
	\end{align*}
with
\begin{align*}
\widetilde{\widetilde{\chi}}_{l} = 
	\begin{cases}
		\chi_{l}, & l\ge 1 \\
		\widetilde{\widetilde{\chi}}_{1}, & l =0
	\end{cases}
\end{align*}
where $\widetilde{\widetilde{\chi}}_{l}$ is a fattened version of $\widetilde{\chi}_{l}$ while $\supp 	\widetilde{\widetilde{\chi}}_{l} \subset [-1, -1/2]\cup[1/2, 1]$.
	Therefore, we get
	\begin{align}
		\label{T:gamm:1}
		\sum_{m,n\ge0}\sum_{k\in\ZZ}\theta_{n}^2 T_{\gamma,1}^{m,n}
		&\lesssim
		\sum_{m,n\ge0}\sum_{k\in\ZZ}
		\sum_{l\le n/2} |\bold{a}_{m,n}|^2
		\binom{n}{l}
		\nnorm{    \omnk e^{W}\chi_{m+n}}_{L^2}\bold{a}_{m,n-l}^{-1}\bb_{l+1}^{-1}(l+1)^{2-2/s}
		\nn&\quad
		\times
		\bold{a}_{m,n-l}\enorm{  |k|^m  q^{n-l}\Gamma_k^{n-l}\omega_{k} e^{W}\chi_{m+n}}
		\bb_{l+1}(l+1)^{2/s-2}\paren{ l^{1+\sigma} + 1}\norm{q^l\pav^{l} \frac{\overline{H}}{v_y}\widetilde{\widetilde{\chi}}_{l}}_{L^2}
		\nn&\quad+
		\sum_{m,n\ge0}\sum_{k\in\ZZ}
		\sum_{l\le n/2} |\bold{a}_{m,n}|^2
		\binom{n}{l}
		\nnorm{    \omnk e^{W}\chi_{m+n}}_{L^2}\bold{a}_{m,n-l}^{-1}\bb_{l+1}^{-1}(l+1)^{2-2/s}
		\nn&\quad
		\times
		\bold{a}_{m,n-l}\enorm{  |k|^m  q^{n-l}\Gamma_k^{n-l}\omega_{k} e^{W}\chi_{m+n}}
		\bb_{l+1}(l+1)^{2/s-2}
		 \norm{\partial_y\paren{ q^l\pav^{l} \frac{\overline{H}}{v_y}} \widetilde{\widetilde{\chi}}_{l} }_{L^2}.
	\end{align}
	By Corollary~\ref{comb:boun}, using H\"older's and Young's inequalities, we deduce 
	\begin{align*}
		\sum_{m,n\ge0}\sum_{k\in\ZZ} \theta_{n}^2 T_{\gamma,1}^{m,n}
		&\lesssim
		\theta_{n}^2 \mathcal{E}^{(\gamma)}
		\nu^{100}
\left( \paren{\sum_{n\ge0}(n+1)^{2\sss-2}  \bold{a}_{n+1}^2  \left\|  \partial_{y} \paren{\frac{\overline{H}}{v_y}}_n e^{W/2} \chi_n \right\|_{L^2}^2}^{1/2}
\right. \\&\quad
\left.+  \paren{\sum_{n\ge0}(n+1)^{2\sss-2}  \bold{a}_{n+1}^2  \left\| \paren{\frac{\overline{H}}{v_y}}_n e^{W/2} \chi_n \right\|_{L^2}^2}^{1/2} \right).
	\end{align*}
Noting that
\begin{align}
	\label{1onvy}
	\frac{1}{v_y} = 	\frac{1}{1 + H} = \sum_{l=0}^{\infty} (-1)^l H^l,
\end{align}
we have by Lemma~\ref{pro:-s} and Lemma~\ref{pro:1:-s} that
\begin{align*}
	\paren{\sum_{n\ge0}(n+1)^{2\sss-2}  \bold{a}_{n+1}^2  \left\|  \partial_{y} \paren{\frac{\overline{H}}{v_y}}_n e^{W/2} \chi_n \right\|_{L^2}^2}^{1/2} \le &\nu^{-1/2}
	\sqrt{\cd_{\overline H}^{(\gamma)}}\paren{\sum_{i=0}^{\infty} \paren{\mathcal{E}_{ H}^{(\alpha)}}^{i/2}}
	\\ \lesssim &\nu^{-1/2}\sqrt{\cd_{\overline H}^{(\gamma)}},
\end{align*}
and 
\begin{align*}
	\paren{\sum_{n\ge0}(n+1)^{2\sss-2}  \bold{a}_{n+1}^2  \left\|  \paren{\frac{\overline{H}}{v_y}}_n e^{W/2} \chi_n \right\|_{L^2}^2}^{1/2} \le &\sqrt{\mathcal{E}^{(\gamma)}_{\overline H}} +  \nu^{-1/2}
	\paren{  \sqrt{\mathcal{E}^{(\gamma)}_{ H}} 
		\sqrt{\cd^{(\gamma)}_{\overline H}} +  \sqrt{\mathcal{E}^{(\gamma)}_{ \overline  H}}	\sqrt{\cd^{(\gamma)}_{ H}}} .
\end{align*}
Hence, we get
	\begin{align*}
	\sum_{m,n\ge0}\sum_{k\in\ZZ} \theta_{n}^2 T_{\gamma,1}^{m,n}
	\lesssim
  \eps  \nu^{99} \mathcal{E}^{(\gamma)} \paren{1 + \sqrt{\cd^{(\gamma)}_{ H}} + \sqrt{\cd^{(\gamma)}_{ \overline  H}}}
\end{align*}
by the bootstrap assumption and the boundedness of the sequence $\{\theta_{n}\}$.
	While for $T_{\gamma,2}^{m,n}$, we have
	\begin{align*}
		\abs{T_{\gamma,2}^{m,n}} 
		\lesssim&
		\sum_{n/2< l\le n-1} |\bold{a}_{m, n}|^2
		\binom{n}{l}
		\nnorm{    \omnk e^{W}\chi_{m+n}}_{L^2}
		\nnorm{q^l\pav^{l} \frac{\overline{H}}{v_y}e^{W/2}\chi_{n-1}}_{L^2}
		\\&\quad\times
		\norm{  |k|^m q^{n-l}\Gamma_k^{n-l}\omega_{k} e^{W/2} \chi_{m+n}}_{L^\infty}.
	\end{align*}
	By Sobolev embedding, we get
	\begin{align*}
		&\norm{   |k|^m q^{n-l}\Gamma_k^{n-l}\omega_{k} e^{W/2}\chi_{m+n}}_{L^\infty}
		\\&\qquad\lesssim
		\norm{  |k|^m  q^{n-l}\Gamma_k^{n-l}\omega_{k}e^{W/2} \chi_{m+n-l+1}}_{L^2}^{1/2}
		\norm{   |k|^m \partial_y \paren{ q^{n-l}\Gamma_k^{n-l}\omega_{k}e^{W/2} \chi_{m+n-l+1}}}_{L^2}^{1/2}	
		\\&\qquad\lesssim
		\norm{    \partial_y \omega_{m,n-l;k} e^{W/2} \chi_{n-l}}_{L^2}
		+ (m+n-l)^{1+\sigma}
		\norm{   \omega_{m,n-l;k} e^{W/2} \chi_{n-l}}_{L^2}
		\\&\qquad\qquad
		+ \nu^{-1}\norm{   \omega_{m,n-l;k} e^{W/2} \chi_{n-l}}_{L^2}
	\end{align*}
	A similar argument as~\eqref{T:gamm:1}, we arrive at
	\begin{align*}
		\sum_{m,n\ge0}\sum_{k\in\ZZ} \theta_{n}^2 T_{\gamma,2}^{m,n}
		&\lesssim \theta_{n}^2
		\nu^{100} \sqrt{\mathcal{E}^{(\gamma)}}\paren{\sqrt{\cd^{(\gamma)}}+\sqrt{\mathcal{CK}^{(\gamma)}}}
   \paren{\sum_{n\ge0}(n+1)^{2\sss-2}  \bold{a}_{n+1}^2  \left\|  \paren{\frac{\overline{H}}{v_y}}_n e^{W/2} \chi_n \right\|_{L^2}^2}^{1/2}
 \\&\lesssim \theta_{n}^2 \nu^{99}
  \sqrt{\mathcal{E}^{(\gamma) }}
 \paren{\sqrt{\cd^{(\gamma)}} + \sqrt{\mathcal{CK}^{(\gamma)}}} \paren{ \sqrt{\mathcal{E}^{(\gamma)}_{\overline H}} +  \sqrt{\mathcal{E}^{(\gamma)}_{ H}} 
 	\sqrt{\cd^{(\gamma)}_{\overline H}} +  \sqrt{\mathcal{E}^{(\gamma)}_{ \overline  H}} 
 	\sqrt{\cd^{(\gamma)}_{H}}}
 	\\& 
		\lesssim \eps
		\nu^{99}
		\sqrt{\mathcal{E}^{(\gamma) }}
		\paren{\sqrt{\cd^{(\gamma)}} + \sqrt{\mathcal{CK}^{(\gamma)}}}\paren{1 + \sqrt{\cd^{(\gamma)}_{ H}} + \sqrt{\cd^{(\gamma)}_{ \overline  H}}}
	\end{align*}
	concluding the proof, where we again used the boundedness of the sequence $\{\theta_{n}\}$.
\end{proof}
\begin{lemma}[Transport commutator for $\alpha$ estimate] 
	Let the assumptions in Lemma~\ref{tran:comm} hold. Then it follows 
	\label{tran:comm:alph}
	\begin{align}
		\label{tran:comm:esti:alph}
		&\sum_{m,n\ge 0}\sum_{k\neq 0} 
		\nu	\theta_{n}^2  |\bold{a}_{m,n}|^2   \abs{\brk{  
			\partial_y\omnk,  
			\partial_y \bold{C}^{(m,n)}_{\mathrm{trans}, k} e^{2W}\chi_{m+n}^2}}
				\nn & \lesssim
	\eps  \nu^{98}
	\sqrt{\mathcal{E}^{(\alpha) }} \sum_{\iota\in \{\alpha, \gamma \}} 
	\paren{\sqrt{\mathcal{E}^{(\iota) }} + \sqrt{\cd^{(\iota)}} + \sqrt{\mathcal{CK}^{(\iota)}}} \sum_{\iota\in \{\alpha, \gamma \}}
	 \paren{1 + \sqrt{\cd^{(\iota)}_{ H}} + \sqrt{\cd^{(\iota)}_{ \overline  H}}}.
	\end{align}
\end{lemma}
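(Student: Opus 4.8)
The plan is to prove Lemma~\ref{tran:comm:alph} by the same structure as the $\gamma$-level estimate of Lemma~\ref{tran:comm}, differentiating the identity~\eqref{comm:tran} once in $\p_y$ before splitting dyadically. First I would write
\begin{align*}
\p_y \bold{C}^{(m,n)}_{\mathrm{trans},k}
= \sum_{l=0}^{n-1}\binom{n}{l}\p_y\paren{q^n \pav^{l}\tfrac{\overline H}{v_y}\mr\omega_{m,n-l;k}},
\end{align*}
expand the $\p_y$ by the Leibniz rule, and note that the only genuinely new term (versus the $\gamma$ case) is the one where $\p_y$ hits $\mr\omega_{m,n-l;k}$, producing $\p_y \mr\omega_{m,n-l;k}$; all the other pieces put one more $y$-derivative on the coordinate factor $q^n\pav^l(\overline H/v_y)$ or on the cutoff, and are strictly better behaved because they do not increase the order of the $\omega$-factor. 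I would again use the commutator relation~\eqref{comm:q:deri}--\eqref{q:deri} to move $\p_y$ past $q^{n-l}$, generating lower order conormal weights $S_{m,n-l}^{(1,0,0)}\omega_k$ type terms controlled via~\eqref{S:est:1}--\eqref{S:est:2}.

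Next I would split the sum as $\sum_{l\le n/2}+\sum_{n/2<l\le n-1}$ exactly as in Lemma~\ref{tran:comm}. In the first range, the coordinate factor $q^l\pav^l(\overline H/v_y)$ (or its $\p_y$) is put in $L^\infty_y$ via Sobolev embedding and then bounded by $\mathcal{E}_{\overline H}^{(\gamma)}+\mathcal{E}_{\overline H}^{(\alpha)}$ and $\mathcal{D}_{\overline H}^{(\gamma)}+\mathcal{D}_{\overline H}^{(\alpha)}$ after using the geometric series~\eqref{1onvy} and the product Lemmas (the analogues of Lemma~\ref{pro:-s}, Lemma~\ref{pro:1:-s}) to handle the $1/v_y$ factor; the $\omega$-factor $\p_y\mr\omega_{m,n-l;k}$ and the one extra $\p_y$ landing on the high-frequency piece supply the $\sqrt{\mathcal{E}^{(\alpha)}}$, $\sqrt{\mathcal{D}^{(\alpha)}}$ and $\sqrt{\mathcal{CK}^{(\alpha)}}$ on the right-hand side, while the Sobolev loss of one $\p_y$ forces one of the $\gamma$-level factors $\sqrt{\mathcal{D}^{(\gamma)}}$ or $\sqrt{\mathcal{CK}^{(\gamma)}}$ to appear — this is why the sum over $\iota\in\{\alpha,\gamma\}$ is present. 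Corollary~\ref{comb:boun} and~\eqref{comb:boun:rem} handle the binomial and Gevrey-weight bookkeeping, so that $\binom{n}{l}\bold{a}_{m,n}\lesssim \bold{a}_{m,n-l}\bb_{l+1}(l+1)^{2/s-2}$, and the conversion $\nu^{-100}\lesssim e^{W/2}$ in~\eqref{nu:to:W} together with $t\le\nu^{-1/3-\eta}$ absorbs the time and $\nu$ powers into the stated $\nu^{98}$. In the second range $n/2<l\le n-1$ the $\omega$-factor is placed in $L^\infty_y$ instead (again via Sobolev, losing one $\p_y$ which is why $\sqrt{\mathcal{D}^{(\iota)}}$ reappears) and the coordinate factor in $L^2_y$; this produces the $\sqrt{\mathcal{D}_H^{(\iota)}}+\sqrt{\mathcal{D}_{\overline H}^{(\iota)}}$ contributions on the right.

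The main obstacle I anticipate is the bookkeeping of the extra $\p_y$: unlike the $\gamma$ case, differentiating the product creates a term $\p_y \mr\omega_{m,n-l;k}$ whose natural bound is an $\alpha$-level quantity, but the Sobolev embedding used to put either the coordinate factor or the companion $\omega$-factor into $L^\infty$ consumes a derivative, and one must be careful that the resulting ``second derivative on $\omega$'' term is exactly $\mathcal{D}^{(\alpha)}$ (or $\mathcal{D}^{(\mu)}$, after using $\Gamma_k=\pav+ikt$ and $t\le\nu^{-1/3-\eta}$) and not something uncontrolled. Concretely, the danger is a term like $\nu\|\p_y^2 \mr\omega_{m,n-l;k} e^{W/2}\chi\|_{L^2}$ with no spare smallness; one resolves it by only ever using the $L^\infty$ embedding on the factor that already has a derivative to spare, and by peeling off the top-order piece against $\mathcal{D}^{(\alpha)}$ directly rather than through a product estimate. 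Once that is organized, the rest is a routine — if lengthy — repetition of the estimates in Lemma~\ref{tran:comm} and section~\ref{visc:comm:sect}, and I would present it by only writing out the genuinely new term in detail and referring to Lemma~\ref{tran:comm} for the others.
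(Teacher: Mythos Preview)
Your proposal is correct and follows essentially the same approach as the paper: differentiate the identity~\eqref{comm:tran} once in $\partial_y$ via the Leibniz rule to obtain two pieces (one with $\partial_y$ on the coordinate factor $q^{l-1}\pav^{l-1}(\overline H/v_y)$, one with $\partial_y$ on the $\omega$-factor), then repeat the $\gamma$-level argument of Lemma~\ref{tran:comm} line by line with the regularity upgraded from $\gamma$ to $\alpha$. The paper's proof is in fact even terser than your sketch---it writes the two-term Leibniz decomposition and then simply refers back to Lemma~\ref{tran:comm}---so your more detailed anticipation of where the extra $\partial_y$ must be placed and how the mixed $\alpha/\gamma$ factors arise is entirely compatible with (and a reasonable elaboration of) what the paper does.
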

\begin{proof}
	From~\eqref{comm:tran}, we get
	\begin{align*}
		\partial_y \bold{C}^{(m,n)}_{\mathrm{trans}, k} 
		&= |k|^m 
		\sum_{l = 1}^n \binom{n}{l} \partial_y\paren{q^{l-1}\pav^{l-1}
			\frac{\overline{H}}{v_y}} 
		q^{n-l+1}\Gamma_k^{n-l+1} \omega_k
		+|k|^m 
		\sum_{l = 1}^n \binom{n}{l} q^{l-1}\pav^{l-1}
		\frac{\overline{H}}{v_y}
		\partial_y\paren{q^{n-l+1}\Gamma_k^{n-l+1} \omega_k}.
	\end{align*}
	Following the estimates in Lemma~\ref{tran:comm} line by line for the above two terms, upgrading the corresponding regularity from $\gamma$ to $\alpha$, we get~\eqref{tran:comm:esti:alph}.
\end{proof}
\begin{lemma}[Transport commutator for $\mu$ estimate] 
	Let $\bold{C}^{(m,n)}_{\mathrm{trans}, k}$ be defined as above. It holds
	\label{tran:comm:mu}
	\begin{align}
		\sum_{m,n\ge 0}\sum_{k\neq 0} & \theta_{n}^2 |\bold{a}_{m,n}|^2 \abs{\brk{ \omnk,  \bold{C}^{(m,n)}_{\mathrm{trans}, k} e^{2W}\chi_{m+n}^2}}
				\nn & \lesssim
\eps
\nu^{99}
\sqrt{\mathcal{E}^{(\mu) }}
\paren{\sqrt{\mathcal{E}^{(\mu) }} + \sqrt{\cd^{(\mu)}} + \sqrt{\mathcal{CK}^{(\mu)}}}\paren{1 + \sqrt{\cd^{(\mu)}_{ H}} + \sqrt{\cd^{(\mu)}_{ \overline  H}}}.
	\end{align}
\end{lemma}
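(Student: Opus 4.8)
The plan is to prove the $\mu$-level transport-commutator bound by running the argument of Lemma~\ref{tran:comm} (its $\gamma$-level counterpart) essentially verbatim, with the single change of inserting the extra weight $\nu|k|^2$ that separates the $\mu$-energy from the $\gamma$-energy; this is the exact analogue of the reduction of Lemma~\ref{visc:comm:mu} to Lemma~\ref{visc:comm}. First I would rewrite $ik\,\bold{C}^{(m,n)}_{\mathrm{trans},k}$ using the identity $\pav^\ell G=\pav^{\ell-1}(\overline H/v_y)$ from~\eqref{comm:tran}, so that after distributing one copy of $ik$ onto each factor of the inner product the commutator becomes a finite sum over $0\le l\le n-1$ of terms of the shape $\binom{n}{l}\brk{|\bold{a}_{m,n}|^2\,ik\,\omnk,\ |k|^m q^{l}\pav^{l}(\overline H/v_y)\,q^{n-l}\Gamma_k^{n-l}(ik\omega_k)\,e^{2W}\chi_{m+n}^2}$, i.e. the display following~\eqref{comm:tran} with $\omega_k$ replaced by $ik\omega_k$ throughout and the outer weight multiplied by $\nu$.

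Next I would split the $l$-sum at $l\approx n/2$, exactly as in the proof of Lemma~\ref{tran:comm}. For $l\le n/2$ the coordinate factor $q^{l}\pav^{l}(\overline H/v_y)$ is placed in $L^\infty$ and bounded, via one-dimensional Sobolev embedding and $|v_y|\gtrsim1$, by $(l^{1+\sigma}+1)\|q^{l}\pav^{l}(\overline H/v_y)\widetilde{\widetilde\chi}_l\|_{L^2}+\|\partial_y(q^{l}\pav^{l}(\overline H/v_y))\widetilde{\widetilde\chi}_l\|_{L^2}$, keeping both vorticity factors in $L^2$; for $n/2<l\le n-1$ I instead place the vorticity factor $|k|^m q^{n-l}\Gamma_k^{n-l}(ik\omega_k)$ in $L^\infty$, using Sobolev embedding together with $\Gamma_k=\pav+ikt$, the bound $t\le\nu^{-1/3-\eta}$, and $\|v_y^{-1}\|_{L^\infty}\lesssim1$, which reduces it to a combination of $\cd^{(\mu)}$- and $\mathcal{CK}^{(\mu)}$-type quantities. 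In both cases I absorb the binomial coefficients against the Gevrey weights $\bold{a}_{m,n}\bold{a}_{m,n-l}^{-1}\bold{a}_{l+1}^{-1}$ by means of Corollary~\ref{comb:boun} and~\eqref{comb:boun:rem}, use H\"older's and Young's inequalities to split each term into a vorticity factor — recognised as $\sqrt{\mathcal{E}^{(\mu)}}$, $\sqrt{\cd^{(\mu)}}$ or $\sqrt{\mathcal{CK}^{(\mu)}}$ via $\nu|k|^2\cd^{(\gamma)}_{m,n;k}\lesssim\cd^{(\mu)}_{m,n;k}$ and the corresponding bounds for $\mathcal{E}$ and $\mathcal{CK}$ — and a coordinate factor, and invoke~\eqref{nu:to:W} to trade the gain $e^{W/2}$ for the advertised powers of $\nu$.

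To turn the surviving coordinate sums into the functionals $\mathcal{E}^{(\iota)}_{\overline H}$, $\cd^{(\iota)}_{\overline H}$, $\mathcal{E}^{(\iota)}_{H}$, $\cd^{(\iota)}_{H}$ I would expand $1/v_y=\sum_{l\ge0}(-1)^lH^l$ as in~\eqref{1onvy} and apply the Product Lemma~\ref{pro:1} together with Lemmas~\ref{pro:-s} and~\ref{pro:1:-s}, precisely as in the $\gamma$ argument; the bootstrap smallness $\mathcal{E}^{(\alpha)}_{H}\lesssim\eps^2$ makes the geometric series converge. Since the factor $ik$ never touches $\overline H$, $H$ or $1/v_y$, these coordinate contributions are literally those of Lemma~\ref{tran:comm}, which is why the claimed bound carries the factor $(1+\sqrt{\cd^{(\mu)}_{H}}+\sqrt{\cd^{(\mu)}_{\overline H}})$, with the coordinate functionals understood at the level appearing in~\eqref{E_coord}. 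Using finally the uniform boundedness of $\{\theta_n\}$, one sums the two pieces and obtains the asserted estimate.

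The only genuinely new point relative to the $\gamma$ and $\alpha$ cases is bookkeeping: one must check that each time $ik$ is commuted through $q^{n-l}\Gamma_k^{n-l}$ or through a derivative of a cutoff or of $W$, the resulting terms still regroup into the $\mu$-level energy, dissipation and $\mathcal{CK}$ functionals rather than producing a bare power of $|k|$ or of $t$. This is exactly the verification carried out in Lemma~\ref{visc:comm:mu}, and it goes through because every spare power of $|k|$ is paired either with a $\nu^{1/2}$ or with $t\le\nu^{-1/3-\eta}$ and then reabsorbed through~\eqref{nu:to:W}; I therefore expect no new analytic obstacle, and would present the proof as a short reduction to Lemma~\ref{tran:comm}, omitting the repeated computations.
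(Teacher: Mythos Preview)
Your proposal is correct and matches the paper's approach exactly: the paper's proof is a one-line reduction, ``By the same argument as in Lemma~\ref{visc:comm:mu}, we obtain the desired bound,'' which is precisely the strategy you describe of inserting the weight $\nu|k|^2$ (equivalently replacing $\omega_k$ by $ik\omega_k$) and rerunning the $\gamma$-level proof of Lemma~\ref{tran:comm} verbatim. Your write-up is in fact more detailed than the paper's, spelling out the $l\le n/2$ versus $l>n/2$ split and the bookkeeping for the $|k|$ factors, all of which the paper leaves implicit.
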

\begin{proof}
	By the same argument as in Lemma~\ref{visc:comm:mu}, we obtain the desired bound in the statement. 
\end{proof}

\subsection{Proof of Proposition \ref{pro:rhs:intro}}

We now consolidate all the bounds from this section in order to prove Proposition \ref{pro:rhs:intro}. 
\begin{proof}[Proof of Proposition \ref{pro:rhs:intro}] We subdivide the proof into three steps.

\begin{proof}[Proof of \eqref{Im:qu:a}, \eqref{Im:qu:d}, and \eqref{Im:qu:g}]
We first invoke the bound \eqref{CommCnqbd} to obtain the following estimate for $|\mathcal{I}_q^{(\gamma)}|$ 
\begin{align*}
|\mathcal{I}_{q}^{(\gamma)}| \le & \frac{1}{B}\mathcal{D}^{(\gamma)}+CB\sum_{m = 0}^\infty \myr{\sum_{n = 1}^\infty}  \sum_{k \in \mathbb{Z}}\sum_{\ell=0}^{n-1}(\mathfrak C\lambda^{s})^{2n-2\ell} \lf(\frac{(m+n)!}{(m+\ell)!}\rg)^{-2\sigma-2\sigma_\ast}  \mathcal{D}^{(\gamma)}_{m,\ell;k}. 
\end{align*}
Now we invoke the technical inequality $\eqref{tht_app_1}_{\mf G_{\cdots}=\mathcal{D}^{(\gamma)}_{\cdots}}$ and select the $\delta_{\text{Drop}},\ n_\ast$ in Lemma \ref{lem:tht_app} appropriately (to guarantee that the parameter $\mf B$ is large enough) to obtain the result. \ifx
\siming{Previous:
\begin{align} \n
|\mathcal{I}_q^{(\gamma)}| = &\lf| \sum_{m = 0}^\infty \sum_{n = 0}^\infty \sum_{k \in \mathbb{Z}} \theta_{n}^2 \bold{a}_{m,n}^2 \mathrm{Re} \lf\langle  \bold{C}^{(m,n)}_{q,k} , \omega_{m,n;k}  e^{2W} \chi_{m + n}^2\rg \rangle\rg| \\ \n
\le & \lf| \sum_{m = 0}^\infty \sum_{n = 0}^\infty \mathbbm{1}_{n < n_{\ast}} \sum_{k \in \mathbb{Z}} \theta_{n}^2 \bold{a}_{m,n}^2 \mathrm{Re} \lf\langle  \bold{C}^{(m,n)}_{q,k} , \omega_{m,n;k}  e^{2W} \chi_{m + n}^2\rg \rangle\rg| \\ \n
& +  \lf| \sum_{m = 0}^\infty \sum_{n = 0}^\infty \mathbbm{1}_{n \ge n_{\ast}} \sum_{k \in \mathbb{Z}}  \bold{a}_{m,n}^2 \mathrm{Re} \lf\langle  \bold{C}^{(m,n)}_{q,k} , \omega_{m,n;k}  e^{2W} \chi_{m + n}^2\rg \rangle\rg| := |\mathcal{I}_{q, low}^{(\gamma)}| + |\mathcal{I}_{q,hi}^{(\gamma)}|. 
\end{align}
For $\mathcal{I}_{q,hi}^{(\gamma)}$, we invoke the bound \eqref{CommCnqbd} to obtain 
\begin{align*}
|\mathcal{I}_{q,hi}^{(\gamma)}| \le & \sum_{m = 0}^\infty \sum_{n = n_\ast}^\infty  \sum_{k \in \mathbb{Z}} \lf(\frac{1}{B}\mathcal{D}^{(\gamma)}_{m,n;k}+CB\sum_{\ell=0}^{n-1}(\mathfrak C\lambda^{s})^{2n-2\ell} \lf(\frac{(m+n)!}{(m+\ell)!}\rg)^{-2\sigma-2\sigma_\ast}  \mathcal{D}^{(\gamma)}_{m,\ell;k}\rg) \\
\le & \frac{1}{B} \mathcal{D}^{(\gamma)} +CB  \sum_{m = 0}^\infty \sum_{n = n_\ast}^\infty  \sum_{k \in \mathbb{Z}}\sum_{\ell=0}^{n-1}(\mathfrak C\lambda^{s})^{2n-2\ell} \lf(\frac{(m+n)!}{(m+\ell)!}\rg)^{-2\sigma-2\sigma_\ast}  \mathcal{D}^{(\gamma)}_{m,\ell;k}.
\end{align*}
For frozen $(m, k)$, we study the sum 
\begin{align*}
S_{m,k} := \sum_{n = n_\ast}^\infty \sum_{\ell = 0}^{n-1} (\mathfrak C\lambda^{s})^{2n-2\ell} \lf(\frac{(m+n)!}{(m+\ell)!}\rg)^{-2\sigma-2\sigma_\ast}  \mathcal{D}^{(\gamma)}_{m,\ell;k}.
\end{align*}
By interchanging the order of summation, we obtain 
\begin{align*}
S_{m,k} =& \sum_{\ell = 0}^{n_\ast - 1} \sum_{n = n_\ast}^\infty (\mathfrak C\lambda^{s})^{2n-2\ell} \lf(\frac{(m+n)!}{(m+\ell)!}\rg)^{-2\sigma-2\sigma_\ast}  \mathcal{D}^{(\gamma)}_{m,\ell;k} \\
&+ \sum_{\ell = n_\ast}^\infty \sum_{n = \ell + 1}^\infty (\mathfrak C\lambda^{s})^{2n-2\ell} \lf(\frac{(m+n)!}{(m+\ell)!}\rg)^{-2\sigma-2\sigma_\ast}  \mathcal{D}^{(\gamma)}_{m,\ell;k} = S^{(\text{rectangle})}_{m,k} + S^{(\text{triangle})}_{m,k}
\end{align*}
To ease the notation going forward, we define 
\begin{align*}
A := (\mathfrak C\lambda^{s})^2, \qquad b := 2(\sigma + \sigma_\ast), \qquad \mathfrak{K}: \mathbb{N} \rightarrow \mathbb{R}, \qquad \mathfrak{K}(z) = \frac{A^z}{[(m+z)!]^b}.
\end{align*}

\vspace{2 mm}

\noindent \underline{Estimation of $S^{(\text{triangle})}_{m,k}$:} This is the more challenging of these bounds due to the double infinite sum in $\ell, n$.  To handle such a sum, we issue the following 
\begin{claim} \label{cl:oop} Fix any $C > 1$. There exists $L_\ast$ sufficiently large relative to $A, b, C$ and universal constants such that the function $\mathfrak{K}$ satisfies the following inequality
\begin{align}
\sum_{n = L_\ast}^\infty \mathfrak{K}(n) \le C  \mathfrak{K}(L_\ast).
\end{align}
\end{claim}
\begin{proof}[Proof of Claim \ref{cl:oop}] This follows almost immediately by comparing $\mathfrak{K}(n)$ to a geometric function. More precisely, define 
\begin{align*}
R > 1, \qquad C_0 = C_0(L_\ast) = R^{L_\ast} \frac{A^{L^\ast}}{[(m+L_\ast)!]^b}, \qquad \tau(n) = C_0 \frac{1}{R^n}.
\end{align*}
Such a choice of $C_0$ ensures that 
\begin{align}
\tau(L_\ast) = \mathfrak{K}(L_\ast). 
\end{align}
We first note that, due to the standard geometric sum properties, 
\begin{align*}
\sum_{n = L_\ast}^\infty \tau(n) = C_0 \sum_{n = L_\ast}^\infty \frac{1}{R^n} = C_0 \frac{1}{R^{L_\ast}} \frac{1}{1-\frac{1}{R}} =  \frac{1}{1-\frac{1}{R}} \tau(L_\ast).
\end{align*}
We secondly notice that for $L_\ast$ sufficiently large, we have the order property 
\begin{align} \label{order:r:1}
\mathfrak{K}(n) \le \tau(n), \qquad n \ge L_\ast.
\end{align}
To establish, \eqref{order:r:1}, we notice that $K(L_\ast) = \tau(L_\ast)$. Therefore, it suffices to check 
\begin{align*}
\frac{\mathfrak{K}(n+1)}{\mathfrak{K}(n)} \le \frac{\tau(n+1)}{\tau(n)}, \qquad n \ge L_\ast, 
\end{align*}
which is equivalent to 
\begin{align*}
\frac{A}{(m+n+1)^b} \le \frac{1}{R}, \qquad n \ge L_\ast, 
\end{align*}
which can clearly be achieved by choosing $L_\ast$ sufficiently large. Therefore, we have 
\begin{align*}
\sum_{n = L_\ast}^\infty \mathfrak{K}(n) \le \sum_{n = L_\ast}^\infty \tau(n) \le \frac{1}{1-\frac{1}{R}} \tau(L_\ast) = \frac{1}{1 - \frac{1}{R}} \mathfrak{K}(L_\ast). 
\end{align*}
\end{proof}

Returning now to the bound, we have by choosing $n_\ast$ sufficiently large
\begin{align} \n
S^{(\text{triangle})}_{m,k} = & \sum_{\ell = n_\ast}^\infty  \frac{1}{\mathfrak{K}(\ell)}  \mathcal{D}^{(\gamma)}_{m,\ell;k} ( \sum_{n = \ell + 1}^\infty \mathfrak{K}(n)  ) \le \sum_{\ell = n_\ast}^\infty  \frac{\mathfrak{K}(\ell + 1)}{\mathfrak{K}(\ell)}  \mathcal{D}^{(\gamma)}_{m,\ell;k} \\ \label{shst:1}
= &C \sum_{\ell = n_\ast}^\infty \frac{A^{\ell + 1}}{[(m + \ell + 1)!]^b} \frac{[(m+\ell)!]^b}{A^\ell}   \mathcal{D}^{(\gamma)}_{m,\ell;k} \le  \frac{CA}{(m+n_\ast + 1)^b} \sum_{\ell = n_\ast}^\infty   \mathcal{D}^{(\gamma)}_{m,\ell;k}.
\end{align}
By taking $n_\ast$ large and summing over $m, k$, we obtain 
\begin{align*}
\sum S^{(\text{triangle})}_{m,k} \le o(1) \mathcal{D}^{(\gamma)}.
\end{align*}
\vspace{2 mm}

\noindent \underline{Estimation of $S^{(\text{rectangle})}_{m,k}$:} This bound works in a nearly identical manner to $\mathcal{S}^{(\text{triangle})}_{m,k}$. 

\vspace{2 mm}

\noindent \underline{Bound of $\mathcal{I}^{(\gamma)}_{q,low}$:} We have upon again invoking \eqref{CommCnqbd}
\begin{align} \n
|\mathcal{I}^{(\gamma)}_{q,low}| \le  &\frac{1}{B} \mathcal{D}^{(\gamma)} + C_B C_{n_\ast} \sum_{m = 0}^\infty \sum_{n = 0}^{n_\ast} \sum_{k \in \mathbb{Z}} \theta_n^2 \sum_{\ell = 0}^{n-1} \mathcal{D}_{m,\ell;k} \\ \n
\le & \frac{1}{B} \mathcal{D}^{(\gamma)} + C_B C_{n_\ast} \sum_{m = 0}^\infty \sum_{n = 0}^{n_\ast} \sum_{k \in \mathbb{Z}}  \sum_{\ell = 0}^{n-1} \Big( \frac{\theta_n}{\theta_{\ell}} \Big)^2 \theta_\ell^2 \mathcal{D}_{m,\ell;k} \\ \n
\le & \frac{1}{B} \mathcal{D}^{(\gamma)} + C_B C_{n_\ast} \sum_{m = 0}^\infty \sum_{n = 0}^{n_\ast} \sum_{k \in \mathbb{Z}}  \sum_{\ell = 0}^{n-1} \delta_{\text{Drop}}^2  \theta_\ell^2 \mathcal{D}_{m,\ell;k} \\ \label{delta:drop:choice}
= &  \frac{1}{B} \mathcal{D}^{(\gamma)} + \delta_{\text{Drop}}^2 C_B C_{n_\ast} \sum_{m = 0}^\infty  \sum_{k \in \mathbb{Z}}  \sum_{\ell = 0}^{\infty}   \theta_\ell^2 \mathcal{D}_{m,\ell;k},
\end{align}
which upon bringing $\delta_{\text{Drop}}$ small relative to $n_\ast$, we obtain the right-hand side is bounded by $o(1) \mathcal{D}^{(\gamma)}$. 
}\fi

This concludes the proof of \eqref{Im:qu:a}, whereas the proofs of \eqref{Im:qu:d}, \eqref{Im:qu:g} are nearly identical. 
\end{proof}

\begin{proof}[Proof of \eqref{Im:qu:b}, \eqref{Im:qu:e}, and \eqref{Im:qu:h}]
It is a direct consequence of Lemma~\ref{tran:comm}, Lemma~\ref{tran:comm:alph}, and Lemma~\ref{tran:comm:mu}. In fact, noting the sequence $\{\theta_{n}\}$ is bounded, 
we have 
\begin{align*}
	|\mathcal{I}^{(\gamma)}_{\mathrm{trans}}| \le \sum_{n,m\ge 0}\sum_{k\neq 0} & |\bold{a}_{ m,n}|^2 \theta_{n}^2 \abs{\brk{   \omega_{m, n; k},  |k|^m \bold{C}^{(n)}_{trans, k} e^{2W}\chi_{m+n}^2}}
		\\&\lesssim
	\sum_{n,m\ge 0}\sum_{k\neq 0} \abs{\brk{ |\bold{a}_{ m,n}|^2    \omega_{m, n; k},  |k|^m \bold{C}^{(n)}_{trans, k} e^{2W}\chi_{m+n}^2} }
	\nn & \lesssim \sqrt{	\mathcal{E}^{(\gamma)}}  \paren{ \cd^{(\gamma)} + \mathcal{CK}^{(\gamma)} 
		+ \cd_{\overline H}^{(\gamma)}} + \frac{\paren{\mathcal{E}^{(\gamma)}}^{3/2}+ \paren{\mathcal{E}^{(\gamma)}_{\overline H}}^{3/2}}{\brak{t}^2}
	\nn &
	\lesssim \eps \paren{ \cd^{(\gamma)} + \mathcal{CK}^{(\gamma)} 
		+   \cd_{\overline H}^{(\gamma)}} + \frac{\eps^3}{\brak{t}^2}.
\end{align*}
And the proof of \eqref{Im:qu:e} and \eqref{Im:qu:h} follows similarly.
\end{proof}

\begin{proof}[Proof of \eqref{Im:qu:c}, \eqref{Im:qu:f}, and \eqref{Im:qu:i}]
	According to the definition of $\mathcal{I}^{(\gamma)}_{\mathrm{visc}}$, these inequalities are exactly the statement of Lemma~\ref{visc:comm}, Lemma~\ref{visc:comm:alph}, and Lemma~\ref{visc:comm:mu},
	concluding the proof.
\end{proof}

\end{proof}
\subsection{Technical Lemmas: Gevrey Norm Estimates on Physical Side}

\subsubsection{Binomial coefficients lemmas}
We need the following lemmas about the binomial coefficients bound from our companion paper~\cite{BHIW24a}.

\begin{lemma}
	\label{comb:boun:vari:2}
	For $l\le n/2$ and $n\ge 5$, we have
	\begin{align}
		\label{comb:boun:vari:2:est}
	\brak{t}^{-2}	\bold{a}_{m,n+1} \binom{n}{l}\bold{a}_{m+1,l}^{-1}\binom{n-l}{j}\bold{a}_{0,j}^{-1}\bold{a}_{0,n-l-j}^{-1}
		\le \paren{\frac{1}{2}}^{l\paren{\sss-1}}.
	\end{align}
	Moreover, if $m\le c_0n$ for some $c_0>0$, then it holds
	\begin{align}
		\label{comb:boun:est:vari:2:refi}
	\brak{t}^{-2}	\bold{a}_{m,n+1} \binom{n}{l}\bold{a}_{m+1,l}^{-1}\binom{n-l}{j}\bold{a}_{0,j}^{-1}\bold{a}_{0,n-l-j}^{-1}
		\le \paren{\frac{1}{2}}^{l\paren{\sss-1}}
		\paren{\frac{c_0+1/2}{c_0+1}}^{m\sss}.
	\end{align}
	If $m\ge C_0n$, then we have
	\begin{align}
		\label{comb:boun:est:vari:2:refi1}
		\brak{t}^{-2} \bold{a}_{m,n+1}& \binom{n}{l}\bold{a}_{m+1,l}^{-1}\binom{n-l}{j}\bold{a}_{0,j}^{-1}\bold{a}_{0,n-l-j}^{-1} \le
		\paren{\frac{1}{2^l}}^{\sss-1}\paren{\frac{1}{C_0+1}}^{(n-l)\sss}.
	\end{align}
\end{lemma}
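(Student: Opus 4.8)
\textbf{Proof plan for Lemma~\ref{comb:boun:vari:2}.}
The plan is to reduce all three estimates to explicit manipulations of factorials, exploiting the product structure of $\bold{a}_{m,n} = B_{m,n}\varphi^{n+1} = \left(\frac{\lambda^{m+n}}{(m+n)!}\right)^{s}\varphi^{n+1}$. The powers of $\lambda$ all cancel: on the left-hand side the total homogeneity in $\lambda$ is $(m+n+1) - (m+1+l) - j - (n-l-j) = 0$, so $\lambda$ plays no role and can be set aside at the outset. Likewise, the powers of $\varphi$ combine to $\varphi^{(n+2)-(l+1)-(j+1)-(n-l-j+1)} = \varphi^{-1}$, and since $\varphi(t)^{-1} = \langle t\rangle$, the factor $\langle t\rangle^{-2}$ on the left absorbs it with room to spare (one needs $\langle t\rangle^{-1}\le 1$, which is immediate). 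Thus the whole lemma is really a statement about the combinatorial quantity
\begin{align*}
\mathcal{Q}_{m,n,l,j} := \left(\frac{(m+1+l)!}{(m+n+1)!}\right)^{s}\binom{n}{l}\binom{n-l}{j}(j!)^{s}((n-l-j)!)^{s},
\end{align*}
which one should first rewrite, using $\binom{n}{l}\binom{n-l}{j} = \frac{n!}{l!\,j!\,(n-l-j)!}$, as
\begin{align*}
\mathcal{Q}_{m,n,l,j} = \frac{n!}{l!}\,(j!)^{s-1}((n-l-j)!)^{s-1}\left(\frac{(m+1+l)!}{(m+n+1)!}\right)^{s}.
\end{align*}

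First I would dispose of the $j$-dependence. Since $j + (n-l-j) = n-l$, the quantity $(j!)^{s-1}((n-l-j)!)^{s-1}$ is maximized (for $s>1$, where the factorial term is log-convex) at an endpoint, giving $(j!)^{s-1}((n-l-j)!)^{s-1}\le ((n-l)!)^{s-1}$. This removes $j$ entirely and leaves us to bound $\frac{n!}{l!}((n-l)!)^{s-1}\left(\frac{(m+1+l)!}{(m+n+1)!}\right)^{s}$. Writing $\frac{n!}{l!} = \frac{n!}{l!\,(n-l)!}\cdot(n-l)! = \binom{n}{l}(n-l)!$ we arrive at the clean target
\begin{align*}
\mathcal{Q}_{m,n,l,j}\le \binom{n}{l}((n-l)!)^{s}\left(\frac{(m+1+l)!}{(m+n+1)!}\right)^{s} = \binom{n}{l}\left(\frac{(n-l)!\,(m+1+l)!}{(m+n+1)!}\right)^{s}.
\end{align*}
Now $\frac{(n-l)!\,(m+1+l)!}{(m+n+1)!} = 1/\binom{m+n+1}{n-l}$, and $\binom{m+n+1}{n-l}\ge \binom{n}{n-l} = \binom{n}{l}$ since $m+n+1\ge n$ and $n-l\le n/2$ puts us below the midpoint. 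Hence $\mathcal{Q}\le \binom{n}{l}^{1-s} = \binom{n}{l}^{-(s-1)}$. For~\eqref{comb:boun:vari:2:est} it then suffices that $\binom{n}{l}\ge 2^{l}$, which holds for $l\le n/2$ and $n\ge 5$ (indeed $\binom{n}{l}\ge\binom{n/2}{l}\ge (n/(2l))^{l}\ge 2^{l}$ once $n\ge 4l$; the small-$l$ cases $l=0,1,2$ are checked by hand using $n\ge5$). This yields $\mathcal{Q}\le 2^{-l(s-1)}$, which together with the $\varphi$/$\lambda$ bookkeeping gives~\eqref{comb:boun:vari:2:est}.

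For the refinements~\eqref{comb:boun:est:vari:2:refi} and~\eqref{comb:boun:est:vari:2:refi1} I would sharpen the binomial estimate to retain an explicit gain in $m$. Instead of crudely bounding $\binom{m+n+1}{n-l}\ge\binom{n}{l}$, write $\frac{(n-l)!\,(m+1+l)!}{(m+n+1)!}$ as a telescoping product $\prod_{i=1}^{n-l}\frac{m+1+l+i}{?}$ — more precisely, $\frac{(m+1+l)!}{(m+n+1)!}\cdot(n-l)! = \prod_{i=0}^{n-l-1}\frac{l+1+i}{m+l+2+i}$ after reindexing, so that
\begin{align*}
\left(\frac{(n-l)!(m+1+l)!}{(m+n+1)!}\right)^{s} = \prod_{i=0}^{n-l-1}\left(\frac{l+1+i}{m+l+2+i}\right)^{s}.
\end{align*}
Under the constraint $m\le c_0 n$ (with $l\le n/2$, so $n-l\ge n/2$), a fixed positive fraction of the $n-l$ factors have $l+1+i$ of order $n$ while $m+l+2+i$ is at most of order $(c_0+1)n$; extracting $m$ of these factors and bounding each by $\frac{c_0+1/2}{c_0+1}$ (after a short computation comparing the two linear-in-$i$ sequences) produces the advertised $\left(\frac{c_0+1/2}{c_0+1}\right)^{m s}$, and the remaining factors are bounded as in the base case to recover $2^{-l(s-1)}$. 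For~\eqref{comb:boun:est:vari:2:refi1} with $m\ge C_0 n\ge C_0(n-l)\cdot 1$, every one of the $n-l$ factors satisfies $\frac{l+1+i}{m+l+2+i}\le\frac{n}{C_0 n}\cdot\frac{1}{1} \le \frac{1}{C_0+1}$ after the elementary estimate $\frac{l+1+i}{m+l+2+i}\le\frac{1}{C_0+1}$ (valid since $m\ge C_0 n\ge C_0(l+1+i)$), giving the product bound $\left(\frac{1}{C_0+1}\right)^{(n-l)s}$; combined with the $j$-maximization step's leftover $\binom{n}{l}\ge 2^{l}$ this yields $\left(\frac{1}{2^l}\right)^{s-1}\left(\frac{1}{C_0+1}\right)^{(n-l)s}$ as claimed.

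The main obstacle I anticipate is the bookkeeping in the two refinements: turning the heuristic "a positive fraction of the factors is small" into a clean inequality requires carefully choosing which $m$ (resp. $n-l$) indices $i$ to extract and verifying the per-factor bound $\frac{l+1+i}{m+l+2+i}\le\frac{c_0+1/2}{c_0+1}$ uniformly over the relevant range of $i$ and over $l\le n/2$, which is where the hypotheses $n\ge 5$ and $l\le n/2$ are genuinely used. The base case and the $j$-reduction are routine; it is the interplay between the $l$-gain and the $m$-gain that needs care, since one must not "spend" the same factors twice.
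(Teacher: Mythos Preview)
The paper does not actually prove this lemma here; it imports it from the companion paper \cite{BHIW24a}, so there is no proof in the present paper to compare against. I will therefore assess your argument on its own merits.

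Your reduction is correct and efficient: the $\lambda$-powers cancel, the $\varphi$-powers leave exactly $\varphi^{-1}=\langle t\rangle$ which is absorbed by $\langle t\rangle^{-2}$, the $j$-maximization via log-convexity of the factorial is valid, and the resulting inequality
\[
\mathcal{Q}\;\le\;\binom{n}{l}\,\binom{m+n+1}{n-l}^{-s}\;\le\;\binom{n}{l}^{1-s}\;\le\;2^{-l(s-1)}
\]
establishes \eqref{comb:boun:vari:2:est} cleanly. (For the last step one may simply note $\binom{n}{l}\ge\binom{2l}{l}\ge 2^l$ for $l\le n/2$.)

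The refinements, however, have two concrete problems. First, your product identity is wrong: one has
\[
\frac{(n-l)!\,(m+1+l)!}{(m+n+1)!}\;=\;\prod_{i=0}^{n-l-1}\frac{i+1}{m+l+2+i},
\]
with numerators $1,\dots,n-l$, not $l+1,\dots,n$. Second---and this is the real gap---in your argument for \eqref{comb:boun:est:vari:2:refi1} you arrive at $\mathcal{Q}\le\binom{n}{l}(C_0+1)^{-(n-l)s}$ and then invoke $\binom{n}{l}\ge 2^l$ to conclude. But that inequality goes the wrong way: you need an \emph{upper} bound on $\binom{n}{l}$, not a lower one, so the step does not close.

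The fix is to factor the binomial ratio first. Writing
\[
\binom{m+n+1}{n-l}\;=\;\binom{n}{l}\cdot R,\qquad R\;=\;\prod_{j=1}^{n-l}\frac{m+l+1+j}{l+j}\;=\;\prod_{i=0}^{m}\frac{n+1+i}{l+1+i},
\]
one obtains $\mathcal{Q}\le\binom{n}{l}^{1-s}R^{-s}\le 2^{-l(s-1)}R^{-s}$, and then it suffices to bound $R$ from below. For \eqref{comb:boun:est:vari:2:refi1} the first product form has $n-l$ factors, each $\ge 1+m/(l+j)\ge 1+C_0 n/n=C_0+1$ when $m\ge C_0 n$ and $l+j\le n$, so $R\ge (C_0+1)^{n-l}$ and the claim follows at once. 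For \eqref{comb:boun:est:vari:2:refi} one uses the second form with $m+1$ factors and shows $R\ge\bigl(\tfrac{c_0+1}{c_0+1/2}\bigr)^m$; your concern about the per-factor bound failing at the boundary $i=m$, $l=n/2$, $m=c_0 n$ is legitimate and is precisely where the spare factor (there are $m+1$ of them, and only an $m$-th power is needed) and the ``$+1$'' in $m+n+1$ must be used carefully. That bookkeeping is the genuine content of the refinement, and your plan correctly anticipates it but does not yet carry it out.
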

\begin{remark}
	Note that in the above theorem, for $\bold{a}_{\alpha,\beta}$, it is essentially the sum $\alpha+\beta$ that matters. Hence, we are able to generalize it to a more general version:
	\begin{align}
		\label{comb:boun:rem}
		\brak{t}^{-2} \bold{a}_{m,n+1} \binom{n}{l}\bold{a}_{\alpha_1, \beta_1}^{-1}\binom{n-l}{j}\bold{a}_{\alpha_2, \beta_2}^{-1}\bold{a}_{\alpha_3, \beta_3}^{-1}
		\le \paren{\frac{1}{2}}^{l\paren{\sss-1}}.
	\end{align}
	with 
	\begin{align*}
		\alpha_1+\beta_1=m+l+1,\    \alpha_2+\beta_2=j,\     \alpha_3+\beta_3=n-l-j.
	\end{align*}
	Of course, the corresponding inequalities for \eqref{comb:boun:est:vari:2:refi} and \eqref{comb:boun:est:vari:2:refi1} also holds.
\end{remark}

\begin{remark}
	From Theorem~\ref{comb:boun:vari:2}, one may also obtain
	\begin{align}
		\label{comb:boun:rem:1}
		\brak{t}^{-2} \bold{a}_{m,n+1} \binom{n}{l}\bold{a}_{m+1,l}^{-1}\binom{n-l}{j}\bold{a}_{0,j}^{-1}\bold{a}_{0,n-l-j}^{-1} m^a
		\le C \paren{\frac{1}{2}}^{l\paren{\sss-1}} 
	\end{align}
	for a parameter $a\le n/4$, where the constant $C$ depends on $a$.
\end{remark}

Two direct corollaries are the following.
\begin{corollary}
	\label{comb:boun}
	For $l\le n/2$ and $n\ge 5$, we have
	\begin{align}
		\label{comb:boun:est}
	\brak{t}^{-1}	\bold{a}_{m,n} \binom{n}{l}\bold{a}_{m,l}^{-1}\bold{a}_{0,n-l}^{-1} \le \paren{\frac{1}{2}}^{l\paren{\sss-1}}.
	\end{align}
	Moreover, if $m\le c_0n$ for some $c_0>0$, then it holds
	\begin{align}
		\label{comb:boun:est:refi}
	\brak{t}^{-1}	\bold{a}_{m,n} \binom{n}{l}\bold{a}_{m,l}^{-1}\bold{a}_{0,n-l}^{-1} \le \paren{\frac{1}{2}}^{l\paren{\sss-1}}
		\paren{\frac{c_0+1/2}{c_0+1}}^{m\sss}.
	\end{align}
	If $m\ge C_0n$, then we have
	\begin{align}
		\label{comb:boun:est:refi1}
		\brak{t}^{-1} \bold{a}_{m,n} \binom{n}{l}\bold{a}_{m,l}^{-1}\bold{a}_{0,n-l}^{-1} \le
		\paren{\frac{1}{2^l}}^{\sss-1}\paren{\frac{1}{C_0+1}}^{(n-l)\sss}.
	\end{align}
\end{corollary}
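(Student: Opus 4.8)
The plan is to exploit the fact that the extra factor $\langle t\rangle^{-1}$ in the statement is precisely what is needed to remove all time dependence, reducing each of the three inequalities to a purely combinatorial statement about binomial coefficients. First I would recall $\bold{a}_{m,n} = B_{m,n}\varphi^{1+n}$ with $B_{m,n} = (\lambda^{m+n}/(m+n)!)^s$ and $\varphi = \langle t\rangle^{-1}$, and observe that in the product $\bold{a}_{m,n}\bold{a}_{m,l}^{-1}\bold{a}_{0,n-l}^{-1}$ the powers of $\lambda$ cancel (since $(m+n)-(m+l)-(n-l)=0$) and the powers of $\varphi$ add up to $(1+n)-(1+l)-(1+(n-l)) = -1$. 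Hence, after multiplying by $\langle t\rangle^{-1}=\varphi$, one is left with the time-independent quantity
\begin{align*}
\langle t\rangle^{-1}\bold{a}_{m,n}\binom{n}{l}\bold{a}_{m,l}^{-1}\bold{a}_{0,n-l}^{-1} \;=\; \binom{n}{l}\left(\frac{(m+l)!\,(n-l)!}{(m+n)!}\right)^{s}.
\end{align*}
The key algebraic identity I would then use is $\dfrac{(m+l)!\,(n-l)!}{(m+n)!} = \binom{n}{l}^{-1}\dfrac{\binom{m+l}{m}}{\binom{m+n}{m}}$, which turns the above into
\begin{align*}
\langle t\rangle^{-1}\bold{a}_{m,n}\binom{n}{l}\bold{a}_{m,l}^{-1}\bold{a}_{0,n-l}^{-1} \;=\; \binom{n}{l}^{-(s-1)}\left(\frac{\binom{m+l}{m}}{\binom{m+n}{m}}\right)^{s}.
\end{align*}

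For the basic bound \eqref{comb:boun:est}, I would simply note that $\binom{m+n}{m}\ge\binom{m+l}{m}$ (since $n\ge l$), so the last factor is $\le 1$, and that $\binom{n}{l}\ge 2^{l}$ whenever $0\le l\le n/2$ — this is immediate from writing $\binom{n}{l}=\prod_{i=0}^{l-1}\frac{n-i}{l-i}$ and checking each factor is $\ge 2$. Since $s-1>0$, this gives $\binom{n}{l}^{-(s-1)}\le 2^{-l(s-1)}$, which is \eqref{comb:boun:est}. For the two refinements \eqref{comb:boun:est:refi} and \eqref{comb:boun:est:refi1}, I would instead retain the factor $\big(\binom{m+l}{m}/\binom{m+n}{m}\big)^{s}$ and expand it as a telescoping product, choosing the expansion according to the regime: $\frac{\binom{m+l}{m}}{\binom{m+n}{m}} = \prod_{i=1}^{m}\frac{l+i}{n+i} = \prod_{j=l+1}^{n}\frac{j}{m+j}$. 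In the regime $m\le c_0 n$, use the first product: each factor obeys $\frac{l+i}{n+i}\le\frac{n/2+i}{n+i}$ (because $l\le n/2$), the map $i\mapsto\frac{n/2+i}{n+i}$ is increasing, and so is $m\mapsto\frac{n/2+m}{n+m}$, giving each factor $\le\frac{c_0+1/2}{c_0+1}$ and hence the extra $(\frac{c_0+1/2}{c_0+1})^{ms}$; combined with $\binom{n}{l}^{-(s-1)}\le 2^{-l(s-1)}$ this yields \eqref{comb:boun:est:refi}. In the regime $m\ge C_0 n$, use the second product: $j\mapsto\frac{j}{m+j}$ is increasing, so each of its $n-l$ factors is $\le\frac{n}{m+n}\le\frac{1}{C_0+1}$, which gives $(C_0+1)^{-(n-l)s}$ and hence \eqref{comb:boun:est:refi1}.

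There is no genuine obstacle here; the only thing requiring care is the bookkeeping of which binomial-coefficient factorization to apply in each of the three cases and the accompanying elementary monotonicity checks. (As an alternative, \eqref{comb:boun:est} can also be extracted from Lemma~\ref{comb:boun:vari:2} by specializing $j=n-l$ and using $\bold{a}_{0,0}=\varphi$, but the self-contained computation above is shorter and produces the sharper constants in the refinements directly.)
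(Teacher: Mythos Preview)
Your proof is correct. After unwinding the definitions the quantity indeed reduces to the time-independent expression $\binom{n}{l}^{-(s-1)}\bigl(\binom{m+l}{m}/\binom{m+n}{m}\bigr)^{s}$, and your three estimates of this ratio---via $\binom{n}{l}\ge 2^l$ for $l\le n/2$, and via the two telescoping product representations $\prod_{i=1}^m\frac{l+i}{n+i}$ and $\prod_{j=l+1}^n\frac{j}{m+j}$ together with the monotonicity arguments---are all valid and give exactly the stated bounds.

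The paper takes a different route: it records Corollary~\ref{comb:boun} (together with Corollary~\ref{comb:boun:vari:1}) simply as a ``direct corollary'' of the more general Lemma~\ref{comb:boun:vari:2}, whose proof is deferred to the companion paper. Your argument is more elementary and fully self-contained, avoiding any appeal to that lemma; it also makes transparent exactly where the exponents $ms$ and $(n-l)s$ in the refinements come from. The paper's approach, on the other hand, packages this bound and several related ones (including the trilinear version with an extra $\binom{n-l}{j}$ factor) under a single lemma, which is convenient when those variants are all needed in the nonlinear estimates. Your parenthetical remark about recovering \eqref{comb:boun:est} from Lemma~\ref{comb:boun:vari:2} by setting $j=0$ (equivalently $j=n-l$) and using $\bold{a}_{0,0}=\varphi$ is essentially the specialization the paper has in mind.
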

\begin{corollary}
	\label{comb:boun:vari:1}
	For $l\le n/2$ and $n\ge 5$, we have
	\begin{align}
		\label{comb:boun:vari:1:est}
		\brak{t}^{-1}\bold{a}_{m,n+1} \binom{n}{l}\bold{a}_{m+1,l}^{-1}\bold{a}_{0,n-l}^{-1} \le \paren{\frac{1}{2}}^{l\paren{\sss-1}}.
	\end{align}
	Moreover, if $m\le c_0n$ for some $c_0>0$, then it holds
	\begin{align}
		\label{comb:boun:est:vari:1:refi}
		\brak{t}^{-1}\bold{a}_{m,n+1} \binom{n}{l}\bold{a}_{m+1,l}^{-1}\bold{a}_{0,n-l}^{-1} \le \paren{\frac{1}{2}}^{l\paren{\sss-1}}
		\paren{\frac{c_0+1/2}{c_0+1}}^{m\sss}.
	\end{align}
	If $m\ge C_0n$, then we have
	\begin{align}
		\label{comb:boun:est:vari:1:refi1}
		\brak{t}^{-1}\bold{a}_{m,n+1} \binom{n}{l}\bold{a}_{m+1,l}^{-1}\bold{a}_{0,n-l}^{-1}
		\le
		\paren{\frac{1}{2^l}}^{\sss-1}\paren{\frac{1}{C_0+1}}^{(n-l)\sss}.
	\end{align}
\end{corollary}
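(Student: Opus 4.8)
The plan is to obtain all three estimates of Corollary~\ref{comb:boun:vari:1} as the special case $j=0$ of Lemma~\ref{comb:boun:vari:2} (and its generalization~\eqref{comb:boun:rem}), so that no new combinatorial work is required. The only arithmetic to carry out is a bookkeeping check on the powers of $\brak{t}$ and the identity $\bold{a}_{0,0}^{-1}=\brak{t}$.

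\textbf{Step 1: specialize $j=0$.} First I would record that $B_{0,0}=\paren{\lambda^0/0!}^{\sss}=1$, hence by the definitions \eqref{Bweight}, \eqref{a:weight}, \eqref{varphi} one has $\bold{a}_{0,0}=B_{0,0}\varphi^{1}=\varphi$ and therefore $\bold{a}_{0,0}^{-1}=\varphi^{-1}=\brak{t}$. Now apply \eqref{comb:boun:vari:2:est} with $j=0$ (allowed since $l\le n/2$ and $n\ge 5$): using $\binom{n-l}{0}=1$ and $\bold{a}_{0,n-l-j}^{-1}=\bold{a}_{0,n-l}^{-1}$, the left-hand side becomes
\begin{align*}
\brak{t}^{-2}\,\bold{a}_{m,n+1}\binom{n}{l}\bold{a}_{m+1,l}^{-1}\bold{a}_{0,0}^{-1}\bold{a}_{0,n-l}^{-1}
=\brak{t}^{-2}\cdot\brak{t}\cdot\bold{a}_{m,n+1}\binom{n}{l}\bold{a}_{m+1,l}^{-1}\bold{a}_{0,n-l}^{-1}
=\brak{t}^{-1}\bold{a}_{m,n+1}\binom{n}{l}\bold{a}_{m+1,l}^{-1}\bold{a}_{0,n-l}^{-1},
\end{align*}
and the right-hand side is $\paren{\tfrac12}^{l(\sss-1)}$. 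This is exactly \eqref{comb:boun:vari:1:est}.

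\textbf{Step 2: the two refined bounds.} For \eqref{comb:boun:est:vari:1:refi} I would repeat Step 1 verbatim but starting from \eqref{comb:boun:est:vari:2:refi}: setting $j=0$ there, the extra factor $\paren{\tfrac{c_0+1/2}{c_0+1}}^{m\sss}$ on the right-hand side is untouched, and the same $\brak{t}^{-2}\cdot\brak{t}=\brak{t}^{-1}$ collapse on the left yields the claim under the hypothesis $m\le c_0 n$. Likewise, plugging $j=0$ into \eqref{comb:boun:est:vari:2:refi1} (valid when $m\ge C_0 n$) and using $\binom{n-l}{0}=1$, $\bold{a}_{0,0}^{-1}=\brak{t}$, the right-hand side $\paren{\tfrac{1}{2^l}}^{\sss-1}\paren{\tfrac{1}{C_0+1}}^{(n-l)\sss}$ is again unchanged, giving \eqref{comb:boun:est:vari:1:refi1}.

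\textbf{Main obstacle.} There is essentially no obstacle here: the statement is a direct corollary, and the whole content is the observation that the ``third slot'' in Lemma~\ref{comb:boun:vari:2} may be taken trivial and absorbs precisely one power of $\brak{t}$ through $\bold{a}_{0,0}^{-1}=\brak{t}$. The only point requiring a line of care is confirming the hypotheses match ($l\le n/2$, $n\ge5$, and the threshold conditions $m\le c_0n$ / $m\ge C_0n$ are identical in the two statements) and that the decay exponents $m\sss$ and $(n-l)\sss$ are inherited without change. If one preferred a self-contained argument, the alternative would be to mimic the proof of Lemma~\ref{comb:boun:vari:2} directly with the middle binomial factor dropped, but the specialization route above is shorter and is the intended one.
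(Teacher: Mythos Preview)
Your proposal is correct and is exactly the intended argument: the paper states Corollary~\ref{comb:boun:vari:1} immediately after Lemma~\ref{comb:boun:vari:2} as one of ``two direct corollaries'' without giving a separate proof, and your specialization $j=0$ together with $\bold{a}_{0,0}^{-1}=\varphi^{-1}=\brak{t}$ is precisely the one-line deduction the paper has in mind.
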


\subsubsection{Product rule}
We need the following product lemmas from our companion paper~\cite{BHIW24a}.

\begin{lemma}
	\label{pro:-s}
	For $f|_{y=\pm1}=g|_{y=\pm1}=0$, we have
	\begin{align*}
		\norm{fg}_{Y_{0,-s}} \lesssim 
		\norm{ f}_{Y_{0,-s}}
		\norm{g}_{\overline Y_{1,0}} + \norm{f}_{Y_{1,-s}}
		\norm{g}_{\overline Y_{0,0}}.
	\end{align*}
	Without the homogeneous boundary condition, it holds
	\begin{align*}
		\norm{fg}_{Y_{0,-s}} \lesssim 
		\norm{ f}_{Y_{0,-s}}
		\paren{\norm{g}_{\overline Y_{1,0}} + \enorm{g}} + \paren{\norm{f}_{Y_{1,-s}} + \enorm{f}}
		\norm{g}_{\overline Y_{0,0}}.
	\end{align*}
\end{lemma}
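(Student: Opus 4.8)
The plan is to reduce this bilinear bound to a combinatorial summation over the co-normal order $n$. Recall that, up to the degenerate weights, $\norm{\cdot}_{Y_{a,b}}$ is an $\ell^2$-in-$n$ norm of the co-normal building blocks $q^n\pav^n(\cdot)$ (carrying $a$ further $\pa_y$'s and a Gevrey radius shifted by $b$), weighted by the coefficients $\bold{a}_n$, the cutoffs $\chi_n$ and the exponential $e^{W/2}$. First I would apply the Leibniz rule $\pav^n(fg)=\sum_{\ell=0}^n\binom{n}{\ell}\pav^\ell f\,\pav^{n-\ell}g$ and redistribute $q^n=q^\ell q^{n-\ell}$, so that
\[
q^n\pav^n(fg)=\sum_{\ell=0}^n\binom{n}{\ell}\,(q^\ell\pav^\ell f)\,(q^{n-\ell}\pav^{n-\ell}g).
\]
I would then split the $\ell$-sum at $\ell\le n/2$ and $\ell>n/2$: in the first regime $q^\ell\pav^\ell f$ carries few derivatives and is placed in $L^\infty$, in the second regime the roles are reversed and $q^{n-\ell}\pav^{n-\ell}g$ is placed in $L^\infty$. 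Since $f$ always appears inside the $-s$-shifted norm in the statement, the two regimes are not fully symmetric, but the argument is identical once one keeps track of which factor receives the $\pa_y$ from the Sobolev embedding.

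The heart of the estimate is a one-dimensional Agmon/Sobolev inequality on $\mathbb T\times[-1,1]$, $\norm{h}_{L^\infty}\lesssim\norm{h}_{L^2}+\norm{\pa_y h}_{L^2}$, applied to the low-derivative factor. When that factor is $q^\ell\pav^\ell f$ with $\ell=0$, i.e.\ $f$ itself, the term $\pa_y f$ is \emph{not} part of $Y_{0,-s}$; this is exactly where $f|_{y=\pm1}=0$ enters, letting one charge $\pa_y f$ to $Y_{1,-s}$ with the correct co-normal reassociation, and—absent the boundary condition—forcing one to pay the flat norm $\enorm{f}$ instead, which accounts for the corrections $(\norm{f}_{Y_{1,-s}}+\enorm{f})$ and $(\norm{g}_{\overline Y_{1,0}}+\enorm{g})$ in the second display. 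Differentiating $q^\ell\pav^\ell f$ yields a genuinely co-normal piece $q^\ell\pav^{\ell+1}f$ plus a commutator $q'\pav^\ell f$ from $\pa_y q$; the latter is lower order and, by boundedness of $q'$ together with $\chi_{\ell}\widetilde\chi_1$-type localization, is reabsorbed into the same family of norms with one extra derivative. The exponential weight $e^{W/2}$ and the cutoffs $\chi_n$ would be handled as in Lemma~\ref{lem:SLJ}: $e^{W/2}$ is multiplicative up to $\pa_yW$-errors controlled via \eqref{W_prop}, and $\supp\nabla^k\chi_{n+1}\subset\{\chi_n=1\}$ permits passing cutoffs between the two factors at the cost of $(n+1)^{1+\sigma}$, absorbed by the Gevrey gain.

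Finally I would close the combinatorics: the coefficient inequality $\bold{a}_n\binom{n}{\ell}\lesssim\delta^{\ell(s-1)}\bold{a}_\ell\bold{a}_{n-\ell}$ (the $m=0$ case of Corollary~\ref{comb:boun}, with time weights treated as in \eqref{comb:boun:est}) converts $\binom{n}{\ell}$ and the ratio of Gevrey weights into a factor decaying geometrically in $\min(\ell,n-\ell)$; summing this against the two $\ell^2$-in-$n$ norms via Cauchy--Schwarz and Young's convolution inequality on the $n$-index produces the product of the claimed norms. One must verify that the radius shift by $-s$ on the $f$-norm is consistent with this inequality, i.e.\ that the $s$ derivatives' worth of radius is precisely what the factorial ratio liberates—this is where the Gevrey index enters quantitatively. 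The main obstacle I anticipate is bookkeeping rather than conceptual: simultaneously managing (i) the redistribution of the vanishing weight $q^n$ and the $q'$-commutators it generates, (ii) charging the Sobolev-embedding derivative to the correctly-shifted norm, and (iii) the cutoff and $e^{W/2}$ errors, all while preserving the geometric decay in $\ell$. The hypothesis $f|_{y=\pm1}=0$ is exactly what makes (i)--(ii) close without a flat $L^2$ remainder, and dropping it is accounted for by the second inequality.
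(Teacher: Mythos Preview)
The paper does not prove this lemma here: it is stated in Section~5.5.2 under ``We need the following product lemmas from our companion paper~\cite{BHIW24a}'' and no proof is supplied. So there is no in-paper argument to compare against.

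That said, your outline is consistent with how the paper handles closely related product estimates that \emph{are} proved in full, most notably Lemma~\ref{lem:Gprod} (the Gevrey product rule). There the proof proceeds exactly as you describe: Leibniz expansion of $\pav^n(fg)$, redistribution $q^n=q^\ell q^{n-\ell}$, a high/low split in $\ell$, placing the low-derivative factor in $L^\infty$ via one-dimensional Sobolev embedding, and closing the $\ell$-sum with the factorial ratio bounds of Corollary~\ref{comb:boun} and \eqref{prod}. The commutator-with-$q$ and cutoff errors are absorbed just as you suggest, using \eqref{chi:prop:3} and \eqref{gevbd1212}. Your identification of the role of the boundary condition---that $f|_{y=\pm1}=0$ is what allows the $\ell=0$ Sobolev-embedding derivative on $f$ to land in $Y_{1,-s}$ rather than forcing a flat $\enorm{f}$ correction---is the correct mechanism behind the two displayed inequalities. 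Nothing in your plan conflicts with the paper's framework.
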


\begin{lemma}
	\label{pro:1}
	Assume $f|_{y=\pm1}=g|_{y=\pm1}=0$, we have
	\begin{align*}
		\norm{fg}_{Y_{1,0}} \lesssim 
		\norm{f}_{Y_{1,0}}
		\norm{g}_{\overline Y_{1,0}}.
	\end{align*}
	Without the boundary condition requirement, it holds
	\begin{align*}
		\norm{fg}_{Y_{1,0}} \lesssim 
		\paren{\norm{f}_{Y_{1,0}} + \enorm{f}}
		\paren{\norm{g}_{\overline Y_{1,0}} + \enorm{g}}.
	\end{align*}
\end{lemma}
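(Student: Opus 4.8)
The plan is to reduce the estimate to a Leibniz expansion in the twisted conormal derivative $\pav=v_y^{-1}\pa_y$ and then balance the Gevrey, conormal, and cutoff weights against a single one-dimensional Sobolev embedding. First I would unfold $\|fg\|_{Y_{1,0}}$ as a weighted $\ell^2$-sum over the conormal index $n$ of $\bold{a}_{n}^2\|\pa_y(q^n\pav^n(fg))\,e^{W/2}\chi_n\|_{L^2}^2$, and use that $\pav$ is a derivation (a first-order operator with no zeroth-order part) to write $\pav^n(fg)=\sum_{\ell=0}^n\binom{n}{\ell}(\pav^\ell f)(\pav^{n-\ell}g)$. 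Splitting $q^n=q^\ell q^{n-\ell}$ and letting the outer $\pa_y$ act by the ordinary Leibniz rule, each summand becomes a product of the two blocks $q^\ell\pav^\ell f$ and $q^{n-\ell}\pav^{n-\ell}g$, each hit by at most one $\pa_y$, plus commutator terms $[\pa_y,q^\ell]$ and $[\pa_y,q^{n-\ell}]$ which by \eqref{q:defn} cost only a bounded multiple of $\ell/q$, resp. $(n-\ell)/q$, and are absorbed exactly as the $S^{(1,0,0)}$-type pieces in \eqref{S:est:1}.

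In each summand I would place the block carrying fewer conormal derivatives in $L^\infty$ and the other in $L^2$; by symmetry it suffices to treat $\ell\le n/2$, i.e. $f$ low. Here the one-dimensional Agmon inequality $\|h\|_{L^\infty}^2\lesssim\|h\|_{L^2}\|\pa_y h\|_{L^2}$ on $[-1,1]$, applied to $h=q^\ell\pav^\ell f\,e^{W/2}\widetilde\chi_{\ell+1}$, converts the $L^\infty$ factor, with the hypothesis $f|_{y=\pm1}=0$ killing the endpoint contribution. The extra $\pa_y$ produced either falls on $q^\ell\pav^\ell f$, yielding $q^\ell\pav^{\ell+1}f$ and the commutator above; on $e^{W/2}$, yielding $(\pa_yW)e^{W/2}$, controlled via \eqref{W_prop} together with $\nu^{-100}\lesssim e^{W/2}$ for $t\le\nu^{-1/3-\eta}$; or on $\widetilde\chi_{\ell+1}$, costing $(\ell+1)^{1+\sigma}$ against a cutoff supported where $\widetilde\chi_\ell\equiv1$ by \eqref{chi:prop:3}. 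Using \eqref{chi:prop:2} to distribute the cutoffs (so that $\chi_n=\chi_n\widetilde\chi_\ell$), each summand is then a product of a quantity governed by $\|f\|_{Y_{1,0}}$ with an $O(\ell)$ derivative loss and a quantity governed by $\|g\|_{\overline Y_{1,0}}$. The combinatorial factor $\binom{n}{\ell}\bold{a}_{n}\bold{a}_{\ell}^{-1}\bold{a}_{n-\ell}^{-1}\lesssim(1/2)^{\ell(s-1)}$ from Corollary \ref{comb:boun} (estimate \eqref{comb:boun:est} with $m=0$) more than absorbs these polynomial-in-$\ell$ losses since $s>1$; summing the resulting geometric series in $\ell$ leaves a single sum in $n$ on which Cauchy--Schwarz yields $\|f\|_{Y_{1,0}}\|g\|_{\overline Y_{1,0}}$. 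The case $\ell>n/2$ is symmetric, the asymmetric form of the statement persisting because $\overline Y_{1,0}$ dominates $Y_{1,0}$ up to exactly the constants appearing in Corollary \ref{comb:boun}. Dropping the hypotheses $f|_{y=\pm1}=g|_{y=\pm1}=0$ changes only the Agmon step, where the boundary evaluation of $h$ must now be bounded by its full $L^2$ norm; this is precisely the source of the additional $\enorm{f}$ and $\enorm{g}$ terms, and nothing else in the argument changes.

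The main obstacle is the simultaneous bookkeeping of the three competing structures --- the Gevrey weights $\bold{a}_{n}$, the degenerate conormal weights $q^n$, and the sliding cutoffs $\chi_n$ --- across the Leibniz expansion while only one $\pa_y$ is available for the embedding: one must verify that every $\ell$-dependent loss, whether from commutators with $q^\ell$, from $\pa_y$ landing on $\chi_\ell$ or on $e^{W/2}$, or from the Agmon inequality itself, is dominated by the Gevrey gain $(1/2)^{\ell(s-1)}$ of Corollary \ref{comb:boun}, uniformly in $\nu\in(0,1]$ and $t\le\nu^{-1/3-\eta}$.
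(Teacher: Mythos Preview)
The paper does not prove this lemma here; it is stated without proof and attributed to the companion paper \cite{BHIW24a}. Your outline is sound and matches the style of the closely related product estimate that \emph{is} proved in this paper, Lemma~\ref{lem:Gprod}: Leibniz-expand $\pav^n(fg)$, split at $\ell\le n/2$ versus $\ell>n/2$, place the low-index factor in $L^\infty$ via one-dimensional Sobolev embedding, and absorb the polynomial losses from cutoff derivatives and $q$-commutators into the exponential gain $\binom{n}{\ell}^{1-s}$ coming from Corollary~\ref{comb:boun}. The handling of $e^{W/2}$ via \eqref{prop:W}--\eqref{nu:to:W} and of $\chi$-commutators via \eqref{chi:prop:3} is exactly the mechanism used in Section~\ref{visc:comm:sect}, so your bookkeeping plan is the right one.

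One point to tighten: the norms $Y_{1,0}$ and $\overline Y_{1,0}$ are not defined in this paper (they live in \cite{BHIW24a}), so your identification of $\|f\|_{Y_{1,0}}^2$ with $\sum_n\bold a_n^2\|\pa_y(q^n\pav^n f)e^{W/2}\chi_n\|_{L^2}^2$ is an inference from context; your explanation of the $Y$ versus $\overline Y$ asymmetry is correspondingly vague. In practice the bar typically signals an $\ell^1$-in-$n$ summability (rather than $\ell^2$), which is exactly what the Sobolev-embedded factor needs after Young's convolution inequality --- this is the real reason the estimate closes, not that ``$\overline Y_{1,0}$ dominates $Y_{1,0}$.'' Apart from that, the argument is correct.
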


\begin{lemma}
	\label{pro:1:-s}
	Suppose $f|_{y=\pm1}=g|_{y=\pm1}=0$, we have
	\begin{align*}
		\norm{fg}_{Y_{1,-s}} \lesssim 
		\norm{f}_{Y_{1,-s}}
		\norm{g}_{\overline Y_{1,0}}.
	\end{align*}
	Without boundary value requirement, it holds
	\begin{align*}
		\norm{fg}_{Y_{1,-s}} \lesssim 
		\paren{\norm{f}_{Y_{1,-s}} + \enorm{f}}
		\paren{\norm{g}_{\overline Y_{1,0}} + \enorm{g}}.
	\end{align*}
\end{lemma}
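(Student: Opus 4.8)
The plan is to establish Lemma~\ref{pro:1:-s} as the ``one flat $\partial_y$-derivative'' counterpart of Lemma~\ref{pro:-s}, running the same scheme that proves Lemma~\ref{pro:1}. Here $\norm{\cdot}_{Y_{1,-s}}$ is the Gevrey norm with level-$n$ weight $(n+1)^{s-1}\bold{a}_{n+1}$ (the shift $\bold{a}_n\to\bold{a}_{n+1}$ and the polynomial factor $(n+1)^{s-1}$ being what the ``$-s$'' records, as in the functional $\mathcal{E}_{\overline H}$), localized by $\chi_n$ and carrying one flat derivative $\partial_y$ on the level-$n$ block $q^n\pav^n(\cdot)$, while $\norm{\cdot}_{\overline Y_{1,0}}$ is the corresponding reservoir norm with weight $\bold{a}_n$, fattened cutoffs, and one flat derivative. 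Since $\pav=v_y^{-1}\partial_y$ is a derivation and $q^n=q^lq^{n-l}$, the co-normal Leibniz rule
\[
q^{n}\pav^{n}(fg)=\sum_{l=0}^{n}\binom{n}{l}\big(q^{l}\pav^{l}f\big)\big(q^{n-l}\pav^{n-l}g\big)
\]
holds exactly; applying the outer $\partial_y$, distributing it onto one factor, and using the support nesting \eqref{chi:prop:2} ($\chi_n\le\chi_l$ and $\chi_n\le\widetilde{\chi}_{n-l}$) reduces the proof to bounding, for each $m,n,k$, the $L^2$ norm of $(n+1)^{s-1}\bold{a}_{m,n+1}\sum_l\binom{n}{l}(\text{level-}l\ f\text{-block})(\text{level-}(n-l)\ g\text{-block})$, together with the symmetric term carrying the $\partial_y$ on the $g$-block.

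First I would handle the term with $\partial_y$ on the $f$-block. There one puts $q^{n-l}\pav^{n-l}g$ in $L^\infty$ via the one-dimensional Sobolev bound $\norm{h}_{L^\infty}\lesssim\norm{h}_{L^2}+\norm{\partial_y h}_{L^2}$, which uses precisely the single derivative present in $\norm{g}_{\overline Y_{1,0}}$ (commuting $\partial_y$ past the cutoff and past $q^{n-l}$ by \eqref{cm_py_qn} produces an extra $\tfrac{n-l}{q}q^{n-l}\pav^{n-l}g$, an $S^{(1,0,0)}$-type object of the shape \eqref{S_nq}, controlled by Lemma~\ref{lem:SLJ} or directly since $q',v_y^{-1}$ are bounded on the region where $q$ is bounded below), and leaves $\partial_y(q^l\pav^l f)$ in $L^2$, which is exactly what $\norm{f}_{Y_{1,-s}}$ measures at level $l$. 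In the symmetric term one instead puts $q^l\pav^l f$ in $L^\infty$; the lower-order Sobolev summand $\norm{q^l\pav^l f}_{L^2}$ is converted into $\norm{\partial_y(q^l\pav^l f)}_{L^2}$ by Poincar\'e, which is legitimate because $q^l\pav^l f$ vanishes at $y=\pm1$ (from $q(\pm1)=0$ when $l\ge1$, and from $f|_{y=\pm1}=0$ when $l=0$). This $l=0$ slot---and the $n-l=0$ slot for $g$---is the only place the boundary hypothesis is used, and replacing Poincar\'e there by the trivial bound produces precisely the additional $\enorm{f}$ and $\enorm{g}$ terms of the second assertion, mirroring Lemmas~\ref{pro:-s} and~\ref{pro:1}.

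Finally I would carry out the $\binom{n}{l}$-weighted summation. For $l\le n/2$ the required bound $(n+1)^{s-1}\bold{a}_{m,n+1}\binom{n}{l}\bold{a}_{m+1,l}^{-1}\bold{a}_{0,n-l}^{-1}\lesssim\brak{t}\,2^{-l(s-1)}$ is Corollary~\ref{comb:boun:vari:1} combined with the remark estimates~\eqref{comb:boun:rem} and~\eqref{comb:boun:rem:1}, the polynomial factor $(n+1)^{s-1}$ fitting the $a\le n/4$ window of~\eqref{comb:boun:rem:1} once $n$ is large (the finitely many small $n$ checked directly); for $l>n/2$ one writes $\binom{n}{l}=\binom{n}{n-l}$ and uses the elementary inequality $\binom{n}{n-l}\binom{n+1}{n-l}^{-s}\le\binom{n+1}{n-l}^{1-s}\le 2^{-(n-l)(s-1)}$, valid since $n+1\ge2(n-l)$ in this range, to reduce to the same geometric kernel. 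Cauchy--Schwarz in $k$, summation in $m$, and Young's inequality for the discrete convolution against $2^{-\min(l,n-l)(s-1)}$ then close the estimate, with the $\brak{t}$-prefactors absorbed by the $\brak{t}$-weights built into the two norms. The hard part will be the bookkeeping just flagged in the case where the outer $\partial_y$ lands on the high-derivative block: one must commute cutoffs and $q$-powers past $\partial_y$, identify the resulting boundary-weight terms as $S^{(1,0,0)}$-objects, reabsorb them via Lemma~\ref{lem:SLJ}, and verify that the polynomial losses so incurred never exceed the budget permitted by the binomial lemmas---the same delicate balance already present in the proofs of Lemmas~\ref{pro:1} and~\ref{pro:-s}.
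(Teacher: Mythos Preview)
The paper does not actually prove this lemma here; it is listed among the ``product lemmas from our companion paper~\cite{BHIW24a}'' and is stated without proof. Your sketch---co-normal Leibniz rule, distribution of the outer $\partial_y$, Sobolev/Poincar\'e with the boundary condition handling the $l=0$ (resp.\ $n-l=0$) slot, and closure via the shifted-weight binomial bounds of Corollary~\ref{comb:boun:vari:1} and Remark~\eqref{comb:boun:rem:1}---is exactly the scheme used for the closely related product estimates in this paper (compare Lemma~\ref{lem:Gprod} and the treatment of $V_{11},V_{12}$ in Lemma~\ref{visc:comm}), and there is no indication the companion paper does anything structurally different.
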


\subsubsection{Convolution type lemmas}
We also need the following versions of  convolution type lemma, proof of which may be found in our companion paper~\cite{BHIW24a}.
\begin{lemma}
	\label{con:no:k}
	It holds
	\begin{align*}
		\sum_{n\ge 0}\sum_{0\le m\le n}\sum_{k\in\mathbb{Z}} f_{n-m,k}g_{m,k}h_n
		\le
		\sum_{n\ge 0} \paren{\sum_{k\in\mathbb{Z}}|f_{n,k}|^2}^{1/2} 
		\paren{\sum_{n\ge 0}\sum_{k\in\mathbb{Z}}|g_{n,k}|^2}^{1/2}
		\paren{\sum_{n\ge 0}|h_{n}|^2}^{1/2}.
	\end{align*}
\end{lemma}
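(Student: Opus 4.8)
The plan is to reduce the triple sum to a scalar discrete convolution and then invoke Young's convolution inequality. First I would eliminate the $k$-summation pointwise in $(n,m)$: after replacing each summand by its absolute value (which loses no cancellation and legitimizes all rearrangements by Tonelli), the Cauchy--Schwarz inequality in $k$ gives, for each fixed pair $(n,m)$,
\[
\sum_{k\in\mathbb{Z}} |f_{n-m,k}||g_{m,k}| \le \Big(\sum_{k\in\mathbb{Z}}|f_{n-m,k}|^2\Big)^{1/2}\Big(\sum_{k\in\mathbb{Z}}|g_{m,k}|^2\Big)^{1/2} =: F_{n-m}\, G_m ,
\]
where I abbreviate $F_j := \big(\sum_{k}|f_{j,k}|^2\big)^{1/2}$ and $G_j := \big(\sum_{k}|g_{j,k}|^2\big)^{1/2}$ for $j\in\mathbb{N}$. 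Substituting this bound, the left-hand side of the lemma is controlled by $\sum_{n\ge0}|h_n|\sum_{0\le m\le n}F_{n-m}G_m = \sum_{n\ge0}|h_n|\,(F\ast G)_n$, where $F\ast G$ is the convolution of the nonnegative sequences $F,G$ indexed by $\mathbb{N}$ (equivalently, the $\mathbb{Z}$-convolution of sequences supported on $\mathbb{N}$).

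Next I would peel off $h$ by Cauchy--Schwarz in $n$, writing $\sum_{n}|h_n|(F\ast G)_n \le \|h\|_{\ell^2_n}\,\|F\ast G\|_{\ell^2_n}$, and then apply the discrete Young inequality $\|F\ast G\|_{\ell^2}\le \|F\|_{\ell^1}\|G\|_{\ell^2}$. It remains only to identify the three resulting factors with the three factors on the right-hand side of the statement: $\|F\|_{\ell^1} = \sum_{n\ge0}\big(\sum_{k\in\mathbb{Z}}|f_{n,k}|^2\big)^{1/2}$, $\|G\|_{\ell^2} = \big(\sum_{n\ge0}\sum_{k\in\mathbb{Z}}|g_{n,k}|^2\big)^{1/2}$, and $\|h\|_{\ell^2_n} = \big(\sum_{n\ge0}|h_n|^2\big)^{1/2}$. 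Chaining the three inequalities yields the claim.

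There is essentially no genuine obstacle in this argument; it is a packaging of Cauchy--Schwarz plus Young's inequality. The only minor points to state cleanly are that the inner sum runs over all of $\mathbb{Z}$ while the $m$-sum is the finite triangular range $0\le m\le n$ (harmless, since this is exactly the support structure of a one-sided convolution), and that one should pass to absolute values at the outset so that Tonelli justifies every interchange of summation and no sign cancellation is needed. If one prefers to avoid citing Young's inequality, the same conclusion follows from a Schur-test style weighted Cauchy--Schwarz in $(n,m)$ with weights $F_{n-m}^{1/2}G_m^{1/2}$, but the convolution route above is the shortest.
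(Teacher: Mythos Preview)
Your proof is correct and is the natural approach: Cauchy--Schwarz in $k$ followed by Young's convolution inequality $\ell^1\ast\ell^2\hookrightarrow\ell^2$ in the $n$-variable. The paper itself does not give a proof of this lemma, deferring instead to the companion paper \cite{BHIW24a}, so there is nothing to compare against here beyond noting that your argument is the standard one.
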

\begin{lemma}
	\label{con:k}
	It holds
	\begin{align*}
		\sum_{n\ge 0}\sum_{0\le m\le n}\sum_{k\in\mathbb{Z}}\sum_{l\in\mathbb{Z}} f_{n-m,k-l}g_{m,l}h_{n,k}
		\le
		\paren{\sum_{n\ge 0}\sum_{k\in\mathbb{Z}}|f_{n,k}|}
		\paren{\sum_{n\ge 0}\sum_{k\in\mathbb{Z}}|g_{n,k}|^2}^{1/2}
		\paren{\sum_{n\ge 0}\sum_{k\in\mathbb{Z}}|h_{n,k}|^2}^{1/2}.
	\end{align*}
\end{lemma}

\subsection{Combinatorial lemma}
To derive the estimates \eqref{SnBd1sum} and \eqref{SnBd2sum}, we need the following combinatorial lemmas. 
\begin{lemma}\label{lem:comb_sqr}
The following inequality holds
\begin{align}\label{sum_comb}
\sum_{\ell=0}^{n-1}\mathfrak{C}^{n-\ell} \lf(\frac{n!}{\ell!}\rg)^{-\sigma-\sigma_\ast} \leq C,\quad \forall n\in \mathbb{N}.
\end{align}
Here the bound $C$ only depends on $\mathfrak C\geq 1, \, \sigma,\, \sigma_\ast.$
\end{lemma}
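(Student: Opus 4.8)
The statement to prove is
\begin{align*}
\sum_{\ell=0}^{n-1}\mathfrak{C}^{n-\ell} \lf(\frac{n!}{\ell!}\rg)^{-\sigma-\sigma_\ast} \leq C,\quad \forall n\in \mathbb{N},
\end{align*}
with $C$ depending only on $\mathfrak C\geq 1$, $\sigma$, $\sigma_\ast$. The plan is to reindex the sum by $j=n-\ell$, so that $\ell=n-j$ and the summand becomes $\mathfrak C^{j}\big(n!/(n-j)!\big)^{-(\sigma+\sigma_\ast)}$. The key observation is that $n!/(n-j)! = n(n-1)\cdots(n-j+1)$ is a product of $j$ integers, each at least $n-j+1$; in particular, whenever $j\le n/2$ we have $n!/(n-j)!\ge (n/2)^{j}$, and more crudely for all $1\le j\le n$ we have $n!/(n-j)!\ge \lceil n/2\rceil^{\,\lceil j/2\rceil}$ or simply $n!/(n-j)! \geq (j/e)^j$ by the standard lower bound on factorials (since the product of the top $j$ terms dominates $j!$). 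I would set $\alpha:=\sigma+\sigma_\ast=s-1>0$ (this identity comes from \eqref{s:prime}) and split the sum at $j_0$, a threshold depending only on $\mathfrak C$ and $\alpha$.

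First I would treat the tail $j> j_0$. Here I use $n!/(n-j)!\ge j!$, which holds because the $j$ largest factors $n,n-1,\dots,n-j+1$ are each $\ge$ the corresponding factor $j,j-1,\dots,1$ of $j!$. Hence the summand is bounded by $\mathfrak C^{j}(j!)^{-\alpha}$, and $\sum_{j\ge 1}\mathfrak C^{j}(j!)^{-\alpha}<\infty$ is an absolutely convergent series (ratio test: the ratio of consecutive terms is $\mathfrak C/(j+1)^{\alpha}\to 0$), so this part is bounded by a constant depending only on $\mathfrak C,\alpha$. Actually this single estimate already handles the entire sum $\sum_{j=1}^{n}\mathfrak C^{j}(j!)^{-\alpha}\le \sum_{j=1}^{\infty}\mathfrak C^{j}(j!)^{-\alpha}=:C(\mathfrak C,\alpha)<\infty$, so no splitting is strictly necessary — the bound $n!/(n-j)!\ge j!$ is uniform in $n$ and reduces everything to a convergent numerical series. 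I would present it this way for brevity, with the splitting only as a fallback if one wants an explicit constant.

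So the proof is essentially one line after the reindexing: $\sum_{\ell=0}^{n-1}\mathfrak C^{n-\ell}(n!/\ell!)^{-\alpha}=\sum_{j=1}^{n}\mathfrak C^{j}(n!/(n-j)!)^{-\alpha}\le\sum_{j=1}^{n}\mathfrak C^{j}(j!)^{-\alpha}\le\sum_{j=1}^{\infty}\mathfrak C^{j}(j!)^{-\alpha}=:C<\infty$, where the second inequality uses the factor-by-factor comparison $n(n-1)\cdots(n-j+1)\ge j(j-1)\cdots 1$ valid since each of the $j$ descending factors on the left is $\ge$ the corresponding descending factor on the right (both sides being products of $j$ consecutive integers, with the left-hand one starting from the larger value $n\ge j$), and the convergence of the series is by the ratio test since $\alpha=s-1>0$. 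There is no real obstacle here; the only thing to be careful about is the edge cases $n=0$ (empty sum, bounded by $0\le C$) and $n=1$ (single term $\mathfrak C(1!)^{-\alpha}=\mathfrak C\le C$), and making sure the factor comparison $n!/(n-j)!\ge j!$ is stated correctly for the full range $1\le j\le n$. I would also remark that one can absorb the harmless weakening from $\sigma+\sigma_\ast$ down to any positive lower bound if a cleaner constant is desired, but this is not needed.

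\begin{proof}
Reindex by $j=n-\ell$. Writing $\alpha:=\sigma+\sigma_\ast = s-1>0$ (see \eqref{s:prime}), we have
\begin{align*}
\sum_{\ell=0}^{n-1}\mathfrak{C}^{n-\ell} \lf(\frac{n!}{\ell!}\rg)^{-\alpha} = \sum_{j=1}^{n}\mathfrak C^{j}\lf(\frac{n!}{(n-j)!}\rg)^{-\alpha}.
\end{align*}
For $1\le j\le n$ the quantity $n!/(n-j)! = n(n-1)\cdots(n-j+1)$ is a product of $j$ consecutive integers whose smallest factor is $n-j+1\ge 1$, and comparing it factor by factor with $j! = j(j-1)\cdots 1$ (the $i$-th descending factor of the former is $n-i+1\ge j-i+1$, the $i$-th descending factor of the latter) yields $n!/(n-j)! \ge j!$. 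Therefore
\begin{align*}
\sum_{j=1}^{n}\mathfrak C^{j}\lf(\frac{n!}{(n-j)!}\rg)^{-\alpha} \le \sum_{j=1}^{n}\frac{\mathfrak C^{j}}{(j!)^{\alpha}} \le \sum_{j=1}^{\infty}\frac{\mathfrak C^{j}}{(j!)^{\alpha}} =: C(\mathfrak C,\alpha).
\end{align*}
The last series converges by the ratio test: the ratio of the $(j+1)$-st to the $j$-th term equals $\mathfrak C/(j+1)^{\alpha}\to 0$ as $j\to\infty$, since $\alpha>0$. Thus $C(\mathfrak C,\alpha)<\infty$ and it depends only on $\mathfrak C$ and $\alpha=\sigma+\sigma_\ast$. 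Finally, for $n=0$ the sum is empty and bounded by $C$, which completes the proof of \eqref{sum_comb}.
\end{proof}
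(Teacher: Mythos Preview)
Your proof is correct and considerably more direct than the paper's. The paper chooses a threshold $\mathfrak{L}=\max\{(2\mathfrak{C})^{1/(\sigma+\sigma_\ast)},10\}$, handles small $n$ by inspection, and for large $n$ splits the $\ell$-sum at $\mathfrak{L}$: the high-$\ell$ piece is controlled by a geometric ratio argument (consecutive-term ratio $\mathfrak{C}/(\ell+1)^{\sigma+\sigma_\ast}\le 1/2$), while the low-$\ell$ piece is bounded crudely by grouping half of the $n-\ell$ factors in $n!/\ell!$ against $\mathfrak{C}^2$. By contrast, your single inequality $n!/(n-j)!\ge j!$ (valid for all $1\le j\le n$ by the factor-by-factor comparison you give) collapses the whole sum to the $n$-independent convergent series $\sum_{j\ge 1}\mathfrak{C}^j/(j!)^{\alpha}$ in one stroke. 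Your argument is shorter and yields a cleaner constant; the paper's splitting is unnecessary once one sees the $j!$ lower bound.
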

\begin{proof}
We define a threshold $\mathfrak L=\max\lf\{(2\mathfrak{C})^{\frac{1}{\sigma+\sigma_\ast}},10\rg\}.$ We observe that if $n<2\mathfrak L^2+2$, then the sum in \eqref{sum_comb} is bounded in terms of $\mathfrak C, \, \sigma,\, \sigma_\ast.$ Hence we assume that $n\geq 2\mathfrak{L}^2+2$ without loss of generality. Now we further decompose the sum into two parts 
\begin{align}
\sum_{\ell=0}^{n-1}\mathfrak{C}^{n-\ell} \lf(\frac{n!}{\ell!}\rg)^{-\sigma-\sigma_\ast} =\lf(\sum_{\ell=0}^\mathfrak {L} +\sum_{\ell=\mathfrak L +1}^{n-1}\rg)=\mathcal{I}_1+\mathcal{I}_2.\label{I_cm}
\end{align}
Now we focus on the $\mathcal{I}_{2}$ term. In this regime, we can compute the ratio between consecutive terms $\ell$, $\ell+1$:
\begin{align*}
\frac{\mathfrak C^{n-\ell}\lf(\frac{n!}{\ell!}\rg)^{-\sigma-\sigma_\ast}}{\mathfrak C^{n-\ell-1}\lf(\frac{n!}{(\ell+1)!}\rg)^{-\sigma-\sigma_\ast}}=\frac{\mathfrak C}{(\ell+1)^{\sigma+\sigma_\ast}}. 
\end{align*}Hence, if $\ell\geq \mathfrak L$, then the ratio is less than $\frac{1}{2}$ and we can estimate the $\mathcal{I}_2$ as follows
\begin{align}
\mathcal{I}_2\leq \sum_{\ell=\mathfrak L+1}^{n-1}\lf(\frac{1}{2}\rg)^{n-1-\ell}\frac{\mathfrak {C}}{n^{\sigma+\sigma_\ast}}\leq 1.\label{I_cm_2} 
\end{align}Now we estimate the term $\mathcal{I}_{1}$ in \eqref{I_cm},
\begin{align}
\mathcal{I}_1\leq (1+\mathfrak{L})\max_{\ell\in\{0, ...,\mathfrak{L}\} }\mathfrak{C}^{2\frac{n}{2}}\lf(n(n-1)\cdot\cdot\cdot(\ell+1)\rg)^{-\sigma-\sigma_\ast}\leq (1+\mathfrak{L})\lf(\frac{\mathfrak C^2}{\lfloor n/2\rfloor^{\sigma+\sigma_\ast}}\rg)^{\lfloor\frac{n}{2}\rfloor}\mathfrak C \leq (1+\mathfrak{L})\mathfrak C. \label{I_cm_1}
\end{align} 
Combining the discussion above and the decomposition \eqref{I_cm} and the estimates  \eqref{I_cm_2}, \eqref{I_cm_1}, we have obtained \eqref{sum_comb}. 
\end{proof}
\ifx

\begin{corollary}
We have that 
\begin{subequations}
\begin{align}
\sum_{n=0}^\infty\bold{a}_{m,n}^2\nu\|S_{m,n;k} e^W\chi_{n-1}\|_{L^2}^2\lesssim& \sum_{n=0}^\infty\lf(\mathcal{D}_{m,n;k}^{(\gamma)}\rg)^2;\\
\sum_{n=0}^\infty\bold{a}_{m,n} ^2\nu^{\frac53}|k|^{\frac43} \lf \| S_{m,n;k} e^W  \chi_{n -1} \rg\|_{L^2} ^2 \lesssim & \sum_{n=0}^{\infty}\lf(\mathcal{D}^{(\mu)}_{m,n;k}\rg)^2,  \\  
\end{align}\end{subequations}
\end{corollary}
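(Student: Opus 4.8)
The plan is to deduce the Corollary from the frozen‑index estimate \eqref{S:est:1} (and its $\mu$‑level analogue) by summing in $n$ with $m$ and $k$ held fixed, in exactly the manner of the upgrade \eqref{sqr_upgrade} carried out during the proof of Lemma \ref{lem:SLJ}. Recall $S_{m,n;k} = S_{m,n}^{(1,0,0)}\omega_k = \frac{m+n}{q}\mathring{\omega}_{m,n;k}$, so that \eqref{S:est:1} reads, for each fixed $n \ge 0$,
\begin{align*}
\bold{a}_{m,n}^2 \nu \left\| S_{m,n;k}\, e^W \chi_{n-1} \right\|_{L^2}^2 \lesssim \sum_{\ell = 0}^{n-1} \left( \mathfrak{C}\lambda^s \right)^{2(n-\ell)} \left( \frac{(m+n)!}{(m+\ell)!} \right)^{-2(\sigma + \sigma_\ast)} \mathcal{D}^{(\gamma)}_{m,\ell;k},
\end{align*}
with $\mathfrak{C}$ depending only on $s$. (In the square‑root normalization of the Corollary, in which $\mathcal{D}$ denotes the $L^2_t$‑dissipation rather than its square, the right‑hand side carries $(\mathcal{D}^{(\gamma)}_{m,\ell;k})^2$; this affects nothing below. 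The cutoff $\chi_{n-1}$ versus $\chi_{m+n-1}$ likewise plays no role since only an upper bound is sought.)

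First I would sum the displayed bound over all $n \ge 0$ and interchange the order of summation — legitimate since every term is nonnegative — to obtain
\begin{align*}
\sum_{n = 0}^\infty \bold{a}_{m,n}^2 \nu \left\| S_{m,n;k}\, e^W \chi_{n-1} \right\|_{L^2}^2 \lesssim \sum_{\ell = 0}^\infty \mathcal{D}^{(\gamma)}_{m,\ell;k} \left( \sum_{n = \ell + 1}^\infty \left( \mathfrak{C}\lambda^s \right)^{2(n-\ell)} \left( \frac{(m+n)!}{(m+\ell)!} \right)^{-2(\sigma + \sigma_\ast)} \right).
\end{align*}
The inner series is then bounded by a constant, uniformly in $m$, $\ell$ and $k$: the elementary monotonicity $\frac{(m+n)!}{(m+\ell)!} = \prod_{i=1}^{n-\ell}(m+\ell+i) \ge \prod_{i=1}^{n-\ell}(\ell+i) = \frac{n!}{\ell!}$ reduces it to $\sum_{j \ge \ell+1}(\mathfrak{C}\lambda^s)^{2(j-\ell)}(j!/\ell!)^{-2(\sigma+\sigma_\ast)}$, which is $\le C$ by the combinatorial Lemma \ref{lem:comb_sqr}, i.e.\ \eqref{sum_comb}, applied with the constant $\max\{(\mathfrak{C}\lambda^s)^2, 1\}$ in the role of $\mathfrak{C}$ there. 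This gives the first inequality of the Corollary.

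The second inequality follows by rescaling: multiplying the first through by the hypocoercivity weight $\nu^{2/3}|k|^{4/3}$ converts the factor $\nu$ on the left into $\nu^{5/3}|k|^{4/3}$, while on the right $\nu^{2/3}|k|^{4/3}(\mathcal{D}^{(\gamma)}_{m,n;k})^2$ is by construction $(\mathcal{D}^{(\mu)}_{m,n;k})^2$, the $\mu$‑dissipation being the $\gamma$‑dissipation rescaled by the factor $\nu^{1/3}|k|^{2/3}$; equivalently, one repeats the argument starting from the pointwise $\mu$‑bound obtained by the same rescaling of \eqref{S:est:1}, in the spirit of Step 2 of the proof of Lemma \ref{lem:SLJ} (cf.\ \eqref{Kest}). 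The only delicate point — and the main obstacle — is the uniform‑in‑$(m,k)$ convergence of the double factorial–geometric series after the Fubini step; this is precisely what \eqref{sum_comb} supplies, and it is the reason $\lambda$ must be taken small enough that $\mathfrak{C}\lambda^s$ is not too large, exactly as already assumed in Lemma \ref{lem:SLJ}.
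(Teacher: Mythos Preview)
Your approach is essentially the same as the paper's: sum the pointwise bound \eqref{S:est:1} in $n$, swap the order of summation by Fubini--Tonelli, and bound the resulting inner series uniformly in $\ell$; the second inequality then follows by the $\nu^{2/3}|k|^{4/3}$ rescaling, exactly as you say. One small correction: \eqref{sum_comb} as stated bounds the sum over $\ell$ for \emph{fixed} $n$, not the sum over $n$ for fixed $\ell$ that you face after Fubini; the paper handles the inner series directly by $n!/\ell!\ge (n-\ell)!$ and $\sum_{j\ge 1}(j!)^{-(\sigma+\sigma_\ast)}<\infty$ (or, as in \eqref{sqr_upgrade}, simply as a geometric series in $(\mathfrak C\lambda^s)^2$), which is in fact easier than \eqref{sum_comb}.
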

\begin{proof}
The estimate \eqref{sum_comb}, when combined with \eqref{S:est:1} type estimates, yields  expression
\begin{align}
\bold{a}_{m,n}^2\nu\|S_{m,n;k}e^W\chi_{n-1}\|_{L^2}^2\lesssim \sum_{\ell=0}^{n-1}\lf(\frac{n!}{\ell!}\rg)^{-\sigma-\sigma_\ast}(\mathcal{D}_{m,\ell;k}^{(\gamma)})^2. 
\end{align}
Hence 
\begin{align}
\sum_{n=0}^\infty\bold{a}_{m,n}^2\nu\|S_{m,n;k} e^W\chi_{n-1}\|_{L^2}^2\lesssim\sum_{n=0}^\infty\sum_{\ell=0}^{n-1}\lf(\frac{n!}{\ell!}\rg)^{-\sigma-\sigma_\ast}(\mathcal{D}_{m,\ell;k}^{(\gamma)})^2.
\end{align}
Now by the Fubini-Tonelli,
\begin{align}
\sum_{n=0}^\infty&\bold{a}_{m,n}^2\nu\|S_{m,n;k} e^W\chi_{n-1}\|_{L^2}^2\\
\lesssim&\sum_{\ell=0}^\infty\sum_{n=\ell+1}^{\infty}\lf(\frac{n!}{\ell!}\rg)^{-\sigma-\sigma_\ast}(\mathcal{D}_{m,\ell;k}^{(\gamma)})^2
\lesssim\sum_{\ell=0}^\infty\sum_{n=\ell+1}^{\infty}\lf( {n(n-1)\cdot\cdot\cdot(\ell+1)}\rg)^{-\sigma-\sigma_\ast}(\mathcal{D}_{m,\ell;k}^{(\gamma)})^2\\\lesssim &\sum_{\ell=0}^\infty\sum_{n-\ell=1}^{\infty}\lf( ({n-\ell+1)(n-\ell)\cdot\cdot\cdot 1}\rg)^{-\sigma-\sigma_\ast}(\mathcal{D}_{m,\ell;k}^{(\gamma)})^2\\
\lesssim &\sum_{\ell=0}^\infty(\mathcal{D}_{m,\ell;k}^{(\gamma)})^2 \sum_{n=1}^{\infty}\lf( (n+1)!\rg)^{-\sigma-\sigma_\ast}\\
\lesssim &\sum_{\ell=0}^\infty(\mathcal{D}_{m,\ell;k}^{(\gamma)})^2. 
 \end{align}
\end{proof}
\fi


{
\begin{lemma}\label{lem:tht_app}
Given a constant $\mf B>1$, there exists $\delta_{\text{Drop}}(\sigma,\mf B), n_\star(\sigma,\mf B)$ (defined in \eqref{dl_Drp} such that the following estimate holds
\begin{align}
\sum_{m = 0}^\infty \sum_{n =1}^\infty \sum_{\ell=0}^{n-1}\theta_n^2 (\mathfrak C\lambda^{s})^{2n-2\ell} \lf(\frac{(m+n)!}{(m+\ell)!}\rg)^{-2\sigma}  \mathfrak{G}_{m,\ell;k}  \leq \frac{1}{\mf B}\sum_{m=0}^\infty \sum_{n =0}^\infty \theta_n^2 \mathfrak{G}_{m,n;k}.\label{tht_app}
\end{align}
Here $\mathfrak{G}_{m,\ell;k}$ are positive quantities.
\end{lemma}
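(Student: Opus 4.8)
The plan is to reduce \eqref{tht_app} to a single one–dimensional estimate on the weights by interchanging the order of summation. After Fubini's theorem the left–hand side of \eqref{tht_app} equals $\sum_{m\ge0}\sum_{\ell\ge0}\mathfrak{G}_{m,\ell;k}\,\Sigma_{m,\ell}$ with $\Sigma_{m,\ell}:=\sum_{n>\ell}\theta_n^2(\mathfrak C\lambda^s)^{2(n-\ell)}\big(\tfrac{(m+n)!}{(m+\ell)!}\big)^{-2\sigma}$; since $\mathfrak{G}_{m,\ell;k}\ge0$ it suffices to prove the pointwise bound $\Sigma_{m,\ell}\le\theta_\ell^2/\mf B$ for all $m,\ell\ge0$, and relabelling $\ell\mapsto n$ on the right then yields exactly \eqref{tht_app}. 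Three elementary inputs will be used: $\tfrac{(m+n)!}{(m+\ell)!}=\prod_{j=\ell+1}^{n}(m+j)\ge\prod_{j=\ell+1}^{n}j\ge(\ell+1)^{\,n-\ell}$, so the factorial factor is $\le(\ell+1)^{-2\sigma(n-\ell)}$ \emph{uniformly in} $m$ (this is what makes the $m$–sum harmless); $\mathfrak C\lambda^s\le\mathfrak C_0$ for a universal constant $\mathfrak C_0$, since $\lambda\lesssim1$ and $\mathfrak C,s$ are universal; and $\theta_n\le\theta_\ell$ always, with $\theta_\ell=\delta_{\text{Drop}}^{\,\ell-n_\ast}\ge1$ when $\ell<n_\ast$ by \eqref{dl_Drp}.

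I would fix the parameters in the order required by the statement: first choose $n_\ast=n_\ast(\sigma,\mf B)$ (also depending on the universal $\mathfrak C_0$) so large that $(n_\ast+1)^{2\sigma}\ge2\mathfrak C_0^2\mf B$, then choose $\delta_{\text{Drop}}=\delta_{\text{Drop}}(\sigma,\mf B)$ so small that $\delta_{\text{Drop}}\mathfrak C_0\le1$ and $\tfrac{(\delta_{\text{Drop}}\mathfrak C_0)^2}{1-(\delta_{\text{Drop}}\mathfrak C_0)^2}+2(\delta_{\text{Drop}}\mathfrak C_0)^2\le\tfrac1{2\mf B}$. For $\ell\ge n_\ast$ one has $\theta_n=\theta_\ell=1$ for $n>\ell$, and the first input plus the choice of $n_\ast$ bound each summand of $\Sigma_{m,\ell}$ by $\big(\mathfrak C_0^2/(\ell+1)^{2\sigma}\big)^{n-\ell}\le(2\mf B)^{-(n-\ell)}$, so $\Sigma_{m,\ell}\le\sum_{j\ge1}(2\mf B)^{-j}=(2\mf B-1)^{-1}\le\mf B^{-1}=\theta_\ell^2/\mf B$. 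For $\ell<n_\ast$ I would split $\Sigma_{m,\ell}=\sum_{\ell<n<n_\ast}+\sum_{n\ge n_\ast}$: in the first piece $\theta_n^2/\theta_\ell^2=\delta_{\text{Drop}}^{2(n-\ell)}$ and the factorial factor is $\le1$, giving $\le\sum_{j\ge1}(\delta_{\text{Drop}}\mathfrak C_0)^{2j}=\tfrac{(\delta_{\text{Drop}}\mathfrak C_0)^2}{1-(\delta_{\text{Drop}}\mathfrak C_0)^2}$; in the second piece $\theta_n^2/\theta_\ell^2=\delta_{\text{Drop}}^{2(n_\ast-\ell)}$, and after splitting $(\mathfrak C\lambda^s)^{2(n-\ell)}$ at $n_\ast$, discarding $(n_\ast!/\ell!)^{-2\sigma}\le1$, and using $\tfrac{n!}{n_\ast!}\ge(n_\ast+1)^{\,n-n_\ast}$ together with the choice of $n_\ast$, each summand is $\le(\delta_{\text{Drop}}\mathfrak C_0)^{2(n_\ast-\ell)}2^{-(n-n_\ast)}\le(\delta_{\text{Drop}}\mathfrak C_0)^{2}2^{-(n-n_\ast)}$, so this piece is $\le2(\delta_{\text{Drop}}\mathfrak C_0)^2$. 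Adding the two pieces and invoking the choice of $\delta_{\text{Drop}}$ gives $\Sigma_{m,\ell}\le\tfrac1{2\mf B}\le\theta_\ell^2/\mf B$ since $\theta_\ell\ge1$, which closes the pointwise estimate, and hence \eqref{tht_app}.

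The only genuine obstacle is bookkeeping rather than analysis: the quantifiers must be ordered so that $n_\ast$ is pinned down first and $\delta_{\text{Drop}}$ is chosen small relative to the already–fixed $n_\ast$, and one must check that the bound on the factorial quotient $(m+n)!/(m+\ell)!$ is uniform in $m$, so that the $m$–sum on the left can be matched term by term with the $m$–sum on the right. The underlying mechanism is the familiar Cauchy–Kovalevskaya telescoping against the super–geometrically growing factorials $(m+n)!/(m+\ell)!$, with the geometric drop $\delta_{\text{Drop}}$ in the weights $\{\theta_n\}$ supplied precisely to absorb the finitely many small indices $\ell<n_\ast$, where the factorial growth is not yet strong enough on its own.
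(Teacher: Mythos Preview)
Your proof is correct and follows essentially the same strategy as the paper: interchange summation to reduce to a weight estimate, then split according to whether the index is below or above $n_\ast$, using factorial growth for the high range and the geometric drop $\delta_{\text{Drop}}$ for the low range. Your organization is in fact cleaner than the paper's: you perform Fubini once at the outset and reduce to the single pointwise inequality $\Sigma_{m,\ell}\le\theta_\ell^2/\mf B$, whereas the paper first splits by $n$ into $T_1$ ($n<n_\ast$) and $T_2$ ($n\ge n_\ast$), then interchanges inside $T_2$ and splits again into ``rectangle'' and ``triangle'' pieces, invoking a separate claim comparing $\mathfrak K(n)=A^n/[(m+n)!]^b$ to a geometric sequence. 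Your direct factorial bound $\tfrac{(m+n)!}{(m+\ell)!}\ge(\ell+1)^{n-\ell}$ replaces that claim and makes the $m$-uniformity transparent.

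One small slip in your write-up: in the case $\ell<n_\ast$, the two pieces you bound are really $\Sigma_{m,\ell}/\theta_\ell^2$ (you explicitly factor out $\theta_n^2/\theta_\ell^2$), so the correct conclusion of that paragraph is $\Sigma_{m,\ell}\le\theta_\ell^2/(2\mf B)$ rather than $\Sigma_{m,\ell}\le 1/(2\mf B)$. The final inequality $\Sigma_{m,\ell}\le\theta_\ell^2/\mf B$ is unaffected, and the appeal to $\theta_\ell\ge1$ is then unnecessary.
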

\begin{proof} We denote the left hand side of \eqref{tht_app} as follows
\begin{align} \n \eqref{tht_app}_{{L.H.S}}= &  \sum_{m = 0}^\infty \sum_{n = 1}^\infty\sum_{\ell=0}^{n-1} \mathbbm{1}_{n < n_{\ast}} \theta_{n}^2  (\mathfrak C\lambda^{s})^{2n-2\ell} \lf(\frac{(m+n)!}{(m+\ell)!}\rg)^{-2\sigma}  \mathfrak{G}_{m,\ell;k}    \\ \n
& +    \sum_{m = 0}^\infty \sum_{n = 1}^\infty \sum_{\ell=0}^{n-1}\mathbbm{1}_{n \ge n_{\ast}}  (\mathfrak C\lambda^{s})^{2n-2\ell} \lf(\frac{(m+n)!}{(m+\ell)!}\rg)^{-2\sigma}  \mathfrak{G}_{m,\ell;k}   := T_1+T_2.\label{tht_app_1} 
\end{align}
For $T_2$, we froze $(m, k)$, and study the sum 
\begin{align*}
S_{m,k} := \sum_{n = n_\ast}^\infty \sum_{\ell = 0}^{n-1} (\mathfrak C\lambda^{s})^{2n-2\ell} \lf(\frac{(m+n)!}{(m+\ell)!}\rg)^{-2\sigma}  \mathfrak{G}_{m,\ell;k}.
\end{align*}
By interchanging the order of summation, we obtain 
\begin{align*}
S_{m,k} =& \sum_{\ell = 0}^{n_\ast - 1} \sum_{n = n_\ast}^\infty (\mathfrak C\lambda^{s})^{2n-2\ell} \lf(\frac{(m+n)!}{(m+\ell)!}\rg)^{-2\sigma}  \mathfrak{G}_{m,\ell;k} \\
&+ \sum_{\ell = n_\ast}^\infty \sum_{n = \ell + 1}^\infty (\mathfrak C\lambda^{s})^{2n-2\ell} \lf(\frac{(m+n)!}{(m+\ell)!}\rg)^{-2\sigma}  \mathfrak{G}_{m,\ell;k} = S^{(\text{rectangle})}_{m,k} + S^{(\text{triangle})}_{m,k}.
\end{align*}
To ease the notation going forward, we define 
\begin{align*}
A := (\mathfrak C\lambda^{s})^2, \qquad b := 2\sigma , \qquad \mathfrak{K}: \mathbb{N} \rightarrow \mathbb{R}, \qquad \mathfrak{K}(z) = \frac{A^z}{[(m+z)!]^b}.
\end{align*}

\vspace{2 mm}

\noindent \underline{Estimation of $S^{(\text{triangle})}_{m,k}$:} This is the more challenging of these bounds due to the double infinite sum in $\ell, n$.  To handle such a sum, we issue the following 
\begin{claim} \label{cl:oop} Fix any $C > 1$. There exists $L_\ast$ sufficiently large relative to $A, b, C$ and universal constants such that the function $\mathfrak{K}$ satisfies the following inequality
\begin{align}
\sum_{n = L_\ast}^\infty \mathfrak{K}(n) \le C  \mathfrak{K}(L_\ast).
\end{align}
\end{claim}
\begin{proof}[Proof of Claim \ref{cl:oop}] This follows almost immediately by comparing $\mathfrak{K}(n)$ to a geometric function. More precisely, define 
\begin{align*}
R > 1, \qquad C_0 = C_0(L_\ast) = R^{L_\ast} \frac{A^{L^\ast}}{[(m+L_\ast)!]^b}, \qquad \tau(n) = C_0 \frac{1}{R^n}.
\end{align*}
Such a choice of $C_0$ ensures that 
\begin{align}
\tau(L_\ast) =C_0 \frac{1}{R^{L_\star}}= \mathfrak{K}(L_\ast). 
\end{align}
We first note that, due to the standard geometric sum properties, 
\begin{align*}
\sum_{n = L_\ast}^\infty \tau(n) = C_0 \sum_{n = L_\ast}^\infty \frac{1}{R^n} = C_0 \frac{1}{R^{L_\ast}} \frac{1}{1-\frac{1}{R}} =  \frac{1}{1-\frac{1}{R}} \tau(L_\ast).
\end{align*}
We secondly notice that for $L_\ast$ sufficiently large, we have the order property 
\begin{align} \label{order:r:1}
\mathfrak{K}(n)=\frac{A^n}{[(m+n)!]^b} \le \tau(n)=\frac{R^{L_\star}A^{L_\star}}{[(m+L_\star)!]^b}, \qquad n \ge L_\ast.
\end{align}
To establish, \eqref{order:r:1}, we notice that $K(L_\ast) = \tau(L_\ast)$. Therefore, it suffices to check 
\begin{align*}
\frac{\mathfrak{K}(n+1)}{\mathfrak{K}(n)} \le \frac{\tau(n+1)}{\tau(n)}, \qquad n \ge L_\ast, 
\end{align*}
which is equivalent to 
\begin{align*}
\frac{A}{(m+n+1)^b} \le \frac{1}{R}, \qquad n \ge L_\ast, 
\end{align*}
which can clearly be achieved by choosing $L_\ast$ sufficiently large. Therefore, we have 
\begin{align*}
\sum_{n = L_\ast}^\infty \mathfrak{K}(n) \le \sum_{n = L_\ast}^\infty \tau(n) \le \frac{1}{1-\frac{1}{R}} \tau(L_\ast) = \frac{1}{1 - \frac{1}{R}} \mathfrak{K}(L_\ast). 
\end{align*}
\end{proof}

Returning now to the 
$S^{(\text{triangle})}_{m,k}$-bound, we have by choosing $n_\ast$ sufficiently large
\begin{align} \n
S^{(\text{triangle})}_{m,k} = & \sum_{\ell = n_\ast}^\infty  \frac{1}{\mathfrak{K}(\ell)}  \mathfrak{G}_{m,\ell;k} \lf( \sum_{n = \ell + 1}^\infty \mathfrak{K}(n)  \rg) \le \sum_{\ell = n_\ast}^\infty  \frac{\mathfrak{K}(\ell + 1)}{\mathfrak{K}(\ell)}  \mathfrak{G}_{m,\ell;k} \\ \label{shst:1}
= &C \sum_{\ell = n_\ast}^\infty \frac{A^{\ell + 1}}{[(m + \ell + 1)!]^b} \frac{[(m+\ell)!]^b}{A^\ell}   \mathfrak{G}_{m,\ell;k} \le  \frac{CA}{(m+n_\ast + 1)^b} \sum_{\ell = n_\ast}^\infty   \mathfrak{G}_{m,\ell;k}.
\end{align}
By taking $n_\ast$ large and summing over $m, k$, we obtain 
\begin{align*}
\sum S^{(\text{triangle})}_{m,k} \le \frac{1}{3\mf B} \sum_{m=0}^\infty \sum_{n =0}^\infty \theta_n^2 \mathfrak{G}_{m,n;k}.
\end{align*}
\vspace{2 mm}

\noindent \underline{Estimation of $S^{(\text{rectangle})}_{m,k}$:} This bound works in a nearly identical manner to $\mathcal{S}^{(\text{triangle})}_{m,k}$. 

\vspace{2 mm}

\noindent \underline{Bound of $T_1$:} We have upon again invoking \eqref{CommCnqbd}
\begin{align} \n
T_1 \le  & C_{n_\ast} \sum_{m = 0}^\infty \sum_{n = 0}^{n_\ast}   \theta_n^2 \sum_{\ell = 0}^{n-1} \mathfrak{G}_{m,\ell;k} \\ \n
\le &  C_{n_\ast} \sum_{m = 0}^\infty \sum_{n = 0}^{n_\ast} \sum_{k \in \mathbb{Z}}  \sum_{\ell = 0}^{n-1} \Big( \frac{\theta_n}{\theta_{\ell}} \Big)^2 \theta_\ell^2  \mathfrak{G}_{m,\ell;k}\\ \n
\le &   C_{n_\ast} \sum_{m = 0}^\infty \sum_{n = 0}^{n_\ast}   \sum_{\ell = 0}^{n-1} \delta_{\text{Drop}}^2  \theta_\ell^2  \mathfrak{G}_{m,\ell;k} \\ \label{delta:drop:choice}
= &   \delta_{\text{Drop}}^2  C_{n_\ast} \sum_{m = 0}^\infty   \sum_{\ell = 0}^{\infty}   \theta_\ell^2  \mathfrak{G}_{m,\ell;k},
\end{align}
which upon bringing $\delta_{\text{Drop}}$ small relative to $n_\ast$, we obtain the right-hand side is bounded by one third of the right hand side of \eqref{tht_app}. 

By summing all the conclusions above, we  conclude the proof of \eqref{tht_app}. 
\end{proof}
}


\section{Elliptic Regularity Estimates} \label{sec:Elliptic}
\subsection{Setup for Elliptic Bounds}\label{sec:dcmp_ellp}
In this section, we study the elliptic equation: 
\begin{align}
\Delta_k \psi_k = \ww_k, \qquad \psi_k|_{y = \pm 1} = 0.  
\end{align}
When we implements analysis in different regions, we need different set of elliptic estimates. We categorize them as interior elliptic estimates and exterior elliptic estimates. 

\noindent  
{\bf  Elliptic Estimates in the Interior Analysis: }
First of all, we recall the elliptic equation for $\Psi^{(E)}$
\begin{align}
\Delta \Psi^{(E)} = &\ww^E + \left( (1+H^I)^2 - (1 +H)^2 \right) \pav^2 \Psi^{(I)}\n \\
&  + \left((1+H)\pav H - (1+H^I)\pav H^I\right)\pav \Psi^{(I)},\label{dmp_int2}\\
\Psi^{(E)}(x,\pm 1) =& 0.\n
\end{align}
The first main goal in this section is to prove the following proposition.
\begin{proposition}[Estimate of \eqref{dmp_int2}]
\label{pro:ellptc_int}
Consider the solutions to the equation \eqref{dmp_int2}. Under the assumptions \eqref{asmp}, the following estimate holds:
\begin{align}
\sum_{m+n=0}^\infty& \lf(\frac{(2\widetilde{\lambda})^{m+n}}{(m+n)!}\rg)^{2/r}\|\chi_{m+n}|k|^mq^n\Gamma_k^n \Psi_k^{(E)}\|_{H_y^1}^2
\n \\
&\lesssim\lf(\mathcal{E}^{(\gamma)}[\ww_k]+\mathcal{E}_{\mathrm{Int}}^{\mathrm{low}}[\ww_k]\rg)\lf(1+\mathcal{E}_{H}^{(\al)}+\mathcal{E}_{H}^{(\gamma)}\rg)^2e^{-\nu^{-1/8}}.\label{elp_int}
\end{align}
This estimate implies \eqref{fell:e} in Theorem \ref{pro:ell:intro}.
\end{proposition}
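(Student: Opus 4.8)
\textbf{Proof strategy for Proposition \ref{pro:ellptc_int}.}
The plan is to estimate the solution $\Psi_k^{(E)}$ of the elliptic equation \eqref{dmp_int2} in the weighted, cut-off Gevrey norm appearing on the left-hand side of \eqref{elp_int} by splitting the right-hand side forcing into three groups: the exterior vorticity $\ww^E$, the "quadratic-in-$H$" term $\left((1+H^I)^2-(1+H)^2\right)\pav^2\Psi^{(I)}$, and the lower-order term $\left((1+H)\pav H-(1+H^I)\pav H^I\right)\pav\Psi^{(I)}$. First I would set up the basic energy/elliptic inequality for the operator $\Delta_k$ with the Dirichlet boundary condition: commuting the ICC-operators $J_{m,n}^{(a,b,c)}$ (or directly $\chi_{m+n}|k|^mq^n\Gamma_k^n$) through the equation and pairing against the solution produces, on the left, the $H^1_y$-controlled quantity we want, plus negative-definite elliptic contributions, while the right side is the inner product with the commuted forcing. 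The commutator structure here is the same family already analyzed for the parabolic problem: commutators of $\Gamma_k^n$ with $\pav^2$ (via Lemma \ref{lem:com:BA}, especially \eqref{cm_pvv_qn}, \eqref{cm_pvv_q}), commutators of $q^n$ with $\pa_{yy}$ (\eqref{cm_pyy_qn}), and the transport-type rearrangement involving $G$ and $\overline H$, all of which reduce to the building-block estimates of Lemma \ref{lem:SLJ} and the product Lemmas \ref{pro:-s}--\ref{pro:1:-s} together with the binomial bounds of Corollary \ref{comb:boun} and \eqref{comb:boun:rem}.

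The key point — and the source of the factor $e^{-\nu^{-1/8}}$ — is that the forcing of \eqref{dmp_int2} is \emph{supported where the interior and exterior cutoffs disagree}, i.e.\ on the annular region $\{\tfrac34\le|v|\le\tfrac{31}{40}\}$ roughly, which is strictly inside $\{|y|<\tfrac12\}$ where $\chi_n\equiv1$ for all $n$. On this set the weight $W(t,y)=\frac{(|y|-1/4-L\eps\arctan t)_+^2}{K\nu(1+t)}$ is bounded below by $\gtrsim \frac{1}{\nu(1+t)}\gtrsim \nu^{-2/3+\eta}$ for $t\le\nu^{-1/3-\eta}$, so $e^{-W/2}\lesssim e^{-c\nu^{-1/8}}$ there (the precise power $1/8$ versus $1/9$ is built into the choice of constants; see the analogous inequalities \eqref{nu:to:W}, \eqref{prop:W} and the statements \eqref{jell:1}--\eqref{fell:e}). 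Thus for the $\ww^E$ term I would write $\ww^E = \chi^E(v)\ww$ and absorb a factor $e^{W/2}$ from $\ww$ into the already-controlled $\mathcal{E}^{(\gamma)}[\ww]$ functional, leaving $e^{-W/2}$ on the cutoff-overlap region to produce the exponential smallness; the high-Gevrey $(\partial_x,\Gamma)$-regularity of $\mathcal{E}^{(\gamma)}$ supplies the summability in $(m,n)$ once the radius is arranged (this is exactly why $\wt\lambda$ is taken arbitrary and $2\wt\lambda$ chosen small). For the two nonlinear terms I would use the product lemmas to factor off $\mathcal{E}_H^{(\gamma)}+\mathcal{E}_H^{(\al)}$ (entering squared, from the two factors $1+H$, $1+H^I$ and the difference structure), with $\pav^2\Psi^{(I)}$ and $\pav\Psi^{(I)}$ controlled by the interior functionals $\mathcal{E}_{ell}^{(I,\cdot)}$, which by \eqref{eell:out}--\eqref{eell:sob} are bounded by $\mathcal{E}_{\mathrm{Int}}^{\mathrm{low}}[\ww]$; again the spatial support of $H^I-H$ and of $(1+H^I)^2-(1+H)^2$ lies in the overlap region, yielding the $e^{-\nu^{-1/8}}$ gain after inserting $e^{W/2}e^{-W/2}$.

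Concretely the steps, in order, would be: (1) derive the commuted elliptic identity for $\chi_{m+n}|k|^mq^n\Gamma_k^n\Psi_k^{(E)}$ and its $\pa_y$-derivative, isolating the favorable elliptic terms and the forcing inner products; (2) estimate the $\ww^E$ contribution, extracting $e^{-c\nu^{-1/8}}$ from the cutoff-overlap support via the lower bound on $W$ and bounding the rest by $\mathcal{E}^{(\gamma)}[\ww]$ after summing the Gevrey series using Corollary \ref{comb:boun}; (3) estimate the quadratic $H$-term using Lemma \ref{pro:1} (and \ref{pro:-s}, \ref{pro:1:-s} for the co-normal-weighted pieces) to split into $\big(\mathcal{E}_H^{(\al)}+\mathcal{E}_H^{(\gamma)}\big)^2$ times the interior $\pav^2\Psi^{(I)}$ norm, the latter controlled by $\mathcal{E}_{ell}^{(I,out)}$ hence by $\mathcal{E}_{\mathrm{Int}}^{\mathrm{low}}[\ww]$ via \eqref{eell:out}; (4) estimate the lower-order term analogously; (5) sum the three contributions and absorb the elliptic terms to close \eqref{elp_int}, then observe that the left-hand side of \eqref{elp_int} with $H^1_y$ replaced by $L^2_y$ is exactly $\mathcal{F}_{ell}^{(E)}(t)$ with $2\wt\lambda$ in place of $2\wt\lambda$, giving \eqref{fell:e}. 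The main obstacle I anticipate is step (1)–(3): carefully tracking the co-normal weights $q^n$ and the sliding cutoffs $\chi_{m+n}$ through the second-order elliptic commutators so that every term either lands in an $S^{(\cdot)}_{m,n}/J^{(\cdot)}_{m,n}$ building block controlled by Lemma \ref{lem:SLJ}-type arguments, or is a coordinate-system factor controlled by the product lemmas — and verifying that in \emph{every} such term at least one factor carries spatial support in the cutoff-overlap region so that the $e^{-\nu^{-1/8}}$ survives uniformly in $(m,n,k)$; the $(m+n)/q$ versus $t$ versus $\nu^{-1/3}$ bookkeeping, which was delicate already in Section \ref{sec:ext:comm}, is what will require the most care.
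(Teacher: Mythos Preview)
Your overall strategy is in the right spirit, but there is a genuine gap in step (3)--(4) that would prevent the argument from closing as written.

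\textbf{The main issue: you have conflated $\Psi^{(I)}$ with $\phi^{(I)}$.} These come from two \emph{different} decompositions of the stream function: $\psi_k = \Psi_k^{(I)} + \Psi_k^{(E)}$ (equations \eqref{dmp_int1_intr}--\eqref{dmp_int2_intr}) and $\psi_k = \phi_k^{(I)} + \phi_k^{(E)}$ (equations \eqref{d:phi:I:in}--\eqref{d:phi:E:in}). The functionals $\mathcal{E}_{ell}^{(I,\mathrm{out})}$, $\mathcal{E}_{ell}^{(I,\mathrm{full})}$ and the bounds \eqref{eell:out}--\eqref{eell:sob} are all stated and proved for $\phi_k^{(I)}$, not for $\Psi_k^{(I)}$; no Gevrey bounds on $\Psi_k^{(I)}$ are established in this paper (they live in the companion work). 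So your plan to control $\pav^2\Psi^{(I)}$ and $\pav\Psi^{(I)}$ by $\mathcal{E}_{ell}^{(I,\cdot)}$ does not go through directly. The paper instead writes $\Psi_k^{(I)} = \psi_k - \Psi_k^{(E)}$ \emph{inside the forcing}, producing (see \eqref{PhiE}) terms $R_1,R_3$ involving the full $\psi_k$ and terms $R_2,R_4$ involving $\Psi_k^{(E)}$ itself. For $R_1,R_3$ one then uses the \emph{second} decomposition $\psi_k = \phi_k^{(I)}+\phi_k^{(E)}$ together with the already-proven bounds \eqref{jell:2} and \eqref{eell:out}. The terms $R_2,R_4$ carry a factor of $H^E$ and are absorbed back into the left-hand side via the smallness of $H$ on $\mathrm{supp}\,\chi^E$ (this absorption structure is entirely absent from your outline, and without it the estimate does not close).

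\textbf{Two secondary points.} First, your support claim is garbled: the forcing in \eqref{dmp_int2} is supported on $\{|v|\ge 1/2\}$ (the full support of $\chi^E$), not on a thin annulus, and this lies in $\{|y|\gtrsim 1/2\}$, not inside $\{|y|<1/2\}$. The $e^{-W}$ mechanism still applies there, so this confusion is not fatal, but it should be corrected. Second, you have not addressed the mismatch between the Gevrey norm in \eqref{elp_int} (coefficients $((2\wt\lambda)^{m+n}/(m+n)!)^{2/r}$, no $\varphi$ weight) and the $\mathbf{a}_{m,n}$-weighted norms carrying $\varphi^{n+1}$ that appear in $\mathcal{E}^{(\gamma)}$, $\mathcal{J}_{ell}$, etc. The paper handles this via Lemma~\ref{lem:ExtToInt}, trading the missing $\varphi$ factors for an $\exp\{c\nu^{-2/3+\eta}\}$ loss that is then beaten by the $e^{-W}$ gain; this step is essential and should be made explicit.
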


\noindent
{\bf Elliptic Estimates in the Exterior Analysis: }

For the elliptic estimates applied in the exterior region, we apply another decomposition of the solution \eqref{d:phi:I:in}, \eqref{d:phi:E:in}. 
\ifx
Before providing the cutoff function, we recall some properties of $\chi_1$ cut-off \eqref{chi} and the coordinate system. 
Next we use the bootstrap hypothesis of the coordinate system to derive that the there exists a constant $C_\dagger$ such that variable $v$ and $y$ are close in the following sense,
\begin{align*}
\|v-y\|_{L_t^\infty L_y^\infty}\leq C_\dagger\ep. 
\end{align*}
\siming{(We also need a statement to cite here.)} 
Now we define the following cut-off function $\wt \chi_1(v)$ \siming{Double check the range!}
\begin{align}\label{wt_chi}
 \wt \chi_1^\mathfrak{c}(v)=1-\wt \chi_1(v),\quad \wt \chi_1(v)=\lf\{\begin{array}{cc}1,&\quad |v|\geq \frac{3}{8}-\max\{2C_\dagger \ep,\frac{1}{32}\}, \\ \text{smooth monotone}, &\quad |v|\in [\frac{3}{8}-\max\{3C_\dagger \ep,\frac{1}{16}\},\frac{3}{8}-\max\{2C_\dagger \ep,\frac{1}{32}\}], \\ 
0,&\quad |v|\leq \frac{3}{8}-\max\{3C_\dagger \ep,\frac{1}{16}\}. \end{array}\right.
\end{align} The cutoff $\wt \chi_1$ is a fatten version of $\chi_1$ written in the $v$-coordinate. We have the following property 
\begin{align}
\wt\chi_1(v)\equiv 1, \quad y(v)\in \text{support}\{\chi_n\},\quad 1\leq n\in \mathbb{N}. 
\end{align} We choose to write the cutoff in the $v$-variable so that the computation of $\pav^n\wt \chi_1$ is easier. 

Now we decompose the stream function as $\phi_k = \phi_k^{(E)} + \phi_k^{(I)}$, where these two quantities satisfy the following elliptic equations
\begin{subequations}
\begin{align} \label{ell:I}
(\pav^2-|k|^2)\ \phi^{(I)}_k(t,v) &=\widetilde{\chi}_1^\mathfrak{c}(v)\ \ww_k(t,v)+{\wt \chi_1^\mathfrak{c}(v)(\pav^2-\pa_y^2)\phi_k^{(I)}(t,v)}, \\
\n\qquad &\phi^{(I)}_k(t,v)|_{v=v(t,\pm 1)} \ =\ 0, \\  \label{ell:E}
(\pa_y^2-|k|^2)\ \phi^{(E)}_k(t,y) &=\ \widetilde{\chi}_1\lf(v(y)\rg) \ww_k(t,y)+{\wt\chi_1(v(y))}(\pav^2-\pa_{y}^2)\phi^{(I)}_k(t,v(y)),\\
\n \qquad &\phi^{(E)}_k(t,y)|_{y = \pm 1} = 0. 
\end{align}
\end{subequations}
Here $\pav:=v_y^{-1}\pa_y$ and we will use a simplified notation $\Delta_{k,v}:=\pav^2-|k|^2$. 
\fi
We observe that 
\begin{align*}
\de_k&\lf(\phi_k^{(I)}(v(y))+\phi_k^{(E)}(y)\rg)=(|k|^2-\pav^2)\lf(\phi_k^{(I)}(v(y))\rg)+\widetilde{\chi}_1(v(y))\ww_k(y)\\
&=\lf(\widetilde{\chi}_1^\mathfrak{c}(v(y))+\widetilde{\chi}_1(v(y))\rg)\ww_k(y) =\ww_k(y),\quad \lf(\phi_k^{(I)}(v(y))+\phi_k^{(E)}(y)\rg)\bigg|_{y=\pm 1}=0.
\end{align*}
Here in the equation, we omit the argument $t$ in the $v(t,y),\ \wt\chi_1(t,v(t,y)),\ \wt\chi_1^{\mf c}(t,v(t,y))$ expressions for the sake of notation simplicity. Furthermore, we define the following quantities
\begin{align}\label{dfn_omRIE}
\wt \ww_k^{(I)}(t,v(t,y)):=&(1-\wt \chi_1(t,v(t,y)))\ww_k(t,y),\quad \wt \ww_k^{(E)}(t,y)=\wt\chi_1(t,v(t,y))\ww_k(t,y),\\
\n\quad R_k^{(I)}:=&\text{R.H.S. of }\eqref{d:phi:I:in},\quad R_k^{(E)}:=\text{R.H.S. of }\eqref{d:phi:E:in}.
\end{align}
Furthermore, we define an intermediate cut-off function $\chi_\ast(t,\cdot)\in C^\infty([v(t,-1),v(t,1)])$ such that for a positive constant $c>0$ and any $t$, the following conditions hold
\begin{align}\label{chi_ast}
\chi_\ast(t,v)=&\lf\{\begin{array}{cc} 1,&\quad v\in \text{support }\chi_1(y(t,\cdot));\\
0,&\quad v\in \text{support }\wt\chi_1^\mathfrak{c}(t,\cdot);\\
\text{smooth},&\quad \text{others}.
\end{array}\rg. \\
&\hspace{-0.5cm}\text{distance}(\text{support }\chi_\ast(t,\cdot),\text{support }\wt \chi_1^\mathfrak{c}(t,\cdot))\geq c>0.\n
\end{align} Here $y(t,v)$ is the inverse function of $v(t,y)$ at fixed time $t$. Here the cutoff function $\chi_\ast$ has a mild dependence on time because the boundary of the domain is $v(t,\pm 1)$. Finally, we have the relation $\chi_{n}(y)\chi_{\ast}(t,v(t,y))=\chi_n(y)$ for $n\geq 1$.  

Now we present a more precise version of Theorem \ref{pro:ell:intro}.
\begin{proposition}\label{pro:IE_phi_ext} Assume that the conditions in  \eqref{asmp} hold. Further assume that $k\neq 0$. 

\noindent
{\bf a) Interior estimates: }Recall the definition of $\widehat{\bf a}_{m,n}$ \eqref{hat_bf_a_intro} and consider the solution to \eqref{d:phi:I:in}. 
The following estimate holds 
\begin{align}\label{phi_I_est}
\sum_{m+n=0}^\infty \widehat{\bf a}_{m,n}^2\|{\chi_\ast}|k|^m(\pa_v+ikt)^n{|k|^{\mathfrak{l}_1}}\pa_v^{\mathfrak{l}_2}\phn\|_{L_v^\infty}^2\lesssim \frac{1}{\lan t\ran^2}\|\widetilde{\ww}_k^{(I)}\|_{L_v^2}^2\lesssim \frac{1}{\lan t\ran^2}\mathcal{E}_{\mathrm{Int}}^{\mathrm{low}}[w_k],\quad \mathfrak{l}_1,\mathfrak l_2\in\{0,1,2,3,4\}.
\end{align}
\siming{\footnote{Double Check the higher $\mathfrak{l}_1,\mathfrak{l}_2$ case. I just added the $3,4$. Check once.}}
Here the implicit constant is universal and $\mathcal{E}_{\mathrm{Int}}^{\mathrm{low}}$ is defined in \eqref{E_Int}.  
The smooth cutoff function $\chi_\ast$ is defined in \eqref{chi_ast}. Moreover, for any $m\in \mathbb{N}$, the following Sobolev norm estimate holds 
\begin{align}
\|\chi_\ast |k|^m\pa_v^\mathfrak{l}\phi_k^{(I)}\|_{L_v^\infty}\leq C(m) \|\wt\ww_k^{(I)}\|_{L_v^2},\quad \mathfrak{l}\in\{0,1,2,3,4\}.\label{phi_I_est_2}
\end{align}

Next, the following inviscid damping estimate holds for $\mathfrak{l}\in\{0,1,2\}$,
\begin{align}\n
\sum_{m+n=0}^{\infty}&\widehat{\bf a}_{m,n}^2\|\chi_\ast |k|^m(\pa_v+ikt)^n\pa_v^\mathfrak{l}\phi_k^{(I)}\|_{L_v^\infty}^2\lesssim \frac{\varphi^2}{ (|k|t)^{2\mathfrak{n}}}\sum_{\ell=0}^{\mathfrak{n}}\| (\pa_v+ikt)^{\ell} R_k^{(I)}\|_{L_v^2}^2\\
\lesssim &\frac{\varphi^2}{ (|k|t)^{2\mathfrak{n}}}\sum_{\ell=0}^{\mathfrak{n}}\| (\pa_v+ikt)^{\ell}\wt \ww_k^{(I)}\|_{L_v^2}^2\lesssim\frac{\varphi^2}{ (|k|t)^{2\mathfrak{n}}}\mathcal{E}_{\mathrm{Int}}^{\mathrm{low}}[w_k].\label{phi_I_est_3}
\end{align}  
Here $R_k^{(I)}$ is defined in \eqref{dfn_omRIE}. Furthermore, for finite Sobolev norm, the following inviscid damping estimate holds
\begin{align}\label{phI_est_1_1}
\sum_{m+n=0}^{1000}\widehat B_{m,n}\||k|^m(\pa_v+ikt)^n\phi_k^{(I)}\|_{L_v^2}\lesssim\frac{1}{\lan t\ran^2}\sum_{m+n=0}^{1002}\widehat B_{m,n}\||k|^m(\pa_v+ikt)^n\ww_k^{(I)}\|_{L_v^2}.
\end{align}
\noindent
{\bf b) Exterior estimates:} Under the assumption \eqref{asmp}, we have the following estimate
\begin{align}\n
\sum_{m+n=0}^\infty \sum_{a+b+c=0}^2{\bf a}_{m,n}^2\|J^{(a,b,c)}_{m,n}\phe_k\|_{L^2}^2\lesssim& \sum_{m+n=0}^\infty {\bf a}_{m,n}^2\| J_{m,n}^{(0)}\wwe_k\|_{L^2}^2+e^{-\nu^{-1/9}}(\mathcal{E}_{H}^{(\gamma)}+\mathcal{E}_{H}^{(\al)})\|\wt\ww_k^{(I)}\|_{L^2}^2\\
\lesssim& e^{-\nu^{-1/9}}\lf(\mathcal{E}_k^{(\gamma)}+(\mathcal{E}_{H}^{(\gamma)}+\mathcal{E}_{H}^{(\al)})\mathcal{E}_{\mathrm{Int};k}^{\rm low}\rg).\label{phi_E_est_1}
\end{align}
Here, the left hand side contains the usual $H^2$-based Gevrey bound of $\phe_k$.  
Moreover, if $a+b+c=3$, 
\begin{align}\n
\sum_{m+n=0}^\infty&
 \sum_{a+b+c=3}{\bf a}_{m,n}^2\|J^{(a,b,c)}_{m,n}\phe_k\|_2^2\\
\n \lesssim& \sum_{m+n=0}^\infty\sum_{a+b+c=0}^1 {\bf a}_{m,n}^2\|J^{(a,b,c)}_{m,n}\wwe_k\|_2^2\\
\n &+e^{-\nu^{-1/8}}\lf(\sum_{m+n=0}^\infty{\bf a}_{m,n}^2\|J_{m,n}^{(0)}\wwe_{k}\|_2^2+\|\wt \ww_k^{(I)}\|_{H_v^1}^2\rg)\sum
_{\iota\in\{\al,\gamma\}} \lf(\mathcal{E}_{H}^{(\iota)}+\mathcal{D}_H^{(\iota)}\rg)\\
\lesssim&e^{-\nu^{-1/9}}\lf(\mathcal{D}_k^{(\gamma)}+(\mathcal{E}_k^{(\gamma)}+\mathcal{E}_{\mathrm{Int};k}^{\rm low})\sum
_{\iota\in\{\al,\gamma\}} \lf(\mathcal{E}_{H}^{(\iota)}+\mathcal{D}_H^{(\iota)}\rg) \rg).\label{phi_E_est_2}
\end{align}
Here the left hand side contains the usual $H^3$-based Gevrey bound of $\phe_k$.

\end{proposition}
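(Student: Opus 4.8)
## Proof Proposal for Proposition \ref{pro:IE_phi_ext}

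\textbf{Overall strategy.} The proof splits along the structural lines of the decomposition $\psi_k = \phi^{(I)}_k(t,v) + \phi^{(E)}_k(t,y)$ introduced in \eqref{d:phi:I:in}--\eqref{d:phi:E:in}. For the interior estimates (part a)), the key point is that $\phi^{(I)}_k$ solves a \emph{constant-coefficient} one-dimensional elliptic problem $(\pa_v^2 - |k|^2)\phi^{(I)}_k = R^{(I)}_k$ on an interval, whose right-hand side is $\wt\chi_1^{\mf c}$-localized and hence supported strictly away from the singular weight $q$. I would solve this problem explicitly via the one-dimensional Green's function for $\pa_v^2-|k|^2$ on $[v(t,-1),v(t,1)]$ with Dirichlet data, which has the well-known exponential-decay kernel $\sim \frac{1}{|k|}e^{-|k||v-v'|}$ (corrected by boundary images). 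The first task is therefore to commute $|k|^m(\pa_v+ikt)^n\pa_v^{\mathfrak l}$ through this representation: since $\pa_v+ikt$ commutes with $\pa_v^2$, each $(\pa_v+ikt)$ either hits the kernel (producing the factor $ikt$ plus the $v$-derivative of the kernel, which is again bounded by $e^{-|k||v-v'|}$ up to $O(|k|)$) or hits $R^{(I)}_k$. Collecting terms and using $\wt\chi_1^{\mf c}(v) R^{(I)}_k = R^{(I)}_k$ gives a bound in terms of $\|(\pa_v+ikt)^\ell R^{(I)}_k\|_{L^2_v}$ for $\ell\le n$. The $\widehat{\bf a}_{m,n}$ weights are absorbed by the standard Gevrey combinatorial lemma (as in the proof of \eqref{S:est:1}), because the convolution with $\frac1{|k|}e^{-|k||v-v'|}$ costs no regularity; the extra decay $\varphi/(|k|t)^{\mathfrak n}$ in \eqref{phi_I_est_3} comes precisely from the standard inviscid-damping trick \eqref{ID}: write $R^{(I)}_k = \frac{1}{(ikt)^{\mathfrak n}}(\pa_v+ikt-\pa_v)^{\mathfrak n}R^{(I)}_k$ and integrate by parts in the Green's representation, moving the $\pa_v$'s onto the exponentially decaying kernel. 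The Sobolev bounds \eqref{phi_I_est_2}, \eqref{phI_est_1_1} and the conversion $\|R^{(I)}_k\|\lesssim\|\wt\ww^{(I)}_k\|$ follow from the same representation together with the $H^3$-closeness assumption \eqref{v_y_asmp} on $v_y$ (needed to control $(\pa_v^2-(v_y\pa_v)^2)\phi^{(I)}_k$, which is genuinely small on $\mathrm{supp}\,\wt\chi_1$ by \eqref{v_y_sob}), and $\|\wt\ww^{(I)}_k\|\lesssim \mathcal E^{\mathrm{low}}_{\mathrm{Int};k}$ is immediate from the definition \eqref{E_Int}.

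\textbf{Exterior estimates, part b).} For \eqref{phi_E_est_1}--\eqref{phi_E_est_2} the equation is $(\pa_y^2-|k|^2)\phi^{(E)}_k = R^{(E)}_k$, $\phi^{(E)}_k|_{y=\pm1}=0$, where $R^{(E)}_k = \wt\ww^{(E)}_k + \wt\chi_1(\pav^2-\pa_y^2)\phi^{(I)}_k$. The plan is to apply the $ICC$-operators $J^{(a,b,c)}_{m,n}$ directly: using the commutator identities of Lemma \ref{lem:com:BA} (in particular \eqref{cm_py_qn}, \eqref{cm_pyy_qn}, \eqref{cm_pv_qn}, \eqref{cm_pvv_qn}) one commutes $|k|^m q^n\Gamma_k^n$ through $(\pa_y^2-|k|^2)$ to obtain an elliptic equation for $\mathring{\phi^{(E)}}_{m,n;k} := |k|^m q^n\Gamma_k^n\phi^{(E)}_k$ of the schematic form $(\pa_y^2-|k|^2)\mathring{\phi^{(E)}}_{m,n;k} = |k|^m q^n\Gamma_k^n R^{(E)}_k + (\text{commutators involving }H,\bar H,G\text{ and }S\text{-operators})$. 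One then uses the elliptic regularity estimate for $(\pa_y^2-|k|^2)$ on $[-1,1]$ with Dirichlet conditions, namely $\|\nabla_k u\|_{L^2}^2 + \|\sqrt{|k|}\text{-type}\|^2 \lesssim |\langle u, (\pa_y^2-|k|^2)u\rangle|$, tested against $\mathring{\phi^{(E)}}_{m,n;k}\chi_{m+n}^2$, together with the localization-gain mechanism of Lemma \ref{lem:ExtToInt}-type estimates to convert the $e^{-\nu^{-1/9}}$ smallness. The crucial input is that $R^{(E)}_k$ is supported where $\wt\chi_1 \equiv 1$ but $\phi^{(E)}_k$ is being measured with $\chi_{m+n}$ and the weight $q^n$; the commutator terms $[\wt\chi_1,\pa_y^2]\phi^{(I)}_k$ and those carrying $H$ are handled by the product lemmas (Lemma \ref{pro:1}, \ref{pro:1:-s}) and the bounds on $H$ from $\mathcal E^{(\gamma)}_H, \mathcal E^{(\alpha)}_H$, exactly as in the viscous/transport commutator estimates of Section \ref{sec:ext:comm}. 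The $a+b+c=3$ case \eqref{phi_E_est_2} requires one more derivative of elliptic regularity and hence the appearance of $\mathcal D^{(\gamma)}_k$ and $\mathcal D^{(\iota)}_H$ on the right-hand side; structurally it is the same argument with $J^{(\le 1)}_{m,n}$ replaced by $J^{(\le 1)}_{m,n}$ acting on $\wwe_k$ and an extra factor of $(m+n)/q$ or $\pav$ tracked through the commutators.

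\textbf{Gluing and the smallness factors.} Both \eqref{phi_I_est_3}'s polynomial-in-$t$ decay and the exponentially small factors $e^{-\nu^{-1/9}}$, $e^{-\nu^{-1/8}}$ come from the same source: on the support of $R^{(E)}_k$ (resp. on the region where $\phi^{(I)}_k$ is fed back into $\phi^{(E)}_k$) the weight $e^{W}$ is enormous — from \eqref{nu:to:W} and the explicit form of $W$ in \eqref{defndW}, on $\{|y|\ge 1/2\}$ for $t\lesssim\nu^{-1/3-\eta}$ one has $e^{W/2}\gtrsim e^{c\nu^{-1/9}}$ or better. So the plan is: estimate $R^{(E)}_k$ in the $e^W$-weighted $\mathcal E^{(\gamma)}$/$\mathcal E^{(\gamma)}_H$ norms, then trade a fixed fraction of that weight for the exponential gain, leaving behind the unweighted interior norms $\mathcal E^{\mathrm{low}}_{\mathrm{Int};k}$ and $\mathcal E^{(\gamma)}_k$ on the right-hand side. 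The main obstacle I anticipate is \textbf{not} the elliptic theory itself but the bookkeeping of the $\widehat{\bf a}_{m,n}$ versus ${\bf a}_{m,n}$ weights across the interior-to-exterior feedback: the interior solution $\phi^{(I)}_k$ is controlled in the \emph{enlarged} Gevrey radius $\widehat\lambda = 2\lambda$, and one must verify that after the loss incurred by commuting $\wt\chi_1$ and by the elliptic solve, the resulting bound still closes at radius $\lambda$ for $\phi^{(E)}_k$ — this is the analogue of the derivative-counting in \eqref{gevbd1212} and must be done carefully, in particular checking that $\mathrm{supp}(\nabla^j\wt\chi_1)$ sits in a region where $\chi_{m+n}$-type cutoffs can absorb the $(m+n)^{j(1+\sigma)}$ losses. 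The remaining steps — the explicit 1D Green's functions, the commutator expansions, the Gevrey summation — are routine given the machinery already established in Sections \ref{sec:ext:lin} and \ref{sec:ext:comm}.
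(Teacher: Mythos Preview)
Your broad strategy is right, but the mechanism you describe for \eqref{phi_I_est} would not give the stated bound. You write that ``each $(\pa_v+ikt)$ either hits the kernel ... or hits $R^{(I)}_k$'' and that the Gevrey weights are then absorbed by ``the standard Gevrey combinatorial lemma (as in \eqref{S:est:1})''. This is not how the argument works: since $(\pa_v+ikt)$ acts only in $v$, in $\phi^{(I)}_k=\int\mathfrak G_k(v,v')R^{(I)}_k(v')\,dv'$ \emph{all} derivatives land on the kernel and the right-hand side of \eqref{phi_I_est} is just $\|R^{(I)}_k\|_{L^2}$, with no derivatives on the source. The Gevrey gain comes entirely from separation of support: on $\mathrm{supp}\,\chi_\ast$ one has $|v-v'|\geq c>0$ uniformly for $v'\in\mathrm{supp}\,R^{(I)}_k$, so the explicit kernel gives $|\pa_v^N\mathfrak G_k|\lesssim |k|^{N-1}e^{-|k|c}$, and then $|k|^{m+n}e^{-|k|c}\lesssim (m+n)!\,(2/c)^{m+n}$ converts the frequency powers into factorials, which are absorbed by $((m+n)!)^{-2s}$ in $\widehat{\bf a}_{m,n}^2$ because $s>1$. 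The reference to \eqref{S:est:1} is misplaced; that is an ICC iteration, not an analytic-smoothing argument. Your route via $\|(\pa_v+ikt)^\ell R^{(I)}_k\|$ would instead require Gevrey regularity of the source. Separately, \eqref{phI_est_1_1} (which carries \emph{no} $\chi_\ast$) does need derivatives moved onto $R^{(I)}_k$, but this requires the splitting $\mathfrak G_k=H_k(v-v')+S_k(v+v')$ and the identity $(\pa_v+\pa_{v'})H_k=0$, with the $S_k$ remnants controlled using that $R^{(I)}_k$ is supported away from the boundary; ``follows from the same representation'' hides this step.

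For part b), your outline omits what the paper identifies as the main technical difficulty: the $J^{(a,b,c)}_{m,n}$ operator only carries the weight $(m+n)/q$ when $a+b+c\leq n$, which forces a split into small $n$ (where nontrivial boundary contributions such as $\pa_{yy}\phi^{(E)}_k|_{y=\pm1}=-v_{yy}\pa_v\phi^{(I)}_k|_{y=\pm1}$ must be tracked by hand via \eqref{mx_rg_b}) and large $n$ (where the ICC induction of Lemma~\ref{lem:induction} runs cleanly because $q^n$ kills the boundary terms). The $a+b+c=3$ level also needs the maximal-regularity estimate $\|u\|_{\dot H^2_k}\lesssim\|\Delta_k u\|_{L^2}$ of Lemma~\ref{lem:max_reg} applied to $\pa_y(\chi_{m+n}|k|^mq^n\Gamma_k^n\phi^{(E)}_k)$ rather than the energy identity you wrote, and the Dirichlet condition for this derivative only holds when $n\geq2$. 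Without this split the $[\pa_{yy},q^n]$ commutators cannot be closed at the correct Gevrey index.
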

\begin{remark}
The above estimates in {\bf b)} hold with ``$\infty$'' replaced by ``$M\in \mathbb{N}$''.
\end{remark}


\ifx
\begin{remark}
Thanks to the estimates \eqref{crd_implJ1}, \eqref{crd_implJ2}, we have that the contributions from $\wt\chi_1(v_y^2-1)$ are controlled by the local behavior of $H$ near the boundary. Hence the influence from the interior vorticity is small $\lesssim \nu^{10}. $ 
\end{remark}\fi

Our strategy for proving the estimates \eqref{phi_I_est_2}, \eqref{phi_I_est_3} will be to use elliptic regularity estimates and a delicate inductive scheme in Gevrey spaces to get the $\phi_k^{(E)}$ bounds, and carry out detailed analysis of the Green's function to obtain the $\phi_k^{(I)}$ bounds. 

Since the exterior estimate is the most delicate one, we detail our strategy:

\noindent
{\bf Step \# 1: } First of all, by the assumption \eqref{asmp} and the technical Lemma \ref{Lem:H&vy2-1}, we have that
 \begin{align}
\sum_{n=0}^\infty\sum_{a+b=0}^1 B_{0,n}^2\varphi^{2n}\|J^{(a,b,0)}_{0,n}{(\wt\chi_1(v_y^2-1))}\|_{L^2}^2\leq& C\exp\{-\nu^{-1/8}\}(\mathcal{E}_{H}^{(\gamma)}+\mathcal{E}_{H}^{(\al)}),\label{sml_vy2-1}\\
\sum_{n=0}^\infty\sum_{a+b=2} B_{0,n}^2\varphi^{2n}\|J^{(a,b,0)}_{0,n}{(\wt\chi_1(v_y^2-1))}\|_{L^2}^2\leq& C\exp\{-\nu^{-1/8}\}(\mathcal{D}_{H}^{(\gamma)}+\mathcal{D}_{H}^{(\al)}),\label{sml_vy2-1_2}
\end{align}where $B_{0,n}$ is defined in \eqref{Bweight}.

\noindent
{\bf Step \# 2: }
Through detailed analysis of the elliptic equation, we derive the  following pointwise-in-time estimate given that the right hand side of  \eqref{sml_vy2-1} is small enough, 
\begin{align}
\sum_{m+n=0}^\infty \sum_{a+b+c=0}^2{\bf a}_{m,n}^2\|J^{(a,b,c)}_{m,n}\phe_k\|_{L^2}^2\lesssim \sum_{m+n=0}^\infty {\bf a}_{m,n}^2\| J_{m,n}^{(0)}\wwe_k\|_{L^2}^2+\exp\{-\nu^{-1/9}\}\|\wt\ww_k^{(I)}\|_{L^2}^2.   \label{J_ph_est1}
\end{align}
Then we prove the following estimate
\begin{align}\n
\sum_{m+n=0}^\infty&
 \sum_{a+b+c=3}{\bf a}_{m,n}^2\|J^{(a,b,c)}_{m,n}\phe_k\|_2^2\\ \n &\lesssim \sum_{m+n=0}^\infty\sum_{a+b+c=0}^1 {\bf a}_{m,n}^2\|J^{(a,b,c)}_{m,n}\wwe_k\|_2^2\\ 
& \quad+\lf(\sum_{n=0}^\infty\sum_{a+b=0}^{2}  {B_{0,n}^2\varphi^{2n}}\|J^{(a,b,0)}_{0,n}(\wt\chi_1(v_y^2-1))\|_2^2\rg)\lf(\sum_{m+n=0}^\infty {\bf a}_{m,n}^2\| J_{m,n}^{(0)}\wwe_{k}\|_2^2+\|\wt\ww^{(I)}_{k}\|_{H_k^1}^2\rg).  \label{J_phe_est}
\end{align}
\siming{\footnote{(Double check the last line!?)}} 
We recall from \eqref{J_vec} that $J_{m,n}^{(0)}\wwe_{k}=\chi_{m+n}|k|^m q^n\Gamma_k^n\wwe_k$. 
The proof of these two claims \eqref{J_ph_est1}, \eqref{J_phe_est} is the main technical part of this section. The proof of the claims are complicated thanks to the non-traditional definition of the operator $J_{m,n}^{(a,b,c)}f_k$, which only assigns $\displaystyle \frac{m+n}{q}$ weights when $a+b+c\leq n$. Hence we distinguish between the small-$n$ case 
and the large-$n$ case separately. In the first case, we mainly focus on deriving the Sobolev norm controls and keep track of various boundary contributions arising when one implements integration by parts. This is the content of Subsection \ref{sub2sct:lw_rg}. In the second case, boundary contributions are zero and we mainly focus on the control over the combinatorial coefficients in the definition of the Gevrey norms. This is the content of Subsection \ref{sub2sct:h_rg}. 
 
\noindent
{\bf Step \# 3: } Finally, we use the technical Lemma \ref{lem:JtoD} to translate the right hand side of \eqref{J_ph_est1} and \eqref{J_phe_est} to  quantities $\mathcal{E}^{(\gamma)}, \mathcal{D}^{(\gamma)},\mathcal{E}^{(\al)}_H,\mathcal{E}_H^{(\gamma)},\mathcal{E}_{\mathrm{Int}}^{\rm low}$. 

\noindent
{\bf Notation:} In this section, the $T_{i_1 i_2 i_3}$ will be used to denote various terms in the estimates and will be local variables for the proof of each lemma. If the proof has several steps, the $i_1$ will represent the step number. Moreover, if we decompose the term $T_{i_1i_2}$ into several sub-terms, they will be denoted as $T_{i_1i_2i_3}$. 

\subsection{Bounds on $\mathcal{E}_{ell}^{(I, \cdot)}$}
We are interested here in the elliptic problem \eqref{d:phi:I:in}, which we copy as follows: 
\begin{align} \label{ell:I:2}
(\pa_v^2-|k|^2)\phi^{(I)}_k(v) &=  \wt \ww_k^{(I)}(v)+\wt \chi_1^\mathfrak{c}(v)(\pa_v^2-([v_y]\pa_v)^2)\phi_k^{(I)}(v)=:R^{(I)}_k(v), \qquad \phi^{(I)}_k|_{v(t,\pm 1)} = 0. 
\end{align}
Here $[v_y]$ is the function $v_y(t,y)$ expressed in the $v$-coordinate and we drop the $t$-dependence. 
We have explicit solution formula:
\begin{align}
\label{greensRep} \phi^{(I)}_k(t, v) =& \int_{v(t,-1)}^{v(t,1)} \mathfrak  G_k(v, v')  R_k^{(I)}(t, v') dv',\\
 \mathfrak G_k(v,v'):=&-\frac{1}{k\sinh(k|v(t,1)-v(t,-1)|)}\left\{\begin{array}{cc}\sinh(k(v(t,1)-v'))\sinh(k(-v(t,-1)+v)),&\quad v\leq v';\\
\sinh(k(v(t,1)-v))\sinh(k(-v(t,-1)+v')),&\quad v\geq v'.\end{array}\right.\n \\
 =&\myr{-\frac{ \cosh(k(|v-v'|-|v(t,1)-v(t,-1)|)) }{2k\sinh(k|v(t,1)-v(t,-1)|)}+\frac{ \cosh(k(v+v'-v(t,1)-v(t,-1))) }{2k\sinh(k|v(t,1)-v(t,-1)|)}} \n \\
 =: & H_k(t,v-v')+S_k(t, v+v').\n
\end{align}

With the expression \eqref{greensRep}, we can estimate the higher derivatives of the solution $\phi_k^{(I)}$. Thanks to the definitions of the cutoff $\chi_1$ \eqref{chi} and $\wt\chi_1^{\mf c}$ \eqref{wt_chi_intro}, we have the separation of support \begin{align}\text{distance}_y\lf(\text{support}\{\wt \chi_1^{\mathfrak{c}}(v(t,\cdot))\},\text{support}\{ \chi_1(\cdot)\}\right)\geq \frac{1}{320}>0.\label{supp_sep}
\end{align}Since the support of the forcing $R_k^{(I)}$ \eqref{ell:I:2} is contained in $\text{support}\{\wt \chi_1^{\mathfrak{c}}(v(t,\cdot))\}$, we expect that $\phi^{(I)}_k$ is very smooth in terms of Gevrey index and radius of analyticity. 
Thanks to the Lebesgue Dominated Convergence theorem, it can be justified that the $\pa_v$ derivatives of $\phi^{(I)}$ has the following expression given that the distance between $v$ and $\text{support}\ R_k^{(I)}$ is strictly positive,
\begin{align}
\pa_v^n\phi^{(I)}_k(t, v) = \int_{v(t,-1)}^{v(t,1)} \pa_v^n  \mathfrak G_k(v, v') R_k^{(I)}(t, v') dv',\qquad \text{distance}\lf(\text{support}\{\wt \chi_1^{\mathfrak{c}}(\cdot)\},v\right)>0.
\end{align}\siming{\footnote{ %
(?? Derivative w.r.t the $v(t,1), \, v(t,-1)$??) 
It seems that the solution will be extremely smooth. Now we plug this to the $\phi^{(E)}_k$ part. The new term will contribute $\|J^{(1)}(v_{y}^2-1)\|_{Gevrey}$, but it is fine.? }}


 Moreover, we would like to derive the inviscid damping estimate for the interior. To this end, we will derive finite $\wt\Gamma_k$-vector field estimates of $\phi_k^{(I)}$. 
\begin{lemma}\label{lem:fnt_G_phiI}Assume that   \eqref{v_y_sob} holds and define
\begin{align}
\wt\Gamma_k=\pa_v+ikt. \label{wtGa}
\end{align}  If the threshold $\ep$ is smaller than a constant depending on a fixed $M\in\mathbb{N}$, the following estimates hold
\begin{align}\label{phi_I_Sbv}
\sum_{n=0}^M\sum_{b+c=2}\|\pa_v^b|k|^c \wt\Gamma_k^n \phi_k^{(I)}\|_{L_{v}^2}^2+\sum_{n=0}^M\|\wt\Gamma_k^n R_k^{(I)}\|_{L_v^2}^2\leq & C\lf(M,\|[v_y]^2-1\|_{H_v^{M+2}(\mathrm{supp}\wt\chi_1^{\mf c})}\rg)\sum_{n=0}^M\|\wt\Gamma_k^n\wt\ww_k^{(I)}\|_{L^2_v}^2;\\
\sum_{n=0}^{M+2}\|\pa_v^n\phi_k^{(I)}\|_{L_{v}^{2}}^2\leq & C\lf(M,\|[v_y]^2-1\|_{H_v^{M+2}(\mathrm{supp}\wt\chi_1^{\mf c})}\rg) \sum_{n=0}^M\|\pa_v^n\wt\ww_k^{(I)}\|_{L_{v}^{2}}^2.\label{phi_I_Sbv_2}
\end{align}
\siming{\footnote{We need to explicitly derive this! Check!}} 
Here $R_k^{(I)}$ is defined in \eqref{dfn_omRIE} and $\|f_k\|_{H_{v}^2}$ is the standard Sobolev norm in the $v$-coordinate, i.e., $\displaystyle\|f_k\|_{H_{v}^M}^2:=\sum_{n=0}^M\|\pa_v^nf_k\|_{L_v^2}^2$.
\end{lemma}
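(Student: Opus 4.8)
The plan is to treat \eqref{ell:I:2} as an elliptic ODE in $v$ on the interval $[v(t,-1),v(t,1)]$ with the lower-order perturbation $\wt\chi_1^{\mf c}(v)(\pa_v^2-([v_y]\pa_v)^2)\phi_k^{(I)}$ moved to the right-hand side. First I would establish \eqref{phi_I_Sbv_2} (the purely Sobolev statement) because it is the cleaner base case, and then bootstrap to the $\wt\Gamma_k$-weighted bound \eqref{phi_I_Sbv}. For \eqref{phi_I_Sbv_2}: rewrite the equation as $(\pa_v^2-|k|^2)\phi_k^{(I)} = \wt\ww_k^{(I)} + \mathcal{P}\phi_k^{(I)}$ where $\mathcal{P}f := \wt\chi_1^{\mf c}(\pa_v^2 - ([v_y]\pa_v)^2) f = \wt\chi_1^{\mf c}\big((1-[v_y]^{-2})\pa_v^2 f - [v_y]^{-1}\pa_v([v_y]^{-1})\,[v_y]\pa_v f\big)$ — so $\mathcal{P}$ is a \emph{second order} operator whose top-order coefficient $1-[v_y]^{-2}$ is $O(\eps)$ small in $L^\infty$ (and in all the relevant Sobolev norms) by \eqref{close_vy} and \eqref{v_y_sob}. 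The standard $H^2$ energy estimate for $(\pa_v^2-|k|^2)$ with Dirichlet data gives $\|\pa_v^2\phi_k^{(I)}\|_{L^2_v}^2 + |k|^2\|\pa_v\phi_k^{(I)}\|_{L^2_v}^2 + |k|^4\|\phi_k^{(I)}\|_{L^2_v}^2 \lesssim \|\wt\ww_k^{(I)}\|_{L^2_v}^2 + \|\mathcal{P}\phi_k^{(I)}\|_{L^2_v}^2$, and since $\|\mathcal{P}\phi_k^{(I)}\|_{L^2_v} \lesssim \|(1-[v_y]^{-2})\|_{L^\infty}\|\pa_v^2\phi_k^{(I)}\|_{L^2_v} + C(\|[v_y]\|_{H^2_v})\|\pa_v\phi_k^{(I)}\|_{L^2_v} \lesssim \eps\|\pa_v^2\phi_k^{(I)}\|_{L^2_v} + C\|\pa_v\phi_k^{(I)}\|_{L^2_v}$, the first term is absorbed for $\eps$ small and Poincaré handles the rest (using $\phi_k^{(I)}|_{\partial}=0$). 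This gives the $n=0$ case; higher $\pa_v^n$ derivatives follow by differentiating the equation, noting $[\pa_v^n,\mathcal{P}]$ only involves $\le n+1$ derivatives of $\phi_k^{(I)}$ times $H^{M+2}_v$ norms of $[v_y]^2-1$, and inducting on $n$, absorbing the top-order small piece at each stage.

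Next, for \eqref{phi_I_Sbv} I would apply powers of $\wt\Gamma_k = \pa_v + ikt$ to \eqref{ell:I:2}. The key observation is the commutator $[\wt\Gamma_k, \pa_v^2 - |k|^2] = 0$ (constant coefficients in $v$!), so $(\pa_v^2-|k|^2)\wt\Gamma_k^n\phi_k^{(I)} = \wt\Gamma_k^n\wt\ww_k^{(I)} + \wt\Gamma_k^n(\mathcal{P}\phi_k^{(I)})$, and $\wt\Gamma_k^n(\mathcal{P}\phi_k^{(I)})$ expands via the binomial/commutator identities (Lemma \ref{lem:commutator_AB}) into $\sum_{\ell} \binom{n}{\ell} (\operatorname{ad}_{\wt\Gamma_k}^{n-\ell}\mathcal{P})\wt\Gamma_k^\ell\phi_k^{(I)}$, where each $\operatorname{ad}_{\wt\Gamma_k}^{j}\mathcal{P}$ is again a second-order operator with coefficients given by $\le j$ many $\pa_v$-derivatives of $\wt\chi_1^{\mf c}$, $[v_y]$, controlled by $\|[v_y]^2-1\|_{H^{M+2}_v(\operatorname{supp}\wt\chi_1^{\mf c})}$ — and crucially its top-order coefficient is still $O(\eps)$ only when $j=0$; for $j\ge1$ there is no smallness but also no derivative loss relative to what we are estimating since $\wt\Gamma_k^\ell$ with $\ell\le n-1 < n$ is lower order. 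So the energy estimate for $(\pa_v^2-|k|^2)$ applied to $\wt\Gamma_k^n\phi_k^{(I)}$ gives $\sum_{b+c=2}\|\pa_v^b|k|^c\wt\Gamma_k^n\phi_k^{(I)}\|_{L^2}^2 \lesssim \|\wt\Gamma_k^n\wt\ww_k^{(I)}\|_{L^2}^2 + \eps\sum_{b+c=2}\|\pa_v^b|k|^c\wt\Gamma_k^n\phi_k^{(I)}\|^2 + C(M,\|[v_y]^2-1\|_{H^{M+2}_v})\sum_{\ell=0}^{n-1}\sum_{b+c\le2}\|\pa_v^b|k|^c\wt\Gamma_k^\ell\phi_k^{(I)}\|_{L^2}^2$, absorb the $\eps$-term, and induct on $n$ from $0$ to $M$. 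The bound on $\sum_n\|\wt\Gamma_k^n R_k^{(I)}\|_{L^2}^2$ then follows by rewriting $R_k^{(I)} = \wt\ww_k^{(I)} + \mathcal{P}\phi_k^{(I)}$ and using the $\wt\Gamma_k^n\phi_k^{(I)}$ bounds just obtained together with the same commutator expansion.

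One technical point worth flagging: the interval endpoints $v(t,\pm1)$ depend on $t$, so when estimating $\|\wt\Gamma_k^n\wt\ww_k^{(I)}\|_{L^2_v}$ on the right we should be careful the norm is taken over the (time-dependent) $v$-domain, but this causes no difficulty since all the operators $\wt\Gamma_k$, $\pa_v$, $\mathcal{P}$ act in the interior and the Dirichlet conditions are imposed at the moving endpoints — the energy estimates integrate by parts cleanly with vanishing boundary terms provided $\phi_k^{(I)}$ and its $\wt\Gamma_k$-iterates vanish there, which holds since $\wt\Gamma_k^n\phi_k^{(I)}|_{v(t,\pm1)}$ is determined by $\pa_v^j\phi_k^{(I)}|_{v(t,\pm1)}$ and the latter inherit good bounds from the already-established Sobolev trace estimates. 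I expect the main obstacle to be bookkeeping the dependence of the constant on $\|[v_y]^2-1\|_{H^{M+2}_v(\operatorname{supp}\wt\chi_1^{\mf c})}$ through the nested commutator expansion — making sure that at no stage does a term require more than $M+2$ derivatives of $[v_y]$ or more than the $\wt\Gamma_k^\ell$-regularity with $\ell\le n$ that is already controlled — but this is a routine (if tedious) induction rather than a conceptual difficulty, and the smallness of $\eps$ is used only to close the top-order term at the $j=0$ level of each commutator expansion.
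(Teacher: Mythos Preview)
Your overall strategy---treat \eqref{ell:I:2} as a perturbed constant-coefficient elliptic ODE, use $[\wt\Gamma_k,\pa_v^2-|k|^2]=0$, expand $\wt\Gamma_k^n(\mathcal{P}\phi_k^{(I)})$ via Leibniz, absorb the top-order $O(\eps)$ piece, and induct on $n$---is exactly the route the paper takes. The gap is in your handling of the boundary terms.

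You write that the energy estimate ``integrates by parts cleanly with vanishing boundary terms provided $\phi_k^{(I)}$ and its $\wt\Gamma_k$-iterates vanish there,'' but this premise is false: $\wt\Gamma_k\phi_k^{(I)}\big|_{v=v(t,\pm1)} = \pa_v\phi_k^{(I)}\big|_{v=v(t,\pm1)}$, which is the Neumann trace of a Dirichlet solution and is generically nonzero. More generally $\wt\Gamma_k^n\phi_k^{(I)}\big|_{\partial}$ picks up all the odd $\pa_v$-derivatives of $\phi_k^{(I)}$ at the boundary, weighted by powers of $ikt$, so it does not vanish and ``good Sobolev trace bounds'' do not suffice---any leftover boundary contribution would carry factors of $t$ and destroy the $t$-independence of the constant. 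What actually happens (and what the paper exploits) is that near the boundary $\wt\chi_1^{\mf c}\equiv 0$, so the equation reduces to $\pa_v^2\phi_k^{(I)}=|k|^2\phi_k^{(I)}$ there, giving the exact relations
\[
\pa_v^{2m}\phi_k^{(I)}\big|_{\partial}=0,\qquad \pa_v^{2m+1}\phi_k^{(I)}\big|_{\partial}=|k|^{2m}\pa_v\phi_k^{(I)}\big|_{\partial}.
\]
When you expand the boundary term $\mathrm{Re}\,\pa_v\wt\Gamma_k^n\phi_k^{(I)}\,\overline{\wt\Gamma_k^n\phi_k^{(I)}}\big|_{\partial}$ binomially and insert these relations, every surviving cross term is of the form $(ikt)^{\text{odd}}\,|k|^{\text{even}}\,|\pa_v\phi_k^{(I)}|^2$, which is purely imaginary; hence the real part vanishes \emph{by cancellation}, not because the individual factors vanish. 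For \eqref{phi_I_Sbv_2} the analogous boundary term $\mathrm{Re}\,\pa_v^{n+1}\phi_k^{(I)}\,\overline{\pa_v^n\phi_k^{(I)}}\big|_{\partial}$ is easier (one of the two factors is always an even derivative and hence zero), but you should still state this rather than assume Dirichlet for $\pa_v^n\phi_k^{(I)}$. Once you insert this parity argument, the rest of your induction closes as written.
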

\begin{remark}
We remark that in the sequel, we only use the finite regularity estimate (e.g., $M=1000$) of the functions $R_k^{(I)}$ \eqref{dfn_omRIE} and $\phi_k^{(I)}$ in this lemma. Hence the explicit expression of the coefficients in \eqref{phi_I_Sbv} and \eqref{phi_I_Sbv_2} are not needed. 
\end{remark}
\begin{proof}

We divide the proof in two steps.

\noindent
{\bf Step \# 1: Proof of \eqref{phi_I_Sbv}.}
First of all, we study the structure of the right hand side $R_k^{(I)}$ in \eqref{ell:I:2}. For the sake of notation simplicity, we denote 
\begin{align}\label{Z}
\mathbbm{z}(t,v):=Z(t,y(v)),\quad Z(t,y)=v_y(t,y)^2-1.
\end{align} By applying the $\wt\Gamma_k^n,\, n\in \mathbb{N}$, on both side of the equation \eqref{d:phi:E:in}, we obtain that
\begin{align}\n
(\pa_v^2 -|k|^2)\wt\Gamma_k^n\phi_k^{(I)}(t,v) 
=&\wt\Gamma_k^n\wt\ww_k^{(I)}(t,v)+\wt\Gamma_k^n \lf(\wt{\chi}_1^\mathfrak{c}\lf(-([v_y]^2-1)\pa_v^2\phi_k^{(I)}+\frac{1}{2}\pa_v([v_y]^2-1)\pa_v\phi_k^{(I)}\rg)\rg)\\
=&\wt\Gamma_k^n\wt\ww_k^{(I)}(t,v)-\wt\Gamma_k^n \lf(\wt{\chi}_1^\mathfrak{c}\lf(\mathbbm{z}\ \pa_v^2\phi_k^{(I)}- \frac{1}{2}\pa_v \mathbbm{z} \ \pa_v\phi_k^{(I)}\rg)\rg).\label{eq:GnphiI}
\end{align}

Next, we apply the induction argument to prove the estimate \eqref{phi_I_Sbv}. For $\eqref{eq:GnphiI}_{n=0}$, by a simple variant of the elliptic regularity lemma \ref{lem:max_reg} and the Sobolev estimate \eqref{v_y_sob}, we have obtained that 
\begin{align*}
\sum_{b+c=2}\|\pa_v^b|k|^c \phi_k^{(I)}\|_{L^2_v}\lesssim& \|\wt \ww_k^{(I)}\|_{L_v^2}+\|\wt\chi_1^\mf{c} \mathbbm{z}\pa_v^2\phi^{(I)}_k\|_{L_v^2}+\|\wt\chi_1^\mf{c} \pa_v\mathbbm{z}\pa_v\phi^{(I)}_k\|_{L_v^2} 
\lesssim  \|\wt \ww_k^{(I)}\|_{L_v^2}+\ep \sum_{b+c=2}\|\pa_v^b|k|^c \phi^{(I)}_k\|_{L^2_v}.
\end{align*} 
Hence if the $\ep$ is small enough compared to uniform constants, we have that 
\begin{align*}
\sum_{b+c=2}\|\pa_v^b|k|^c \phi_k^{(I)}\|_{L^2_v}\leq C\|\wt \ww_k^{(I)}\|_{L_v^2}.
\end{align*} This concludes the starting level of the induction.  

Assume that the following estimate holds for $1\leq n-1\leq N$:
\begin{align}
\||k|^2\wt\Gamma_k^{\ell}\phi_k^{(I)}\|_{L_v^2}+\||k|\pa_v\wt\Gamma_k^{\ell}\phi_k^{(I)}\|_{L_v^2}+\|\pa_v^2\wt\Gamma_k^{\ell}\phi_k^{(I)}\|_{L_v^2}\lesssim \sum_{\ell'=0}^{\ell}\|\wt\Gamma_k^{\ell'}\wt\ww_k^{(I)}\|_{L^2_v},\quad \forall\ell\leq n-1.\label{indct_hyp}
\end{align}
We would like to show that the same estimate holds with $``n-1"$ replaced by $``n"$. We test the equation \eqref{eq:GnphiI} with the $\wt\Gamma_k^n\phi_k^{(I)}$ and apply integration by parts on $v\in[v(t,-1), v(t,1)]$:
\begin{align}\n
 \| \pa_v\wt\Gamma_k^n\phi_k^{(I)}\|_{L^2_v}^2+ |k|^2\|\wt\Gamma_k^n\phi_k^{(I)}\|_{L^2_v}^2 =&\mathrm{Re}\pa_v\wt\Gamma_k^n\phi_k^{(I)}\overline{\wt\Gamma_k^n\phi_k^{(I)}}\bigg|_{v=v(t,-1)}^{v=v({t,1})}- \mathrm{Re}\int_{v(t,-1)}^{v(t,1)} \wt\Gamma_k^n  \wt\ww_k^{(I)}\overline{\wt\Gamma_k^n\phi_k^{(I)}}dv\\
 &+\mathrm{Re}\int_{v(t,-1)}^{v{(t,1)}}\wt\Gamma_k^n\lf(\wt \chi_1^\mathfrak{c} \mathbbm{z}\  \pa_v^2\phi_k^{(I)}-\frac{1}{2}\wt \chi_1^\mathfrak{c}\pa_v \mathbbm{z}\ \pa_v\phi_k^{(I)}\rg)\overline{\wt\Gamma_k^n\phi_k^{(I)}}dv\n \\
 =:&T_1+T_2+T_3.\label{enrgy_rl}
\end{align}
Now we treat the first term on the right hand side of \eqref{enrgy_rl}. We observe that since $\wt\chi_1^\mathfrak{c}(v)$ is identically zero in a neighborhood of the boundary, we have that the \eqref{d:phi:I:in} yields that
\begin{align}\label{fnt_G_phiI_bc}
\pa_v^{2m}\phi_k^{(I)}(t,v(t,\pm1))=0,\quad \pa_v^{2m+1}\phi_k^{(I)}(t,v(t,\pm 1))=|k|^{2m}\pa_v\phi_k^{(I)}(t,v(t,\pm 1)), \quad m\in\mathbb{N}.
\end{align}
Now we apply these relations to obtain
\begin{align*}
\mathrm{Re}\pa_v\wt\Gamma_k^n\phi_k^{(I)}\overline{\wt\Gamma_k^n\phi_k^{(I)}}&\bigg|_{v=v(t,-1)}^{v=v({t,1})}=\mathrm{Re}\lf(\sum_{\ell=0}^n\binom{n}{\ell}\pa_v^{\ell+1}(ikt)^{n-\ell}\phi_k^{(I)}\rg)\overline{\lf(\sum_{\ell=0}^n\binom{n}{\ell}\pa_v^{\ell }(ikt)^{n-\ell}\phi_k^{(I)}\rg)}\bigg|_{v=v(t,\pm1)}\\
=&
\mathrm{Re}\lf(\sum_{\ell=0}^n\binom{n}{\ell}\mathbbm{1}_{\ell\, \text{even}}\pa_v^{\ell+1}(ikt)^{n-\ell}\phi_k^{(I)}\rg)\overline{\lf(\sum_{\ell=0}^n\binom{n}{\ell}\mathbbm{1}_{\ell\, \text{odd}}\pa_v^{\ell }(ikt)^{n-\ell}\phi_k^{(I)}\rg)}\bigg|_{v=v(t,\pm1)}.
\end{align*}
For every pairing in this expression, we have that 
\begin{align*}
\mathbbm{1}_{\ell_1\, \text{even}}&\mathbbm{1}_{\ell_2\, \text{odd}}\mathrm{Re}(ikt)^{n-\ell_1}\pa_v^{\ell_1+1}\phi_k^{(I)}\overline{(ikt)^{n-\ell_2}\pa_v^{\ell_2 }\phi_k^{(I)}}\bigg|_{v=v(t,\pm1)}\\
=&
\mathbbm{1}_{\ell_1\, \text{even}}\mathbbm{1}_{\ell_2\, \text{odd}}|k|^{\ell_1+\ell_2-1}\mathrm{Re}\lf((ikt)^{2n-\ell_1-\ell_2} (-1)^{n-\ell_2} \pa_v \phi_k^{(I)}\overline{\pa_v\phi_k^{(I)}}\rg)\bigg|_{v=v(t,\pm1)}=0.
\end{align*}
Therefore, the boundary term add up to zero in \eqref{enrgy_rl}. Now the second term in \eqref{enrgy_rl} can be estimated as follows:
\begin{align*}
|T_2|\leq &C\|\wt\Gamma_k^n\wt \ww_k^{(I)}\|_{L_v^2}\|\wt\Gamma_k^n\phi_k^{(I)}\|_{L_v^2}.
\end{align*}
Since $\wt \chi_1^\mathfrak{c}$ vanishes near the boundary, we can apply integration by parts to rewrite the last term and estimate it as follows
\begin{align*}
|T_3&|= \lf|\sum_{\ell=0}^n\binom{n}{\ell}\int \lf(\pa_v^{n-\ell}(\wt\chi_1^\mathfrak{c} \mathbbm{z} )\pa_v^2\wt\Gamma_k^{\ell}\phi_k^{(I)}-\frac{1}{2}\pa_v^{n-\ell}(\wt\chi_1^\mathfrak{c} \pa_v \mathbbm{z} )\pa_v\wt\Gamma_k^{\ell}\phi_k^{(I)}\rg)\overline{\wt\Gamma_k^n\phi_k^{(I)}} dv\rg|\\
\leq &\lf| \int \lf( \wt\chi_1^\mathfrak{c} \mathbbm{z} \pa_v\wt\Gamma_k^{n}\phi_k^{(I)}\rg)\overline{\pa_v\wt\Gamma_k^n\phi_k^{(I)}} dv+\int \lf(\pa_v(\wt\chi_1^\mathfrak{c} \mathbbm{z} )\pa_v\wt\Gamma_k^{n}\phi_k^{(I)}\rg)\overline{\wt\Gamma_k^n\phi_k^{(I)}} dv\rg|\\
&+C(n)\|\wt \chi_1^\mathfrak{c}   \mathbbm{z}\|_{W^{n,\infty}_v}\|  \wt\Gamma_k^n  \phi_k^{(I)}\|_{L_v^2}\sum_{\ell=0}^{n-1}\|\pa_{v}^2\wt\Gamma_k^\ell\phi_k^{(I)}\|_{L_v^2} 
+C(n)\|\wt \chi_1^\mathfrak{c} \pa_v \mathbbm{z}\|_{W^{n,\infty}_v}\|\wt\Gamma_k^n\phi_k^{(I)}\|_{L_v^2} \sum_{\ell=0}^n \|\pa_v \wt\Gamma_k^\ell\phi_k^{(I)}\|_{L_v^2}.
\end{align*}
Now we invoke the induction hypothesis \eqref{indct_hyp} to obtain that 
\begin{align*}
|T_3|\leq&C(n)\|\mathbbm{z}\|_{H^{2}_v(\text{supp}(\wt \chi_1^{\mf c}))}\lf(\|\pa_v\wt\Gamma_k^n\phi_k^{(I)}\|_{L_v^2}^2+\|k\wt\Gamma_k^n\phi_k^{(I)}\|_{L_v^2}^2\rg)+C(n) \|\mathbbm{z}\|_{H^{n+2}_v(\text{supp}(\wt \chi_1^{\mf c}))} \|\wt\Gamma_k^n\phi_k^{(I)}\|_{L_v^2} \sum_{\ell=0}^{n-1} \| \wt\Gamma_k^\ell\wt \ww_k^{(I)}\|_{L_v^2}.
\end{align*}
Combining the estimates above and \eqref{enrgy_rl}, we apply the assumption \eqref{v_y_sob} to obtain that
\begin{align}
\|\pa_v\wt\Gamma_k^n\phi_k^{(I)}\|_{L_v^2}^2+\|\wt\Gamma_k^n\phi_k^{(I)}\|_{L_v^2}\|k^2\wt\Gamma_k^n\phi_k^{(I)}\|_{L_v^2}\lesssim& \sum_{\ell=0}^{n}\|\wt\Gamma_k^\ell\wt \ww_k^{(I)}\|_{L_v^2}\|\wt\Gamma_k^n\phi_k^{(I)}\|_{L_v^2}.  \label{engy_rel_1}
\end{align}Hence, we have that 
\begin{align*}
\||k|^2\wt\Gamma_k^n\phi_k^{(I)}\|_{L_v^2}\lesssim \sum_{\ell=0}^{n}\|\wt\Gamma_k^\ell\wt \ww_k^{(I)}\|_{L_v^2}. 
\end{align*}
Now we multiply the equation \eqref{engy_rel_1} by $|k|^2$ and invoke the above relation to get
\begin{align*}
\||k|\pa_v\wt\Gamma_k^n\phi_k^{(I)}\|_{L_v^2}\lesssim \sum_{\ell=0}^{n}\|\wt\Gamma_k^\ell\wt \ww_k^{(I)}\|_{L_v^2}.
\end{align*}
Finally, we test the equation \eqref{eq:GnphiI} by $\pa_v^2\wt\Gamma_k^n\phi_k^{(I)}$, and apply the estimates derived before to obtain that 
\begin{align*}
\|\pa_v^2\wt\Gamma_k^n\phi_k^{(I)}\|_{L_v^2}^2\leq& C\lf(n,\|\mathbbm z\|_{H^{n+2}_v(\text{supp}(\wt\chi_1^\mf{c}))}\rg)\sum_{\ell\leq n}\|\wt\Gamma_k^\ell{\ww^{(I)}_k}\|_{L_v^2}\|\pa_v^2\wt\Gamma_k^n\phi_k^{(I)}\|_{L_v^2}+\|\pa_v^2\wt\Gamma_k^n\phi_k^{(I)}\|_{L_v^2}\||k|^2 \wt\Gamma_k^n\phi_k^{(I)}\|_{L_v^2}\\
&+C\|\mathbbm{z}\|_{H^{2}_v(\text{supp}(\wt\chi_1^\mf{c}))}\|\pa_v^2\wt\Gamma_k^n\phi_k^{(I)}\|_{L_v^2}^2. 
\end{align*} 
Hence, by the estimate \eqref{v_y_sob} and the smallness of $\ep$, we have that
\begin{align*}
\|\pa_v^2\wt\Gamma_k^n\phi_k^{(I)}\|_{L_v^2}\leq  C\lf(n,\|\mathbbm z\|_{H^{n+2}_v(\text{supp}(\wt\chi_1^\mf{c}))}\rg)\sum_{\ell\leq n}\|\wt\Gamma_k^\ell{\ww^{(I)}_k}\|_{L_v^2}.
\end{align*}
Hence, we observe that 
\begin{align*}
\sum_{b+c=2}\|\pa_v^b|k|^c\wt\Gamma_k^{n}\phi_k^{(I)}\|_{L_v^2}\leq C\lf(n,\|\mathbbm z\|_{H^{n+2}_v(\text{supp}(\wt\chi_1^\mf{c}))}\rg) \sum_{\ell=0}^{n}\|\wt\Gamma_k^{\ell}\wt\ww_k^{(I)}\|_{L^2_v}. 
\end{align*} 
This concludes the proof of \eqref{indct_hyp} for $n$. To estimate the $R_k^{(I)}$ in \eqref{d:phi:I:in}, we can invoke the elliptic estimate of $\phi_k^{(I)}$ and combine it with the assumption $\sum_{n=0}^{M+2}\|\pa_v^n\mathbbm{z}\|_{H^{n}(\text{supp}\chi_1^\mf{c})}\lesssim \ep.$ We omit further details for the sake of brevity.

\noindent
{\bf Step \# 2: Proof of \eqref{phi_I_Sbv_2}.} The proof is identical to the one in step \# 1. One needs to replace $\wt\Gamma_k$ by $\pa_v$. Thanks to the condition \eqref{fnt_G_phiI_bc}, the boundary contributions 
\begin{align*}
\mathrm{Re}  \pa_v^{n+1}\phi_k^{(I)}\overline{\pa_v^n\phi_{k}^{(I)}}\bigg|_{v=v(t,-1)}^{v=v(t,1)}=0.
\end{align*} We omit further details. This completes the proof.
\end{proof}

Now we can prove Proposition \ref{pro:IE_phi_ext} a). 

\begin{proof}[Proof of Proposition \ref{pro:IE_phi_ext} a)] We divide the proof into several steps.

\noindent
{\bf Step \# 1: Proof of \eqref{phi_I_est},  \eqref{phi_I_est_2} \eqref{phi_I_est_3}:}
We apply the property that $t\varphi\lesssim 1$, and the fact $\sum_{\ell=0}^n\binom{n}{\ell}=2^n$ to estimate the term as follows:
\begin{align}\n
&\sum_{m+n=0}^\infty\widehat{\bf a}_{m,n}^2\|\chi_\ast\wt\Gamma_k^n|k|^{m+\mathfrak l_1}\pa_v^{\mathfrak{l}_2}\phn\|_{L^\infty_v}^2\\
\n &=\sum_{m+n=0}^\infty \widehat{B}_{m,n}^2\varphi^{2(1+n)}\lf\|\chi_\ast( \pa_v+ikt)^n\int_{v(-1)}^{v(1)}|k|^{m+\mathfrak{l}_1}\pa_v^{\mathfrak{l}_2} \GG_k(v,v') R^{(I)}_k(v')dv'\rg\|_{L^\infty}^2\\ \n
 &\leq\sum_{m+n=0}^\infty\sum_{n'=0}^n \widehat{B}_{m,n}^2\binom{n}{n'}^{2}\varphi^{2(1+n')}\lf\| \chi_\ast(y(v))\int_{v(-1)}^{v(1)}  |k|^{m+n-n'+\mf l_1}\pa_v^{n'+\mathfrak{l}_2}\GG_k(v,v') R^{(I)}_k(v')dv'\rg\|_{L_v^\infty}^2\\ \n
 &\leq\sum_{m+n=0}^\infty\max_{n'\{0,\cdots,n\}} \widehat{B}_{m,n}^2 2^{2n}\varphi^{2(1+n')}\lf\|\chi_\ast\int_{v(-1)}^{v(1)}  |k|^{m+n-n'+\mf l_1}\pa_v^{n'+\mathfrak{l}_2}\GG_k(v,v') R^{(I)}_k(v')dv'\rg\|_{L_v^\infty}^2\\ \n 
 &\leq\varphi^2\|R^{(I)}_k\|_{L_v^2}^2 \lf(\sum_{m+n=0}^\infty\max_{n'\in\{0,\cdots,n\}} \frac{\widehat\lambda^{2(m+n)s}{2^{2n}}}{((m+n)!)^{2s}} \sup_{v\in[v(t,-1),v(t,1)]}\lf\|\mathbbm{1}_{|v-v'|\geq c>0} |k|^{m+n-n'+\mf l_1}\pa_v^{n'+\mathfrak{l}_2}\GG_k(v,\cdot)\rg\|_{L^2_{v'}}^2\rg)\\ \label{Int_phi_T_1}
&=:\varphi^2\|R^{(I)}_k\|_{L_v^2}^2 T_1.
\end{align}
To estimate the last line, we recall the explicit form of the Green's function \eqref{greensRep}. Without loss of generality, we consider the $v\leq v'$ case \siming{(Check the other case!)}. Hence,
\begin{align*}
T_1 &\mathbbm{1}_{v\leq v'}\leq\sum_{m+n=0}^\infty\max_{n'\in\{0,\cdots,n\}}\frac{(2\widehat{\lambda})^{2(m+n)s}}{(m+n)!^{2s}}\frac{|k|^{2(m+n+\mathfrak{l}_1+\mathfrak{l}_2)}}{k^2(\sinh (k|v(1)-v(-1)|))^2}\\
&\qquad\qquad\times\sup_v\lf\| \mathbbm{1}_{|v-v'|\geq c}\sinh\lf(k(v(t,1)-v')\rg)\sinh^{(n')}\lf(k(-v(t,-1)+v)\rg)\rg\|_{L_{v'}^2}^2\\
\lesssim&\sum_{m+n=0}^\infty\frac{(2\widehat{\lambda})^{2(m+n)s}}{(m+n)!^{2s}}\frac{|k|^{2(m+n+\mf l_1+\mathfrak{l}_2)}}{k^2(\sinh(k|v(1)-v(-1)|))^2 }\sup_v\int_{v'-v\geq c}e^{2k(v(1)-v')}e^{2k(v-v(t,-1))}dv'  \\
\lesssim&\sum_{m+n=0}^\infty\frac{(2\widehat{\lambda})^{2(m+n)s}}{(m+n)!^{2s}}\frac{|k|^{2(m+n+\mf l_1+\mathfrak{l}_2)}}{k^2}\sup_v\int_{v'-v\geq c} e^{-2k(v'-v)}dv' \\
\lesssim&\sum_{m+n=0}^\infty\frac{(2\widehat{\lambda})^{2(m+n)s}}{(m+n)!^{2s}}\frac{|k|^{2(m+n+\mf l_1+\mathfrak{l}_2)}}{e^{2kc}}\lesssim\sum_{m+n=0}^\infty\frac{(2\widehat{\lambda})^{2(m+n)s}}{(m+n)!^{2s}}\frac{( m+n)!^2}{(c/2)^{2m+2n}}.
\end{align*}
Now we use the fact that $\sum_{M=0}^\infty \frac{C^M}{M!^{2s-2}}\lesssim 1$ for any fixed bounded $C<\infty$ to obtain
\begin{align*}T_1\mathbbm{1}_{v\leq v'}\lesssim\sum_{M=0}^\infty\sum_{m+n=M}\frac{(4\widehat{\lambda}/c)^{2Ms}}{M!^{2s-2}}\lesssim\sum_{M=0}^\infty\myr{M}2^{-2Ms}\frac{(8\widehat{\lambda}/c)^{2Ms}}{M!^{2s-2}} \leq C(\widehat{\lambda},c,s).
\end{align*}
\siming{When we write $\sum_{m+n=0}^\infty f(m+n)$, we should always remember that we are dealing with a double sum and one CANNOT change it directly to $\sum_{M=0}^\infty f(M)$! There will be an extra factor of \myr{$M$} in the change. I will need to double check my writing!}  Now we  invoke the estimate \eqref{phi_I_Sbv} and bound the expression \eqref{Int_phi_T_1} as follows:
\begin{align}\label{Int_phi_2}
\sum_{m+n=0}^\infty \widehat{\bf a}_{m,n}^2\|\chi_\ast\wt\Gamma_k^n|k|^{m+\mf l_1}\pa_v^{\mathfrak{l}_2}\phn\|_{L^\infty_y}^2\lesssim&\varphi^2\|\wt\ww^{(I)}_k\|_{L_v^2}^2.
\end{align}
This concludes the proof of \eqref{phi_I_est}. 

The derivation for the estimate \eqref{phi_I_est_2} is similar, we have that 
\begin{align*}
\|\chi_\ast |k|^m \pa_v^{\mathfrak{l}}\phi_{k}^{(I)}\|_{L_v^\infty}\leq& C \lf\|\chi_\ast \int_{v(-1)}^{v(1)}|k|^m \pa_v^\mathfrak{l}\GG_k(v,v') R^{(I)}_k(v')dv'\rg\|_{L_v^\infty} 
\leq C(m)\|R^{(I)}_k\|_{L_v^2}\leq C(m)\|\wt\ww_k^{(I)}\|_{L_v^2}.
\end{align*}

To derive the estimate \eqref{phi_I_est_3},
 we modify the previous computation as follows 
\begin{align*}
\sum_{m+n=0}^\infty&\widehat{\bf a}_{m,n}^2\|\chi_\ast\wt\Gamma_k^n|k|^m\pa_v^\mathfrak{l}\phn\|_{L^\infty_y}^2\\
\leq &\sum_{m+n=0}^\infty\max_{n'\in\{0,\cdots,n\}} \widehat{B}_{m,n}^2 2^{2n}\varphi^{2(1+n')}\lf\|\chi_\ast\int_{v(-1)}^{v(1)}  |k|^{m+n-n'}\pa_v^{n'+\mathfrak{l}}\GG_k(v,v') R^{(I)}_k(v')dv'\rg\|_{L_v^\infty}^2\\
=&\sum_{m+n=0}^\infty\max_{n'\in\{0,\cdots,n\}} \widehat{B}_{m,n}^2 2^{2n}\varphi^{2(1+n')}\lf\|\chi_\ast\int_{v(-1)}^{v(1)}  |k|^{m+n-n'}\pa_v^{n'+\mathfrak{l}}\GG_k(v,v') \frac{\pa_{v'}^\mathfrak{n} e^{-iktv'}}{(-ikt)^\mathfrak{n}} e^{iktv'} R^{(I)}_k(v')dv'\rg\|_{L_v^\infty}^2\\
\lesssim&\frac{\varphi^2}{|kt|^{2\mathfrak{n}}}\sum_{\ell=0}^{\mathfrak{n}}\binom{\mathfrak{n}}{\ell}\sum_{m+n=0}^\infty \max_{n'\in\{0,\cdots,n\}} \widehat{B}_{m,n}^2 2^{2n}\varphi^{2n'}\\
&\qquad\times\lf\|\chi_\ast\int_{v(-1)}^{v(1)}  |k|^{m+n-n'}\pa_v^{n'+\mathfrak{l}}\pa_{v'}^{\mathfrak{n}-\ell}\GG_k(v,v') e^{-iktv'}  e^{iktv'} \wt\Gamma_k^{\ell}R^{(I)}_k(v')dv'\rg\|_{L_v^\infty}^2\\
\lesssim&\frac{\varphi^2}{|kt|^{2\mathfrak{n}}} \sum_{\ell=0}^\mathfrak{n}\|\wt\Gamma_k^{\ell}\wt\ww_k^{(I)}\|_{L^2_v}^2.
\end{align*}
Here in the last line, we have invoked \eqref{phi_I_Sbv} and applied a similar argument as in derivation of \eqref{Int_phi_2}. 
 \siming{We should double check the proof one more time!! Especially the last line (about the bound of the big factor.) I didn't check.}
 
\noindent
{\bf Step \# 2: Proof of \eqref{phI_est_1_1}.} We apply the following argument:
\begin{align*}
|k|^m \wt\Gamma_k^n\phn =  \int_{v(-1)}^{v(1)}  |k|^{m}(\pa_v+ikt)^n\GG_k(v,v') R^{(I)}_k(v')dv'.
\end{align*}
Now we invoke the structure of the Green's function \eqref{greensRep}
\begin{align}
\GG_k(v,v')=H_k(v-v')+S_k(v+v').
\end{align}
Here $H_k(v-v')$ encodes the jumps of derivatives and the $S_k(v+v')$ satisfies the following bound ($k>0$)
\begin{align}
\n\lf||k|^m\pa_v^n S_k(v+v')\rg|=&\lf||k|^m\frac{\pa_v^n\cosh(k(v+v'-v(t,1)-v(t,-1))) }{2k\sinh(k|v(t,1)-v(t,-1)|)} \rg|\\
\lesssim& |k|^{m+n}\max\lf\{e^{k(v+v'-2v(t,1))},e^{-k(v+v'-2v(t,-1))}\rg\} \n \\
\lesssim&|k|^{m+n}e^{-\mathfrak c |k|}\lesssim C(\mf c,{m+n}),\quad \text{dist}(v',\{v(t,1),v(t,-1)\})\geq\mathfrak{c}>0.\label{S_estimate}
\end{align} Hence we have that
\begin{align*}
|k|^m \wt\Gamma_k^n\phn=& (\pa_v+ikt)^{n-1}\int_{v(-1)}^{v(1)}  |k|^{m}\lf(-{\pa_{v'}}+ikt\rg)\GG_k(v,v')R^{(I)}_k(v')dv'\\
&+(\pa_v+ikt)^{n-1}\int_{v(-1)}^{v(1)}  |k|^{m}\lf(\pa_v+{\pa_{v'}}\rg)\GG_k(v,v')R^{(I)}_k(v')dv'.
\end{align*}
Since $(\pa_v+{\pa_{v'}})H_k(v-v')=0$ and $\GG_k(v,v'=\pm 1)=0$, we have that 
\begin{align*}
|k|^m \wt\Gamma_k^n\phn=& (\pa_v+ikt)^{n-1}\int_{v(-1)}^{v(1)}  |k|^{m}\GG_k(v,v') \lf({\pa_{v'}}+ikt\rg)R^{(I)}_k(v')dv'\\
&+(\pa_v+ikt)^{n-1}\int_{v(-1)}^{v(1)}  |k|^{m}2\pa_v S_k(v+v')R^{(I)}_k(v')dv'\\
=&(\pa_v+ikt)^{n-2}\int_{v(-1)}^{v(1)}  |k|^{m}\GG_k(v,v') \lf({\pa_{v'}}+ikt\rg)^2R^{(I)}_k(v')dv'\\
&+(\pa_v+ikt)^{n-2}\int_{v(-1)}^{v(1)}  |k|^{m}2\pa_v S_k(v+v')\lf({\pa_{v'}}+ikt\rg) R^{(I)}_k(v')dv'\\
&+(\pa_v+ikt)^{n-2}\int_{v(-1)}^{v(1)}  |k|^{m}2\pa_v^2 S_k(v+v')R^{(I)}_k(v')dv'\\
&+(\pa_v+ikt)^{n-2}\int_{v(-1)}^{v(1)}  |k|^{m}2\pa_v S_k(v+v')({\pa_{v'}}+ikt)R^{(I)}_k(v')dv'\\
&-(\pa_v+ikt)^{n-2}\int_{v(-1)}^{v(1)}  |k|^{m}2\pa_v S_k(v+v'){\pa_{v'}}R^{(I)}_k(v')dv'.
\end{align*}
Since $R_k^{(I)}$ and all its higher derivatives vanish on the boundary, we implement the integration by parts in the last term,
\begin{align*}
|k|^m \wt\Gamma_k^n\phn
=&(\pa_v+ikt)^{n-2}\int_{v(-1)}^{v(1)}  |k|^{m}\GG_k(v,v') \lf({\pa_{v'}}+ikt\rg)^2R^{(I)}_k(v')dv'\\
&+4(\pa_v+ikt)^{n-2}\int_{v(-1)}^{v(1)}  |k|^{m}\pa_v S_k(v+v')\lf({\pa_{v'}}+ikt\rg) R^{(I)}_k(v')dv'\\
&+4(\pa_v+ikt)^{n-2}\int_{v(-1)}^{v(1)}  |k|^{m}\pa_v^2 S_k(v+v')R^{(I)}_k(v')dv'.
\end{align*}
Hence after $n$ iteration steps, we have that 
\begin{align*}
|k|^m\wt\Gamma_k^n \phi_k^{(I)}=\int_{v(-1)}^{v(1)}|k|^m\GG_k(v,v')\wt\Gamma_k^n R^{(I)}_k(v')dv'+\sum_{m'+n'=0}^{n}C_{m',n'}\int_{v(-1)}^{v(1)}  |k|^{m+m'}\pa_v^{m'} S_k(v+v')\wt\Gamma_k^{n'}R^{(I)}_k(v')dv'.
\end{align*}
Now we follow the same strategy as in the proof of \eqref{phi_I_est_3}. By applying the fact that $\pa_{v'}(e^{ikt v'}f_k(v'))=e^{ikt v'}(\pa_{v'}+ikt)f_k(v')=e^{ikt v'}\wt \Gamma_kf_k$, we obtain that  
\begin{align*}
|k|^m\wt\Gamma_k^n \phi_k^{(I)}=&\int_{v(-1)}^{v(1)}|k|^m\GG_k(v,v')\frac{{\pa_{v'}}^2 e^{-iktv'}}{(-ikt)^2} e^{iktv'}\wt\Gamma_k^n R^{(I)}_k(v')dv'\\
&+\sum_{m'+n'=0}^{n}C_{m',n'}\int_{v(-1)}^{v(1)}  |k|^{m+m'}\pa_v^{m'} S_k(v+v')\frac{{\pa_{v'}}^2 e^{-iktv'}}{(-ikt)^2} e^{iktv'}\wt\Gamma_k^{n'}R^{(I)}_k(v')dv'\\
=&\frac{1}{(-ikt)^2}\int_{v(-1)}^{v(1)}|k|^m\GG_k(v,v')\wt\Gamma_k^{n+2} R^{(I)}_k(v')dv'\\
&+\frac{2}{(-ikt)^2}\int_{v(-1)}^{v(1)}|k|^m\pa_v\GG_k(v,v') \wt\Gamma_k^{n+1} R^{(I)}_k(v')dv' 
+\frac{1}{(-ikt)^2}|k|^m \wt\Gamma_k^{n} R^{(I)}_k(v')\\
&+\frac{1}{k^2t^2}\sum_{m'+n'=0}^{n+2}\wt C_{m',n'}\int_{v(-1)}^{v(1)}  |k|^{m+m'}\pa_v^{m'} S_k(v+v')\wt\Gamma_k^{n'}R^{(I)}_k(v')dv'.
\end{align*}
Here we recall the $S_k$-estimate for $v'$ supported away from the boundary \eqref{S_estimate}. 
By taking the $L^2$ norm and invoking \eqref{phi_I_Sbv}, we have the following
\begin{align*}
\sum_{m+n=0}^{1000}\widehat{B}_{m,n}^2\||k|^m\wt\Gamma_k^n \phi_k^{(I)}\|_{L_y^2}^2\lesssim \frac{1}{k^4t^4}\sum_{m'+n'=0}^{1002}\widehat{B}_{m',n'}^2\||k|^{m'}\wt\Gamma_k^{n'}R_k^{(I)}\|_{L_v^2}^2\lesssim\frac{1}{k^4t^4}\sum_{m'+n'=0}^{1002}\widehat{B}_{m',n'}^2\||k|^{m'}\wt\Gamma_k^{n'}\ww _k^{(I)}\|_{L_v^2}^2.
\end{align*}
This concludes the proof of \eqref{phI_est_1_1}.
\end{proof}

{\subsection{Bounds on $\mathcal{J}_{ell}^{(\ell)}$}


We recall the equation \eqref{d:phi:E:in}
\begin{align}
 \de_k \phi_k^{(E)}=\wt{\ww}_k^{(E)}+\wt\chi_1(\pav^2-\pa_y^2)\phi_k^{(I)},\qquad \phe_k|_{y=\pm1}=0.
\end{align}
Recalling the vector field $\Gamma_kf_k= \left( {v_y^{-1}}\pa_y+ikt\right)f_k=(\pav+ikt)\ f_k$, the $\chi_{m+n}$ cutoff \eqref{chi} and the $q^n$ weight \eqref{q:defn}, then the equation for $\chi_{m+n}|k|^mq^n \Gamma_k^n \phi_k$ can be rewritten as follows:
\begin{align}\n
\de_k( \chi_{m+n}&|k|^m q^n\Gamma_k^n \phi_k^{(E)})\\ \n
=&\chi_{m+n} |k|^mq^n\Gamma_k^n \wt\ww_k^{(E)}+{\chi_{m+n} |k|^mq^n\Gamma_k^n(\wt\chi_1(\pav^2-\pa_y^2)\phi_k^{(I)})}+\chi_{m+n}|k|^m q^n[\pa_{yy}, \Gamma_k^n]\phi_k^{(E)}\\
\n &+\chi_{m+n}[\pa_{yy},q^n](|k|^m\Gamma_k^n \phi_k^{(E)})+[\pa_{yy},\chi_{m+n}](|k|^mq^n \Gamma_k^n\phi_k^{(E)})\\
=:&\chi_{m+n} |k|^m q^n\Gamma_k^n \wt\ww_k^{(E)}+{\mathbb{C}_{m,n}^{(I)}}+\mathbb{T}_{1;m,n}+\mathbb{T}_{2;m,n}+\mathbb{T}_{3;m,n}.\label{T123}
\end{align}
Here $\mathbb{C}_{m,n}^{(I)}$ denotes the contribution from the interior and $\{\mathbb{T}_{j;m,n}\}_{j=1}^3$ represents the commutator terms. We recall the simplified notations
\begin{align}
\phmn:=&\chi_{m+n}|k|^m q^n\Gamma_k^n \phi_k^{(E)},\quad \omn:=\chi_{m+n}|k|^m q^n\Gamma_k^n \wt\ww_k^{(E)}.\label{phe_ome_mnk}
\end{align}
We expand the expression 
\begin{align}\n
(\pav^2-\pa_y^2)\phi_k^{(I)}=&\pav^2 \phi_k^{(I)}-v_y\pav(v_y\pav\phi_k^{(I)})=(1-v_y^2)\pav^2\phi_k^{(I)}+v_{yy}\pav\phi_k^{(I)}\\ =
&- (v_y^2-1)  \pav^2\phi_k^{(I)}+\frac{1}{2}\pav(v_{y}^2-1)\pav\phi_k^{(I)}.\label{pphiI_dcmp}
\end{align}
The interior contribution can be rewritten as follows
\begin{align}\mathbb{C}_{m,n}^{(I)}:=\chi_{m+n}|k|^mq^n\Gamma_k^n\underbrace{\lf(-\wt\chi_1Z\  \pav^2\phi_k^{(I)}+\frac{1}{2}\wt\chi_1\pav Z\ \pav\phi_k^{(I)}\rg)}_{\displaystyle =\wt\chi_1(\pav^2-\pa_y^2)\phi_k^{(I)}},\quad Z:=  v_y^2- 1.\label{CImn}
\end{align}
Here the commutator terms are the following \siming{(Check the $\pm$-sign!)}
\begin{subequations}\label{T_i} 
\begin{align}
\mathbb{T}_{1;m,n}=&\chi_{m+n}|k|^m q^n \left(\sum_{\ell=0}^{n-1}\binom{n}{\ell} \left(2\pav^{n-\ell-1}v''\ \pav^2 +\pav^{n-\ell}v'' \ \pav\right)\Gamma_k^{\ell}\right)\phe_k;\label{T_1}\\
\mathbb{T}_{2;m,n}=&\chi_{m+n}|k|^m(n(n-1)q^{n-2}(q')^2+nq^{n-1} q''+2nq^{n-1}q'\pa_y)\Gamma_k^n\phe_k;\label{T_2}\\
\mathbb{T}_{3;m,n}=&(2\chi_{m+n}'\pa_y+\chi_{m+n}'')|k|^m q^n\Gamma_k^n \phe_k.\label{T_3}
\end{align}
\end{subequations}
We organize our analysis as follows: in Subsection \ref{sub2sc:Int}, we derive estimate on the interior contribution $\mathbb{C}_{m,n}^{(I)}$; in Subsection \ref{sub2sct:lw_rg}, we derive estimate for small $n$; in Subsection \ref{sub2sct:h_rg}, we derive estimate for large $n$; in Subsection \ref{sub2sct:con}, we conclude. 
\subsubsection{Interior Contributions}\label{sub2sc:Int}
We present the estimate of the second term in \eqref{T123}.
\begin{lemma}\label{lem:R_I_est} For any $M\in \mathbb{N}$, the following estimate of the $\mathbb{C}_{m,n}^{(I)}$ \eqref{CImn} holds
\begin{align}
\sum_{m+n=0}^{M}{\bf a}_{m,n}^2\|{\mathbb{C}_{m,n}^{(I)}}\|_{L_y^2}^2
\lesssim&\|\wt\ww_k^{(I)}\|_{L^2}^2\sum_{n=0}^{M}\sum_{a+b\leq 1}B_{0,n}^2\varphi^{2n{+2}}\|\mathbbm{1}_{\mathrm{supp}\wt\chi_1} J_{0,n}^{(a,b,0)}Z\|_{L^2}^2.\label{pv2-py2phI}
\end{align}Moreover, 
\siming{\footnote{\myr{Siming: We should write the right hand side in terms of $\|J^{(a,b,0)}_{0,n}Z\|_{L^2}$. This is expected because for $n\geq 1$, we have that $\chi_nq^n\pav^n(\wt \chi_1\pav Z)=\chi_nq^n\pav^{n+1} Z$, now we quote Lemma \ref{lem:vy2-1est} and its variants. The $n=0$ case can be estimated. \siming{We need $\|\wt \chi_1 Z\|_{L^2}+\|\wt \chi_1\pav Z\|_{L^2}\lesssim \exp\{-\nu^{-1/7}\}.$}}}}
\begin{align}
\sum_{m+n=0}^{M}{\bf a}_{m,n}^2\|\pa_y{\mathbb{C}_{m,n}^{(I)}}\|_{L_y^2}^2
\lesssim&\|\wt\ww_k^{(I)}\|_{\myr{H^1}}^2\sum_{n=0}^{M}\sum_{a+b\leq 2}B_{0,n}^2\varphi^{2n{+2}}\|\mathbbm{1}_{\mathrm{supp}\wt\chi_1} J_{0,n}^{(a,b,0)}Z\|_{L^2}^2.
\label{ppv2-py2phI}
\end{align}Here, we use the notation ${Z:=(v_y^2-1) }$. Moreover, the implicit constant does not depend on $M$ and we can take the limit as $M\rightarrow \infty. $ \siming{The extra $\varphi^2$ in the estimate will not cause problem because we have smallness of the $Z$ on the support of $\wt \chi_1$ $(\lesssim \exp\{-\nu^{-1/7}\}).$} 
\end{lemma}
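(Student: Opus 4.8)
\textbf{Proof plan for Lemma \ref{lem:R_I_est}.}

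The plan is to estimate the interior contribution $\mathbb{C}_{m,n}^{(I)}$ by distributing the operator $|k|^m q^n \Gamma_k^n$ across the product $\wt\chi_1 Z \cdot \pav^2 \phi_k^{(I)}$ (and the analogous product with $\pav Z \cdot \pav \phi_k^{(I)}$), keeping the coordinate-system factor $Z$ ``attached'' to the conormal weights $q^n$ and the cutoffs $\chi_{m+n}$ (so that it gets measured in the $J_{0,n}^{(a,b,0)}$-norms), while the factor $\phi_k^{(I)}$ is measured in the $L^\infty$-based Gevrey norm $\mathcal{E}_{ell}^{(I,out)}$ coming from Proposition \ref{pro:IE_phi_ext}a), in particular the bounds \eqref{phi_I_est}, \eqref{phi_I_est_2}. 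Concretely, first I would expand
\begin{align*}
\mathbb{C}_{m,n}^{(I)} = \chi_{m+n}|k|^m q^n \Gamma_k^n\lf(-\wt\chi_1 Z\, \pav^2\phi_k^{(I)} + \tfrac12 \wt\chi_1 \pav Z\, \pav\phi_k^{(I)}\rg),
\end{align*}
write $\Gamma_k = \pav + ikt$, and use the binomial/Leibniz formula together with the commutator identities in Lemma \ref{lem:com:BA} (especially \eqref{cm_pv_qn}, \eqref{cm_pvv_qn}) to move $q^n$ inside. The point is that $q^n \Gamma_k^n$ applied to a product splits, up to lower-order conormal-commutator terms of exactly the form encoded by $S^{(a,b,c)}$/$J^{(a,b,c)}$, into $\sum_\ell \binom{n}{\ell}(q^\ell\pav^\ell \text{-type action on }\wt\chi_1 Z)(q^{n-\ell}\pav^{n-\ell}\text{-type action on }\pav^j\phi_k^{(I)})$. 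Since $\wt\chi_1$ is supported away from the support of $\wt\chi_1^{\mf c}$ (the forcing region) and away from $y=\pm1$ where $q$ vanishes, on $\mathrm{supp}\,\wt\chi_1$ we have $q \sim 1$, so $q^\ell \pav^\ell(\wt\chi_1 Z)$ and $J_{0,\ell}^{(a,b,0)}Z$ are comparable there; this is what produces the factor $\|\mathbbm{1}_{\mathrm{supp}\wt\chi_1}J_{0,n}^{(a,b,0)}Z\|_{L^2}$ on the right-hand side.

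The second main ingredient is the Gevrey-coefficient bookkeeping. After the split, one has a convolution-type double sum over $(m,n)$ and the split index $\ell$, with weights $\bold{a}_{m,n}$ on the left, $\bold{a}_{0,\ell}$ (really $B_{0,\ell}\varphi^{\ell+1}$) attached to the $Z$-factor, and $\widehat{\bold{a}}_{m,n-\ell}$ attached to $\phi_k^{(I)}$. Here I would invoke the combinatorial estimates Corollary \ref{comb:boun} and its variants \eqref{comb:boun:rem}, \eqref{comb:boun:rem:1} (the $m^a$-version, to absorb the $|k|^m$ powers and the $v''$-type factors which carry extra $\pav^j v_y$), which give $\bold{a}_{m,n}\binom{n}{\ell}\bold{a}_{m,n-\ell}^{-1}\bold{a}_{0,\ell}^{-1} \lesssim \brak{t}\, 2^{-\ell(\sss-1)}$; since $\phi_k^{(I)}$ is measured in the $\widehat{\bold{a}}$-norm with the \emph{doubled} radius $\widehat\lambda = 2\lambda$, the mismatch factor $2^{-\ell(\sss-1)}$ (more precisely $(\lambda/\widehat\lambda)^{\ell s} = 2^{-\ell s}$) is more than enough to make the $\ell$-sum converge. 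The $L^\infty$ control of $\phi_k^{(I)}$ via \eqref{phi_I_est_2} (and \eqref{phi_I_est} for the $\Gamma$-derivatives) then lets me pull the $\phi_k^{(I)}$-factor out in $L^\infty$ and leave an $L^2$ pairing of $\chi_{m+n}$ against $J_{0,n}$-type quantities of $Z$, at the cost of $\|\wt\ww_k^{(I)}\|_{L^2}$ (which dominates $\|\phi_k^{(I)}\|$-norms by the elliptic estimate, with an extra $\varphi$ harmlessly absorbed). For the gradient bound \eqref{ppv2-py2phI} I would differentiate once in $\pa_y$, producing one more derivative landing either on $\chi_{m+n}$ (giving $\chi_{m+n-1}$ at the price of $(m+n)^{1+\sigma}$, absorbed as in \eqref{gevbd1212}), on $q^n$ (another $S^{(1,\cdot,\cdot)}$-term), on $Z$ (raising $a+b$ by one, hence the $\sum_{a+b\le 2}$), or on $\phi_k^{(I)}$ (handled by the $\ell\le 2$ cases of \eqref{phi_I_est_2} and \eqref{phi_I_est}); correspondingly $\|\wt\ww_k^{(I)}\|_{L^2}$ is upgraded to $\|\wt\ww_k^{(I)}\|_{H^1}$.

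The main obstacle I anticipate is precisely the distribution of $q^n \Gamma_k^n$ across the product in a way that is \emph{clean at the boundary}: because $q$ vanishes linearly at $y=\pm1$ while the $J^{(a,b,c)}$-norms only carry the $\frac{m+n}{q}$ weight when $a+b+c\le n$, one must be careful that all the conormal-commutator remainders that arise (from \eqref{cm_pv_qn}, \eqref{cm_pvv_qn}, and from $v_{yy}$-factors via the definition $v'' = v_y \pav v_y$) are either supported on $\mathrm{supp}\,\wt\chi_1$ where $q\sim1$ — so the $\frac{1}{q}$ weights are innocuous — or are genuinely lower order and reabsorbed. A secondary technical nuisance is tracking which $\mathfrak{l}\in\{0,1,2\}$ of the $\phi_k^{(I)}$-estimates \eqref{phi_I_est}, \eqref{phi_I_est_2} is needed for each term (e.g. $\pav^2\phi_k^{(I)}$ requires $\mathfrak l = 2$, its $\pa_y$-derivative requires $\mathfrak l = 3$, which is why \eqref{phi_I_est_2} is stated up to $\mathfrak l = 4$); but since all of these are available uniformly in $m+n$ with the $\widehat{\bold{a}}$-weights, this is purely a matter of careful enumeration and does not create any genuine analytic difficulty. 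The convergence of the final $(m,n,\ell)$-sum — a Young/Schur-type estimate — follows from Corollary \ref{comb:boun} exactly as in the proof of \eqref{SnBd1sum} in Lemma \ref{lem:SLJ}, with the radius gap $\widehat\lambda = 2\lambda$ providing room to spare.
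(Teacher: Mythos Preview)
Your plan is essentially the paper's approach: Leibniz on $\Gamma_k^n$ to split the product, put the $Z$-factor in $L^2$ with the conormal weight $q^{n-\ell}$, put $\phi_k^{(I)}$ in $L^\infty$ via \eqref{phi_I_est}, and close the convolution sum by the coefficient gap. Two corrections are worth noting.

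First, a factual error: you write that $\wt\chi_1$ is ``supported away from $y=\pm1$'' so that $q\sim 1$ on $\mathrm{supp}\,\wt\chi_1$. This is false --- by \eqref{wt_chi_intro}, $\wt\chi_1\equiv 1$ for $|v|\geq 3/8-1/80$, so in particular $\wt\chi_1\equiv 1$ near the walls, exactly where $q\to 0$. Fortunately this does not break your argument: the paper simply writes $q^n = q^{n-\ell}\cdot q^\ell$, keeps $q^{n-\ell}$ on the $Z$-factor (this \emph{is} the $J_{0,n-\ell}^{(0,0,0)}Z$-quantity, up to the cutoff $\chi_{n-\ell}$, since $\chi_{n-\ell}\wt\chi_1 = \chi_{n-\ell}$ for $n-\ell\geq 1$), and discards $q^\ell\leq 1$ on the $\phi_k^{(I)}$-factor since the latter is measured in $L^\infty_v$ anyway. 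No boundary subtlety arises, and the commutator identities \eqref{cm_pv_qn}, \eqref{cm_pvv_qn} you flag as the ``main obstacle'' are not needed at all for \eqref{pv2-py2phI} --- the split is purely multiplicative.

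Second, for the coefficient bookkeeping the paper uses the direct ratio
\[
\frac{{\bf a}_{m,n}}{B_{0,n-\ell}\,\widehat{\bf a}_{m,\ell}} \leq \varphi^{n-\ell}\binom{n}{\ell}^{-s}
\]
(this is \eqref{py-pvphi_rel}) together with the elementary $\sum_\ell \binom{n}{\ell}^{-\zeta}\leq C(\zeta)$ from \eqref{prod}, rather than Corollary~\ref{comb:boun}. Both routes work; the paper's is slightly more self-contained here since the $\widehat\lambda = 2\lambda$ gap already sits inside the definition of $\widehat{\bf a}_{m,\ell}$ and the $\binom{n}{\ell}^{-s}$ decay alone makes the $\ell$-sum converge. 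For \eqref{ppv2-py2phI} the paper splits into $T_3$ (derivative inside the product) and $T_4$ (derivative on $\chi_{m+n}q^n$) exactly as you describe; the $T_4$ piece uses \eqref{gevbd1212} and \eqref{chi:prop:3} to drop one level in $n$, and then re-runs the $T_3$-argument. The invocation of Lemma~\ref{lem:vy2-1est} at the end (to pass from $\|\chi_n q^n\pav^{n+1}Z\|_{L^2}$ to $\|J_{0,n}^{(\leq 1)}Z\|_{L^2}$) is the only place a nontrivial commutator with $q^n$ enters.
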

\siming{We note that as long as $n\neq 0$, the $\wt\chi_1$ in the $\Gamma_k^n$ is effectively $1$. So we don't really need to worry too much about $\wt\chi_1.$ If $n=0$, then we have $\wt\chi_1\pav(v_y^2-1)$ localization control. }
\begin{proof} We divide the proof into two steps.

\noindent{\bf Step \# 1: Proof of \eqref{pv2-py2phI}.}

Thanks to Proposition \ref{pro:IE_phi_ext}, we observe that $\pav\phi_k^{(I)},\, \pav^2\phi_k^{(I)}$ have very high regularity. 

Now we are ready to estimate the expression \eqref{pv2-py2phI}. We apply the product rule
\begin{align}\n
\sum_{m+n=0}^{M}&{\bf a}_{m,n}^2\|\chi_{m+n}|k|^mq^n\Gamma_k^n(\wt\chi_1 (\pav^2-\pa_y^2)\phi_k^{(I)})\|_{L_y^2}^2\\ \n
=&\sum_{m+n=0}^{M}{\bf a}_{m,n}^2\lf(\lf\|\chi_{m+n}|k|^mq^n\Gamma_k^n \lf(\wt\chi_1Z\pav^2\phi_k^{(I)} -\frac{1}{2}\wt\chi_1\pav Z\pav\phi_k^{(I)}\rg)\rg\|_{L_y^2}\rg)^2\\ \n
\lesssim&\sum_{m+n=0}^{M}{\bf a}_{m,n}^2\lf(\sum_{\ell=0}^n\binom{n}{\ell}\lf\|\chi_{m+n}q^n \lf(\pav^{n-\ell}(\wt\chi_1Z)\ |k|^m\Gamma_k^{\ell}\pav^2\phi_k^{(I)} -\frac{1}{2}\pav^{n-\ell}(\wt\chi_1\pav Z)\ |k|^{m}\Gamma_k^\ell\pav\phi_k^{(I)}\rg)\rg\|_{L_y^2}\rg)^2\\ \n
\lesssim&\sum_{m+n=0}^{M}{\bf a}_{m,n}^2\lf(\sum_{\ell=0}^n\binom{n}{\ell}\lf\|\chi_{m+n}q^n \pav^{n-\ell}(\wt\chi_1Z)\ |k|^m\Gamma_k^{\ell}\pav^2\phi_k^{(I)} \rg\|_{L_y^2}\rg)^2\\ 
&+\sum_{m+n=0}^{M}{\bf a}_{m,n}^2\lf(\sum_{\ell=0}^n\binom{n}{\ell}\lf\|\chi_{m+n}q^n \pav^{n-\ell}(\wt\chi_1\pav Z)\ |k|^{m}\Gamma_k^\ell\pav\phi_k^{(I)}\rg\|_{L_y^2}\rg)^2=:T_1+T_2.\label{py-pvphi_T12}
\end{align}
Recalling the definitions \eqref{Bweight}, \eqref{Gj_intro}, \eqref{hat_bf_a_intro}, we have the following relation 
\begin{align}\n
\frac{{\bf a}_{m,n}}{B_{0,n-\ell} \widehat {\bf a}_{m,\ell}}= &\frac{\frac{\lambda^{(m+n)s}\varphi^{(n+1)}}{((m+n)!)^{s}}}{\frac{\lambda^{(n-\ell)s}} {((n-\ell)!)^{s}}\times \frac{\widehat\lambda(t)^{(\ell+m)s}\varphi^{(\ell+1)}}{((\ell+m)!)^{{s}}}}=\varphi^{n-\ell}\left(\frac{(n+m-(m+\ell))! (\ell+m)!}{{(n+m)!}}\right)^{s}\\
=&\varphi^{n-\ell}\binom{n+m}{m+\ell}^{-s}\leq\varphi^{n-\ell}\binom{n}{\ell}^{-s}. \label{py-pvphi_rel}
\end{align} 

Now we apply the H\"older inequality to obtain that 
\begin{align}\n
 {T}_{1}
 \lesssim &\sum_{m+n=0}^M\lf( \sum_{\ell=0}^{n-1}\binom{n}{\ell}\frac{{\bf a}_{m,n}}{B_{0,n-\ell}\widehat{\bf a}_{m,\ell}}B_{0,n-\ell} \lf\|\chi_{n-\ell}q^{n-\ell}\pav^{n-\ell}(\wt\chi_1 Z)\rg\|_{L_y^2} \widehat{\bf a}_{m,\ell}\| |k|^m\Gamma_{k}^\ell \pav^2\phi_k^{(I)}\|_{L_v^\infty}\rg)^2 \\
\n &\lesssim \sum_{m+n=0}^M\lf(\sum_{\ell=0}^{n}\binom{n}{ \ell}^{1-s}\varphi^{n-\ell}B_{0,n-\ell} \lf\|\chi_{n-\ell}q^{n-\ell}\pav^{n-\ell}(\wt\chi_1Z)\rg\|_{L_y^2}\widehat{ {\bf a}}_{m,\ell}\||k|^m\Gamma_{k}^\ell \pav^2\phi_k^{(I)}\|_{L_v^\infty}\rg)^2.\n
\end{align}
Now we have that by the relation \eqref{s:prime}, the combinatorial fact \eqref{prod}, and interchanging the order of summation, 
\begin{align*}\n
T_{1 } \lesssim&\sum_{m+n=0}^M \lf(\sum_{\ell=0}^{n-1}\binom{n}{\ell}^{-2\sigma-2\sigma_\ast}\rg) 
 \lf(\sum_{\ell=0}^{n-1}\varphi^{2n-2\ell}B_{0,n-\ell;0}^2\lf\|\chi_{n-\ell}q^{n-\ell}\pav^{n-\ell}(\wt\chi_1Z)\rg\|_{L^2}^2 \widehat{\bf a}_{m,\ell}^2\| |k|^m\Gamma_{k}^\ell\pav^2\phi^{(I)}_k\|_{L_v^\infty}^2\rg)\n \\
\lesssim&\sum_{m=0}^M\sum_{n=0}^{M-m} \sum_{\ell=0}^{n-1}\varphi^{2n-2\ell}B_{0,n-\ell} ^2\lf\|\chi_{n-\ell}q^{n-\ell}\pav^{n-\ell}(\wt\chi_1Z)\rg\|_{L^2}^2\widehat{\bf a}_{m,\ell}^2\| |k|^m\Gamma_{k}^\ell\pav^2\phi^{(I)}_k\|_{L_v^\infty}^2 \\
=&\sum_{m=0}^M\sum_{\ell=0}^{M-m-1} \sum_{n=\ell+1}^{M-m}\varphi^{2n-2\ell}B_{0,n-\ell} ^2\lf\|\chi_{n-\ell}q^{n-\ell}\pav^{n-\ell}(\wt\chi_1Z)\rg\|_{L^2}^2\widehat{\bf a}_{m,\ell}^2\| |k|^m\Gamma_{k}^\ell\pav^2\phi^{(I)}_k\|_{L_v^\infty}^2\\
\lesssim&\lf(\sum_{n=0}^{M}B_{0,n}^2\varphi^{2n}\|\chi_nq^n\pav^n (\wt\chi_1Z)\|_{L^2}^2\rg)\lf(\sum_{m+\ell=0}^M\widehat{\bf a}_{m,\ell}^2\| |k|^m\Gamma_{k}^\ell\pav^2\phi^{(I)}_k\|_{L_v^\infty}^2\rg). 
\end{align*}
Thanks to the Proposition \ref{pro:IE_phi_ext}, we have that 
\begin{align}
T_1\lesssim \lf(\sum_{n=0}^{M}B_{0,n}^2\varphi^{2n\myr{+2}}\|\chi_nq^n\pav^n (\wt\chi_1Z)\|_{L^2}^2\rg)\|\wt\ww_k^{(I)}\|_{L_v^2}^2.
\end{align}
\siming{The estimate for $\phi_k^{(I)}$ has $\varphi^2.$} 
A similar argument yields that 
\begin{align}
T_2\lesssim\lf(\sum_{n=0}^{M}B_{0,n}^2\varphi^{2n\myr{+2}}\|\chi_nq^n\pav^{n}(\wt\chi_1\pav Z)\|_{L^2}^2\rg)\|\wt\ww_k^{(I)}\|_{L_v^2}^2.
\end{align}
Combining the bounds and \eqref{py-pvphi_T12} yields that
\begin{align*}
\sum_{m+n=0}^{M}{\bf a}_{m,n}^2\|&{\mathbb{C}_{m,n}^{(I)}}\|_{L_y^2}^2
\lesssim\sum_{n=0}^{M}B_{0,n}^2\varphi^{2n {+2}}\lf(\|\chi_nq^n\pav^{n} (\wt\chi_1 Z)\|_{L^2}^2+\|\chi_nq^n\pav^{n}(\wt\chi_1\pav Z)\|_{L^2}^2\rg)\|\wt\ww_k^{(I)}\|_{L_v^2}^2\\
\lesssim&\|\wt\ww_k^{(I)}\|_{L_v^2}^2\lf(\sum_{n=1}^{M}B_{0,n}^2\varphi^{2n {+2}}\lf(\|J^{(0,0,0)}_{0,n}Z\|_{L^2}^2+\|\chi_nq^n\pav^{n+1} Z\|_{L^2}^2\rg)+\sum_{\ell=0}^1\myr{\varphi^2}\|\mathbbm{1}_{\text{supp}\wt\chi_1} \pav^\ell Z\|_{L^2}^2\rg).
\end{align*} 
Now we invoke Lemma \ref{lem:vy2-1est} to obtain  \eqref{pv2-py2phI}. This concludes {\bf Step \# 1}.

\noindent {\bf Step \# 2: Proof of \eqref{ppv2-py2phI}.}
We expand the expression on the left hand side as follows
\begin{align}\n
\sum_{m+n=0}^{M}&{\bf a}_{m,n}^2\lf\|\pa_y\lf(\chi_{m+n}|k|^mq^n\Gamma_k^n \lf(\wt \chi_1(\pav^2-\pa_y^2)\phi_k^{(I)}\rg)\rg)\rg\|_{L_y^2}^2\\ \n
\lesssim& \sum_{m+n=0}^{M}{\bf a}_{m,n}^2\lf\| \chi_{m+n}|k|^mq^n   \pav\lf(\Gamma_k^n\lf(\wt \chi_1 (\pav^2-\pa_y^2)\phi_k^{(I)}\rg)\rg)\rg\|_{L_y^2}^2\\
&+\sum_{m+n=0}^{M}{\bf a}_{m,n}^2\lf\|\pav\lf(\chi_{m+n}q^n\rg)|k|^m\Gamma_k^n \lf(\wt \chi_1 (\pav^2-\pa_y^2)\phi_k^{(I)} \rg)\rg\|_{L_y^2}^2
=:T_3+T_4.\label{pv2py2T345}
\end{align} 
We first estimate the $T_3$, which can be further decomposed into two sub-terms ($m+n\neq 0$ and $m+n=0$) with the expression \eqref{pphiI_dcmp}:
\begin{align}\n
T_3\leq &\sum_{m+n=0}^{M}{\bf a}_{m,n}^2\lf\| \chi_{m+n}|k|^mq^n \pav\lf(\Gamma_k^n\lf(\wt \chi_1 Z\ \pav^2\phi_k^{(I)}\rg)\rg)\rg\|_{L_y^2}^2\\ \n
&+\sum_{m+n=0}^{M}{\bf a}_{m,n}^2\lf\| \chi_{m+n}|k|^mq^n \pav\lf(\Gamma_k^n\lf(\wt \chi_1\pav Z\ \pav\phi_k^{(I)}\rg)\rg)\rg\|_{L_y^2}^2\\ 
\lesssim&\sum_{m+n=1}^{M}\sum_{\mf l=0}^2{\bf a}_{m,n}^2\lf\| \chi_{m+n}|k|^mq^n \Gamma_k^n\lf(\pav^{\mf l}(\wt \chi_1 Z)\ \pav^{3-\mf l}\phi_k^{(I)}\rg)\rg\|_{L_y^2}^2\\
&+\mathbbm{1}_{m+n=0}\varphi^2\sum_{\ell_1=0}^2\sum_{\ell_2=0}^3\|\mathbbm{1}_{\text{supp}\wt\chi_1}\pav^{\ell_1} Z\|_{L^2}^2\|\pav^{\ell_2}\phi^{(I)}_k\|_{L^2}^2=:T_{31}+T_{32}.\label{pv2py2T3}
\end{align}
Thanks to the Sobolev estimate \eqref{phi_I_Sbv_2}, we have that the second term is bounded
\begin{align}\label{pv2py2T32}
T_{32}\lesssim \mathbbm{1}_{m+n=0}\sum_{b=0}^2B_{0,0}^2\myr{\varphi^2}\|\mathbbm{1}_{\text{supp}\wt\chi_1}J^{(0,b,0)}_{0,0} Z\|_{L^2}^2\|\ww^{(I)}_k\|_{H^1}^2.
\end{align}
\siming{Here it seems that if we use the Sobolev estimate, we will need the $\|v_y^2-1\|_{H^3}$ and this is fine.}
Next, we focus on the estimate of the first term $T_{31}$. We recall the property of the cutoff $\chi_\ast$ \eqref{chi_ast} that $\chi_\ast\equiv 1$ on the support of $\chi_{m+n}, \ m+n\neq 0$, and apply the product rule to derive that 
\begin{align*}
&T_{31}\lesssim\sum_{m+n=1}^{M}\sum_{\mf l=0}^2{\bf a}_{m,n}^2\lf(\sum_{\ell=0}^n \binom{n}{\ell}\lf\|\chi_{m+n}\chi_{\ast} \lf( q^{n-\ell}\pav^{n-\ell+\mf l}(\wt\chi_1Z)\rg)\lf(|k|^m q^\ell\Gamma_k^{\ell}\pav^{3-\mf l}\phi_k^{(I)} \rg)\rg\|_{L_y^2}\rg)^2\\
&\lesssim\sum_{m+n=1}^{M}\sum_{\mf l=0}^2 \lf(\sum_{\ell=0}^n \binom{n}{\ell}^{-\sigma-\sigma_\ast}B_{0,n-\ell}\varphi^{n-\ell}\lf\|\chi_{n-\ell}q^{n-\ell} \pav^{n-\ell+\mf l}(\wt\chi_1Z)\rg\|_{L_y^2}{\bf \widehat{a}}_{m,\ell}\lf\|\chi_{\ast}|k|^m q^\ell\Gamma_k^{\ell}\pav^{3-\mf l}\phi_k^{(I)}\rg\|_{L_y^{\infty}}\rg)^2.
\end{align*}
Here in the last line, we have used the relation \eqref{py-pvphi_rel} and the H\"older inequality. Then we apply the bound of $\phi_k^{(I)}$ \eqref{phi_I_est}, the estimate of $Z=v_y^2-1$ \eqref{vy2_1est4} \siming{(HS: If $\ell=n$, then we can directly read the bound.)}, and the combinatorial fact \eqref{prod}    to obtain that 
\begin{align*}
&T_{31}\lesssim\sum_{\mf l=0}^2\sum_{m+n=1}^M\lf(\sum_{\ell=0}^{n}\binom{n}{\ell}^{-2\sigma}\rg)\lf(\sum_{\ell=0}^{n}\sum_{a+b\leq 2}B_{0,n-\ell}^2\varphi^{2n-2\ell}\lf\|\mathbbm{1}_{\text{supp}\wt\chi_{1}} J^{(a,b,0)}_{0,n-\ell}Z\rg\|_{L_y^2}^2\ \widehat{\bf a}_{m,\ell}^2\lf\|\chi_\ast |k|^m q^\ell\Gamma_k^{\ell}\pav^{3-\mf l}\phi_k^{(I)}\rg\|_{L_y^\infty}^2\rg) \\
&\lesssim\sum_{\mf l=0}^2\sum_{m=0}^M\sum_{n=0}^{M-m}\sum_{\ell=0}^{n}\lf(\sum_{a+b\leq 2}B_{0,n-\ell}^2\varphi^{2n-2\ell}\lf\|\mathbbm{1}_{\text{supp}\wt\chi_{1}} J^{(a,b,0)}_{0,n-\ell}Z\rg\|_{L_y^2}^2\rg) \widehat{\bf a}_{m,\ell}^2\lf\|\chi_\ast |k|^m q^\ell\Gamma_k^{\ell}\pav^{3-\mf l}\phi_k^{(I)}\rg\|_{L_y^\infty}^2\\
&\lesssim \sum_{\mf l=0}^2\sum_{m=0}^M\sum_{\ell=0}^{M-m} \widehat{\bf a}_{m,\ell}^2\lf\|\chi_\ast |k|^m q^\ell\Gamma_k^{\ell}\pav^{3-\mf l}\phi_k^{(I)}\rg\|_{L_y^\infty}^2\lf(\sum_{n=\ell}^{M-m}\sum_{a+b\leq 2}B_{0,n-\ell}^2\varphi^{2n-2\ell}\lf\|\mathbbm{1}_{\text{supp}\wt\chi_{1}} J^{(a,b,0)}_{0,n-\ell}Z\rg\|_{L_y^2}^2\rg)\\
&\lesssim \|\wt \ww_k^{(I)}\|_{L^2}^2\sum_{n=0}^M\sum_{a+b\leq 2} B_{0,n}^2\varphi^{2n\myr{+2}}\|\mathbbm{1}_{\mathrm{supp}\wt \chi_1}J^{(a,b,0)}_{0,n}Z\|_{L_y^2}^2.
\end{align*}
Combining this bound with \eqref{pv2py2T32} yields that the $T_3$ term is consistent with \eqref{ppv2-py2phI}. 

To estimate the $T_4$-term in \eqref{pv2py2T345}, we invoke the expansion \eqref{CImn}, the bound \eqref{chi:prop:3}, the bound \eqref{s:prime} and the observation that the term with $m+n=0$ is zero. Then we apply the argument as in \eqref{pv2py2T3} to obtain that 
\begin{align*}
T_4\lesssim&\sum_{m+n=1}^{M}\sum_{\mf l=0}^1\mathbbm{1}_{n\geq 1}{\bf a}_{m,n}^2\lf\| (m+n)^{1+\sigma}\chi_\ast\chi_{m+n-1}q^{n-1}|k|^m(\pav+ikt)\Gamma_k^{n-1}\lf(\pav ^{\mf l}Z \ \pav^{2-\mf l}\phi_k^{(I)}\rg)\rg\|_{L_y^2}^2\\
&+\sum_{m=1}^{M}\sum_{\mf l=0}^1{\bf a}_{m,0}^2\lf\| m^{1+\sigma}\chi_\ast\wt \chi_1\chi_{m-1}|k|^m \lf(\pav ^{\mf l}Z\ \pav^{2-\mf l}\phi_k^{(I)}\rg)\rg\|_{L_y^2}^2\\
\lesssim&\myr{\sum_{\mf l=0}^{2}}\sum_{m+n=1}^{M}\mathbbm{1}_{n\geq 1}\sum_{\ell=0}^{n}\frac{\lambda^{2s}{\bf a}_{m,n-1}^2\varphi^2}{(m+n)^{2\sigma_\ast}}\binom{n-1}{\ell}\lf\| \mathbbm{1}_{\text{supp}\wt \chi_1} \chi_{n-1-\ell}q^{n-1-\ell}\pav^{n-1-\ell+\mf l}Z\rg\|_{L^2}\lf\|\chi_\ast |k|^m\Gamma_k^{\ell}\pav^{\myr{3}-\mf l }\phi_k^{(I)}\rg\|_{L^\infty}^2\\
&+\myr{\sum_{\mf l=0}^{1}}\sum_{m+n=1}^{M}\sum_{\ell=0}^{n}\frac{\lambda^{2s}{\bf a}_{m,n-1}^2\myr{\varphi^2t^2}}{(m+n)^{2\sigma_\ast}}\binom{n-1}{\ell}\lf\| \mathbbm{1}_{\text{supp}\wt \chi_1} \chi_{n-1-\ell}q^{n-1-\ell}\pav^{n-1-\ell+\mf l}Z\rg\|_{L^2}\lf\|\chi_\ast |k|^m\Gamma_k^{\ell}\myr{|k|}\pav^{2-\mf l} \phi_k^{(I)}\rg\|_{L^\infty}^2\\
&+\lambda^{2s}\sum_{b=0}^1 \|\mathbbm{1}_{\text{supp}\wt \chi_1} J^{(0,b,0)}_{0,0}Z\|_{L^2}^2\sum_{m=1}^{M}{\bf a}_{m-1,0}^2m^{-2\sigma_\ast}\|\chi_{\ast} |k|\pav^{2-\mf l}|k|^{m-1}\phi_k^{(I)}\|_{L_y^\infty}^2.
\end{align*}
Now we invoke the bound \eqref{phi_I_est}, the combinatorial fact \eqref{prod}, the estimates \eqref{vy2_1est3}, \eqref{vy2_1est4} to obtain that  
\begin{align*}
T_4\lesssim &\sum_{\mf l=0}^{2}\sum_{m+n=1}^{M}\sum_{\ell=0}^{n}\frac{\lambda^{2s}}{(m+n)^{2\sigma_\ast}}B_{n-1-\ell}^2\varphi^{2n-2-2\ell}\lf\| \mathbbm{1}_{\text{supp}\wt \chi_1} \chi_{n-1-\ell}q^{n-1-\ell}\pav^{n-1-\ell+\mf l}Z\rg\|_{L^2}\\
&\qquad\qquad\qquad\qquad\times\widehat{\bf a}_{m,\ell}^2\lf(\lf\|\chi_\ast |k|^m\Gamma_k^{\ell}\pav^{\myr{3}-\mf l }\phi_k^{(I)}\rg\|_{L^\infty}^2+\lf\|\chi_\ast |k|^m\Gamma_k^{\ell}|k|\pav^{\myr{2}-\mf l }\phi_k^{(I)}\rg\|_{L^\infty}^2\rg)\\
&+\lambda^{2s}\sum_{b=0}^1 \varphi^2\|\mathbbm{1}_{\text{supp}\wt \chi_1} J^{(0,b,0)}_{0,0}Z\|_{L^2}^2\|\wt\ww_k^{(I)}\|_{L_v^2}^2\\
\lesssim&\|\wt\ww_k^{(I)}\|_{L_v^2}^2\sum_{n=0}^{M}\sum_{a+b\leq2}B_{0,n}^2\varphi^{2n+2} \|\mathbbm{1}_{\text{supp}\wt\chi_1} J_{0,n}^{(a,b,0)}Z\|_{L^2}^2.
\end{align*} 

\end{proof}

\subsubsection{Lower regularity bounds}
\label{sub2sct:lw_rg}
In this section, we derive estimates with small-$n$ level. To simplify the notation, we introduce the norm/semi-norm 
\begin{align}\|f_k\|_{\dot H_k^j}^2:=\sum_{b+c=j}\|\pa_y^b|k|^c f_k\|_{L_y^2}^2,\quad  
\|f_{k}\|_{H_k^j}^2:=\sum_{j'\leq j}\|f_k\|_{\dot H_{k}^{j'}}^2. \label{H_k_y}
\end{align}
The first main lemma is the following.
\begin{lemma}\label{lem:n<=1} Assume the bootstrap assumption. 
The following estimates hold for $n\leq 1$ and arbitrary $M\in \mathbb{N}$,\begin{subequations}\label{est_phen<=1}
\begin{align}
\sum_{m=0}^{M}\sum_{a+b+c\leq 2}{\bf a}_{m,0}^2\|J^{(a,b,c)}_{m,0}\phe_k\|_{L^2}^2\lesssim&\sum_{m=0}^{M} {\bf a}_{m,0}^2\lf(\| \mr\wwe_{m,0;k} \|_{L^2}^2+\|\ci_{m,0}\|_{L_y^2}^2\rg),\label{est_n=0_J2}\\
\sum_{m=0}^{M}\sum_{a+b+c= 3}{\bf a}_{m,0}^2\|J^{(a,b,c)}_{m,0}\phe_k\|_{L^2}^2\lesssim&\sum_{m=0}^{M} {\bf a}_{m,0}^2\lf(\| \mr\wwe_{m,0;k} \|_{\dot H_k^1}^2+\|\ci_{m,0}\|_{H_k^1}^2\rg)+|v_{yy}|\big|_{y=\pm 1}\|\wt\ww_k^{(I)}\|_{L_v^2}^2, \label{est_n=0_J3} \\ 
\sum_{m=0}^{M-1}\sum_{a+b+c\leq 2}{\bf a}_{m,1}^2\|J^{(a,b,c)}_{m,1}\phe_k\|_{L^2}^2\lesssim&\sum_{m+n=0}^{M} \mathbbm{1}_{n\in\{0,1\}}{\bf a}_{m,n}^2\lf(\| J_{m,n}^{(0)}\wwe_{k} \|_{L^2}^2+\|\ci_{m,n}\|_{L_y^2}^2\rg),
\label{est_n=1_J2}\\ 
\sum_{m=0}^{M-1}\sum_{a+b+c= 3}{\bf a}_{m,1}^2\|J^{(a,b,c)}_{m,1}\phe_k\|_{L^2}^2\lesssim&\sum_{m+n=0}^{M} \mathbbm{1}_{n\in\{0,1\}}{\bf a}_{m,n}^2\lf(\|J_{m,n}^{(0)} \wwe_{k} \|_{H^1_k}^2+\|\ci_{m,n}\|_{H_k^1}^2\rg)+|v_{yy}|\big|_{y=\pm 1}\|\wt\ww_k^{(I)}\|_{L_v^2}^2.\label{est_n=1_J3}
\end{align}\end{subequations}
Here the implicit constant is independent of $M$ and $\mathbb S_n^1$-set is defined in \eqref{Snell}.
\end{lemma}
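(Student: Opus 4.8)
\textbf{Proof plan for Lemma \ref{lem:n<=1}.}

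The plan is to treat the four estimates by elliptic regularity applied to the equation \eqref{T123} for $\phmn$, carefully handling the boundary terms that arise from integration by parts. The key point is that for $n\le 1$ the co-normal weight $q^n$ is at most degree one, so the $ICC$-operator structure in \eqref{J_nq} collapses to essentially a Sobolev norm (the index set $\mathbb S_n^\ell$ only allows the boundary factor $\frac{m+n}{q}$ when $a+b+c\le n$, i.e., for $n=0$ only the $a=0$ Sobolev part survives at every level $\ell$, and for $n=1$ the weight $\frac{m+1}{q}$ appears only at level $\ell=1$). Hence the quantities $\|J^{(a,b,c)}_{m,n}\phe_k\|_{L^2}$ reduce, up to commuting $q$ and $\chi_{m+n}$ past derivatives, to $\|\pa_y^b|k|^c(\chi_{m+n}|k|^mq^n\Gamma_k^n\phe_k)\|_{L^2}$ plus, for $n=1,\ \ell=1$, the single weighted term $\|\frac{m+1}{q}\chi_{m+1}|k|^mq\Gamma_k\phe_k\|_{L^2}=(m+1)\||k|^m\Gamma_k\phe_k\chi_{m+1}\|_{L^2}$, which is again a clean Sobolev-type quantity.

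First I would fix $m$ (and $n\in\{0,1\}$) and apply a one-dimensional elliptic estimate — the maximal regularity lemma for $\Delta_k=\pa_y^2-|k|^2$ with Dirichlet data, a variant of Lemma \ref{lem:max_reg} — to \eqref{T123}. For the level-$2$ bounds \eqref{est_n=0_J2}, \eqref{est_n=1_J2} this directly gives $\|\phmn\|_{\dot H^2_k}\lesssim\|\Delta_k\phmn\|_{L^2}$, and the right side is controlled by $\|\omn\|_{L^2}+\|\mathbb C^{(I)}_{m,n}\|_{L^2}+\sum_j\|\mathbb T_{j;m,n}\|_{L^2}$. The commutator terms $\mathbb T_{1},\mathbb T_2,\mathbb T_3$ in \eqref{T_i} all carry lower order in $(m+n)$-regularity (they involve $\Gamma_k^\ell$ with $\ell\le n-1\le 0$ for $n\le1$, or $\chi'_{m+n}$ which by \eqref{chi:prop:3} shifts the cutoff down one index); after summing in $m$ and using the Gevrey coefficient gain \eqref{gevbd1212} together with $\|v_y^{-1}\|_\infty+\|v_y\|_{H^3_y}\lesssim1$ from \eqref{v_y_asmp}, these are absorbed into a small multiple of the left-hand sides (this is the same mechanism as in Section \ref{sec:ext:comm}; I would either absorb them or, since they only involve strictly lower indices, close by a short induction on $M$). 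The interior contribution $\mathbb C^{(I)}_{m,n}$ is handled by invoking \eqref{pv2-py2phI} of Lemma \ref{lem:R_I_est} directly, which is exactly $\sum_{m+n}\bold a_{m,n}^2\|\mathbb C^{(I)}_{m,n}\|^2_{L^2}$ bounded by the claimed quantity — note the statement of the present lemma keeps $\|\ci_{m,n}\|_{L^2}$ on the right, so this term is simply carried through.

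For the level-$3$ bounds \eqref{est_n=0_J3}, \eqref{est_n=1_J3} I would differentiate \eqref{T123} once in $\pa_y$ (or multiply by $|k|$) and run the same elliptic estimate to get $\|\phmn\|_{\dot H^3_k}\lesssim\|\Delta_k\phmn\|_{\dot H^1_k}+(\text{boundary terms})$. The new feature here is the boundary contribution: at third order, integration by parts produces a trace term of the form $\Re\,\pa_y\phmn\,\overline{\pa_y^2\phmn}\big|_{y=\pm1}$. For $n=0$ this vanishes because $\pa_y^2\phe_k|_{y=\pm1}$ is controlled through the equation ($\Delta_k\phe_k=\wwe_k+\wt\chi_1(\pav^2-\pa_y^2)\phi_k^{(I)}$ evaluated at the wall, and $\wwe_k|_{y=\pm1}$ together with the trace of $\pa_v^2\phi_k^{(I)}$ — here is where the factor $|v_{yy}|\big|_{y=\pm1}\|\wt\ww_k^{(I)}\|_{L^2_v}$ in the statement comes from, using the Sobolev bound \eqref{phi_I_est_2} for $\phi_k^{(I)}$), while for $n=1$, $q(\pm1)=0$ kills the weighted piece and one is again left with the $\phi_k^{(I)}$ trace, estimated the same way. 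The remaining terms on the right, $\|\Delta_k\phmn\|_{\dot H^1_k}$, are decomposed as before into $\|\omn\|_{\dot H^1_k}$, $\|\mathbb C^{(I)}_{m,n}\|_{H^1_k}$ (using \eqref{ppv2-py2phI} of Lemma \ref{lem:R_I_est}) and the commutators, now measured one derivative higher; the Gevrey-coefficient gain \eqref{gevbd1212} again beats the polynomial loss in $(m+n)$ coming from $\pa_y$ hitting $\chi_{m+n}$ or $q^n$.

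The main obstacle I anticipate is the bookkeeping for the $n=1$ weighted term and the passage between $\|J^{(a,b,c)}_{m,1}\phe_k\|$ and the plain Sobolev quantities: one must commute $\frac{m+1}{q}$, $\chi_{m+1}$, and $\pa_y^b|k|^c$ in the correct order, verify that the extra factors produced (e.g. $q'/q$, $\chi'_{m+1}$) stay within the index set $\mathbb S_1^\ell$ or else fall to a strictly lower Sobolev level that can be absorbed, and keep track of the supports so that $\chi_\ast\equiv1$ on $\mathrm{supp}\,\chi_{m+1}$ lets us import the $\phi_k^{(I)}$ estimates of Proposition \ref{pro:IE_phi_ext}. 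None of this is conceptually hard — it is the same co-normal commutator algebra as in Lemma \ref{lem:com:BA} — but it is where the proof is most error-prone, so I would organize it as: (i) reduce $\|J^{(\le3)}_{m,n}\phe_k\|$ to $\sum_{b+c\le3}\|\pa_y^b|k|^c\phmn\|_{L^2}$ plus the single $n=1$ term; (ii) apply maximal elliptic regularity order by order; (iii) collect boundary terms and dispatch them via the trace argument above; (iv) sum in $m$ using \eqref{gevbd1212} and the combinatorial bounds, absorbing commutators. This yields \eqref{est_phen<=1}.
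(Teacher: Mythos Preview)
Your plan is correct and follows essentially the same route as the paper: reduce the $J^{(a,b,c)}_{m,n}$ quantities (which for $n\le1$ collapse to Sobolev norms by the index restriction) to $\|\phmn\|_{\dot H^j_k}$, apply the maximal regularity Lemma \ref{lem:max_reg} (and its inhomogeneous-boundary variants) to equation \eqref{T123}, handle the commutators $\mathbb T_{1;m,n},\mathbb T_{2;m,n},\mathbb T_{3;m,n}$ by the Gevrey gain \eqref{gevbd1212} so they drop to strictly lower index and are absorbed after summing in $m$, and keep $\|\ci_{m,n}\|$ on the right as stated.

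One point to sharpen: the boundary treatment at level $3$ for $n=1$ is not quite ``$q(\pm1)=0$ kills the weighted piece and one is left with the $\phi_k^{(I)}$ trace.'' The paper instead computes $\pa_y\mr\phe_{m,1;k}\big|_{y=\pm1}=v_y^{-1}\pa_y(|k|^m\phe_k)\big|_{y=\pm1}$ (nonzero Dirichlet data for $\pa_y\mr\phe_{m,1;k}$), applies the inhomogeneous estimate \eqref{mx_rg_b2}, and then bounds the resulting trace term $|k|^{3/2}|\pa_y\phe_k(\pm1)|$ by Gagliardo--Nirenberg in terms of the already-established $n=0$ level-$3$ quantities --- which in turn carry the $|v_{yy}|\big|_{y=\pm1}\|\wt\ww_k^{(I)}\|_{L^2_v}$ contribution. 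So the $\phi_k^{(I)}$ boundary information enters the $n=1$ estimate \emph{indirectly}, through \eqref{est_n=0_J3}, rather than by re-evaluating the $\phi_k^{(I)}$ equation at the wall. This is a minor bookkeeping point; your overall organization (i)--(iv) will carry through once you make this adjustment.
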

\begin{proof}
\noindent
\textbf{Step \# 0: Preliminaries and $0$-th level estimates.}
First of all, we observe the following equivalence relation for $k\neq 0,\ \ell\in\{2,3\}$,
\begin{align}\sum_{b+c=\ell}\|J_{m,n}^{(0,b,c)}f_k\|_{L^2}
\approx \sum_{b+c=\ell}\|\pa_y^b |k|^c(\chi_{m+n}|k|^mq^n\Gamma_k^nf_{k})\|_{L^2}. 
\label{Jel_equiv}
\end{align}  
Here the implicit constant depends on $ \|v_y\|_{L_t^\infty W_y^{\ell-1,\infty}}$ and $\|\vyn\|_{L_{t,y}^\infty}$. 

Thanks to the elliptic estimate \eqref{mx_rg_v},  the relations $\eqref{Jel_equiv}_{\ell=2}$, \eqref{CImn} and the bound \eqref{pv2-py2phI}, we have that
\begin{align}
\sum_{b+c=2}&\|J_{0,0}^{(0,b,c)}\phe_k\|_{L_y^2}\lesssim\|\phe_k\|_{\dot H_{k}^2}\lesssim\|\wt \ww_{k}^{(E)}\|_{L_y^2}+\lf\|\mathbb{C}_{0,0}^{(I)}\rg\|_{L_y^2}.\label{J2_lv0} 
\end{align}
This is consistent with $\eqref{est_n=0_J2}_{m=0}$. 

Next we prove the $m=0$ case of the estimate \eqref{est_n=0_J3}. From the relations 
\begin{align}\n
\pa_{yy}\phe_k-|k|^2\phe_k=&\wwe_k+\wt \chi_1(\pav^2-\pa_y^2)\phi_k^{(I)},\quad (\pav^2 -|k|^2)\phi^{(I)}=\wt\ww_k^{(I)}+\wt \chi_1^\mathfrak{c}(\pav^2-\pa_y^2)\phi_k^{(I)},\\
\n \quad \phe_k(y=\pm 1)=&\wwe(y=\pm 1)=\phi_k^{(I)}(v=v(\pm 1))=\pav^2 \phi_k^{(I)}(v=v(\pm 1))=0,
\end{align} one obtain that 
\begin{align}\n\pa_{yy}\phe_k\bigg|_{ y=\pm 1}=-\pa_{yy}\phi^{(I)}_k\bigg|_{y=\pm 1}=\lf(-v_y^2\pav^2\phi_k^{(I)}- v_{yy}\pav\phi_k^{(I)}\rg)\bigg|_{y=\pm 1}=- v_{yy}\pav\phi_k^{(I)}\bigg|_{y=\pm 1}.
\end{align}
Thanks to the solution expression \eqref{greensRep} and the fact that $v(\pm 1)$ and the support of $R_k^{(I)}$ have order $1$ distance, we have that by \eqref{phi_I_est_2} \siming{(Check!)}
\begin{align}\label{bc_phI}
|\pa_{v}\phi_k^{(I)}(v=v(\pm1))|\leq \frac{C}{|k| ^2} \|\wt\ww_k^{(I)}\|_{L_v^2}.
\end{align}  Hence the following relations hold
\begin{align*}
\de_k|k|\phe_k=&|k|\wwe_k+|k|\wt \chi_1(\pav^2-\pa_y^2)\phi_k^{(I)},\quad |k|\phe_k\big|_{y=\pm 1}=0 ;\\
\de_k\pa_{y}\phe_k=&\pa_y\wwe_k+ \pa_y(\wt \chi_1(\pav^2-\pa_y^2)\phi_k^{(I)}),\quad \pa_{yy}\phe_k\big|_{y=\pm 1}=- v_{yy}\pav\phi_k^{(I)}\big|_{y=\pm 1} .
\end{align*}
Thanks to the elliptic bounds \eqref{mx_rg},  \eqref{mx_rg_b}, the definition of $\ci_{0,0}$ \eqref{CImn}, and the estimate \eqref{phi_I_Sbv_2} the following estimates hold
\begin{align*}\n
\||k|\phe_k\|_{\dot H_k^2}&+\|\pa_y \phe_k\|_{\dot H_{k}^2} 
\lesssim \||k|\wwe_k\|_{L^2}+\|\pa_y\wwe_k\|_{L^2}+\|\ci_{0,0}\|_{\dot H_{k}^1}+|k|^{1/2}|v_{yy}\pav\phi_k^{(I)}|\bigg|_{v=v(\pm 1)}\\
\lesssim& \|\wwe_k\|_{\dot H_k^1}+\|\ci_{0,0}\|_{\dot H_{k}^1}+ |v_{yy}|\big|_{y=\pm1}\|\wt\ww_k^{(I)}\|_{L_v^2} .
\end{align*}
Combining it with \eqref{Jel_equiv} yields that
\begin{align}
 \sum_{b+c=3}\|J_{0,0}^{(0,b,c)}\phe_k\|_{L^2}\lesssim \|\wwe_k\|_{\dot H_k^1}+\|\ci_{0,0}\|_{\dot H_k^1} +|v_{yy}|\big|_{y=\pm 1}\|\wt\ww_k^{(I)}\|_{L_v^2}. \label{J3_lv0}
\end{align}The implicit constant depends on $\|\vyn\|_{L^\infty}, \|v_y\|_{W^{2,\infty}}$. This is consistent with $\eqref{est_n=0_J3}_{m=0}$. 
This concludes the proof at the zeroth level. 

\noindent
{\bf Step \# 1: Proof of \eqref{est_n=0_J2}.}
\begin{align}\de_k \lf(\chi_m |k|^m \phe_k\rg)=&\chi_m|k|^m\wt \ww_k^{(E)} +[\pa_{yy},\chi_m]\lf( |k|^m\phe_k\rg)+\ci_{m,0},\quad \chi_m|k|^m\phe_k(y=\pm1)=0.\label{eq_lv2_x}
\end{align}
Thanks to the elliptic estimate \eqref{mx_rg}, we have that 
\begin{align}\n
\sum_{b+c=2}{\bf a}_{m,0}^2\|J^{(0,b,c)}_{m,0}\phe_k\|_2^2 
\lesssim & {\bf a}_{m,0}^2\|\chi_m|k|^m\wwe_k\|_{L^2}^2+{\bf a}_{m,0}^2\|[\pa_{yy},\chi_m](|k|^{m}\phe_k)\|_{L^2}^2+{\bf a}_{m,0}^2\|\ci_{m,0}\|_{L^2}^2\\
=:&T_{11}+T_{12}+T_{13}.\label{n=0_J2}
\end{align}
The $T_{11}$ and $T_{13}$ are already consistent with \eqref{est_n=0_J2}, so we focus on the $T_{12}$-term. If $m=0$, the commutator is vacuous. If $m=1$, the term can be bounded by applying \eqref{J2_lv0} 
\begin{align*}
T_{12}\lesssim {\bf{a}}_{1,0}^2\|\pa_y |k|\phe_k\|_{L_y^2}^2+{\bf{a}}_{1,0}^2\| |k|\phe_k\|_{L_y^2}^2\lesssim \|\wwe_k\|_{L_y^2}^2+\lf\|\ci_{0,0}\rg\|_{L_y^2}^2.
\end{align*}
Finally, if $m\geq 2$, we estimate the $T_{12}$ term with \eqref{s:prime} and the equivalence relation \eqref{Jel_equiv}, as follows:\begin{align*}
T_{12}\lesssim& {\bf a}_{m,0}^2\|\chi_m''|k|^2(\chi_{m-2}|k|^{m-2}\phe_k)\|_{L^2}^2+{\bf a}_{m,0}^2\|\chi_m'|k|\pa_y(\chi_{m-1}|k|^{m-1}\phe_k)\|_{L^2}^2\\
\lesssim& \lambda^{4s}{\bf a}_{m-2,0}^2\||k|^2(\chi_{m-2}|k|^{m-2}\phe_k)\|_{L^2}^2+\lambda^{2s}{\bf a}_{m-1,0}^2\||k|\pa_y(\chi_{m-1}|k|^{m-1}\phe_k)\|_{L^2}^2\\
\lesssim&\lambda^{2s} \sum_{m'=m-2}^{m-1}\sum_{b+c=2}{\bf a}_{m',0}^2\|J^{(0,b,c)}_{m',0}\phe_k\|_{L^2}^2.
\end{align*}
Now we sum from $m=0$ to $M$ in \eqref{n=0_J2} to obtain that 
\begin{align*}
\sum_{m=0}^M&\sum_{b+c=2} {\bf a}_{m,0}^2\|J_{m,0}^{(0,b,c)}\phe_k\|_{L_y^2}^2\\
\leq& C\sum_{m=0}^{M}{\bf{a}}_{m,n}^2\|\wwe_{m,0}\|_{L_y^2}^2+C\sum_{m=0}^M{\bf a}_{m,0}^2\lf\|\ci_{m,0}\rg\|_{L_y^2}^2+C\lambda^{2s}\sum_{m=0}^{M-1}\sum_{b+c=2} {\bf a}_{m,0}^2\|J_{m,0}^{(0,b,c)}\phe_k\|_{L_y^2}^2.
\end{align*} By taking the $\lambda>0$ small enough compared to the constant appeared in the expression, we absorb the last term by the left hand side and obtain \eqref{est_n=0_J2}.

\noindent
{\bf Step \# 2: Proof of \eqref{est_n=0_J3}.}
We consider the $m\geq 1$ case. Since $(a,b,c)\in S_{0}^3$, the only non-vacuous terms on the left hand side come from $\|\pa_y^b|k|^c\mr\phe_{m,0;k}\|_{L^2}, \, b+c=3$.    
First, we observe that the estimate \eqref{est_n=0_J2} yields that 
\begin{align}\label{est_n=0_J3_k}
\sum_{m=0}^M\sum_{b+c=2}{\bf a}_{m,0}^2\lf\|\pa_y^b|k|^{c+1}\phe_k\rg\|_{L^2}^2\lesssim \sum_{m=0}^M{\bf a}_{m,0}^2\lf\||k|\wwe_k\rg\|_{L^2}^2+\sum_{m=0}^M{\bf a}_{m,0}^2\lf\||k|\ci_{m,0}\rg\|_{L_y^2}^2.
\end{align} Hence the only unknown component is for $\pa_y^3\mr\phe_{m,0;k}$. To obtain this, we consider the term $\pa_y(\mr\phe_{m,0;k})=\pa_y(\chi_{m} |k|^m\phe_k)$. We observe that it solves the following equation with Neumann boundary condition,
\begin{align*}
\de_k\pa_y  \mr\phe_{m,0;k}=\pa_y \mr\wwe_{m,0;k}+\pa_y[\pa_{yy},\chi_m]\lf(|k|^m\phe_k\rg)+\pa_y \ci_{m,0},\quad \pa_{yy}\mr \phe_{m,0;k}\big|_{y=\pm 1}=-|k|^mv_{yy}\pav\phi_k^{(I)}\big|_{v=v(\pm 1)}.
\end{align*} 
The main term to estimate is the second term on the right hand side, we rewrite it in the following fashion,
\begin{align*}
&\pa_y[\pa_{yy},\chi_m]\lf(|k|^m\phi^{(E)}_k\rg)=\chi_m'''|k|^m\phe_k+2\chi_m''\pa_y\lf(|k|^{m}\phe_k\rg)+2\chi_m'\pa_{yy}\lf(|k|^m\phe_k\rg)\\ 
&=\lf\{\begin{array}{rr}\begin{aligned}\chi_{m}'''\ |k|^3\mr\phe_{{m-3},0;k}\ +\ 2\chi_{m}''\ |k|^2\pa_y\mr\phe_{m-2,0;k}\ +\ 2\chi_m'\ |k|\pa_{yy}\mr\phe_{m-1,0;k},&\quad {m\geq 3};\\
 \chi_2'''\ |k|^2\phe_k\ +\ 2\chi_2''\ |k|^{2}\pa_y\phe_k\ +\ 2\chi_2'|k|\pa_{yy}\lf(\chi_{1}|k|\phe_k\rg),&\quad  m= 2;\\
 \chi_1'''\ |k|\phe_k\ +\ 2\chi_1''\ |k|\pa_y \phe_k\  +\ 2\chi_1'\ |k|\pa_{yy}\phe_k ,&\quad  m= 1.\end{aligned}\end{array}\rg.
\end{align*}
Here $\mr\phe_{m,0;k}=\chi_{m}|k|^{m}\phe_k$. 
Thanks to the elliptic estimate \eqref{mx_rg_b}, $\eqref{est_J2}_{m=0},\ \eqref{est_J3}_{m=0}$,  \eqref{phi_I_est}, and \eqref{bc_phI}, we have that for $m=1$,
\begin{align*}
{\bf a}_{1,0}\|\pa_y \mr\phe_{1,0;k}\|_{\dot H^2_k}
\lesssim&{\bf a}_{1,0} \|\mr\wwe_{1,0;k}\|_{\dot H_k^1}+{\bf a}_{1,0}\||k|\pa_{yy}\phe_k\|_{L^2}+{\bf a}_{1,0}\||k|\pa_{y}\phe_k\|_{L^2}\\
&+{\bf a}_{1,0}\|\pa_y \ci_{1,0}\|_{L^2}+{\bf a}_{1,0}|k|^{1/2}||k| v_{yy}\pav\phi_k^{(I)}|\bigg|_{v=v(\pm1)}\\
\lesssim &\sum_{m=0}^1{\bf a}_{m,0}\|\mr\wwe_{m,0;k}\|_{\dot H_k^1}+\sum_{m=0}^1{\bf a}_{m,0}\|\ci_{m,0}\|_{H_k^1}+|v_{yy}|\big|_{y=\pm 1}\|\oi_k\|_{L_v^2}. 
\end{align*}
This is consistent with \eqref{est_n=0_J3}. For the $m=2$ case, we estimate it using the bounds   \eqref{phi_I_est}, \eqref{bc_phI}, and  \eqref{est_n=0_J3_k}, 
\begin{align*}
{\bf a}_{2,0}\|\pa_y \mr\phe_{2,0;k}\|_{\dot H^2_k}
\lesssim&{\bf a}_{2,0} \|\mr\wwe_{2,0;k}\|_{\dot H_k^1}+\sum_{m=0}^1\sum_{b+c=2}{\bf a}_{m,0}\|\pa_{y}^b|k|^{c+1}\mr\phe_{m,0;k}\|_{L^2}\\
&+{\bf a}_{2,0}\|\pa_y \ci_{2,0}\|_{L^2}+{\bf a}_{2,0}|k|^{1/2}\lf||k|^2 v_{yy}\pav\phi_k^{(I)}\rg|\bigg|_{v=v(\pm1)}\\
\lesssim &\sum_{m=0}^2{\bf a}_{m,0}\|\mr\wwe_{m,0;k}\|_{\dot H_k^1}+\sum_{m=0}^2{\bf a}_{m,0}\|\ci_{m,0}\|_{H_k^1}+|v_{yy}|\big|_{y=\pm 1}\sum_{m=0}^2{\bf a}_{m,0}\lf\|\chi_{\ast}\pav\lf( |k|^{m+1/2}\phi_k^{(I)}\rg)\rg\|_\infty\\
\lesssim& \sum_{m=0}^2{\bf a}_{m,0}\|\mr\wwe_{m,0;k}\|_{\dot H_k^1}+\sum_{m=0}^2{\bf a}_{m,0}\|\ci_{m,0}\|_{H_k^1}+|v_{yy}|\big|_{y=\pm 1}\lf\|\oi_k\rg\|_{L_v^2}.
\end{align*}
Here $\chi_\ast$ is defined in \eqref{chi_ast}. 
Finally, we consider the case where $m\geq 3.$ In this case, the Gevrey coefficients starts to play major roles. We recall the fundamental relations discussed in the previous sections \eqref{gevbd1212}, and the estimates of the cutoff $\chi_{m+n}$ \eqref{chi:prop:3},  \begin{align*}
{\bf a}_{m,0}&\|\pa_y \mr\phe_{m,0;k}\|_{\dot H^2_k}
\\
\lesssim&{\bf a}_{m,0} \|\mr\wwe_{m,0;k}\|_{\dot H_k^1}+{\bf a}_{m,0}m^{3+3\sigma}\||k|^3\mr\phe_{m-3,0;k}\|_{L^2}+{\bf a}_{m,0}m^{2+2\sigma}\||k|^2\pa_{y}\mr\phe_{m-2,0;k}\|_{L^2}\\
&+{\bf a}_{m,0}m^{1+\sigma}\||k|\pa_{y}^2\mr\phe_{m-1,0;k}\|_{L^2}+{\bf a}_{m,0}\|\pa_y \ci_{m,0}\|_{L^2}+{\bf a}_{m,0}|k|^{1/2}\lf||k|^m v_{yy}\pav\phi_k^{(I)}\rg|\bigg|_{v=v(\pm1)}\\
\lesssim&{\bf a}_{m,0} \|\mr\wwe_{m,0;k}\|_{\dot H_k^1}+\lambda^{3s}{\bf a}_{m-3,0}m^{-3\sigma_\ast}\||k|^3\mr\phe_{m-3,0;k}\|_{L^2}+\lambda^{2s}{\bf a}_{m-2,0}m^{-2\sigma_\ast}\||k|^2\pa_{y}\mr\phe_{m-2,0;k}\|_{L^2}\\
&+\lambda^{s} {\bf a}_{m-1,0}m^{-\sigma_\ast}\||k|\pa_{y}^2\mr\phe_{m-1,0;k}\|_{L^2}+{\bf a}_{m,0}\|\pa_y \ci_{m,0}\|_{L^2}+{\bf a}_{m,0}|k|^{1/2}\lf||k|^m v_{yy}\pav\phi_k^{(I)}\rg|\bigg|_{v=v(\pm1)}\\
\lesssim &{\bf a}_{m,0}\|\mr\wwe_{m,0;k}\|_{\dot H_k^1}+\lambda^{s}\sum_{m'=m-3}^{m-1}\sum_{b+c=2}\|\pa_y^b|k|^{c+1}\mr\phe_{m',0;k}\|_{L^2}+{\bf a}_{m,0}\|\ci_{m,0}\|_{H_k^1}+|v_{yy}|\big|_{y=\pm 1}\|\oi_k\|_{L_v^2}.
\end{align*}
Now we sum over the estimates we obtained so far and invoke the bound \eqref{est_n=0_J3_k} to obtain
\begin{align*}
\sum_{m=0}^M\sum_{b+c=2}{\bf a}_{m,0}^2\|\pa_y^{b+1}|k|^c \mr\phe_{m,0;k}\|_{L^2}^2\lesssim \sum_{m=0}^M{\bf a}_{m,0}^2\|\mr\ww_{m,0;k}\|_{\dot H_k^1}^2+\sum_{m=0}^M{\bf a}_{m,0}^2\|\ci_{m,0}\|_{\dot H_k^1}^2+|v_{yy}|\big|_{y=\pm 1}\|\oi_k\|_{L_v^2}.
\end{align*}
Combining this estimates and \eqref{est_n=0_J3_k} yields \eqref{est_n=0_J3}.

\noindent
{\bf Step \# 3: Proof of \eqref{est_n=1_J2}.} Here we highlight that on the left hand sides of \eqref{est_n=1_J2}, there is an extra term $J_{m,1}^{(1,0,0)}\phe_k$ to be estimated. Hence, we further decompose the argument in two substeps: 

\noindent
{\bf Step \# 3a: }In this substep, we only consider the Sobolev component $\|J_{m,1}^{(0,b,c)}\phe_k\|_{L^2}^2$ on the left hand side of \eqref{est_n=1_J2}. First of all, we have the commutator relation
\begin{align}\n
[A,BCD]f=&ABCD f-BCDA f\\
\n=&ABCD f-BACDf+BACD f-BCAD f+BCADf-BCDA f\\
=&[A,B](CDf)+ B[A,C]Df+BC[A,D]f.\label{ABCD_rel}
\end{align}
By setting $A=\pa_{yy},\, B=\chi_{m+1},\,C=q,\, D=\Gamma_k$, we can derive the following equation for $\mr\phe_{m,1;k}$:
\begin{align}\ \n
\de_k\mr\phe_{m,1;k}=& \de_k(\chi_{m+1} |k|^{m}q\Gamma_k \phe_k)\\
\n =&\chi_{m+1} |k|^{m}q\Gamma_k\wt \ww_k^{(E)}+[\pa_{yy},\chi_{m+1} q\Gamma_k]|k|^{m}\phe_k+\chi_{m+1}|k|^{m}q\Gamma_k(\wch(\pav^2-\pa_y^2)\phi_k^{(I)}) \\
\n =&\chi_{m+1}|k|^{m} q\Gamma_k\wt \ww_k^{(E)}+[\pa_{yy},\chi_{m+1}](|k|^{m}q\Gamma_k\phe_k)+\chi_{m+1}[\pa_{yy},q](|k|^{m}\Gamma_k\phe_k)\\
\n &+\chi_{m+1}q[\pa_{yy},\Gamma_k] |k|^{m}\phe_k+\ci_{m,1},\\
\mr\phe_{m,1;k}(y=\pm1)=&\chi_{m+1}|k|^{m}q\Gamma_k\phe_k(y=\pm1)=0.\label{eq_lv2_y}
\end{align}
Thanks to the equivalence \eqref{Jel_equiv} and the elliptic estimate \eqref{mx_rg}, we have that 
\begin{align}\n
\sum_{b+c\leq 2}&{\bf a}_{m,1}^2\|J^{(0,b,c)}_{m,1}\phe_k\|_2^2\lesssim \sum_{b+c=2}{\bf a}_{m,1}^2\|\pa_y^b|k|^c\mr\phe_{m, 1;k}\|_{L^2}^2\\
\n \lesssim &{\bf a}_{m,1}^2\|\chi_{m+1}|k|^{m} q\Gamma_k\wt \ww_k^{(E)}\|_{L^2}^2+{\bf a}_{m,1}^2\|[\pa_{yy},\chi_{m+1}](|k|^{m}q\Gamma_k\phe_k)\|_{L^2}^2+{\bf a}_{m,1}^2\|\chi_{m+1}[\pa_{yy},q](|k|^{m}\Gamma_k\phe_k)\|_{L^2}\\
\n &\qquad+{\bf a}_{m,1}^2\|\chi_{m+1}q[\pa_{yy},\Gamma_k] |k|^{m}\phe_k\|_{L^2}^2+{\bf a}_{m,1}^2\|\ci_{m,1}\|_{L^2}^2\\
=:&\sum_{j=1}^5 T_{3j}.\label{n=1_J2}
\end{align}
Since the $T_{31},\ T_{35}$ terms are consistent with \eqref{est_n=1_J2}, we focus on the remaining terms and start from $T_{32}$. Thanks to a similar argument as in the estimate of $T_{12}$-term in \eqref{n=0_J2}, we distinguish between $m=0$ and $m\geq 1$ cases. If $m=0$, we apply the facts that ${\bf a}_{m,n}=B_{m,n}\varphi^{n+1}$ \eqref{a:weight:neq} and $t\varphi\leq C$ \eqref{ineq:varphi}, \begin{align*}
T_{32}\lesssim &{B}_{0,1}^2\varphi^{4}\|\pa_y (q(\pav+ikt)\phe_k)\|_{L_y^2}^2+B_{0,1}^2\varphi^4\| (\pav+ikt)\phe_k\|_{L_y^2}^2\\
\lesssim&\sum_{b+c=2}B_{0,0}^2\varphi^2\|\pa_y^b|k|\phe_k\|_{L^2}^2\lesssim \|\wwe_k\|_{L_y^2}^2+\lf\|\ci_{0,0}\rg\|_{L_y^2}^2.
\end{align*}
Here in the last line, we have invoked \eqref{J2_lv0}. On the other hand, if $m\geq 1$, we estimate the $T_{32}$ term with \eqref{s:prime}, the $\chi_{m+n}$-estimate \eqref{chi:prop:3}, the relation \eqref{gevbd1212} and the equivalence relation \eqref{Jel_equiv}, as follows:\begin{align*}
T_{32}\lesssim& {\bf a}_{m,1}^2\|\chi_{m+1}''|k|q(\pav+ikt)(\chi_{m-1}|k|^{m-1}\phe_k)\|_{L^2}^2+{\bf a}_{m,1}^2\|\chi_{m+1}'\pa_y( q(\pav+ikt)(\chi_{m}|k|^{m}\phe_k))\|_{L^2}^2\\
\lesssim& \lambda^{4s}{\bf a}_{m-1,0}^2 (t\varphi )^2\sum_{b+c=2}\|\pa_y^b|k|^c(\chi_{m-1}|k|^{m-1}\phe_k)\|_{L^2}^2+\lambda^{2s}{\bf a}_{m,0}^2(t\varphi )^2\sum_{b+c=2}\|\pa_y^b|k|^c(\chi_{m}|k|^{m}\phe_k)\|_{L^2}^2\\
\lesssim&\lambda^{2s} \sum_{m'=m-1}^{m}\sum_{b+c=2}{\bf a}_{m',0}^2\|J^{(0,b,c)}_{m',0}\phe_k\|_{L^2}^2.
\end{align*}
The $T_{33}$ term in \eqref{n=1_J2} can be estimated as follows
\begin{align*}T_{33}\lesssim &{\bf a}_{m,1}^2\|\chi_{m+1}\pav(\pav+ikt)(\chi_m|k|^m\phe_k)\|_{L^2}^2  
\lesssim \lambda^{2s}{\bf a}_{m,0}^2\varphi^2(\|J^{(0,2,0)}_{m,0}\phe_{k}\|_{L^2}^2+t^2\|J^{(0,1,1)}_{m,0}\phe_k\|_{L^2}^2)\\
\lesssim &\lambda^{2s}{\bf a}_{m,0}^2\sum_{b+c=2}\|J^{(0,b,c)}_{m,0}\phe_k\|_{L^2}^2.
\end{align*}
Thanks to the commutator relation \eqref{cm_pyy_G_n}, we have that the $T_{34}$ term can be estimated as follows:
\begin{align*}
T_{34}\lesssim & {\bf a}_{m,1}^2\lf(\|\chi_{m+1}q v_{yy}\pav^2|k|^m\phe_k\|_{L^2}^2+\|\chi_{m+1}q\vyn v_{yyy}\pav|k|^m\phe_k\|_{L^2}^2\rg)\\
\lesssim&\lambda^{2s}{\bf a}_{m,0}^2(m+1)^{-2-2\sigma-2\sigma_\ast}\lf(\| \pav^2(\chi_m|k|^m\phe_k)\|_{L^2}^2+\|\pav(\chi_m|k|^m\phe_k)\|_{L^2}^2\rg)\\
\lesssim&\lambda^{2s}{\bf a}_{m,0}^2\lf(\| J^{(0,2,0)}_{m,0} \phe_k\|_{L^2}^2+\|J^{(0,1,1)}_{m,0}\phe_k\|_{L^2}^2\rg).
\end{align*}
Now we sum from $m=0$ to $M-1$ and invoke \eqref{est_n=0_J2} to obtain \eqref{est_n=0_J2}.

\noindent
{\bf Step \# 3b: }\siming{This step can be omitted if we make sure that \eqref{JtoJ_2} is true.} In this substep, we consider the estimate of the quantity $J_{m,1}^{(1,0,0)}\phe_k$. We observe that 
\begin{align*}
J_{m,1}^{(1,0,0)}\phe_k=\frac{m+1}{q} \chi_{m+1}|k|^mq(\pav+ikt)\phe_k=(m+1)\chi_{m+1}(\pav+ikt)(\chi_{m}|k|^m\phe_k).
\end{align*}
Hence the estimate is a direct consequence of \eqref{est_n=0_J2},
\begin{align*}
\sum_{m=0}^{M-1} &{\bf a}_{m,1}^2\lf\|J_{m,1}^{(1,0,0)}\phe_k\rg\|_{L^2}^2\lesssim \sum_{m=0}^{M-1}{\bf a}_{m,1}^2(m+1)^2 \lan t\ran^2\sum_{b+c=1}\|\pav^b|k|^c\mr\phe_{m,0;k}\|_{L^2}^2\\
\lesssim& \lambda^{2s}\sum_{m=0}^{M-1}{\bf a}_{m,0}^2\sum_{b+c=1}\|\pav^b|k|^c\mr\phe_{m,0;k}\|_{L^2}^2\lesssim\sum_{m=0}^{M-1} {\bf a}_{m,0}^2\| \mr\wwe_{m,0;k} \|_{L^2}^2+\sum_{m=0}^{M-1}{\bf a}_{m,0}^2\|\ci_{m,0}\|_{L_y^2}^2. 
\end{align*}

This concludes the proof of \eqref{est_n=1_J2}.

\noindent
{\bf Step \# 4: Proof of \eqref{est_n=1_J3}.}
Since $a+b+c=3\geq 1$ in this case, we only needs to consider classical derivative estimates. 

From \eqref{est_n=1_J2}, we have that
\begin{align}
\sum_{m=0}^{M-1}\sum_{b+c=2}{\bf a}_{m,1}^2\|\pav^b|k|^{c+1}J_{m,1}^{(0)}\phe_{k}\|_{L^2}^2\lesssim&\sum_{m+n=0}^{M}\mathbbm{1}_{n\in\{0,1\}}{\bf a}_{m,n}^2\| |k|J_{m,n}^{(0)}\wwe_{k} \|_{L^2}^2+\sum_{m+n=0}^{M} \mathbbm{1}_{n\in\{0,1\}}{\bf a}_{m,n}^2\|\ci_{m,n}\|_{H_k^1}^2.\label{est_n=1_J3_k}
\end{align}
 We observe that the $\pa_y(\mr\phe_{m,1;k})=\pa_y(\chi_{m+1} |k|^mq( \pav +ikt )\phe_k)$ solves the following equation:
\begin{align}
\de_k\pa_y\mr\phe_{m,1;k}=&\pa_y \mr\wwe_{m,1;k}+\pa_y[\pa_{yy},\chi_{m+1} q\Gamma_k](\chi_m|k|^m\phe_k)+\pa_y\ci_{m,1}.
\end{align} 
Furthermore, we have that 
\begin{align*}
 \pa_y\mr\phe_{m,1;k} \big|_{y=\pm 1}
=&  {v_y^{-1}}\pa_y|k|^m\phe_k \big|_{y=\pm1}.
\end{align*}
Hence the elliptic estimate \eqref{mx_rg_b2} yields that
\begin{align}\n
{\bf a}_{m,1}^2 \sum_{b+c= 2}\|\pa_y^{b+1}|k|^c\mr\phe_{m,1;k}\|_{L^2}^2\lesssim&{\bf a}_{m,1}^2\|\pa_y\mr\wwe_{m,1;k}\|_{L^2}^2+{\bf a}_{m,1}^2\|\pa_y[\pa_{yy}, \chi_{m+1}q\Gamma](\chi_m|k|^m\phe_k)\|_{L^2}^2\\
 \n &+{\bf a}_{m,1}^2\|\pa_y \ci_{m,1}\|_{L^2}^2+{\bf a}_{m,1}^2|k|^{3}|\pav(|k|^m\phe_k)(y=\pm 1)|^2\\
=:&\sum_{j=1}^4 T_{4j}.\label{n=1_J3}
\end{align}
Since the $T_{41}$ and $T_{43}$ are consistent with \eqref{est_n=1_J3}, we focus on $T_{42}$ and $T_{44}$. We distinguish between the $m=0$, $m=1$ and the $m\geq2$ case. 

\noindent
{\bf Step \# 4a: the $m=0$ case.}

For the $T_{42}$ term in \eqref{n=1_J3}, we explicitly write it out as follows, 
\begin{align*}\n
T_{42}\lesssim&{\bf a}_{0,1}^2\lf\|\pa_y\lf(\pa_{yy}(\chi_1q\vyn)\pa_y\phe_k+2\pa_y (\chi_1 q\vyn)\pa_{yy}\phe_k+\pa_{yy}(q\chi_1)ikt\phe_k+2\pa_y(q\chi_1)ikt\pa_y\phe_k\rg)\rg\|_{L^2}^2\\
\lesssim&{\bf a}_{0,1}^2 \lan t\ran^2\sum_{b+c=3}\||k|^b\pa_y^c\phe_k\|_{L^2}^2
\lesssim {\bf a}_{0,0}^2\|\wwe_k\|_{\dot H_k^1}^2+|v_{yy}|^2\big|_{y=\pm 1}\|\wt\ww_k^{(I)}\|_{L_v^2}^2+{\bf a}_{0,0}^2\|\ci_{0,0}\|_{H_k^1}^2.
\end{align*} 
This is consistent with \eqref{est_n=1_J3}. 
To estimate the $T_{44}$ term, we invoke the Gagliardo-Nirenberg interpolation inequality to obtain that 
\begin{align}
|\pa_y\phe_k(y=\pm 1)|^2\lesssim \|\pa_y \phe_k\|_{L^2}(\|\pa_{yy}\phe_k\|_{L^2}+\|\pa_y\phe_k\|_{L^2}).\label{bc_bd}
\end{align}
Hence, we invoke \eqref{est_n=0_J3_k} to obtain that 
\begin{align}\n
T_{44}\lesssim &{\bf a}_{0,1}^2|k|^{3}\|\pa_y \phe_k\|_2(\|\pa_{yy}\phe_k\|_2+\|\pa_y\phe_k\|_2)\\ \n
 \lesssim&{\bf a}_{0,1}^2\||k|^2\pa_y\phe_k\|_2(\||k|\pa_{yy}\phe_k\|_2+\||k|\pa_{y}\phe_k\|_2)\\
\lesssim&{\bf a}_{0,0}^2\|\wt\ww_k^{(E)}\|_{\dot H_k^1}^2+|v_{yy}|\big|_{y=\pm 1}\|\wt\ww_k^{(I)}\|_{L_v^2}^2+{\bf a}_{0,0}^2\|\ci_{0,0}\|_{H^1}^2.
\end{align}
This is consistent with \eqref{est_n=1_J3}.  Combining the estimates above, we conclude {\bf Step \# 4a}. 

\noindent
{\bf Step \# 4b: the $m=1$ case.} The treatment is similar to {\bf Step \# 4a} and we omit the details. \siming{(Check?!)}

\noindent
{\bf Step \# 4c: the $m\geq2$ case.}
 For the $T_{42}$ term, we apply the commutator relation \eqref{ABCD_rel} to decompose it as follows
\begin{align}\n
T_{42}=& {\bf a}_{m,1}^2\|\pa_y[\pa_{yy}, \chi_{m+1}q\Gamma](\chi_m|k|^m\phe_k)\|_{L^2}^2\\ \n
\lesssim&{\bf a}_{m,1}^2\bigg(\|\pa_y[\pa_{yy},\chi_{m+1}](\chi_m|k|^{m}q\Gamma_k\phe_k)\|_{L^2}^2+\|\pa_y(\chi_{m+1}[\pa_{yy},q](\chi_m|k|^{m}\Gamma_k\phe_k))\|_{L^2}\\
& +\|\pa_y(\chi_{m+1}q[\pa_{yy},\Gamma_k] (\chi_m|k|^{m}\phe_k)\|_{L^2}^2\bigg)=:\sum_{j=1}^3T_{42j}\label{T42_dcmp}.
\end{align}
To estimate the $T_{21}^{(4)}$-term in \eqref{T42_dcmp}, we invoke the relation \eqref{s:prime} and the estimate \eqref{chi:prop:3}, together with the definition of $J^{(a,b,c)}_{m,n}\phe_k$ \eqref{J_nq} as follows  
\begin{align*}\n
T_{421}\lesssim&{\bf a}_{m,1}^2(m+1)^{6+6\sigma}\|\chi_{m}|k|^{m}q\Gamma_k\phe_k\|_{L^2}^2+{\bf a}_{m,1}^2 (m+1)^{4+4\sigma}\|\pa_y(\chi_{m}|k|^{m}q\Gamma_k\phe_k)\|_{L^2}^2\\
& +{\bf a}_{m,1}^2 (m+1)^{2+2\sigma}\|\pa_{yy}(\chi_{m}|k|^{m}q\Gamma_k\phe_k)\|_{L^2}^2\\
\lesssim&{\bf a}_{m-2,1}^2\lambda^{4s}(m+1)^{-4\sigma}\lf\|\frac{m+1}{q}|k|^2\lf(\chi_{m-2}|k|^{m-2}q\Gamma_k\phe_k\rg)\rg\|_{L^2}^2\\
&+{\bf a}_{m-1,1}^2 \lambda^{2s}(m+1)^{-4\sigma}\lf\|\frac{m+1}{q}\pav|k|(\chi_{m-1}|k|^{m-1}q\Gamma_k\phe_k)\rg\|_{L^2}^2\\
& +{\bf a}_{m-1,1}^2 \lambda^{2s}(m+1)^{-2\sigma}\||k|\pa_{yy}(\chi_{m}|k|^{m-1}q\Gamma_k\phe_k)\|_{L^2}^2\\
\lesssim&{\bf a}_{m-2,1}^2\lambda^{4s} \lf\|J^{(1,0,2)}_{m-2,1}\phe_k\rg\|_{L^2}^2+{\bf a}_{m-1,1}^2 \lambda^{2s}\lf\|J^{(1,1,1)}_{m-1,1}\phe_k\rg\|_{L^2}^2 +{\bf a}_{m-1,1}^2 \lambda^{2s}\|J^{(0,2,1)}_{m-1,1}\phe_k\|_{L^2}^2.
\end{align*}
Next we recall that on the support of $\chi_{m+1}$, $\chi_{m}=\chi_{m-1}\equiv1$ and estimate the $T_{422}$-term as follows
\begin{align*}
T_{422}\lesssim& {\bf a}_{m,1}^2\lf(\|\pa_y(\chi_{m+1}q''(\chi_m|k|^{m}\Gamma_k\phe_k))\|_{L^2}^2+\|\pa_y(\chi_{m+1}q'\pa_y(\chi_m|k|^{m}\Gamma_k\phe_k))\|_{L^2}^2\rg)\\
\lesssim&{\bf a}_{m,1}^2\bigg(\|\pa_y(\chi_{m+1}q'')(\chi_m|k|^{m}\Gamma_k\phe_k)\|_{L^2}^2+\|\chi_{m+1}q''\pa_y(\chi_m|k|^{m}\Gamma_k\phe_k)\|_{L^2}^2\\
&\qquad+\|\pa_y(\chi_{m+1}q')\pa_y(\chi_m|k|^{m}\Gamma_k\phe_k)\|_{L^2}^2+ \| \chi_{m+1}q'\pa_y^2(\chi_m|k|^{m}\Gamma_k\phe_k)\|_{L^2}^2\bigg)\\
\lesssim&{\bf a}_{m,1}^2\bigg((m+1)^{2+2\sigma}\|\Gamma_k(\chi_m|k|^{m}\phe_k)\|_{L^2}^2+\|\pav\Gamma_k(\chi_{m}|k|^{m }\phe_k)\|_{L^2}^2\\
&\qquad+(m+1)^{2+2\sigma} \|\pav\Gamma_k(\chi_{m}|k|^{m}\phe_k)\|_{L^2}^2+ \|\pa_y^2\Gamma_k(\chi_{m}|k|^{m}\phe_k)\|_{L^2}^2\bigg)\\
\lesssim&{\bf a}_{m,0}^2\lambda^{2s}(m+1)^{-2\sigma_\ast}\bigg(\sum_{b+c=2}\|J^{(0,b,c)}_{m,0}\phe_k\|_{L^2}^2+\sum_{b+c=3}\|J^{(0,b,c)}_{m,0} \phe_k\|_{L^2}^2\bigg).
\end{align*}
An application of the commutator relation  \eqref{cm_pyy_G_n} and the estimates  yields that the $T_{423}$ term is bounded as follows
\begin{align*}
T_{423}\lesssim&{\bf a}_{m,1}^2\big(\|\pa_y(\chi_{m+1}q \pa_{yy}(\vyn) (\chi_m|k|^{m}\phe_k))\|_{L^2}^2+\|\pa_y(\chi_{m+1}q v_y \pa_{y}\vyn \pav(\chi_m|k|^{m}\phe_k))\|_{L^2}^2\big)\\
\lesssim&{\bf a}_{m,1}^2\big(\|\pa_y(\chi_{m+1}q \pa_{yy}(\vyn))\ \chi_m|k|^{m}\phe_k\|_{L^2}^2+\|\chi_{m+1}q \pa_{yy}(\vyn)\pa_y( \chi_m|k|^{m}\phe_k)\|_{L^2}^2\\
&\quad+\|\pa_y(\chi_{m+1}q v_y \pa_{y}\vyn )\ \pav(\chi_m|k|^{m}\phe_k)\|_{L^2}^2+\| \chi_{m+1}q v_y^2 \pa_{y}\vyn \pav^2(\chi_m|k|^{m}\phe_k)\|_{L^2}^2\big)\\
\lesssim&{\bf a}_{m,1}^2\big((m+1)^{2+2\sigma}\|\chi_m|k|^{m}\phe_k\|_{L^2}^2+\|\pav(\chi_m|k|^{m}\phe_k)\|_{L^2}^2\\
&\qquad\qquad+(m+1)^{2+2\sigma}\|\pav(\chi_m|k|^{m}\phe_k)\|_{L^2}^2+\|\pav^2(\chi_m|k|^{m}\phe_k)\|_{L^2}^2\big)\\
\lesssim&{\bf a}_{m,0}^2(m+1)^{-2\sigma_\ast}\lambda^{2s}\sum_{b+c=2}\|J^{(0,b,c)}_{m,0}\phe_k\|_{L^2}^2.
\end{align*}
Combining the estimates of $T_{421},\, T_{422},\, T_{423}$ together with the decomposition \eqref{T42_dcmp}, we obtain that
\begin{align}
T_{42}\lesssim \lambda^{2s}{\bf a}_{m-2,1}^2\|J^{(1,0,2)}_{m-2,1}\phe_k\|_{L_y^2}^2+\lambda^{2s}\sum_{b+c=3}{\bf a}_{m-1,1}^2\|J^{(0,b,c)}_{m-1,1}\phe_k\|_2^2+\lambda^{2s}\sum_{b+c=3}{\bf a}_{m,0}^2\|J^{(0,b,c)}_{m,0}\phe_k\|_2^2.\label{T42_est}
\end{align}
Next we observe that the $T_{43}$-term is consistent with the final bound. 
We focus on the $T_{44}$-term. By \eqref{bc_bd},
\begin{align}
|\pav(|k|^m\phe_k)(y=\pm 1)|^2\lesssim \|\pa_y (\chi_m|k|^m\phe_k)\|_{L^2}(\|\pa_{yy}(\chi_m|k|^m\phe_k)\|_{L^2}+\|\pa_y(\chi_{m}|k|^m\phe_k)\|_{L^2}).
\end{align}
Hence,
\begin{align}\n
T_{44}\lesssim& {\bf a}_{m,1}^2\||k|^{2}\pa_y (\chi_m|k|^m\phe_k)\|_{L^2}(\||k|\pa_{yy}(\chi_m|k|^m\phe_k)\|_{L^2}+\||k|\pa_y(\chi_{m}|k|^m\phe_k)\|_{L^2})\\
\lesssim&\lambda^{2s} {\bf a}_{m,0}^2\sum_{b+c\leq 3}\|J^{(0,b,c)}_{m,0}\phe_k\|_{L^2}^2.\label{T44_est}
\end{align}
Combining the estimates \eqref{T42_est}, \eqref{T44_est} we developed thus far and the decomposition \eqref{n=1_J3},  summing up in $m$ and taking $\lambda$ to be small yields the result $\eqref{est_n=1_J3}$.
\siming{\myr{??}} 

\end{proof}
We also need the following lemma.
\begin{lemma}
Assume the bootstrap assumptions. Consider the solution to \eqref{d:phi:E:in}. The following estimates hold,
\begin{align}\sum_{a+b+c\leq 2}{\bf a}_{0,2}^2\|J^{(a,b,c)}_{0,2}\phe_k\|_{L_y^2}^2  
\lesssim &\sum_{m+n=0}^{2}{\bf a}_{m,n}^2\lf(\|J_{m,n}^{(0)}\wt \ww ^{(E)}_{k}\|_{L_y^2}^2 +\|\mathbb{C}_{m,n}^{(I)}\|_{L_y^2}^2\rg) ; \label{est_J2}\\ 
\n\sum_{a+b+c=3}{\bf a}_{0,2}^2\|J^{(a,b,c)}_{0,2}\phe_k\|_{L_y^2}^2 
\lesssim&\sum_{m+n=0}^{2}{\bf a}_{m,n}^2\lf(\|J_{m,n}^{(0)}\wt \ww ^{(E)}_{k}\|_{H_{k}^1}^2+\|\mathbb{C}_{m,n}^{(I)}\|_{H^1_{k}}^2\rg) + |v_{yy}|\big|_{y=\pm 1} \|\wt\ww^{(I)}_k\|_{L_{v}^2}^2\\
&+ \lf(\sum_{n=0}^2\sum_{b=1}^2\varphi^n\|J^{(0,b,0)}_{0,n}(v_y^2-1)\|_{L^2}\rg) {\bf a}_{0,0}^2\lf(\| \mr\wwe_{0,0;k} \|_{L^2}^2+\|\ci_{0,0}\|_{L_y^2}^2\rg).  \label{est_J3}%
\end{align}
\siming{\footnote{\myr{The estimate of $J_{0,3}^{(3)}$ is not okay because the implicit constant might depend on $\|v_{yyyyy}\|_{L^2}$. So it is better to use the method in the next section to prove it. }}}
Here the $\mathbb{C}_{m,n}^{(I)}$ are defined in \eqref{CImn}. 
\ifx 
Last but not least, the following $H^4$-estimate holds 
\begin{align} \sum_{ b+c= 4} \|\pa_y^b|k|^c\phe_k\|_{L_y^2}^2\lesssim \|\wt \ww ^{(E)}_{k}\|_{H_{k}^2}^2+\siming{\lf(|v_{yy}|^2\big|_{y=\pm1}+\|\wt\chi_1Z\|_{H_y^2}^2+\|\wt\chi_1\pav Z\|_{H_y^2}^2\rg) }\|\wt\ww^{(I)}_k\|_{L_{v}^2}^2. \label{est_J4}
\end{align}
\fi  
Here the implicit constant depends on the norms $\|v_{y}\|_{L_t^\infty H_y^{3}},\,\|\vyn\|_{L_{t,y}^\infty}$. \siming{From this estimate we observe that one needs $\|v_{yyyy}\|_{L^2}.$}
\end{lemma}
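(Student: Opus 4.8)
\textbf{Proof proposal for \eqref{est_J2}--\eqref{est_J3}.} The plan is to treat $n=2$, $m=0$ exactly as in the proof of Lemma~\ref{lem:n<=1}, i.e.\ by writing down the elliptic equation for $\mr\phe_{0,2;k}=\chi_2 q^2\Gamma_k^2\phi_k^{(E)}$, applying the elliptic (maximal) regularity estimates from Lemma~\ref{lem:max_reg} (and its Neumann variants used in \eqref{n=1_J3}), and then carefully accounting for the various commutators $[\pa_{yy},\chi_2]$, $[\pa_{yy},q^2]$, $[\pa_{yy},\Gamma_k^2]$ that are generated, using \eqref{cm_pyy_qn}, \eqref{cm_pyy_G_n} together with the bounds on $\{\chi_n\}$ from \eqref{chi:prop:3} and the Gevrey coefficient comparison \eqref{gevbd1212}. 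First I would record the governing equation: applying the identity \eqref{ABCD_rel}-type bookkeeping with $A=\pa_{yy}$ to the triple product $\chi_2\,q^2\,\Gamma_k^2$ gives
\begin{align*}
\de_k \mr\phe_{0,2;k} = J_{0,2}^{(0)}\wt\ww_k^{(E)} + \mathbb{C}_{0,2}^{(I)} + [\pa_{yy},\chi_2](q^2\Gamma_k^2\phi_k^{(E)}) + \chi_2[\pa_{yy},q^2](\Gamma_k^2\phi_k^{(E)}) + \chi_2 q^2[\pa_{yy},\Gamma_k^2]\phi_k^{(E)},
\end{align*}
with the boundary condition $\mr\phe_{0,2;k}|_{y=\pm1}=0$ (and for the $a+b+c=3$ estimate, the derived Neumann-type condition $\pa_{yy}\mr\phe_{0,2;k}|_{y=\pm1}$ expressed via $v_{yy}\pav\phi_k^{(I)}|_{v=v(\pm1)}$, which is controlled by \eqref{bc_phI}). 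The $L^2$-level estimate \eqref{est_J2} then follows from \eqref{mx_rg}, reducing each commutator term to lower indices $(m,n)$ with $m+n\le 2$: the $[\pa_{yy},\chi_2]$ term drops to $n=2$ or $n=1$ Sobolev quantities already bounded by \eqref{J2_lv0} and Step \#3 of Lemma~\ref{lem:n<=1}; the $[\pa_{yy},q^2]$ term via \eqref{cm_pyy_qn} produces $J^{(a,b,c)}_{0,1}$ and $J^{(a,b,c)}_{0,0}$ pieces; and the $[\pa_{yy},\Gamma_k^2]$ term via \eqref{cm_pyy_G_n} brings coordinate-function factors $v'',\,v'''$ which are bounded by the $H^3$ control of $v_y$ in \eqref{v_y_asmp}.

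For the $H^1$-level estimate \eqref{est_J3}, I would differentiate the elliptic equation once in $\pa_y$, use the Neumann maximal-regularity bound \eqref{mx_rg_b}, and handle the new boundary contribution $|k|^{1/2}|v_{yy}\pav\phi_k^{(I)}|_{v=v(\pm1)}$ using \eqref{bc_phI} and the interior bound \eqref{phi_I_est_2}, which is exactly the source of the $|v_{yy}||_{y=\pm1}\|\wt\ww_k^{(I)}\|_{L^2_v}$ term on the right-hand side. A Gagliardo--Nirenberg trace estimate of the type \eqref{bc_bd} converts the remaining boundary terms into interior $H^2$ norms already controlled by \eqref{est_J2}. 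The interior contribution $\mathbb{C}_{0,2}^{(I)}$ and its $\pa_y$-derivative are estimated directly by Lemma~\ref{lem:R_I_est} (estimates \eqref{pv2-py2phI}, \eqref{ppv2-py2phI}), which accounts for the final product term on the right-hand side of \eqref{est_J3} involving $\sum_n \varphi^n\|J^{(0,b,0)}_{0,n}(v_y^2-1)\|_{L^2}$ multiplied by the low-regularity $\phi_k^{(E)}$ data.

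I expect the main obstacle to be the careful bookkeeping of the commutator $[\pa_{yy},\chi_2 q^2\Gamma_k^2]$ at the $H^1$ level: one must verify that every sub-term, after using \eqref{gevbd1212} to absorb the derivative-loss factors $(m+n)^{j(1+\sigma)}$ from $\chi_{m+n}$-derivatives into Gevrey weight gains $\lambda^{js}(m+n)^{-j\sigma_\ast}$, lands inside one of the $J$-norms with index sum $m+n\le 2$ and with the correct power of $q$ in the denominator (recalling that the weight $\tfrac{m+n}{q}$ is only available when $a+b+c\le n$, which constrains how the $[\pa_{yy},q^2]$ pieces can be distributed). Since here $m+n$ is bounded by $2$, the constants are all explicit and depend only on $\|v_y\|_{L^\infty_tH^3_y}$, $\|v_y^{-1}\|_{L^\infty_{t,y}}$, so there is no subtlety in summing a series; the work is purely in organizing the finitely many terms and matching them to the right-hand sides of \eqref{est_J2}--\eqref{est_J3}.
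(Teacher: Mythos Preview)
Your overall strategy matches the paper's: write the elliptic equation for $\mr\phe_{0,2;k}$, apply Lemma~\ref{lem:max_reg}, and reduce the three commutators to lower-order $J$-norms already controlled by Lemma~\ref{lem:n<=1}. Two specific points, however, are misidentified and would cause trouble if you carried them out as written.

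First, the boundary condition for the $H^1$ step. Because $q^2$ vanishes to second order at $y=\pm1$, one has $\pa_y\mr\phe_{0,2;k}|_{y=\pm1}=0$, so the \emph{Dirichlet} estimate \eqref{mx_rg} applies directly to $\pa_y\mr\phe_{0,2;k}$ and no boundary term is produced here. Your claimed Neumann data ``$\pa_{yy}\mr\phe_{0,2;k}|_{\pm1}=-v_{yy}\pav\phi_k^{(I)}|_{v(\pm1)}$'' is the formula for $\pa_{yy}\phe_k|_{\pm1}$ from the $n=0$ case, not for $\pa_{yy}\mr\phe_{0,2;k}|_{\pm1}$ (which actually equals $2(q')^2\Gamma_k^2\phe_k|_{\pm1}$). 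The term $|v_{yy}||_{\pm1}\|\wt\ww_k^{(I)}\|_{L^2_v}$ on the right of \eqref{est_J3} enters not from any boundary contribution at level $n=2$, but from invoking the lower-order bounds \eqref{est_n=0_J3}, \eqref{est_n=1_J3} when you estimate the commutator pieces $\pa_y\mathbb{T}_{2;0,2}$ and $\pa_y\mathbb{T}_{3;0,2}$.

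Second, the product term $\bigl(\sum_{n=0}^2\sum_{b=1}^2\varphi^n\|J^{(0,b,0)}_{0,n}(v_y^2-1)\|_{L^2}\bigr)\cdot{\bf a}_{0,0}^2(\|\mr\wwe_{0,0;k}\|_{L^2}^2+\|\ci_{0,0}\|_{L^2}^2)$ does not come from Lemma~\ref{lem:R_I_est}; the interior contribution $\pa_y\mathbb{C}^{(I)}_{0,2}$ is already covered by the $\|\mathbb{C}^{(I)}_{m,n}\|_{H^1_k}^2$ term in the first line of \eqref{est_J3}. The product term arises from $\pa_y\mathbb{T}_{1;0,2}$, i.e.\ the $[\pa_{yy},\Gamma_k^2]$ commutator at $H^1$ level, specifically from the piece $\varphi^3\|\pav(\chi_2 q^2\pav^3 Z)\,\pav\phe_k\|_{L^2}$ with $Z=v_y^2-1$. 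A naive bound here would require $\|\pav^4 Z\|_{L^2}$ in the implicit constant, exceeding the stated dependence on $\|v_y\|_{L^\infty_t H^3_y}$. The paper's resolution is to rewrite $\pav(\chi_2 q^2\pav^3 Z)$ as $J^{(0,2,0)}_{0,2}Z$ plus lower-order $J^{(0,b,0)}_{0,n}Z$ pieces ($n\le1$), put this factor in $L^2$, and pair it with $\|\pav\phe_k\|_{L^\infty}$ controlled by Gagliardo--Nirenberg and \eqref{est_n=0_J2}. This factorization---not mere bookkeeping---is the actual content of the ``main obstacle'' you anticipated.
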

\begin{proof}
We organize the proof in steps.

\noindent
\textbf{Step \# 1: Proof of \eqref{est_J2}.}  
\siming{\bf This part can be omitted once we make sure that \eqref{JtoJ_2} is true.} \textbf{Step \# 1a: $a\neq 0$ case.}
This case can only happen with $n=2$ thanks to the definition of $J_{m,n}^{(a,b,c)}$. There are three sub-cases to consider, i.e., $J_{0,2}^{(2,0,0)}\phe_k,\, J_{0,2}^{(1,1,0)}\phe_k,\, J_{0,2}^{(1,0,1)}\phe_k$. We observe that $J_{0,2}^{(2,0,0)}\phe_k$ is fairly direct to estimate with  $\eqref{est_J2}$ and ${\bf a}_{0,2}\lesssim \varphi^3$, i.e.,
\begin{align}\n 
{\bf a}_{0,2}&\|J_{0,2}^{(2,0,0)}\phe_k\|_2\lesssim\varphi^3\|\chi_2 \Gamma_k^2\phe_k\|_2
\lesssim \varphi^3\lf(\|\pa_{yy}\phe_k\|_2+\|\pa_y \phe_k\|_2+t\||k|\pa_y\phe_k\|_2+t^2\||k|^2\phe_k\|_2\rg)\\
\lesssim &{\bf a}_{0,0}\|\wwe_k\|_{L_y^2}+{\bf a}_{0,0}\|\ci_{0,0}\|_{L^2}. \label{J_200_02}
\end{align}
Here we first apply the bound $\|\vyn\|_\infty+\|v_{yy}\|_\infty\leq C$ and then the bound $t\varphi\lesssim 1$ \eqref{ineq:varphi}. 

For the $J_{0,2}^{(1,1,0)}\phe_k$ term, we estimate it as follows, 
\begin{align}\n
\n {\bf a}_{0,2}\|&J_{0,2}^{(1,1,0)}\phe_k\|_2\lesssim\varphi^3\|q^{-1}\pav (\chi_2q^2\Gamma_k^2\phe_k)\|_2\\
\lesssim&\varphi^3\lf(\|\Gamma_k^2 \phe_k\chi_2\|_2+\|q\Gamma_k^2\phe_k\chi_2'\|_2+\|q\pav(\Gamma_k^2\phe_k)\chi_2\|_2\rg)
=:T_{11}+T_{12}+T_{13}. \label{J110_02}
\end{align}
The $T_{11},\ T_{12}$ terms can be estimated as in \eqref{J_200_02}. By invoking $t\varphi\lesssim1$, we end up with the result 
\begin{align*}
T_{11}+T_{12}\lesssim{\bf a}_{0,0}(\|\wwe_k\|_{L_y^2}+\|\ci_{0,0}\|_{L^2}).
\end{align*}
Hence we focus on the last term $T_{13}$ in \eqref{J110_02}. By applying $\eqref{est_n=0_J2}_{M=0}$, $\eqref{est_n=1_J2}_{M=1}$ and the commutator estimates \eqref{cm_qn_pavj}, \eqref{wtq}, we have that
\begin{align*}\n 
T_{13}\lesssim& \varphi^3\|\chi_2[q,\pav\Gamma_k](\Gamma_k\phe_k)\|_2+\varphi^3\|\chi_2\pav\Gamma_k(\chi_1 q\Gamma_k\phe_k) \|_2 \\
\lesssim& {\bf a}_{0,0}\varphi^2\lf(\|[q,\pav^2 ]\Gamma_k\phe_k\|_2+|k|t\|[q,\pav]\Gamma_k\phe_k\|_2\rg)+\varphi {\bf a}_{0,1}\lf(\|\pav^2 \mr \phe_{0,1;k} \|_2+ t\|k\pav\mr \phe_{0,1;k} \|_{2}\rg)\\
\lesssim&{\bf a}_{0,0}\varphi^2\sum_{\ell=1}^2\|\pav^{2-\ell}\Gamma_k\phe_k\|_{2}+{\bf a}_{0,0}\varphi \||k|\Gamma_k\phe_k\|_2+\sum_{m+n\leq 1}{\bf a}_{m,n}\lf(\|J_{m,n}^{(0)}\wwe_{k}\|_2+\|\ci_{m,n}\|_{L_y^2}\rg)\\
\lesssim&\sum_{m+n\leq 1}{\bf a}_{m,n}\lf(\|J_{m,n}^{(0)}\wwe_{k}\|_2+\|\ci_{m,n}\|_{L_y^2}\rg). 
\end{align*}
To conclude, we have that 
\begin{align}
 {\bf a}_{0,2}\|J_{0,2}^{(1,1,0)}\phe_k\|_{L_y^2}\lesssim\sum_{m+n\leq 1}{\bf a}_{m,n}\lf(\|J_{m,n}^{(0)}\wwe_{k}\|_2+\|\ci_{m,n}\|_{L_y^2}\rg). \label{J110_02est}
\end{align}

We estimate the $J_{0,2}^{(1,0,1)}\phe_k$-term in a similar fashion. We observe that
\begin{align}\n
{\bf a}_{0,2}\|J_{0,2}^{(1,0,1)}\phe_k\|_{L_y^2}\lesssim&\varphi^3\|\chi_2 q |k|\Gamma_k^2\phe_k\|_{L_y^2}\\
\n \lesssim & \varphi^3\lf(\| |k|\Gamma_k(\chi_{1}q\Gamma_k\phe_k)\|_{L_y^2}+\||k| [q,\pav](\chi_1 \Gamma_k\phe_k)\|_{L_y^2}\rg)\\ \n
\lesssim&\varphi^3 \||k|\pav\mr\phe_{0,1;k}\|_{L_y^2}+\varphi^2 \||k|^2\mr\phe_{0,1;k}\|_{L_y^2}+\varphi^2\||k|\pav\phe_k\|_{L_y^2}+\varphi\||k|^2\phe_k\|_{L_y^2}\\
\lesssim &{\bf a}_{0,1}  \|\mr\wwe_{0,1;k}\|_{L_y^2}+{\bf a}_{0,0} \|\wwe_{k}\|_{L_y^2}+\sum_{m+n\leq 1}{\bf a}_{m,n}\|\ci_{m,n}\|_{L_y^2}. \label{J_101_02} 
\end{align}
This is consistent with \eqref{est_J2} and concludes the treatment. 


\noindent 
{\bf Step \# 1b: $a=0$ case.} Thanks to the definition, it is enough to consider the terms $J^{(0,b,c)}_{0,2}\phe_k,\,\, b+c=2$. Recall the equation $\eqref{T123}_{m=0,n=2\}}$ with the Dirichlet boundary condition 
\begin{align*}
\chi_2  q^2\Gamma_k^2\phe_k\big|_{y=\pm 1}=0.
\end{align*} Thanks to the relation \eqref{Jel_equiv}, it is enough to estimate $\|\mr\phe_{0,2;k}\|_{\dot H_k^2}$. 
The following estimate is a direct consequence of Lemma \ref{lem:max_reg}, 
\begin{align}
{\bf a}_{0,2} \|\chi_2q^2\Gamma_k^2 \phe_k\|_{\dot H_k^2}
\lesssim& \varphi^3 \|\mr\wwe_{0,2;k}\|_{L^2}+\varphi^3\|\ci_{0,2}\|_{L^2}+\sum_{j=1}^3\varphi^3 \|\mathbb{T}_{j,2}\|_{L^2} 
=: \sum_{\ell=1}^5\mathcal{T}_{1\ell}.\label{mxrg_lv2_I}
\end{align}
Here the $\mathbb{T}$-terms are defined in \eqref{T_i}. The $\mathcal{T}_{11}, \ \mathcal{T}_{12}$ terms are consistent with the estimate \eqref{est_J2}. Hence we focus on the rest of the terms.
To begin with, we expand the $\mathcal{T}_{13}$ term with \eqref{T_1} to obtain the following estimate
\begin{align}
\mathcal{T}_{13} \lesssim& \varphi^{3} \sum_{\ell=0}^{1}\lf(\|\chi_2 q^2 \pav^{2-\ell} (v_{y}^2-1)\ \pav^2\Gamma^\ell\phe_k\|_{L^2}+\|\chi_2 q^2\pav^{3-\ell} (v_{y}^2-1)\ \pav\Gamma_k^\ell\phe_k\|_{L^2}\rg).
\end{align}
Application of the estimates $t\varphi\leq C$ \eqref{ineq:varphi}, $\|\vyn\|_{L^\infty}+\|v_{y}\|_{W^{2,\infty}}+\|v_y\|_{H^{3}} \leq C$,  \eqref{wtq} and \eqref{cm_qn_pavj} yields that 
\begin{align*}
\mathcal{T}_{13} \lesssim &\varphi^{3} \sum_{\ell=0}^{1}\lf(\|\chi_2 q^{2-\ell} \pav^{2-\ell} (v_{y}^2-1)\ \pav^2(q^{\ell}\Gamma^\ell\phe_k)\|_{L^2}+\|\chi_2 q^2\pav^{3-\ell} (v_{y}^2-1)\ \pav\Gamma_k^\ell\phe_k\|_{L^2}\rg)\\
\lesssim&\varphi^{3} \sum_{\ell=0}^{1}\lf(\|\chi_2 q^{2-\ell} \pav^{2-\ell} (v_{y}^2-1)\ \pav^2(q^{\ell}\Gamma^\ell\phe_k)\|_{L^2}+\|\chi_2 q^{2-\ell} \pav^{2-\ell} (v_{y}^2-1)\ [\pav^2,q^{\ell}]\Gamma^\ell\phe_k\|_{L^2}\rg.\\
&\qquad\qquad\lf.+\|\chi_2 q^{2-\ell}\pav^{3-\ell} (v_{y}^2-1)\ \pav(q^\ell\Gamma_k^\ell\phe_k)\|_{L^2}+\|\chi_2 q^{2-\ell}\pav^{3-\ell} (v_{y}^2-1)\ [\pav,q^\ell]\Gamma_k^\ell\phe_k\|_{L^2}\rg)\\
\lesssim&\varphi^{3-\ell} \sum_{\ell=0}^{1}\|\pav^{2-\ell}(v_y^2-1)\|_{L^\infty}\ \varphi^\ell\| \pav^2(\chi_{\ell}q^{\ell}\Gamma^\ell\phe_k)\|_{L^2}+\varphi^{2}\sum_{\ell=0}^1\|\pav^{2-\ell}(v_y^2-1)\|_{L^\infty}\varphi\|\pav^\ell\Gamma\phe_k\|_{L^2}\\
&+\varphi^2\|\pav^{2} (v_{y}^2-1)\|_{L^\infty}\ \varphi\| \pav(\chi_1 q\Gamma_k\phe_k)\|_{L^2}+\varphi^2\|\pav^{2} (v_{y}^2-1)\|_{L^\infty}\ \varphi \|\Gamma_k\phe_k\|_{L^2}\\
&+\varphi^3\|\pav^{3} (v_{y}^2-1)\|_{L^2}\| \pav\phe_k\|_{L^\infty}.
\end{align*} 
Now we apply the Gagliardo-Nirenberg interpolation, the bound \eqref{est_n=0_J2} to obtain 
\begin{align}
\mathcal{T}_{13}\lesssim \sum_{m+n=0}^1{\bf a}_{m,n}( \| \wwe _{m,n}\|_{L_y^2}+\|\ci_{m,n}\|_{L_y^2}).\label{mxlv2I2}
\end{align} 
For the $ \mathcal{T}_{14}$ and $ \mathcal{T}_{15}$ terms in \eqref{mxrg_lv2_I}, we  implement similar arguments to get the result, 
\begin{align} 
\n \mathcal T_{ 14}=&\varphi^3\|\mathbb{T}_{2,2}\|_2=\varphi^3\|\chi_2(2 (q')^2+2q  q''+4q q'\pa_y)\Gamma_k^2\phe_k\|_{L^2}\\
\n \lesssim&
\varphi^3\lf\| \chi_2  \Gamma_k^2\phe_k\rg\|_2+
\varphi^3 \lf\|\chi_2  q \Gamma_k^2\phe_k \rg\|_2\\
\n&+ \varphi^3 \lf\|   \pav\Gamma_k(\chi_{ 1}q\Gamma_k \phe_k)\rg\|_2+\varphi^3 \lf\|  [q, \pav\Gamma_k](\chi_{ 1}\Gamma_k \phe_k)\rg\|_2\\ 
\lesssim& \varphi^2 \|\chi_{1}q \Gamma _k\phe_k\|_{\dot H_k^2}+ \varphi\|\phe_k\|_{\dot H_k^2}\lesssim \sum_{n=0}^1{\bf a}_{0,\ell} \|\mr\wwe_{0,\ell;k}\|_{L_y^2}+\sum_{m+n\leq 1}{\bf a}_{m,n}\|\ci_{m,n}\|_{L^2_y};\label{mxlv2I3}\\
\n \mathcal{T}_{15}=&\varphi^3\|\mathbb{T}_{3,2}\|_2=\varphi^3\|(2\chi_2'\pa_y+\chi_2'') q^2\Gamma_k^2 \phe_k\|_2\\
 \n\lesssim&\varphi^3\| \chi_2'\pa_y ( q \Gamma_k(\chi_1 q\Gamma_k  \phi_k))\|_2+\varphi^3\| \chi_2'\pa_y (q [ q,\Gamma_k](\chi_1 \Gamma_k  \phe_k))\|_2\\
 \n&+\varphi^3\| \chi_2''( q \Gamma_k(\chi_1 q\Gamma_k  \phi_k))\|_2+\varphi^3\| \chi_2''( [q, \Gamma_k](\chi_1 q\Gamma_k  \phe_k))\|_2\\
\lesssim&\varphi^2\|\chi_{1}q \Gamma _k\phe_k\|_{\dot H_k^2}\lesssim \sum_{\ell=0}^1{\bf a}_{0,\ell} \|\mr\wwe_{0,\ell;k}\|_{L_y^2}+\sum_{m+n\leq 1}{\bf a}_{m,n}\|\ci_{m,n}\|_{L_y^2}.\label{mxlv2I4}
\end{align} 
Combining the estimates above yields the estimate 
\begin{align}
{\bf a}_{0,2}^2\|\chi_2q^2\Gamma_k^2\phe_k\|_{\dot H_k^2}^2\lesssim \sum_{m+n\leq 2}{\bf a}_{m,n}^2\|\wwe_{m,n} \|_{L_y^2}^2+\sum_{m+n\leq 2}{\bf a}_{m,n}^2\|\ci_{m,n}\|_{L_y^2}^2.
\end{align}
This concludes the proof of $\eqref{est_J2}$. 

\siming{It seems that we skip various terms below. }

\noindent
\textbf{Step \# 2: Proof of the estimate \eqref{est_J3}. }
\ifx
{\color{blue} {\bf This step can be omitted if we make sure that \eqref{JtoJ_2} is true.}
\noindent 
\textbf{Step \# 2a: $a\neq 0$ case.}  We start by considering the quantity 
\begin{align}
J^{(3,0,0)}_{0,3}\phe_k =\frac{27}{q^3}\chi_3 q^3\Gamma_k^3 \phe_k =27\chi_3\Gamma_k^3\phe_k.
\end{align} \ifx
To estimate this, we recall the elliptic equation $\de_k\phe_k=\wwe_k+(\pav^2-\pa_y^2)\phi_k^{(I)}$, and obtain that
\begin{align}
\de_k\pa_y\phe_k=\pa_y\wwe_k+\pa_y(\pav^2-\pa_y^2)\phi_k^{(I)},\quad \de_k ik\phi_k=ik\ww_k.
\end{align}
Since $\phi_k(\pm 1)=0,\ \ \pa_{yy}\phi_k(\pm1)=\ww_k(\pm 1)+|k|^2\phi_k(\pm 1)=0$,   the estimate \eqref{mx_rg} yields that the following estimate is satisfied 
\begin{align}\label{Gaphi_H2}
\|\pa_y\phi_k\|_{\dot H_k^2}\lesssim \|\pa_y\ww_k\|_{L^2},\quad \|k\phi_k\|_{\dot H^2_k}\lesssim \|k\ww_k\|_{L^2}.
\end{align}\fi
Application of $\eqref{est_J3}_{j=0}$ yields that 
\begin{align*}
{\bf a}_{0,3}\|J^{(3,0,0)}_{0,3}\phe_k\|_{L^2}\lesssim&\varphi^4\|\chi_3\Gamma_k^3 \phe_k\|_{L_y^2} 
\lesssim  \sum_{\ell=0}^3\varphi^4t^{3-\ell}\|\pav^\ell|k|^{3-\ell} \phe_k\|_{L^2}\\
\lesssim& {\bf a}_{0,0}\|\wwe_k\|_{L_y^2}+|v_{yy}|\big|_{y=\pm1}\|\ww_k^{(I)}\|_{L_v^2}+\sum_{m+n\leq 3}{\bf a}_{m,n}\|\ci_{m,n}\|_{H^1_k}.
\end{align*}
For the other $a>0$ cases, we can use Lemma \ref{lem:induction} to derive the estimates. Hence we omit the details. }
\fi
We derive the estimate for the $J_{0,2}^{(0,b,c)}\phe_k$ terms with $b+c=3$. \siming{If \eqref{JtoJ_2} holds, then we might not need to worry about the others. }

First of all, thanks to \eqref{est_J2}, we have that 
\begin{align}
\sum_{b+c= 2}{\bf a}_{0,2}^2\|J^{(0,b,c+1)}_{0,2}\phe_k\|_{L_y^2}^2  
\lesssim &\sum_{m+n=0}^{2}{\bf a}_{m,n}^2\lf(\||k|J_{m,n}^{(0)}\wt \ww ^{(E)}_{k}\|_{L_y^2}^2 +\||k|\mathbb{C}_{m,n}^{(I)}\|_{L_y^2}^2\rg).\label{J_3-k}
\end{align} 
To obtain the remaining components of $J_{0,2}^{(3)} \phe_k$, we focus on the quantity $\pa_y (\mr\phe_{0,2;k})$. Since there is $q^2$ factors in the expression, the boundary condition $\pa_y (\mr\phe_{0,2;k})(y=\pm 1)=0$ holds, and by the estimate \eqref{mx_rg} and the equation \eqref{T123}, we obtained the following:
\begin{align}
\n{\bf a}_{0,2}\|\pa_y(\mr\phe_{0,2;k})\|_{\dot H_k^2}\lesssim &\varphi^3\|\pa_y \mr\wwe_{0,2;k}\|_{L^2}+\varphi^3\|\pa_y\ci_{0,2}\|_{L^2}+\varphi^3\|\pa_y \mathbb{T}_{1,2}\|_{L^2}+\varphi^3\|\pa_y \mathbb{T}_{2,2}\|_{L^2}
+\varphi^3\|\pa_y \mathbb{T}_{3,2}\|_{L^2}\\
=:&\sum_{j=1}^5 T_{2j}.\label{J3_T2}
\end{align}
The $T_{21},\, T_{22}$ terms are consistent with \eqref{est_J3}. We estimate the $T_{23}$-term by applying \eqref{est_phen<=1} as follows,
\begin{align}\n
&T_{23} 
\lesssim\varphi^3\lf\|v_y\pav\lf(\chi_2 q^2 \left(\sum_{\ell=0}^{1}\binom{2}{\ell} \left(2\pav^{1-\ell}v_{yy}\ \pav^2 +\pav^{2-\ell}v_{yy} \ \pav\right)\Gamma_k^{\ell}\right)\phe_k\rg) \rg\|_{L^2} \\
\n&\lesssim \varphi^3\sum_{\ell=0}^{1} \lf\|\pav\lf(\chi_2 q^2 \pav^{1-\ell}v_{yy}\ \pav^2(\chi_\ell\Gamma_k^{\ell}\phe_k)\rg) \rg\|_{L^2}+\varphi^3\sum_{\ell=0}^1\lf\|\pav\lf(\chi_2 q^2\pav^{2-\ell}v_{yy} \ \pav(\chi_\ell\Gamma_k^{\ell}\phe_k)\rg) \rg\|_{L^2}\\
\n&\lesssim\varphi^3 \lf\|\pav\lf(\chi_2  v_{yy}\ q[q,\pav^2]\Gamma_k\phe_k\rg) \rg\|_{L^2}+\varphi^3 \sum_{\ell=0}^{1} \lf\|\pav\lf(\chi_2   \pav^{1-\ell}v_{yy}\ q^{2-\ell} \pav^2(\chi_\ell q^{\ell}\Gamma_k^{\ell}\phe_k)\rg) \rg\|_{L^2}\\
\n &\quad+\varphi^3\lf\|\pav\lf(\chi_2 \pav v_{yy} \ q[q,\pav]\Gamma_k\phe_k\rg) \rg\|_{L^2}+\varphi^3\sum_{\ell=0}^1\lf\|\pav\lf(\chi_2\pav^{2-\ell}v_{yy} \  q^{2-\ell}\pav(\chi_\ell q^\ell \Gamma_k^{\ell}\phe_k)\rg) \rg\|_{L^2}\\
&=:\sum_{j=1}^4T_{23j}.\label{T23}
\end{align}
Here the last term with $\ell=0$ costs the most regularity on $v_y$. We recall the definition $Z=v_y^2-1$, and estimate it as follows:
\begin{align*}
T_{234}&\mathbbm{1}_{\ell=0}\lesssim\varphi^3\lf\|\pav\lf(\chi_2q^{2}\pav^{3}Z\rg) \  \pav \phe_k\rg\|_{L^2}+\varphi^3\lf\|\pav^{3}Z \  \chi_2q^{2}\pav^2 \phe_k\rg\|_{L^2}.
\end{align*}
Now rewrite the first term:
\begin{align*}
\pav&\lf(\chi_2 q^2\pav^3 Z\rg)=\pav^2\lf(\chi_2 q^2\pav^2Z\rg)+\pav\lf([\chi_2 q^2,\pav]\pav^2Z\rg)\\
&=J^{(0,2,0)}_{0,2}Z-\pav\lf(\pav \chi_2\  q^2 \pav^2Z+2\chi_2\pav q\ q\pav^2Z\rg)\\
&=J^{(0,2,0)}_{0,2}Z-\pav\lf((\pav\chi_2\   q+2\chi_2\ \pav q) \lf( \pav\lf(\chi_1q\pav Z\rg)+ [\pav,q]\pav Z\rg)\rg)\\
&=J^{(0,2,0)}_{0,2}Z-(\pav\chi_2\   q+2\chi_2\ \pav q)J^{(0,2,0)}_{0,1}Z-\pav(\pav\chi_2\ q+2\chi_2\ \pav q)J^{(0,1,0)}_{0,1}Z\\
&\quad-\pav(\pav \chi_2 q\pav q+2\chi_2(\pav q)^2)J^{(0,1,0)}_{0,0}Z- (\pav \chi_2 q\pav q+2\chi_2(\pav q)^2)J^{(0,2,0)}_{0,0}Z.
\end{align*}
Hence the term $T_{234}\mathbbm{1}_{\ell=0}$ can be estimated through Gagliardo Nirenberg interpolation and the estimate \eqref{est_n=0_J2},
\begin{align*}
T_{234}\mathbbm{1}_{\ell=0}\lesssim& \lf(\sum_{n=0}^2\sum_{b=1}^2\varphi^n\|J^{(0,b,0)}_{0,n}Z\|_{L^2}\rg)\lf(\sum_{b+c=2}{\bf a}_{0,0}\|J^{(0,b,c)}_{0,0}\phe_k\|_{L^2}\rg)+{\bf a}_{0,0}\varphi^2\|J^{(0,2,0)}_{0,0}\phe_k\|_{L^2}\\
\lesssim& \lf(\sum_{n=0}^2\sum_{b=1}^2\varphi^n\|J^{(0,b,0)}_{0,n}Z\|_{L^2}+1\rg) {\bf a}_{0,0}^2\lf(\| \mr\wwe_{0,0;k} \|_{L^2}^2+\|\ci_{0,0}\|_{L_y^2}^2\rg).
\end{align*}
Here the implicit constant depends on $\|\vyn\|_{L_{t,y}\infty},\ \|v_y\|_{L_t^\infty H_y^3}$. 
Other terms in \eqref{T23} can be estimated as follows
\begin{align*}
\sum_{j=1}^3T_{23j}+\mathbbm{1}_{\ell=0}T_{234}&\lesssim \sum_{m+n=0}^2{\bf a}_{m,n}\lf(\|J_{m,n}^{(0)}\wwe_{k}\|_{H_k^1}+\|\ci_{m,n}\|_{H_k^1}\rg) +|v_{yy}|\big|_{y=\pm 1}\|\oi\|_{L_v^2}.
\end{align*}
Combining the decomposition \eqref{T23} and the estimate we obtained, we have that $T_{23}$ has a bound which is consistent with \eqref{est_J3}
The estimate of the $T_{24}$-term in \eqref{J3_T2} is as follows,
\begin{align*}
T_{24} 
\lesssim&\varphi^3\|\pav(\chi_2(2(q')^2+2q q''+4q q'\pa_y)\Gamma_k^2\phe_k)\|_{L^2}\\
\lesssim&\varphi^3\|\pav\Gamma_k^2\phe_k\|_{L^2}+\varphi^3 \| \chi_2 q  \pav\Gamma_k^2\phe_k\|_{L^2}+\varphi^3\|\pav(qq'')\Gamma_k^2\phe_k\|_{L^2}\\
&+\varphi^3\|\chi_2v_yq q'\pav\Gamma_k^2\phe_k\|_{L^2}+\varphi^3\|\pav(v_yq q')\Gamma_k^2\phe_k\|_{L^2}\\
\lesssim& \varphi^3\sum_{b+c\leq 2}|kt|^c\|\pa_y^{1+b}\phe_k\|_{L^2}+\varphi^3\|\chi_2\pav\Gamma_k(\chi_1 q\Gamma_k\phe_k)\|_{L^2}+\varphi^3\|\chi_2[q,\pav\Gamma_k](  \Gamma_k\phe_k)\|_{L^2}\\
\lesssim& \varphi^3\sum_{b+c\leq 2}|kt|^c\|\pa_y^{1+b}\phe_k\|_{L^2}+\varphi^3\sum_{b+c\leq1}(|k|t)^c\|\pa_y^b\mr\phe_{0,1;k}\|_{L^2}\\
\lesssim& {\bf a}_{0,0}\|\wwe _k\|_{H_k^1}+{\bf a}_{0,1} \|\mr\wwe _{0,1;k}\|_{H_k^1}+|v_{yy}|\big|_{y=\pm1}\|\oi_k\|_{L_v^2}+\sum
_{m+n=0}^{ 2}{\bf a}_{m,n}\|\ci_{m,n}\|_{H_k^1}.
\end{align*}
The estimate of the $T_{25}$-term in \eqref{J3_T2} is as follows,
\begin{align*}
T_{25}\lesssim&\varphi^3\|\pa_y(2\chi_2'\pa_y+\chi_2'') q^2\Gamma_k^2 \phe_k\|_{L^2}\\
\lesssim&\varphi^3\|\pav(\chi_2'v_y\pav (q^2\Gamma_k(\chi_1\Gamma_k \phe_k))\|_{L^2}+\varphi^3\|\pav( \chi_2''q^2\Gamma_k(\chi_1\Gamma_k \phe_k))\|_{L^2}\\
\lesssim&\varphi^3\|\pav(\chi_2'v_y\pav \Gamma_k(\chi_1q\Gamma_k \phe_k))\|_{L^2}+\varphi^3\|\pav(\chi_2'v_y\pav (q [q,\pav](\chi_1\Gamma_k \phe_k))\|_{L^2}\\
&+\varphi^3\|\pav( \chi_2''q \Gamma_k(\chi_1q\Gamma_k \phe_k))\|_{L^2}+\varphi^3\|\pav( \chi_2''q[q,\pav](\chi_1\Gamma_k \phe_k))\|_{L^2}\\
\lesssim&{\bf a}_{0,1}\|\mr\wwe_{0,1;k}\|_{H^1_k}+|v_{yy}|\big|_{y=\pm 1}\|\oi_k\|_{L_v^2}+\sum_{m+n=0}^{ 2}{\bf a}_{m,n}\|\ci_{m,n}\|_{H_k^1}.
\end{align*}
Combining the estimates above and the decomposition \eqref{J3_T2}, we have that
\begin{align*}
{\bf a}_{0,2}\|\pa_y\mr\phe_{0,2;k}\|_{H_k^2}\lesssim &\sum_{m+n=0}^2{\bf a}_{m,n}\lf(\|\pa_y \wwe_{m,n}\|_{L_y^2}+\|\ci_{m,n}\|_{H_k^1}\rg)+|v_{yy}|\big|_{y=\pm1}\|\oi_k\|_{L_v^2}\\ &+ \lf(\sum_{n=0}^2\sum_{b=1}^2\varphi^n\|J^{(0,b,0)}_{0,n}Z\|_{L^2}\rg) {\bf a}_{0,0}^2\lf(\| \mr\wwe_{0,0;k} \|_{L^2}^2+\|\ci_{0,0}\|_{L_y^2}^2\rg).
\end{align*}
Combining this and \eqref{J_3-k} yields \eqref{est_J3}.
\end{proof}
\subsubsection{Higher Regularity Bounds in Gevrey Spaces}
\label{sub2sct:h_rg}
\begin{lemma}\label{lem:ellp>3}
Let $M$ be an arbitrary natural number greater than $3$. The following estimate holds
\begin{align}
\n
\sum_{m+n=3}^{M}&\mathbbm{1}_{n\geq 2}{\bf a}_{m,n}^2\sum_{b+c=2} \|J^{(0,b,c)}_{m,n} \phe_k\|_{L^2} ^2 \\ \n
\lesssim&  \sum_{m+n=0}^M  {\bf a}_{m,n}^2\|J_{m,n}^{(0)} \wwe_{k} \|_{L^2}^2+\sum_{m+n=0}^M  {\bf a}_{m,n}^2\| \ci_{m,n} \|_{L^2}^2 +\lambda^{2s} \sum_{m+n=0}^{M-1}{\bf a}_{m,n}^2\sum_{a+b+c=2}\|J^{(a,b,c)}_{m,n}\phe_k\|_{L^2}^2\\
 &+   \lf(\sum_{n_1=0}^{M}{B}_{0,n_1}^2\varphi^{2n_1} \lf\|J^{(\leq 1)}_{0,n_1}Z\rg\|_{L^2}^2\rg)\lf(\sum_{m+n_2=0}^{M} {\bf a}_{m,n_2}^2\lf\| J^{(\leq 2)}_{m,n_2}\phe_k\rg\|_{L^2}^2\rg). \label{J2_est} 
\end{align}
Here ${\bf a}_{m,n}$ is defined in \eqref{a:weight:neq}, $B_{m,n}$ is defined in \eqref{Bweight}, $J^{(\leq j)}_{m,n}f_k$ is defined in \eqref{J_vec}, and $Z$ is the short hand notation for $Z=v_y^2-1$. Here the implicit constant is independent of $M$ and hence we can take the limit as $M$ approaches $\infty.$ 
\end{lemma}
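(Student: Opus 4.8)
The plan is to treat $\phmn = \chi_{m+n}|k|^m q^n \Gamma_k^n \phe_k$ as the solution of the Poisson problem \eqref{T123}, which for every $n\ge 1$ carries homogeneous Dirichlet data (thanks to the factor $q^n$), and then to combine the maximal‑regularity elliptic estimate (the variant of Lemma \ref{lem:max_reg} already used above) with the norm equivalence \eqref{Jel_equiv}: for $k\neq 0$ one has $\sum_{b+c=2}\|J^{(0,b,c)}_{m,n}\phe_k\|_{L^2}\approx \|\phmn\|_{\dot H^2_k}\lesssim \|\Delta_k\phmn\|_{L^2}$, with constants depending only on $\|v_y\|_{L^\infty_t H^3_y}$ and $\|v_y^{-1}\|_{L^\infty}$, i.e.\ exactly the quantities controlled by \eqref{asmp}. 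Squaring, multiplying by $\bold{a}_{m,n}^2$ and summing over $n\ge 2$, $3\le m+n\le M$ reduces \eqref{J2_est} to bounding $\sum \bold{a}_{m,n}^2\|\cdot\|_{L^2}^2$ of the five source terms of \eqref{T123}: the forcing $\chi_{m+n}|k|^m q^n\Gamma_k^n\wwe_k = J^{(0)}_{m,n}\wwe_k$, the interior term $\ci_{m,n}$, and the commutators $\mathbb{T}_{1;m,n},\mathbb{T}_{2;m,n},\mathbb{T}_{3;m,n}$ of \eqref{T_1}--\eqref{T_3}.

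The first two source terms are literally the first two terms on the right of \eqref{J2_est}, so nothing is required. For $\mathbb{T}_{2;m,n}$ and $\mathbb{T}_{3;m,n}$ I would expand each summand: all coefficients that occur ($q'$, $q''$, $\chi'_{m+n}$, $\chi''_{m+n}$, and $v_y^{\pm 1}$) are bounded, the cut‑off derivatives contribute at most a factor $(m+n)^{j(1+\sigma)}$ supported on $\chi_{m+n-j}$ by \eqref{chi:prop:3}, and any factor of $t$ produced when writing $\Gamma_k=\pav+ikt$ is absorbed by $t\varphi\lesssim 1$ from \eqref{ineq:varphi}. After moving one power of $q$ outward, these terms are — up to bounded‑coefficient, lower‑order pieces — of the form $J^{(a,b,c)}_{m,n-1}\phe_k$ with $a+b+c\le 2$, and the Gevrey‑coefficient inequality \eqref{gevbd1212} turns $(m+n)^{2+2\sigma}B_{m,n}$ into $\lambda^{2s}(m+n)^{-2\sigma_\ast}B_{m-\ell_1,n-\ell_2}$ with $\ell_1+\ell_2\le 2$; this produces precisely the third term $\lambda^{2s}\sum_{m+n\le M-1}\bold{a}_{m,n}^2\sum_{a+b+c=2}\|J^{(a,b,c)}_{m,n}\phe_k\|_{L^2}^2$ of \eqref{J2_est}.

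The main term is $\mathbb{T}_{1;m,n}$. Using $v''=\tfrac12\pav Z$ with $Z:=v_y^2-1$, rewrite \eqref{T_1} as
\[
\mathbb{T}_{1;m,n}=\chi_{m+n}|k|^m q^n\sum_{\ell=0}^{n-1}\binom{n}{\ell}\lf(\pav^{n-\ell}Z\ \pav^2\Gamma_k^\ell\phe_k+\tfrac12\pav^{n-\ell+1}Z\ \pav\Gamma_k^\ell\phe_k\rg).
\]
Split $q^n=q^{n-\ell}q^\ell$ and commute $q^\ell$ inside $\pav^2\Gamma_k^\ell$ via \eqref{cm_pv_qn} and \eqref{cm_pvv_qn}, the commutators only generating further $\tfrac{\ell}{q}$‑weighted terms of $J^{(\le 2)}_{m,\ell}$‑type, so that each summand becomes $\bigl(q^{n-\ell}\pav^{n-\ell}Z\bigr)\cdot\bigl(\text{a }J^{(\le 2)}_{m,\ell}\text{-quantity}\bigr)$, up to a possible extra $\pav$ on $Z$. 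Then split the $\ell$‑sum at $\ell=n/2$. For $\ell\ge n/2$, the coefficient ratio $\binom{n}{\ell}\bold{a}_{m,n}/(B_{0,n-\ell}\bold{a}_{m,\ell})$ is $\le \varphi^{n-\ell}\binom{n}{\ell}^{-s}$ as in \eqref{py-pvphi_rel}; pairing an $L^\infty_v$ Sobolev bound on $Z$ (controlled by \eqref{asmp}) with the $L^2$ norm of the $\phe$‑factor, summing the resulting convolution with $\sum_\ell\binom{n}{\ell}^{-2\sigma-2\sigma_\ast}\lesssim 1$ by \eqref{prod}, and using $n-\ell\ge 1$ to extract one $\lambda^s$ from $B_{0,n-\ell}$, this piece is absorbed into $\lambda^{2s}$ times the third term of \eqref{J2_est}. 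For $\ell<n/2$ (so $n-\ell$ large), $q^{n-\ell}\pav^{n-\ell}Z$ together with its extra derivative is exactly a $J^{(\le 1)}_{0,n-\ell}Z$‑type quantity, and a H\"older split followed by the convolution Lemma \ref{con:no:k} bounds the contribution by $\bigl(\sum_{n_1\le M}B_{0,n_1}^2\varphi^{2n_1}\|J^{(\le 1)}_{0,n_1}Z\|_{L^2}^2\bigr)\bigl(\sum_{m+n_2\le M}\bold{a}_{m,n_2}^2\|J^{(\le 2)}_{m,n_2}\phe_k\|_{L^2}^2\bigr)$, the fourth term; a Fubini interchange in $(n,\ell)$ and the boundedness of $\{\theta_n\}$ close the estimate, with all constants independent of $M$.

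The step I expect to be the main obstacle is precisely this treatment of $\mathbb{T}_{1;m,n}$: one must simultaneously propagate the Gevrey weights through the splitting $q^n=q^{n-\ell}q^\ell$ and the commutator identities \eqref{cm_pv_qn}, \eqref{cm_pvv_qn}; respect the non‑standard definition of $J^{(a,b,c)}_{m,n}$, in which the boundary weight $(\tfrac{m+n}{q})^a$ is inserted only when $a+b+c\le n$ — which is why the statement is restricted to $n\ge 2$ and the errors are collected as full $J^{(\le 2)}$‑norms; and make sure every error term genuinely carries one fewer copy of $\Gamma_k$ or $\pav$, so that, when \eqref{J2_est} is later combined with the low‑$n$ Lemma \ref{lem:n<=1} and summed, the $\lambda^{2s}$‑term can be absorbed for $\lambda$ small. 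Making the $\ell=n/2$ case split dovetail with the two distinct mechanisms — pure Gevrey gain on $\phe$ versus a genuine product estimate against the coordinate norm of $Z=v_y^2-1$ — is the delicate bookkeeping point.
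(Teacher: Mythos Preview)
Your overall architecture is correct and matches the paper: apply the maximal regularity estimate \eqref{mx_rg_v} to the Poisson equation \eqref{T123} for $\phmn$, use the equivalence \eqref{Jel_equiv}, and estimate each of the five right-hand side terms. Your treatment of $\mathbb T_{2;m,n}$ and $\mathbb T_{3;m,n}$ is right and is exactly what the paper does in its Steps~2 and~3: both land in the $\lambda^{2s}$ term via \eqref{gevbd1212} and the commutator identities \eqref{cm_J}.

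The gap is in your handling of $\mathbb T_{1;m,n}$ for $\ell\ge n/2$. You claim that after the coefficient ratio \eqref{py-pvphi_rel} you can use an $L^\infty$ Sobolev bound on $q^{n-\ell}\pav^{n-\ell}Z$ ``controlled by \eqref{asmp}'' and then extract $\lambda^{2s}$ from $B_{0,n-\ell}^2$ to land in the third term of \eqref{J2_est}. But $n-\ell$ ranges up to $\lfloor n/2\rfloor$, which is unbounded, and the hypotheses \eqref{asmp} only control the \emph{weighted} Gevrey sums $\sum_j B_{0,j}^2\varphi^{2j}\|J^{(\le 1)}_{0,j}Z\|^2$, not the stronger sum $\sum_j \lambda^{-2s}B_{0,j}^2\varphi^{2j}\|J^{(\le 1)}_{0,j}Z\|^2$ that remains after you strip off $\lambda^{2s}$. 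So this piece cannot be absorbed into the third term with an implicit constant of the type stated.

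The paper's fix --- and the simplest one --- is to drop the $\ell=n/2$ split entirely. For every $\ell$ it puts the $Z$-factor (or its $H^1$ upgrade via Sobolev) in $L^2$ paired with the Gevrey weight $B_{0,n-\ell}\varphi^{n-\ell}$, and the $\phe$-factor in $L^2$ paired with $\bold a_{m,\ell}$; after Cauchy--Schwarz in $\ell$ with \eqref{prod} and a Fubini interchange (exactly as in \eqref{J2_T1_h}), the whole of $\mathbb T_{1;m,n}$ lands in the \emph{product} term of \eqref{J2_est}. The third ($\lambda^{2s}$) term is fed only by $\mathbb T_{2;m,n}$ and $\mathbb T_{3;m,n}$. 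Your $\ell<n/2$ argument already produces this product structure, so redirecting the $\ell\ge n/2$ half to the same destination closes the gap immediately.
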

\ifx
 \chi_n q^n\pav^n(\siming{And $B_{n-\ell}\varphi^{n-\ell}$ should be the $B_{0,n-\ell}$ in Fei's section.})
There are corresponding estimates for $b+c=3:$
\begin{align}\n
\sum_{m+n=3}^{m+n=M}&{\bf a}_{m,n}\sum_{b+c=3} \|J^{(0,b,c)}_{m,n} \phe_k\|_{L^2}  \\ \n
\lesssim&  \sum_{m+n=0}^M\sum_{b+c=3} {\bf a}_{m,n}\|J^{(0,b,c)}_{m,n}\wwe_k\|_{L^2}\\ \n 
&+ \sum_{m+n=0}^{M} \sum_{a+b=3}M^{-\sigma }\sum_{\ell=1}^{n}B_{0,n-\ell}\varphi^{n-\ell}\myr{\lf(\lf\|\chi_{n-\ell}q^{n-\ell}\pav^{n-\ell} v_y^2\rg\|_{L^\infty}+ \lf\|\chi_{n-\ell}q^{n-\ell}\pav^{n-\ell+1} v_y^2\rg\|_{L^2}\rg)}\\ \n
&\quad\quad \times  {\bf a}_{m,\ell}\lf\| J^{(a,b,0)}_{m,\ell}\phe_k\rg\|_{L^2}\\
&+ \sum_{m+n=0}^{M} \sum_{a+b=3}\varphi^n\myr{\|\chi_{n}q^n\pav^{n+1}v_y^2\|_{L^2}}{\bf a}_{m,\ell}\|\chi_m|k|^m\phe_k\|_{L^2}.\label{J3_bg}
\end{align}
\begin{align}
\sum_{m+n=3}^{m+n=M}&{\bf a}_{m,n}\sum_{b+c=4} \|J^{(0,b,c)}_{m,n} \phe_k\|_{L^2}  \\
\lesssim&  \sum_{m+n=0}^M\sum_{b+c=4} {\bf a}_{m,n}\|J^{(0,b,c)}_{m,n}\wwe_k\|_{L^2}\\
&+ \sum_{m+n=0}^{M} \sum_{a+b=2}M^{-\sigma }\sum_{\ell=1}^{n}B_{0,n-\ell}\varphi^{n-\ell}\myr{\lf(\lf\|\chi_{n-\ell}q^{n-\ell}\pav^{n-\ell} v_y^2\rg\|_{L^\infty}+ \lf\|\chi_{n-\ell}q^{n-\ell}\pav^{n-\ell+1} v_y^2\rg\|_{L^2}\rg)}\\
&\quad\quad \times  {\bf a}_{m,\ell}\lf\| J^{(a,b,0)}_{m,\ell}\phe_k\rg\|_{L^2}\\
&+ \sum_{m+n=0}^{M} \sum_{a+b=4}\varphi^n\myr{\|\chi_{n}q^n\pav^{n+1}v_y^2\|_{L^2}}{\bf a}_{m,\ell}\|\chi_m|k|^m\phe_k\|_{L^2}.
\end{align}
\fi
\begin{proof}
Thanks to the equation \eqref{T123} and the maximal regularity estimate \eqref{mx_rg_v}, we have that, 
\begin{align}\n
&\sum_{m+n=3}^M\mathbbm{1}_{n\geq 2}\sum_{b+c=2}{\bf a}_{m,n}^2\|J^{(0,b,c)}_{m,n}\phe_k\|_{L^2}^2\\
&\lesssim \sum_{m+n=3}^{M}\mathbbm{1}_{n\geq 2}{\bf a}_{m,n}^2\lf(\|J_{m,n}^{(0)}\wwe_{k}\|_{L^2}^2+\|\ci_{m,n}\|_{L^2}^2+\|\mathbb{T}_{1;m,n}\|_{L^2}^2+\|\mathbb{T}_{2;m,n}\|_{L^2}^2+\|\mathbb{T}_{3;m,n}\|_{L^2}^2\rg).\label{J2_>3_T}
\end{align}
The first two terms on the right hand side are consistent with the bound \eqref{J2_est}. Hence, we focus on estimating the $\mathbb{T}_{i;m,n}$-terms. 

\noindent 
{\bf Step \# 1: }We start by considering the $\mathbb{T}_{1;m,n}$-term,
\begin{align}\n
{\bf a}_{m,n}^2\|\mathbb{T}_{1;m,n}\|_{L^2}^2  
\n \lesssim& \lf({\bf a}_{m,n}\sum_{\ell=0}^{n-1}\binom{n}{\ell} \lf\| q^{n-\ell}\pav^{n-\ell}Z \ \chi_{m+n}  \pav^2\left(q^{\ell}\Gamma_k^\ell |k|^m\phe_k \right)\rg\|_2\rg)^{2}\\
\n&+\lf( {\bf a}_{m,n}\sum_{\ell=0}^{n-1}\binom{n}{\ell} \lf\| q^{n-\ell}\pav^{n-\ell}Z \ \chi_{m+n}  \lf[\pav^2,q^{\ell}\rg]\left(\Gamma_k^\ell |k|^m\phe_k \right)\rg\|_2\rg)^{2}\\ \n
&+\lf({\bf a}_{m,n}\sum_{\ell=0}^{n-1}\binom{n}{\ell} \lf\| q^{n-\ell }\pav^{n-\ell+1} Z\ \chi_{m+n} \pav\lf(q^{\ell}\Gamma_k^\ell |k|^m\phe_k\rg) \rg\|_2\rg)^{2}\\ 
&+\lf({\bf a}_{m,n}\sum_{\ell=0}^{n-1}\binom{n}{\ell} \lf\| q^{n-\ell}\pav^{n-\ell+1} Z \ \chi_{m+n}\lf[  \pav ,q^{\ell}\rg] \lf(\Gamma_k^\ell |k|^m\phe_k \rg) \rg\|_2\rg)^{2}
=:\sum_{i=1}^4(\mathbb{T}_{1;i})^2.\label{Step3_T_114}
\end{align}

Before diving into details, we make the following observation concerning the combinatoric numbers. Recalling the definitions \eqref{Bweight}, \eqref{a:weight:neq}, we have the following relation 
\begin{align}\n
\frac{{\bf a}_{m,n}}{B_{0,n-\ell}{\bf a}_{m,\ell}}=&\frac{\frac{\lambda^{(m+n)s}\varphi^{(m+n)}}{((m+n)!)^{s}}}{\frac{\lambda^{(n-\ell)s}} {((n-\ell)!)^{s}}\times \frac{\lambda(t)^{(m+\ell)s}\varphi^{(m+\ell)}}{((m+\ell)!)^{s}}}=\varphi^{n-\ell}\left(\frac{(m+n-(m+\ell))! (m+\ell)!}{{(m+n)!}}\right)^{s}\\
=&\varphi^{n-\ell}\binom{m+n}{m+\ell}^{-s}\leq\varphi^{n-\ell}\binom{n}{\ell}^{-s}.\label{b_com} 
\end{align} 
Now we apply the H\"older inequality and the estimates of $\vyn, \, v_{yy}$ to obtain that 
\begin{align}\n
\mathbb{T}_{1;1}
 \lesssim & \sum_{\ell=0}^{n-1}\binom{n}{\ell}\frac{{\bf a}_{m,n}}{B_{0,n-\ell}{\bf a}_{m,\ell}}B_{0,n-\ell} \lf\|\chi_{n-\ell}q^{n-\ell}\pav^{n-\ell}Z\rg\|_{L^\infty} {\bf a}_{m,\ell}\|J^{(0,2,0)}_{m,\ell}\phe_k\|_{L^2} \\
\n &\lesssim \sum_{\ell=0}^{n-1}\binom{m+n}{m+\ell}^{1-s}\varphi^{n-\ell}B_{0,n-\ell} \lf\|\chi_{n-\ell}q^{n-\ell}\pav^{n-\ell}Z\rg\|_{\dot H^1}{\bf a}_{m,\ell}\|J^{(0,2,0)}_{m,\ell}\phe_k\|_{L^2}.\n
\end{align}
Now we have that by the relation \eqref{s:prime} and the  combinatorial fact \eqref{prod}, 
\begin{align}
(\mathbb T_{1;1})^2\lesssim& \lf(\sum_{\ell=0}^{n-1}\binom{n}{\ell}^{-2\sigma-2\sigma_\ast}\rg)\lf(\sum_{\ell=0}^{n-1}\sum_{b\leq 1}\varphi^{2n-2\ell}B_{0,n-\ell} ^2\lf\|J^{(0,b,0)}_{0,n-\ell}Z\rg\|_{L^2}^2 {\bf a}_{m,\ell}^2\|J^{(0,2,0)}_{m,\ell}\phe_k\|_{L^2}^2\rg)\n \\
\lesssim&\sum_{\ell=0}^{n-1}\sum_{b\leq 1}\varphi^{2n-2\ell}B_{0,n-\ell} ^2\lf\|J^{(0,b,0)}_{0,n-\ell}Z\rg\|_{L^2}^2 {\bf a}_{m,\ell}^2\|J^{(0,2,0)}_{m,\ell} \phe_k\|_{L^2}^2 .\label{Step3_T11}
\end{align}
For the second term $\mathbb{T}_{1;2}$, we invoke the commutator relation \eqref{cm_J_est} to obtain
\begin{align*}
( \mathbb{T}_{1;2})^2 \lesssim&  \lf({\bf a}_{m,n}\sum_{\ell=1}^{n-1}\binom{n}{\ell} \lf\| \chi_{n-\ell}q^{n-\ell}\pav^{n-\ell} Z \ \chi_{m+n} \lf[\pav^2,q^{\ell}\rg]\left(\chi_{m+\ell}\Gamma_k^\ell|k|^m \phe_k \right)\rg\|_{L^2}\rg)^2\\
\lesssim &  \lf({\bf a}_{m,n}\sum_{\ell=1}^{n-1}\ \sum_{b_1=0}^1\binom{n}{\ell}   \lf\| J_{0,n-\ell}^{(0,b_1,0)} Z\rg\|_{L^2}\ \sum_{i_2=0}^2 \sum_{a_2+b_2\leq 2}\|J^{(a_2,b_2,0)}_{m,(\ell-i_2)_+}\phe_k\|_{L^2}\rg)^2.
\end{align*}
Now we are in a situation which is almost identical to the term $\mathbb{T}_{1;1}$, so we apply the same argument as before to obtain that  
\begin{align}
\lf( \mathbb{T}_{1;2}\rg)^2 \lesssim  \sum_{\ell=1}^{n-1}\ \sum_{b_1\leq 1}  B_{0,n-\ell}^2  \varphi^{2n-2\ell} \lf\|  J^{(0,b_1,0)}_{0,n-\ell} Z\rg\|_{L^2}^2 \ \sum_{i_2=0}^2\sum_{a_2+b_2\leq 2} {\bf a}_{m,(\ell-i_2)_+}^2\|J^{(a_2,b_2,0)}_{m,(\ell-i_2)_+}\phe_k\|_{L^2}^2. \label{Step3_T12}
\end{align} 
The $\mathbb{T}_{1;3}$-term can be estimated as follows with the help of \eqref{b_com} and the bound \eqref{vy2_1est3},
\begin{align*}\n
 &\mathbb{T}_{1;3} \lesssim {\bf a}_{m,n}\sum_{\ell=0}^{n-1}\binom{n}{\ell} \lf\|{\chi_{m+n}}q^{n- \ell }\pav^{n- \ell+1 } Z\rg\|_{L^2}\ \lf\| \pav \lf(\chi_{m+\ell }q^{\ell }\Gamma_k^{\ell }|k|^m \phe_k\rg) \rg\|_{L^\infty}\\
\n &\lesssim \sum_{\ell=0}^{n-1}\binom{m+n}{m+\ell}^{-\sigma-\sigma_\ast}\  B_{0,n-\ell}\varphi^{n-\ell}\lf\|\chi_{n-\ell}q^{n-\ell }\pav^{n-\ell+1}Z\rg\|_{L^2} \ \sum_{b_2\leq 2}{\bf a}_{m,\ell}\|J^{(0,b_2,0)}_{m,\ell }\phe_k \|_{L^2}\\ \n
 &\lesssim\sum_{\ell=0}^{n-1} \binom{m+n}{m+\ell}^{-\sigma} \sum_{i_1=0}^1B_{0,(n-\ell-i_1)_+}\varphi^{(n-\ell-i_1)_+}\lf\| J_{0,(n-\ell-i_1)_+}^{(\leq 1)} Z\rg\|_{L^2(\text{supp}\wt \chi_1)}{\bf a}_{m,\ell}\|J^{(0,2,0)}_{m,\ell }\phe_k \|_{L^2}.
\end{align*} 
By Lemma \ref{lem:vy2-1est}, the $\mathbb{T}_{1;3}$ term can be estimated as follows:
\begin{align}
(\mathbb{T}_{1;3})^2\lesssim& \sum_{\ell=0}^{n-1}\ \sum_{i_1=0}^1B_{0,(n-\ell-i_1)_+}\varphi^{(n-\ell-i_1)_+}\lf\| J_{0,(n-\ell-i_1)_+}^{(\leq 1)} Z\rg\|_{L^2(\text{supp}\wt \chi_1)}\ {\bf a}_{m,\ell}\|J^{(0,2,0)}_{m,\ell }\phe_k \|_{L^2}.\label{Step3_T13}
\end{align} %
For the $\mathbb{T}_{1;4}$-term in \eqref{Step3_T_114}, we estimate it using similar method as before,
\begin{align*}
\n \mathbb{T}_{1;4}&\lesssim \sum_{\ell=1}^{n-1}\binom{m+n}{m+\ell}^{-\sigma} B_{0,n-\ell}\varphi^{n-\ell}\lf\| \chi_{n-\ell} q^{n-\ell} \pav^{n-\ell+1} Z\rg\|_{L^2} {\bf a}_{m,\ell}\lf\| [\pav,q^\ell]( \chi_{m+\ell}|k|^m  \Gamma_k^{\ell}\phe_k)\rg\|_{L^\infty}\\
\lesssim&\sum_{\ell=1}^{n-1}\binom{m+n}{m+\ell}^{-\sigma}B_{0,n-\ell }\varphi^{n-\ell}\lf\|  \chi_{n-\ell}q^{n-\ell } \pav^{n-\ell+1} Z\rg\|_{L^2}\ \sum_{b_2=0}^1{\bf a}_{m,\ell}\lf\|\pav^{b_2}[\pav,q^{\ell } ](\chi_{m+\ell} |k|^m\Gamma_k^{\ell }\phe_k)\rg\|_{L^2}. 
\end{align*}
Application of the bounds \eqref{cm_pv_J_est}, and \eqref{vy2_1est3} yields that 
\begin{align}  (\mathbb{T}_{1;4})^2\lesssim&   \sum_{\ell=1}^{n-1}\ \sum_{i_1=0}^2B_{0,(n-\ell-i_1)_+}^2\varphi^{2(n-\ell-i_1)_+}\lf\| J^{(\leq 1)}_{0,(n-\ell-i_1)_+}Z\rg\|_{L^2}^2\sum_{i_2=0}^2{\bf a}_{m,(\ell-i_2)_+}^2  \|J_{m,(\ell-i_2)_+}^{(\leq 2)}\phe_k\|_{L^2}^2.\label{Step3_T14}
\end{align}

Combining the decomposition \eqref{J2_>3_T},  \eqref{Step3_T_114} and the estimates \eqref{Step3_T11}, \eqref{Step3_T12}, \eqref{Step3_T13}, \eqref{Step3_T14} yields that 
\begin{align}\n
&\sum_{m+n=3}^{M}\mathbbm{1}_{n\geq 2}{\bf a}_{m,n}^2\|\mathbb{T}_{1;m,n}\|_{L^2}^2\\
\n &\lesssim\sum_{i_1=0}^2\sum_{i_2=0}^2\lf(\sum_{m=0}^{M}\sum_{n=0}^{M-m} \sum_{\ell=0}^{n-1}\ B_{0,(n-\ell-i_1)_+}^2\varphi^{2(n-\ell-i_1)_+}\lf\| J^{(\leq 1)}_{0,(n-\ell-i_1)_+}Z\rg\|_{L^2}^2\ {\bf a}_{m,(\ell-i_2)_+}^2  \|J_{m,(\ell-i_2)_+}^{(\leq 2)}\phe_k\|_{L^2}^2\rg)\\
\n &\lesssim \sum_{i_1=0}^2\sum_{i_2=0}^2 \lf(\sum_{m=0}^{M} \sum_{\ell=0}^{M-m}\ {\bf a}_{m,(\ell-i_2)_+}^2  \|J_{m,(\ell-i_2)_+}^{(\leq 2)}\phe_k\|_{L^2}^2  \sum_{n=\ell}^{M-m}B_{0,(n-\ell-i_1)_+}^2\varphi^{2(n-\ell-i_1)_+}\lf\| J^{(\leq 1)}_{0,(n-\ell-i_1)_+}Z\rg\|_{L^2}^2\rg)\\
&\lesssim  \lf(\sum_{n_1=0}^{M}{B}_{0,n_1}^2\varphi^{2n_1} \lf\|J^{(\leq 1 )}_{0,n_1}Z\rg\|_{L^2}^2\rg)\lf(\sum_{m+n_2=0}^{M}{\bf a}_{m,n_2}^2\lf\| J^{(\leq 2)}_{m,n_2}\phe_k\rg\|_{L^2}^2\rg).\label{J2_T1_h}
\end{align} This is consistent with \eqref{J2_est} and concludes {\bf Step \# 1.} 


\noindent 
{\bf Step \# 2: }
We can estimate the $\mathbb{T}_ {2;m,n}( =\chi_{m+n}(n(n-1)q^{n-2}(q')^2+nq^{n-1} q''+2nq^{n-1}q'\pa_y)|k|^m\Gamma_k^n\phe_k)$ in \eqref{T123} as follows 
\begin{align}
\n {\bf a}_{m,n}^2 \|\mathbb{T}_{2;m,n}\|_{L^2}^2 
  \lesssim&
\bigg({\bf a}_{m,n}\lf\|\chi_{m+n} \frac{n^2}{q }|k|^m q^{n-1}\Gamma_k(\chi_{m+n-1}\Gamma_k^{n-1}\phe_k)\rg\|_{L^2} \\ \n
&+{\bf a}_{m,n}n\lf\|\chi_{m+n} q^{n-1} \Gamma_k(\chi_{m+n-1}|k|^m\Gamma_k^{n-1}\phe_k)\rg\|_{L^2}\\
\n&+
{\bf a}_{m,n}n\lf\|\chi_{m+n}q^{n-1}(\pav\Gamma_k)(\chi_{m+n-1}|k|^m\Gamma_k^{n-1}\phe_k)\rg\|_{L^2}\bigg)^2\\
\n\lesssim&\lambda^{2s}\bigg(\frac{{\bf a}_{m,n-1}}{(m+n)^{\sigma+\sigma_\ast}}\varphi\lf\| \frac{m+n}{q} \Gamma_k(\chi_{m+n-1}|k|^mq^{n-1}\Gamma_k^{n-1}\phe_k)\rg\|_{L^2}\\
\n &+\frac{{\bf a}_{m,n-1}}{(m+n)^{\sigma+\sigma_\ast}}\lf\|\frac{(m+n)^2}{q^2}(\chi_{m+n-1}|k|^mq^{n-1}\Gamma_k^{n-1}\phe_k)\rg\|_{L^2}\\
\n &+\frac{{\bf a}_{m,n-1}}{(m+n)^{\sigma+\sigma_\ast}}\varphi\lf\|\chi_{m+n}  \Gamma_k(\chi_{m+n-1}|k|^mq^{n-1}\Gamma_k^{n-1}\phe_k)\rg\|_{L^2} \\
\n&+\frac{{\bf a}_{m,n-1}}{(m+n)^{\sigma+\sigma_\ast}}\varphi\lf\|\chi_{m+n}  [q^{n-1},\Gamma_k](\chi_{m+n-1}|k|^m\Gamma_k^{n-1}\phe_k)\rg\|_{L^2}\\
\n&+\frac{{\bf a}_{m,n-1}}{(m+n)^{\sigma+\sigma_\ast}}\varphi\lf\|\chi_{m+n}  [q^{n-1}, \pav\Gamma_k](\chi_{m+n-1}|k|^m\Gamma_k^{n-1}\phe_k)\rg\|_{L^2}\\ \n
&+
\frac{{\bf a}_{m,n-1}}{(m+n)^{\sigma+\sigma_\ast}}\varphi\lf\|\chi_{m+n}(\pav\Gamma_k)(\chi_{m+n-1}|k|^mq^{n-1}\Gamma_k^{n-1}\phe_k)\rg\|_{L^2}\bigg)^2\\ 
\lesssim&\frac{\lambda^{2s}{\bf a}_{m,n-1}^2}{(m+n)^{2\sigma+2\sigma_\ast}}\sum_{a+b+c=2}\|J_{m,n-1}^{(a,b,c)}\phe_k\|_{L^2}^2. \label{St_5_T_2}
\end{align} 
Hence the sum $\sum_{m+n=3}^M\mathbbm{1}_{n\geq 2} {\bf a}_{m,n}^2\|\mathbb{T}_{2;m,n}\|_{L^2}^2$ is bounded by the right hand side of \eqref{J2_est}. Hence, the {\bf Step \# 2} is concluded. 

\noindent 
{\bf Step \# 3: }
The estimate of the $\mathbb{T}_{3;m,n}$ term is similar to the estimate before. Here we recall the properties of the cut-off function
\begin{align}
\|\pa_y \chi_{m+n}\|_\infty\leq C(m+n)^{1+\sigma}, \quad \|\pa_{yy}\chi_{m+n}\|_\infty\leq C(m+n)^{2(1+\sigma)}. 
\end{align} 
Similar to the estimate of $\mathbb{T}_{1;m,n}$, the main problem in estimating the $\chi_{m+n}$ commutator terms that results in the loss of $(m+n)^2$ powers. As a result, we have to open up two $\Gamma_k$-derivatives to extract regularity information from the $m+n-2$ level. We estimate the $\mathbb{T}_{3;m,n}$ as follows
\begin{align}\n
{\bf a}_{m,n}^2\|\mathbb{T}_{3;m,n}\|_{L^2}^2\leq &{\bf a}_{m,n}^2\|(2\chi_{m+n}'\pa_y+\chi_{m+n}'') q^n\Gamma_k^n |k|^m\phe_k\|_{L^2}^2\\   
\n \lesssim&{\bf a}_{m,n}^2(m+n)^{2+2\sigma}\lf\|\pav \lf(  q^n\lf(\pav+ikt\rg)(\chi_{m+n-1}|k|^m\Gamma_k^{n-1} \phe_k)\rg)\rg\|_2 ^2\\
\n &+{\bf a}_{m,n}^2(m+n)^{4(1+\sigma)}\lf\|q^n\lf(\pav+ikt\rg)^2(\chi_{m+n-2}|k|^{m}\Gamma_k^{n-2}\phe_k) \rg\|_2^2\\
=:&\mathbb{T}_{3;1}+\mathbb{T}_{3;2}.\label{Step_3_T_3} 
\end{align}
We first treat the term $\mathbb{T}_{3;1}$.  We recall the notation $\phe_{m,n}=\chi_{m+n}|k|^m q^n\Gamma_k^n\phe_k$. Applying the combinatoric bound \eqref{gevbd1212} and the  commutator relations \eqref{cm_J}, \siming{\eqref{cm_qn_pv_1}} yields that
\begin{align}\n
\mathbb{T}_{3;1}\lesssim&\frac{\lambda^{2s}{\bf a}_{m,n-1}^2}{(m+n)^{2\sigma_\star}}\varphi^2\lf(\lf\|\pav \lf(  q^n\pav(\chi_{m+n-1}|k|^m\Gamma_k^{n-1} \phe_k)\rg)\rg\|_2+|k|t\lf\|\pav \lf(  q^n\chi_{m+n-1}|k|^m\Gamma_k^{n-1} \phe_k\rg)\rg\|_2\rg)^2\\ \n
\lesssim& \frac{\lambda^{2s}{\bf a}_{m,n-1}^2}{(m+n)^{2\sigma_\star}}\bigg(\lf\|\pav\lf(q\pav\lf(\chi_{m+n-1}|k|^m q^{n-1}\Gamma_k^{n-1} \phe_k\rg)\rg)\rg\|_2 \\
&\qquad\qquad +\lf\|\pav \lf(q [\pav,q^{n-1}]\lf(\chi_{m+n-1}|k|^m\Gamma_k^{n-1} \phe_k\rg)\rg)\rg\|_2\n \\ 
\n &\quad\quad\qquad +\lf\|q |k|\pav \lf(  \chi_{m+n-1}|k|^mq^{n-1}\Gamma_k^{n-1} \phe_k\rg)\rg\|_2+\lf\| |k|  \lf(\chi_{m+n-1}|k|^mq^{n-1}\Gamma_k^{n-1} \phe_k\rg)\rg\|_2\bigg)^2\\
\n\lesssim &  \frac{\lambda^{2s}{\bf a}_{m,n-1}^2}{(m+n)^{2\sigma_\star}}\bigg(\lf\|\pav^2\mr\phe_{m,n-1;k}\rg\|_2^2+\lf\|\pav \mr\phe_{m,n-1;k}\rg\|_2^2   +\lf\|  (n-1)\pav\mr \phe_{m,n-1;k}\rg\|_2^2\\ \n&\qquad\qquad\qquad+\lf\|  \frac{n-1}{q } \mr\phe_{m,n-1;k} \rg\|_2^2+\lf\||k|\pav\mr\phe_{m,n-1;k}\rg\|_2^2+\lf\|\frac{m+n-1}{q}|k|\mr\phe_{m,n-1;k}\rg\|_2^2\bigg)\\
\lesssim &  \frac{\lambda^{2s}{\bf a}_{m,n-1}^2}{(m+n)^{2\sigma_\star}}\sum_{a+b+c=2}\lf\|J^{(a,b,c)}_{m,n-1}\phe_k\rg\|_2^2.\label{Step_4_T31} 
\end{align} 
Next we treat the term $\mathbb{T}_{3;2}$ in \eqref{Step_3_T_3}. We further decompose the term as follows, 
\begin{align}\n
\mathbb{T}_{3;2}\lesssim&\frac{\lambda^{4s}{\bf a}_{m,n-2}^2}{(m+n)^{4\sigma_\star} }\varphi^4 \lf\|q^n\lf( \pav^2+2ikt \pav  -k^2t^2\rg) (\chi_{m+n-2}\Gamma_k^{n-2}|k|^m\phe_k) \rg\|_2^2\\ \n
\lesssim & \frac{\lambda^{4s}{\bf a}_{m,n-2}^2}{(m+n)^{4\sigma_\star } }\bigg(\lf\|q^n\pav^2(\chi_{m+n-2}|k|^m\Gamma_k^{n-2}\phe_k)\rg\|_2^2+\lf\||k| q^n\pav( \chi_{m+n-2}|k|^m\Gamma_k^{n-2}\phe_k)\rg\|_2^2\\
\n &\qquad\qquad\qquad+ \lf\|\chi_{m+n-2}q^n|k|^2  \Gamma_k^{n-2}|k|^m\phe_k \rg\|_2^2\bigg)\\
=:&\mathbb{T}_{3;21}+\mathbb{T}_{3;22}+\mathbb{T}_{3;23}. \label{Step_4_T32} 
\end{align}
For the $\mathbb{T}_{3;21}$ term, we recall the commutator relations \eqref{cm_J}, \eqref{cm_qn_pv_1} and estimate it as follows:
\begin{align}\n
\mathbb{T}_{3;21}\lesssim & \frac{\lambda^{4s}{\bf a}_{m,n-2}^2}{(m+n)^{4\sigma_\star } }\lf(\lf\|q^2\pav^2(\chi_{m+n-2}q^{n-2}\Gamma_k^{n-2}|k|^m\phe_k)\rg\|_2^2+\lf\|q^2\lf[\pav^2,q^{n-2}\rg](\chi_{m+n-2}\Gamma_k^{n-2}|k|^m\phe_k)\rg\|_2^2\rg)\\
\lesssim&\frac{\lambda^{4s}{\bf a}_{m,n-2}^2}{(m+n)^{4\sigma_\star } }\sum_{a+b=2}\|J_{m,n-2}^{(a,b,0)}\phe_k\|_2^2.\label{Step_4_T321} 
\end{align} 
This is consistent with the result. 
Next we estimate the $\mathbb{T}_{3;22}$-term, we recall the commutator relations \eqref{cm_J}, \eqref{cm_qn_pv_1} to estimate the term as follows
\begin{align}\n
\mathbb{T}_{3;22}\lesssim&\frac{\lambda^{4s}{\bf a}_{m,n-2}^2}{(m+n)^{4\sigma_\star } }\lf(\lf\| |k|q^2 \pav\mr\phe_{m,n-2;k} \rg\|_2^2+\lf\| |k|q^2  [\pav ,q^{n-2}](\chi_{m+n-2}|k|^m\Gamma_k^{n-2}\phe_k)\rg\|_2^2\rg)\\
\lesssim&\frac{\lambda^{4s}{\bf a}_{m,n-2}^2}{(m+n)^{4\sigma_\star } }\sum_{a+b+c=2}\|J_{m,n-2}^{(a,b,c)}\phe_k\|_2^2.\label{Step_4_T322}
\end{align} 

Finally, we estimate the $\mathbb{T}_{3;23}$ term,
\begin{align}
\mathbb{T}_{3;23} \lesssim&\frac{\lambda^{4s}{\bf a}_{m,n-2}^2}{(m+n)^{4\sigma_\star } } \|J_{m,n-2}^{(0,0,2)}\phe_k\|_2 .\label{Step_4_T323}
\end{align}
Hence the sum $\sum_{m+n=3}^M\mathbbm{1}_{n\geq2} {\bf a}_{m,n}^2\|\mathbb{T}_{3;m,n}\|_{L^2}^2$ is bounded by the right hand side of \eqref{J2_est}. Hence, the {\bf Step \# 3} is concluded. Combining the estimates yields the result \eqref{J2_est}.  



\end{proof}
\ifx
\noindent
\textcolor{blue}{\bf The $J^3$ term:} 

For higher order Sobolev norms, we need a family of new quantities. Since one quickly runs out of letters, we define
\begin{align}
J^{(a,b,c)}_{m,n}=\lf(\frac{m+n}{q}\rg)^a(v_y^{-1}\pa_y)^b |k|^c ( \chi_{m+n} q^n\Gamma_k^n|k|^m \phe_k)\mathbbm{1}_{a+b+c\leq n,\text{ or }a=0}. 
\end{align}\fi

To derive the Sobolev $H^1$-based Gevrey estimates $\lf(\sum_{a+b+c=3}J^{(a,b,c)}_{m,n}\phe_k\rg)$, we plan to invoke the elliptic maximal regularity type estimate and the estimates of the terms $\pa_y \mathbb{T}_{1;m,n}, \, \pa_y \mathbb{T}_{2;m,n},\, \pa_y \mathbb{T}_{3;m,n}$. We summarize the estimates of the $\pa_y\mathbb{T}$-terms in the following lemma.
\begin{lemma} Assume that $\sigma_\ast\geq 8\sigma$, \myr{$m+n\geq 3, n\geq 2 $. \siming{(We have treated the $n\leq 1$ cases in the previous subsection.)}} Then the following estimates hold \begin{subequations}
\begin{align} 
\sum_{m+n=3}^M \mathbbm{1}_{n\geq 2}{\bf a}_{m,n}^2\|\pa_y\mathbb{T}_{1;m,n}\|_{L^2}^2
\lesssim&\sum_{\mf i=0}^1\lf(\sum_{n_1=0}^MB_{0,n_1}^2\varphi^{2n_1}\lf\|J^{(\leq 1+\mf i)}_{0,n_1}Z\rg\|_{L^2}^2\rg)\lf(\sum_{m+n_2=0}^M{\bf a}_{m,n_2}^2\lf\|J^{(\leq 3-\mf i)}_{m,n_2}\phe_k\rg\|_{L^2}^2\rg); \label{payT1} 
\end{align}
\siming{We can in fact derive the general estimate for general strictly monotone $v$. We can tune the $\myr{\eta=\lambda_{\phe}/\lambda_Z}$ small enough to gain a control over the norm of $Z$. But one still needs that $\|Z\|\leq 1/2$ to control the case where all $\Gamma$ land on $\phe$. }
\begin{align}
\sum_{m+n=3}^M\mathbbm{1}_{n\geq 2}{\bf a}_{m,n}^2\|\pa_y\mathbb{T}_{2;m,n}\|_{L^2}^2\lesssim& \lambda^{2s}\sum_{m+n=2}^{M-1}{\bf a}_{m,n}^2\lf\|J^{( \leq 3)}_{m,n}\phe_k\rg\|_{L^2}^2;\label{payT2}
\\
\sum_{m+n=3}^M\mathbbm{1}_{n\geq 2}{\bf a}_{m,n}^2\|\pa_y\mathbb{T}_{3;m,n}\|_{L^2}^2\lesssim&\lambda^{2s}\sum_{m+n=1}^{M-1}{\bf a}_{m,n}^2\lf\|J^{(\leq 3)}_{m,n}\phe_k\rg\|_{L^2}^2.\label{payT3}
\end{align}
\siming{Double check the sum in $\mathbb{T}_2,\mathbb{T}_3. $} 
Here, we use the notation $J_{m,n}^{(\leq\zeta)}f_k$ (or $J_{0,n}^{(\leq\zeta)}f_0$) to denote the vectors  $\{J^{(a,b,c)}_{m,n}f_k\}_{a+b+c\leq \zeta}$ (or $\{J^{(a,b,0)}_{0,n}f_0\}_{a+b\leq \zeta}$), and measure them with the classical $L^2$-norm of vectors:
\begin{align*}
\lf\|J_{m,n}^{(\leq\zeta)}f_k\rg\|_{L^2}^2:=\sum_{a+b+c\leq\zeta}\lf\|J_{m,n}^{(a,b,c)}f_k\rg\|_{L^2}^2,\quad\lf\|J_{0,n}^{(\leq\zeta)}f_0\rg\|_{L^2}^2:=\sum_{a+b\leq\zeta}\lf\|J_{0,n}^{(a,b,0)}f_0\rg\|_{L^2}^2.
\end{align*}
\end{subequations}

\end{lemma}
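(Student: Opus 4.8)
The plan is to follow the three-step scheme of Lemma~\ref{lem:ellp>3} essentially line by line, now carrying one extra $\pa_y$ derivative through. Throughout we work in the stated regime $m+n\ge 3$, $n\ge2$ (the $n\le1$ cases having been settled in the lower-regularity lemmas), and for each $i=1,2,3$ we apply $\pa_y$ to $\mathbb{T}_{i;m,n}$ by the Leibniz rule and then re-express all factors in terms of $\pav=\vyn\pa_y$, using $\|\vyn\|_{L^\infty}+\|v_y\|_{L^\infty_tW^{3,\infty}_y}\lesssim1$ from \eqref{v_y_asmp} to absorb the coefficients $v_y,\vyn,v_{yy},\ldots$ that this generates.

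First I would treat $\pa_y\mathbb{T}_{1;m,n}$. Recall from \eqref{T_1} that $\mathbb{T}_{1;m,n}$ is a binomially-weighted sum over $\ell\le n-1$ of terms of the schematic shape $\chi_{m+n}q^n(\pav^{n-\ell}v'')(\pav^2\Gamma_k^\ell|k|^m\phe_k)$ and $\chi_{m+n}q^n(\pav^{n-\ell+1}v'')(\pav\Gamma_k^\ell|k|^m\phe_k)$, the coordinate factor $v''=v_y\pav v_y$ being controlled by $Z=v_y^2-1$. Applying the extra derivative sends it either onto the coordinate factor or onto the $\phe_k$-factor, which produces precisely the two contributions indexed by $\mf i\in\{0,1\}$ in \eqref{payT1}, with $\mf i$ counting how many extra derivatives land on $Z$. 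In each case one reinserts powers of $q$ to reconstitute the operators $J^{(a,b,c)}_{0,n-\ell}Z$ and $J^{(a,b,c)}_{m,\ell}\phe_k$ (using the $[\pav,q^\ell]$ and $[\pav^2,q^\ell]$ identities \eqref{cm_pv_qn},\eqref{cm_pvv_qn} from Lemma~\ref{lem:com:BA} and the same $J$-commutator estimates already invoked in Lemma~\ref{lem:ellp>3}), absorbs the binomial coefficient into the Gevrey weights via the ratio identity \eqref{b_com} together with $\sigma+\sigma_\ast=s-1$, and applies H\"older's inequality — $L^\infty_y$ on the coordinate factor and $L^2$ on the $\phe_k$-factor when $\ell\le n/2$, and the reverse (with Sobolev embedding) when $\ell>n/2$ — exactly as in the treatment of $\mathbb{T}_{1;1}$–$\mathbb{T}_{1;4}$. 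Invoking the coordinate estimates of Lemma~\ref{lem:vy2-1est}, summing in $m+n$, and reindexing by Fubini as in \eqref{J2_T1_h} then gives \eqref{payT1}.

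For $\pa_y\mathbb{T}_{2;m,n}$ and $\pa_y\mathbb{T}_{3;m,n}$ the coordinate factor is replaced by the smooth, order-one functions $q',q'',q'''$ and by $\chi_{m+n}',\chi_{m+n}''$ respectively; their $y$-derivatives cost only a bounded factor (in the $q$ case) or a factor $(m+n)^{1+\sigma}$ per derivative (in the $\chi$ case, by \eqref{chi:prop:3}). Hence $\pa_y\mathbb{T}_{2;m,n}$, $\pa_y\mathbb{T}_{3;m,n}$ are, schematically, the corresponding terms of $\mathbb{T}_{2;m,n}$, $\mathbb{T}_{3;m,n}$ with one more $\pav$ on $\phe_k$ plus these harmless coefficient derivatives. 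Repeating the computations \eqref{St_5_T_2} and \eqref{Step_3_T_3}–\eqref{Step_4_T323} — opening one $\Gamma_k$-derivative for $\mathbb{T}_2$ and two for $\mathbb{T}_3$ to descend to levels $m+n-1$ and $m+n-2$, using \eqref{gevbd1212} to extract the $\lambda^{2s}$ and $(m+n)^{-\sigma_\ast}$ gains, and the $q$-commutator identities \eqref{cm_pv_qn},\eqref{cm_pvv_qn} to reassemble $J^{(a,b,c)}$-operators with $a+b+c\le3$ — yields \eqref{payT2} and \eqref{payT3} after summation in $m+n$.

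The hard part will be the $\mf i=1$ contribution in \eqref{payT1}, i.e.\ the case where the additional $\pa_y$ falls on $Z$: this forces control of $J^{(\le2)}_{0,n}Z$, one Sobolev level of the coordinate system above what appears in Lemma~\ref{lem:ellp>3}, which must ultimately be traced (via Lemma~\ref{lem:vy2-1est} and \eqref{sml_vy2-1_2}) to the dissipation functional $\mathcal{D}_H$ rather than to the energy $\mathcal{E}_H$; keeping the two possibilities $\mf i\in\{0,1\}$ separate, as in the statement, is exactly what makes this affordable. A second delicate point is that in the high-$\ell$ regime $\ell>n/2$, with the coordinate factor placed in $L^2$ and $\phe_k$ in $L^\infty$, one must still be able to carry three $y$-derivatives on $\phe_k$ after Sobolev embedding; this is absorbed by the $\binom{n}{\ell}^{-\sigma_\ast}$ surplus used throughout, and closes because $\sigma_\ast\ge8\sigma$ (indeed $\sigma_\ast\ge10\sigma$ by \eqref{s:prime}). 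Otherwise the argument is pure bookkeeping of the extra derivative and introduces no new ideas beyond those already developed in Lemma~\ref{lem:ellp>3}.
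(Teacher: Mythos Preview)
Your proposal is correct and follows essentially the same route as the paper. The only organizational difference is that the paper's first Leibniz split in Step~\#1 is on the weight $\chi_{m+n}q^n$ versus the remaining product (producing the two terms $\mathcal{T}_{11},\mathcal{T}_{12}$ in \eqref{payT1_T_j}), with the $Z$--$\phe_k$ split and the high/low $\ell$ dichotomy entering one level deeper; you describe the $Z$--$\phe_k$ split as primary, but the resulting bookkeeping and the role of the index $\mf i$ are the same.
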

\begin{proof}
We divide the proof into three steps.

\noindent
{\bf Step \# 1: Proof of \eqref{payT1}.}
By recalling the definition \eqref{T_1} and $Z=v_y^2-1$, we have that 
\begin{align}\n
{\bf a}_{m,n}^2\|\pa_y\mathbb{T}_{1;m,n}\|_{L^2}^2
& \myr{\lesssim {\bf a}_{m,n}^2\lf(\sum_{\ell=0}^{n-1}\sum_{\mf i=0}^1\binom{n}{\ell}\lf\| \pa_y(\chi_{m+n}  q^n)  \pav^{n-\ell+\mf i}Z\,  \pav^{2-\mf i}  \lf(\Gamma_k^{\ell}|k|^m\phe_k\rg)\rg\|_{L^2}\rg)^2} \n \\ 
&\quad\myr{+{\bf a}_{m,n}^2\lf(\sum_{\ell=0}^{n-1}\sum_{\mf i=0}^2\binom{n}{\ell}\lf\| \chi_{m+n}  q^n  \pav^{n-\ell+\mf i}Z\,  \pav^{3-\mf i}  \lf(\Gamma_k^{\ell}|k|^m\phe_k\rg)\rg\|_{L^2}\rg)^2}
=:\sum_{j=1}^2\mathcal{T}_{1j}^2.\label{payT1_T_j}
\end{align}
We decompose the proof into substeps. In each step, we estimate one term in \eqref{payT1_T_j}.

\noindent
{\bf Step \# 1a: Estimation of the $\mathcal{T}_{11}^2$ term. }
We start by considering the $\mathcal{T}_1$-term in \eqref{payT1_T_j}. To this end, we invoke the assumption \eqref{chi:prop:3} to obtain that for $m+n\geq 3, n\geq 2$ 
\begin{align*}\n 
\mathcal{T}_{11} 
\n \lesssim&\frac{\lambda^s}{(m+n)^{\sigma_\ast}}{\sum_{\mf i=0}^1} \sum_{\ell=0}^{n-1}\binom{n}{\ell}{\bf a}_{m,n-1}\varphi\|\mathbbm{1}_{\text{supp}\chi_{m+n}} q^{n-1}\pav^{\mf i}\lf( \pav^{n-\ell}Z\rg)\, \pav^{2-\mf i}\lf( \Gamma_k^{\ell}|k|^m\phe_k\rg)\|_{L^2}. 
\end{align*}
We distinguish between the $\ell=0$ and $\ell\geq 1$ cases as follows
\begin{align}\n
\mathcal T_{11}\lesssim&\frac{\lambda^s}{(m+n)^{\sigma_\ast}}{\sum_{\mf i=0}^1}\sum_{\ell=\lceil n/2\rceil}^{n-1}\binom{n}{\ell} {\bf a}_{m,n-1}\varphi \lf \|q^{n-\ell}\pav^{\mf i}(\chi_{n-\ell}\pav^{n-\ell }Z)\   q^{\ell-1}\Gamma_k\pav^{2-\mf i}\lf(\chi_{m+\ell-1}\Gamma_k^{\ell-1}|k|^m\phe_k\rg)\rg\|_{L^{2}}\\ \n
&+\frac{\lambda^s}{(m+n)^{\sigma_\ast}}{\sum_{\mf i=0}^1}\sum_{\ell=1}^{\lfloor n/2\rfloor}\binom{n}{\ell} {\bf a}_{m,n-1}\varphi \lf \|\chi_{n-\ell-1}q^{n-\ell-1}\pav^{\mf i+1}(\pav^{n-\ell-1 }Z)\   q^\ell\pav^{2-\mf i}\lf(\chi_{m+\ell}\Gamma_k^{\ell}|k|^m\phe_k\rg)\rg\|_{L^{2}}\\ \n
&+\frac{\lambda^s}{(m+n)^{\sigma_\ast}}{\sum_{\mf i=0}^1} \frac{{\bf a}_{m,n-1}\ \varphi}{B_{0,n-1}{\bf a}_{m,0}}\, B_{0,n-1} \|q^{n-1}\pav^{\mf i+1}(\chi_{n-1}\pav^{n-1}Z)\|_{L^2}\ {\bf a}_{m,0} \lf \|\pav^{2-\mf i} \lf(\chi_{m}|k|^m\phe_k\rg)\rg\|_{L^\infty}\n \\ 
=:&\sum_{j=1}^3\mathcal T_{11j}.\label{J3_T_11}
\end{align}
To estimate the first term, we observe that 
\begin{align}
\binom{n}{\ell}=& \frac{n(n-1)!}{\ell (n-\ell)!(\ell-1)!} =\binom{n-1}{\ell-1} \frac{n}{\ell}\leq 2 \binom{n-1}{\ell-1}\leq 2 \binom{m+n-1}{m+\ell-1},\quad {\ell\geq \lceil n/2\rceil}.\label{comb_1}
\end{align}
For the first term $\mathcal T_{11j}$ in \eqref{J3_T_11}, we apply the commutator relations  \eqref{cm_J_est}, \eqref{cm_pv_J_est}, the relation \eqref{s:prime}, and the relation \eqref{comb_1} to estimate it as follows 
\begin{align*}
\n \mathcal T_{111}
&\lesssim \frac{\lambda^s}{(m+n)^{\sigma_\ast }}{\sum_{\mf i=0}^1}\sum_{\ell=\lceil n/2\rceil}^{n-1}\binom{m+n-1}{m+\ell-1}^{-\sigma-\sigma_\ast} \varphi^{n-\ell}\, B_{0,n-\ell}\|q^{n-\ell}\pav^{\mf i}(\chi_{n-\ell}\pav^{n-\ell}Z)\|_{H^1_y}\\ \n
&\qquad\times {\bf a}_{m,\ell-1}\bigg( \|q^{\ell-1}\pav^{3-\mf i}(\chi_{m+\ell-1}|k|^m\Gamma_k^{\ell-1}\phe_{k})\|_{L^2}+|k|\|q^{\ell-1}\pav^{2-\mf i}(\chi_{m+\ell-1}|k|^m\Gamma_k^{\ell-1}\phe_k)\|_{L^2}\bigg)\\
&\lesssim\frac{\lambda^s}{(m+n)^{\sigma_\ast }}{\sum_{\mf i=0}^1}\sum_{\ell=\lceil n/2\rceil}^{n-1}\binom{m+n-1}{m+\ell-1}^{-\sigma-\sigma_\ast} \lf(\sum_{i_1=0}^{1}\varphi^{n-\ell-i_1} B_{0,n-\ell-i_1}\|J^{(\leq 1+\mf i)}_{0,n-\ell-i_1} Z\|_{L^2}\rg) \n \\
&\qquad\qquad\qquad\qquad\times\lf(\sum_{i_2=0}^2{\bf a}_{m,\ell-i_2}  \|J^{(\leq 3-\mf i)}_{m,\ell-i_2} \phe_k\|_{L^2}\rg).
\end{align*}
Hence, we can invoke the estimate \eqref{prod} to obtain that $\mathcal T_{111}$ is bounded as follows
\begin{align}
\mathcal T_{111}^2\lesssim \lambda^{2s}{\sum_{\mf i=0}^1}\sum_{\ell=0}^{n-1}\  \sum_{i_1=0}^{1}\varphi^{2(n-\ell-i_1)_+} B_{0,(n-\ell-i_1)_+}^2\lf\|J^{(\leq 1+\mf i)}_{0,(n-\ell-i_1)_+} Z\rg\|_{L^2}^2\ \sum_{i_2=0}^2{\bf a}_{m,(\ell-i_2)_+}  \lf\|J^{(\leq 3-\mf i)}_{m,(\ell-i_2)_+} \phe_k\rg\|_{L^2}^2.\label{J3_T111}  
\end{align}

For the second term $\mathcal T_{112}$ in \eqref{J3_T_11}, we recall the relation \eqref{comb_1} and obtain that in this parameter regime, 
\begin{align*}
\binom{n}{\ell}=&\siming{ \frac{n(n-1)!}{(n-\ell) (n-\ell-1)!\ell!} =}\binom{n-1}{\ell} \frac{n}{n-\ell}\leq 2 \binom{n-1}{\ell}\leq 2\binom{m+n-1}{m+\ell},\quad {\ell \leq\lfloor n/2\rfloor}.
\end{align*}
Hence, 
\begin{align*}
 \binom{n}{\ell} \frac{{\bf a}_{m,n-1}}{B_{n-\ell-1}{\bf a}_{m,\ell}}\leq 2\frac{(m+n-1)!}{(n-\ell-1)! (m+\ell)!}\frac{ (n-\ell-1)!^{s}(m+\ell)!^{s}}{(m+n-1)!^{s}}\varphi^{n-\ell-1}=2\binom{m+n-1}{m+\ell}^{-\sigma-\sigma_\ast}\varphi^{n-\ell-1}.
\end{align*}
Now we estimate the $\mathcal T_{112}$-term with \eqref{cm_J}, \eqref{cm_qn_pv_3} \siming{(check!)} and \eqref{prod}, as follows,
\begin{align}\n
\mathcal T_{112}^2& \lesssim\lambda^{2s}{\sum_{\mf i=0}^1}\bigg(\sum_{\ell=1}^{\lfloor n/2\rfloor}\binom{m+n-1}{m+\ell}^{-\sigma_\ast} B_{0,n-\ell-1}\varphi^{n-\ell-1}\lf \|q^{n-\ell-1}\pav^{\mf i+1}(\chi_{n-\ell-1}\pav^{n-\ell-1}Z)\rg\|_{L^2}\\ \n
&\qquad\qquad\qquad\qquad\times{\bf a}_{m,\ell}\lf\|q^\ell\pav^{2-\mf i}\lf(\chi_{m+\ell}\Gamma_k^{\ell}|k|^m\phe_k\rg)\rg\|_{H^{1}}\bigg)^2\\
\lesssim&\lambda^{2s}{\sum_{\mf i=0}^1}\sum_{\ell=1}^{\lfloor n/2\rfloor}\lf(\sum_{i_1=0}^{1}\varphi^{2n-2\ell-2-2i_1}\, B_{0,n-\ell-1-i_1}^2\|J^{(\leq 1+\mf i)}_{0,n-\ell-1-i_1} Z\|_{L^2}^2\rg)
 \lf(\sum_{i_2=0}^2 {\bf a}_{m,\ell-i_2}^2  \|J^{( \leq 3-\mf i)}_{m,\ell-i_2} \phe_k\|_{L^2}^2\rg).\label{J3_T112}
\end{align}

Finally, we estimate the $\mathcal T_{113}$ term in \eqref{J3_T_11} with \eqref{cm_J_est}, 
\begin{align}\n
\mathcal T_{113}^2 \lesssim&\frac{\lambda^{2s}}{(m+n)^{\sigma_\ast}}{\sum_{\mf i=0}^1} B_{0,n-1}^2\varphi^{2n-2} \|q^{n-1}\pav^{1+\mf i}(\chi_{n-1}\pav^{n-1}Z)\|_{L^2}^2\ {\bf a}_{0,0} \lf \|\pav^{2-\mf i} \phe_k \rg\|_{H^1}^2\\ 
\lesssim&\frac{\lambda^{2s}}{(m+n)^{\sigma_\ast}}{\sum_{\mf i=0}^1}\sum_{i_1=0}^2 B_{0,(n-1-i_1)_+}^2\varphi^{2(n-1-i_1)_+} \|J^{(\leq 1+\mf i)}_{0,n-1-i_1}Z\|_{L^2}^2\ {\bf a}_{0,0} \lf \|J^{(\leq 3-\mf i)}_{0,0} \phe_k \rg\|_{L^2}^2. \label{J3_T113}
\end{align}
This is concludes the {\bf Step \# 1a}.

\noindent
{\bf Step \# 1b: Estimation of the $\mathcal{T}_{2}^2$ term. }
For the $\mathcal{T}_2^2$ term in \eqref{payT1_T_j}, we estimate it with the relation \eqref{b_com} as follows:
\begin{align}\n
\mathcal{T}_{12}^2\lesssim&\sum_{\mf i=1}^2\lf(\sum_{\ell=0}^{n-1}\binom{m+n}{m+\ell}^{-\sigma}\lf\| \chi_{m+n}  q^{n-\ell}  \pav^{n-\ell+\mf i}Z\rg\|_{L^2}\  \lf\| q^\ell \pav^{3-\mf i}  (\chi_{m+\ell} \Gamma_k^{\ell}|k|^m\phe_k)\rg\|_{L^\infty}\rg)^2\\
&+\lf(\sum_{\ell=0}^{n-1}\binom{m+n}{m+\ell}^{-\sigma}\lf\| \chi_{m+n}  q^{n-\ell}  \pav^{n-\ell}Z\rg\|_{L^\infty}\  \lf\| q^\ell \pav^3  (\chi_{m+\ell} \Gamma_k^{\ell}|k|^m\phe_k)\rg\|_{L^2}\rg)^2. 
\end{align}
Now we invoke the combinatorial fact \eqref{prod}, the commutator relations \eqref{cm_J}, \eqref{cm_qn_pv_3},  \eqref{cm_qn_pv},  and Lemma \ref{lem:vy2-1est}, 
\begin{align}
\n \mathcal{T}_{12}^2\lesssim&\sum_{\mf i=1}^2 \sum_{\ell=0}^{n-1}B_{0,n-\ell}^2\varphi^{2n-2\ell}\lf\|  \chi_{n-\ell}q^{n-\ell}  \pav^{n-\ell+\mf i}Z\rg\|_{L^2}^2\\ \n
&\quad\times{\bf a}_{m,\ell}^2  \lf(\lf\|J^{(0,3-\mf i,0)}_{m,\ell}\phe_k\rg\|_{H^1}+\lf\| [q^\ell, \pav^{3-\mf i}]  \lf(\chi_{m+\ell} \Gamma_k^{\ell}|k|^m\phe_k\rg)\rg\|_{H^1}\rg)\\ \n
&+ \sum_{\ell=0}^{n-1}B_{0,n-\ell}^2\varphi^{2n-2\ell}\lf\| J_{0,n-\ell}^{(\leq 1)}Z\rg\|_{L^2}^2{\bf a}_{m,\ell}^2  \lf(\lf\|J^{(0,3,0)}_{m,\ell}\phe_k\rg\|_{H^1}+\lf\| [q^\ell, \pav^{3}]  \lf(\chi_{m+\ell} \Gamma_k^{\ell}|k|^m\phe_k\rg)\rg\|_{L^2}\rg)\\ 
\lesssim&\sum_{\mf i=0}^1\sum_{\ell=0}^{n-1} \sum_{i_1=0}^2 B_{0,(n-\ell-i_1)_+}^2\varphi^{2(n-\ell-i_1)_+}\lf\| J^{(\leq 1+\mf i)}_{0,n-\ell-i_1} Z\rg\|_{L^2}^2  \ \sum_{i_2=0}^2{\bf a}_{m,(\ell-i_2)_+}^2\|J^{(\leq3-\mf i)}_{m,\ell-i_2}\phe_k\|_{L^2}^2.\label{payT1_T12}
\end{align}\siming{Check?}
\myr{Now summing all the estimates in {\bf Step \#1}, and applying a similar argument as in \eqref{J2_T1_h} yield \eqref{payT1}.}
\ifx
\noindent
{\bf Step \# 1c: Estimation of the $\mathcal{T}_{3}^2$ term. }
Next we consider the $\mathcal{T}_3$ term in \eqref{payT1_T_j}. 
We decompose the term as follows
\begin{align}\n
\mathcal{T}_3^2\lesssim&\lf({\bf a}_{m,n}\sum_{\ell=0}^{n-1}\binom{n}{\ell}  \lf\| \chi_{m+n}  q^n   \pav^{n-\ell+2}Z \   \pav  \left(\Gamma_k^{\ell}|k|^m\phe_k\right)\rg\|_{L^2}\rg)^2\\ 
&+\lf({\bf a}_{m,n}\sum_{\ell=0}^{n-1}\binom{n}{\ell}\lf\| \chi_{m+n}  q^n  \pav^{n-\ell+1}Z \ \pav^2 \left(\Gamma_k^{\ell}|k|^m\phe_k\right)\rg\|_{L^2}\rg)^2\\
=&\mathcal{T}_{31}^2+\mathcal{T}_{32}^2.\label{payT1_T3}
\end{align}
For the first term, we estimate it using \eqref{b_com} as follows:
\begin{align*}
\mathcal{T}_{31}\lesssim&{\bf a}_{m,n}\sum_{\ell=0}^{n-1}(\mathbbm{1}_{m\geq 1}+\mathbbm{1}_{\ell\geq 1})\binom{n}{\ell}  \lf\| \sqrt{\chi_{m+n}}  q^{n-\ell}  \pav^{n-\ell+2}v_y^2 \rg\| _{L^2}\lf \|  \sqrt{\chi_{m+n}}q^{\ell}\pav \left(\Gamma_k^{\ell}|k|^m\phe_k\right)\rg\|_{L^\infty}\\
&+{\bf a}_{0,n} \lf\| \chi_{n}  q^n   \pav^{n+2}Z \   \pav  \phe_k\rg\|_{L^2}\\
\lesssim&\sum_{\ell=0}^{n-1}\binom{n}{\ell}\frac{{\bf a}_{m,n}}{B_{0,n-\ell}{\bf a}_{m,\ell}} B_{0,n-\ell}\\
&\qquad\qquad\qquad\times \lf\| \sqrt{\chi_{m+n}}  q^{n-\ell}  \pav^2 (\chi_{n-\ell}  \pav  ^{n-\ell}v_y^2) \rg\| _{L^2} \ {\bf a}_{m,\ell} \lf \|  \sqrt{\chi_{m+n}}q^{\ell}\pav \left(\chi_{m+\ell}\Gamma_k^{\ell}|k|^m\phe_k\right)\rg\|_{L^\infty}\\
\lesssim&\sum_{\ell=0}^{n-1}\binom{m+n}{m+\ell}^{1-s}\varphi^{n-\ell}\\
&\times B_{0,n-\ell} \lf( \lf\|\pav^2 \lf(\chi_{n-\ell}q^{n-\ell}\pav^{n-\ell}v_y^2\rg) \rg\| _{L^2}+\lf\|    [ q^{n-\ell},\pav^2] \lf(\chi_{n-\ell} \pav^{n-\ell}v_y^2\rg) \rg\| _{L^2}\rg)\\
&\times {\bf a}_{m,\ell} \lf(\lf \| \pav \left(\chi_{m+\ell}q^{\ell}\Gamma_k^{\ell}|k|^m\phe_k\right)\rg\|_{L^\infty} +\lf \|[ q^{\ell}, \pav] \left(\chi_{m+\ell}\Gamma_k^{\ell}|k|^m\phe_k\right)\rg\|_{L^\infty} \rg).
\end{align*}

Now we invoke the commutator relations \eqref{cm_J}, \eqref{cm_qn_pv} to obtain that 
\begin{align*}
\mathcal{T}_{31}
\lesssim&\sum_{\ell=0}^{n-1}\binom{m+n}{m+\ell}^{-\sigma-\sigma_\ast}\varphi^{n-\ell}\ \
  B_{0,n-\ell} \lf( \sum_{a+b=2}\lf\|  \lf(\frac{n-\ell}{q}\rg)^{a}\pav^b \lf( \chi_{n-\ell} q^{n-\ell}\pav^{n-\ell}v_y^2\rg) \rg\| _{L^2}\rg)\\
&\times {\bf a}_{m,\ell} \lf(\lf \|\pav ^2\left(\chi_{m+\ell}q^{\ell}\Gamma_k^{\ell}|k|^m\phe_k\right)\rg\|_{L^2} +\lf \|\pav[ q^{\ell},\pav] \left(\chi_{m+\ell}\Gamma_k^{\ell}|k|^m\phe_k\right)\rg\|_{L^2} \rg)\\
\lesssim&\sum_{\ell=0}^{n-1}\binom{m+n}{m+\ell}^{-\sigma-\sigma_\ast}\varphi^{n-\ell}\\
&\times B_{0,n-\ell} \lf( \sum_{a+b=2}\lf\|  \lf(\frac{n-\ell}{q}\rg)^{a}\pav^b \lf( \chi_{n-\ell} q^{n-\ell}\pav^{n-\ell}v_y^2\rg) \rg\| _{L^2}\rg)\\
&\times {\bf a}_{m,\ell} \lf(\sum_{a+b=2} \lf \| \lf(\frac{m+\ell}{q}\rg)^a\pav^b \left(\chi_{m+\ell}q^\ell\Gamma_k^{\ell}|k|^m\phe_k\right)\rg\|_{L^2} \rg)?
\end{align*}

We can estimate the $\mathcal{T}_{32}$-term in a similar fashion and end up with 
\begin{align}
\mathcal{T}_{32}\lesssim&\sum_{\ell=0}^{n-1}\binom{m+n}{m+\ell}^{-\sigma-\sigma_\ast}\varphi^{n-\ell}\\
&\times B_{0,n-\ell} \lf( \sum_{a+b=2}\lf\|  \lf(\frac{n-\ell}{q}\rg)^{a}\pav^b \lf( \chi_{n-\ell} q^{n-\ell}\pav^{n-\ell}v_y^2\rg) \rg\| _{L^2}\rg) {\bf a}_{m,\ell} \lf(\sum_{a+b=2} \|J^{(a,b,0)}_{m,\ell}\phe_k\|_{L^2}\rg).?
\end{align}  
\ifx
\siming{We need the $\pa_{yy}\mathbb{T}_{(\cdot);m,n}$ and $\pa_y\mathbb{T}_{(\cdot)=2,3;m,n}$. The following relation might be helpful
\begin{align}
\sum_{a+b+c=j}\|\pa_y J^{(a,b,c)}_{m,n}\phe_k\|_{L^2}\lesssim&\sum_{a+b+c=j+1}\|J^{(a,b,c)}_{m,n}\phe_k\|_{L^2},\quad j\in\{2,3\},\quad n\geq 4.\label{JMtoM+1}
\end{align}
The estimate is the natural consequence of the following computation
\begin{align*}
\lf\|\pa_y\lf(\frac{m+n}{q}\rg)^a\pav^b|k|^c\phe_k \rg\|_{L^2}\lesssim& \lf\| \lf(\frac{m+n}{q}\rg)^a\pav^{b+1}|k|^c\phe_k \rg\|_{L^2}+\lf\|\lf[\pa_y,\lf(\frac{m+n}{q}\rg)^a\rg]\pav^b|k|^c\phe_k \rg\|_{L^2}\\
\lesssim&\|J^{(a,b+1,c)}_{m,n}\phe_k\|_{L^2}+\|J^{(a+1,b,c)}_{m,n}\phe_k\|_{L^2}.
\end{align*} 
}
\fi
\fi

\noindent
{\bf Step \# 2: Proof of \eqref{payT2}}
We can estimate the $\mathbb{T}_{2;m,n} $ in \eqref{T123} as follows 
\footnote{\siming{$\mathbb{T}_{2;m,n}= \chi_{m+n}(n(n-1)q^{n-2}(q')^2+nq^{n-1} q''+2nq^{n-1}q'\pa_y)|k|^m\Gamma_k^n\phe_k$}}
\begin{align} \n
{\bf a}_{m,n}\|\pa_y \mathbb{T}_{2;m,n}\|_{L^2}
\lesssim&{\bf a}_{m,n}\lf\|\chi_{m+n}'(n(n-1)q^{n-2}(q')^2+nq^{n-1} q''+2nq^{n-1}q'\pa_y)|k|^m\Gamma_k^n\phe_k\rg\|_{L^2}\\
\n &+{\bf a}_{m,n}\lf\|\chi_{m+n}\pa_y\lf( \frac{n^2}{q^2}|k|^mq^{n}\Gamma_k^n\phe_k\rg)\rg\|_{L^2}\\ 
\n&+
{\bf a}_{m,n}n\lf\|\chi_{m+n} \pa_y\lf(q^{n-1} \Gamma_k(\chi_{m+n-1}|k|^m\Gamma_k^{n-1}\phe_k)\rg)\rg\|_{L^2}
\\
  &+{\bf a}_{m,n}n\lf\|\chi_{m+n}\pa_y\lf(q^{n-1}(\pav\Gamma_k)(\chi_{m+n-1}|k|^m\Gamma_k^{n-1}\phe_k)\rg)\rg\|_{L^2} \n\\
=:& \sum_{\ell=1}^4\mathcal{T}_{2\ell}.\label{payT2_est}
\end{align}
We estimate the first term in \eqref{payT2_est} using the estimate \eqref{chi:prop:3} and the commutator estimates \eqref{cm_J}, \eqref{cm_qn_pv_1} as follows
\begin{align}
\n \mathcal{T}_{21}\lesssim& {\bf a}_{m,n}(m+n)^{1+\sigma}\\
\n &\times\bigg(\lf\|n^2 q^{-1}\pav\lf(\chi_{m+n-1}|k|^m q^{n-1}\Gamma_k^{n-1}\phe_k\rg)\rg\|_{L^2} +\lf\|n^2 q^{-1}[q^{n-1},\pav]\lf(\chi_{m+n-1}|k|^m \Gamma_k^{n-1}\phe_k\rg)\rg\|_{L^2}\\
\n &+\lf\|n \pav\lf(\chi_{m+n-1}|k|^m q^{n-1}\Gamma_k^{n-1}\phe_k\rg)\rg\|_{L^2} +\lf\|n[q^{n-1},\pav]\lf(\chi_{m+n-1}|k|^m \Gamma_k^{n-1}\phe_k\rg)\rg\|_{L^2}\\
\n &+\lf\|n\pav^2  \lf(\chi_{m+n-1}|k|^m q^{n-1}\Gamma_k^{n-1}\phe_k\rg)\rg\|_{L^2}
+t\lf\|n\pa_{v}|k| \lf(\chi_{m+n-1}|k|^m q^{n-1}\Gamma_k^{n-1}\phe_k\rg)\rg\|_{L^2}\\
\n &+\lf\|n [q^{n-1},\pav^2 +ikt\pav ]\lf(\chi_{m+n-1}|k|^m \Gamma_k^{n-1}\phe_k\rg)\rg\|_{L^2}\bigg)\\
\lesssim&\lambda^s{\bf a}_{m,n-1}(m+n)^{-\sigma_\ast}   \|J^{(\leq 3)}_{m,n-1}\phe\|_{L^2}.\label{payT2_1}
\end{align}
The estimate of the other three terms are similar, 
\begin{align*}
\mathcal T_{22}\lesssim& {\bf a}_{m,n}\lf(\lf\| \pav\lf( \frac{n^2}{q}\Gamma_k (\chi_{m+n-1}|k|^mq^{n-1}\Gamma_k^{n-1}\phe_k)\rg)\rg\|_{L^2}+\lf\| \pav\lf( \frac{n^2}{q}[q^{n-1},\pav] (\chi_{m+n-1}|k|^m\Gamma_k^{n-1}\phe_k)\rg)\rg\|_{L^2}\rg)\\ 
\lesssim&\lambda^s{\bf a}_{m,n-1}(m+n)^{-\sigma-\sigma_\ast}\sum_{a+b=3}\|J^{(a,b,0)}_{m,n-1}\phe_k\|_{L^2};\\
\n \mathcal T_{\myr{23\siming{?}}}\lesssim&
{\bf a}_{m,n}n\lf(\lf\|  \pav\lf(\Gamma_k(\chi_{m+n-1}|k|^mq^{n-1} \Gamma_k^{n-1}\phe_k)\rg)\rg\|_{L^2}+\lf\|  \pav\lf([q^{n-1}, \pav](\chi_{m+n-1}|k|^m\Gamma_k^{n-1}\phe_k)\rg)\rg\|_{L^2}\rg)\\
\lesssim&\lambda^s{\bf a}_{m,n-1}(m+n)^{-\sigma-\sigma_\ast}\|J^{( 3 )}_{m,n-1}\phe_k\|_{L^2};\\
\n \mathcal T_{24}\lesssim&{\bf a}_{m,n}n\lf(\lf\| \pav\lf((\pav\Gamma_k)(\chi_{m+n-1}|k|^mq^{n-1}\Gamma_k^{n-1}\phe_k)\rg)\rg\|_{L^2}+\lf\| \pav\lf([q^{n-1},\pav\Gamma_k](\chi_{m+n-1}|k|^m\Gamma_k^{n-1}\phe_k)\rg)\rg\|_{L^2}\rg)\\
\lesssim&\lambda^s{\bf a}_{m,n-1}(m+n)^{-\sigma-\sigma_\ast}{}\|J^{(3)}_{m,n-1}\phe_k\|_{L^2}.\siming{(Check?)}
\end{align*}
\myr{Now summing all the estimates in {\bf Step \# 2} yields \eqref{payT2}.}

\noindent
{\bf Step \# 3: Proof of \eqref{payT3}}
We estimate the $\mathbb{T}_{3;m,n}$ as follows
\begin{align}\n
{\bf a}_{m,n} \|\pa_y\mathbb{T}_{3;m,n}\|_{L^2} 
\leq &{\bf a}_{m,n}\|\pa_y(2\chi_{m+n}'\pa_y+\chi_{m+n}'') q^n\Gamma_k^n |k|^m\phe_k\|_{L^2}\\
 \n \lesssim&{\bf a}_{m,n}\bigg(\| \chi_{m+n}''\pa_y(q^n(\pav+ikt) \chi_{m+n-1}|k|^m\Gamma_k^{n-1} \phe_k)\|_{L^2}\\
\n &\qquad+\| \chi_{m+n}'\pa_{yy}(q^n (\pav+ikt)\chi_{m+n-1}|k|^m\Gamma_k^{n-1} \phe_k)\|_{L^2}\\
 \n &\qquad+ \|\chi_{m+n}''' q^n(\pav+ikt)^2 \chi_{m+n-2}|k|^m\Gamma_k^{n-2} \phe_k\|_{L^2}\\
\n &\qquad+ \|\chi_{m+n}'' \pa_y(q^n (\pav+ikt)^2\chi_{m+n-2}|k|^m\Gamma_k^{n-2} \phe_k)\|_{L^2}\bigg)\\
 =:&\sum_{\ell=1}^4 \mathcal T_{3\ell}.\label{payT3_est}
\end{align} 
Now we estimate the $\mathcal T_{31}$ term in the expression \eqref{payT3_est} 
\begin{align*}
\mathcal T_{31}\lesssim &{\bf a}_{m,n}(m+n)^{2(1+\sigma)}\lf( \|  \pa_y(q \Gamma_k  \mr\phe_{m,n-1;k})\|_{L^2}+\|  \pa_y(q[q^{n-1} ,\pav](\chi_{m+n-1}|k|^m\Gamma_k^{n-1} \phe_k))\|_{L^2}\rg)\\
\lesssim&\lambda^s{\bf a}_{m,n-1}(m+n)^{-\sigma_\ast+\sigma}\bigg(\lf\|\frac{m+n}{q}\pav^2\mr\phe_{m,n-1;k}\rg\|_{L^2}+ \lf\|\frac{m+n}{q}\pav|k|\mr\phe_{m,n-1;k}\rg\|_{L^2}\\
&\qquad+\lf\|\lf(\frac{m+n}{q}\rg)^2\pav\mr\phe_{m,n-1;k}\rg\|_{L^2}+\lf\|\lf(\frac{m+n}{q}\rg)^2\mr\phe_{m,n-1;k}\rg\|_{L^2}\bigg)\\
\lesssim &\lambda^s{\bf a}_{m,n-1}(m+n)^{-2\sigma} \|J^{(3)}_{m,n-1}\phe_k\|_{L^2}.
\end{align*}
Here in the last line, we have used that $\sigma_\ast\geq 4\sigma. $

Now we estimate the $\mathcal T_{32}$ term in the expression \eqref{payT3_est} 
\begin{align*}
\mathcal T_{32}\lesssim &{\bf a}_{m,n}(m+n)^{1+\sigma}\lf( \|  \pa_{yy}(q \Gamma_k  \mr\phe_{m,n-1;k})\|_{L^2}+\|  \pa_{yy}(q[q^{n-1} ,\pav](\chi_{m+n-1}|k|^m\Gamma_k^{n-1} \phe_k))\|_{L^2}\rg)\\
\lesssim &{\bf a}_{m,n-1}(m+n)^{-\sigma_\ast}\|J^{(3)}_{m,n-1}\phe_k\|_{L^2}.
\end{align*}

Next we estimate $\mathcal T_{33}$ term in the expression \eqref{payT3_est} with the commutator relations \eqref{cm_J}, \eqref{cm_qn_pv_1} as follows 
\begin{align*}
\mathcal T_{33}\lesssim &{\bf a}_{m,n}(m+n)^{3(1+\sigma)}\lf( \|  q^2 \Gamma_k^2  \mr\phe_{m,n-2;k}\|_{L^2}+\| q^2[q^{n-2} ,\pav^2+2ikt\pav](\chi_{m+n-2}|k|^m\Gamma_k^{n-2} \phe_k)\|_{L^2}\rg)\\
\lesssim&\lambda^{2s}{\bf a}_{m,n-2}(m+n)^{-2\sigma_\ast+\sigma}\bigg(\lf\|\frac{m+n}{q}\pav^2\mr\phe_{m,n-2;k}\rg\|_{L^2}+ \lf\|\frac{m+n}{q}\pav|k|\mr\phe_{m,n-2;k}\rg\|_{L^2}+\lf\|\frac{m+n}{q} |k|^2\mr\phe_{m,n-2;k}\rg\|_{L^2}\\
&\qquad+\sum_{a+b+c=3,a\geq 1}\lf\|\lf(\frac{m+n}{q}\rg)^a\pav^b|k|^c\mr\phe_{m,n-2;k}\rg\|_{L^2} \bigg)\\
\lesssim &\lambda^{2s}{\bf a}_{m,n-2}(m+n)^{-2\sigma}  \|J^{({3})}_{m,n-2}\phe_k\|_{L^2}.
\end{align*}
The estimate of the $\mathcal T_{34}$ term is similar to the estimate of the $\mathcal T_{33}$ term in \eqref{payT3_est}. We invoke the relations \eqref{cm_J}, \eqref{cm_qn_pv_2}\siming{HS: use \eqref{cm_pv_J_est}?} to obtain that 
\begin{align*}
\mathcal T_{34}\lesssim &{\bf a}_{m,n}(m+n)^{2(1+\sigma)}\lf( \|  \pa_y(q^2 \Gamma_k^2  \mr\phe_{m,n-2;k})\|_{L^2}+\| \pa_y(q^2[q^{n-2} ,\pav^2+2ikt\pav](\chi_{m+n-2}|k|^m\Gamma_k^{n-2} \phe_k))\|_{L^2}\rg)\\
\lesssim&\lambda^{2s}{\bf a}_{m,n-2}(m+n)^{-2\sigma_\ast}\bigg(\lf\| \pav(q^2\pav^2\mr\phe_{m,n-2;k})\rg\|_{L^2}+ \lf\|\pav({q}^2\pav|k|\mr\phe_{m,n-2;k})\rg\|_{L^2}+\lf\|\pav({q}^2 |k|^2\mr\phe_{m,n-2;k})\rg\|_{L^2}\\
&\qquad+\sum_{a+b+c=3,a\geq 1}\lf\|\lf(\frac{m+n}{q}\rg)^a\pav^b|k|^c\mr\phe_{m,n-2;k}\rg\|_{L^2} \bigg)\\
\lesssim &\lambda^{2s}{\bf a}_{m,n-2}(m+n)^{-2\sigma_\ast}\|J^{(3)}_{m,n-2}\phe_k\|_{L^2}.
\end{align*}
Now summing all the estimates in {\bf Step \# 3} yields \eqref{payT3}.
\ifx \begin{align*}
\n \lesssim&{\bf a}_{m,n}(m+n)\lf\|\pa_y \lf(  q^n\lf(\pav+ikt\rg)(\chi_{m+n-1}|k|^m\Gamma_k^{n-1} \phe_k)\rg)\rg\|_2 \\
\n &+{\bf a}_{m,n}(m+n)^2\lf\|q^n\lf(\pav+ikt\rg)^2(\chi_{m+n-2}|k|^{m}\Gamma_k^{n-2}\phe_k) \rg\|_2 
\end{align*}\fi
\end{proof}

As a consequence of the above lemma and the elliptic maximal estimate, we obtained the lemma:
\begin{lemma} Assume $m+n\geq 3,\, n\geq 2.$ Then, 
\begin{align}
\n
\sum_{m+n=3}^{M}&\mathbbm{1}_{n\geq 2}{\bf a}_{m,n}^2\sum_{b+c=3} \|J^{(0,b,c)}_{m,n} \phe_k\|_{L^2} ^2 \\ \n
\lesssim&  \sum_{m+n=0}^M\sum_{b+c=1} {\bf a}_{m,n}^2\|J^{(0,b,c)}_{m,n}\wwe_k\|_{L^2}^2\\
\n &+\sum_{\mf i=0}^{1}  \lf(\sum_{n=0}^{M}B_{0,n}^2\varphi^{2n} {\lf\|J^{(\leq 1+\mf i)}_{0,n}Z\rg\|_{L^2(\mathrm{supp}\wt \chi_1)}^2}\rg)\lf(\sum_{m+n=0}^{M}{\bf a}_{m,n}^2\lf\| J^{(\leq 3-\mf i)}_{m,n}\phe_k\rg\|_{L^2}^2\rg)\\
& +\lambda^{2s} \sum_{m+n=0}^{M-1}{\bf a}_{m,n}^2\|J^{(3)}_{m,n}\phe_k\|_{L^2}^2+\|\wt\ww_k^{(I)}\|_{\myr{H_k^1}}^2\sum_{n=0}^{M}B_{0,n}^2\varphi^{2n{+2}}\| J_{0,n}^{(\leq 2)}Z\|_{L^2(\mathrm{supp}\wt\chi_1)}^2. \label{J23_bg} 
\end{align}
Here the ${\bf a}_{m,n}$ is defined in \eqref{a:weight:neq} and $B_{m,n}$ is defined in \eqref{Bweight}.
\end{lemma}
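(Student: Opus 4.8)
\textbf{Proof strategy for the $H^1$-based Gevrey estimate \eqref{J23_bg}.}
The plan is to mirror the argument already used for the $L^2$-based estimate \eqref{J2_est} in Lemma~\ref{lem:ellp>3}, but now with one extra $\pa_y$-derivative in play. First I would apply the maximal-regularity-type elliptic estimate \eqref{mx_rg_v} (in its differentiated form, as in the derivation of \eqref{est_n=1_J3} in the low-regularity section) to the equation satisfied by $\pa_y(\mr\phe_{m,n;k})$. Differentiating \eqref{T123} once in $\pa_y$ produces, on the right-hand side, the forcing $\pa_y(\chi_{m+n}|k|^mq^n\Gamma_k^n\wwe_k)$ together with the differentiated commutators $\pa_y\mathbb{C}^{(I)}_{m,n}$, $\pa_y\mathbb{T}_{1;m,n}$, $\pa_y\mathbb{T}_{2;m,n}$ and $\pa_y\mathbb{T}_{3;m,n}$. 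Since $n\geq 2$ we have the degenerate weight $q^n$ vanishing to second order at $y=\pm1$, so no boundary terms appear in this maximal-regularity step (the same observation that was used in the $n\geq 2$ parts of the earlier lemmas); this is why the estimate here is cleaner than the delicate $n\leq 1$ cases that generated the $|v_{yy}||_{y=\pm1}$ trace terms. Consequently
\begin{align*}
\sum_{m+n=3}^{M}\mathbbm{1}_{n\geq 2}{\bf a}_{m,n}^2\sum_{b+c=3}\|J^{(0,b,c)}_{m,n}\phe_k\|_{L^2}^2
\lesssim &\sum_{m+n=3}^{M}\mathbbm{1}_{n\geq 2}{\bf a}_{m,n}^2\Big(\sum_{b+c=2}\|J^{(0,b,c+1)}_{m,n}\wwe_k\|_{L^2}^2+\|\pa_y\mathbb{C}^{(I)}_{m,n}\|_{L^2}^2\\
&\qquad+\sum_{j=1}^3\|\pa_y\mathbb{T}_{j;m,n}\|_{L^2}^2\Big),
\end{align*}
using the equivalence \eqref{Jel_equiv} to pass between $J^{(0,b,c)}_{m,n}$ and flat $\pa_y^b|k|^c\mr\phe_{m,n;k}$, which only costs a constant depending on $\|v_y\|_{L_t^\infty W_y^{2,\infty}}$ and $\|\vyn\|_{L^\infty}$.

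Next I would insert the three commutator bounds \eqref{payT1}, \eqref{payT2}, \eqref{payT3} for $\sum\|\pa_y\mathbb{T}_{j;m,n}\|_{L^2}^2$ and the estimate \eqref{ppv2-py2phI} for $\sum{\bf a}_{m,n}^2\|\pa_y\mathbb{C}^{(I)}_{m,n}\|_{L^2}^2$. The term \eqref{payT1} contributes the bilinear expression $\big(\sum_{n}B_{0,n}^2\varphi^{2n}\|J^{(\leq 1+\mathfrak i)}_{0,n}Z\|_{L^2}^2\big)\big(\sum_{m+n}{\bf a}_{m,n}^2\|J^{(\leq 3-\mathfrak i)}_{m,n}\phe_k\|_{L^2}^2\big)$ summed over $\mathfrak i\in\{0,1\}$, which is exactly the second line of \eqref{J23_bg}; here one should be a little careful that the $Z$-norms on the support of $\wt\chi_1$ are what appears, which follows because $\pa_y\mathbb{T}_{1;m,n}$ only sees $Z$ through factors $\pav^{n-\ell+\mathfrak i}Z$ that, once multiplied by the cutoff chain $\chi_{n-\ell}$ and re-expressed via $J^{(\cdot)}_{0,n-\ell}Z$, are supported in $\mathrm{supp}\,\wt\chi_1$ (this is the content of Lemma~\ref{lem:vy2-1est} and its variants). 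The terms \eqref{payT2}, \eqref{payT3} each contribute $\lambda^{2s}\sum_{m+n=0}^{M-1}{\bf a}_{m,n}^2\|J^{(\leq 3)}_{m,n}\phe_k\|_{L^2}^2$, which is the third line of \eqref{J23_bg}; and \eqref{ppv2-py2phI} contributes $\|\wt\ww_k^{(I)}\|_{H^1}^2\sum_n B_{0,n}^2\varphi^{2n+2}\|J^{(\leq 2)}_{0,n}Z\|^2_{L^2(\mathrm{supp}\wt\chi_1)}$, the last line. The forcing term $\sum{\bf a}_{m,n}^2\sum_{b+c=2}\|J^{(0,b,c+1)}_{m,n}\wwe_k\|_{L^2}^2$ matches $\sum_{m+n=0}^M\sum_{b+c=1}{\bf a}_{m,n}^2\|J^{(0,b,c)}_{m,n}\wwe_k\|_{L^2}^2$ after the harmless shift by one conormal derivative. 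Throughout, one uses the standard interchange-of-summation / Fubini trick applied already in \eqref{sqr_upgrade} and \eqref{J2_T1_h} to convert the inner $\ell$-convolutions against the geometrically-summable Gevrey ratios (via \eqref{gevbd1212}, \eqref{prod}) into the advertised clean double sums.

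The main obstacle, as in the $L^2$ case, is bookkeeping: the differentiated commutator $\pa_y\mathbb{T}_{1;m,n}$ now involves up to $\pav^{n-\ell+2}Z$ paired with only one $\pa_y$-derivative of $\phi_k^{(I)}$-type factors, and in the worst case ($\ell=0$, all $\Gamma_k$'s landing on $Z$) one is forced — exactly as in the treatment of $T_{234}\mathbbm{1}_{\ell=0}$ in \eqref{T23} — to redistribute $\pav$-derivatives between $Z$ and $\phe_k$ using the commutator identities \eqref{cm_J}, \eqref{cm_pv_J_est}, \eqref{cm_qn_pavj} and the bounds \eqref{vy2_1est3}, \eqref{vy2_1est4} so that at most $\|J^{(\leq 2)}_{0,n}Z\|$ and at most $\|J^{(\leq 3)}_{m,n}\phe_k\|$ are produced. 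Keeping $\sigma_\ast\geq 8\sigma$ is what makes the combinatorial factors $(m+n)^{2(1+\sigma)}$ coming from two $\chi_{m+n}$-derivatives absorbable after two steps of the $\Gamma_k$-expansion; this is the same mechanism as in \eqref{Step_3_T_3}--\eqref{Step_4_T323}, now carried out with one additional $\pa_y$. Once these reductions are in place, the remaining steps are the routine Gevrey-combinatorics already packaged in the cited lemmas, and summing the four contributions gives \eqref{J23_bg}.
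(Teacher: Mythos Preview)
Your approach is essentially the same as the paper's: differentiate \eqref{T123} once in $\pa_y$, observe that $n\ge 2$ kills the boundary terms so \eqref{mx_rg} applies directly to $\pa_y\mr\phe_{m,n;k}$, and then feed in the prepackaged commutator bounds \eqref{payT1}--\eqref{payT3} together with \eqref{ppv2-py2phI}. The paper then covers the missing $|k|^3$-piece (the $b=0,c=3$ component of $J^{(0,b,c)}_{m,n}\phe_k$, which the $\pa_y$-equation alone does not see) by simply multiplying the already-established $J^{(2)}$ estimate \eqref{J2_est} through by $|k|$; this second step is what produces the $\||k|J^{(0)}_{m,n}\wwe_k\|_{L^2}^2$ contribution and completes the $\sum_{b+c=1}\|J^{(0,b,c)}_{m,n}\wwe_k\|_{L^2}^2$ forcing term.

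Your display and the remark about a ``harmless shift by one conormal derivative'' are muddled on exactly this point: the expression $\sum_{b+c=2}\|J^{(0,b,c+1)}_{m,n}\wwe_k\|_{L^2}^2$ as written places three derivatives on $\wwe_k$, not one, and it is not what the $\pa_y$-maximal-regularity step actually gives (that step gives $\|\pa_y J^{(0)}_{m,n}\wwe_k\|_{L^2}^2=\|J^{(0,1,0)}_{m,n}\wwe_k\|_{L^2}^2$). Once you split off the $|k|$-scaled $J^{(2)}$ estimate explicitly, the forcing contributions line up with the $b+c=1$ sum in \eqref{J23_bg} without any index gymnastics. Aside from this bookkeeping slip, the outline is correct and matches the paper.
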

\begin{proof}
The proof is similar to the proof of Lemma \ref{lem:ellp>3}. Since $n\geq 2$, we consider the equation \eqref{T123} and derive the following two equations 
\begin{align}\n
\de_k \pa_y(\chi_{m+n}|k|^m q^n\Gamma_k^n \phi_k^{(E)})
=&\pa_y(\chi_{m+n} |k|^mq^n\Gamma_k^n \wt\ww_k^{(E)})+\myr{\pa_y(\chi_{m+n} |k|^mq^n\Gamma_k^n(\wch(\pav^2-\pa_y^2)\phi_k^{(I)}))}\\
&+\pa_y\mathbb{T}_{1;m,n}+\pa_y\mathbb{T}_{2;m,n}+\pa_y\mathbb{T}_{3;m,n},\n \\
\quad \pa_y(\chi_{m+n}|k|^m q^n\Gamma_k^n \phi_k^{(E)})&\big|_{y=\pm1 }=0.\label{eq:pyphi_mnk}
\end{align} We invoke the maximal regularity estimate \eqref{mx_rg} on the equation \eqref{eq:pyphi_mnk} to obtain that 
\begin{align} \n
&\sum_{m+n=3}^M\sum_{b+c=2}{\bf a}_{m,n}^2\|\pa_y^{b+1}|k|^cJ_{m,n}^{(0)}\phe_{k}\|_{L^2}^2\\
&\lesssim \sum_{m+n=3}^M{\bf a}_{m,n}^2\lf(\|\pa_yJ_{m,n}^{(0)}\wwe_{k}\|_{L^2}^2+\|\pa_y\mathbb{T}_{1;m,n}\|_{L^2}^2+\|\pa_y\mathbb{T}_{2;m,n}\|_{L^2}^2+\|\pa_y\mathbb{T}_{3;m,n}\|_{L^2}^2+\|\pa_y{\ci_{m,n}}\|_{L^2}^2\rg).\label{J3_>3_T1}
\end{align}
The estimate of the second to fourth terms are summarized in \eqref{payT1}, \eqref{payT2} and \eqref{payT3}. For the last term, we invoke the estimate \eqref{ppv2-py2phI}. In conclusion, the left hand side of \eqref{J3_>3_T1} is consistent with \eqref{J23_bg}.

Next, from $J^{(2)}$-estimate \eqref{J2_est}, and the bound of $\ci_{m,n}$ \eqref{pv2-py2phI}, we have that 
\begin{align*}
\sum_{m+n=3}^M&\mathbbm{1}_{n\geq 2}\sum_{b+c=2}\|J^{(0,b,c+1)}_{m,n}\phe_k\|_{L^2}^2\\
\lesssim &\sum_{m+n=0}^M  {\bf a}_{m,n}^2\| |k|J_{m,n}^{(0)}\wwe_{k} \|_{L^2}^2+\lambda^{2s} \sum_{m+n=0}^{M-1}{\bf a}_{m,n}^2\sum_{a+b+c=2}\|J^{(a,b,c+1)}_{m,n}\phe_k\|_{L^2}^2\\
 &+   \lf(\sum_{n_1=0}^{M}{B}_{0,n_1}^2\varphi^{2n_1} \lf\|J^{(\leq 1)}_{0,n_1}Z\rg\|_{L^2}^2\rg)\lf(\sum_{m+n_2=0}^{M} {\bf a}_{m,n_2}^2\lf\| J^{(\leq 3)}_{m,n_2}\phe_k\rg\|_{L^2}^2\rg)\\
 &+\||k|\wt\ww_k^{(I)}\|_{L^2}^2\sum_{n=0}^{M}\sum_{a+b\leq 1}B_{0,n}^2\varphi^{2n{+2}}\|\mathbbm{1}_{\mathrm{supp}\wt\chi_1} J_{0,n}^{(a,b,0)}Z\|_{L^2}^2. 
\end{align*}
Combining all the estimates developed thus far, we obtained \eqref{J23_bg}. 
\end{proof}
\subsubsection{Conclusion} \label{sub2sct:con}


\begin{proof}[Proof of Proposition \ref{pro:IE_phi_ext} b)]
Recalling the induction relation \eqref{JtoJ_2}, the low-$n$ regularity estimates \eqref{est_n=0_J2}, \eqref{est_n=1_J2}, \eqref{est_J2}, the high-$n$ regularity estimate \eqref{J2_est} and the $\ci_{m,n}$-estimate \eqref{pv2-py2phI}, we obtain that 
\siming{(Check!)}
\begin{align*}
\n
\sum_{m+n=0}^{M} &{\bf a}_{m,n}^2\|J^{(\leq 2)}_{m,n} \phe_k\|_{L^2} ^2\\
  \lesssim&  \sum_{m+n=0}^M  {\bf a}_{m,n}^2\| \wwe_{m,n} \|_{L^2}^2+   \lf(\sum_{n_1=0}^{M}{B}_{0,n_1}^2\varphi^{2n_1} \lf\|J^{(\leq 1)}_{0,n_1}Z\rg\|_{L^2(\mathrm{supp}\wt\chi_1)}^2\rg)\lf(\sum_{m+n_2=0}^{M} {\bf a}_{m,n_2}^2\lf\| J^{(\leq 2)}_{m,n_2}\phe_k\rg\|_{L^2}^2\rg)\\ \n
& +\lambda^{2s} \sum_{m+n=0}^{M-1}{\bf a}_{m,n}^2\|J^{(2)}_{m,n}\phe_k\|_{L^2}^2 +\sum_{n=0}^{M}B_{n}^2\varphi^{2n}\|J^{(\leq 1)}_{0,n} Z\|_{L^2\myr{(\mathrm{supp}\wt\chi_1)}}^2  \|\wt\ww_k^{(I)}\|_{L^2}^2  .
\end{align*}

Now we apply the assumption that
\begin{align}\label{sml_vy2-1'}
\sum_{n=0}^{M}B_{0,n}^2\varphi^{2n}\|J^{(\leq 1)}_{0,n}Z \|_{L^2(\text{supp}\wt\chi_1)}^2\lesssim \exp\{-\nu^{-1/6}\},
\end{align} and take the parameters $\lambda,\, \nu$ small compared to universal constants to guarantee \eqref{J_ph_est1}.
   
To prove the estimate \eqref{J_phe_est}, the strategy is similar. We invoke the  induction relation \eqref{JtoJ_2}, the low-$n$ regularity estimates \eqref{est_n=0_J3}, \eqref{est_n=1_J3}, \eqref{est_J3}, the high-$n$ regularity estimate \eqref{J23_bg} and the $\ci_{m,n}$-estimate \eqref{ppv2-py2phI}, and apply the assumption \eqref{sml_vy2-1'} to obtain that \siming{(Check??)}
\begin{align*}
 &\sum_{m+n=0}^{M} {\bf a}_{m,n}^2\|J^{(\leq 3)}_{m,n} \phe_k\|_{L^2} ^2  \\
& \lesssim  \sum_{m+n=0}^M  {\bf a}_{m,n}^2\| J_{m,n}^{(0)}\wwe_{k} \|_{H_k^1}^2
+ \lf(\sum_{n=0}^{M}B_{0,n}^2\varphi^{2n} \lf\|J^{(2)}_{0,n}Z\rg\|_{L^2(\mathrm{supp}\wt\chi_1)}^2\rg)\lf(\sum_{m+n=0}^{M} {\bf a}_{m,n}^2\lf\| J^{(\leq 2)}_{m,n}\phe_k\rg\|_{L^2}^2\rg)\\
&\quad+(\exp\{-\nu^{-1/6}\}+\lambda^{2s}) \sum_{m+n=0}^{M} {\bf a}_{m,n}^2\|J^{(\leq 3)}_{m,n} \phe_k\|_{L^2} ^2+ \|\wt\ww_k^{(I)}\|_{H_k^1}^2\sum_{n=0}^{M}B_{0,n}^2\varphi^{2n}\|J^{(\leq 2)}_{0,n} Z\|_{L^2(\text{supp}\wt\chi_1)}^2\\
&\quad+|v_{yy}|\big|_{y=\pm 1}\|\oi_k\|_{L_v^2}.
\end{align*} 
Hence for $\lambda,\,\nu$ chosen small enough, we can apply the bound \eqref{J_ph_est1} to derive the following estimate 
\begin{align*}
 \sum_{m+n=0}^{M}& {\bf a}_{m,n}^2\|J^{({3})}_{m,n} \phe_k\|_{L^2} ^2 \\ \lesssim&  \sum_{m+n=0}^M  {\bf a}_{m,n}^2\|J_{m,n}^{(0)}\wwe_{k} \|_{H_k^1}^2
+ \lf(\sum_{n=0}^{M}B_{0,n}^2\varphi^{2n} \lf\|J^{(2)}_{0,n}Z\rg\|_{L^2(\mathrm{supp}\wt\chi_1)}^2\rg)\lf(\sum_{m+n=0}^\infty {\bf a}_{m,n}^2\| J_{m,n}^{(0)}\wwe_{k}\|_{L^2}^2+\|\wt\ww_k^{(I)}\|_{H_k^1}^2\rg)\\
&+ \exp\{-\nu^{-1/9}\}\|\wt\ww_k^{(I)}\|_{H_k^1}^2+|v_{yy}|\big|_{y=\pm 1}\|\oi_k\|_{L_v^2}.
\end{align*} 
 This concludes the proof. 
\ifx
{\bf Previous: }
We have the relation
\begin{align*}
\sum_{\ell=0}^{n}\binom{n}{\ell}^{-\sigma}|f_\ell|\leq (\sum_{\ell=0}^n\binom{n}{\ell}^{-2\sigma})^{1/2}(\sum| f_\ell|^2)^{1/2}\lesssim \|f_k\|_{\ell_k^2}
\end{align*}
Here the idea to handle the second term in \eqref{J23_bg} is that 
\begin{align*}
\leq&\sum_{m+n=0}^M\lf(\sum_{\ell=1}^n\binom{m+n}{m+\ell}^{-2\sigma}\rg)\lf(\sum_{\ell=1}^nB_{0,n-\ell}^2\varphi^{2n-2\ell}\|J^{(\mathfrak m_0-1)}\lf(\chi_{n-\ell}q^{n-\ell}\pav^{n-\ell}Z \rg)\|_{L^2}^2{\bf a}_{m,\ell}^2\|J_{m,\ell}^{(\mathfrak m_0)}\phe_k\|_2^2\rg)\\
\lesssim&\sum_{m=0}^{M}\sum_{n=0}^{M-m}\lf(\sum_{\ell=1}^nB_{0,n-\ell}^2\varphi^{2n-2\ell}\|J^{(\mathfrak m_0-1)}\lf(\chi_{n-\ell}q^{n-\ell}\pav^{n-\ell}Z \rg)\|_{L^2}^2{\bf a}_{m,\ell}^2\|J_{m,\ell}^{(\mathfrak m_0)}\phe_k\|_2^2\rg)\\
\lesssim&\sum_{m=0}^{M}\lf(\sum_{n=0}^{M-m}B_{0,n}^2\varphi^{2n}\|J^{(\mathfrak m_0-1)}\lf(\chi_{n}q^{n}\pav^{n}Z \rg)\|_{L^2}^2\rg)\lf(\sum_{n=0}^{M-m}{\bf a}_{m,n}^2\|J_{m,n}^{(\mathfrak m_0)}\phe_k\|_2^2\rg)\\
\lesssim&\lf(\sum_{n=0}^MB_{0,n}^2\varphi^{2n}\|J^{(\mathfrak m_0-1)}\lf(\chi_{n}q^{n}\pav^{n}Z \rg)\|_{L^2}^2\rg)\lf(\sum_{m+n=0}^M{\bf a}_{m,n}^2\|J_{m,n}^{(\mathfrak m_0)}\phe_k\|_2^2\rg)\\
\lesssim&\ep \sum_{m+n=0}^M{\bf a}_{m,n}^2\|J_{m,n}^{(\mathfrak m_0)}\phe_k\|_2^2.
\end{align*}
If $\ep$ is small enough, this term can be absorbed by the left hand side.  
\fi
\end{proof}

\subsection{Bounds on $\mathcal{F}_{ell}^{(E)}$}
In this section, we prove Proposition \ref{pro:ellptc_int} b). We take Fourier transform in $x$ and reformulate the problem \eqref{dmp_int2} as follows
\begin{align}\n
\Delta_k \Psi_k^{(E)} = &\ww_k^E-  (2+(1+\chi^I) H)\ H^E\ \pav^2 \psi_k +  (2+(1+\chi^I) H)\ H^E\ \pav^2 \Psi^{(E)}_k \\
\n& + \left(H^E\ \pav H + (H^I-1)\ \pav H^E\right)\ \pav \psi_k
-\left(H^E\ \pav H + (H^I-1)\ \pav H^E\right)\ \pav \Psi^{(E)}_k\\
=:& \ww_k^E+R_1+R_2+R_3+R_4, \label{PhiE}\\
\Psi^{(E)}_k(\pm 1) =& 0.\n
\end{align}
Here $F^I=\chi^I F,\ F^E=\chi^E F$ for functions $F\in\{w,H\}$. During the reformulation, we have used the decomposition that $\psi_k=\Psi_k^{(I)}+\Psi_k^{(E)}$. We further note that the support of the right hand side forcing are supported in the region where $\chi_{m+n}\equiv 1$ \eqref{chi} and hence one has sufficient regularity for the coordinate function $H$. 
   
To derive the conormal Gevrey estimates, we can write down a similar equation as \eqref{T123}, 
\begin{align}\n
&\hspace{-0.5cm} \de_k\lf( {\chi_{m+n}} |k|^m q^n\Gamma_k^n \Psi_k^{(E)}\rg) 
\\ =&  |k|^mq^n\Gamma_k^n \ww_k^E+ |k|^mq^n\Gamma_k^nR_1+ |k|^mq^n\Gamma_k^nR_2+ |k|^mq^n\Gamma_k^nR_3+ |k|^mq^n\Gamma_k^nR_4\n \\
\n &+ \chi_{m+n}|k|^m q^n[\pa_{yy}, \Gamma_k^n]\Psi_k^{(E)}+ \chi_{m+n}[\pa_{yy},q^n]\lf( |k|^m \Gamma_k^n \Psi_k^{(E)} \rg)+[\pa_{yy},\chi_{m+n}] \lf( |k|^mq^n\Gamma_k^n \Psi_k^{(E)} \rg) \\
=:&|k|^m q^n\Gamma_k^n \ww_k^E+\sum_{j=1}^7\mathbb{T}_{j;m,n}.\label{PhE_eq}
\end{align} 
Here we note that the $\ww^E_k$, $R_i,\ i\in\{1,2,3,4\}$ are supported in $\text{support}\chi^E$ and $\chi^E\chi_{m+n}=\chi^E$. Hence, we do not have $\chi_{m+n}$ cut-off for the first few terms. However, the solution $\Psi_k^{(E)}$ is supported on the whole channel, we have to include the cutoff $\chi_{m+n}$ in the last three terms. 
\begin{proof}[Proof of Proposition \ref{pro:ellptc_int} b)] 
We decompose the proof into steps:

\noindent
{\bf Step \# 1: General Plan.}
To estimate the solutions to \eqref{PhE_eq}, we invoke the exterior  estimates of $\ww_k,\ \psi_k$ and $H$ to control the $ \ww_k^E$, $\mathbb{T}_{1;m,n}$ and $\mathbb{T}_{3;m,n}$ contribution. Then we use the fact that $H$ is extremely small on the support of $\mathbb{T}_{2;m,n},\ \mathbb{T}_{4;m,n}$ to absorb their contribution. Finally, the treatment of the $\mathbb{T}_{5;m,n},\ \mathbb{T}_{6;m,n}$ and $\mathbb{T}_{7;m,n}$ are treated in the same fashion as previous subsections. 

With this plan in mind, we test the equation \eqref{PhE_eq} by $\chi_{m+n}|k|^m q^n \Gamma_k^n\Psi_k^{(E)}$ to obtain that for arbitrary $M\in\mathbb{N}$,\begin{align}\n
&\sum_{m+n=0}^M \lf(\frac{(2\widetilde\lambda)^{m+n}}{(m+n)!}\rg)^{2/r}\|{\chi_{m+n}}\mathring\Psi^{(E)}_{m,n;k}\|_{H_k^1}^2 \\
&\lesssim\sum_{m+n=0}^M\lf(\frac{(2\widetilde\lambda)^{m+n}}{(m+n)!}\rg)^{2/r}\|\chi_{m+n} \mathring{\ww}_{m,n;k}^E\|_{L^2}^2+\sum_{j=1}^7\sum_{m+n=0}^M\lf(\frac{(2\widetilde\lambda)^{m+n}}{(m+n)!}\rg)^{2/r}\lf\lan\mathbb{T}_{j;m,n},{\chi_{m+n}}\mathring\Psi^{(E)}_{m ,n;k}\rg\ran \n \\
&=: \sum_{m+n=0}^M\lf(\frac{(2\widetilde\lambda)^{m+n}}{(m+n)!}\rg)^{2/r}\|\chi_{m+n} \mathring{\ww}_{m,n;k}^E\|_{L^2}^2+\sum_{j=1}^7 T_j. \label{elp_int_T1-7}
\end{align}%
Here we recall the norms that we employed in \eqref{FE_ell} and the simplified notation $\mathring f_{m,n;k}=|k|^mq^n\Gamma_k^n f_k$, \eqref{f_mn_ring}. Hence to derive the bound \eqref{elp_int}, it is enough to estimate the right hand side of \eqref{elp_int_T1-7}.

\noindent 
{\bf Step \# 2: Estimates of $ {T}_{1},\ {T}_{2}$ terms. }

For the $ {T}_{1},\ {T}_{2}$ contributions in \eqref{elp_int_T1-7}, we estimate them through integration by parts. Since they have similar structures, we use $\mathfrak{B}_k$ to denote $\{\psi_k,\Psi_k^{(E)}\}$ and estimate them in the following unified fashion,
\begin{align*}
T_{j}&\mathbbm{1}_{j\in\{1,2\}}:=\sum_{m+n=0}^M\lf(\frac{(2\widetilde\lambda)^{m+n}}{(m+n)!}\rg)^{2/r}\lf\lan \mathbb{T}_{\iota;m,n},  {\chi_{m+n}}\mathring\Psi^{(E)}_{m,n;k}\rg\ran\mathbbm{1}_{j\in\{1,2\}}\\
=&\sum_{m+n=0}^M \lf(\frac{(2\widetilde\lambda)^{m+n}}{(m+n)!}\rg)^{2/r}\sum_{n_1=0}^n\binom{n}{n_1}\\
&\quad\bigg(
\lf\lan q^{n-n_1}\Gamma^{n-n_1}\lf((2+(1+\chi^I)H)\ H^E\rg)\   [q^{n_1},\pav]\pav(\Gamma_k^{n_1}|k|^m\mathfrak{B}_k),  {\chi_{m+n}}\mr \Psi^{(E)}_{m,n;k} \rg\ran\\ 
&\qquad-\lf\lan v_y^{-1}q^{n-n_1}\Gamma^{n-n_1}\lf((2+(1+\chi^I)H)\ H^E\rg)\  q^{n_1}\pav\lf(\Gamma_k^{n_1}|k|^m\mathfrak{B}_k\rg), \pa_y\lf({\chi_{m+n}} \mr\Psi^{(E)}_{m,n;k}\rg)\rg\ran\\ 
&\qquad-\lf\lan \pa_y\lf(v_y^{-1}q^{n-n_1}\Gamma^{n-n_1}\lf((2+(1+\chi^I)H)\ H^E\rg)\rg) \ q^{n_1}\pav\Gamma^{n_1}|k|^m \mathfrak{B}_k, {\chi_{m+n}}\mr\Psi^{(E)}_{m,n;k}\rg\ran
\end{align*}
Now we start to estimate the above expression. First we observe that the commutator term with $[q^{n_1},\pav]$ is non-vacuous only if $n\geq n_1\geq 1$. Hence, we apply the Sobolev embedding to estimate the coordinate factors in the above expressions, and suitably adjust the position of the $q$-weight to end up with the following 
\begin{align*}
T_{j}\mathbbm{1}_{j=1,2}&\lesssim\sum_{m+n=0}^M \sum_{n_1=0}^n\binom{n}{n_1}^{1-1/r}\lf(\frac{(2\widetilde\lambda)^{n-n_1}}{(n-n_1)!}\rg)^{1/r}\lf(\frac{(2\widetilde\lambda)^{m+n_1}}{(m+n_1)!}\rg)^{1/r}\lf(\frac{(2\widetilde\lambda)^{m+n}}{(m+n)!}\rg)^{1/r}  \\
&\bigg(  \|{q}[q^{n_1},\pav]\pav\Gamma^{n_1}|k|^m\mf B_k\|_{L^2([-1/2,1/2]^c)} \lf\|\frac{1}{q}{\chi_{m+n}} \mr\Psi^{(E)}_{m,n;k} \rg\|_{L^2}\mathbbm{1}_{n\geq n_1\geq 1}\\ 
&\quad +\|  q^{n_1}\pav(\Gamma^{n_1}|k|^m\mf B_k)\|_{L^2([-1/2,1/2]^c)}\lf\| \na_k\lf( {\chi_{m+n}}\mr\Psi^{(E)}_{m,n;k}\rg)\rg\|_{L^2}\\ 
&\hspace{-0.5cm}+\|q^{n_1}\pav({\chi_{m+n_1}}\Gamma^{n_1}|k|^m \mf B_k)\|_{L^2([-1/2,1/2]^c)}\|{\chi_{m+n}}\mr\Psi^{(E)}_{m,n;k}\|_{L^\infty}\bigg)\lf\|  q^{n-n_1}\pav^{n-n_1}\lf((2+(1+\chi^I)H)H^E\rg)\rg\|_{H^1}.
\end{align*}
Here in the last inequality, we have invoked a property of the Gevrey coefficient that $\binom{n}{n_1}\frac{B_{m,n}}{B_{m,n_1}B_{0,n-n_1}}\leq \binom{n}{n-n_1}\binom{m+n}{n-n_1}^{-1/r}\leq C\binom{n}{n_1}^{1-1/r}$, and the fact that $\chi_{m+n}\mathbbm{1}_{m+n\geq 1}\equiv 1$ on the support of $\chi^E.$ 
Now we recall the definition of the $J_{m,n}^{(a,b,c)}f_k$ \eqref{J_nq}, \eqref{J_vec}, the combinatorial bound \eqref{prod} and bound the above expression as follows:
\begin{align*}
T_{j}\mathbbm{1}_{j=1,2}&\lesssim\sum_{m+n=0}^M \sum_{n_1=0}^n\binom{n}{n_1}^{1-1/r}\lf(\frac{(2\widetilde\lambda)^{n-n_1}}{(n-n_1)!}\rg)^{1/r}\lf(\frac{(2\widetilde\lambda)^{m+n_1}}{(m+n_1)!}\rg)^{1/r}\lf(\frac{(2\widetilde\lambda)^{m+n}}{(m+n)!}\rg)^{1/r}\\
&\hspace{2cm}\times \lf(\sum_{a+b+c=1}\lf\|J^{(a,b,c)}_{m,n}\Psi^{(E)}_{k} \rg\|_{L^2}\rg)\lf\|  q^{n-n_1}\pav^{n-n_1}\lf((2+(1+\chi^I)H)H^E\rg)\rg\|_{H^1} \\
&\hspace{2cm}\times\bigg(\sum_{a+b=1}\|J^{(a,b,0)}_{m,n_1}\mf B_k\|_{L^2([-1/2,1/2]^c)}\bigg)\\
&\hspace{-0.8cm}\lesssim\lf(\sum_{m+n=0}^M \lf(\frac{(2\widetilde\lambda)^{m+n}}{(m+n)!}\rg)^{2/r}\lf\|J^{(1)}_{m,n}\Psi^{(E)}_{k} \rg\|_{L^2}^2\rg)^{1/2} \lf(\sum_{m+n=0}^M \lf(\sum_{n_1=0}^n\binom{n}{n_1}^{2-2/r}\rg) \sum_{n_1=0}^n \lf(\frac{(2\widetilde\lambda)^{n-n_1}}{(n-n_1)!}\rg)^{2/r}\rg.\\
&\hspace{-0.1cm}\lf.\times\lf\|  q^{n-n_1}\pav^{n-n_1}\lf((2+(1+\chi^I)H)H^E\rg)\rg\|_{H^1}^2\lf(\frac{(2\widetilde\lambda)^{m+n_1}}{(m+n_1)!}\rg)^{2/r}\|J^{(1)}_{m,n_1}\mf B_k\|_{L^2([-1/2,1/2]^c)}^2\rg)^{1/2}.
\end{align*}
In the first estimate, we have invoked the fact that on the support of $\chi^E$, it is permissible to introduce the cut-off $\chi_{m+n}$ freely to generate the quantities $J_{m,n}^{(a,b,c)}(\cdots)$. Thanks to the combinatorial bound \eqref{prod}, the sum $\sum_{n_1=0}^n\binom{n}{n_1}^{2-2/r}$ in the final expression is bounded by a constant $C_r.$ Furthermore, it is possible to switch the order of summation to estimate the last factor as we have done previously:
 \begin{align}\n
T_{j}\lesssim& \lf(\sum_{m+n=0}^M \lf(\frac{(2\widetilde\lambda)^{m+n}}{(m+n)!}\rg)^{2/r}\lf\|J^{(1)}_{m,n}\Psi^{(E)}_{k} \rg\|_{L^2}^2\rg)^{1/2} \lf(\sum_{m+n=0}^M\lf(\frac{(2\widetilde\lambda)^{m+n}}{(m+n)!}\rg)^{2/r}\|J^{(1)}_{m,n}\mf B_k\|_{L^2([-1/2,1/2]^c)}^2\rg)^{1/2}\\
&\hspace{1cm} \times \lf(\sum_{n=0}^M  \lf(\frac{(2\widetilde\lambda)^{n}}{n!}\rg)^{2/r}\lf\|  q^{n }\pav^{n }\lf((2+(1+\chi^I)H)H^E\rg)\rg\|_{H^1}^2\rg)^{1/2}.\label{FE_Tj12}
\end{align}
At this time, one might be tempted to apply the induction lemma \eqref{JtoJ_2}. However, the induction estimate is valid when applied with the Gevrey coefficients ${\bf a}_{m,n}$, which include $\varphi^n\sim \langle t\rangle^{-n}$ factors. The time-decaying $\varphi$ compensates for the additional $t$-factors concealed in $\Gamma_k$. Here, the Gevrey coefficients lack $\varphi$-factors. Nonetheless, thanks to the analysis in the proof of \eqref{JtoJ_2}, it is clear that invoking the induction will only result in extra $\langle t\rangle$-growing factors.

To estimate the ${T}_{1 }$ contribution, we invoke the product estimates \eqref{ga_prod}, \eqref{al_prod}, and the hypotheses \eqref{boot:H} to obtain that 
\begin{align*}\n
T_{1}\leq&  \frac{1}{8}\sum_{m+n=0}^M \lf(\frac{(2\widetilde\lambda)^{m+n}}{(m+n)!}\rg)^{2/r}\lf\|\na_k\lf({\chi_{m+n}}\mr \Psi^{(E)}_{m,n;k}\rg)\rg\|_{L^2}^2 \\ \n 
&+ C\nu^{4}\lan t\ran^2\sum_{m+n=0}^M \lf(\frac{(2\widetilde\lambda)^{m+n}}{(m+n)!}\rg)^{2/r}e^{-\frac{1}{8}\nu^{-2/3+\eta}}\||k|^m q^n \Gamma_k^n\psi_k\|_{H_k^1([-1/2,1/2]^c))}^2\lf(\mathcal{E}_{H}^{(\gamma)}+\mathcal{E}_{H}^{(\al)}\rg).
\end{align*}
Here in the last line, we have the extra factor $\exp\{-\frac{1}{8}\nu^{-2/3+\eta}\}$ because there is a weight $e^W$ in $\mathcal{E}_{H}^{(\al)},\ \mathcal{E}_{H}^{(\gamma)}
$. Now by Lemma \ref{lem:ExtToInt}, we can express the last term with Gevrey norms with $\varphi$ weights. Combining the decomposition $\psi_k=\phi_k^{(I)}+\phi_k^{(E)}$, the estimates \eqref{jell:2}, \eqref{eell:out}, yields the following, 
\begin{align}
T_{1}\leq& \frac{1}{8}\sum_{m+n=0}^M \lf(\frac{(2\widetilde\lambda)^{m+n}}{(m+n)!}\rg)^{2/r}\lf\|\na_k\lf({\chi_{m+n}}\mr\Psi^{(E)}_{m,n;k}\rg)\rg\|_{L^2}^2 
+ \exp\{-\nu^{-1/6}\}\lf(\mathcal{E}^{(\gamma)}[\ww_k]+\mathcal{E}_{\text{Int}}^{\rm low}[\ww_k]\rg)\lf(\mathcal{E}_{H}^{(\gamma)}+\mathcal{E}_{H}^{(\al)}\rg)^2.\label{est:R_1}
\end{align}

Next we consider the ${T}_{2}$ contributions . Now combining a variation of the induction relation \eqref{JtoJ_2} (with ${\bf a}_{m,n}$ replaced by $\frac{(2\wt\lambda)^{(m+n)/r}}{(m+n)!^{1/r}}$) \siming{(Checked once. But still need double check due to the extra time factors appearing here)}, the Sobolev inequality, Young's inequality and the combinatorial bound \eqref{prod}  yields that
\begin{align*}
T_{2}\leq  \sum_{m+n=0}^M &\lf(\frac{(2\widetilde\lambda)^{m+n}}{(m+n)!}\rg)^{2/r}\lf\|\na_k\lf(\chi_{m+n}\mr\Psi^{(E)}_{m,n;k}\rg)\rg\|_{L^2}^2 \\
&\times\lf(\frac{1}{16}+C\lan t\ran^2\sum_{n=0}^\infty \lf(\frac{(4\widetilde\lambda)^{n}}{n!}\rg)^{2/r}  \lf\|q^n\pav^n\lf((2+(1+\chi^I)H)H^E\rg)\rg\|_{H^1}^2\rg).
\end{align*}
Now we invoke the product estimates \eqref{ga_prod}, \eqref{al_prod}, the hypotheses \eqref{boot:H}, and Lemma \ref{lem:ExtToInt} to obtain that 
\begin{align}
T_{2}\leq \frac{1}{8} \sum_{m+n=0}^M &\lf(\frac{(2\widetilde\lambda)^{m+n}}{(m+n)!}\rg)^{2/r}\lf\|\na_k\lf(\chi_{m+n}\mr\Psi^{(E)}_{m,n;k}\rg)\rg\|_{L^2}^2.
\label{est:R_2}
\end{align}

This concludes Step \# 2.

\noindent
{\bf Step \# 3: Estimates of $ {T}_{3}$, ${T}_{4}$ terms.} The estimates in this step are similar to the ones in the last step. Hence we will only highlight the main differences. The contributions from the $ {T}_{3},\  {T}_{4}$ can be expressed in the following unified fashion (with $\mf B_k=\psi_k$ for $j=3$ and $\mf B_k=\Psi_k^{(E)}$ for $j=4$)
\begin{align*}
\n T_{j}\mathbbm{1}_{j=3,4}=&
\sum_{m+n=0}^M \lf(\frac{(2\widetilde\lambda)^{m+n}}{(m+n)!}\rg)^{2/r}\lf\lan \mathbb{T}_{j;m,n},  {\chi_{m+n}}\mr\Psi^{(E)}_{m,n;k}\rg\ran\mathbbm{1}_{j=3,4} \\
\n =&\sum_{m+n=0}^M \lf(\frac{(2\widetilde\lambda)^{m+n}}{(m+n)!}\rg)^{2/r}\sum_{n_1=0}^n\binom{n}{n_1}\\
&\quad\times\lf \lan q^{n-n_1}\Gamma_k^{n-n_1} \left(H^E\ \pav H + (H^I-1)\ \pav H^E\right)\ q^{n_1}\Gamma_k^{n_1}\pav |k|^m \mf B_k ,  {\chi_{m+n}}\mr\Psi^{(E)}_{m,n;k}\rg\ran. 
\end{align*}
We observe that this expression is similar to the $T_{1},\ T_2$ terms in Step \# 2. Hence, we apply the Sobolev embedding, the combinatorial bound \eqref{prod} to estimate the contributions as follows:
\begin{align}\n
&|T_{j=3,4}|\lesssim\\ \n
&\lf(\sum_{m+n=0}^M \lf(\frac{(2\widetilde\lambda)^{m+n}}{(m+n)!}\rg)^{2/r}\lf\| \chi_{m+n}\mr\Psi^{(E)}_{m,n;k} \rg\|_{L^\infty}^2\rg)^{1/2} \lf(\sum_{m+n=0}^M \lf(\sum_{n_1=0}^n\binom{n}{n_1}^{2-2/r}\rg) \sum_{n_1=0}^n \lf(\frac{(2\widetilde\lambda)^{n-n_1}}{(n-n_1)!}\rg)^{2/r}\rg.\\
&\lf.\times\lf\|  q^{n-n_1}\pav^{n-n_1}\left(H^E\ \pav H + (H^I-1)\ \pav H^E\right)\rg\|_{L^2}^2\lf(\frac{(2\widetilde\lambda)^{m+n_1}}{(m+n_1)!}\rg)^{2/r}\|J^{(1)}_{m,n_1}\mf B_k\|_{L^2([-1/2,1/2]^c)}^2\rg)^{1/2}\n \\
&\lesssim\lf(\sum_{m+n=0}^M \lf(\frac{(2\widetilde\lambda)^{m+n}}{(m+n)!}\rg)^{2/r}\lf\|\na_k( \chi_{m+n}\mr\Psi^{(E)}_{m,n;k}) \rg\|_{L^2}^2\rg)^{1/2} \lf(\sum_{m+n=0}^M \lf(\frac{(2\widetilde\lambda)^{m+n}}{(m+n)!}\rg)^{2/r}\|J^{(1)}_{m,n}\mf B_k\|_{L^2([-1/2,1/2]^c)}^2\rg)^{1/2}\n\\
&\quad \times\lf(\sum_{n=0}^M \lf(\frac{(2\widetilde\lambda)^{n}}{n!}\rg)^{2/r}\lf\|  q^{n}\pav^{n}\left(H^E\ \pav H + (H^I-1)\ \pav H^E\right)\rg\|_{L^2}^2\rg)^{1/2}.\label{FE_Tj_34}
\end{align}
Now we estimate the last factor which is associated with the coordinate system quantity $H$. We invoked a variant of the Gevrey product rule \eqref{prod_gen}: for  sufficiently smooth functions $f,g$, we have the following bound for $1/\mf r=1/\mf p+1/\mf q,$
\begin{align*}&\lf(\sum_{m+n=0}^\infty \lf(\frac{(2\wt \lambda)^n}{n!}\rg)^{2/r}\|q^n \pav^n (fg)\|_{L^{\mathfrak r}}^2\rg)\\
&\hspace{1cm}\lesssim\lf(\sum_{m+n=0}^\infty \lf(\frac{(2\wt \lambda)^n}{n!}\rg)^{2/r}\|q^n \pav^n f\|_{L^{\mathfrak p }}^2\rg)\lf(\sum_{m+n=0}^\infty \lf(\frac{(2\wt \lambda)^n}{n!}\rg)^{2/r}\|q^n \pav^n g\|_{L^{\mathfrak q}}^2\rg).
\end{align*}
The proof of this estimate is similar to the proof of \eqref{prod_gen} and we omit the details. By invoking the above product rule, we obtain the following
\begin{align*}
&\lf(\sum_{n=0}^M \lf(\frac{(2\widetilde\lambda)^{n}}{n!}\rg)^{2/r}\lf\|  q^{n}\pav^{n}\left(H^E\ \pav H + (H^I-1)\ \pav H^E\right)\rg\|_{L^2}^2\rg)^{1/2}\\
&\lesssim 
\lf(\sum_{n=0}^M \lf(\frac{(2\widetilde\lambda)^{n}}{n!}\rg)^{2/r}\lf\|  q^{n}\pav^{n} H^E \rg\|_{L^\infty}^2\rg)^{1/2}\
\lf(\sum_{n=0}^M \lf(\frac{(2\widetilde\lambda)^{n}}{n!}\rg)^{2/r}\lf\|  q^{n}\pav^{n+1}H \rg\|_{L^2(\text{support} {\chi^E})}^2\rg)^{1/2}\\
&\quad+\lf(\sum_{n=0}^M \lf(\frac{(2\widetilde\lambda)^{n}}{n!}\rg)^{2/r}\lf\|  q^{n}\pav^{n+1} H^E \rg\|_{L^2}^2\rg)^{1/2}\
\lf(\sum_{n=0}^M \lf(\frac{(2\widetilde\lambda)^{n}}{n!}\rg)^{2/r}\lf\|  q^{n}\pav^{n}(H^I-1) \rg\|_{L^\infty(\text{support} {\chi^E})}^2\rg)^{1/2}\\
&\lesssim \exp\{-\nu^{-1/5}\}\lf(\mathcal{E}_H^{(\al)}+\mathcal{E}_H^{(\gamma)}\rg).
\end{align*}
Here in the last line, we have invoked the assumption \eqref{asmp}. 

Now we are ready to conclude from the estimate \eqref{FE_Tj_34} and the coordinate control. Similar to the estimate of \eqref{est:R_1}, we have that 
\begin{align}
{T}_{3}\leq &\frac{1}{8}\sum_{m+n=0}^M \lf(\frac{(2\widetilde\lambda)^{m+n}}{(m+n)!}\rg)^{2/r}\lf\| \na_k\lf({\chi_{m+n}}\mr\Psi^{(E)}_{m,n;k}\rg)\rg\|_{L^2}^2+Ce^{-\nu^{-1/6}}\lf((\mathcal{E}_H^{\gamma})^2+(\mathcal{E}_H^{\al})^2\rg) \lf(\mathcal{E}^{(\gamma)}[\ww_k]+\mathcal{E}_{\text{Int}}[\ww_k]\rg). \label{est:R_3}
\end{align}

Similar to the estimate of \eqref{est:R_2}, we have that the ${T}_{4}$ contribution is bounded as follows
\begin{align}
T_4\leq& \lf(\frac{1}{8}+Ce^{-\nu^{-1/7}}((\mathcal{E}_H^{\gamma})^2+(\mathcal{E}_H^{\al})^2)\rg) \sum_{m+n=0}^M \lf(\frac{(2\widetilde\lambda)^{m+n}}{(m+n)!}\rg)^{2/r}\lf\|\na_k\lf({\chi_{m+n}}\mr\Psi^{(E)}_{m,n;k}\rg)\rg\|_{L^2}^2.\label{est:R_4}
\end{align}

This concludes Step \# 3.

\noindent
{\bf Step \# 4: Conclusion.}
The $\mathbb{T}_{5;m,n},\ \mathbb{T}_{6;m,n}$ and $\mathbb{T}_{7;m,n}$ terms in \eqref{PhE_eq} are similar to the terms in \eqref{T123}. Hence they can be estimated in a similar fashion as in the previous subsections. We omit further details for the sake of brevity. \footnote{\siming{\bf Check!} In previous sections, we treat these terms with maximal regularity trick $H^2$. Here we only do $H^1$ estimates. So should be easier.}  
Combining the above estimates (\eqref{elp_int_T1-7}, \eqref{est:R_1}, \eqref{est:R_2}, \eqref{est:R_3}, \eqref{est:R_4}) and the definitions of $\mathcal{E}^{(\gamma)}[\ww_k],\ \mathcal{E}_{\text{Int}}[\ww_k]$, we have obtained that 
\begin{align*}
\sum_{m+n=0}^M &\lf(\frac{(2\widetilde\lambda)^{m+n}}{(m+n)!}\rg)^{2/r}\|\myr{\chi_{m+n}}|k|^m q^n \Gamma_k^n\Psi^{(E)}_k\|_{H_k^1}^2\lesssim e^{-\nu^{-1/6}}\lf(\mathcal{E}^{(\gamma)}[\ww_k] + \mathcal{E}_{\text{Int}}[\ww_k]\right)\lf(1+ \mathcal{E}_H^{(\gamma)}+\mathcal{E}_H^{(\al)}\rg)^2.
\end{align*}
By taking $M\rightarrow \infty$, we have the result.
\end{proof}

\subsection{Proof of Proposition \ref{pro:ell:intro}}

We are now ready to bring together the bounds established in this section in order to prove Proposition \ref{pro:ell:intro}.

\begin{proof}[Proof of Proposition \ref{pro:ell:intro}]Combining Proposition \ref{pro:ellptc_int} and Proposition \ref{pro:IE_phi_ext} yields the result.
\end{proof}

\subsection{Technical Lemmas:  ICC-Method}

We first present a simple lemma, which is useful when we derive maximal regularity estimates.
\begin{lemma}\label{lem:max_reg}
Assume that $F_k\in L^2, \, k\neq 0$, and $\Psi_k$ solves the following elliptic equation
\begin{align}\label{mx_rg_eq}
-\pa_{yy}\Psi_k+|k|^2\Psi_k=F_k
\end{align}{subject to homogeneous boundary conditions $\Psi_k(y=\pm 1)=0$ or $\pa_y \Psi_k(y=\pm 1)=0$.}
Then the following estimates hold
\begin{align}
\|\Psi_k\|_{\dot H_k^2}:=&\|\pa_{yy}\Psi_k\|_{L^2}+\||k|\pa_y\Psi_k\|_{L^2}+\||k|^2\Psi_k\|_{L^2}\leq (3+\sqrt{2})\|F_k\|_{L^2};\label{mx_rg}\\
\|\Psi_k\|_{\dot H_{k,v}^2}:=&\|\pav^2 \Psi_k\|_{L^2}+\||k|\pav\Psi_k\|_{L^2}+\||k|^2\Psi_k\|_{L^2}\leq C(\|v_y^{-1}\|_{L_{t,y}^\infty},\|v_y\|_{L_t^\infty W^{1,\infty}_y})\|F_k\|_{L^2}.\label{mx_rg_v} 
\end{align}
\end{lemma}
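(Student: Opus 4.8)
<br>

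The plan is to prove the two maximal regularity estimates \eqref{mx_rg} and \eqref{mx_rg_v} by standard energy methods applied to the elliptic equation \eqref{mx_rg_eq}, exploiting the homogeneous boundary conditions to kill boundary terms after integration by parts. For \eqref{mx_rg}, I would first multiply \eqref{mx_rg_eq} by $\overline{\Psi_k}$ and integrate over $y \in [-1,1]$; the boundary terms vanish under either Dirichlet or Neumann conditions, giving $\|\pa_y \Psi_k\|_{L^2}^2 + |k|^2 \|\Psi_k\|_{L^2}^2 = \mathrm{Re}\langle F_k, \Psi_k\rangle$. By Cauchy–Schwarz and the Poincaré-type bound $\|\Psi_k\|_{L^2} \le |k|^{-1}\|\pa_y\Psi_k\|_{L^2}$ (valid for $k \neq 0$ from the same identity), this yields $\||k|\Psi_k\|_{L^2} \le \|F_k\|_{L^2}$ and $\||k|^{1/2}\pa_y\Psi_k\|_{L^2} \lesssim |k|^{1/2}\|F_k\|_{L^2}$ in a first pass. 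To get the full $\dot H^2_k$ control, I would next multiply \eqref{mx_rg_eq} by $-\overline{\pa_{yy}\Psi_k}$ (legitimate since the relevant boundary terms $\pa_y\Psi_k \overline{\pa_{yy}\Psi_k}$ or $\Psi_k \overline{\pa_y\Psi_k}$ vanish — under Dirichlet $\Psi_k|_{\pm1}=0$, and under Neumann $\pa_y\Psi_k|_{\pm1}=0$), obtaining $\|\pa_{yy}\Psi_k\|_{L^2}^2 + |k|^2\|\pa_y\Psi_k\|_{L^2}^2 = -\mathrm{Re}\langle F_k, \pa_{yy}\Psi_k\rangle \le \|F_k\|_{L^2}\|\pa_{yy}\Psi_k\|_{L^2}$, hence $\|\pa_{yy}\Psi_k\|_{L^2} \le \|F_k\|_{L^2}$ and $\||k|\pa_y\Psi_k\|_{L^2} \le \|F_k\|_{L^2}$. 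Combined with $\||k|^2\Psi_k\|_{L^2} \le \|\pa_{yy}\Psi_k\|_{L^2} + \|F_k\|_{L^2} \le 2\|F_k\|_{L^2}$ from the equation itself, collecting these three pieces gives the constant $3+\sqrt 2$ (or any comparable universal constant).

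For \eqref{mx_rg_v}, the idea is to transfer the flat-derivative estimate \eqref{mx_rg} to the $\pav = v_y^{-1}\pa_y$ derivatives using the bounds $\|v_y^{-1}\|_{L^\infty_{t,y}}$ and $\|v_y\|_{L^\infty_t W^{1,\infty}_y}$. Since $\pav \Psi_k = v_y^{-1}\pa_y\Psi_k$, we have $\||k|\pav\Psi_k\|_{L^2} \le \|v_y^{-1}\|_{L^\infty}\||k|\pa_y\Psi_k\|_{L^2} \lesssim \|F_k\|_{L^2}$ immediately, and $\||k|^2\Psi_k\|_{L^2}$ is already controlled. For the second-order term, expand $\pav^2\Psi_k = v_y^{-1}\pa_y(v_y^{-1}\pa_y\Psi_k) = v_y^{-2}\pa_{yy}\Psi_k - v_y^{-3}v_{yy}\pa_y\Psi_k$, so $\|\pav^2\Psi_k\|_{L^2} \le \|v_y^{-1}\|_{L^\infty}^2\|\pa_{yy}\Psi_k\|_{L^2} + \|v_y^{-1}\|_{L^\infty}^3\|v_{yy}\|_{L^\infty}\|\pa_y\Psi_k\|_{L^2}$, and both right-hand terms are bounded by a constant depending on the stated norms times $\|F_k\|_{L^2}$ (using $\|\pa_y\Psi_k\|_{L^2} \le \|F_k\|_{L^2}$ from the first energy identity, say for $|k|\ge1$, or directly). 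Assembling the three contributions yields \eqref{mx_rg_v}.

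I do not expect a genuine obstacle here; this is a routine elliptic regularity argument. The only point requiring a little care is justifying the integration by parts in the second step of the Dirichlet case (multiplying by $\pa_{yy}\Psi_k$): one must check the boundary term $\pa_y\Psi_k\,\overline{\pa_y\Psi_k}$ coming from $\int \pa_{yy}\Psi_k\,\overline{\pa_{yy}\Psi_k}$ versus the term from $\int |k|^2\Psi_k\,\overline{\pa_{yy}\Psi_k}$, which is $|k|^2\Psi_k\overline{\pa_y\Psi_k}|_{\pm1} = 0$ since $\Psi_k|_{\pm1}=0$; the first such integration by parts instead produces $\int|\pa_{yy}\Psi_k|^2$ with no boundary term if done as $-\langle F_k,\pa_{yy}\Psi_k\rangle = \langle \pa_{yy}\Psi_k,\pa_{yy}\Psi_k\rangle - |k|^2\langle\Psi_k,\pa_{yy}\Psi_k\rangle$ directly from the equation without integrating by parts at all. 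So in fact one can avoid the delicate boundary term entirely by substituting $F_k = -\pa_{yy}\Psi_k + |k|^2\Psi_k$ into $\|F_k\|_{L^2}^2 = \|\pa_{yy}\Psi_k\|_{L^2}^2 - 2|k|^2\mathrm{Re}\langle\pa_{yy}\Psi_k,\Psi_k\rangle + |k|^4\|\Psi_k\|_{L^2}^2$ and using $-\mathrm{Re}\langle\pa_{yy}\Psi_k,\Psi_k\rangle = \|\pa_y\Psi_k\|_{L^2}^2 \ge 0$ (boundary term vanishes under either condition), which gives $\|\pa_{yy}\Psi_k\|_{L^2}^2 + |k|^4\|\Psi_k\|_{L^2}^2 \le \|F_k\|_{L^2}^2$ cleanly. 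This is the cleanest route and the one I would write up.
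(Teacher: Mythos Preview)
Your proposal is correct and follows essentially the same energy-method approach as the paper. The paper's proof proceeds in three steps: (i) test against $\overline{\Psi_k}$ to get $\||k|^2\Psi_k\|_{L^2}\le\|F_k\|_{L^2}$; (ii) use the triangle inequality directly on the equation to get $\|\pa_{yy}\Psi_k\|_{L^2}\le 2\|F_k\|_{L^2}$; (iii) integrate by parts $\||k|\pa_y\Psi_k\|_{L^2}^2=-\langle |k|^2\Psi_k,\pa_{yy}\Psi_k\rangle\le 2\|F_k\|_{L^2}^2$, which is exactly where the constant $3+\sqrt{2}=1+2+\sqrt{2}$ comes from. Your ``cleanest route'' of expanding $\|F_k\|_{L^2}^2=\|\pa_{yy}\Psi_k\|_{L^2}^2+2|k|^2\|\pa_y\Psi_k\|_{L^2}^2+|k|^4\|\Psi_k\|_{L^2}^2$ (after one integration by parts on the cross term) is actually a bit tighter and more economical, yielding each piece $\le\|F_k\|_{L^2}$ with a smaller total constant; the paper's version trades this sharpness for a more sequential presentation. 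For \eqref{mx_rg_v} both you and the paper do the same thing: bound $\pav$ and $\pav^2$ in terms of $\pa_y,\pa_{yy}$ via $\|v_y^{-1}\|_{L^\infty}$ and $\|v_{yy}\|_{L^\infty}$.
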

\begin{proof}We first test the equation against $\overline{\Psi_k}$ and implement integration by parts. Thanks to the Dirichlet/Neumann boundary condition, we obtain  
\begin{align*}
\|\pa_y\Psi_k\|_2^2+|k|^2\|\Psi_k\|_2^2\leq \|F_k\|_2\|\Psi_k\|_2.
\end{align*}
As a result, we have that 
\begin{align*}
\||k|^2\Psi_k\|_2\leq \|F_k\|_2.
\end{align*}
By taking the $L^2$-norm on both side of the equation \eqref{mx_rg_eq}, we have that 
\begin{align}\n
\|\pa_{yy}\Psi_k\|_2\leq \|F_k\|_2+\||k|^2\Psi_k\|_2\leq 2\|F_k\|_2. 
\end{align}
Finally, we observe that, direct integration by parts yields the inequality
\begin{align}\n
\|k\pa_y\Psi_k\|_2^2=\int_{-1}^1 |k|\pa_y\Psi_k\overline{|k|\pa_y\Psi_k}dy=-\int_{-1}^1|k|^2\Psi_k\overline{\pa_{yy}\Psi_k}dy\leq \|\pa_{yy}\Psi_k\|_2\||k|^2\Psi_k\|_2\leq 2\|F_k\|_2^2.
\end{align}
Combining the estimates we developed so far, we obtained \eqref{mx_rg}. 

To derive \eqref{mx_rg_v}, it is enough to observe that 
\begin{align*}
\||k|\pav\Psi_k\|_{L_y^2}\leq& C\|\vyn\|_{L_{t,y}^\infty}\||k|\pa_y\Psi_k\|_{L_y^2},\\
\|\pav^2 \Psi_k\|_{L_y^2}\leq& C \|\vyn\|_{L^\infty_{t,y}}\lf(\|\pa_y\vyn\|_{L_{t,y}^\infty}\|\pa_y\Psi_k\|_{L_y^2}+\|\vyn\|_{L_{t,y}^\infty }\|\pa_{yy}\Psi_k\|_{L_y^2}\rg).
\end{align*}
Now direct application of \eqref{mx_rg} yields \eqref{mx_rg_v}.
\end{proof}

\begin{lemma}[Maximal regularity estimates with nonvanishing boundary condition]
Assume that $F_k\in L^2$. 

a) Consider the solution  $\Psi_k$ to the following equation
\begin{align}-\pa_{yy}\Psi_k+|k|^2\Psi_k=F_k,\quad \pa_y\Psi_k(y=\pm 1)=a_\pm\in \rr.\label{mx_rg_eq_b}
\end{align}
Then the following estimate holds
\begin{align}
\|\pa_{yy}\Psi_k\|_2+\||k|\pa_y\Psi_k\|_2+\| |k|^2\Psi_k\|_2\lesssim\|F_k\|_2+|k|^{1/2}|a_+|+|k|^{1/2}|a_-|.\label{mx_rg_b}
\end{align}

b) Consider the solution  $\Phi_k$ to the following equation
\begin{align}-\pa_{yy}\Phi_k+|k|^2\Phi_k=F_k,\quad \Phi_k(y=\pm 1)=b_\pm\in \rr.\label{mx_rg_eq_b2}
\end{align}
Then the following estimate holds
\begin{align}
\|\pa_{yy}\Phi_k\|_2+\||k|\pa_y\Phi_k\|_2+\| |k|^2\Phi_k\|_2\lesssim\|F_k\|_2+|k|^{3/2}|b_+|+|k|^{3/2}|b_-|.\label{mx_rg_b2}
\end{align}

c) In either of the scenarios above, if the following boundary constraint is satisfied, 
\begin{align}
\mathrm{Re} \Psi_k\overline{\pa_y\Psi_k}\big|_{y=\pm 1}=0,\label{b_vnsh}
\end{align} 
then the estimate can be improved to be \eqref{mx_rg}.
\end{lemma}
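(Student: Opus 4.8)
The three parts a), b), c) are all variations on the energy-plus-equation argument used to prove Lemma~\ref{lem:max_reg}, so the plan is to run that same two-step scheme while carefully tracking the boundary terms that no longer vanish. For part a), I would test the equation \eqref{mx_rg_eq_b} against $\overline{\Psi_k}$ and integrate by parts; the boundary contribution is now $-\pa_y\Psi_k\overline{\Psi_k}\big|_{y=-1}^{y=1} = -(a_+\overline{\Psi_k(1)} - a_-\overline{\Psi_k(-1)})$, which must be controlled by a trace inequality. The natural tool is the interpolation bound $|\Psi_k(\pm 1)|^2 \lesssim \|\Psi_k\|_{L^2}\big(\|\pa_y\Psi_k\|_{L^2} + |k|\|\Psi_k\|_{L^2}\big)$ (or its $k$-weighted rescaling), which combined with Young's inequality lets one absorb a small multiple of $\|\pa_y\Psi_k\|_{L^2}^2 + |k|^2\|\Psi_k\|_{L^2}^2$ into the left-hand side, at the cost of a term $\sim |k|^{-1}(|a_+|^2 + |a_-|^2)$. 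This yields $\|\pa_y\Psi_k\|_{L^2}^2 + |k|^2\|\Psi_k\|_{L^2}^2 \lesssim \|F_k\|_{L^2}^2 + |k|^{-1}(|a_+|^2+|a_-|^2)$, whence $\||k|^2\Psi_k\|_{L^2} \lesssim \|F_k\|_{L^2} + |k|^{1/2}(|a_+|+|a_-|)$. Then $\|\pa_{yy}\Psi_k\|_{L^2} \le \|F_k\|_{L^2} + \||k|^2\Psi_k\|_{L^2}$ from the equation directly, and $\||k|\pa_y\Psi_k\|_{L^2}$ is handled by integrating $\int |k|^2 |\pa_y\Psi_k|^2 = -\int |k|^2 \Psi_k \overline{\pa_{yy}\Psi_k} + |k|^2\pa_y\Psi_k\overline{\Psi_k}\big|_{-1}^1$, where the boundary term is again bounded by the trace inequality and Young. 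Collecting these gives \eqref{mx_rg_b}.

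For part b), the structure is the same but now the Dirichlet data $b_\pm$ are nonzero, so the cleanest route is to subtract off an explicit lift. I would write $\Phi_k = \widetilde\Phi_k + L_k$ where $L_k$ is an explicit harmonic-type corrector with $\Delta_k L_k = 0$ (or a convenient smooth function) matching $L_k(\pm 1) = b_\pm$; a good choice is a combination of $\cosh(ky)$ and $\sinh(ky)$, for which one computes directly $\|\pa_{yy}L_k\|_{L^2} + \||k|\pa_y L_k\|_{L^2} + \||k|^2 L_k\|_{L^2} \lesssim |k|^{3/2}(|b_+|+|b_-|)$ — the power $|k|^{3/2}$ being exactly what the $L^2$ norm of two derivatives of a function of size $|b_\pm|$ and boundary-layer width $|k|^{-1}$ produces. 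Then $\widetilde\Phi_k$ solves \eqref{mx_rg_eq} with right-hand side $F_k - \Delta_k L_k = F_k$ (if $L_k$ harmonic) and homogeneous Dirichlet data, so Lemma~\ref{lem:max_reg} applies verbatim and gives $\|\widetilde\Phi_k\|_{\dot H^2_k} \lesssim \|F_k\|_{L^2}$. Adding the two bounds yields \eqref{mx_rg_b2}. Alternatively one can argue directly with the trace inequality as in part a), but the lift is less error-prone.

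For part c), the observation is that when the boundary quantity $\mathrm{Re}\,\Psi_k\overline{\pa_y\Psi_k}\big|_{y=\pm1} = 0$, every integration-by-parts boundary term that appeared in parts a) and b) is either exactly this quantity or its $|k|$-weighted multiple, hence vanishes; the argument then collapses to precisely the proof of Lemma~\ref{lem:max_reg}, giving the stronger estimate \eqref{mx_rg} with no $a_\pm$ or $b_\pm$ contribution. Concretely: testing against $\overline{\Psi_k}$ now gives $\|\pa_y\Psi_k\|_{L^2}^2 + |k|^2\|\Psi_k\|_{L^2}^2 = \mathrm{Re}\langle F_k,\Psi_k\rangle$ with no boundary leftover, and the $\||k|\pa_y\Psi_k\|_{L^2}$ estimate's boundary term $|k|^2\pa_y\Psi_k\overline{\Psi_k}\big|_{-1}^1$ is $|k|^2$ times the vanishing quantity. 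I expect the main obstacle to be bookkeeping the exact power of $|k|$ in the trace/Young estimates of parts a) and b) — one must be careful to weight the interpolation inequality so that the absorbed term is genuinely $\|\pa_y\Psi_k\|_{L^2}^2 + |k|^2\|\Psi_k\|_{L^2}^2$ and the residual has the claimed powers $|k|^{1/2}$ and $|k|^{3/2}$ respectively; this is routine but is the one place where a wrong exponent would break the statement.
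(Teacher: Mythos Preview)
Your proof is correct. For part b) you take essentially the paper's approach: decompose the solution into a homogeneous-boundary piece and an explicit harmonic boundary corrector built from $\cosh(ky)$ and $\sinh(ky)$, then compute the $\dot H^2_k$ norm of the corrector directly and invoke Lemma~\ref{lem:max_reg} on the remainder. Your part c) reasoning is also exactly right and matches the intended argument.

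For part a), however, you take a genuinely different route from the paper. The paper treats part a) by the \emph{same} explicit-corrector decomposition as your part b): it writes $\Psi_k = \Psi_k^f + \Psi_k^b$ with $\Psi_k^f$ satisfying homogeneous Neumann conditions and
\[
\Psi_k^b = \frac{a_+}{|k|\sinh(2|k|)}\cosh(|k|(1+y)) - \frac{a_-}{|k|\sinh(2|k|)}\cosh(|k|(1-y)),
\]
then computes $\|\Psi_k^b\|_{\dot H^2_k} \lesssim |k|^{1/2}(|a_+|+|a_-|)$ by direct calculation. Your approach instead runs the energy estimate directly on $\Psi_k$, controlling the boundary term $a_\pm\overline{\Psi_k(\pm 1)}$ via a $k$-weighted trace inequality and Young's inequality. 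Both are valid; the paper's corrector method is slightly more mechanical (no need to balance $\epsilon$'s or track the exact Young exponents you flag as the main obstacle), while your direct-energy approach has the advantage that it makes part c) an immediate corollary---the vanishing of $\mathrm{Re}\,\Psi_k\overline{\pa_y\Psi_k}\big|_{\pm 1}$ is exactly the boundary term in your integration by parts, so the improvement to \eqref{mx_rg} is transparent rather than requiring one to observe that the corrector is unnecessary.
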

\begin{proof}
To derive the estimate with nonzero boundary conditions, we decompose the solution into the free part $\Psi_k^f$ and the boundary corrector $\Psi_k^b$, i.e.,
\begin{align}
-\de_k\Psi_k^f=F_k,\quad \pa_y\Psi_k^f(\pm 1)=0;\\
-\de_k\Psi_k^b=0,\quad \pa_y\Psi_k^b(\pm 1)=a_\pm.
\end{align}
The solution $\Psi_k^f$ can be estimated with \eqref{mx_rg}, whereas the boundary corrector can be solved explicitly:
\begin{align*}
\Psi_k^b=\frac{a_+}{|k|\sinh(2|k|)}\cosh(|k|(1+y))-\frac{a_-}{|k|\sinh(2|k|)}\cosh(|k|(1-y)). 
\end{align*}
Given the explicit form of the solution, one can derive the $\dot H_k^2$ bound for $\Psi_k^b$ as follows
\begin{align*}
\|\pa_{yy}\Psi_k^b\|_2+\||k|\pa_y \Psi_k^b\|_2+\||k|^2\Psi_k^b\|_2\lesssim |k|^{1/2}(|a_+|+|a_-|). 
\end{align*}
The proof of this inequality is direct and we omit the details for the sake of brevity. 
\siming{(For checking purpose only)Explicit estimate is as follows:
\begin{align*}
\int_{-1}^1 |\pa_{yy}\Psi_k^b|^2dy\lesssim &|a_+|^2\frac{|k|^4}{|k|^2\sinh^2(2|k|)}\int_{-1}^1 \cosh^2(|k|(1+y))dy+|a_-|^2\frac{|k|^4}{|k|^2\sinh^2(2|k|)}\int_{-1}^1 \cosh^2(|k|(1-y))dy\\
\lesssim&|a_+|^2\frac{|k|^4}{|k|^2e^{4|k|}}\int_{-1}^1 e^{2|k|(1+y)}dy+|a_-|^2\frac{|k|^4}{|k|^2e^{4|k|}}\int_{-1}^1 e^{2|k|(1-y)}dy\\
\lesssim&(|a_+|^2+|a_-|^2)|k|;\\
\int_{-1}^1 ||k|\pa_{y}\Psi_k^b|^2dy\lesssim &|a_+|^2\frac{|k|^4}{|k|^2\sinh^2(2|k|)}\int_{-1}^1 \sinh^2(|k|(1+y))dy+|a_-|^2\frac{|k|^4}{|k|^2\sinh^2(2|k|)}\int_{-1}^1 \sinh^2(|k|(1-y))dy\\
\lesssim&|a_+|^2\frac{|k|^4}{|k|^2e^{4|k|}}\int_{-1}^1 e^{2|k|(1+y)}dy+|a_-|^2\frac{|k|^4}{|k|^2e^{4|k|}}\int_{-1}^1 e^{2|k|(1-y)}dy\\
\lesssim&(|a_+|^2+|a_-|^2)|k|.
\end{align*}
The estimate of the $|k|^2\|\Psi_k^b\|_{L^2}$ is similar and we omitted. 
}
Combining this estimate of the boundary corrector and the estimate of the free part, yields the estimate \eqref{mx_rg_b}.

The proof for the \eqref{mx_rg_b2} is similar and we omit the details. 
\siming{(For checking purpose only) We decompose the solution into the free part $\Phi_k^f$ and the boundary corrector $\Phi_k^b$, i.e.,
\begin{align}
-\de_k\Phi_k^f=F_k,\quad \Phi_k^f(\pm 1)=0;\\
-\de_k\Phi_k^b=0,\quad \Phi_k^b(\pm 1)=b_\pm.
\end{align}
The solution $\Phi_k^f$ can be estimated with \eqref{mx_rg}, whereas the boundary corrector can be solved explicitly:
\begin{align*}
\Phi_k^b=\frac{a_+}{\sinh(2|k|)}\sinh(|k|(1+y))+\frac{a_-}{\sinh(2|k|)}\sinh(|k|(1-y)). 
\end{align*}
Given the explicit form of the solution, we can derive the $\dot H_k^2$ bound for $\Phi_k^b$ as follows
\begin{align*}
\|\pa_{yy}\Phi_k^b\|_2+\||k|\pa_y \Phi_k^b\|_2+\||k|^2\Phi_k^b\|_2\lesssim |k|^{3/2}(|b_+|+|b_-|).
\end{align*}
Explicit computation is as follows:
\begin{align*}
\int_{-1}^1 |\pa_{yy}\Phi_k^b|^2dy\lesssim &|b_+|^2\frac{|k|^4}{ \sinh^2(2|k|)}\int_{-1}^1 \sinh^2(|k|(1+y))dy+|b_-|^2\frac{|k|^4}{\sinh^2(2|k|)}\int_{-1}^1 \sinh^2(|k|(1-y))dy\\
\lesssim&|b_+|^2\frac{|k|^4}{e^{4|k|}}\int_{-1}^1 e^{2|k|(1+y)}dy+|b_-|^2\frac{|k|^4}{e^{4|k|}}\int_{-1}^1 e^{2|k|(1-y)}dy\\
\lesssim&(|b_+|^2+|b_-|^2)|k|^3;\\
\int_{-1}^1 ||k|\pa_{y}\Phi_k^b|^2dy\lesssim &|b_+|^2\frac{|k|^4}{\sinh^2(2|k|)}\int_{-1}^1 \cosh^2(|k|(1+y))dy+|b_-|^2\frac{|k|^4}{\sinh^2(2|k|)}\int_{-1}^1 \cosh^2(|k|(1-y))dy\\
\lesssim&|b_+|^2\frac{|k|^4}{e^{4|k|}}\int_{-1}^1 e^{2|k|(1+y)}dy+|b_-|^2\frac{|k|^4}{e^{4|k|}}\int_{-1}^1 e^{2|k|(1-y)}dy\\
\lesssim&(|b_+|^2+|b_-|^2)|k|^3.
\end{align*}
The estimate of the $|k|^2\|\Phi_k^b\|_{L^2}$ is similar and we omitted. 
}
\end{proof}

Before implementing the estimations, we recall Lemma \ref{lem:com:BA}. The product rule (without $c$ correction) should be simplified to a simple combination fact
\begin{lemma}For any positive $\zeta>0$, there exists a constant $C=C(\zeta)$ such that the following estimates hold
\begin{align}
\sum_{\ell=0}^n \binom{n}{\ell}^{-\zeta}\leq& C(\zeta); \label{prod}
\\
\sum_{\ell=1}^{n-1}\binom{n}{\ell}^{-\zeta}\leq& C(\zeta) n^{-\zeta },\quad \forall n\in \mathbb{N}\backslash \{0\}.\label{prod2}
\end{align}
\end{lemma}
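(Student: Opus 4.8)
The plan is to derive both inequalities from the single elementary bound
\[
\binom{n}{\ell}\ \ge\ \Big(\frac{n}{\ell}\Big)^{\ell},\qquad 1\le \ell\le n,
\]
which follows by writing $\binom{n}{\ell}=\prod_{j=0}^{\ell-1}\frac{n-j}{\ell-j}$ and observing that each factor satisfies $\frac{n-j}{\ell-j}\ge\frac{n}{\ell}$, since after cross-multiplication this reduces to $j(n-\ell)\ge 0$. Together with the symmetry $\binom{n}{\ell}=\binom{n}{n-\ell}$, it then suffices to work on the range $1\le\ell\le n/2$, where $n/\ell\ge 2$; the terms with $\ell>n/2$ are recovered by relabeling $\ell\mapsto n-\ell$, and the two endpoints contribute $\binom{n}{0}^{-\zeta}=\binom{n}{n}^{-\zeta}=1$.

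For \eqref{prod}: on $1\le\ell\le\lfloor n/2\rfloor$ we have $\binom{n}{\ell}\ge(n/\ell)^{\ell}\ge 2^{\ell}$, hence $\binom{n}{\ell}^{-\zeta}\le 2^{-\zeta\ell}$. Summing the geometric series, accounting for the mirror range $\lceil n/2\rceil\le\ell\le n$ by symmetry, and adding the two endpoint terms gives
\[
\sum_{\ell=0}^{n}\binom{n}{\ell}^{-\zeta}\ \le\ 2+2\sum_{\ell\ge1}2^{-\zeta\ell}\ =\ 2+\frac{2}{2^{\zeta}-1}\ =:\ C(\zeta),
\]
a constant depending only on $\zeta$ (the small values $n\le 1$ are immediate).

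For \eqref{prod2}: again restrict to $1\le\ell\le n/2$, so that $n\ge 2\ell$. From $\binom{n}{\ell}\ge(n/\ell)^{\ell}$,
\[
\binom{n}{\ell}^{-\zeta}\ \le\ \ell^{\zeta\ell}\,n^{-\zeta\ell}\ =\ n^{-\zeta}\cdot\ell^{\zeta\ell}\,n^{-\zeta(\ell-1)}\ \le\ n^{-\zeta}\,\ell^{\zeta\ell}\,(2\ell)^{-\zeta(\ell-1)}\ =\ 2^{\zeta}\,n^{-\zeta}\,\ell^{\zeta}\,2^{-\zeta\ell},
\]
where in the third step we used $n\ge 2\ell$ with $\ell-1\ge 0$. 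Summing over $\ell\ge1$ and adding the mirror range by symmetry yields
\[
\sum_{\ell=1}^{n-1}\binom{n}{\ell}^{-\zeta}\ \le\ 2^{\zeta+1}\Big(\sum_{\ell\ge1}\ell^{\zeta}2^{-\zeta\ell}\Big)\,n^{-\zeta}\ =:\ C(\zeta)\,n^{-\zeta},
\]
the series converging for every $\zeta>0$ by exponential-versus-polynomial growth; for $n=1$ the left side is the empty sum.

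There is essentially no obstacle here: both estimates are elementary. The only point requiring a little care is that \eqref{prod2} asks for exactly one full power $n^{-\zeta}$ with a constant uniform in $n$ \emph{and} in $\zeta\in(0,\infty)$; a cruder bound such as $\binom{n}{\ell}\gtrsim\binom{n}{2}\gtrsim n^{2}$ on the interior range would only deliver $n^{1-2\zeta}$, which fails for $\zeta<1$. Pairing the genuine decay $n^{-\zeta(\ell-1)}$ against the combinatorial growth $\ell^{\zeta\ell}$ through $n\ge 2\ell$ is exactly what resolves this.
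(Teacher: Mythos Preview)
Your proof is correct and considerably more elementary than the paper's. The paper derives \eqref{prod} from \eqref{prod2} as you do, but for \eqref{prod2} it introduces a threshold $N(\zeta)$, splits the sum into five ranges $\{1,\dots,N\}$, $\{N+1,\dots,\lfloor n/4\rfloor\}$, $\{\lfloor n/4\rfloor,\dots,\lceil 3n/4\rceil\}$, and their mirrors, and invokes Stirling's formula on the central block. Your single inequality $\binom{n}{\ell}\ge(n/\ell)^\ell$ replaces all of this: it simultaneously gives the geometric decay $2^{-\zeta\ell}$ needed for summability and, via $n\ge 2\ell$ on the half-range, the precise extraction of one factor $n^{-\zeta}$ with the leftover $\ell^{\zeta}2^{-\zeta\ell}$ summable in $\ell$. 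The trade-off is only that your constant $C(\zeta)=2^{\zeta+1}\sum_{\ell\ge1}\ell^\zeta 2^{-\zeta\ell}$ is less explicit for large $\zeta$ than the paper's piecewise bounds, but that is irrelevant for the lemma as stated. One small wording issue: in your closing paragraph you write ``uniform in $n$ \emph{and} in $\zeta\in(0,\infty)$'', but the lemma allows $C$ to depend on $\zeta$; you presumably meant uniform in $n$ for each fixed $\zeta>0$, which is what your argument delivers.
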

\begin{proof}
First we observe that \eqref{prod} is a direct consequence of \eqref{prod2} thanks to the fact that the $\ell=0$ and $\ell=n$ term are bounded by $1$. Hence, in the remaining part of the proof, we focus on \eqref{prod2}.  
For a fixed $\zeta>0$ in \eqref{prod2}, we first choose a large enough $N(\zeta):=\max\{ 10, \lceil\frac{2}{\zeta}\rceil+1\}$. For $n\leq 4 N$, the estimate \eqref{prod2} is direct because $n^{-\zeta}$ has a fixed lower bound. Hence we only consider the $n>4N$ case. Now we expand the combinatoric numbers
\begin{align}
\sum_{\ell=1}^{n-1} &\left(\frac{(n-\ell)!\ell!}{n!}\right)^{\zeta}
=\left(\sum_{\ell=1}^N +\sum_{\ell=N+1}^{\lfloor\frac{n}{4}\rfloor}+\sum_{\ell=\lfloor\frac{n}{4}\rfloor}^{\lceil\frac{3n}{4}\rceil} +\sum_{\ell=\lceil\frac{n}{4}\rceil}^{n-N}+\sum_{\ell=n-N+1}^{n-1}\right) \left(\frac{(n-\ell)!\ell!}{n!}\right)^{\zeta} 
=: \sum_{j=1}^5\mathcal{I}_j.\label{prod_I}
\end{align}
Thanks to the symmetric property of the $\binom{n}{\ell}$, it is enough to estimate the first three terms in \eqref{prod_I}. For the first term, we use $ \binom{n}{\ell}\geq n$ for ${\ell\geq 1}$ to obtain that 
\begin{align}
\mathcal{I}_1\leq &N n^{-\zeta}\leq C(\zeta)n^{-\zeta}. \label{prod_I1}
\end{align}
For the second term, we recall the choice of $N$ to obtain that 
\begin{align}\nonumber
\mathcal{I}_2=&\sum_{\ell= N+1}^{\lfloor \frac{n}{4}\rfloor}\lf(\frac{\ell!}{n(n-1)...(n-\ell+1)}\rg)^{\zeta}\\ \nonumber
\leq&\sum_{\ell= N+1}^{\lfloor \frac{n}{4}\rfloor}\lf(\frac{1\cdot 2\cdot (...)\cdot N}{n(n-1)(n-2)...(n-N+1)}\rg)^{\zeta}\lf(\frac{\ell(\ell-1)...(N+1)}{(n-N)....(n-\ell+1)}\rg)^{\zeta}\\
 \leq& \sum_{\ell= N+1}^{\lfloor \frac{n}{4}\rfloor}\frac{ 2^{ \zeta N}(N!)^{\zeta}}{n^{\zeta N }}\leq \sum_{\ell=N +1}^{\lfloor \frac{n}{4}\rfloor}\frac{ 2^{ \zeta N}(N!)^{\zeta}}{n^{2+\zeta}}\leq C(\zeta)n^{-\zeta}. \label{prod_I2}
\end{align}
For the third term,  we apply the Stirling's formula $n!\sim n^{n+1/2}e^{-n}$ to obtain that for $n,\ell,n-\ell\geq N,\, \zeta>0$
 \begin{align}\nonumber
\sum_{\ell=\lfloor\frac{n}{4}\rfloor}^{\lceil\frac{3n}{4}\rceil}&\binom{ n}{ \ell} ^{-\zeta}\leq C \sum_{\ell=\lfloor\frac{n}{4}\rfloor}^{\lceil\frac{3n}{4}\rceil}\lf(\frac{\ell^{\ell+1/2}(n-\ell)^{n-\ell+1/2}}{n^{n+1/2}}\rg)^{\zeta}\\
\leq& C\sum_{\ell=\lfloor\frac{n}{4}\rfloor}^{\lceil\frac{3n}{4}\rceil}\lf(\frac{7}{8}\rg)^{n\zeta}(\sqrt{n})^{\zeta}\leq C(\zeta)n^{-\zeta} .\label{prod_I3} 
\end{align}Here we have used the fact that in the range of the sum, $\ell,n-\ell\leq  \frac7 8 n$. 
Combining the decomposition \eqref{prod_I} and the argument after, the estimates \eqref{prod_I1}, \eqref{prod_I2}, and \eqref{prod_I3}, we have proven the estimate \eqref{prod2}. 
\end{proof}
In the following, we will present four general lemmas regarding the ICC-method (Lemma \ref{lem:pavq}, Lemma \ref{lem:cm_J}, Lemma \ref{lem:induction}, Lemma \ref{lem:JtoD}). 
\begin{lemma}\label{lem:pavq}The $j$-th derivative ($1\leq j\leq 4$) of the $q^n$ has the following representation
\begin{align}
\pav^j (q^n)=\wt{q_{n,j}}\, n^{j}\,  q^{(n-j)_+}.\label{pav_j_qn}
\end{align}
Here $(n-j)_+:=\max\{n-j,0\}$ and the $\wt{q_{n,j}}$ are bounded functions such that \begin{subequations}\label{wtq_all}
\begin{align}
\|\wt {q_{n,j}}\|_{L^\infty}\leq& C( \|q\|_{W^{j,\infty}} ,\|v_{y}^{-1}\|_{L^\infty},\|v_y\|_{W^{j-1,\infty}});\label{wtq}\\
\lf\|\pa_y\wt {q_{n,j}}\rg\|_{L^\infty}\leq&  C( \|q\|_{W^{j+1,\infty}} ,\|v_y^{-1}\|_{L^\infty},\|v_{y} \|_{W^{j,\infty}}). \label{pay_wtq}
\end{align}
\end{subequations}
Here the constants are independent of $n.$  \siming{HS: As a result, the highest coefficients we can tolerate are $\wt {q_{n,3}}$ and $\pa_y \wt{q_{n,2}}$. }

As a consequence, the following commutator relation holds
\begin{align}
[\pav^j,q^n]f=\sum_{\ell=1}^j\binom{j}{\ell}\wt{q_{n,\ell}}n^\ell q^{(n-\ell)_+} \pav^{j-\ell}f. \label{cm_qn_pavj}
\end{align}
\end{lemma}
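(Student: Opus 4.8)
\textbf{Proof proposal for Lemma \ref{lem:pavq}.}

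The plan is to prove the representation \eqref{pav_j_qn} by induction on $j$, carrying along the bounds \eqref{wtq_all} as part of the inductive statement, and then to deduce the commutator formula \eqref{cm_qn_pavj} as a routine consequence of the binomial-type expansion for $\pav^j(q^n f)$. First I would record the base case $j=1$: since $\pav = v_y^{-1}\pa_y$, we have $\pav(q^n) = v_y^{-1}\,n\,q^{n-1} q' = \wt{q_{n,1}}\,n\,q^{(n-1)_+}$ with $\wt{q_{n,1}} := v_y^{-1} q'$ (interpreting the $n=0$ case as producing $0$, consistent with $n^1$), which is manifestly bounded in terms of $\|q\|_{W^{1,\infty}}$ and $\|v_y^{-1}\|_{L^\infty}$, and whose $\pa_y$-derivative is controlled by $\|q\|_{W^{2,\infty}}$ and $\|v_y\|_{W^{1,\infty}}$. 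This matches \eqref{wtq} and \eqref{pay_wtq} for $j=1$.

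For the inductive step, assuming \eqref{pav_j_qn}--\eqref{wtq_all} at level $j$, I would compute
\begin{align*}
\pav^{j+1}(q^n) = \pav\big(\wt{q_{n,j}}\,n^j\, q^{(n-j)_+}\big)
= (\pav \wt{q_{n,j}})\,n^j\,q^{(n-j)_+} + \wt{q_{n,j}}\,n^j\,\pav\big(q^{(n-j)_+}\big).
\end{align*}
For the second term, by the base case applied with exponent $(n-j)_+$ we get $\pav(q^{(n-j)_+}) = v_y^{-1}q'\,(n-j)_+\,q^{(n-j-1)_+}$, and since $(n-j)_+ \le n$ this contributes a factor bounded by $n$, giving overall $n^{j+1}q^{(n-j-1)_+}$ up to a bounded coefficient. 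For the first term, I need $\pav\wt{q_{n,j}} = v_y^{-1}\pa_y\wt{q_{n,j}}$, whose sup norm is controlled by \eqref{pay_wtq} at level $j$ together with $\|v_y^{-1}\|_{L^\infty}$; to turn $q^{(n-j)_+}$ into $q^{(n-j-1)_+}$ one extracts a factor of $q$ (harmless, since $q\in L^\infty$ and when $n-j\le 0$ this term vanishes anyway because $\wt{q_{n,j}}$ carries the correct power structure). Collecting, $\pav^{j+1}(q^n) = \wt{q_{n,j+1}}\,n^{j+1}\,q^{(n-j-1)_+}$ with $\wt{q_{n,j+1}}$ built from $\wt{q_{n,j}}$, $\pa_y\wt{q_{n,j}}$, $v_y^{-1}$, $q$, $q'$; the $L^\infty$ bound \eqref{wtq} at level $j+1$ then follows from \eqref{pay_wtq} at level $j$ (which costs one more derivative on $q$ and $v_y$, as the statement allows), and the $\pa_y\wt{q_{n,j+1}}$ bound \eqref{pay_wtq} at level $j+1$ follows by applying $\pa_y$ to this expression, which costs yet one more derivative. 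One should check that the polynomial-in-$n$ coefficients (arising because $(n-j)_+ \le n$) remain uniformly bounded after dividing by $n^{j+1}$ — this is where one uses that each differentiation of $q^{(n-\ell)_+}$ produces exactly one power of $(n-\ell)_+\le n$, so no super-linear growth in $n$ per step accumulates beyond the $n^{j}$ already present.

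Finally, for \eqref{cm_qn_pavj}, I would use the general Leibniz expansion $\pav^j(q^n f) = \sum_{\ell=0}^j \binom{j}{\ell}\big(\pav^\ell q^n\big)\pav^{j-\ell}f$ — valid since $\pav$ is a first-order differential operator (the same computation underlying Lemma \ref{lem:com:BA} with $\mathcal A = \pav$, $\mathcal B = $ multiplication by $q^n$) — subtract the $\ell=0$ term $q^n\pav^j f$, and substitute \eqref{pav_j_qn} into each remaining term. This gives \eqref{cm_qn_pavj} immediately. The main obstacle, such as it is, is bookkeeping: ensuring the inductive statement is phrased so that the $L^\infty$ bound at level $j+1$ and the $\pa_y$-bound at level $j+1$ close off each other without an infinite regress of derivatives — this is handled precisely because each of \eqref{wtq} and \eqref{pay_wtq} at level $j$ feeds into level $j+1$ at the cost of exactly one extra derivative on $q$ and $v_y$, and the lemma only claims finitely many ($j\le 4$) levels, so $\|q\|_{W^{j+1,\infty}}$ and $\|v_y\|_{W^{j,\infty}}$ suffice throughout.
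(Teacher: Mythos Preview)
Your inductive approach is different from the paper's: the paper applies the Fa\`a di Bruno formula twice --- once to expand $\pav^j q = (\partial_v)^j q(y(v))$ as a polynomial in derivatives of $q$ and of $v_y^{-1}$, then again to expand $\pav^j(q^n)$ as a composition through $q$ --- arriving at an explicit closed form for $\wt{q_{n,j}}$ (a finite sum over partitions $(m_1,\ldots,m_j)$ of $j$, with $n$-dependence only through uniformly bounded combinatorial factors $n!/(n^j(n-M)!)$, $M=\sum m_\ell\le j$). From that explicit formula both \eqref{wtq} and \eqref{pay_wtq} are read off directly, with no induction. Your recursion is more elementary in that it avoids Fa\`a di Bruno entirely; the paper's route buys an explicit expression for $\wt{q_{n,j}}$ from which differentiability to any order is transparent.

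There is, however, a bookkeeping gap in your induction as phrased. You build $\wt{q_{n,j+1}}$ from $\wt{q_{n,j}}$, $\pa_y\wt{q_{n,j}}$, $v_y^{-1}$, $q$, $q'$; consequently $\pa_y\wt{q_{n,j+1}}$ contains a term involving $\pa_y^2\wt{q_{n,j}}$, which is \emph{not} controlled by your inductive hypothesis (you only carry \eqref{wtq} and \eqref{pay_wtq} at level $j$, i.e.\ $C^0$ and $C^1$ bounds). So the step ``\eqref{pay_wtq} at level $j+1$ follows by applying $\pa_y$'' does not go through with the hypothesis you stated. The fix is easy: strengthen the inductive hypothesis to the structural statement that $\wt{q_{n,j}}$ is a polynomial (with coefficients bounded uniformly in $n$) in $v_y^{-1}$, $q, q', \ldots, q^{(j)}$, and $v_{yy}, \ldots, \pa_y^{j-1}v_y$, so that any finite number of $\pa_y$-derivatives follows at the cost of the corresponding extra derivatives on $q$ and $v_y$. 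This is precisely the content the paper's explicit Fa\`a di Bruno formula supplies. (A more pedestrian fix: since $j\le 4$, carry $\|\pa_y^k\wt{q_{n,j}}\|_{L^\infty}$ for all $k\le 5-j$ through the induction; this is a finite list and closes.) Also, your remark that in the case $n-j\le 0$ ``this term vanishes anyway'' is not quite right --- for $n\le j$ the first term $(\pav\wt{q_{n,j}})\,n^j\,q^{(n-j)_+}=(\pav\wt{q_{n,j}})\,n^j$ need not vanish --- but since $(n-j)_+=(n-j-1)_+=0$ in that range, no factor of $q$ needs to be extracted and the coefficient $\frac{1}{n}\pav\wt{q_{n,j}}$ is still bounded, so the conclusion survives.
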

\begin{proof}
We start the proof by recalling the Fa\`a di Bruno's formula:
\begin{align}
\frac{d^j}{dx^j}f(g(x))=&\sum_ {(m_1,m_2,..., m_j)\in S_j}\frac{j!}{m_1!(1!)^{m_1}m_2! (2!)^{m_2}...m_j! (j!)^{m_j}} f^{(m_1+...+m_j)}(g(x))\cdot \prod_{\ell=1}^n(g^{(\ell)}(x))^{m_\ell},\label{FaadiBruno}\\
\n &\quad S_j:=\left\{(m_1,m_2,...,m_j)\in \mathbb{N}^j\bigg|\sum_{\ell=1}^j \ell m_\ell=j\right\}. 
\end{align} For a fixed $t\geq 0$, we write the physical variable $y$ as a function of the variable $v$, i.e., $y=y(v)$. A direct application of the formula \eqref{FaadiBruno} yields the expression of the quantity $\pav^j q(y),\, j\leq 4$,
\begin{align*}\n
\pav^j q(y)=&(\pav)^j q(y(v))=\sum_{(m_1,m_2,...,m_j)\in S_j}\frac{j!}{m_1!m_2!...m_j!}q^{(m_1+m_2+...+m_j)}(y(v))\prod_{\ell=1}^j\lf(\frac{\pav^{\ell }y(v)}{\ell!}\rg)^{m_\ell}\\
=&\sum_{(m_1,m_2,...,m_j)\in S_j}\frac{j!}{m_1!m_2!...m_j!}q^{(m_1+m_2+...+m_j)}(y(v))\prod_{\ell=1}^j\lf(\frac{\pav^{\ell-1 }v_y^{-1}}{\ell!}\rg)^{m_\ell}. 
\end{align*}
We observe that $\sum_{\ell=1}^jm_j\leq j\leq 4$. Hence the $j$-th $\pav$-derivative of the $q$-weight is bounded, 
\begin{align}\label{pa_vjq}
\|\pav^j q\|_{L^\infty}\leq C\lf(\|q\|_{W^{j,\infty}},\|v_y^{-1}\|_{L^\infty}, \| v_y\|_{W^{j-1,\infty}}\rg),\quad j\leq 4.
\end{align}  

{We apply the Fa\`a di Bruno formula again  to derive the following 
\begin{align*}
\pav^j (q^n(y))=&\pav ^j (q^n(y(v))) \\
=&\sum_{(m_1,m_2,...,m_j)\in S_j}\frac{j!}{m_1! m_2! \ ...\ m_j!}\lf(\frac{d}{dq}\rg)^{(\sum_{\ell=1}^j m_\ell)}(q^n)\prod_{\ell=1}^{j}\lf(\frac{\pav^{\ell}q}{\ell!}\rg)^{m_\ell} \\
=&\sum_{(m_1,m_2,...,m_j)\in S_j}\frac{j!}{m_1! m_2! \ ...\ m_j!}  \frac{n!}{(n-m_1-m_2...-m_j)!}q^{ n-m_1-...-m_j}\\
&\quad\quad\quad\quad\quad\quad\times\mathbbm{1}_{ n-m_1-...-m_j\geq 0} \prod_{\ell=1}^{j}\lf(\frac{\pav^{\ell}q}{\ell!}\rg) ^{m_\ell}\\
=:&\wt {q_{n,j}}\,  {n^ j }\, q^{\max\{n-j, 0\}},\quad   j\leq 4. 
\end{align*}
Here the explicit form of the coefficients are as follows 
\begin{align}\n
\wt {q_{n,j}}:=&\sum_{\substack{(m_1,m_2,...,m_j)\in S_j\\ m_1+m_2+...+m_j\leq n}}\frac{j!}{m_1! m_2! \ ...\ m_j!}  \frac{n!}{n^j(n-m_1-m_2...-m_j)!}\\
&\quad\quad\quad\quad\quad\quad\times q^{n-\max\{n-j,0\}-m_1-...-m_j}\prod_{\ell=1}^j\lf(\frac{\pav^{\ell}q}{\ell!}\rg) ^{m_\ell}.\label{wt_qnj}
\end{align}
Since for $(m_1,...,m_j)\in S_j$, the relation $\sum_{\ell=1}^j m_j\leq j\leq 4$ is satisfied  \eqref{FaadiBruno}. Combining this fact with the upper bound \eqref{pa_vjq} yields that
\begin{align}
|\pav^j (q^n)|=& |\wt{q_{n,j}}| n^j q^{\max\{n-j,0\}},\quad \|\wt {q_{n,j}}\|_{L^\infty}\leq C(\|q\|_{W^{j,\infty}},\|v_y^{-1}\|_{L^\infty},\|v_y\|_{W^{j-1,\infty}}), \quad 1\leq j\leq 4.
\end{align}\siming{Double check! (Check once.)} Hence we conclude the proof of \eqref{pav_j_qn}, \eqref{wtq}. 
Next we note that there are at most $2j$ copies of $\pav^{(\cdots)}q$ factors in the above expression \eqref{wt_qnj}, hence the estimate \eqref{pay_wtq}
is a direct consequence of differentiating the expression \eqref{wt_qnj}. Moreover, an application of the product rule and the relation \eqref{pav_j_qn} yields \eqref{cm_qn_pavj}. This concludes the proof of the lemma.  \siming{Double check the regularity requirements in \eqref{pay_wtq}. I mainly focus on the $n,q$ loss here! \myr{Check once.}}
}
\ifx The $q$-weight might also not be there.
Previous:
\siming{
Next, we prove the estimate \eqref{pay_wtq}. One can check that for $j\leq i\leq 4$, the bound is a direct consequence of the formula derived above. For $j>i$, we use the relation, 
\begin{align}
\wt{q_{j,i+1}}\frac{j^{i+1}}{q^{i+1}}q^j=\pav\lf( \pav^{i}q^j\rg)=\pav\lf(\wt{q_{j,i}}\frac{j^{i}}{q^{i}}q^j\rg)=\pav\wt{q_{j,i}}\frac{j^i}{q^i}q^j+\wt{q_{j,i}}\frac{(j-i) q'}{j v_y} \frac{j^{i+1}}{q^{i+1}}q^j. 
\end{align} 
Hence, 
\begin{align}
\lf\|\frac{q}{v_y j}\pa_y\wt{q_{j,i}}\rg\|_{L^\infty}\leq \|\wt{q_{j,i+1}}\|_{L^\infty}+\|q'\vyn\wt{q_{j,i}}\|_{L^\infty}\leq C.
\end{align}
Here the bound $C$ is independent of $j$.  This is \eqref{pay_wtq}.
}
{\bf Previous:} 
First, we consider the case where $j>i$, and obtain
\begin{align*}
(\vyn&\pa_y)^i (q^j(y))= (\pav)^ i q^j(y(v))\\
=&\sum_ {(m_1,m_2,..., m_i)\in S_i}\frac{i!}{m_1!(1!)^{m_1}m_2! (2!)^{m_2}...m_i! (i!)^{m_i}} \pa_y^{(m_1+...+m_i)}q^j(y)\bigg|_{y=y(v)}\cdot \prod_{\ell=1}^i(\pav^{(\ell)}y(v))^{m_\ell}\\
=&\sum_ {(m_1,m_2,..., m_i)\in S_i}\frac{i!}{m_1!(1!)^{m_1}m_2! (2!)^{m_2}...m_i! (i!)^{m_i}} \pa_y^{(m_1+...+m_i)}q^j(y)\bigg|_{y=y(v)}\cdot \prod_{\ell=1}^i(\pav^{(\ell-1)}\vyn)^{m_\ell}.
\end{align*}
Since we have that 
\begin{align}
\pa_y^{(n)}q^j=h  j^n q^{j-n}, \quad \|h\|_\infty\leq C,
\end{align}
the result \eqref{pav_i_qj} is a direct consequence.  

{\color{blue} For the $j\leq i\leq 4$ case, we apply the Fa\`a di Bruno formula to derive the following 
\begin{align*}
\|\pav^i (q^j(y(v)))\|_{L^\infty}=&\lf\|\sum_{(m_1,m_2,...,m_i)\in S_i}\frac{i!}{m_1! m_2! \ ...\ m_i!}\lf(\frac{d}{dq}\rg)^{(\sum_{\ell=1}^i m_\ell)}(q^j)\prod_{\ell=1}^{i}\lf(\frac{\pav^{(\ell)}q}{\ell!}\rg)^{m_\ell}\rg\|_\infty.
\end{align*}
Here the summation is implemented on the index set $S_i:=\{(m_1,m_2,...,m_i)|\sum_{\ell=1}^i \ell m_\ell =i,\quad \sum_{\ell=1}^im_\ell\leq j \}. $ Since $j\leq i\leq 4$, the combinatorial numbers are bounded and the norm is bounded as follows 
\begin{align*}
\|\pav^i (q^j(y(v)))\|_{L^\infty}\lesssim&\sum_{(m_1,m_2,...,m_i)\in S_i} \prod_{\ell=1}^{i}\lf\|\frac{\pav^{(\ell)}q}{\ell!}\rg\|_\infty^{m_\ell}\leq C(\|v_{y}^{-1}\|_{W^{i-1,\infty}}),\quad j\leq i\leq 4. 
\end{align*} 
}\fi
\end{proof}
\begin{lemma}\label{lem:cm_J}
The following estimates hold for any $n\geq 1$ and $j\in\{1,2,3\}$, %
 \siming{\footnote{\siming{(HS: For $k\neq 0$, we can replace the $a+b+c\leq j$ on the right hand side by $a+b+c= j$. For $k=0$, we don't have the $|k|^c$ component on the right hand side and we might need to include other terms in $a+b+c\leq j$.)}}}
\begin{subequations}\label{cm_J}
\begin{align}\n
&B_{m,n}\varphi^{n+1}\|[\pav^j,q^n](\chi_{m+n}|k|^m \Gamma_k^n f_k)\|_{L^2}
\\ &\lesssim  \sum_{n'=(n-2)_+}^{n}  B_{m,n'}\lambda^{(n-n')s}\varphi^{n'+1}\lf(\frac{n'!}{n!}\rg)^\sigma\|J_{m,n'}^{(\leq j)}f_k\|_{L^2} \n \\
&\quad+\mathbbm{1}_{n<j}\sum_{m'=(m-1)_+}^m\sum_{n'=0}^{n-1}\sum_{b+c\leq j}B_{m',n'}\lambda^{(|m,n|-|m',n'|)s}\varphi^{n'+1}\lf(\frac{|m',n'|!}{|m,n|!}\rg)^\sigma\|J_{m',n'}^{(0,b,c)}f_k\|_{L^2}.\label{cm_J_est} 
\end{align}
Here, $(n-2)_+=\max\{n-2,0\}$, $|m,n|=m+n$ and the implicit constants depend only on $\|v_y^{-1}\|_{L^\infty},\ \|v_y\|_{W^{2,\infty}}$ and independent of $m,n$.  Moreover, \begin{align} 
B_{m,n}&\varphi^{n+1}\lf\|\pav\lf([\pav,q^n](\chi_{m+n}|k|^m \Gamma_k^n f_k)\rg)\rg\|_{L^2} 
\lesssim  \sum_{n'=(n-1)_+}^{n}B_{m,n'}\varphi^{n'+1}\|J_{m,n'}^{(\leq 2)}f_k\|_{L^2};
\label{cm_pv_J_est}\\ 
  {B}_{m,n}&\varphi^{n+1}\lf\|\pav\lf([\pav^2,q^n]\lf(\chi_{m+n}|k|^m\Gamma_k^nf_k\rg)\rg)\rg\|_{L^2}  \lesssim  \sum_{m'=(m-1)_+}^m\sum_{n'=(n-2)_+}^{n}B_{m',n'}\varphi^{n'+1}\|J^{(\leq 3)}_{m',n'}f_k\|_{L^2}.
\label{cm_pv2J_est}
\end{align}\end{subequations}
Here the implicit constants depends only on $\|v_y^{-1}\|_{L^\infty}, \|v_y-1\|_{W^{2,\infty}}$.  

\ifx
?\\
\fi
\end{lemma}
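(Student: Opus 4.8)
The plan is to prove the three commutator estimates \eqref{cm_J}, \eqref{cm_pv_J_est}--\eqref{cm_pv2J_est} by brute-force expansion of $[\pav^j, q^n]$ using the Leibniz-type formula for $\pav$-derivatives of products together with the structural Lemma~\ref{lem:pavq}, then reorganizing the resulting terms so each is recognized as one of the building blocks $J_{m,n'}^{(a,b,c)} f_k$ (or $J^{(0,b,c)}$ with a lower index) multiplied by a Gevrey coefficient that we bound by $\binom{n}{n'}$-type factors. First I would record the elementary commutator identity
\begin{align*}
[\pav^j, q^n] g = \sum_{\ell=1}^{j} \binom{j}{\ell} \lf(\pav^\ell q^n\rg) \pav^{j-\ell} g,
\end{align*}
and substitute \eqref{pav_j_qn}, i.e. $\pav^\ell(q^n) = \wt{q_{n,\ell}}\, n^\ell\, q^{(n-\ell)_+}$, where $\wt{q_{n,\ell}}$ and $\pa_y\wt{q_{n,\ell}}$ are bounded uniformly in $n$ by \eqref{wtq_all}. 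Applying this with $g = \chi_{m+n} |k|^m \Gamma_k^n f_k$ produces a finite sum (at most $j\le 3$ terms) of the form $\wt{q_{n,\ell}}\, n^\ell\, q^{(n-\ell)_+}\, \pav^{j-\ell}(\chi_{m+n}|k|^m \Gamma_k^n f_k)$.

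\textbf{Key reorganization step.}
The main point is to absorb the factor $q^{(n-\ell)_+}\Gamma_k^n = q^{(n-\ell)_+}\Gamma_k^{n-n'}\cdot \Gamma_k^{n'}$ and move enough copies of $q$ next to $\Gamma_k^{n'}$ to reconstruct $\mathring{f}_{m,n';k} = |k|^m q^{n'}\Gamma_k^{n'} f_k$, commuting $q$-powers past $\Gamma_k$-powers and past $\pav$'s using \eqref{cm_qn_pavj} and the commutators in Lemma~\ref{lem:com:BA}. When $n \ge j$ we take $n' \in \{(n-2)_+, \dots, n\}$ and the weight $\frac{n^\ell}{q^\ell}$ becomes precisely the $\lf(\frac{m+n}{q}\rg)^a$-type prefactor of a $J^{(a,b,c)}_{m,n'}$ operator with $a+b+c \le j$; here one must check that the constraint $a+b+c \le n'$ or $a=0$ in the definition \eqref{Snell} is satisfied, which it is once $n$ is large. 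When $n < j$ (so $n \in \{1,2\}$), there are not enough $\Gamma_k$'s and the weight $\frac{n^\ell}{q^\ell}$ cannot all be converted to the boundary weight; this is exactly why the second sum in \eqref{cm_J_est} carries only classical derivatives $J^{(0,b,c)}$ and drops down to index $(m-1)_+$ in $m$ and lower $n'$. The coefficient bookkeeping is then a matter of comparing $B_{m,n} n^\ell$ against $B_{m,n'}$: using the definition \eqref{Bweight} one has $B_{m,n} n^\ell \le B_{m,n'}\lambda^{(n-n')s}(n'!/n!)^{\sigma}\cdot(\text{bounded})$ for $n' \ge n-\ell$, which is the $\lf(\frac{n'!}{n!}\rg)^\sigma$ factor in the statement (and an identical computation with $|m,n|$ replacing $n$ for the mixed terms). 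The estimates \eqref{cm_pv_J_est} and \eqref{cm_pv2J_est} follow the same template but with one extra outer $\pav$ applied first, which either lands on $\wt{q_{n,\ell}}$ (controlled by \eqref{pay_wtq}), on $q^{(n-\ell)_+}$ (giving another $\frac{n}{q}$ factor, hence the bump from $J^{(\le 2)}$ to $J^{(\le 3)}$ and the drop to $(m-1)_+$ in \eqref{cm_pv2J_est}), or on the $f_k$-factor (raising the number of derivatives by one).

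\textbf{Main obstacle.}
The hard part will be the careful combinatorial/coefficient accounting: one must verify that after all the $q$-power shuffling (each commutation of $q^p$ past $\Gamma_k$ or $\pav$ by Lemma~\ref{lem:com:BA} introduces its own $\frac{p}{q}$ factors and lower-order terms) the resulting operators genuinely lie in the admissible index sets $\mathbb{S}_{n'}^\ell$, $\mathring{\mathbb{S}}_{n'}^\ell$, and that no term exceeds $a+b+c = j$ in total order. A second delicate point is tracking the ranges of $n'$ (and $m'$): opening two $\pav$-derivatives past $q^n$ forces descent to $n-2$, and in \eqref{cm_pv2J_est} the extra $\pav$ can push one further or convert a derivative into a $|k|$, forcing the $m \mapsto (m-1)_+$ descent; getting these index ranges exactly right — rather than merely summable — is what makes the lemma usable in the downstream applications. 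I would handle $j=1$ first as a warm-up (only the single term $\ell=1$ appears), then $j=2,3$, and finally the three $\pav(\cdots)$ variants, at each stage invoking \eqref{prod}, \eqref{prod2}, and \eqref{gevbd1212}-type comparisons to convert the explicit $n^\ell$, $\binom{j}{\ell}$ weights into the clean $(n'!/n!)^\sigma$ form stated.
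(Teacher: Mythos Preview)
Your overall strategy---expand $[\pav^j,q^n]$ via Leibniz, substitute $\pav^\ell(q^n)=\wt{q_{n,\ell}}\,n^\ell q^{(n-\ell)_+}$ from Lemma~\ref{lem:pavq}, and reorganize into $J$-operators---matches the paper. But your ``key reorganization step'' takes a different and harder route than the paper's, and as stated it obscures the central structural fact.

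You propose to write $q^{(n-\ell)_+}\Gamma_k^n = q^{(n-\ell)_+}\Gamma_k^{n-n'}\Gamma_k^{n'}$ and \emph{descend} to level $n'<n$ by opening copies of $\Gamma_k=\pav+ikt$. The paper does \emph{not} open $\Gamma_k^n$ at all. Instead, for $n\ge j$ it rewrites $n^\ell q^{n-\ell}=\big(\tfrac{n}{m+n}\big)^\ell\big(\tfrac{m+n}{q}\big)^\ell q^n$ and then commutes $q^n$ \emph{back} past the remaining $\pav^{j-\ell}$ (a low-order commutation, at most two derivatives) to reconstruct $\chi_{m+n}|k|^m q^n\Gamma_k^n f_k$ untouched. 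This yields, by induction in $j$, an \emph{exact identity}
\[
[\pav^j,q^n](\chi_{m+n}|k|^m\Gamma_k^n f_k)=\sum_{\substack{a+b=j\\a\ge 1}}\mf A^{a,b,0;j}_{m,n}\,J^{(a,b,0)}_{m,n}f_k,
\]
with bounded coefficients $\mf A$ and \emph{no descent in $n$}. The sum over $n'$ down to $(n-2)_+$ in the statement is slack for this case; descent only actually happens when $n<j$ (handled case-by-case) or when the extra outer $\pav$ in \eqref{cm_pv_J_est}--\eqref{cm_pv2J_est} hits the coefficients.

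Your approach of opening $\Gamma_k^{n-n'}$ would generate $O(n)$ many $(ikt)$-factors requiring $\varphi$-absorption, force cutoff mismatches $\chi_{m+n}$ versus $\chi_{m+n'}$ inside the $J$-operators, and produce a cascade of $[q^{n'},\Gamma_k^{n-n'}]$ commutators whose uniform-in-$n$ control is not obvious from the cited lemmas. It is not clearly wrong, but it is substantially more laborious, and you would likely find yourself re-deriving the paper's same-level identity as a subroutine anyway. The efficient move is: do not touch $\Gamma_k^n$; commute $q^n$ past $\pav^{j-\ell}$ and use induction on $j\le 3$.
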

\begin{proof}\ifx
We prove the relation \eqref{cm_qn_pv} with the expression \eqref{pav_i_qj}, 
\begin{align*}
[q^n&,\pav^3](\chi_n\Gamma_k^n\phi_k)\\
=&\sum_{i_0=1}^3\binom{3}{i_0}\pav^{i_0}(q^n)\ \pav^{3-i_0}(\chi_n\Gamma_k^n \phi_k)\\
=&\sum_{i_0=1}^3\binom{3}{i_0}\wt {q_{n,i_0}}n^{i_0} q^{n-i_0}\ \pav^{3-i_0}(\chi_n\Gamma_k^n \phi_k)\\
=&\sum_{i_0=1}^3\binom{3}{i_0}\wt {q_{n,i_0}}n^{i_0} q^{ -i_0}\ \pav^{3-i_0}(q^n \Gamma_k^n \phi_k)+\sum_{i_0=1}^3\binom{3}{i_0}\wt {q_{n,i_0}}n^{i_0} q^{ -i_0}\ [q^n, \pav^{3-i_0}](\chi_n \Gamma_k^n \phi_k)\\
=&\sum_{i_0=1}^3\binom{3}{i_0}\wt {q_{n,i_0}}J^{(i_0,3-i_0,0)}+\sum_{i_0=1}^3\sum_{i_1=1}^{3-i_0}\binom{3}{i_0}\binom{3-i_0}{i_1}\wt {q_{n,i_0}}\wt {q_{n,i_1}}n^{i_0} n^{i_1}q^{n -i_0-i_1} \pav^{3-i_0-i_1}(\chi_n \Gamma_k^n \phi_k)\\
=&\sum_{i_0=1}^3\binom{3}{i_0}\wt {q_{n,i_0}}J^{(i_0,3-i_0,0)}+\sum_{i_0=1}^3\sum_{i_1=1}^{3-i_0}\binom{3}{i_0}\binom{3-i_0}{i_1}\wt {q_{n,i_0}}\wt {q_{n,i_1}}n^{i_0} n^{i_1}q^{ -i_0-i_1} \pav^{3-i_0-i_1}(\chi_nq^n \Gamma_k^n \phi_k)\\
&+\sum_{i_0=1}^3\sum_{i_1=1}^{3-i_0}\binom{3}{i_0}\binom{3-i_0}{i_1}\wt {q_{n,i_0}}\wt {q_{n,i_1}}n^{i_0} n^{i_1}q^{ -i_0-i_1} [q^n,\pav^{3-i_0-i_1}](\chi_n \Gamma_k^n \phi_k)\mathbbm{1}_{i_0=i_1=1}\\
=&\sum_{i_0=1}^3\binom{3}{i_0}\wt {q_{n,i_0}}J^{(i_0,3-i_0,0)}+\sum_{i_0=1}^3\sum_{i_1=1}^{3-i_0}\binom{3}{i_0}\binom{3-i_0}{i_1}\wt {q_{n,i_0}}\wt {q_{n,i_1}}J_{m,n}^{(i_0+i_1,3-i_0-i_1,0)}+6\wt {q_{n,1}}^3 J_{m,n}^{(3,0,0)}.
\end{align*}\fi
{\bf Step \# 0: Setup.}
Without loss of generality, we assume that $k\geq 1$ in our proof. The $k=0$ and $k<0$ cases are treated in a similar fashion. 
We note that if $a+b+c=j>n$, the parameter $a$ in the $J^{(a,b,c)}_{m,n}f_k$ \eqref{J_nq} must be zero. Hence there is a natural distinction between the $j\leq n$ and $j>n$ cases. We prove the following relations in these two different regimes 
\begin{subequations}\label{cm_qn_pv}
\begin{align}
\label{cm_qn_pv_1}[\pav^{j},q^n]\lf(\chi_{m+n}|k|^m\Gamma_k^nf_k\rg)=&\sum_{a+b=j,a>0}\mf A_{m,n}^{a,b,0;j}J^{(a,b,0)}_{m,n}f_k,\, \quad j\in\{1,2,3\},\quad n\geq j;\\ 
{\varphi[\pav^{2},q]\lf(\chi_{m+1}|k|^m\Gamma_kf_k\rg)= }&\sum_{m'=(m-1)_+}^{m}\sum_{b+c\leq 2}\lan m\ran^{\myr{(m-m')_+}(1+\sigma)}\mf A_{m',0}^{0,b,c;2}J^{(0,b,c)}_{m',0}f_k;\n\\ 
|\varphi^{n}[\pav^{3},q^n]\lf(\chi_{m+n}|k|^m\Gamma_k^nf_k\rg)|\lesssim &\sum_{m'=(m-1)_+}^m\sum_{n'=(n-2)_+}^{n-1}\sum_{b+c\leq 3}\lan m\ran^{(n-n')(1+\sigma)}\varphi^{n'}|J^{(0,b,c)}_{m',n'}f_k|,\quad 1\leq n\leq2.\n
\end{align} Here, most of the indices in the coefficients $\mf A_{m,n}^{a,b,c;j}$ match their companion $J_{m,n}^{(a,b,c)}f_k$, and the extra index `$j$' highlights the associated commutator (e.g., $j=2$ if one is dealing with $[\pav^2,q^n]$). These coefficients satisfy the following estimates for $\ell\in\{0,1\},\, \ell+j\leq 3$,
\begin{align}\label{cm_qn_pv_1.5}
\|\pa_y^{\ell}\mf A_{m,n}^{a,b,0;j}\|_{L_y^\infty}\leq& C\lf(\|v_y^{-1}\|_{L^\infty_y},\|v_{y}-1\|_{W_y^{j-1+\ell,\infty}}\rg), \quad  j\leq n;\\
 \|\pa_y^{\ell}\mf A_{m,n}^{0,b,c;j}\|_{L_y^\infty}\leq& C\lf(\|v_y^{-1}\|_{L^\infty_y},\|v_{y}-1\|_{W_y^{j-1+\ell,\infty}}\rg)\lan m\ran^{\ell(1+\sigma)}, \quad \, j> n .\n 
\end{align}
\siming{{Double check the two set of estimates below! Check once.}} Moreover, \begin{align}
\label{cm_qn_pv_2}\varphi^n\lf|\pav\lf([\pav,q^n]\lf(\chi_{m+n}|k|^m\Gamma_k^nf_k\rg)\rg)\rg|\leq&\sum_{n'=(n-1)_+}^n\sum_{a+b+c\leq 2}\mf B_{m,n'}^{a,b,c}\varphi^{n'}\lf|J^{(a,b,c)}_{m,n'}f_k\rg|,\quad \|\mf B_{m,n}^{a,b,c}\|_{L_y^\infty}\lesssim 1,\quad n\geq 1.
\end{align}
Moreover, \begin{align}
\label{cm_qn_pv_3}\varphi^n\lf|\pav\lf([\pav^2,q^n]\lf(\chi_{m+n}|k|^m\Gamma_k^nf_k\rg)\rg)\rg|\leq&\sum_{n'=(n-2)_+}^n\sum_{a+b+c\leq3}\mf C_{m,n'}^{a,b,c}\varphi^{n'}\lf|J^{(a,b,c)}_{m,n'}f_k\rg|,\quad\|\mf C_{m,n}^{a,b,c}\|_{L_y^\infty}\lesssim 1, \quad n\geq 3;\\  \n
\varphi^n \lf|\pav[\pav^{2},q^n]\lf(\chi_{m+n}|k|^m\Gamma_k^n f_k\rg)\rg|\lesssim & \sum_{m'=(m-1)_+}^m\sum_{n'=(n-2)_+}^n\sum_{b+c\leq 3}\lan m\ran^{(m+n-m'-n')(1+\sigma)}\lf|J^{(0,b,c)}_{m',n'}f_k\rg|, \quad n\in\{1,2\}.
\end{align}
\end{subequations} Here the implicit constant only depends on the $\|v_y^{-1}\|_{L^\infty_y},\|v_{y}-1\|_{H_y^3}$. 

These estimates, combined with 
the bound \eqref{gevbd1212}, yields the results \eqref{cm_J_est}, \eqref{cm_pv_J_est} and \eqref{cm_pv2J_est}. For example, to derive the third estimate in  \eqref{cm_J_est}, we invoke the \eqref{gevbd1212} to get that 
\begin{align*}
&B_{m,n}\varphi^{n+1} \|[\pav^3,q^n](\chi_{m+n}|k|^m\Gamma_k^nf_k)\|_{L^2}\lesssim \sum_{a+b=3,a>0}\|\mf A_{m,n}^{a,b,0;3}\|_{L^\infty}B_{m,n}\varphi^{n+1}\|J_{m,n}^{(a,b,0)}f_k\|_{L^2}\\ &\hspace{3cm}\lesssim \sum_{a+b=3,a>0}B_{m,n}\varphi^{n+1}\|J_{m,n}^{(a,b,0)}f_k\|_{L^2},\quad n\geq 3;\\
&B_{m,n}\varphi^{n+1} \|[\pav^3,q^n](\chi_{m+n}|k|^m\Gamma_k^nf_k)\|_{L^2}\\
&\hspace{0.5 cm}\lesssim \sum_{m'=(m-1)_+}^m\sum_{n'=(n-2)_+}^{n-1}\sum_{b+c=3}\lan m\ran^{(n-n')(1+\sigma)}\frac{B_{m,n}}{B_{m',n'}}B_{m',n'}\varphi^{n'+1}\|J_{m',n'}^{(0,b,c)}f_k\|_{L^2}\\ 
&\hspace{0.5 cm}\lesssim \sum_{m'=(m-1)_+}^m\sum_{n'=(n-2)_+}^{n-1}\sum_{b+c=3}B_{m',n'}\lambda^{(|m,n|-|m',n'|)s}\lf(\frac{|m',n'|!}{|m,n|!}\rg)^\sigma\varphi^{n'+1}\|J_{m,n'}^{(0,b,c)}f_k\|_{L^2},\quad n\leq 2.
\end{align*}
We omit the derivation of other inequalities since they are similar.  

Thanks to the above discussion, the remaining task is to develop \eqref{cm_qn_pv}. It turns out that the most technical step is  the proof of \eqref{cm_qn_pv_1}, \eqref{cm_qn_pv_1.5}. Because to get the estimates \eqref{cm_qn_pv_2}, \eqref{cm_qn_pv_3}, it is also necessary to derive the explicit form of the coefficient $\mf A_{m,n}^{a,b,c;j}$ in \eqref{cm_qn_pv_1}. Hence, we further decompose the proof of \eqref{cm_qn_pv_1} into the high regularity regime and the low regularity regime.


\noindent
{\bf Step \# 1a: Proof of $\eqref{cm_qn_pv_1}_{j\leq n}, \ \eqref{cm_qn_pv_1.5}_{j\leq n}$. }
We apply induction to prove the result:
\begin{align}\label{indc_hp_cm}
\lf[\pav^j,q^n\rg]&(\chi_{m+n}|k|^m\Gamma_k^n f_k)=\sum_{\substack{a+b=j
}} \mf A^{a,b,0;j}_{m,n} J^{(a,b,0)}_{m,n}f_k, \quad 1\leq j\leq 3,\\
\mf A^{a,b,0;j}_{m,n}=&{\lf(\frac{n}{m+n}\rg)^a}\sum_{\substack{i_1+\cdots+i_h=a\\ i_1,\cdots, i_h\neq 0}}(-1)^{h+1}\binom{j}{i_1}\binom{j-i_1}{i_2}\binom{j-i_1-i_2}{i_3}\cdots\binom{j -i_1-\cdots-i_{h-1}}{i_{h}}\prod_{h'=1}^h \wt{q_{n,i_{h'}}}. \label{C_exp}
\end{align}
Here $(i_1,\cdots, i_h)$ is a partition of the integer $a$ and the terms with $a=0$ are vacuous. 
The $\wt{q_{n,i_h}}$ are defined in \eqref{wt_qnj}.  Moreover, thanks to the bounds \eqref{wtq}, \eqref{pay_wtq}, the coefficients $\mf A_{m,n}^{a,b,0;j}$ has the following bound for $ n\geq j,\ m \in \mathbb{N}$,
\begin{align}\label{mf_A_bound}
\|\mf A_{m,n}^{a,b,0;j}\|_{L_y^\infty}\leq& C(\|v_y^{-1}\|_{L^\infty_y},\|v_{y}-1\|_{W_y^{j-1,\infty}}),\quad a+b=j\in \{1,2,3\};\\ 
\|\pa_y\mf A_{m,n}^{a,b,0;j}\|_{L_y^\infty}\leq& C(\|v_y^{-1}\|_{L^\infty_y},\|v_{y}-1\|_{W_y^{j,\infty}}),\quad a+b=j\in\{1,2\}.
\end{align}
This is the first estimate in \eqref{cm_qn_pv_1.5}. \siming{(Check!)\myr{Check once.}} 

As a start, it is clear to see that for $j=1$, the commutator matches the expression \eqref{indc_hp_cm},  
\begin{align*}
[\pav,q^n](\chi_{m+n}|k|^m\Gamma_k^n f_k)=|k|^m\pav (q^n)\chi_{m+n}\Gamma_k^nf_k=\wt {q_{n,1}}\frac{n}{q}(\chi_{m+n} |k|^m q^n\Gamma_k^nf_k)=\wt {q_{n,1}}\lf(\frac{n}{m+n}\rg) J^{(1,0,0)}_{m,n}f_k. 
\end{align*}

Assume that the expression \eqref{indc_hp_cm} holds for the $1,\cdots, (j-1)$-th levels. Then for the $j$-th level, we have that an application of the Lemma \ref{lem:commutator_AB} and the expressions  \eqref{pav_j_qn}, \eqref{cm_qn_pavj} yields the relation 
\begin{align*}
\big[\pav^{j}&,q^n\big](\chi_{m+n}|k|^m\Gamma_k^nf_k)\\
=&\sum_{i_1=1}^{j}\binom{{j}}{i_1}\wt {q_{n,i_1}}\frac{n^{i_1}}{q^{i_1}} q^n\ \pav^{{j}-i_1}(\chi_{m+n}|k|^m\Gamma_k^n f_k)\\
=&\sum_{i_1=1}^{j}\binom{{j}}{i_1}\wt {q_{n,i_1}}\lf(\frac{n}{m+n}\rg)^{i_1}\frac{(m+n)^{i_1}}{q^{ i_1}}\ \pav^{{j}-i_1}(\chi_{m+n}|k|^mq^n \Gamma_k^n f_k)\\
&+\sum_{i_1=1}^{j}\binom{{j}}{i_1}\wt {q_{n,i_1}}\lf(\frac{n}{m+n}\rg)^{i_1}\frac{(m+n)^{i_1}}{ q^{ i_1}}\ [q^n, \pav^{{j}-i_1}](\chi_{m+n} |k|^m\Gamma_k^nf_k). 
\end{align*} Now we invoke the induction hypothesis \eqref{indc_hp_cm} and the relation $\frac{(m+n)^{i_1}}{q^{i_1}}J_{m,n}^{(a,b,0)}f_k\mathbbm{1}_{a+i_1+b\leq n}=J_{m,n}^{(a+i_1,b,0)}f_k\mathbbm{1}_{a+i_1+b\leq n}$ to obtain
\begin{align*}\bigg[\pav^{j}&,q^n\bigg](\chi_{m+n}|k|^m\Gamma_k^nf_k)\\
=&\sum_{i_1=1}^{j}\myr{\lf(\frac{n}{m+n}\rg)^{i_1}}\binom{{j}}{i_1}\wt {q_{n,i_1}}J^{(i_1,{j}-i_1,0)}_{m,n}f_k-\sum_{i_1=1}^{j}\sum_{\substack{a+b=j-i_1}}\lf(\frac{n}{m+n}\rg)^{i_1}\binom{{j}}{i_1}\mf A_{m,n}^{a,b,0;j-i_1}\wt {q_{n,i_1}}\frac{(m+n)^{i_1}}{ q^{ i_1}}J_{m,n}^{(a,b,0)}f_k\\
=&\sum_{i_1=1}^{j}\lf(\frac{n}{m+n}\rg)^{i_1}\binom{{j}}{i_1}\wt {q_{n,i_1}}J^{(i_1,{j}-i_1,0)}_{m,n}f_k\\
&+\sum_{i_1=1}^{j}\sum_{{(a+i_1)+b=j
}}\sum_{\substack{i_2+\cdots+i_{h}=a\\ i_2,\cdots,i_{h}\neq 0}}(-1)^{h+1}\lf(\frac{n}{m+n}\rg)^{i_1+a}\binom{{j}}{i_1}\binom{j-i_1}{i_2}\\
&\qquad\qquad\qquad\qquad\times\binom{j-i_1-i_2}{i_3} \cdots\binom{j-i_1 -i_2-\cdots-i_{h-1}}{i_{h}}\wt {q_{n,i_1}}\lf(\prod_{h'=2}^h \wt{q_{n,i_{h'}}}\rg)  J_{m,n}^{(a+i_1,b,0)}f_k\\
=&\sum_{ {a+b=j}}\sum_{ \substack{i_1+\cdots+i_{h}=a\\ i_1,\cdots, i_{h}\nq0}}\lf(\frac{n}{m+n}\rg)^{a}(-1)^{h+1}\binom{{j}}{i_1}\binom{j-i_1}{i_2}\binom{j-i_1-i_2}{i_3} \cdots\binom{j-i_1 -i_1-\cdots-i_{h-1}}{i_{h}}\\
&\qquad\qquad\qquad\qquad\times\lf(\prod_{h'=1}^h \wt{q_{n,i_h}}\rg)  J_{m,n}^{(a,b,0)}f_k\\
=&\myr{\sum_{\substack{a+b=j}}\mf A_{m,n}^{a,b,0; j} J^{(a,b,0)}_{m,n}f_k}.
\end{align*}
Here in the second to last line, we have redefined $a$. 
\siming{We should still double check whether it is equal. Some details about decomposition is needed. }
As a result, we have obtained \eqref{cm_qn_pv_1} for $j\leq n$. 

\noindent
{\bf Step \# 1b: Proof of $\eqref{cm_qn_pv_1}_{j> n}, \ \eqref{cm_qn_pv_1.5}_{j> n}$.} Next we consider the $j>n$ case. Since $j\leq 3$, we have $n\in\{1,2\}.$ Hence we only have $5$-sub-cases to consider. The main technical difficulty here is to keep track of the coefficients before $J^{(0,b,c)}_{m,n}f_k$. As the number of derivative $\pav$ increases, one will quickly find that the expression of the coefficients becomes complicated. To make the presentation clear, we will first do a precise computation on one of the cases. For other cases, we will use the following notation convention: We say that the coefficient $\mf F_{m,n}^{b,c;j}$ (associated with $[\pav^j,q^n]$ commutators) is \emph{admissible} if it satisfies the following two properties:
\begin{itemize}
\item $\mf F_{m,n}^{b,c;j}$ depends only on $v_y^{-1}, \lf\{\pa_y^\ell v_y\rg\}_{\ell=0}^{j-1}, |k|^{-1},t;$
\item $\|\pav^\ell \mf F_{m,n}^{b,c;j}\|_{L^\infty}\lesssim \lan m\ran^{\ell(1+\sigma)}, \ \ell=0,1$. Here the implicit constant depends only on $\|v_y^{-1}\|_{L_{t,y}^\infty}, \ \|v_y\|_{L_t^\infty W_y^{2,\infty}}$. 
\end{itemize} 
When we do the computation, the admissible coefficients will be changing line by line to incorporate all the contributions. We choose to drop the $a$ index because it is automatically zero in the low regularity setting. \siming{To keep track of the computation, we always absorb the $Jf_k$-terms generated along the way by a term of the form ${\mf F}Jf_k$. And we will keep the commutator terms on the back of the expression. }

 We invoke the commutator relation \eqref{comm_rel}, the cutoff bound \eqref{chi:prop:3}, the definition \eqref{pav_j_qn} and the computation facts that $\text{ad}_{\pav}^\ell(q)=\widetilde{q_{1,\ell}}\lesssim 1$, $\text{ad}_{\pav}^\ell(q^2)=\widetilde{q_{2,\ell}}2^{\ell}q^{(2-\ell)_+}\lesssim 1$ to obtain that 
\begin{align*}
\varphi^{n}[\pav,q^n]&(\chi_{m+n}|k|^m\Gamma_k^nf_k)=\varphi^n n\wt {q_{n,1}} \chi_{m+n} q^{n-1}(\pav+ikt)\lf(|k|^m \Gamma_k^{n-1} f_k\rg)\\
=&\varphi^n \lf(n\wt {q_{n,1}} \chi_{m+n}\rg)\lf(\mathbbm{1}_{n=2} [q^{n-1},\pav]\lf( \Gamma_k^{n-1}(\chi_{ m }|k|^mf_k)\rg)+J_{m,n-1}^{(0,1,0)}f_k+itJ_{m,n-1}^{(0,0,1)}f_k\rg)\\
=&\varphi^n\lf(n\wt {q_{n,1}} \chi_{m+n}\rg) J_{m,n-1}^{(0,1,0)}f_k +\varphi^{n}i t\lf(n\wt {q_{n,1}} \chi_{m+n}\rg) J_{m,n-1}^{(0,0,1)}f_k\\
&-\mathbbm{1}_{n=2}(2\widetilde {q_{2,1}}\widetilde{q_{1,1}}\chi_{m+2})\varphi^2\lf(J_{m,0}^{(0,1,0)}f_k+i tJ_{m,0}^{(0,0,1)}f_k\rg).
\end{align*}
It is direct to see that 
\begin{align*}
|\varphi^{n}[\pav,q^n](\chi_{m+n}|k|^m\Gamma_k^nf_k)|\lesssim&\sum_{n'=(n-2)_+}^{n-1}\sum_{b+c=1}\lan m\ran ^{(n-1-n')_+(1+\sigma)}\varphi^{n'}\lf|J^{(0,b,c)}_{m,n'} f_k\rg|,\quad n\in\{1,2\}.
\end{align*}
Moreover, we can clearly see that the coefficients are \emph{admissible} because $n=1,2$ and the derivative of $\chi_{m+n}$ is bounded by $C\lan m\ran^{1+\sigma}$,  \eqref{chi:prop:3}. Next we consider the following case, 
 \begin{align}\n
\varphi& [\pav^2,q](\chi_{m+1}|k|^m\Gamma_kf_k)\\  \n
&= \varphi \lf(\text{ad}_{\pav}^2[q](\chi_{m+1}\Gamma_k(\chi_m|k|^m f_k))+2\text{ad}_{\pav}[q]\pav (\chi_{m+1}\Gamma_k( |k|^mf_k))\rg)\\ \n
&=  \wt {q_{1,2}}\chi_{m+1} \lf(\varphi J_{m,0}^{(0,1,0)}f_k+i(\varphi t) J^{(0,0,1)}_{m,0}f_k\rg)+ 2\wt{q_{1,1}}\chi_{m+1}\lf(\varphi J_{m,0}^{(0,2,0)}f_k+it\varphi J^{(0,1,1)}_{m,0}f_k\rg)\\ \n
&\quad+ \mathbbm{1}_{m\geq 1}2\wt{q_{1,1}}[\pav\chi_{m+1}] \lf(\varphi J_{m-1,0}^{(0,1,1)}f_k+i(\varphi t) J^{(0,0,2)}_{m-1,0}f_k\rg)+ \mathbbm{1}_{m=0}2\wt{q_{1,1}}[\pav\chi_{1}] \lf(\varphi J_{0,0}^{(0,1,0)}f_k+i(\varphi t) J^{(0,0,1)}_{0,0}f_k\rg) \\
 &=\sum_{m'=(m-1)_+}^m\sum_{b+c\leq 2}\lan m\ran^{(m-m')_+(1+\sigma)}\mf F_{m',0}^{b,c;2} J^{(0,b,c)}_{m',0}f_k. \label{cm_qn_pv2}
 \end{align}
\siming{HS: The indexing system is not perfect because we will need to hide the starting $n$ (which appears on the left hand side) in $\mf F$ and record only the ending $n'$ ($\mf F_{m,n'}J_{m,n'}f_k$). } This is consistent with \eqref{cm_qn_pv_1}. 
With the above two examples, we can summarize the idea. First of all, we apply the commutator expansion \eqref{comm_rel} to expand the commutator properly. Then we commute the cut-off function $\chi_{\cdots}$ out of the main battle ground. Finally, we continuously commute the boundary weight $q^n$ to obtain various $J^{(0,b,c)}_{m,n}f_k$ terms. Consequently,\begin{align}\n
\varphi^2&[\pav^2,q^2](\chi_{m+2}|k|^m\Gamma_k^2f_k)\\ \n
&=\varphi^2 \text{ad}_{\pav}^2[q^2]\lf(\chi_{m+2}\Gamma_k^2(\chi_m |k|^mf_k)\rg) +2\varphi^2\text{ad}_{\pav}[q^2]\lf(\pav\lf(\chi_{m+2}\Gamma_k^2(|k|^mf_k)\rg)\rg)\\ \n
&=\sum_{b+c=2}\mf F_{m,0}^{b,c;2}J_{m,0}^{(0,b,c)}f_k+4\wt {q_{1,1}} \varphi^2  (\pav \chi_{m+2}) q\Gamma_k^2(|k|^mf_k)+4\wt{ q_{1,1}}\varphi^2 \chi_{m+2} q(\pav^2+ikt\pav)\Gamma_k (|k|^m f_k)\\ \n
&=\sum_{\substack{n'\in\{0,1\}\\ b+c=2}}\lan m\ran^{(\myr{1-n'})(1+\sigma)}\mf F_{m,n'}^{b,c;2}\varphi^{n'}J_{m,n'}^{(0,b,c)}f_k  +4\wt{ q_{1,1}}\varphi^2 \chi_{m+2} ([q,\pav^2]+ikt[q,\pav])\Gamma_k (\chi_m |k|^m f_k)\\
&=\sum_{n'=0}^1\sum_{b+c=2}\lan m\ran^{(\myr{1-n'})(1+\sigma)}\mf F_{m,n'}^{b,c;2}\varphi^{n'}J_{m,n'}^{(0,b,c)}f_k.\label{cm_q2_pv2}
\end{align}\siming{It is extremely important to keep the power of $\lan m\ran $ right.}
This is consistent with \eqref{cm_qn_pv_1}. 
\ifx
\siming{Explicit Expansion (Checking Purpose only): 
\begin{align*} & \lf(-4\wt {q_{2,1}}\wt{q_{1,1}}+4\wt{q_{2,2}}\rg)\chi_{m+2}\lf(J^{(0,2,0)}_{m,0}f_k+2it J^{(0,1,1)}_{m,0}f_k-t^2J^{(0,0,2)}_{m,0}f_k\rg)\\
&+4\wt{q_{2,1}}\vyn\chi_{m+2}'\lf(J^{(0,1,0)}_{m,0}f_k+itJ^{(0,0,1)}_{m,0}f_k\rg)+4\wt {q_{2,1}}\chi_{m+2}\lf(J^{(0,2,0)}_{m,1}f_k+it J^{(0,1,1)}_{m,1}f_k\rg)\\
&-4\lf(\wt{q_{2,1}}\wt{q_{1,1}}\vyn\chi_{m+2}'+\wt{q_{2,1}}\wt {q_{1,2}}\chi_{m+2}\rg)\lf(J^{(0,1,0)}_{m,0}f_k+it J^{(0,0,1)}_{m,0}f_k\rg);
\end{align*}}
\fi
Through similar but more complicated computations, we obtain the estimate of the other two commutator. Since we do not need to keep track of their derivatives, we choose to bound them directly (with implicit constants depends only on $\|v_y^{-1}\|_{L_{t,y}^\infty}, \ \|v_y\|_{L_t^\infty W^{2,\infty}_y}$),
\begin{align*}
\varphi |[\pav^3,&q](\chi_{m+1}|k|^m\Gamma_kf_k)|\lesssim \sum_{\ell=0}^2|\pav^{3-\ell} q|\varphi \lf|\pav^{\ell}(\chi_{m+1}\Gamma_k |k|^m f_k)\rg|\\
\lesssim&\sum_{\ell=0}^2\sum_{\ell_1=0}^{\ell }\varphi \lf|\pav^{\ell-\ell_1}\chi_{m+1}\rg|\lf|\lf(\pav^{1+\ell_1} +ikt\pav^{\ell_1}\rg)(|k|^m f_k)\rg|\\
\lesssim &\sum_{\ell=0}^2\sum_{\ell_1=0}^{ \ell }\mathbbm{1}_{(\ell,\ell_1)\neq (2,0)}\lan m\ran^{(1+\sigma)}\sum_{b+c=1+\ell_1}\lf|J^{(0,b,c)}_{m,0} f_k\rg|+\mathbbm{1}_{m\geq 1}\lan m\ran^{2(1+\sigma)}\sum_{b+c=1}|J^{(0,b,c+1)}_{m-1, 0} f_k|\\
&+\mathbbm{1}_{m=0}\sum_{b+c=1}|J^{(0,b,c)}_{0,0} f_k|.
\end{align*}
Finally, we estimate the most complicated expression:
\begin{align*}
\varphi^2& \big|[\pav^3,q^2](\chi_{m+2}|k|^m\Gamma_k^2f_k)\big|\lesssim \sum_{\ell=0}^2|\pav^{3-\ell} q^2|\varphi^2 \lf|\pav^{\ell}(\chi_{m+2}\Gamma_k^2 |k|^m f_k)\rg|\\
\lesssim&\sum_{\ell=0}^1\varphi^2|\pav^\ell(\chi_{m+2}|k|^m\Gamma_k^2f_k)|\ +\ \varphi^2|\pav q||q\pav^2(\chi_{m+2}|k|^m\Gamma_k^2f_k)|\\
\lesssim&\sum_{b+c=3}\lan m\ran^{(1+\sigma)}\lf|J_{m,0}^{(0,b,c)}f_k\rg|\ +\ \varphi^2|\pav q||q[\pav^2,\chi_{m+2}](|k|^m\Gamma_k^2f_k)|+\ \varphi^2|\pav q|\chi_{m+2}|q\pav^2(|k|^m\Gamma_k^2f_k)|\\
\lesssim&\sum_{b+c=3}\lan m\ran^{(1+\sigma)}\lf|J_{m,0}^{(0,b,c)}f_k\rg|\ +\ \varphi^2\sum_{\ell=0}^1|\pav^{2-\ell}\chi_{m+2}||\pav^\ell(|k|^m\Gamma_k^2f_k)|\\
&+\ \varphi^2\chi_{m+2}|[q,\pav^2\Gamma_k](|k|^m\Gamma_kf_k)|+\ \varphi^2\chi_{m+2}|\pav^2\Gamma_k(\chi_{m+1}|k|^mq\Gamma_kf_k)|\\
\lesssim&\sum_{n'=0}^1\sum_{b+c=3}\lan m\ran^{(n-n')(1+\sigma)}\varphi^{n'}\lf|J_{m,n'}^{(0,b,c)}f_k\rg|\ +\ \varphi^2\sum_{\ell=0}^1\lan m\ran^{(1+\sigma)}|\pav^\ell\Gamma_k^2(\chi_{m}|k|^mf_k)|\mathbbm{1}_{\text{supp }\chi_{m+2}}\\
&+\ \varphi^2\chi_{m+2}|[q,\pav^2\Gamma_k]\Gamma_k(\chi_m|k|^mf_k)|\\
\lesssim&\sum_{n'=0}^1\sum_{b+c\leq 3}\lan m\ran^{(n-n')(1+\sigma)}\varphi^{n'}\lf|J_{m,n'}^{(0,b,c)}f_k\rg|. 
\end{align*}
This concludes the proof of \eqref{cm_qn_pv_1} in the $j<n$ case. 
\ifx
\myr{\siming{Explicit Expansion (Checking Purpose only): }
\begin{align*}
[\pav^3,&q](\chi_{m+1}|k|^m\Gamma_kf_k)\\
=&3\wt{q_{1,1}}\chi_{m+1}\lf(J^{(0,3,0)}_{m,0}f_k+itJ_{m,0}^{(0,2,1)}f_k\rg)+3\wt{q_{1,1}} \pav^2\chi_{m+1}(J^{(0,1,0)}_{m,0}f_k+itJ^{(0,0,1)}_{m,0}f_k)\\
 &+6\wt{q_{1,1}} \pav\chi_{m+1}(J^{(0,2,0)}_{m,0}f_k+itJ^{(0,1,1)}_{m,0}f_k)+3\wt{q_{1,2}}\vyn\chi_{m+1}'\lf(J_{m,0}^{(0,1,0)}f_k+itJ^{0,0,1}_{m,0}f_k\rg)\\
 &+3\wt{q_{1,2}}\chi_{m+1}\lf(J_{m,0}^{(0,2,0)}f_k+itJ^{0,1,1}_{m,0}f_k\rg)+\wt{q_{1,3}}\chi_{m+1}\lf(J_{m,0}^{(0,1,0)}f_k+itJ^{0,0,1}_{m,0}f_k\rg);\\
[\pav^3,&q^2](\chi_{m+2}|k|^m\Gamma_k^2f_k)\\
=&-6\wt{q_{2,1}}\pav^2(\chi_{m+2}\pav q)(J_{m,0}^{(0,1,0)}f_k+itJ_{m,0}^{(0,0,1)}f_k)-12\wt{q_{2,1}}\pav(\chi_{m+2}\pav q)(J_{m,0}^{(0,2,0)}f_k+itJ_{m,0}^{(0,1,1)}f_k)\\
&-6\wt{q_{2,1}}\chi_{m+2}\pav q(J_{m,0}^{(0,3,0)}f_k+itJ_{m,0}^{(0,2,1)}f_k)\\
&+6\wt{q_{2,1}}\pav^2 \chi_{m+2}(J_{m,0}^{(0,1,0)}f_k+itJ_{m,0}^{(0,0,1)}f_k)+12\wt{q_{2,1}}\pav \chi_{m+2}(J_{m,0}^{(0,2,0)}f_k+itJ_{m,0}^{(0,1,1)}f_k)\\
&+6\wt{q_{2,1}}\chi_{m+2}(J_{m,0}^{(0,3,0)}f_k+itJ_{m,0}^{(0,2,1)}f_k)\\
&-6\wt{q_{2,1}}\wt{q_{1,2}}\chi_{m+2}(J_{m,0}^{(0,2,0)}f_k+2itJ_{m,0}^{(0,1,1)}f_k-t^2J_{m,0}^{(0,0,2)}f_k)\\
&-12\wt{q_{2,1}}\wt{q_{1,1}}\pav\chi_{m+2}(J_{m,0}^{(0,2,0)}f_k+2itJ_{m,0}^{(0,1,1)}f_k-t^2J_{m,0}^{(0,0,2)}f_k)\\
&-12\wt{q_{2,1}}\wt{q_{1,1}}\chi_{m+2}(J_{m,0}^{(0,3,0)}f_k+2itJ_{m,0}^{(0,2,1)}f_k-t^2J_{m,0}^{(0,1,2)}f_k)\\
&+12\wt{q_{2,2}}  \chi_{m+2}(J_{m,0}^{(0,3,0)}f_k+2itJ_{m,0}^{(0,2,1)}f_k-t^2J_{m,0}^{(0,1,2)}f_k)\\
&+12\wt{q_{2,2}} \pav \chi_{m+2}(J_{m,0}^{(0,2,0)}f_k+2itJ_{m,0}^{(0,1,1)}f_k-t^2J_{m,0}^{(0,0,2)}f_k)\\
&+8\wt{q_{2,3}}  \chi_{m+2}(J_{m,0}^{(0,2,0)}f_k+2itJ_{m,0}^{(0,1,1)}f_k-t^2J_{m,0}^{(0,0,2)}f_k).
\end{align*}
Hence direct estimate yields \eqref{cm_qn_pv_1}. }

\fi

\noindent{\bf Step \# 2: Proof of \eqref{cm_qn_pv_2}.} 
We apply the expression $\eqref{cm_qn_pv_1}_{j=1}$ to derive the following  
\begin{align}\n
\varphi^n\pav  [\pav,q^n]&(\chi_{m+n} |k|^m \Gamma_k^nf_k) 
=\varphi^n\pav \mf A_{m,n}^{1,0,0;1} J^{(1,0,0)}_{m,n}f_k+ \varphi^n\mf A_{m,n}^{1,0,0;1}  \pav J^{(1,0,0)}_{m,n}f_k\\
\n =&\begin{cases} \varphi^n\pav \mf A_{m,n}^{1,0,0;1}J^{(1,0,0)}_{m,n}f_k -\varphi^n\mf A_{m,n}^{1,0,0;1}\frac{\pav q}{m+n}J^{(2,0,0)}_{m,n}f_k+\varphi^n\mf A_{m,n}^{1,0,0;1}   J^{(1,1,0)}_{m,n}f_k,&\quad n\geq 2;\\
& \\
\varphi\pav \mf A_{m,1}^{1,0,0;1}J^{(1,0,0)}_{m,1}f_k +\mf A_{m,1}^{1,0,0;1} (  \varphi J^{(0,2,0)}_{m,0}f_k+i(\varphi t) J^{(0,1,1)}_{m,0}f_k),&\quad n= 1,\end{cases}\\
=:&\sum_{n'=(n-1)_+}^n\sum_{ a+b+c\leq 2}\mf B_{m,n'}^{a,b,c}\varphi^{n'}J^{(a,b,c)}_{m,n'}f_k.\label{dfn_B}
\end{align}
Thanks to the estimate \eqref{cm_qn_pv_1.5}, we have that 
\begin{align*}
\lf\|\mf A_{m,n}^{1,0,0;1}\rg\|_{L^\infty}+\lf\|\pa_y \mf A_{m,n}^{1,0,0;1}\rg\|_{L^\infty}\leq C(\|v_y^{-1}\|_{L^\infty}, \|v_y-1\|_{L_t^\infty W_y^{1,\infty}}).
\end{align*} 
Combining this estimate with the expression  \eqref{dfn_B}, we have obtained \eqref{cm_qn_pv_2} for $n\geq 1$.

\noindent{\bf Step \# 3: Proof of  \eqref{cm_qn_pv_3}.}  
We start by considering the case $n\geq 3$. We apply the expression $\eqref{cm_qn_pv_1}_{j=2}$ to derive the following  
\begin{align*}\n
\varphi^n\pav  [\pav^2,q^n]&(\chi_{m+n} |k|^m \Gamma_k^nf_k)\\ 
&\hspace{-2cm}=\varphi^n\pav \mf A_{m,n}^{2,0,0;2} J^{(2,0,0)}_{m,n}f_k+\varphi^n \mf A_{m,n}^{2,0,0;2} \pav J^{(2,0,0)}_{m,n}f_k+ \varphi^n\pav \mf A_{m,n}^{1,1,0;2}   J^{(1,1,0)}_{m,n}f_k+ \varphi^n\mf A_{m,n}^{1,1,0;2}  \pav J^{(1,1,0)}_{m,n}f_k\\
\n &\hspace{-2cm}=  \varphi^n\pav \mf A_{m,n}^{2,0,0;2}J^{(2,0,0)}_{m,n}f_k -\varphi^n\mf A_{m,n}^{2,0,0;2}\frac{2\pav q}{m+n}J^{(3,0,0)}_{m,n}f_k+\varphi^n \pav\mf A_{m,n}^{1,1,0;2}   J^{(1,1,0)}_{m,n}f_k\\
&\hspace{-2cm} \quad-\varphi^n  \mf A_{m,n}^{1,1,0;2} \frac{\pav q}{m+n}  J^{(2,1,0)}_{m,n}f_k +\varphi^n \mf A_{m,n}^{1,1,0;2}   J^{(1,2,0)}_{m,n}f_k\\
&\hspace{-2cm}=:\sum_{a+b\leq 3}\mf C_{m,n}^{(a,b,0)}f_k.
\end{align*}
Thanks to the estimate \eqref{cm_qn_pv_1.5}, we have that 
\begin{align*}
\sum_{\ell=0}^1\sum_{a+b=2,a>0}\lf\|\pa_y^\ell \mf A_{m,n}^{a,b,0;2}\rg\|_{L^\infty}\leq C(\|v_y^{-1}\|_{L^\infty}, \|v_y-1\|_{L_t^\infty W_y^{2,\infty}}).
\end{align*} 
Hence the coefficients $\{\mf C_{m,n}^{(a,b,0)}\}_{n\geq 3}$ are consistent with \eqref{cm_qn_pv_3}. 
 
For the $n\leq 2,\ j=1,2$ case, we invoke the second relation in \eqref{cm_qn_pv_1}, and the bound \eqref{cm_qn_pv_1.5},  
\begin{align*}
&\varphi^{n}\lf|\pav[\pav^{j},q^n]\lf(\chi_{m+n}|k|^m\Gamma_k^nf_k\rg)\rg|\\
&\lesssim \sum_{m'=(m-1)_+}^{m}\sum_{n'=(n-2)_+}^{n-1}\sum_{b+c=j}\lan m\ran^{(m+n-1-m'-n')_+(1+\sigma)}\lf(|\pav\mf A_{m',n'}^{0,b,c;j}|\varphi^{n'}|J^{(0,b,c)}_{m',n'}f_k|+| \mf A_{m',n'}^{0,b,c;j}|\varphi^{n'}|J^{(0,b+1,c)}_{m',n'}f_k|\rg)\\
&\lesssim \sum_{m'=(m-1)_+}^{m}\sum_{n'=(n-2)_+}^{n-1}\sum_{b+c=j+1}\lan m\ran^{(m+n-m'-n')_+(1+\sigma)}\varphi^{n'}|J^{(0,b,c)}_{m',n'}f_k|.
\end{align*}
This is consistent with \eqref{cm_qn_pv_3} and hence complete the proof.
\ifx
\siming{Previous:  
For the $n= 2$ case, we invoke the expression \eqref{cm_q2_pv2} to obtain that that
\begin{align*}\varphi^2\pav  [\pav^2,q^2]&(\chi_{m+2} |k|^m \Gamma_k^2f_k) =\sum_{n'=0}^1\sum_{b+c=2}\lan m\ran^{(1-n')(1+\sigma)}\pav \lf(\mf F_{m,n'}^{b,c;2}\varphi^{n'}J_{m,n'}^{(0,b,c)}f_k \rg) 
\\ &\hspace{-2cm} =\sum_{n'=0}^1\sum_{b+c=2}\lan m\ran^{(2-n')(1+\sigma)}\frac{\pav  \mf F_{m,n'}^{b,c;2}}{\lan m\ran^{1+\sigma}}\varphi^{n'}J_{m,n'}^{(0,b,c)}f_k  + \sum_{n'=0}^1\sum_{b+c=2}\lan m\ran^{(1-n')(1+\sigma)} \mf F_{m,n'}^{b,c;2}\varphi^{n'}J_{m,n'}^{(0,b+1,c)}f_k.
\end{align*}
Thanks to the fact that the coefficients $\mf F _{m,n}^{b,c;2}$ are \emph{admissible} (as defined in Step \# 1b), we have the estimate $\eqref{cm_qn_pv_3}_{n=2}$. 
\\
For the $n=1,\ m\geq 1$ case, we need to explicitly write it out as in \eqref{cm_qn_pv2} and estimate it as follows:
\begin{align*}\n
\big|\varphi& \pav[\pav^2,q](\chi_{m+1}|k|^m\Gamma_kf_k)\big|
= \big|\varphi \pav\lf(\text{ad}_{\pav}^2[q](\chi_{m+1}\Gamma_k( |k|^m f_k))+2\text{ad}_{\pav}[q]\pav (\chi_{m+1}\Gamma_k(|k|^mf_k))\rg)\big| \\  
&= \bigg|\pav(\wt {q_{1,2}}\chi_{m+1})\lf(\varphi J_{m,0}^{(0,1,0)}f_k+i(\varphi t) J^{(0,0,1)}_{m,0}f_k\rg)+ \wt {q_{1,2}}\chi_{m+1}\lf(\varphi J_{m,0}^{(0,2,0)}f_k+i(\varphi t) J^{(0,1,1)}_{m,0}f_k\rg)\\
&\quad+ 2\pav(\wt{q_{1,1}}\chi_{m+1})\lf(\varphi J_{m,0}^{(0,2,0)}f_k+it\varphi J^{(0,1,1)}_{m,0}f_k\rg)+2 \wt{q_{1,1}}\chi_{m+1}\lf(\varphi J_{m,0}^{(0,3,0)}f_k+it\varphi J^{(0,2,1)}_{m,0}f_k\rg)\\
&\quad+\pav(
 2\wt{q_{1,1}}\pav\chi_{m+1})\lf(\varphi J_{m-1,0}^{(0,1,1)}f_k+i(\varphi t) J^{(0,0,2)}_{m-1,0}f_k\rg)+2\wt{q_{1,1}}\pav\chi_{m+1} \lf(\varphi J_{m,0}^{(0,2,0)}f_k+i(\varphi t) J^{(0,1,1)}_{m,0}f_k\rg)\bigg|\\
&\lesssim  \sum_{m'=(m-1)_+}^m \lan m\ran^{(m+1-m')(1+\sigma)}|J_{m',0}^{(0,b,c)}f_k|,\quad m\geq 1.
\end{align*}
This is consistent with \eqref{cm_qn_pv_3}. If $n=1,\ m= 0$, we have that 
\begin{align*}
 \big|\varphi& \pav[\pav^2,q](\chi_{1} \Gamma_kf_k)\big|
= \big|\varphi \pav\lf(\text{ad}_{\pav}^2[q](\chi_{1}\Gamma_k( f_k))+2\text{ad}_{\pav}[q]\pav (\chi_{1}\Gamma_k(f_k))\rg)\big| \lesssim  \sum_{b+c\leq 3}|J_{0,0}^{(0,b,c)}f_k|.
\end{align*}
This is consistent with \eqref{cm_qn_pv_3} and hence complete the proof.}\fi
\end{proof}

The following lemma help us to control various type of $J_{m,n}^{(a,b,c)}$-terms.
\begin{lemma}[Induction lemma]\label{lem:induction}
Assume that $\sigma_\ast\geq 10\sigma$ and fix an arbitrary pair $(m,n)\in \mathbb{N}\times (\mathbb{N}\backslash\{0\})$. The $J^{(a,b,c)}_{m,n}f_k$ can be estimated as follows
\begin{align} 
\n &{\bf a}_{m,n}^2\sum_{\substack{a+b+c=\mf m_0,\\ a\geq 1}}\|J^{(a,b,c)}_{m,n}f_k\|_{L^2}^2\\
 &\lesssim\sum_{n'= (n-\mf m_0-2)_+}^{n-1}\lambda^{2s(n-n')} \lf(\frac{|m,n'|!}{|m,n|!}\rg)^{2\sigma}{\bf a}_{m,n'}^2\|J^{( \mf m_0)}_{m,n'}f_k\|_{L^2}^2\n \\
&\quad+\mathbbm{1}_{n<\mf m_0}\sum_{m'=(m-1)_+}^m\sum_{n'= 0}^{1}\lambda^{2s(|m,n|-|m',n'|)} \lf(\frac{|m',n'|!}{|m,n|!}\rg)^{2\sigma}{\bf a}_{m',n'}^2\sum_{b+c\leq \mf m_0}\|J^{(0,b,c)}_{m',n'}f_k\|_{L^2}^2.  \label{J_al>0_l2}
\end{align}
Here, we recall the notations $J^{( \mf m_0)}_{m,n}f_k
,\ J^{(\leq \mf m_0)}_{m,n}f_k
$ \eqref{J_vec}, the index $\mf m_0\in\{1,2,3\}$ and highlight that the implicit constant depends on $\|v_y^{-1}\|_{L_{t,y}^\infty},\ \|v_y\|_{L_t^\infty W_y^{2,\infty}}$.
\end{lemma}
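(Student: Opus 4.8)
\textbf{Proof strategy for Lemma~\ref{lem:induction}.}

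The plan is to prove \eqref{J_al>0_l2} by an inductive scheme on $a$, the number of boundary-weight factors $\frac{m+n}{q}$. The base case is $a=0$, which is trivial since the right-hand side already contains all the $J^{(0,b,c)}_{m,n'}f_k$ quantities and the claim is vacuous there. For the inductive step, fix $(a,b,c)$ with $a\geq 1$ and $a+b+c=\mathfrak m_0$. By definition \eqref{S_nq}--\eqref{J_nq}, the presence of $a\geq 1$ forces $a+b+c\leq n$ (otherwise $J^{(a,b,c)}_{m,n}f_k\equiv 0$ by the indicator $\mathbbm 1_{\mathbb S_n^\ell}$), so in particular $n\geq \mathfrak m_0$ when $a\geq 1$, which is why the second line of \eqref{J_al>0_l2} only appears through the induction on lower $a$ (the $\mathbbm 1_{n<\mathfrak m_0}$ term). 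The key algebraic identity is to peel off one copy of $\Gamma_k$: writing $\mathring f_{m,n;k}=|k|^m q^n\Gamma_k^n f_k = q^n(\pav+ikt)\Gamma_k^{n-1}|k|^m f_k$ and using $q^n \pav = \pa_y q^{n-1} - (n-1)(\pa_yq) q^{n-2}$ (i.e. the commutator \eqref{cm_py_qn}), one expresses $\frac{m+n}{q}\mathring f_{m,n;k}$ in terms of $\pa_y\omega_{m,n-1;k}$, $S^{(1,0,0)}_{m,n-1}f_k$, and $(ikt)\omega_{m,n-1;k}$, exactly as in \eqref{start1}. The general move: every application of $\frac{m+n}{q}$ or $\pav$ to $\chi_{m+n}q^n\Gamma_k^n f_k$, after commuting $q^n$ past derivatives via Lemma~\ref{lem:cm_J} (estimates \eqref{cm_J_est}, \eqref{cm_pv_J_est}, \eqref{cm_pv2J_est}) and $\Gamma_k$ past $\pa_y$ via \eqref{cm_py_G_n}, produces a finite sum of $J^{(a',b',c')}_{m,n'}f_k$ with either (i) $n'=n$ and $a'+b'+c'=\mathfrak m_0$ but smaller $a'$ (to which the inductive hypothesis on $a$ applies directly), or (ii) $n'\in\{n-1,n-2\}$ and $a'+b'+c'\leq \mathfrak m_0$. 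The factor $t$ hidden in $\Gamma_k=\pav+ikt$ is absorbed by one power of $\varphi$ using \eqref{ineq:varphi}, which is precisely why the weight $\bold a_{m,n}=B_{m,n}\varphi^{n+1}$ with its $\varphi^{n+1}$ is the right object: dropping from level $n$ to level $n-1$ trades one $\varphi$ for one $t$ harmlessly.

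The combinatorial accounting is the routine part: whenever we drop $n\to n-1$ (or $n\to n-2$, $m\to m-1$), the Gevrey weight ratio is controlled by \eqref{gevbd1212} (or its products), giving factors $\lambda^{s}(m+n)^{-\sigma-\sigma_\ast}\leq \lambda^s(m+n)^{-\sigma}$, and iterating this $j=n-n'$ times yields the claimed $\lambda^{2s(n-n')}\big(\frac{|m,n'|!}{|m,n|!}\big)^{2\sigma}$ upon squaring. Since the recursion decreases $a+b+c+$(distance from $n$) — more precisely, each step either strictly lowers $a$ at fixed $(m,n)$, or lowers $n$ while not raising $a+b+c$ — the recursion terminates after finitely many steps; bookkeeping the $n'$-range gives $n'\geq (n-\mathfrak m_0-2)_+$ because dropping $a+b+c=\mathfrak m_0$ derivatives costs at most $\mathfrak m_0$ levels, plus the $\pm 2$ slack from the $[\pav^2,q^n]$-type commutators \eqref{cm_pv2J_est}. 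The coefficients $\mathfrak A^{a,b,c;j}_{m,n}$, $\mathfrak B^{a,b,c}_{m,n}$, $\mathfrak C^{a,b,c}_{m,n}$ from Lemma~\ref{lem:cm_J} are bounded uniformly in $m,n$ in terms of $\|v_y^{-1}\|_{L^\infty_{t,y}}$ and $\|v_y\|_{L^\infty_t W^{2,\infty}_y}$, which is the origin of the stated dependence of the implicit constant.

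I expect the main obstacle to be the careful treatment of the \emph{low-$n$ boundary cases} $n\in\{1,2\}$, i.e. when $n<\mathfrak m_0$ so that some intermediate $J^{(a',b',c')}_{m,n'}f_k$ would have $a'>n'$ and thus must be replaced by its purely-Sobolev version (with no $\frac{m+n}{q}$ weight). This is exactly the second line of \eqref{J_al>0_l2}, and tracking which $(a',b',c')$-terms survive the indicator $\mathbbm 1_{\mathbb S_{n'}^{\ell}}$ requires the detailed low-regularity commutator identities \eqref{cm_qn_pv_1}--\eqref{cm_qn_pv_3} of Lemma~\ref{lem:cm_J} (the $j>n$ branch), which introduce the extra $m\to m-1$ bookkeeping and the $\langle m\rangle^{(1+\sigma)}$ factors that must be reabsorbed via \eqref{gevbd1212}. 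Once those finitely many cases ($n=1,2$ against $\mathfrak m_0=1,2,3$) are checked by hand, summing the squared estimates over the finite recursion tree and applying $\sum_{\ell}\binom{n}{\ell}^{-\zeta}\lesssim 1$ (Lemma via \eqref{prod}) to collapse the convolution sums yields \eqref{J_al>0_l2}.
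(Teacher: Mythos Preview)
Your ingredients are correct—the commutator estimates \eqref{cm_J_est}–\eqref{cm_pv2J_est}, the weight bound \eqref{gevbd1212}, the absorption $t\varphi\lesssim 1$, and the separate low-$n$ handling via the $j>n$ branch of Lemma~\ref{lem:cm_J}—and they do suffice. But the recursion is mis-described: the basic peeling identity \eqref{start1} always drops $n\to n-1$, producing for instance $S^{(1,0,0)}_{m,n-1}$ with the \emph{same} $a$ at a \emph{lower} level alongside $a$-reduced terms. Branch ``(i) $n'=n$, smaller $a'$'' does not occur, so ``induction on $a$ at fixed $(m,n)$'' is not the right framing; the lemma is a one-step reduction in $n$ (its right-hand side still carries $J^{(\mathfrak m_0)}_{m,n'}$ with $a\geq 1$ at level $n'<n$), and the full reduction to $a=0$ is deferred to the subsequent Translation Lemma~\ref{lem:JtoD}.

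The paper's route is more direct than your iteration. After commuting out $\chi_{m+n}$ at cost $(m+n)^{3\sigma}$ (absorbed by $\sigma_\ast\geq 10\sigma$), it opens $a+b-i_1$ copies of $\Gamma_k$ at once via
\[
\Big(\tfrac{m+n}{q}\Big)^{a}\pav^{b}\big(q^{n}\Gamma_{k}^{n}|k|^{m}f_{k}\big)
\;\sim\;
(m+n)^{a}\sum_{i_1\le b} q^{\,n-a-b+i_1}\,\Gamma_{k}^{a+b-i_1}\pav^{i_1}\big(\Gamma_{k}^{n-a-b+i_1}|k|^{m}f_{k}\big),
\]
using $[\pav,\Gamma_k]=0$, and then commutes $q^{n-a-b+i_1}$ past $\Gamma_{k}^{a+b-i_1}\pav^{i_1}$. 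This yields a main term $T_1$ at level $n'=n-a-b+i_1\in[n-\mathfrak m_0,n-1]$ (giving $J^{(0,b',c')}_{m,n'}$ after expanding $\Gamma_k=\pav+ikt$) and a commutator $T_2$, the latter handled by a \emph{single} invocation of \eqref{cm_J_est}; that is precisely where the extra $-2$ in the $n'$-range and the $\mathbbm 1_{n<\mathfrak m_0}$ tail with the $m\to(m-1)_+$ bookkeeping enter. Your iterated one-step scheme would also close (since $n$ strictly decreases each pass), but the all-at-once peeling avoids tracking a geometric accumulation of constants across sub-iterations and lands directly on the stated window $(n-\mathfrak m_0-2)_+\le n'\le n-1$.
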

\begin{proof}

\noindent
{\bf Step \#1: Setup.} We will derive the following bound which implies the estimate \eqref{J_al>0_l2}
\begin{align}\n
 &{\bf a}_{m,n}\sum_{\substack{a+b+c=\mf m_0\\ \mf m_0\leq n,\ a\geq 1}}\|J^{(a,b,c)}_{m,n}f_k\|_{L^2}\\ 
 &\lesssim 
\sum_{n'=(n-\mf m_0-2)_+ }^{n-1}\lambda^{s(n-n')} \lf(\frac{|m,n'|!}{|m,n|!}\rg)^{\sigma}{\bf a}_{m,n'}\|J^{(\mf m_0)}_{m,n'}f_k\|_{L^2}\n \\
&\quad+\mathbbm{1}_{n<\mf m_0}\sum_{m'=(m-1)_+}^m\sum_{n'= 0}^{1}\lambda^{s(|m,n|-|m',n'|)} \lf(\frac{|m',n'|!}{|m,n|!}\rg)^{\sigma}{\bf a}_{m',n'} \sum_{b+c\leq \mf m_0}\|J^{(0,b,c)}_{m',n'}f_k\|_{L^2},
\quad \mf m_0\in\{1,2,3\} . \label{J_al>0}
\end{align}
Thanks to the definition of $J_{m,n}^{(a,b,c)}$ \eqref{J_nq} and the restriction of its indices  \eqref{Snell}, we observe that the left hand side of \eqref{J_al>0} indeed contains all the nontrivial terms on the left hand side of the relation \eqref{J_al>0_l2}. Now, the implication is a direct consequence of the fact that there are only at most twenty terms in the sum ($\mathfrak{m}_0\in\{1,2,3\}$) and the H\"older inequality. 

The plan now is to decompose the left hand side of \eqref{J_al>0} into manageable pieces. We will commute the $\chi_{m+n}$-cutoff out first and then identify four main contributions. We start by rewriting the $J^{(a,b,c)}_{m,n}f_k$ term, for $a+b+c=\mf m_0,\quad \mf m_0\in\{1,2,3\}$, as follows,
\begin{align*}\n 
\eqref{J_al>0}_{L.H.S.}& = \sum_{a+b+c=\mf m_0}\mathbbm{1}_{\substack{a+b+c\leq n\\ a\geq 1}}\ {\bf a}_{m,n}\lf\||k|^c\lf(\frac{m+n}{q}\rg)^a\pav^b( \chi_{m+n}  q^n\Gamma_k^n|k|^m f_k)\rg\|_{L^2}\\ 
\n &\leq
\sum_{a+b+c=\mf m_0}\ina\ {\bf a}_{m,n}\lf\|\chi_{m+n} |k|^c\lf(\frac{m+n}{q}\rg)^a\pav^b(  q^n\Gamma_k^n|k|^m f_k)\rg\|_{L^2}\\ \n
&\quad+\sum_{a+b+c=\mf m_0}\ina\ {\bf a}_{m,n}\lf\||k|^c \lf(\frac{m+n}{q}\rg)^a[\pav^b,\chi_{m+n}](q^n \Gamma_k^n|k|^mf_k)\rg\|_{L^2}. 
\end{align*} 
Here we apply a variant of the $\chi_{m+n}$-estimate \eqref{chi:prop:3}
\begin{align}
\pav^\ell \chi_{m+n}\leq C(\|v_y^{-1}\|_{L_{t,y}^\infty},\|v_y\|_{L_t^\infty W_y^{2,\infty}})(m+n)^{\ell(1+\sigma)}\chi_{m+n-1},\quad \sigma_\ast\geq 10\sigma,\quad \ell\leq\mathfrak{m}_0\leq 3 ,
\end{align} 
and Lemma \ref{lem:commutator_AB} 
to conclude that 
\begin{align}\n
&\eqref{J_al>0}_{L.H.S.}\\ \n
 &\lesssim\sum_{a+b+c=\mf m_0}\ina \ {\bf a}_{m,n}\lf\|\mathbbm{1}_{\text{supp}(\chi_{m+n}) }|k|^c\lf(\frac{m+n}{q}\rg)^a\pav^b(q^n\Gamma_k^n|k|^m f_k)\rg\|_{L^2}\\ \n
&\quad +\sum_{a+b+c=\mf m_0}\ina\ {\bf a}_{m,n}\sum_{\ell=1}^{b}{(m+n)^{\sigma \ell}}\lf\|\mathbbm{1}_{\text{supp}(\chi_{m+n})}|k|^c\lf(\frac{m+n}{q}\rg)^{a+\ell}\pav^{b-\ell}( q^n\Gamma_k^n|k|^m f_k)\rg\|_{L^2}\\
& \lesssim{\bf a}_{m,n}(m+n)^{3\sigma}\sum_{a+b+c=\mf m_0}\ina\lf\|\mathbbm{1}_{\text{supp}(\chi_{m+n})}|k|^c\lf(\frac{m+n}{q}\rg)^{a}\pav^{b}( q^n\Gamma_k^n|k|^m f_k)\rg\|_{L^2}.\label{J_rf}
\end{align} 
This concludes the setup.

\siming{Previously, there is another argument here. Can undo the $\backslash ifx$ and check. }
\ifx
To estimate this expression, we distinguish between two cases: {\bf Case a)} $n-a\geq\mf m_0$; {\bf Case b)} $n-a<\mf m_0$.

To treat the first {\bf Case a)}, we note the following relation: 
\begin{align*}
q^{-a}\pav^b(q^{a}\, q^{n-a}\Gamma_k^a (g))=& \pav^b( \Gamma_k^a (q^{n-a}g))+q^{-a}[\pav^b, q^a]( \Gamma_k^a (q^{n-a}g))\\
&- \pav^b [ \Gamma_k^a, q^{n-a}](g) -q^{-a}[\pav^b, q^a]( [\Gamma_k^a,q^{n-a}] (g)),\quad 1\leq a\leq n.
\end{align*} 
Hence, we rewrite the expression \eqref{J_rf} as follows ($n-a\geq\mf m_0$):
\begin{align}\n
&\hspace{-1cm}\eqref{J_al>0}_{L.H.S.}\\ \n
\lesssim &{\bf a}_{m,n}(m+n)^{3\sigma}\sum_{\substack{a+b+c=\mf m_0\\ \mf m_0\leq n,\, a\geq 1}}\lf\|\mathbbm{1}_{\text{supp}(\chi_{m+n})}|k|^c\lf(\frac{m+n}{q}\rg)^a\pav^b(q^a \ q^{n-a}\Gamma_k^{a}\ (\chi_{m+n-a}\Gamma_k^{n-a}|k|^m f_k)\rg\|_2 \\
\n\lesssim&{\bf a}_{m,n}(m+n)^{3\sigma}\sum_{\substack{a+b+c=\mf m_0\\ \mf m_0\leq n\ a\geq 1}}\bigg(\lf\|\mathbbm{1}_{\text{supp}(\chi_{m+n})}|k|^c(m+n)^a\pav^b\ \lf(\Gamma_k^{a}\ (\chi_{m+n-a}q^{n-a}\Gamma_k^{n-a}|k|^mf_k)\rg)\rg\|_{L^2}\\
\n &+\lf\|\mathbbm{1}_{\text{supp}(\chi_{m+n})}|k|^c(m+n)^a q^{-a}\ [\pav^b,q^a]\ \lf( \Gamma_k^{a}\ (\chi_{m+n-a}q^{n-a}\Gamma_k^{n-a}|k|^m f_k)\rg)\rg\|_{L^2}\\
\n&+\lf\|\mathbbm{1}_{\text{supp}(\chi_{m+n})}|k|^c(m+n)^a\pav^b\ \lf([q^{n-a},\Gamma_k^{a}]\ (\chi_{m+n-a}\Gamma_k^{n-a}|k|^m f_k)\rg)\rg\|_{L^2}\\ \n
&+\lf\|\mathbbm{1}_{\text{supp}(\chi_{m+n})}|k|^c(m+n)^a q^{-a}\ [\pav^b,q^a ]\ \lf([q^{n-a},\Gamma_k^{a}]\ (\chi_{m+n-a}\Gamma_k^{n-a}|k|^m f_k)\rg)\rg\|_{L^2}\bigg)\\
=:&\sum_{j=1}^4T_{j}.\label{Jabg_I}
\end{align}
Hence to derive the estimate \eqref{J_al>0}, it is enough to estimate all the terms in \eqref{Jabg_I}. 

For {\bf Case b)}, we will treat it separately.

\noindent
{\bf Step \# 2: Case a).} Now we estimate the four terms in \eqref{Jabg_I}. First, we estimate the first term with the estimate \eqref{gevbd1212} as follows
\begin{align*}
T_{1}\lesssim&{\bf a}_{m,n}\sum_{\substack{a+b+c=\mf m_0\\ \mf m_0\leq n,\ a\geq 1}}(m+n)^{a+3\sigma}\sum_{i_1=0}^{a}\binom{a}{i_1} \lf\|\mathbbm{1}_{\text{supp}(\chi_{m+n})} t^{i_1} \pav^{b+a-i_1}|k|^{i_1+c}(\chi_{m+n-a}q^{n-a}\Gamma_k^{n-a}|k|^mf_k)\rg\|_{L^2}\\
\lesssim&\sum_{\substack{a+b+c=\mf m_0\\ \mf m_0\leq n,\ a\geq 1}}{\bf a}_{m,n}(m+n)^{3\sigma+a}  \sum_{i_1=0}^a t^{i_1}\|J^{(0,b+a-i_1,i_1+c)}_{m,n-a}f_k\|_{L^2}\\
\lesssim&\sum_{\substack{a+b+c=\mf m_0\\ \mf m_0\leq n,\ a\geq 1}}\myr{\lambda^{as}{\bf a}_{m,n-a}\frac{(m+n-a)!^{\sigma}} {(m+n)!^{\sigma}}(m+n)^{-a \sigma_\ast+3\sigma} }\sum_{i_1=0}^a (\varphi^a t^{i_1})\|J^{(0,b+a-i_1,i_1+c)}_{m,n-a}f_k\|_{L^2}\\
\lesssim& \sum_{ n'=(n-\mf m_0)_+}^{n-1}  \sum_{b+c=\mf m_0}\myr{\frac{{\bf a}_{m,n'}\lambda^{(n-n')s}(m+n')!^\sigma} {(m+n)!^{\sigma}}}\|J^{( 0 ,b ,c)}_{m,n'}f_k\|_{L^2}. 
\end{align*}
Next, we estimate the $T_2$ term in \eqref{Jabg_I} with the $q$-bounds \eqref{pav_j_qn}, \eqref{wtq} in Lemma \ref{lem:pavq} as follows
\begin{align*}
T_2\lesssim&{\bf a}_{m,n}(m+n)^{3\sigma}\sum_{\substack{a+b+c=\mf m_0\\ \mf m_0\leq n,\ a\geq 1}}\sum_{i_1=0}^{b-1}\sum_{i_2=0}^{a}\binom{b}{i_1} \binom{a}{i_2}|k|^c\\
&\lf\|\mathbbm{1}_{\text{supp}(\chi_{m+n})}\lf(\frac{m+n}{q}\rg)^a  (\pav^{b-i_1}q^{a})\ \pav^{i_1+i_2}|k|^{a-i_2}t^{a-i_2}\lf(\chi_{m+n-a}q^{n-a}\Gamma_k^{n-a}|k|^mf_k\rg) \rg\|_{L^2}\\
\lesssim&{\bf a}_{m,n}(m+n)^{3\sigma+a}\sum_{\substack{a+b+c=\mf m_0\\ \mf m_0\leq n,\ a\geq 1}}\sum_{i_1=0}^{b-1}\sum_{i_2=0}^{a}|k|^c\lf\|\wt{q_{a,b-i_1}}\frac{1}{q^{a-(a-b+i_1)_+}}\pav^{i_1+i_2}(|k|t)^{a-i_2}\lf(\chi_{m+n-a}q^{n-a}\Gamma_k^{n-a}|k|^mf_k\rg)\rg\|_{L^2}\\
\lesssim&\sum_{\substack{a+b+c=\mf m_0\\ \mf m_0\leq n,\ a\geq 1}}\myr{{\bf a}_{m,n-a}\lambda^{as} \frac{(m+n-a)!^\sigma}{(m+n)!^{\sigma}}}\sum_{i_1=0}^{b-1}\sum_{i_2=0}^{a}(\varphi^a t ^{a-i_2}) \lf\|J^{(a-(a-b+i_1)_+,i_1+i_2,a-i_2+c)}_{m,n-a}f_k \rg\|_{L^2}\\
\lesssim& \sum_{n'=(n-\mf m_0)_+}^{n-1}\myr{\sum_{\substack{a+b+c=\mf m_0}}{\bf a}_{m,n'}\lambda^{(n-n')s}\frac{(m+n')!^{\sigma}}{ (m+n)!^{\sigma}}}\|J^{( a ,b ,c)}_{m,n'}f_k\|_{L^2}. 
\end{align*}
Here the implicit constant depends on $\|v_y^{-1}\|_{L_{t,y}^\infty},\, \|v_y\|_{L_t^\infty W^{2,\infty}}$. \siming{We need to discuss the case $n-a\leq \mf m_0$ separately because in this case $((m+n)q^{-1})^a\pav^b|k|^c f_k\neq J^{(a,b,c)}_{m,n}f_k$.}

Next,  we estimate the $T_3$ term in \eqref{Jabg_I} as follows
\begin{align*}
T_3\lesssim& {\bf a}_{m,n}(m+n)^{3\sigma}\sum_{\substack{a+b+c=\mf m_0\\ \mf m_0\leq n,\ a\geq 1}}\sum_{i_1=0}^{a-1}\binom{a}{i_1}|k|^c \\
&\quad\quad\quad\quad\lf\|\mathbbm{1}_{\text{supp}(\chi_{m+n})}(m+n)^a \pav^b \lf(\Gamma_k^{a-i_1}q^{n-a}\ \ \Gamma_k^{i_1}(\chi_{m+n-a}\Gamma_k^{n-a}|k|^mf_k)\rg) \rg\|_{L^2}\\
\lesssim&  {\bf a}_{m,n}(m+n)^{3\sigma+a}\sum_{\substack{a+b+c=\mf m_0\\ \mf m_0\leq n,\ a\geq 1}}\sum_{i_1=0}^{a-1} \sum_{i_1=0}^{a-1}\sum_{i_2=0}^{b}  \lf\||k|^c\pav^{b-i_2}\Gamma_k^{a-i_1}q^{n-a}\ \ \pav^{i_2}\Gamma_k^{i_1}\lf(\chi_{m+n-a}\Gamma_k^{n-a}|k|^mf_k\rg)\rg\|_{L^2}\\
\lesssim&\sum_{\substack{a+b+c=\mf m_0\\ \mf m_0\leq n,\ a\geq 1}}{\bf a}_{m,n-a}\myr{\lambda^{as} \frac{(m+n-a)!^\sigma}{(m+n)!^{\sigma}}}\sum_{i_1=0}^{a-1}\sum_{i_2=0}^{b}\sum_{i_3=0}^{a-i_1}\sum_{i_4=0}^{i_1}\\
&\quad\quad\quad\quad\varphi^a \lf\|  |kt|^{a-i_1-i_3}|k|^c\pav^{b-i_2+i_3}q^{n-a}\ \ |kt|^{i_1-i_4}\pav^{i_2+i_4}\lf (\chi_{m+n-a}\Gamma_k^{n-a}|k|^mf_k \rg)\rg\|_{L^2}.
\end{align*}
Now we invoke the representation \eqref{pav_j_qn}, the estimate \eqref{wtq}, and the commutator relations \eqref{cm_J},  \eqref{cm_qn_pv_1}, to obtain that
\begin{align*}
T_3\lesssim&  \sum_{\substack{a+b+c=\mf m_0\\ \mf m_0\leq n,\ a\geq 1}}{\bf a}_{m,n-a}\myr{\lambda^{as} \frac{(m+n-a)!^\sigma}{(m+n)!^{\sigma}}}\sum_{i_1=0}^{a-1}\sum_{i_2=0}^{b}\sum_{i_3=0}^{a-i_1}\sum_{i_4=0}^{i_1} \varphi^{i_3+i_4}\\
&\quad\quad\quad\quad\lf\||k|^{a -i_3-i_4+c}((m+n)q^{-1})^{b-i_2+i_3}\ \ q^{n-a} \pav^{i_2+i_4} (\chi_{m+n-a}\Gamma_k^{n-a}|k|^mf_k)\rg\|_{L^2}\\
\lesssim&\sum_{\substack{a+b+c=\mf m_0\\ \mf m_0\leq n,\ a\geq 1}}{\bf a}_{m,n-a}\myr{\lambda^{as} \frac{(m+n-a)!^\sigma}{(m+n)!^{\sigma}}}\sum_{i_1=0}^{a-1}\sum_{i_2=0}^{b}\sum_{i_3=0}^{a-i_1}\sum_{i_4=0}^{i_1} \varphi^{i_3+i_4}\\
&\quad\quad\quad\quad\lf\|  |k|^{a -i_3-i_4+c}((m+n)q^{-1})^{b-i_2+i_3}\ \ \pav^{i_2+i_4} (\chi_{m+n-a}q^{n-a} \Gamma_k^{n-a}|k|^mf_k)\rg\|_{L^2}\\
&+\sum_{\substack{a+b+c=\mf m_0\\ \mf m_0\leq n,\ a\geq 1}}{\bf a}_{m,n-a}\myr{\lambda^{as} \frac{(m+n-a)!^\sigma}{(m+n)!^{\sigma}}}\sum_{i_1=0}^{a-1}\sum_{i_2=0}^{b}\sum_{i_3=0}^{a-i_1}\sum_{i_4=0}^{i_1}\varphi^{i_3+i_4}\\
&\quad\quad\quad\quad \lf\| |k|^{a -i_3-i_4+c}((m+n)q^{-1})^{b-i_2+i_3}\ \ [q^{n-a},\pav^{i_2+i_4}] (\chi_{m+n-a} \Gamma_k^{n-a}|k|^mf_k)\rg\|_{L^2}\\
\lesssim&\sum_{\substack{a+b+c=\mf m_0\\ \mf m_0\leq n,\ a\geq 1}}{\bf a}_{m,n-a}\myr{\lambda^{as} \frac{(m+n-a)!^\sigma}{(m+n)!^{\sigma}}}\sum_{i_1=0}^{a-1}\sum_{i_2=0}^{b}\sum_{i_3=0}^{a-i_1}\sum_{i_4=0}^{i_1}\varphi^{i_3+i_4} \lf\|  J^{(b-i_2+i_3,i_2+i_4,a -i_3-i_4+c)}_{m,n-a }f_k \rg\|_{L^2}\\
&+\sum_{\substack{a+b+c=\mf m_0\\ \mf m_0\leq n,\ a\geq 1}}{\bf a}_{m,n-a}\myr{\lambda^{as} \frac{(m+n-a)!^\sigma}{(m+n)!^{\sigma}}}\sum_{i_1=0}^{a-1}\sum_{i_2=0}^{b}\sum_{i_3=0}^{a-i_1}\sum_{i_4=0}^{i_1}\sum_{i_5+i_6=i_2+i_4}\varphi^{i_3+i_4}\\
&\quad\quad\quad\quad \lf\|  |k|^{a -i_3-i_4+c}((m+n)q^{-1})^{b-i_2+i_3+i_5} \pav^{i_6}  (\chi_{n-a} q^{n-a}\Gamma_k^{n-a}|k|^mf_k)\rg\|_{L^2}\\
\lesssim& \sum_{\substack{a+b+c=\mf m_0\\ \mf m_0\leq n,\ a\geq 1}}{\bf a}_{m,n-a}\myr{\lambda^{as} \frac{(m+n-a)!^\sigma}{(m+n)!^{\sigma}}}\sum_{i_1=0}^{a-1}\sum_{i_2=0}^{b}\sum_{i_3=0}^{a-i_1}\sum_{i_4=0}^{i_1}\lf\|J^{(b-i_2+i_3,i_2+i_4,a -i_3-i_4+c)}_{m,n-a }f_k \rg\|_{L^2}\\
&+\sum_{\substack{a+b+c=\mf m_0\\ \mf m_0\leq n,\ a\geq 1}}{\bf a}_{m,n-a}\myr{\lambda^{as} \frac{(m+n-a)!^\sigma}{(m+n)!^{\sigma}}}\sum_{i_1=0}^{a-1}\sum_{i_2=0}^{b}\sum_{i_3=0}^{a-i_1}\sum_{i_4=0}^{i_1}\sum_{i_5+i_6=i_2+i_4} \varphi^{i_3+i_4}  \lf\| J^{(b-i_2+i_3+i_5, i_6 ,a -i_3-i_4+c)}_{m,n-a}f_k\rg\|_{L^2}\\
\lesssim& \sum_{n'=n-\mf m_0}^{n-1}\myr{\sum_{\substack{a+b+c=\mf m_0}}}{\bf a}_{m,n'}\myr{\lambda^{(n-n')s} \frac{(m+n')!^\sigma}{(m+n)!^{\sigma}}}\|J^{( a ,b ,c)}_{m,n'}f_k\|_{L^2}. 
\end{align*}
Here the implicit constant depends on $\|v_y^{-1}\|_{L_{t,y}^\infty},\, \|v_y\|_{L_t^\infty W^{2,\infty}}$. 
Finally, we estimate the $T_4$ term in \eqref{Jabg_I} with Lemma \ref{lem:pavq} as follows
\begin{align*}
T_4\lesssim&  {\bf a}_{m,n}(m+n)^{3\sigma}\sum_{\substack{a+b+c=\mf m_0\\ \mf m_0\leq n,\ a\geq 1}}\sum_{i_1=0}^{b-1}\sum_{i_2=0}^{a-1} \\
&\qquad\qquad\qquad |k|^c\lf\|\lf(\frac{m+n}{q}\rg)^a(\pav^{b-i_1}q^a)\ \pav^{i_1}\lf((\Gamma_k^{a-i_2}q^{n-a})(\chi_{m+n-a+i_2} \Gamma_k^{n-a+i_2}|k|^mf_k)\rg)\rg\|_{L^2}\\
\lesssim& {\bf a}_{m,n}(m+n)^{3\sigma}\sum_{\substack{a+b+c=\mf m_0\\ \mf m_0\leq n,\ a\geq 1}}\sum_{i_1=0}^{b-1}\sum_{i_2=0}^{a-1}\sum_{i_3=0}^{i_1} \\ 
& \qquad\qquad|k|^c\lf\|\lf(\frac{m+n}{q}\rg)^a (\pav^{b-i_1}q^a)\ \lf((\pav^{i_1-i_3}\Gamma_k^{a-i_2}q^{n-a})( \pav^{i_3}(\chi_{m+n-a+i_2}\Gamma_k^{n-a+i_2}|k|^mf_k))\rg)\rg\|_{L^2}\\
\lesssim&{\bf a}_{m,n}(m+n)^{3\sigma}\sum_{\substack{a+b+c=\mf m_0\\ \mf m_0\leq n,\ a\geq 1}} \sum_{i_1=0}^{b-1}\sum_{i_2=0}^{a-1}\sum_{i_3=0}^{i_1}\\
&\qquad\qquad|k|^c\lf\|\lf(\frac{m+n}{q}\rg)^a(\pav^{b-i_1}q^a)\  (\pav^{i_1-i_3}(\pav+ikt)^{a-i_2}q^{n-a})(\pav^{i_3} (\chi_{m+n-a+i_2}\Gamma_k^{n-a+i_2}|k|^mf_k)\rg\|_{L^2}\\
\lesssim& {\bf a}_{m,n}(m+n)^{3\sigma}\sum_{\substack{a+b+c=\mf m_0\\ \mf m_0\leq n,\ a\geq 1}} \sum_{i_1=0}^{b-1}\sum_{i_2=0}^{a-1}\sum_{i_3=0}^{i_1}\sum_{i_4=0}^{a-i_2}(|k|t)^{ i_4 }\\
&\qquad\qquad\times|k|^c\lf\|\lf(\frac{m+n}{q}\rg)^a(\pav^{b-i_1}q^a)\ \lf((\pav^{i_1-i_3+a-i_2-i_4}q^{n-a})\pav^{i_3}( \chi_{m+n-a+i_2}\Gamma_k^{n-a+i_2}|k|^mf_k)\rg)\rg\|_{L^2}\\
\lesssim&{\bf a}_{m,n}(m+n)^{3\sigma}\sum_{\substack{a+b+c=\mf m_0\\ \mf m_0\leq n,\ a\geq 1}} \sum_{i_1=0}^{b-1}\sum_{i_2=0}^{a-1}\sum_{i_3=0}^{i_1}\sum_{i_4=0}^{a-i_2} (|k|t)^{ i_4 }\\
&\qquad\times|k|^c\lf\|\lf(\frac{m+n}{q}\rg)^a\frac{(m+n)^{b-i_1}}{q^{b-i_1}} q^a \frac{(m+n)^{i_1-i_3+a-i_2-i_4}}{q^{i_1-i_3+a-i_2-i_4}}q^{n-a}\pav^{i_3}\lf(\chi_{m+n-a+i_2} \Gamma_k^{n-a+i_2 }|k|^mf_k\rg)\rg\|_{L^2}\\
\lesssim&{\bf a}_{m,n}(m+n)^{3\sigma}\sum_{\substack{a+b+c=\mf m_0\\ \mf m_0\leq n,\ a\geq 1}} \sum_{i_1=0}^{b-1}\sum_{i_2=0}^{a-1}\sum_{i_3=0}^{i_1}\sum_{i_4=0}^{a-i_2} (|k|t)^{ i_4 }(m+n)^{a-i_2}\\
&\qquad\times|k|^c\lf\|  \frac{(m+n)^{a +b-i_3-i_4}}{q^{a+b-i_3 -i_4}}\ q^{n-a+i_2}\pav^{i_3}\lf(\chi_{m+n-a+i_2} \Gamma_k^{n-a+i_2 }|k|^mf_k\rg)\rg\|_{L^2}.
\end{align*}
Here the implicit constant depends on $\|v_y^{-1}\|_{L_{t,y}^\infty},\, \|v_y\|_{L_t^\infty W^{2,\infty}}$. 
Now we invoke the commutator estimate \eqref{cm_J}, \eqref{cm_qn_pv_1} to obtain that\begin{align*}
T_4\lesssim&\sum_{\substack{a+b+c=\mf m_0\\ \mf m_0\leq n,\ a\geq 1}} \sum_{i_1=0}^{b-1}\sum_{i_2=0}^{a-1}\sum_{i_3=0}^{i_1}\sum_{i_4=0}^{a-i_2} {\bf a}_{m,n-a+i_2}\myr{\lambda^{(a-i_2)s} \frac{(m+n-a+i_2)!^\sigma}{(m+n)!^{\sigma}}} \varphi^{a-i_2-i_4} \\
 &\qquad\qquad \times\lf\|\frac{(m+n)^{a+b-i_3-i_4}}{q^{a+b-i_3-i_4}} |k|^{i_4+c}\pav^{i_3}\lf( \chi_{m+n-a+i_2}q^{n-a+i_2}\Gamma_k^{n-a+i_2}|k|^mf_k\rg)\rg\|_{L^2}\\ 
&+ \sum_{\substack{a+b+c=\mf m_0\\ \mf m_0\leq n,\ a\geq 1}}\sum_{i_1=0}^{b-1}\sum_{i_2=0}^{a-1}\sum_{i_3=0}^{i_1}\sum_{i_4=0}^{a-i_2} {\bf a}_{m,n-a+i_2}\myr{\lambda^{(a-i_2)s} \frac{(m+n-a+i_2)!^\sigma}{(m+n)!^{\sigma}}} \varphi^{a-i_2-i_4}\\ 
& \qquad\qquad\times \lf\|\frac{(m+n)^{a+b-i_3-i_4}}{q^{a+b-i_3-i_4}} |k|^{i_4+c}[q^{n-a+i_2},\pav^{i_3}]\lf( \chi_{m+n-a+i_2}\Gamma_k^{n-a+i_2}|k|^mf_k\rg)\rg\|_{L^2}\\ 
\lesssim& \sum_{\substack{a+b+c=\mf m_0\\ \mf m_0\leq n,\ a\geq 1}} \sum_{i_1=0}^{b-1}\sum_{i_2=0}^{a-1}\sum_{i_3=0}^{i_1}\sum_{i_4=0}^{a-i_2}{\bf a}_{m,n-a+i_2}\myr{\lambda^{(a-i_2)s} \frac{(m+n-a+i_2)!^\sigma}{(m+n)!^{\sigma}}} \varphi^{a-i_2-i_4}\|J^{(a+b-i_3-i_4,i_3,i_4+c)}_{m,n-a+i_2}f_k \|_{L^2}\\
 &+  \sum_{\substack{a+b+c=\mf m_0\\ \mf m_0\leq n,\ a\geq 1}}\sum_{i_1=0}^{b-1}\sum_{i_2=0}^{a-1}\sum_{i_3=0}^{i_1}\sum_{i_4=0}^{a-i_2}\sum_{i_5+i_6=i_3, i_5>0} 
 {\bf a}_{m,n-a+i_2} \myr{\lambda^{(a-i_2)s} \frac{(m+n-a+i_2)!^\sigma}{(m+n)!^{\sigma}}}\varphi^{a-i_2-i_4}\\ 
 &\qquad\qquad\times \lf\| \frac{(m+n)^{a+b-i_3-i_4}}{q^{a+b-i_3-i_4}} |k|^{i_4+c}\lf(\frac{m+n}{q}\rg)^{i_5}\pav^{i_6}\lf( \chi_{m+n-a+i_2}q^{n-a+i_2}\Gamma_k^{n-a+i_2}|k|^mf_k\rg)\rg\|_{L^2}\\
\lesssim& \sum_{\substack{a+b+c=\mf m_0\\ \mf m_0\leq n,\ a\geq 1}} \sum_{i_1=0}^{b-1}\sum_{i_2=0}^{a-1}\sum_{i_3=0}^{i_1}\sum_{i_4=0}^{a-i_2}{\bf a}_{m,n-a+i_2}\myr{\lambda^{(a-i_2)s} \frac{(m+n-a+i_2)!^\sigma}{(m+n)!^{\sigma}}} \varphi^{a-i_2-i_4}\|J^{(a+b-i_3-i_4,i_3,i_4+c)}_{m,n-a+i_2}f_k \|_{L^2}\\
 &+\sum_{\substack{a+b+c=\mf m_0\\ \mf m_0\leq n,\ a\geq 1}}\sum_{i_1=0}^{b-1}\sum_{i_2=0}^{a-1}\sum_{i_3=0}^{i_1}\sum_{i_4=0}^{a-i_2}\sum_{i_5+i_6=i_3, i_5>0} 
 {\bf a}_{m,n-a+i_2} \myr{\lambda^{(a-i_2)s} \frac{(m+n-a+i_2)!^\sigma}{(m+n)!^{\sigma}}}\varphi^{a-i_2-i_4}\\
&\qquad\qquad \times \lf\|J^{(a+b-i_3-i_4+i_5,i_6,i_4+c)} _{m,\myr{n-a+i_2\siming{?}}}f_k \rg\|_{L^2}\\
 \lesssim& \sum_{n'=n-\mf m_0}^{n-1}\sum_{\substack{a+b+c=\mf m_0}}{\bf a}_{m,n'}\myr{\lambda^{(n-n')s} \frac{(m+n')!^\sigma}{(m+n)!^{\sigma}}}\|J^{( a ,b ,c)}_{m,n'}f_k\|_{L^2}. 
\end{align*}
Here the implicit constant depends on $\|v_y^{-1}\|_{L_{t,y}^\infty},\, \|v_y\|_{L_t^\infty W^{2,\infty}}$. Combining all the estimates developed so far yields \eqref{J_al>0}. 
\fi

\noindent
{\bf Step \# 2: Estimates.} We estimate the right hand side of \eqref{J_rf} with simple commutator:
\begin{align*}
 &\eqref{J_rf}_{R.H.S.}\lesssim{\bf a}_{m,n}(m+n)^{3\sigma}\sum_{\substack{a+b+c=\mf m_0\\ 
\mf m_0\leq n,\ a\geq 1}}\sum_{i_1=0}^b \lf\|\mathbbm{1}_{\text{supp}(\chi_{m+n})}|k|^c\lf(\frac{m+n}{q}\rg)^{a}\pav^{b-i_1}(q^n)\ \pav^{i_1}\Gamma_k^n(|k|^m f_k)\rg\|_{L^2}\\
&\lesssim {\bf a}_{m,n}(m+n)^{3\sigma+a}\sum_{\substack{a+b+c=\mf m_0\\ 
\mf m_0\leq n,\ a\geq 1}}\sum_{i_1=0}^b \lf\|\mathbbm{1}_{\text{supp}(\chi_{m+n})}|k|^c q^{n-a-b+i_1}\ \Gamma_k^{a+b-i_1}\pav^{i_1} (\Gamma_k^{n-a-b+i_1}|k|^m f_k)\rg\|_{L^2}\\
&\lesssim 
{\bf a}_{m,n}(m+n)^{3\sigma+a}\sum_{\substack{a+b+c=\mf m_0\\ 
\mf m_0\leq n,\ a\geq 1}}\sum_{i_1=0}^b \lf\|\mathbbm{1}_{\text{supp}(\chi_{m+n})}|k|^c \Gamma_k^{a+b-i_1}\pav^{i_1} (q^{n-a-b+i_1} \Gamma_k^{n-a-b+i_1}|k|^m f_k)\rg\|_{L^2}\\
&\quad+{\bf a}_{m,n}(m+n)^{3\sigma+a}\sum_{\substack{a+b+c=\mf m_0\\ 
\mf m_0\leq n,\ a\geq 1}}\sum_{i_1=0}^b \lf\|\mathbbm{1}_{\text{supp}(\chi_{m+n})}|k|^c \lf[q^{n-a-b+i_1},\Gamma_k^{a+b-i_1}\pav^{i_1}\rg] (\Gamma_k^{n-a-b+i_1}|k|^m f_k)\rg\|_{L^2}\\
&=:T_1+T_2.
\end{align*}
For the $T_1$ term, we apply the facts that $10\sigma\leq \sigma_\ast$, $\varphi t\lesssim 1$ and estimate as follows
\begin{align*}
T_1\lesssim&\sum_{\substack{a+b+c=\mf m_0\\ 
\mf m_0\leq n,\ a\geq 1}}\sum_{i_1=0}^b\sum_{i_2=0}^{a+b-i_1}{\bf a}_{m,n-a-b+i_1}\frac{(m+n-a-b+i_1)!^{\sigma}}{(m+n)!^{\sigma}}(m+n)^{-\sigma_\ast+3\sigma}(\varphi t)^{a+b-i_1-i_2}\lambda^{(a+b-i_1)s}\\
&\hspace{2.83cm}\times \lf\|\mathbbm{1}_{\text{supp}(\chi_{m+n})}|k|^{a+b+c-i_1-i_2}\pav^{i_1+i_2} (\chi_{n-a-b+i_1}|k|^mq^{ n-a-b+i_1}\Gamma_k^{ n-a-b+i_1}f_{k})\rg\|_{L^2}\\
\lesssim& \sum_{n'=(n-\mf m_0)_+}^{n-1}\sum_{b+c=\mf m_0} {\bf a}_{m,n'}\frac{(m+n')!^{\sigma}}{(m+n)!^\sigma}\lambda^{(n-n')s}\|J_{m,n'}^{(0,b,c)}f_k\|_{L^2}.
\end{align*}
Here in the last line, we use the observation that the sum of the order of the $x$-derivative ($a+b+c-i_1-i_2$) and the order of the $\pav$-derivative ($i_1+i_2$) is $\mf m_0$ and they can be combined to $J^{(0,b,c)}_{m,n}$-form. Then we relabel the summation indices to compactify the notation. 
 
To treat the $T_2$-term, we invoked the Gevrey coefficient relation \eqref{gevbd1212} and the product rule to decompose it as follows
\begin{align*}
T_2\lesssim&\sum_{\substack{a+b+c=\mf m_0\\ 
\mf m_0\leq n,\ a\geq 1}}\sum_{i_1=0}^b \sum_{i_2=0}^{a+b-i_1}{\bf a}_{m,n-a-b+i_1}\lf(\frac{|m,n-a-b+i_1|!}{|m,n|!}\rg)^\sigma \lambda^{(a+b-i_1)s}(m+n)^{3\sigma-\sigma_\ast} \varphi^{a+b-i_1}\\
&\hspace{2cm}\times |k|^{a+b+c -i_1-i_2} t^{a+b-i_1-i_2}\lf\|[q^{n-a-b+i_1},\pav^{i_1+i_2}] (\Gamma_k^{n-a-b+i_1}|k|^m f_k)\rg\|_{L^2(\text{supp}\{\chi_{m+n}\})}.
\end{align*} 
Thanks to the assumption that $10\sigma\leq \sigma_\ast$, the factor $(m+n)^{3\sigma-\sigma_\ast}\leq 1$. Moreover, we observe that the last factor can be estimated by the commutator estimate \eqref{cm_J}. Finally, since $a>0$, $b-i_1\geq 0$, we have that $n-a-b+i_1\leq n-1$. Hence  $\text{support}\{\chi_{m+n-a-b+i_1}\}\subset\text{support}\{\chi_{m+n}\}$, and the resulting $J_{m,n'}f_k$ terms will have lower $n'$-regularity (which guarantees that the induction process will finish  in finite steps). By combining all the observations above, we have that 
\begin{align*}
&\hspace{-0.3cm} T_2\\
\lesssim& \sum_{\substack{a+b+c=\mf m_0\\ 
\mf m_0\leq n,\ a\geq 1}}\sum_{i_1=0}^b \sum_{i_2=0}^{a+b-i_1} \lf(\frac{|m,n-a-b+i_1|!}{|m,n|!}\rg)^\sigma \lambda^{(a+b-i_1)s} |k|^{\mf m_0-i_1-i_2} (\varphi t)^{a+b-i_1-i_2}\\
&\hspace{-0.2cm}\times\Bigg(\sum_{n'=(n-a-b+i_1-2)_+}^{n-a-b+i_1}{\bf a}_{m,n'}\lf(\frac{|m,n'|!}{|m,n-a-b+i_1|!}\rg)^\sigma \lambda^{(n-a-b+i_1-n')s}\lf\|J^{(\leq i_1+i_2)}_{m,n'}f_k\rg\|_{L^2(\text{supp}\{\chi_{m+n}\})}\\
&\hspace{0.3cm}+\mathbbm{1}_{n-a-b+i_1<\mf m_0}\sum_{m'=(m-1)_+}^m\sum_{n'= {0} }^{n-a-b+i_1-1}{\bf a}_{m',n'}\lf(\frac{|m',n'|!}{|m,n-a-b+i_1|!}\rg)^\sigma \\
&\hspace{5.7cm}\times\lambda^{(|m,n-a-b+i_1|-|m',n'|)s}\sum_{b'+c'\leq i_1+i_2}\lf\|J^{(0,b',c')}_{m',n'}f_k\rg\|_{L^2(\text{supp}\{\chi_{m+n}\})}\Bigg)\\
\lesssim & \sum_{n'=(n-\mf m_0-2)_+}^{n-1}\sum_{a+b+c=\mf m_0} {\bf a}_{m,n'}\frac{|m,n'|!^{\sigma}}{|m,n|!^\sigma}\lambda^{(n-n')s}\|J_{m,n'}^{(a,b,c)}f_k\|_{L^2}\\
&\quad+\mathbbm{1}_{n<\mf m_0}\sum_{m'=(m-1)_+}^m\sum_{n'= 0}^{1}\sum_{b+c\leq \mf m_0}\lambda^{s(|m,n|-|m',n'|)} \lf(\frac{|m',n'|!}{|m,n|!}\rg)^{\sigma}{\bf a}_{m',n'}\|J^{(0,b,c)}_{m',n'}f_k\|_{L^2}.
\end{align*}
Here in the last line, we have used the fact that $|k|^{\mf m_0-i_1-i_2}\|J^{(\leq i_1+i_2)}_{m,n}f_k\|_{L^2}\leq  \|J^{(\leq  \mf m_0 )}_{m,n}f_k\|_{L^2}$.  
This concludes the proof. 
\end{proof}

\begin{lemma}[Translation Lemma]\label{lem:JtoD}Assume that $\sigma_\ast\geq 8\sigma$. 

\noindent 
{\bf a)} Assume that $\mf m_0\in\{1,2\}$. There exists a universal $\mathfrak{C}\geq 1$ such that the following two estimates hold\begin{subequations}\label{JtoD}
\begin{align}
\label{JtoD_1}
&\hspace{-0.4cm}\bold{a}_{m,n}^2\sum_{a+b+c=\mathfrak{m}_0} \lf\| J_{m,n}^{(a,b,c)} f_k\rg\|_{L^2} ^2 \\
\lesssim& \sum_{n'=0}^{n}\sum_{b+c\leq\mathfrak{m}_0} (\mathfrak C\lambda^s)^{2(n-n')}\lf(\frac{|m,n'|!}{|m,n|!}\rg)^{2\sigma}  \bold{a}_{m,n'}^2  \lf\| \chi_{m+n'}\pa_y^b|k|^c(|k|^mq^{n'}\Gamma_k^{n'}  f_k)\rg\|_{L^2} ^2 \n \\
&\hspace{-0.3cm}+\mathbbm{1}_{n<\mf m_0 }\sum_{m'=(m-1)_+}^m\sum_{n'=0}^{1}\sum_{b+c\leq\mathfrak{m}_0} (\mathfrak C\lambda^s)^{2(|m,n|-|m',n'|)}\lf(\frac{|m',n'|!}{|m,n|!}\rg)^{2\sigma}  \bold{a}_{m',n'}^2  \lf\| \chi_{m'+n'}\pa_y^b|k|^c(|k|^{m'}q^{n'}\Gamma_k^{n'}  f_k)\rg\|_{L^2} ^2.\n
\end{align}
Here the implicit constant depends on $\|\vyn\|_{L_{t,y}^\infty},\, \|v_y-1\|_{L_t^\infty H_y^3}$ and is independent of $m+n$. Moreover, if $\lambda^s\leq \frac{1}{2\mathfrak C}$, then \begin{align} \label{JtoD_2} 
\quad \sum_{m+n=0}^M\sum_{a+b+c=\mathfrak{m}_0}\bold{a}_{m,n}^2  \lf\| J_{m,n}^{(a,b,c)}f_k\rg\|_{L^2} ^2 \lesssim & \sum_{m+n=0}^{M}\sum_{b+c\leq\mathfrak{m}_0}\bold{a}_{m,n}^2 \lf\| \chi_{m+n}\pa_y^b|k|^c(|k|^mq^n\Gamma_k^n f_k)\rg\|_{L^2}^2, \ \quad \mathfrak{m}_0\in\{1,2\}. 
\end{align}
\end{subequations}Here the implicit constant depends on $\|\vyn\|_{L_{t,y}^\infty},\, \|v_y-1\|_{L_t^\infty H_y^3}$ and is independent of $M$. 

\noindent {\bf b)} Assume that $\mf m_0\in\{1,2,3\}.$ There exists a universal constant $\mf C\geq 1$ such that the following estimate holds\begin{subequations}\label{JtoJ} 
\begin{align} \label{JtoJ_1}
& \bold{a}_{m,n}^2\sum_{\substack{a+b+c=\mathfrak{m}_0\\ a\geq 1 }} \lf\| J_{m,n}^{(a,b,c)} f_k\rg\|_{L^2} ^2\\ &\lesssim \sum_{n'=0}^{n-1}\sum_{b+c\leq\mathfrak{m}_0}  (\mathfrak C\lambda^s)^{2(n-n')}\lf(\frac{|m,n'|!}{|m,n|!}\rg)^{2\sigma}  \bold{a}_{m,n'}^2  \lf\| J^{(0,b,c)}_{m,n'}f_k\rg\|_{L^2}^2\n\\
&\quad+\mathbbm{1}_{n< \mf m_0}\sum_{m'=(m-1)_+}^m\sum_{n'=0}^{1}\sum_{b+c\leq\mathfrak{m}_0}  (\mathfrak C\lambda^s)^{2(|m,n|-|m',n'|)}\lf(\frac{|m',n'|!}{|m,n|!}\rg)^{2\sigma}  \bold{a}_{m',n'}^2  \lf\| J^{(0,b,c)}_{m',n'}f_k\rg\|_{L^2}^2.\n
\end{align}
Here the implicit constant depends on $\|\vyn\|_{L_{t,y}^\infty},\, \|v_y-1\|_{L_t^\infty H_y^3}$ and is independent of $m+n$.  Moreover,  \begin{align}\label{JtoJ_2} 
\quad \sum_{m+n=0}^M\sum_{\substack{a+b+c=\mathfrak{m}_0\\ a\geq1} }\bold{a}_{m,n}^2  \lf\| J_{m,n}^{(a,b,c)}f_k\rg\|_{L^2} ^2 \lesssim & \siming{\lambda^{2s}}\sum_{m+n=0}^{{M-1}}\sum_{b+c\leq\mathfrak{m}_0}\bold{a}_{m,n}^2 \lf\| J^{(0,b,c)}_{m,n}f_k\rg\|_{L^2}^2, \ \quad \mathfrak{m}_0\in\{1,2,3\}. 
\end{align}
\siming{There should be $a>0$, otherwise, the $M-1$ is questionable. And do we have $\lambda$ here?}
\end{subequations}
Here the implicit constant depends on $\|\vyn\|_{L_{t,y}^\infty},\, \|v_y-1\|_{L_t^\infty H_y^3}$ and is independent of $M$.\end{lemma}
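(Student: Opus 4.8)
The plan is to obtain both parts as corollaries of the Induction Lemma~\ref{lem:induction}: the pointwise estimate \eqref{J_al>0_l2} already trades one factor of the conormal weight $((m+n)/q)$ for a loss of one unit of the index $n$, a gain $\lambda^{2s}$, and a harmless factorial ratio. So \eqref{JtoJ_1} will follow by iterating \eqref{J_al>0_l2} finitely many times until no $a\ge 1$ survives, and the summed statements \eqref{JtoD_2}, \eqref{JtoJ_2} will follow from the pointwise ones by the same Fubini-type rearrangement already used around \eqref{sqr_upgrade}. Part~a) is then part~b) together with the (routine) treatment of the $a=0$ diagonal contribution and the conversion between $\pav=v_y^{-1}\pa_y$ and $\pa_y$, which only costs bounded coefficients since $\|v_y^{-1}\|_{L^\infty_{t,y}}$ and $\|v_y-1\|_{L^\infty_tH^3_y}$ are bounded by hypothesis.

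For the pointwise bound \eqref{JtoJ_1} I would apply \eqref{J_al>0_l2} to $\bold{a}_{m,n}^2\sum_{a+b+c=\mf m_0,\,a\ge1}\|J^{(a,b,c)}_{m,n}f_k\|_{L^2}^2$ and observe that its right-hand side is a band of width at most $\mf m_0+2\le 5$ of terms $\bold{a}_{m,n'}^2\|J^{(\mf m_0)}_{m,n'}f_k\|_{L^2}^2$ with $n'\le n-1$, weighted by $\lambda^{2s(n-n')}(|m,n'|!/|m,n|!)^{2\sigma}$, plus the boundary terms supported on $n'\in\{0,1\}$, $m'\in\{(m-1)_+,m\}$. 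I would then split each vector $J^{(\mf m_0)}_{m,n'}f_k$ into its $a=0$ entries, which are exactly the $J^{(0,b,c)}_{m,n'}f_k$ on the right-hand side of \eqref{JtoJ_1}, and its $a\ge1$ entries, to which \eqref{J_al>0_l2} is re-applied; since $n$ strictly drops at each round and the case $n'<\mf m_0$ terminates the recursion in one extra step, the process halts after at most $n$ rounds. The weights compose multiplicatively, the factorial ratios telescope to $(|m,n_{\mathrm{final}}|!/|m,n|!)^{2\sigma}\le 1$, and the finitely many per-round constants coming from Lemma~\ref{lem:induction} can be absorbed into a single universal $\mathfrak C$ — the bounded branching ($\le 5$ children per round) is what prevents this constant from degenerating. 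For \eqref{JtoD_1} I would additionally note that $\pav^{b}g=\sum_{b'\le b}c_{b'}(y)\pa_y^{b'}g$ with $\|c_{b'}\|_{W^{3-b,\infty}}\lesssim 1$ (and conversely), so the $J^{(0,b,c)}$ right-hand side of \eqref{JtoJ_1} is comparable to the $\chi_{m+n'}\pa_y^{b}|k|^{c}(|k|^mq^{n'}\Gamma_k^{n'}f_k)$ right-hand side of \eqref{JtoD_1}, and that the $a=0$ entries of the left-hand side of \eqref{JtoD_1} are directly the $n'=n$ diagonal term after the same conversion.

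The summed estimates \eqref{JtoD_2} and \eqref{JtoJ_2} follow by summing the pointwise inequalities over $m+n\le M$ and interchanging the order of summation in $n$ and $n'$, exactly as in \eqref{sqr_upgrade}: for \eqref{JtoJ_2} every inner index satisfies $n'\le n-1$, so each term carries at least one factor $(\mathfrak C\lambda^s)^2$, and summing $\sum_{n=n'+1}^{M}(\mathfrak C\lambda^s)^{2(n-n')}\lesssim\lambda^{2s}$ (valid since $\mathfrak C\lambda^s\le\tfrac12$) together with $(|m,n'|!/|m,n|!)^{2\sigma}\le1$ yields the claimed $\lambda^{2s}\sum_{m+n\le M-1}$ bound; for \eqref{JtoD_2} the diagonal term $n'=n$ is present, so there is no $\lambda$ gain and the sum runs to $M$, the off-diagonal terms being absorbed as before. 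The boundary terms in $m'\in\{(m-1)_+,m\}$ are a bounded-width band in $m$ as well and are summed trivially, and the combinatorial sums against factorial ratios are controlled by Lemma~\ref{lem:comb_sqr}.

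I expect the only real obstacle to be bookkeeping rather than analysis: verifying that the constant $\mathfrak C$ in \eqref{JtoD}–\eqref{JtoJ} stays universal throughout an iteration whose length grows with $n$, i.e. that the per-round constants of Lemma~\ref{lem:induction} and the combinatorial factors of Lemma~\ref{lem:comb_sqr} compose to something that is merely geometric in $n-n'$ rather than factorially large. This is exactly where the telescoping of the factorial ratios, the bounded branching of the recursion, and the sharp gain $(m+n)^{-\sigma_\ast+3\sigma}\le 1$ afforded by the standing assumption $\sigma_\ast\ge 8\sigma$ (used in Lemma~\ref{lem:induction} and in handling the cutoff commutators) are all needed.
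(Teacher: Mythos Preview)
Your plan for part~\textbf{b)} is exactly the paper's: iterate the induction relation \eqref{J_al>0_l2} of Lemma~\ref{lem:induction}, peel off the $a=0$ components at each round, and observe that the factorial ratios telescope while the bounded branching produces only a geometric factor $\mathfrak C^{\,n-n'}$; the summed version \eqref{JtoJ_2} then follows by the same Fubini argument as \eqref{sqr_upgrade}. This is correct and matches the paper's Step~1.

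For part~\textbf{a)} you are too quick in calling the passage from $J^{(0,b,c)}_{m,n'}f_k=\pav^{\,b}|k|^c\bigl(\chi_{m+n'}|k|^mq^{n'}\Gamma_k^{n'}f_k\bigr)$ to $\chi_{m+n'}\pa_y^{\,b}|k|^c\bigl(|k|^mq^{n'}\Gamma_k^{n'}f_k\bigr)$ a mere $\pav\leftrightarrow\pa_y$ conversion with bounded coefficients. The cutoff $\chi_{m+n'}$ sits \emph{inside} the derivative in the first expression and \emph{outside} in the second, and the commutator $[\pav^{\,b},\chi_{m+n'}]$ is not bounded: by \eqref{chi:prop:3} it carries factors $(m+n')^{j(1+\sigma)}$ which must be paid for by opening $\Gamma_k$'s and shifting to lower indices. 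The paper devotes its Step~2 to exactly this: writing the $a=0$ left-hand side as $T_1+T_2+T_3$ where $T_1$ is the target term, $T_2$ absorbs the $\chi$-commutator after opening $i_1$ copies of $\Gamma_k$, and $T_3$ handles the residual $[\pav^{i_2},q^{n-i_1}]$ via Lemma~\ref{lem:cm_J}. None of this is hard once you see it --- it reuses the same commutator and weight machinery as Lemma~\ref{lem:induction} --- but it is a genuine second induction rather than a pointwise coefficient bound, and it is what forces the sum on the right of \eqref{JtoD_1} to run over $b+c\le\mathfrak m_0$ (not $=\mathfrak m_0$) and to include the full range $n'\in\{0,\dots,n\}$. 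Your outline would recover this upon execution, but it is not ``routine'' in the sense you suggest.
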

\begin{remark}
Here we highlight that the estimates \eqref{JtoD} cost more regularity than \eqref{JtoJ}. The \eqref{JtoD} estimates are used to convert the $J$-estimates to the diffusive bounds $\mathcal{D}$. On the other hand, the \eqref{JtoJ} estimates are used to treat various $\{J^{(a,b,c)}_{m,n}\phe_k\}_{a\neq 0}$ terms in the elliptic section.
\end{remark}
\begin{proof}
To make the flow of the proof more accessible, we organize the material in three steps. In Step \# 1, we prove estimate \eqref{JtoJ_1} because it is a direct consequence of the previous induction lemma and combinatorics; In Step \# 2, we use the trick in Step \# 1 to handle more complicated situation  \eqref{JtoD_1}, \eqref{JtoD_2}; In the final Step \# 3, we will comment that the remaining estimate \eqref{JtoJ_2} is similar to the treatment in Step \# 2.

\noindent
{\bf Step \# 1: Proof of the estimate \eqref{JtoJ_1}. } We plan to apply the induction relation \eqref{J_al>0_l2}. For presentation purposes, we first consider the case where $n\geq 5(\mf m_0+2)$ to avoid complicated lower order terms that might appear in the first few steps of induction. The $n<5(\mf m_0+2)$ case will be naturally handled when we complete the induction argument. We apply the relation \eqref{J_al>0_l2},
\begin{align*} 
{\bf a}_{m,n}^2&\sum_{\substack{a+b+c=\mf m_0,\\ a\geq 1}}\|J^{(a,b,c)}_{m,n}f_k\|_{L^2}^2\lesssim \sum_{n_1= (n-\mathfrak{m}_0-2)_+}^{n-1}\lambda^{2(n-n_1)s}{\bf a}_{m,n_1}^2\lf(\frac{|m,n_1|!}{|m,n|!}\rg)^{2\sigma}\|J^{(\mf m_0)}_{m,n_1}f_k\|_{L^2}^2\\
\lesssim&\sum_{n_1= n-1}\lambda^{2(n-n_1)s}{\bf a}_{m,n_1}^2\lf(\frac{|m,n_1|!}{|m,n|!}\rg)^{2\sigma}\sum_{b+c= \mf m_0}\|J^{(0,b,c)}_{m,n_1}f_k\|_{L^2}^2\\
&+\sum_{n_1= (n-\mathfrak{m}_0-2)_+}^{n-2}\lambda^{2(n-n_1)s}{\bf a}_{m,n_1}^2\lf(\frac{|m,n_1|!}{|m,n|!}\rg)^{2\sigma}\sum_{b+c= \mf m_0}\|J^{(0,b,c)}_{m,n_1}f_k\|_{L^2}^2\\
&+\sum_{n_1= (n-\mathfrak{m}_0-2)_+}^{n-1}\sum_{n_2=(n_1-\mathfrak{m}_0-2)_+}^{n_1-1}\lambda^{2(n-n_2)s}{\bf a}_{m,n_2}^2\lf(\frac{|m,n_1|! }{|m,n|! } \frac{|m,n_2|! }{|m,n_1|!}\rg)^{2\sigma}\|J^{(\mf m_0)}_{m,n_2}f_k\|_{L^2}^2.
\end{align*}Here in the second inequality, we single out the terms with $n_1=n-1$, identify the $J^{(a,b,c)}$-terms with $a\neq 0$ and apply the induction estimate \eqref{J_al>0_l2}. First, we note that the $n_1=n-1$ will no longer appear in the latter expansion because $n_2$ ends at $n_1-1\leq n-2$. Next we observe that the $|m,n_1|!$ cancels out in the last term. Hence, the coefficients in the sum only ``remembers'' the starting position $(m,n)$ and the ending position $(m,n_2)$. With these observations, we do another induction and end up with
\begin{align*}
{\bf a}_{m,n}^2&\sum_{\substack{a+b+c=\mf m_0,\\ a\geq 1}}\|J^{(a,b,c)}_{m,n}f_k\|_{L^2}^2\\
\lesssim&\sum_{\ell= n-2}^{n-1}\mathfrak{C}^{n-\ell} \lambda^{2(n-\ell)s}{\bf a}_{m,\ell}^2\lf(\frac{|m,\ell|!}{|m,n|!}\rg)^{2\sigma}\sum_{b+c=\mf m_0}\|J^{(0,b,c)}_{m,\ell}f_k\|_{L^2}^2\\
&+\sum_{n_1= (n-\mathfrak{m}_0-2)_+}^{n-3}\lambda^{2(n-n_1)s}{\bf a}_{m,n_1}^2\frac{|m,n_1|!^{2\sigma}}{|m,n|!^{2\sigma}}\sum_{b+c= \mf m_0}\|J^{(0,b,c)}_{m,n_1}f_k\|_{L^2}^2\\
&+\sum_{n_1= (n-\mathfrak{m}_0-2)_+}^{n-1}\sum_{n_2=(n_1-\mathfrak{m}_0-2)_+}^{n_1-1}\mathbbm{1}_{n_2\neq n-2}\lambda^{2(n-n_2)s}{\bf a}_{m,n_2}^2\lf(  \frac{|m,n_2|! }{|m,n|! }\rg)^{2\sigma}\sum_{b+c=\mf m_0}\|J^{(0,b,c)}_{m,n_2}f_k\|_{L^2}^2\\
&+\sum_{n_1= (n-\mathfrak{m}_0-2)_+}^{n-1}\sum_{n_2=(n_1-\mathfrak{m}_0-2)_+}^{n_1-1}\sum_{n_3=(n_2-\mathfrak{m}_0-2)_+}^{n_2-1}\lambda^{2(n-n_3)s}{\bf a}_{m,n_3}^2\lf(\frac{|m,n_3|! }{|m,n|! }\rg)^{2\sigma}\|J^{(\mf m_0)}_{m,n_3}f_k\|_{L^2}^2.
\end{align*} This process will be stopped after $\approx n$ steps. 
Here the constant $\mathfrak{C}=\mathfrak{C}(\mf m_0)$ is introduced to account for the repetition of the $J_{m,\ell}^{(0,b,c)} f_k$ terms. For example, in the first sum, the term $\ell=n-2$ appears at most $\mathfrak m_0+3$ times and it will not appear in the latter induction expansion. For other terms $J_{m,\ell}^{(0,b,c)}$, the total number of repetition increases as the distance $|m,n|-|m,\ell|$ increases. Thanks to the iterative sum structure in the last term, the number of repetition is exponential in terms of $|m,n|-|m,\ell|$. Hence,
\begin{align*}
{\bf a}_{m,n}^2\mathbbm{1}_{\mf   a\geq 1}\|J^{(\mf m_0)}_{m,n}f_k\|_{L^2}^2 
\lesssim &\sum_{\ell=0}^{n-1}\sum_{b+c= \mf m_0}{\mf C^{n-\ell}}\lambda^{2(n-\ell)s}{\bf a}_{m,\ell}^2\lf(\frac{|m,\ell|!}{|m,n|!}\rg)^{2\sigma}\|J^{(0,b,c)}_{m,\ell}f_k\|_{L^2}^2\\
&\hspace{-1cm}+\sum_{m'=(m-1)_+}^m\sum_{\ell=0}^{1}\sum_{b+c\leq\mathfrak{m}_0}  \mathfrak C^{|m,n|-|m',\ell|}\lambda^{2s(|m,n|-|m',\ell|)}\lf(\frac{|m',\ell|!}{|m,n|!}\rg)^{2\sigma}  \bold{a}_{m',\ell}^2  \lf\| J^{(0,b,c)}_{m',\ell}f_k\rg\|_{L^2}^2.  
\end{align*} 
Now all the terms appeared in the sum only have $J^{(0,b,c)}_{m,n}f_k$ components and are consistent with \eqref{JtoJ_1}. Since the $n<5(\mf m_0+2)$ case can be treated in an identical fashion, this concludes the proof of \eqref{JtoJ_1}.
 \siming{The $\mf C^{n-\ell}$ factor comes from counting. \footnote{\siming{HS: Double check this statement because there is a counting problem here. It can be phrased like this: Each time you take $1,2,3$ steps, and how many different ways to get to $n-\ell$ level. We can write this as a recursion problem: $a(n)=$ ways to get to $n$. We check that $a(1)=1,\,a(2)=2,\, a(3)=4$. To reach $n$, first you need to reach $(n-1)$, $(n-2)$ or $(n-3)$ and then take $1$, $2$ or $3$ steps. Hence, $a(n)=a(n-1)+a(n-2)+a(n-3)$. When I search on wolfram alpha, the bound is exponential $a(n)\lesssim \mf C^{n}$.}}}

    \noindent
{\bf Step \# 2: Proof of the  estimates \eqref{JtoD_1}, \eqref{JtoD_2}. } Thanks to the induction relation \eqref{J_al>0_l2}, we can first focus on the case where $a=0,\  b+c=\mf m_0$ because the $a>0$ case can be reduced to this case:
\begin{align}\n
&\hspace{-0.5cm}{\bf a}_{m,n}\sum_{b+c=\mf m_0}\|J_{m,n}^{(0,b,c)}f_k\|_{L^2}\\
\n\leq& {\bf a}_{m,n}\sum_{b+c=\mf m_0}\|\chi_{m+n}\pav^b |k|^c(q^n \Gamma_k^n|k|^m f_k)\|_{L^2}+{\bf a}_{m,n}\sum_{b+c=\mf m_0}\| |k|^c[\pav^b,\chi_{m+n}](q^{n} \Gamma_k^{n}|k|^m f_k)\|_{L^2}\\
\n\lesssim& {\bf a}_{m,n}\sum_{b+c=\mf m_0}\|\chi_{m+n}\pav^b |k|^c(q^n \Gamma_k^n|k|^m f_k)\|_{L^2}\\
\n&+{\bf a}_{m,n}\sum_{b+c=\mf m_0}\sum_{i_1=1}^{b}\binom{b}{i_1}\sum_{i_2=0}^{i_1}\binom{i_1}{i_2}\lf\| |k|^c \lf(\pav^{i_1}\chi_{m+n} \rg) \pav^{b-i_1}(q^{i_1}q^{n-i_1}\pav^{i_2}(ikt)^{i_1-i_2} \Gamma_k^{n-i_1}|k|^m f_k)\rg\|_{L^2}\\
\n\lesssim& {\bf a}_{m,n}\sum_{b+c\leq\mf m_0}\|\chi_{m+n}\pa_y^b |k|^c(q^n \Gamma_k^n|k|^m f_k)\|_{L^2}\\
\n&+\sum_{b+c=\mf m_0}\sum_{i_1=1}^b\sum_{i_2=0}^{i_1}\frac{{\bf a}_{m,n-i_1}\varphi^{i_1}t^{i_1-i_2}\lambda^{i_1s}}{(m+n)^{i_1\sigma_\ast}} |k|^{c+i_1-i_2} \|\chi_{m+n-i_1} \pav^{b-i_1}(q^{i_1} \pav^{i_2}(q^{n-i_1}\Gamma_k^{n-i_1}|k|^m f_k))\|_{L^2}\\
&+\sum_{b+c=\mf m_0}\sum_{i_1=1}^b\sum_{i_2=0}^{i_1}\frac{{\bf a}_{m,n-i_1}\varphi^{i_1}t^{i_1-i_2}\lambda^{i_1s}}{(m+n)^{i_1\sigma_\ast}}|k|^{c +i_1-i_2} \| \pav^{b-i_1}(q^{i_1}[\pav^{i_2} ,q^{n-i_1}]\Gamma_k^{n-i_1}|k|^m f_k))\|_{L^2(\text{supp}\{\chi_{m+n}\})}\n \\
=:&T_1+T_2+T_3.\label{JtoD_pf1}
\end{align}%
The $T_1$ term is consistent with the \eqref{JtoD_1}. For the $T_2$-term, we estimate it as follows 
\begin{align}\n
T_2\lesssim& \sum_{b+c=\mf m_0}\sum_{i_1=1}^b\sum_{i_2=0}^{i_1}\sum_{i_3=0}^{b-i_1}\frac{{\bf a}_{m,n-i_1}\lambda^{i_1s}}{(m+n)^{i_1\sigma_\ast}}  \|\chi_{m+n-i_1}  |k|^{c+i_1-i_2}\pav^{i_2+i_3}(q^{n-i_1}\Gamma_k^{n-i_1}|k|^m f_k))\|_{L^2}\\
\lesssim&\sum_{n'=(n-\mf m_0)_+}^{n-1}\sum_{b+c\leq\mf m_0}{\bf a}_{m,n'}\lambda^{(n-n')s}\lf(\frac{|m,n'|!}{|m,n|!}\rg)^{\sigma_\ast} \|\chi_{m+n'}  \pa_y^{b}|k|^{c}(q^{n'}\Gamma_k^{n'}|k|^m f_k)\|_{L^2}.\label{JtoD_pf2}
\end{align}
Next we estimate the $T_3$-term using the product rule,
\begin{align*}\n
T_3\lesssim \sum_{b+c=\mf m_0}\sum_{i_1=1}^b\sum_{i_2=0}^{i_1}\sum_{i_3=0}^{b-i_1}&\frac{{\bf a}_{m,n-i_1}\varphi^{i_2}\lambda^{i_1s}}{(m+n)^{i_1\sigma_\ast}}|k|^{c+i_1-i_2}\\
&\times\lf\|q^{(i_1-i_3)_+}\pav^{(b-i_1-i_3)}\lf(\lf[\pav^{i_2},q^{n-i_1}\rg] |k|^m\Gamma_k^{n-i_1}f_k \rg)\rg\|_{L^2(\text{supp}\ \chi_{m+n})}.
\end{align*}
Here we note that $(b-i_1-i_3)+i_2\leq b\leq \mf m_0\leq 2$. Hence we invoke the estimates $\eqref{cm_qn_pv_1}_{j=2}$, $\eqref{cm_qn_pv_2}$ 
to obtain that 
\begin{align}\n
T_3\lesssim &\sum_{n'=(n-\mf m_0-1)_+}^{n-1}\sum_{i_1=1}^b\frac{{\bf a}_{m,n-i_1}\lambda^{i_1s}}{(m+n)^{i_1\sigma_\ast}}\lf\|J^{(\leq \mf m_0)}_{m,n-i_1}f_k \rg\|_{L^2(\text{supp}\ \chi_{m+n})}\\
&+\mathbbm{1}_{n<2\mf m_0}\sum_{m'=(m-1)_+}^m{\bf a}_{m',0}\lambda^{(|m,n|-m')s}\lf(\frac{m'!}{|m,n|!}\rg)^{\sigma_\ast}\sum_{b+c\leq \mf m_0}\lf\|\chi_{m'}\pa_y^b|k|^c(|k|^{m'}f_k) \rg\|_{L^2(\text{supp}\ \chi_{m+n})}.\label{JtoD_pf3}
\end{align} 
Hence, combining the decomposition \eqref{JtoD_pf1} for the $\{a=0, b+c=\mf m_0\}$ case and the estimates \eqref{JtoD_pf2}, \eqref{JtoD_pf3}, we further apply the induction relation \eqref{J_al>0_l2} to take the $\{a\neq 0, a+b+c=\mf m_0\}$ cases into account and end up with the following
\begin{align*}
&\hspace{-0.5cm}{\bf a}_{m,n}\sum_{a+b+c=\mf m_0}\|J_{m,n}^{(a,b,c)}f_k\|_{L^2}\\
\lesssim& \sum_{n'=(n-\mf m_0-1)_+}^{n}\sum_{b+c\leq\mf m_0}{\bf a}_{m,n'}\lambda^{(n-n')s}\lf(\frac{|m,n'|!}{|m,n|!}\rg)^{\sigma_\ast} \|\chi_{m+n'}  \pa_y^{b}|k|^{c}(q^{n'}\Gamma_k^{n'}|k|^m f_k)\|_{L^2}\\
&+\sum_{n'=(n-\mf m_0-1)_+}^{n-1}\sum_{a+b+c\leq\mf m_0}{\bf a}_{m,n'}\lambda^{(n-n')s}\lf(\frac{|m,n'|!}{|m,n|!}\rg)^{\sigma_\ast} \| J^{(a,b,c)}_{m,n'} f_k\|_{L^2}\ \mathbbm{1}_{a\geq 1}\\
&+\mathbbm{1}_{n<2\mf m_0}\sum_{m'=(m-1)_+}^m{\bf a}_{m',0}\lambda^{(|m,n|-m')s}\lf(\frac{m'!}{|m,n|!}\rg)^{\sigma_\ast}\sum_{b+c\leq \mf m_0}\lf\|\chi_{m'}\pa_y^b|k|^c(|k|^{m'}f_k) \rg\|_{L^2}.&&
\end{align*}
Now we apply the induction estimates \eqref{J_al>0_l2} on the second term in the expression to translate all the terms in to the form $J^{(0,b,c)}_{m',n'}f_k$. We further note that each distinct $J_{m,n}^{(0,b,c)}f_k$ can only appear at most exponentially in $m+n$ many times during the inductive expansion. Now we apply the H\"older inequality and the combinatorial estimate \eqref{sum_comb} to obtain \eqref{JtoD_1}. 
\ifx\begin{align*}
{\bf a}_{m,n}^2\sum_{(a,b,c)\in \myr{\mathbb  S_n^{\mf m_0}}}\|J_{m,n}^{(a,b,c)}f_k\|_{L^2}^2\leq & C\lf(\sum_{\ell=0}^{n}\sum_{b+c=\mathfrak{m}_0} \mathfrak C^{n-\ell
+1}\lambda^{(n-\ell)s}\lf(\frac{(m+\ell)!}{(m+n)!}\rg)^{\sigma}  \bold{a}_{m,\ell}  \lf\| \chi_{m+\ell}\pa_y^b|k|^c(|k|^mq^\ell\Gamma_k^\ell f_k)\rg\|_{L^2}\rg)^2\\
\leq&C \sum_{\ell=0}^{n}\sum_{b+c=\mathfrak{m}_0} \mathfrak (2\mathfrak{C}\lambda^s)^{2(n-\ell)}\lf(\frac{(m+\ell)!}{(m+n)!}\rg)^{2\sigma}  \bold{a}_{m,\ell}^2  \lf\| \chi_{m+\ell}\pa_y^b|k|^c(|k|^mq^\ell\Gamma_k^\ell f_k)\rg\|_{L^2}^2.
\end{align*}This is the result \eqref{JtoD_1}.
\fi
To derive \eqref{JtoD_2}, we sum over both side of the expression  \eqref{JtoD_1} and apply the Fubini 
\begin{align*}
&\hspace{-0.5cm}\sum_{m+n=0}^M{\bf a}_{m,n}^2\sum_{a+b+c=\mf m_0}\|J_{m,n}^{(a,b,c)}f_k\|_{L^2}^2\\
\leq &C \sum_{m=0}^M\sum_{n=0}^{M-m}\sum_{\ell=0}^{n}\sum_{b+c\leq\mathfrak{m}_0} \mathfrak ( \mathfrak{C}\lambda^s)^{2(n-\ell)}\lf(\frac{|m,\ell|!}{|m,n|!}\rg)^{2\sigma}  \bold{a}_{m,\ell}^2  \lf\| \chi_{m+\ell}\pa_y^b|k|^c(|k|^mq^\ell\Gamma_k^\ell f_k)\rg\|_{L^2}^2\\
&+C\sum_{m=0}^M\sum_{n=0}^{M-m}\sum_{m'=(m-1)_+}^m\sum_{b+c\leq\mathfrak{m}_0}(\mf C\lambda^s)^{2(|m,n|-m')}\lf(\frac{m'!}{|m,n|!}\rg)^{2\sigma}{\bf a}_{m',0}^2\lf\|\chi_{m'}\pa_y^b|k|^c(|k|^{m'}f_k )\rg\|_{L^2}^2\\
\leq&C \sum_{m=0}^M\sum_{\ell=0}^{M-m}\sum_{n=\ell}^{M-m}\sum_{b+c\leq\mathfrak{m}_0} \mathfrak ( \mathfrak{C}\lambda^s)^{2(n-\ell)} \bold{a}_{m,\ell}^2  \lf\| \chi_{m+\ell}\pa_y^b|k|^c(|k|^mq^\ell\Gamma_k^\ell f_k)\rg\|_{L^2}^2\\
&+C\sum_{m=0}^M \sum_{b+c\leq\mathfrak{m}_0}{\bf a}_{m,0}^2\lf\|\chi_{m} \pa_y^b |k|^c(|k|^m f_k) \rg\|_{L^2}^2\\
\leq& C \sum_{m+\ell=0}^M\sum_{b+c\leq\mathfrak{m}_0} \bold{a}_{m,\ell}^2  \lf\| \chi_{m+\ell}\pa_y^b|k|^c(|k|^mq^\ell\Gamma_k^\ell f_k)\rg\|_{L^2}^2.
\end{align*} 
This is \eqref{JtoD_2}.

\noindent
{\bf Step \# 3: Proof of the estimate \eqref{JtoJ_2}. } Now the derivation of \eqref{JtoJ_2} is similar to {\bf Step \# 2} and we omit the details.  
\end{proof}

\ifx
\siming{
\begin{lemma}\label{Lem:H&vy2-1}
Assume \eqref{asmp}. The following estimate holds
\begin{align} 
\sum_{n=0}^\infty &\sum_{a+b=0}^1 B_{0,n}^2\varphi^{2n}\|J^{(a,b,0)}_{0,n}(\wt \chi_1(v_y^2-1))\|_{L^2}^2 
\lesssim  \nu^{100}\mathcal{E}_H^{(\al)}\lf(\mathcal{E}_H^{(\al)}+1\rg);\label{Hvy^2-1-H1}\\
\sum_{n=0}^\infty& \sum_{a+b=2} B_{0,n}^2\varphi^{2n}\|J^{(a,b,0)}_{0,n}(\wt \chi_1(v_y^2-1))\|_{L^2}^2 
\lesssim\nu^{100}(\mathcal{E}_H^{(\al)}\mathcal{D}_H^{(\al)}+\mathcal{E}_H^{(\al)} +\mathcal{D}_H^{(\al)}).\label{Hvy^2-1-H2}
\end{align}
\end{lemma}
\begin{proof}
To prove the estimate \eqref{Hvy^2-1-H1}, we reformulate the left hand for $n\neq 0$ side as follows:\siming{??}
\begin{align*}
&\sum_{n=0}^\infty \sum_{a+b=0}^1 B_{0,n}^2\varphi^{2n}\lf\|J_{0,n}^{(a,b,0)}(\wt \chi_1 (v_y^2-1))\rg\|_{L^2}^2\\
&=\sum_{n=0}^\infty \sum_{a+b=0}^1 B_{0,n}^2\varphi^{2n}\lf\|\lf(\frac{n}{q}\rg)^a\pav^b \lf(\chi_n q^n \pav^n((H+2)H\rg)\rg\|_{L^2}^2
\\ &\lesssim\sum_{n=0}^\infty \sum_{a+b=0}^1 B_{0,n}^2\varphi^{2n}\lf(\lf\|\sum_{i_2=1}^{n-1}\lf(\frac{n}{q}\rg)^a[\pav^b,\chi_n]\lf(q^{n-i_2}\pav^{n-i_2}H \ q^{i_2}\pav^{i_2}(H+2)\rg)\rg\|_{L^2}\rg)^2\\
&+\sum_{n=0}^\infty \sum_{a+b=0}^1 B_{0,n}^2\varphi^{2n}\lf(\sum_{i_1=0}^b\sum_{i_2=0}^n \binom{b}{i_1}\binom{n}{i_2}\lf\|\mathbbm{1}_{\text{supp}\chi_n}\lf(\frac{n}{q}\rg)^a\pav^{b-i_1}(\chi_{n-i_2} q^{n-i_2}\pav^{n-i_2}H) \ \pav^{i_1}(\chi_{i_2}q^{i_2}\pav^{i_2}(H+2))\rg\|_{L^2}\rg)^2\\
&\lesssim\mathbbm{1}_{b=1}\sum_{n=0}^\infty   B_{0,n}^2\varphi^{2n}\lf\|\chi_n' v_y^{-1}\sum_{i_2=1}^{n-1} \binom{n}{i_2} q^{n-i_2}\pav^{n-i_2}H \ q^{i_2}\pav^{i_2}(H+2)\rg\|_{L^2}^2\\
&\quad+\sum_{n=0}^\infty \sum_{a+b=0}^1\sum_{i_1=0}^b B_{0,n}^2\varphi^{2n}\lf(\sum_{i_2=0}^n\binom{n}{i_2}\lf\|\mathbbm{1}_{\text{supp}\chi_n}\lf(\frac{n}{q}\rg)^a \pav^{b-i_1}(\chi_{n-i_2}q^{n-i_2}\pav^{n-i_2}H)\ \pav^{i_1}(\chi_{i_2}q^{i_2}\pav^{i_2}(H+2))\rg\|_{L^2}\rg)^2\n \\
&=:T_1+T_2.
\end{align*}
Now we estimate the $T_1$ term in the above expression. To this end, we invoke the estimates $\|\vyn\|_{L_{t,y}^\infty}\lesssim 1$,  \eqref{chi:prop:3}, and the combinatorial estimate \eqref{prod} to obtain that \siming{(Sketch, the index might have issue for $n=0, etc.$)}
\begin{align*}
 &T_1\lesssim\sum_{n=0}^\infty   B_{0,n}^2\varphi^{2n}\lf(\sum_{i_2=0}^n \binom{n}{i_2}n^{1+\sigma}\lf\|  q^{n-i_2}\pav^{n-i_2}H \ q^{i_2}\pav^{i_2}(H+2)\rg\|_{L^2}\rg)^2\\
 &\lesssim\sum_{n=0}^\infty   \lf(\sum_{i_2=0}^{\lfloor n/2 \rfloor} \frac{B_{0,n-1} n^{-\sigma_\ast}}{B_{0,n-i_2-1}B_{0,i_2}} \binom{n-1}{i_2}\frac{n}{n-i_2} B_{0,n-i_2-1}\varphi^{n-i_2}\lf\|\chi_{n-i_2-1}  q^{n-i_2-1}\pav\ \pav^{n-i_2-1}H\rg\|_{L^2}\rg.\\
&\quad\lf. \times B_{0,i_2} \varphi^{i_2}\lf\| \chi_{i_2}q^{i_2}\pav^{i_2}(H+2)\rg\|_{L^\infty}\rg)^2\\
 &\quad+\sum_{n=0}^\infty   \lf(\sum_{i_2=\lfloor n/2 \rfloor}^n \frac{B_{0,n-1} n^{-\sigma_\ast}}{B_{0,n-i_2}B_{0,i_2-1}} \binom{n-1}{i_2-1}\frac{n}{i_2} B_{0,n-i_2}\varphi^{n-i_2}\lf\|\chi_{n-i_2}  q^{n-i_2}\pav^{n-i_2}H\rg\|_{L^\infty}\rg.\\
&\quad\lf. \times B_{0,i_2-1} \varphi^{i_2}\lf\| \chi_{i_2-1}q^{i_2-1}\pav\ \pav^{i_2-1}(H+2)\rg\|_{L^2}\rg)^2\\
 &\lesssim\sum_{n=0}^\infty   \lf(\sum_{i_2=0}^{\lfloor n/2 \rfloor} \binom{n-1}{i_2}^{-2\sigma_\ast}\rg)\sum_{i_2=0}^{\lfloor n/2 \rfloor} B_{0,n-i_2-1}^2\varphi^{2n-2i_2}\lf\|\chi_{n-i_2-1}  q^{n-i_2-1}\pav\ \pav^{n-i_2-1}H\rg\|_{L^2}^2\\
 &\qquad\qquad\times B_{0,i_2}^2 \varphi^{2i_2}\lf\| \chi_{i_2}q^{i_2}\pav^{i_2}(H+2)\rg\|_{L^\infty}^2\\
 & \quad+\sum_{n=0}^\infty   \lf(\sum_{i_2=\lfloor n/2 \rfloor}^n \binom{n-1}{i_2-1}^{-2\sigma_\ast}\rg) \sum_{i_2=\lfloor n/2 \rfloor}^n  B_{0,n-i_2}^2\varphi^{2n-2i_2}\lf\|\chi_{n-i_2}  q^{n-i_2}\pav^{n-i_2}H\rg\|_{L^\infty}^2\\
&\qquad\qquad \times B_{0,i_2-1} ^2\varphi^{2i_2}\lf\| \chi_{i_2-1}q^{i_2-1}\pav\ \pav^{i_2-1}(H+2)\rg\|_{L^2}^2\\
&\lesssim \lf(\sum_{n=0}^\infty\sum_{a+b=0}^1B_{0,n}^2\varphi^{2n}\|J_{0,n}^{(a,b,0)} H \|_{L^2}^2\rg)\lf(\sum_{n=0}^\infty\sum_{a+b=0}^1B_{0,n}^2\varphi^{2n}\|J_{0,n}^{(a,b,0)} H \|_{L^2}^2+1\rg).
\end{align*}
 To estimate the $T_2$ term, we apply similar strategy,\siming{(Double check the $\chi_0$!)}
\begin{align*}
T_2\lesssim&\sum_{n=0}^\infty \sum_{a+b=0}^1\sum_{i_1=0}^b\lf( \sum_{i_2=0}^n \binom{n}{i_2}B_{0,n} \varphi^{n}\lf\|\lf(\frac{n}{q}\rg)^a \pav^{b-i_1}(\chi_{n-i_2}q^{n-i_2}\pav^{n-i_2}H)\ \pav^{i_1}(\chi_{i_2}q^{i_2}\pav^{i_2}(H+2))\rg\|_{L^2}\rg)^2\\\lesssim&\sum_{n=0}^\infty \sum_{a+b=0}^1\sum_{i_1=0}^b\lf( \sum_{i_2=1}^{n-1} \binom{n}{i_2}B_{0,n} \varphi^{n}\lf\|\lf(\frac{n-i_2}{q}\rg)^a \pav^{b-i_1}(\chi_{n-i_2}q^{n-i_2}\pav^{n-i_2}H)\ \pav^{i_1}(\chi_{i_2}q^{i_2}\pav^{i_2}(H+2))\rg\|_{L^2}\rg)^2\\
&+\sum_{n=0}^\infty \sum_{a+b=0}^1\sum_{i_1=0}^b\lf( \sum_{i_2=1}^{n-1} \binom{n}{i_2}B_{0,n} \varphi^{n}\lf\| \pav^{b-i_1}(\chi_{n-i_2}q^{n-i_2}\pav^{n-i_2}H)\ \lf(\frac{i_2}{q}\rg)^a\pav^{i_1}(\chi_{i_2}q^{i_2}\pav^{i_2}(H+2))\rg\|_{L^2}\rg)^2\\
&+\sum_{n=0}^\infty \sum_{a+b=0}^1\sum_{i_1=0}^b\lf( \sum_{i_2=0,n} \binom{n}{i_2}B_{0,n} \varphi^{n}\lf\|\lf(\frac{n}{q}\rg)^a \pav^{b-i_1}(\chi_{n-i_2}q^{n-i_2}\pav^{n-i_2}H)\ \pav^{i_1}(\chi_{i_2}q^{i_2}\pav^{i_2}(H+2))\rg\|_{L^2}\rg)^2\\
=:&T_{21}+T_{22}+T_{23}.
\end{align*}
We first estimate the $T_{21}, \ T_{22}$ terms
\begin{align*}
T_{21}+T_{22}\lesssim&\sum_{n=0}^\infty  \lf( \sum_{i_2=0}^n \binom{n}{i_2}^{-\sigma}B_{0,n-i_2} \varphi^{n-i_2}\lf\|J^{(1,0,0)}_{0,n-i_2}H\rg\|_{L^2}  B_{0,i_2} \varphi^{i_2}\lf\|\chi_{i_2}q^{i_2}\pav^{i_2}(H+2)\rg\|_{L^\infty}\rg)^2\\
&+\sum_{n=0}^\infty \sum_{i_1=0}^1\lf( \sum_{i_2=0}^n \binom{n}{i_2}^{-\sigma} \lf\|\lf(B_{0,n-i_2} \varphi^{n-i_2}J^{(0,1-i_1,0)}_{0,n-i_2}H\rg)\lf (B_{0,i_2} \varphi^{i_2} J^{(0,i_1,0)}_{0,i_2}(H+2)\rg)\rg\|_{L^2}\rg)^2\\
&+\sum_{n=0}^\infty  \lf( \sum_{i_2=0}^n \binom{n}{i_2}^{-\sigma}B_{0,n-i_2} \varphi^{n-i_2}\lf\|J^{(0,0,0)}_{0,n-i_2}H\rg\|_{L^\infty}  B_{0,i_2} \varphi^{i_2}\lf\|J^{(1,0,0)}_{0,i_2}(H+2)\rg\|_{L^2}\rg)^2\\
&+\sum_{n=0}^\infty \sum_{i_1=0}^1\lf( \sum_{i_2=0}^n \binom{n}{i_2}^{-\sigma} \lf\|\lf(B_{0,n-i_2} \varphi^{n-i_2}J^{(0,1-i_1,0)}_{0,n-i_2}H\rg)\lf (B_{0,i_2} \varphi^{i_2} J^{(0,i_1,0)}_{0,i_2}(H+2)\rg)\rg\|_{L^2}\rg)^2\\
\lesssim& \lf(\sum_{n=0}^\infty\sum_{a+b=0}^1B_{0,n}^2\varphi^{2n}\|J_{0,n}^{(a,b,0)} H \|_{L^2}^2\rg)\lf(\sum_{n=0}^\infty\sum_{a+b=0}^1B_{0,n}^2\varphi^{2n}\|J_{0,n}^{(a,b,0)} H \|_{L^2}^2+1\rg)\lesssim \nu^{100}\mathcal{E}_H^{(\al)}\lf(\mathcal{E}_H^{(\al)}+1\rg).
\end{align*} 
Here in the last line, we have invoked Lemma \ref{lem:JtoD} and the definition of $\mathcal{E}_H^{(\al)}$ \eqref{E_coord}. 
The remaining term can be estimated as follows
\begin{align*}
T_{23}
\lesssim&\sum_{n=0}^\infty \sum_{a+b=0}^1\sum_{i_1=0}^b\sum_{i_2=0,n}\lf(B_{0,n} \varphi^{n}\lf\|\lf(\frac{\max\{n-i_2, i_2\}}{q}\rg)^a \pav^{b-i_1}(\chi_{n-i_2}q^{n-i_2}\pav^{n-i_2}H)\ \pav^{i_1}(\chi_{i_2}q^{i_2}\pav^{i_2}(H+2))\rg\|_{L^2}\rg)^2\\
\lesssim& \lf(\sum_{n=0}^\infty\sum_{a+b=0}^1B_{0,n}^2\varphi^{2n}\|J_{0,n}^{(a,b,0)} H \|_{L^2}^2\rg)\lf(\sum_{n=0}^\infty\sum_{a+b=0}^1B_{0,n}^2\varphi^{2n}\|J_{0,n}^{(a,b,0)} H \|_{L^2}^2+1\rg)\lesssim \nu^{100}\mathcal{E}_H^{(\al)}\lf(\mathcal{E}_H^{(\al)}+1\rg).
\end{align*}
This, when combined with the above estimates, yields the bound \eqref{Hvy^2-1-H1}.

The treatment of \eqref{Hvy^2-1-H2} is similar \siming{(Have not done? When we spread the $(nq^{-1})^a$, we have to be careful when $i_2\in\{0,...,n\}$ is too small ($\{0,1\}$) or too large ($\{n-1,n\}$). Thanks to the fact that $(nq^{-1})^a\approx(\max\{n-i_2,i_2\}q^{-1})^a$ in these two scenarios, the factor will naturally seeks the $q^{\max\{n-i_2,i_2\}}\pav^{\max\{n-i_2,i_2\}}$ part of the product and $J^{...}_{0,\max\{n-i_2,i_2\}}H$ is well-defined.)} and we omit the details.
\end{proof}
}
\fi

As an application of the ICC method, we present two lemmas that provide bounds on the coordinate quantity $Z=v_y^2-1$.

\begin{lemma} \label{lem:vy2-1est}
Assume $\sigma_\ast\geq 3\sigma. $ 
The following estimate holds for $Z=v_y^2-1$ and $n\geq 1$,
\begin{align}
B_{0,n }\|\chi_n q^n\pav^{n+1}Z\|_{L^2} \lesssim &B_{0,n }\|J^{(0,1,0)}_{0,n} Z\|_{L^2}+B_{0,n-1} n^{-\sigma_\ast}\sum_{a+b= 1}\|\mathbbm{1}_{\mathrm{supp}\chi_n}J^{(a,b,0)}_{0,n-1}Z\|_{L^2};\label{vy2_1est3} \\
B_{0,n }\|\chi_n q^n\pav^{n+2}Z\|_{L^2} \lesssim &B_{0,n }\|J^{(0,2,0)}_{0,n} Z\|_{L^2}+B_{0,n-1} n^{-\sigma}\sum_{a+b= 2}\|\mathbbm{1}_{\mathrm{supp}\chi_n}J^{(a,b,0)}_{0,n-1}Z\|_{L^2}\n\\
&+\mathbbm{1}_{n\in\{1,2\}}\underbrace{\sum_{\ell=0}^2\|\mathbbm{1}_{\mathrm{supp}\chi_1}\pav^\ell Z\|_{L^2}}_{=\sum_{0\leq b\leq 2}\|\mathbbm{1}_{\mathrm{supp}\chi_1}J^{(0,b,0)}_{0,0}Z\|_{L^2}}.\label{vy2_1est4} 
\end{align}
\end{lemma}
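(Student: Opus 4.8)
\textbf{Proof proposal for Lemma \ref{lem:vy2-1est}.}

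The plan is to prove both identities by the same device: commute one extra $\pav$-derivative past the weight $q^n$ and the cutoff $\chi_n$, using the commutator machinery of Lemma \ref{lem:cm_J} together with the Fa\`a di Bruno expansion of $\pav^j(q^n)$ from Lemma \ref{lem:pavq}. For \eqref{vy2_1est3}, I would start from the trivial identity
\begin{align*}
\chi_n q^n \pav^{n+1} Z = \pav\big(\chi_n q^n \pav^n Z\big) - (\pav\chi_n)\, q^n \pav^n Z - \chi_n (\pav q^n)\, \pav^n Z,
\end{align*}
and rewrite the first term on the right as $\pav(J_{0,n}^{(0,0,0)}Z)$, which is (up to the weight $v_y^{-1}$ absorbed into $\pav$) precisely $J_{0,n}^{(0,1,0)}Z$. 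The second term uses $|\pav\chi_n|\lesssim n^{1+\sigma}\chi_{n-1}$ from \eqref{chi:prop:3}, so it is bounded by $n^{1+\sigma}\|\chi_{n-1}q^n\pav^n Z\|_{L^2} = n^{1+\sigma}\|\chi_{n-1}q\cdot q^{n-1}\pav^n Z\|_{L^2}$; absorbing the bounded factor $q$ and writing $q^{n-1}\pav^n Z = q^{n-1}\pav(\chi_{n-1}\pav^{n-1}Z) + q^{n-1}[\pav,\chi_{n-1}]\pav^{n-1}Z$ produces $J^{(0,1,0)}_{0,n-1}Z$ and lower-order pieces, and after invoking the Gevrey coefficient bound \eqref{gevbd1212} the prefactor $B_{0,n}n^{1+\sigma}$ becomes $B_{0,n-1}\lambda^s n^{-\sigma_\ast}$. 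The third term uses $\pav q^n = \widetilde{q_{n,1}}\,n\,q^{n-1}$ from \eqref{pav_j_qn}–\eqref{wtq}, so it equals $\widetilde{q_{n,1}}\,\frac{n}{q}\,(q^n\pav^n Z)$; recognizing $\frac{n}{q}(\chi_n q^n\pav^n Z)$ as $J^{(1,0,0)}_{0,n}Z$ (valid since $1\le n$) and, where $\chi_n$ must be inserted, again splitting $\chi_n q^n\pav^n Z$ against $\chi_{n-1}$ lands this term in $\|J^{(a,b,0)}_{0,n-1}Z\|_{L^2}$ with the gain $n^{-\sigma_\ast}$ from \eqref{gevbd1212}. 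Collecting the three contributions gives \eqref{vy2_1est3}.

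For \eqref{vy2_1est4} I would iterate the same step once more. Write $\chi_n q^n\pav^{n+2}Z = \pav(\chi_n q^n\pav^{n+1}Z) - (\pav\chi_n)q^n\pav^{n+1}Z - \chi_n(\pav q^n)\pav^{n+1}Z$, then feed the already-proven bound \eqref{vy2_1est3} into the last two terms after redistributing the $q$- and $\chi$-factors as above. The first term, $\pav(\chi_n q^n\pav^{n+1}Z)$, requires a further commutator: $\pav\chi_n q^n\pav^{n+1}Z = \chi_n q^n\pav^{n+2}Z + \cdots$ is circular, so instead I write $\chi_n q^n\pav^{n+1}Z = \pav(\chi_n q^n\pav^n Z) + [\,\chi_n q^n,\pav\,]\pav^n Z$ inside and differentiate, producing $J^{(0,2,0)}_{0,n}Z$ plus commutator terms of the form $[\pav^2, q^n](\cdots)$ and $[\pav^2,\chi_n](\cdots)$, each of which is handled by \eqref{cm_J} of Lemma \ref{lem:cm_J} (with $j=2$) and \eqref{chi:prop:3}. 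The commutator $[\pav^2,q^n]$ expands, via \eqref{cm_qn_pv_1} for $n\ge 2$, into $\sum_{a+b=2,a>0}\mf A^{a,b,0;2}_{0,n}J^{(a,b,0)}_{0,n}Z$ whose prefactor after \eqref{gevbd1212} carries $\lambda^s n^{-\sigma}$; for $n=1$ the weaker bound in \eqref{cm_qn_pv_1} produces the boundary term $\mathbbm 1_{n\in\{1,2\}}\sum_{\ell\le2}\|\mathbbm 1_{\mathrm{supp}\chi_1}\pav^\ell Z\|_{L^2}$ as listed. Tracking the worst loss carefully shows the aggregate gain at second order is only $n^{-\sigma}$ (not $n^{-\sigma_\ast}$), because one of the two commutator-induced factors of $\frac{n}{q}$ is spent restoring the weight rather than buying decay; this is exactly why the exponent in the statement drops from $\sigma_\ast$ to $\sigma$, and is consistent with the hypothesis $\sigma_\ast\ge 3\sigma$.

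The main obstacle is the bookkeeping around the cutoff $\chi_n$ versus $\chi_{n-1}$: the quantities $J^{(a,b,0)}_{0,n-1}Z$ in the statement carry the cutoff $\chi_{n-1}$, but the terms that naturally arise from $\pav\chi_n$ carry $\chi_{n-1}$ only after one uses $\mathrm{supp}(\pav\chi_n)\subset\{\chi_{n-1}=1\}$, and one must be careful that $q^n\pav^n Z$ restricted to $\mathrm{supp}\chi_n$ really is controlled by $\|\chi_{n-1}q^n\pav^n Z\|$ rather than something with mismatched index. A secondary subtlety is the degenerate small-$n$ cases ($n=1$ in \eqref{vy2_1est3}, $n\in\{1,2\}$ in \eqref{vy2_1est4}), where $J^{(a,b,0)}_{0,n-1}$ with $a\ge1$ is either trivial or ill-defined and must be replaced by the plain Sobolev terms $\|\mathbbm 1_{\mathrm{supp}\chi_1}\pav^\ell Z\|_{L^2}$; these should be peeled off first, before running the general commutator argument for $n$ large, exactly as in the proofs of Lemma \ref{lem:cm_J} and Lemma \ref{lem:SLJ}. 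Everything else is a routine application of \eqref{gevbd1212}, \eqref{chi:prop:3}, \eqref{pav_j_qn}–\eqref{wtq}, and the commutator identities \eqref{cm_qn_pv}.
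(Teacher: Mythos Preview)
Your proposal is correct and follows essentially the same commutator approach as the paper: extract $\pav^j(\chi_n q^n\pav^n Z)=J^{(0,j,0)}_{0,n}Z$ as the main term, insert $\chi_{n-1}$ on the support of the remainder, then commute $q^{n-1}$ past $\pav^j$ and convert via \eqref{gevbd1212}. The paper streamlines \eqref{vy2_1est4} by applying the product rule for $\pav^2$ in one shot (giving $T_1=J^{(0,2,0)}_{0,n}Z$, $T_2\sim\pav(\chi_n q^n)\,\pav^2(\chi_{n-1}\pav^{n-1}Z)$, $T_3\sim\pav^2(\chi_n q^n)\,\pav(\chi_{n-1}\pav^{n-1}Z)$) rather than your iterate-then-unwind route, but once you expand inside $\pav$ you land on the same decomposition; note that your phrase ``feed \eqref{vy2_1est3} into the last two terms'' is imprecise---those terms are not literally of the form $\chi_n q^n\pav^{n+1}Z$, and what you actually do (and what the paper does) is repeat the same commutation step, not invoke \eqref{vy2_1est3} as a black box.
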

\begin{proof}
To derive the inequality \eqref{vy2_1est3}, 
\begin{align*}
B_{0,n }&\|\chi_n q^n\pav^{n+1}Z\|_{L^2}\\\lesssim&B_{0,n } \|\pav(\chi_n q^n\pav^{n }Z)\|_{L^2}+B_{0,n }\| \pav(\chi_n q^n) \pav (\chi_{n-1}\pav^{n-1 }Z)\|_{L^2}\\
\lesssim&B_{0,n }\|J^{(0,1,0)}_{0,n}Z\|_{L^2}+B_{0, n}n^{1+\sigma}\|\mathbbm{1}_{\mathrm{supp}\chi_n}q^{n-1}\pav (\chi_{n-1}\pav^{n-1}Z)\|_{L^2}\\
\lesssim&B_{0,n }\|J^{(0,1,0)}_{0,n}Z\|_{L^2}+\frac{\lambda^s B_{0,n-1} }{n^{\sigma_\ast}}\lf(\lf\|\mathbbm{1}_{\mathrm{supp}\chi_n}J^{(0,1,0)}_{0,n-1}Z\rg\|_{L^2}+\lf\|\mathbbm{1}_{\mathrm{supp}\chi_n}[q^{n-1},\pav](\chi_{n-1}\pav^{n-1}Z)\rg\|_{L^2}\rg)\\
\lesssim&B_{0,n }\|J^{(0,1,0)}_{0,n}Z\|_{L^2}+\frac{\lambda^s B_{0,n-1} }{n^{\sigma_\ast}}\sum_{a+b=1}\|\mathbbm{1}_{\mathrm{supp}\chi_n}J^{(a,b,0)}_{0,n-1}Z\|_{L^2}.
\end{align*}
To derive the inequality \eqref{vy2_1est4}, we expand the expression and obtain that, 
\begin{align}\n
B_{0,n }&\|\chi_n q^n\pav^{n+2}Z\|_{L^2}\\ \n
\lesssim&B_{0,n } \|\pav^2(\chi_n q^n\pav^{n }Z)\|_{L^2}+B_{0,n }\| \pav(\chi_n q^n) \pav^2 (\chi_{n-1}\pav^{n-1 }Z)\|_{L^2}\\ \n
&+B_{0,n }\| \pav^2(\chi_n q^n) \pav (\chi_{n-1}\pav^{n-1 }Z)\|_{L^2}\\ \n
\lesssim&B_{0,n }\|J^{(0,2,0)}_{0,n} Z\|_{L^2}+B_{0,n}\ n^{1+\sigma}\ \|\mathbbm{1}_{\mathrm{supp}\chi_n}q^{n-1}\pav^2(\chi_{n-1}\pav^{n-1}Z)\|_{L^2}\\ \n
&+B_{0,n }\myr{ n^{2+2\sigma}\|\mathbbm{1}_{\mathrm{supp} \chi_n}q^{(n-2)_+}} \pav (\chi_{n-1}\pav^{n-1 }Z)\|_{L^2}\\ \n
\lesssim&B_{0,n }\|J^{(0,2,0)}_{0,n}Z\|_{L^2}+B_{0,n }n^{1+\sigma}\ \lf(\|\mathbbm{1}_{\mathrm{supp}\chi_n}  J^{(0,2,0)}_{0,n-1}Z\|_{L^2}+\lf\|\mathbbm{1}_{\mathrm{supp}\chi_n}[q^{n-1},\pav^2](\chi_{n-1} \pav^{n-1}Z)\rg\|_{L^2}\rg)\\ 
&+B_{0,n }\myr{ n^{2+2\sigma}\|\mathbbm{1}_{\mathrm{supp} \chi_n}q^{(n-2)_+}} \pav (\chi_{n-1}\pav^{n-1 }Z)\|_{L^2}=:T_1+T_2+T_3.\label{vy2_1est_4_pf}
\end{align}
The first term is consistent with the result \eqref{vy2_1est4}. Then we  can   estimate the last term $T_3$ in \eqref{vy2_1est_4_pf} with the relation \eqref{s:prime}, and the facts that $\sigma_\ast\geq 3\sigma$ to as follows,
\begin{align*}
T_3\lesssim& \mathbbm{1}_{n\geq 3}\frac{\lambda^s B_{0,n-1}}{n^{\sigma}}\lf\|\mathbbm{1}_{\mathrm{supp}\chi_n}\frac{n}{q}\pav\lf(\chi_{n-1} q^{n-1}\pav^{n-1}Z\rg)\rg\|_{L^2}+\mathbbm{1}_{n=2}B_{0,0}\|\mathbbm{1}_{\mathrm{supp}\chi_n}\pav^2 Z\|_{L^2}\\
&+\mathbbm{1}_{n=1}B_{0,0}\|\mathbbm{1}_{\mathrm{supp}\chi_n}\pav Z\|_{L^2}\\
=&\mathbbm{1}_{n\geq 3}\frac{\lambda^s B_{0,n-1}}{n^{\sigma}}\lf\|\mathbbm{1}_{\mathrm{supp}\chi_n}J^{(1,1,0)}_{0,n-1}Z\rg\|_{L^2}+\mathbbm{1}_{n=2}B_{0,0}\|\mathbbm{1}_{\mathrm{supp}\chi_n}\pav^2 Z\|_{L^2}+\mathbbm{1}_{n=1}B_{0,0}\|\mathbbm{1}_{\mathrm{supp}\chi_n}\pav Z\|_{L^2}.
\end{align*}
Now we invoke the relation \eqref{cm_qn_pavj} and the estimate \eqref{wtq} to obtain a bound for the second term
\begin{align*}
T_2\lesssim& \frac{\lambda^s B_{0,n-1} }{n^{\sigma}}\| \mathbbm{1}_{\mathrm{supp}\chi_n}J^{(0,2,0)}_{0,n-1}\|_{L^2}+B_{0,n}n^{2+\sigma}\| \mathbbm{1}_{\mathrm{supp}\chi_n}q^{(n-2)_+}\pav(\chi_{n-1}\pav^{n-1}Z)\|_{L^2}\\
&+\frac{\lambda^s B_{0,n-1} }{n^{\sigma}}(n-1)^2\| \mathbbm{1}_{\mathrm{supp}\chi_n}q^{(n-3)_+-(n-1)}(\chi_{n-1}q^{n-1}\pav^{n-1}Z)\|_{L^2}.
\end{align*}
We observe that the second term can be estimated in a similar fashion as $T_3$ in \eqref{vy2_1est_4_pf}. Hence, 
\begin{align*}
T_2\lesssim &\frac{\lambda^s B_{0,n-1} }{n^{\sigma}}\| \mathbbm{1}_{\mathrm{supp}\chi_n}J^{(0,2,0)}_{0,n-1}\|_{L^2}+\mathbbm{1}_{n\geq 3}\frac{\lambda^s B_{0,n-1}}{n^{\sigma}}\sum_{a+b=2}\lf\|\mathbbm{1}_{\mathrm{supp}\chi_n}J^{(a,b,0)}_{0,n-1}Z\rg\|_{L^2}+\mathbbm{1}_{n\in\{1,2\}}\sum_{\ell=0}^2\|\mathbbm{1}_{\mathrm{supp}\chi_1}\pav^\ell Z\|_{L^2}. 
\end{align*}

\end{proof}
\begin{lemma}\label{Lem:H&vy2-1}
Assume \eqref{asmp}.
The following estimates hold
\begin{align}\label{crd_implJ1}
\sum_{n=0}^\infty \sum_{a+b=1} B_{0,n}^2\varphi^{2n}\lf\|J^{(a,b,0)}_{0,n}(\wt \chi_1 (v_y^2-1))\rg\|_{2}^2\lesssim& \sum_{n=0}^\infty \sum_{b=0}^{1}B_{0,n}^2\varphi^{2n}\lf\|\mathbbm{1}_{{\rm supp}\{  \wt\chi_1\}}J_{0,n}^{(0,b,0)}H\rg \|_2^2\lesssim e^{-\nu^{-1/9}}(\mathcal{E}_H^{(\gamma)}+\mathcal{E}_H^{(\al)});\\
\sum_{n=0}^\infty \sum_{a+b=2} B_{0,n}^2\varphi^{2n}\|J^{(a,b,0)}_{0,n}(\wt \chi_1 (v_y^2-1))\|_{2}^2\lesssim&\sum_{n=0}^\infty\sum_{b=0}^{2 }B_{0,n}^2 \varphi^{2n}\lf\|\mathbbm{1}_{{\rm supp}\{\wt\chi_1\}}J_{0,n}^{(0,b,0)}H\rg \|_2^2\lesssim e^{-\nu^{-1/9}}\sum_{\iota=\al,
\gamma}(\mathcal{E}_H^{(\iota)}+\mathcal{D}_H^{(\iota)}). \label{crd_implJ2}
\end{align}
\end{lemma}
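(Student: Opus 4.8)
The plan is to prove each of \eqref{crd_implJ1}, \eqref{crd_implJ2} in two moves: an \emph{algebraic/product} move that yields the left-hand inequality, and a \emph{weight} move that yields the right-hand one. For the first, observe the algebraic identity $Z:=v_y^2-1=(1+H)^2-1=2H+H^2$, so $\wt\chi_1 Z=2\wt\chi_1 H+\wt\chi_1 H^2$. In the left-hand side of \eqref{crd_implJ1} (with $\mf m_0=1$) and of \eqref{crd_implJ2} (with $\mf m_0=2$), first apply the Translation Lemma~\ref{lem:JtoD}, estimate~\eqref{JtoJ_2}, with $f_0=\wt\chi_1 Z$, to absorb every term carrying a boundary weight $(n/q)^a$ with $a\ge 1$ into $\sum_n\sum_{b\le\mf m_0}B_{0,n}^2\varphi^{2n}\|J^{(0,b,0)}_{0,n}(\wt\chi_1 Z)\|_{L^2}^2$ (the smallness of $\lambda$ provides the gain there, and $\sigma_\ast\ge 8\sigma$ from \eqref{asmp} is what~\ref{lem:JtoD} needs). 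It therefore suffices to bound $\sum_n\sum_{b\le\mf m_0}B_{0,n}^2\varphi^{2n}\|\pav^b(\chi_n q^n\pav^n(\wt\chi_1 Z))\|_{L^2}^2$.

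For this, expand $\pav^n(\wt\chi_1 H)=\sum_\ell\binom n\ell\pav^{n-\ell}\wt\chi_1\,\pav^\ell H$ and $\pav^n(\wt\chi_1 H^2)=\sum_{\ell_1+\ell_2+\ell_3=n}\tfrac{n!}{\ell_1!\ell_2!\ell_3!}\pav^{\ell_1}\wt\chi_1\,\pav^{\ell_2}H\,\pav^{\ell_3}H$, insert into $\pav^b(\chi_n q^n(\cdots))$, and move the outer $\pav^b$, the cutoff $\chi_n$ and the weight $q^n$ through the factors using the ICC identities of Lemmas~\ref{lem:pavq} and~\ref{lem:cm_J} together with Lemma~\ref{lem:vy2-1est} (which controls exactly $\chi_n q^n\pav^{n+b}(\cdots)$-type objects for $b\le 2$). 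Since $\wt\chi_1$ is a fixed Gevrey-$s$ cutoff~\eqref{wt_chi_intro}, its $B_{0,n}\varphi^n$-weighted Gevrey series sums to a universal constant once $\lambda\le\lambda_0$, so the combinatorial/convolution bounds (e.g.~\eqref{prod}) give
\[
\sum_n\sum_{b\le\mf m_0}B_{0,n}^2\varphi^{2n}\|\pav^b(\chi_n q^n\pav^n(\wt\chi_1 H))\|_{L^2}^2 \ \lesssim\ \sum_n\sum_{b\le\mf m_0}B_{0,n}^2\varphi^{2n}\|\mathbbm{1}_{\supp\wt\chi_1}J^{(0,b,0)}_{0,n}H\|_{L^2}^2 ,
\]
while for the quadratic term one of the two $H$-factors is measured in $L^\infty$ (or a low Sobolev norm), which by \eqref{close_vy}, \eqref{v_y_asmp} is $\lesssim\eps\lesssim 1$, so $\wt\chi_1 H^2$ contributes a term of the same form (in fact of lower order). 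The factor $\mathbbm{1}_{\supp\wt\chi_1}$ appears because every surviving term carries either $\wt\chi_1$, one of its derivatives, or $\chi_n$, all supported in $\supp\wt\chi_1$. This proves the left-hand inequalities.

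For the right-hand inequalities, note every term just produced is of the form $\mathbbm{1}_{\supp\wt\chi_1}J^{(0,b,0)}_{0,n}H=\mathbbm{1}_{\supp\wt\chi_1}\pav^b(\chi_n q^n\pav^n H)$. On $\supp\wt\chi_1$ one has $|v|\ge 3/8-1/40$, hence $|y|\ge|v|-\|v-y\|_{L^\infty}\ge 3/8-1/40-1/160>1/4+1/16$ by \eqref{close_vy}; since $\arctan t\le\pi/2$, for $\eps$ small this gives $|y|-1/4-L\eps\arctan t\ge 1/32$, so from \eqref{defndW}, $W(t,y)\ge \frac{c_0}{\nu(1+t)}\ge c_0\,\nu^{-2/3+\eta}$ on $\supp\wt\chi_1$ for $t\le\nu^{-1/3-\eta}$ and a universal $c_0>0$; in particular $\nu^{-3/2}e^{-W}\le e^{-\nu^{-1/9}}$ for $\nu$ small. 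Writing $\mathbbm{1}_{\supp\wt\chi_1}(\cdot)=\big(\mathbbm{1}_{\supp\wt\chi_1}e^{-W/2}\big)\big((\cdot)e^{W/2}\big)$, pulling $e^{-W/2}$ out in $L^\infty$, and turning $\pav^b(\chi_n q^n\pav^n H)e^{W/2}$ into the ingredients $\partial_y^{i}H_n\,e^{W/2}\chi_n$ of the coordinate functionals \eqref{E_coord}, \eqref{D_coord} (using \eqref{W_prop} to commute $e^{W/2}$ with $\pav$, and the same ICC identities as above to move the cutoffs and $q$-weights), the $\nu$-powers weighting $\mathcal E_H^{(\alpha)}$ and $\mathcal D_H^{(\iota)}$ (of order at most $\nu^{3/2}$) are swallowed by $\nu^{-3/2}e^{-W}\le e^{-\nu^{-1/9}}$. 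For \eqref{crd_implJ1} ($b\le1$) only $\mathcal E_H^{(\gamma)}+\mathcal E_H^{(\alpha)}$ is needed; for \eqref{crd_implJ2} ($b\le2$) the top derivative forces $\mathcal D_H^{(\gamma)}+\mathcal D_H^{(\alpha)}$ as well. This completes the proof.

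The genuinely delicate step is the product expansion of $\wt\chi_1 Z=2\wt\chi_1H+\wt\chi_1H^2$ inside the degenerate $J^{(a,b,0)}_{0,n}$-norms: one must track the $\binom n\ell$ combinatorial factors against the Gevrey coefficients $B_{0,n}$ and the $\varphi^{2n}$ weights, correctly handle the commutators generated when $\pav^b$ and the boundary weights $(n/q)^a$ pass through $\chi_n$, $q^n$ and the derivatives of $\wt\chi_1$ (this is where Lemmas~\ref{lem:pavq}, \ref{lem:cm_J}, \ref{lem:vy2-1est}, \ref{lem:JtoD} are used in concert), and exploit the $\eps$-smallness from \eqref{asmp} to dispose of the $H^2$ contribution. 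By contrast, the exponential gain $e^{-\nu^{-1/9}}$ is a soft consequence of the elementary lower bound on $W$ over $\supp\wt\chi_1$.
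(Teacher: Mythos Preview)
Your proposal is correct and follows essentially the same approach as the paper: reduce the $a\ge 1$ contributions via the Translation Lemma~\ref{lem:JtoD}, then handle the remaining $a=0$ terms by a Gevrey product expansion of $\wt\chi_1 Z=\wt\chi_1(2H+H^2)$, and finally extract the $e^{-\nu^{-1/9}}$ gain from the lower bound $W\gtrsim\nu^{-2/3+\eta}$ on $\supp\wt\chi_1$. The only cosmetic difference is that the paper cites \eqref{JtoD_2} rather than \eqref{JtoJ_2} for the translation step; both belong to Lemma~\ref{lem:JtoD} and accomplish the same reduction here. Your treatment of the right-hand inequalities (the explicit computation of $|y|-1/4-L\eps\arctan t\gtrsim 1$ on $\supp\wt\chi_1$ and the absorption of the stray $\nu$- and $\langle t\rangle$-powers into the exponential) is in fact more detailed than the paper, which simply calls these ``natural consequences of the definitions.''
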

\begin{proof}

First of all, we observe that the last inequalities in \eqref{crd_implJ1} and \eqref{crd_implJ2} are natural consequences of the definitions of $\mathcal{E}^{(\al)}_H$ \eqref{E_coord} and $\mathcal{D}^{(\al)}_H$ \eqref{D_coord}. We can explicitly estimate the left hand side of \eqref{crd_implJ1} with the translation relation \eqref{JtoD_2} \siming{(Should also holds for $k=0$, Check!)} 
as follows
\begin{align*}
\text{L.H.S of }&\eqref{crd_implJ1}\lesssim \sum_{n=0}^\infty\sum_{b=0}^1 B_{0,n}^2\varphi^{2n}\lf\|\chi_{n}\pav^{b}\lf( q^n\pav^n(\wt \chi_1(v_y^2-1))\rg)\rg\|_{2}^2
\end{align*}We recall that $v_y^2-1=H(H+2).$ 
Hence to prove \eqref{crd_implJ1}, it is enough to prove
\begin{align}\label{crd_impl_1}
\sum_{n=0}^\infty\sum_{b=0}^1 B_{0,n}^2\varphi^{2n}\lf\|\chi_n\pav^b(q^n\pav^{n}(\wt \chi_1 H(H+2)))\rg\|_{2}^2\lesssim&\sum_{n=0}^\infty \sum_{b= 0}^1 B_{0,n}^2\varphi^{2n}\lf\|\mathbbm{1}_{{\rm supp}\ \wt\chi_1}J^{(0,b,0)}_{0,n} H\rg\|_{L^2}^2.
\end{align}
Similarly, by the translation  estimate \eqref{JtoD_2}, \siming{(Should also holds for $k=0$, Check!)} the following estimate guarantees \eqref{crd_implJ2}
\begin{align}
\sum_{n=0}^\infty B_{0,n}^2\varphi^{2n}\|\pav^2(\chi_nq^n\pav^{n}(\wt \chi_1H(H+2)))\|_{2}^2\lesssim&\sum_{n=0}^\infty \sum_{b=0}^2 B_{0,n}^2\varphi^{2n}\|\mathbbm{1}_{{\rm supp}\ \wt\chi_1}J^{(0,b,0)}_{0,n} H\|_{L^2}^2.\label{crd_impl_2} 
\end{align}
We comment that the estimates \eqref{crd_impl_1} and \eqref{crd_impl_2} are simple exercises in Gevrey product rule and we omit further details for the sake of brevity.

\siming
{HS: For checking purpose: 

\noindent
{\bf Step \# 2: Proof of the estimate \eqref{crd_impl_1}.}  Consider two cases, $n=0$ and $n\geq 1$. In the first case, the estimate is direct consequence of the assumption \eqref{asmp}:
\begin{align*}
B_0^2\| &\pav(\wt\chi_1(v_y^2-1))\|_2^2=B_0^2\|\pav(\wt \chi_1H(H+2))\|_2^2\\
\lesssim&\|\pav \wt\chi_1\|_\infty^2\|\mathbbm{1}_{{\rm supp}\ \wt\chi_1}H\|_{2}^2(\|\mathbbm{1}_{{\rm supp}\ \wt\chi_1}H\|_{\infty}^2+1)+ \| \wt\chi_1\pav H\|_{2}^2(\|H\|_{\infty}^2+1)\\
\lesssim&\|\mathbbm{1}_{{\rm supp}\ \wt\chi_1}J^{(0,0,0)}_{0,0}H\|_{2}^2+\|\mathbbm{1}_{{\rm supp}\ \wt\chi_1}J^{(0,1,0)}_{0,0}H\|_{2}^2.
\end{align*}
This is consistent with \eqref{crd_impl_1}. For the $n\geq 1$ case, the general scheme is to replace $\chi_n$ by $\wt \chi_1^2\chi_n$ and  use the product rule. For the factor $\pav H$, we measure it in $L^2$, and for the factor $H$, we put it in $L^\infty$ and invoke embedding theorem. We decompose the left hand side of \eqref{crd_impl_1} as follows,
\begin{align}\n
\sum_{n=1}^\infty& B_{0,n}^2\varphi^{2n}\|\wt\chi_1^2\chi_n \pav(q^n\pav^{n}(\wt \chi_1 H(H+2)))\|_{2}^2\\
\lesssim&\sum_{n=1}^\infty B_{n}^2\varphi^{2n}\|\wt \chi_1^2\chi_n\pav(q^n\pav^{n}(\wt \chi_1 H^2 ))\|_{2}^2+\sum_{n=1}^\infty B_{n}^2\varphi^{2n}\|\wt\chi_1^2\chi_n\pav(q^n\pav^{n}(\wt \chi_1 H ))\|_{2}^2=:T_1+T_2.\label{crd_implT12} 
\end{align}
Now we focus on the first term $T_1$, the estimate of the $T_2$-term is similar. \siming{(Wikipedia)} We recall the following general Leibniz rule: 
\begin{align}
\pav^n(f_1f_2\cdots f_m)=\sum_{n_1+n_2+\cdots +n_m=n}\binom{n}{n_1,n_2,\cdots,n_m}\prod_{1\leq i\leq m}f_i^{(n_i)}.
\end{align}
Here $\binom{n}{n_1,n_2,\cdots, n_m}=\frac{n!}{n_1!n_2!\cdots n_m!}$ are the multinomial coefficients. As a result, 
\begin{align}\n
T_1
\lesssim &\sum_{n=0}^\infty\bigg(\sum_{n_1+n_2+n_3= n}\binom{n}{n_1,n_2,n_3} \frac{B_{0,n}}{B_{0,n_1}B_{0,n_2}B_{0,n_3}}\\ \n
&\quad\times\lf(B_{0,n_1}\|q^{n_1}\pav^{n_1}\wt \chi_1\|_\infty\rg)\lf(B_{0,n_2}\varphi^{n_2}\|{\wt \chi_1}\chi_{n_2}q^{n_2}\pav^{n_2}H\|_{\infty}\rg)\lf(B_{0,n_3}\varphi^{n_3}\|{\wt \chi_1}\pav(\chi_{n_3}q^{n_3}\pav^{n_3}H)\|_{2}\rg)\bigg)^2\\ 
&+\sum_{n=0}^\infty\bigg(\sum_{n_1+n_2+n_3= n}\binom{n}{n_1,n_2,n_3} \frac{B_{0,n}}{B_{0,n_1}B_{0,n_2}B_{0,n_3}}\n \\ 
&\quad\times\lf(B_{0,n_1}\|\pav(q^{n_1}\pav^{n_1}\wt \chi_1)\|_\infty\rg)\lf(B_{0,n_2}\varphi^{n_2}\|\wt \chi_1\chi_{n_2}q^{n_2}\pav^{n_2}H\|_{\infty}\rg)\lf(B_{0,n_3}\varphi^{n_3}\|{\wt \chi_1}\chi_{n_3}q^{n_3}\pav^{n_3}H\|_{2}\rg)\bigg)^2\n \\
=:&T_{11}+T_{12}.\label{crd_implT112}
\end{align}
Now we use the fact that 
\begin{align*}
\binom{n}{n_1,n_2,n_3}&\frac{B_{0,n}}{B_{0,n_1}B_{0,n_2}B_{0,n_3}}=\frac{n!}{n_1!n_2!n_3!}\frac{(n_1!)^s(n_2!)^s(n_3!)^s}{(n!)^{s}}=\binom{n}{n_1, n_2, n_3}^{-\sigma-\sigma_\ast}\\
=&\binom{n}{n_1}^{-\sigma-\sigma_\ast}\binom{n-n_1}{n_2}^{-\sigma-\sigma_\ast}\binom{n-n_1-n_2}{n_3}^{-\sigma-\sigma_\ast}=\binom{n}{n_1}^{-\sigma-\sigma_\ast}\binom{n-n_1}{n_2}^{-\sigma-\sigma_\ast}.
\end{align*}
Now we estimate the $T_{11}$ term in \eqref{crd_implT112} as follows
\begin{align*}
T_{11}\lesssim&\sum_{n=1}^\infty\bigg(B_{0,n-n_1-n_2}\|\pav^{n-n_1-n_2}\wt \chi_1\|_\infty \sum_{n_1=0}^{n}\binom{n}{n_1}^{-\sigma-\sigma_\ast}\lf(B_{0,n_1}\varphi^{n_1}\|\pav( { \wt \chi_1}\chi_{n_1}q^{n_1}\pav^{n_1}H)\|_{2}\rg)\\ 
&\quad\qquad\qquad\times\sum_{n_2=0}^{n-n_1} \binom{n-n_1}{n_2}^{-\sigma-\sigma_\ast}\lf(B_{0,n_2}\varphi^{n_2}\| { \wt \chi_1}\pav(\chi_{n_2}q^{n_2}\pav^{n_2}H)\|_{2}\rg)\bigg)^2. 
\end{align*}Now we invoke the estimate \eqref{vy2_1est3} in Lemma \ref{lem:vy2-1est} to obtain \siming{(Check the low $n$ case. We should be able to use the Sobolev estimates of $H$ to get similar outcome.)}
\begin{align*}
T_{11}\lesssim&\sum_{n=1}^\infty\Bigg[\lf(\sum_{n_1=0}^{n}\binom{n}{n_1}^{-2\sigma-2\sigma_\ast}\rg)\Bigg(\sum_{n_1=0}^{n}\sum_{b=0}^1B_{0,n_1}^2\varphi^{2n_1}\|\mathbbm{1}_{\text{supp} \wt \chi_1}J^{(0,b,0)}_{0,n_1}H\|_{2}^2\\ 
&\quad\times\lf(\sum_{n_2=0}^{n-n_1} \binom{n-n_1}{n_2}^{-2\sigma-2\sigma_\ast}\rg)\lf(\sum_{n_2=0}^{n-n_1}\sum_{b'=0}^1B_{0,n_2}^2\varphi^{2n_2}\|\mathbbm{1}_{\text{supp} \wt \chi_1}J^{(0,b',0)}_{0,n_2} H\|_{2}^2 \rg)\Bigg)\Bigg]\\
&\qquad\times B_{0,n-n_1-n_2}^2\|\pav^{n-n_1-n_2}\wt \chi_1\|_\infty^2 \siming{\myr{??}}\\
\lesssim&\sum_{n=0}^\infty  \lf(\sum_{n_1=0}^{n}\sum_{b=0}^1B_{0,n_1}^2\varphi^{2n_1}\|\mathbbm{1}_{\text{supp} \wt \chi_1}J^{(0,b,0)}_{0,n_1}H\|_{2}^2\rg)\lf(\sum_{n_2=0}^{n-n_1}\sum_{b'=0}^1B_{0,n_2}^2\varphi^{2n_2}\|\mathbbm{1}_{\text{supp} \wt \chi_1}J^{(0,b',0)}_{0,n_2}H\|_{2}^2 \rg)\\ 
&\quad\times\lf(B_{0,n-n_1-n_2}^2\|\pav^{n-n_1-n_2}\wt \chi_1\|_\infty^2\rg)\\
\lesssim& \lf(\sum_{n_1=0}^{\infty}\sum_{b=1}B_{0,n_1}^2\varphi^{2n_1}\|\mathbbm{1}_{\text{supp} \wt \chi_1} J^{(0,b,0)}_{0,n_1}H\|_{2}^2\rg)\lf(\sum_{n_2=0}^{\infty}\sum_{b'=1}B_{0,n_2}^2\varphi^{2n_2}\|\mathbbm{1}_{\text{supp} \wt \chi_1}J^{(0,b',0)}_{0,n_2}H\|_{2}^2 \rg)\\ 
&\quad\times \lf(\sum_{n=n_1+n_2}^\infty  B_{0,n-n_1-n_2}^2\|\pav^{n-n_1-n_2}\wt \chi_1\|_\infty^2\rg)\\
\lesssim& \lf(\sum_{n=0}^{\infty}\sum_{b=0}^{1}B_{0,n}^2\varphi^{2n}\|\mathbbm{1}_{\text{supp} \wt \chi_1}J^{(0,b,0)}_{0,n}H\|_{2}^2\rg)^2. 
\end{align*}
Here, in the last line, we have invoked the fact that $\wt\chi_1$ is very smooth. This is consistent with \eqref{crd_impl_1}. 

To treat the $T_{12}$-term in \eqref{crd_implT112}, we observe that since $\pa_y\wt\chi_1\equiv 0$ on the support of $q'$, hence $\sup_n B_{0,n}\|\pav(q^{n}\pav^{n}\wt\chi_1)\|_\infty\lesssim 1.$ Now by similar argument as before, 
 we obtain the bound
\begin{align*}
T_2+T_{12}\lesssim \sum_{n=0}^{\infty}\sum_{b=0}^{1}B_{0,n}^2\varphi^{2n}\|\mathbbm{1}_{\text{supp} \wt \chi_1}J^{(0,b,0)}_{0,n}H\|_{2}^2.
\end{align*}
Combining these estimates and the decomposition \eqref{crd_implT12}, \eqref{crd_implT112}, we obtain the estimate \eqref{crd_impl_1}. 

\noindent
{\bf Step \# 3: Proof of \eqref{crd_impl_2}.} The proof of \eqref{crd_impl_2} is similar. Consider two cases, $n=0$ and $n\geq 1$. In the first case, the estimate is direct a consequence of the assumption \eqref{asmp}:
\begin{align*}
B_0^2\|&\pa_{v}^2(\wt\chi_1(v_y^2-1))\|_2^2=B_0^2\|\pav^2(\wt \chi_1H(H+2))\|_2^2\\
\lesssim&\|\pav^2 \wt\chi_1\|_\infty^2\|\mathbbm{1}_{{\rm supp}\ \wt\chi_1} H\|_{2}^2(\|H\|_{\infty}^2+1) 
 +\|\pav \wt\chi_1\|_\infty^2\|\mathbbm{1}_{{\rm supp}\ \wt\chi_1}\pa_{v}H\|_{2}^2(\| H\|_{\infty}^2+1)\\ &+ \| \wt\chi_1\pav^2 H\|_{2}^2(\|H\|_{\infty}^2+1)+\| \wt\chi_1\pav H\|_{2}^2\|\pav H\|_{\infty}^2\\
\lesssim&\sum_{b=0}^2\|\mathbbm{1}_{{\rm supp}\ \wt\chi_1}J^{(0,b,0)}_{0,0}H\|_{2}^2.
\end{align*}
The remaining estimates are similar to step \# 1 and we omit further details. \siming{Check!}}
\end{proof}

To conclude the subsection, we present the last two technical lemma.
\begin{lemma}[Gevrey Product Rule]\label{lem:Gprod} Consider two Gevrey smooth functions $f,g\in G^{r;\Lambda}([-1,1])$. Let \[
\overline{B}_{m,n} :=\frac{\Lambda^{m+n}}{|m,n|!^{1/r}}.\]
The following two product estimates hold
\begin{align}\n
\sum_{m+n=0}^\infty &\overline B_{m,n}^2\varphi^{2n}\|q^n\pa_x^m\Gamma^n(fg)\|_{L^2}^2\\ \n
\lesssim &\lf(\sum_{m+n=0}^\infty\overline B_{m,n}^2\varphi^{2n}\|q^n\pa_x^m\Gamma^nf\|_{L^2}^2\rg)\lf(\sum_{m+n=0}^\infty\overline B_{m,n}^2c^{2n}\varphi^{2n}\|\pa_y(q^n\pa_x^m\Gamma^n g)\|_{L^2}^2\rg)\\
&+\lf(\sum_{m+n=0}^\infty\overline B_{m,n}^2c^{2n}\varphi^{2n}\|\pa_y(q^n\pa_x^m\Gamma^nf)\|_{L^2}^2\rg)\lf(\sum_{m+n=0}^\infty\overline B_{m,n}^2\varphi^{2n}\|q^n\pa_x^m\Gamma^n g\|_{L^2}^2\rg);\label{ga_prod}\\
\n
\sum_{m+n=0}^\infty &\overline B_{m,n}^2\varphi^{2n}\|\pa_y(q^n\pa_x^m\Gamma^n(fg))\|_{L^2}^2\\ 
\lesssim &\lf(\sum_{m+n=0}^\infty\overline B_{m,n}^2\varphi^{2n}\|\pa_y(q^n\pa_x^m\Gamma^nf)\|_{L^2}^2\rg)\lf(\sum_{m+n=0}^\infty\overline B_{m,n}^2\varphi^{2n}\|\pa_y(q^n\pa_x^m\Gamma^n g)\|_{L^2}^2\rg) .\label{al_prod}
\end{align}
Here $0<c=c(r)<1$.
 
In general, for $1/\mathfrak{p}_1+1/\mathfrak{q}_1=1/\mathfrak{p}_2+1/\mathfrak{q}_2=1/\mathfrak{r}$,\begin{align}\n
\sum_{m+n=0}^\infty &\overline B_{m,n}^2\varphi^{2n}\|q^n\pa_x^m\Gamma^n(fg)\|_{L^\mathfrak{r}}^2\\ \n
\lesssim &\lf(\sum_{m+n=0}^\infty\overline B_{m,n}^2\varphi^{2n}\|q^n\pa_x^m\Gamma^nf\|_{L^{\mathfrak p_1}}^2\rg)\lf(\sum_{m+n=0}^\infty\overline B_{m,n}^2c^{2n}\varphi^{2n}\|q^n\pa_x^m\Gamma^n g\|_{L^{\mathfrak{q}_1}}^2\rg)\\
&+\lf(\sum_{m+n=0}^\infty\overline B_{m,n}^2c^{2n}\varphi^{2n}\|q^n\pa_x^m\Gamma^nf\|_{L^{\mathfrak{p}_2}}^2\rg)\lf(\sum_{m+n=0}^\infty\overline B_{m,n}^2\varphi^{2n}\|q^n\pa_x^m\Gamma^n g\|_{L^{\mathfrak{q}_2}}^2\rg).\label{prod_gen}
\end{align}Here $0<c=c(r)<1$.
\end{lemma}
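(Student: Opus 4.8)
The plan is to prove \eqref{ga_prod}, \eqref{al_prod} and \eqref{prod_gen} by one and the same scheme: a Leibniz expansion, an exact splitting of the weights $q^n$ and $\varphi^n$, H\"older's inequality (combined, in the cases \eqref{ga_prod} and \eqref{al_prod}, with the one-dimensional Sobolev embedding $H^1([-1,1])\hookrightarrow L^\infty([-1,1])$), the elementary identity $\overline B_{m,n}=\binom{m+n}{j+\ell}^{-1/r}\,\overline B_{j,\ell}\,\overline B_{m-j,n-\ell}$ together with the combinatorial bounds already established in the paper (notably \eqref{prod}, \eqref{prod2}, \eqref{gevbd1212} and Corollary \ref{comb:boun}, and their straightforward analogues for the weights $\overline B_{m,n}$), and finally Young's convolution inequality for sequences. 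Since \eqref{ga_prod} and \eqref{al_prod} are the instances $(\mathfrak p,\mathfrak q)=(\infty,2)$ of \eqref{prod_gen} up to the Sobolev step, I would carry out \eqref{prod_gen} in detail and then indicate the modifications needed for the $\partial_y$-versions.

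For \eqref{prod_gen}: since $\partial_x$ and $\Gamma=v_y^{-1}\partial_y+t\partial_x$ are commuting first-order derivations, the Leibniz rule gives $\partial_x^m\Gamma^n(fg)=\sum_{j=0}^m\sum_{\ell=0}^n\binom{m}{j}\binom{n}{\ell}(\partial_x^j\Gamma^\ell f)(\partial_x^{m-j}\Gamma^{n-\ell}g)$; multiplying by $q^n=q^\ell q^{n-\ell}$ (an exact identity, since $q=q(y)$ is a fixed function) and by $\varphi^n=\varphi^\ell\varphi^{n-\ell}$, and applying the triangle inequality followed by H\"older's inequality with exponents $(\mathfrak p_1,\mathfrak q_1)$ on the portion of the sum with $j+\ell\ge\lceil(m+n)/2\rceil$ and with $(\mathfrak p_2,\mathfrak q_2)$ on the complementary portion, reduces the left-hand side of \eqref{prod_gen} to a finite sum of terms $\binom{m}{j}\binom{n}{\ell}\overline B_{m,n}\big(\varphi^\ell\|q^\ell\partial_x^j\Gamma^\ell f\|_{L^{\mathfrak p_i}}\big)\big(\varphi^{n-\ell}\|q^{n-\ell}\partial_x^{m-j}\Gamma^{n-\ell}g\|_{L^{\mathfrak q_i}}\big)$. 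Since $\binom{m}{j}\binom{n}{\ell}\le\binom{m+n}{j+\ell}$, the displayed identity yields $\binom{m}{j}\binom{n}{\ell}\overline B_{m,n}\le\binom{m+n}{j+\ell}^{1-1/r}\overline B_{j,\ell}\overline B_{m-j,n-\ell}$, and—because $1/r\ge1$, with the borderline regime $j+\ell\sim(m+n)/2$ controlled by the combinatorial lemmas quoted above—this is bounded by $c_0^{|m,n|-|j,\ell|}\,\overline B_{j,\ell}\,\overline B_{m-j,n-\ell}$ for a universal $c_0\in(0,1)$. Assigning this geometric factor to the many-derivative side (which carries $\ge\lceil(m+n)/2\rceil$ derivatives, so in particular $|m,n|-|j,\ell|\ge n-\ell$) produces the asymmetric weight $c^{2n}$ claimed in the statement and still leaves enough geometric decay to sum the $(j,\ell)$-convolution absolutely. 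Writing $F_{j,\ell}=\overline B_{j,\ell}\varphi^\ell\|q^\ell\partial_x^j\Gamma^\ell f\|_{L^{\mathfrak p}}$ and $G_{j,\ell}=c_0^{|j,\ell|/2}\overline B_{j,\ell}\varphi^\ell\|q^\ell\partial_x^j\Gamma^\ell g\|_{L^{\mathfrak q}}$ (and symmetrically with the roles of $f$ and $g$ exchanged), the remaining double sum is $\sum_{m,n}\big(\sum_{j\le m,\,\ell\le n}F_{j,\ell}G_{m-j,n-\ell}\big)^2$, which by Young's inequality for $\ell^1*\ell^2$ (using $\sum_{j,\ell}c_0^{|j,\ell|/2}<\infty$) is $\lesssim\big(\sum F_{j,\ell}^2\big)\big(\sum G_{j,\ell}^2\big)$—exactly the right-hand side of \eqref{prod_gen}.

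For \eqref{ga_prod} and \eqref{al_prod} one takes $\mathfrak p=\infty$, $\mathfrak q=2$ and replaces the $L^\infty$-norm of the low-derivative (say $f$-) factor by $\|q^\ell\partial_x^j\Gamma^\ell f\|_{L^2}+\|\partial_y(q^\ell\partial_x^j\Gamma^\ell f)\|_{L^2}$ via Sobolev embedding on $[-1,1]$; here $\partial_y(q^\ell\partial_x^j\Gamma^\ell f)=q^\ell\partial_y\partial_x^j\Gamma^\ell f+\ell q^{\ell-1}q'\,\partial_x^j\Gamma^\ell f$, and the second term is harmless because $|q|\le1$, $q'$ is bounded, and $\ell\,\overline B_{j,\ell}\lesssim\overline B_{j,\ell-1}$ (an index shift reabsorbed into the plain-norm sum), while the plain $L^2$-term is dominated by the (weighted or unweighted) $L^2$-sums already present on the right-hand side since $c<1$; for \eqref{al_prod} the outer $\partial_y$ first lands on one of the two weighted factors, after which the same embedding is applied to the other factor. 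I expect the only genuine difficulty to be this combinatorial step—uniformly bounding $\binom{m}{j}\binom{n}{\ell}\binom{m+n}{j+\ell}^{-1/r}$ and, in the regime $j+\ell\sim(m+n)/2$, extracting the geometric gain $c_0^{|m,n|-|j,\ell|}$ that simultaneously supplies the weight $c^{2n}$ in the statement and renders the $(j,\ell)$-convolution summable; everything else (the Leibniz expansion, the exact splitting of $q^n$ and $\varphi^n$, H\"older/Sobolev on the bounded interval, and the final Young inequality) is routine.
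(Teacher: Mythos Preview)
Your proposal is correct and follows the same overall scheme as the paper: Leibniz expansion, the exact splitting $q^n=q^\ell q^{n-\ell}$ and $\varphi^n=\varphi^\ell\varphi^{n-\ell}$, a high/low split according to which factor carries at least half the derivatives, a combinatorial weight estimate, Cauchy--Schwarz plus Fubini to sum, and Sobolev embedding to pass from \eqref{prod_gen} to \eqref{ga_prod}, \eqref{al_prod}. The one place where your route genuinely differs is the combinatorial core. The paper first applies Cauchy--Schwarz against $\binom{m}{m'}^{-\zeta}\binom{n}{n'}^{-\zeta}$ (summable by \eqref{prod}) and then splits the low-side index into three ranges $T_{11},T_{12},T_{13}$: a finite piece $|m',n'|\le N$ handled by brute force, an intermediate piece via $\binom{|m,n|}{|m',n'|}^{-1}\le 7^{-|m',n'|}$, and a middle piece $|m',n'|\sim|m,n|/2$ via Stirling. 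You instead use the single clean bound $\binom{N}{k}\ge 2^{k}$ for $k\le N/2$ (each ratio $\frac{N-j+1}{k-j+1}\ge 2$), which gives $\binom{|m,n|}{|j,\ell|}^{1-1/r}\le c_0^{\,|m,n|-|j,\ell|}$ with $c_0=2^{-(1/r-1)}$ in one stroke and avoids the three-way decomposition entirely. This is a legitimate simplification.

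Two small corrections that do not affect the argument: (i) the geometric factor $c_0^{|m,n|-|j,\ell|}$ is assigned to the \emph{few}-derivative side (your $G$-sequence), not the many-derivative side as written; (ii) what you call ``Young's inequality for $\ell^1*\ell^2$'' is, in the form you need it, the weighted Cauchy--Schwarz $\bigl(\sum_{j',\ell'} c_0^{|j',\ell'|}F_{m-j',n-\ell'}H_{j',\ell'}\bigr)^2\le\bigl(\sum c_0^{|j',\ell'|}\bigr)\bigl(\sum c_0^{|j',\ell'|}F_{m-j',n-\ell'}^2H_{j',\ell'}^2\bigr)$ followed by Fubini in $(m,n)$ --- which is exactly the mechanism the paper uses as well. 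Your treatment of the residual $L^2$-term from Sobolev in \eqref{ga_prod} is slightly glib, but the paper is equally brief at that step.
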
\begin{remark}
We comment that the lemma has another form in our companion paper \cite{BHIW24a}, see, e.g., Lemma 6.27.  
\end{remark}
\begin{proof}  Throughout the proof, we use the shorthand notation $|m,n|:=m+n$. 
We recall the relation 
\begin{align*}
\binom{|m,n|}{|m',n'|}\geq \binom{m}{m'}\binom{n}{n'},\quad m'\leq m,\ n'\leq n.
\end{align*}
Now we prove the estimate \eqref{prod_gen}. To this end, we consider a finite sum (Here 
 $0<\zeta\leq \frac{1}{16}(1/r-1)$) and apply the combinatorial bound \eqref{prod}, 
\begin{align*}
&\sum_{m+n=0}^M\overline{B}_{m,n}^2\varphi^{2n} \|\pa_x^mq^n\Gamma^n(fg)\|_{L^\mathfrak{r}}^2\\
&\lesssim\sum_{m=0}^M\sum_{n=0}^{M-m}\frac{\Lambda^{2m+2n}}{|m,n|!^{2/r}}\varphi^{2n}\lf(\sum_{m'=0}^m\sum_{n'=0}^n\binom{m}{m'}^{1+\zeta-\zeta}\binom{n}{n'}^{1+\zeta-\zeta}\|\pa_{x}^{m'}q^{n'}\Gamma^{n'}f\pa_{x}^{m-m'}q^{n-n'}\Gamma^{n-n'}g\|_{L^\mathfrak{r}}\rg)^2\\
&\lesssim\sum_{m=0}^M\sum_{n=0}^{M-m}\sum_{\substack{m'\leq m,\ n'\leq n\\|m',n'|\leq 1/2|m,n|}}\frac{\Lambda^{2m+2n}}{|m,n|!^{2/r}}\varphi^{2n}\binom{m}{m'}^{2+2\zeta}\binom{n}{n'}^{2+2\zeta}\|\pa_{x}^{m'}q^{n'}\Gamma^{n'}f\|_{L^{\mathfrak{p_1}}}^2\|\pa_{x}^{m-m'}q^{n-n'}\Gamma^{n-n'}g\|_{L^{\mathfrak{q}_1}}^2\\
&\quad+\sum_{m=0}^M\sum_{n=0}^{M-m}\sum_{\substack{m'\leq m,\ n'\leq n\\|m',n'|> 1/2|m,n|}}\frac{\Lambda^{2m+2n}}{|m,n|!^{2/r}}\varphi^{2n}\binom{m}{m'}^{2+2\zeta}\binom{n}{n'}^{2+2\zeta}\|\pa_{x}^{m'}q^{n'}\Gamma^{n'}f\|_{L^{\mathfrak{p}_2}}^2\|\pa_{x}^{m-m'}q^{n-n'}\Gamma^{n-n'}g\|_{L^{\mathfrak{q}_2}}^2\\
&=: T_1+T_2.
\end{align*}
We focus on the $T_1$ term, and treat the $T_2$ term in a similar fashion. 
\ifx We estimate the $T_1$-term as follows:
\begin{align*}
T_1\lesssim&\sum_{|m,n|=0}^M \sum_{|m',n'|=0}^{ |m,n| }\frac{\overline{B}_{m,n}^2}{\overline{B}_{m-m',n-n'}^2\overline{B}_{m',n'}^2 }\binom{|m,n|}{|m',n'|}^{2+2\zeta}\\
&\times\overline{B}_{m',n'}^2\varphi^{2n'}\|\pa_{x}^{m'}q^{n'}\Gamma^{n'}f\|_{\dot H_y^1}^2\overline{B}_{m-m',n-n'}^2\varphi^{2n-2n'}\|\pa_{x}^{m-m'}q^{n-n'}\Gamma^{n-n'}g\|_{L^2}^2\\
\lesssim&\sum_{|m,n|=0}^M\sum_{|m',n'|=0}^{|m,n|}\binom{|m,n|}{|m',n'|}^{-2(1/r-1)+2\zeta}\overline{B}_{m',n'}^2\varphi^{2n'}\|\pa_{x}^{m'}q^{n'}\Gamma^{n'}f\|_{\dot H_y^1}^2\overline{B}_{m-m',n-n'}^2\varphi^{2n-2n'}\|\pa_{x}^{m-m'}q^{n-n'}\Gamma^{n-n'}g\|_{L^2}^2\\
\lesssim&\lf(\sum_{|m,n|=0}^M\overline{B}_{m,n}^2\varphi^{2n}\|\pa_{x}^{m}q^{n}\Gamma^{n}f\|_{\dot H^1}^2\rg)\lf(\sum_{|m,n|=0}^{M}\overline{B}_{m,n}^2\varphi^{2n}\|\pa_{x}^{m}q^{n}\Gamma^{n}g\|_{L^2}^2\rg). 
\end{align*}\myr{To obtain better estimate, we do the following}\fi  

We further decompose the $T_1$ term as follows:\begin{align*}
T_1\leq&\sum_{|m,n|=0}^M\sum_{|m',n'|=0}^{|m,n|}\mathbbm{1}_{m'\leq m,\ n'\leq n}\frac{\overline{B}_{m,n}^2}{\overline{B}_{m-m',n-n'}^2\overline{B}_{m',n'}^2 }\binom{|m,n|}{|m',n'|}^{2+2\zeta}\\
&\times\overline{B}_{m',n'}^2\varphi^{2n'}\|\pa_{x}^{m'}q^{n'}\Gamma^{n'}f\|_{L^{\mathfrak{p}_1}}^2\overline{B}_{m-m',n-n'}^2\varphi^{2n-2n'}\|\pa_{x}^{m-m'}q^{n-n'}\Gamma^{n-n'}g\|_{L^{\mathfrak{q}_1}}^2\\
\lesssim&\sum_{|m,n|=0}^M\lf(\sum_{|m',n'|=0}^{\min\{N,\lfloor|m,n|/8\rfloor\}}+\sum_{\min\{N,\lfloor|m,n|/8\rfloor\}+1}^{\lceil|m,n|/8\rceil}+\sum_{|m',n'|=\lceil|m,n|/8\rceil}^{|m,n|}\rg)\mathbbm{1}_{m'\leq m,\ n'\leq n}\binom{|m,n|}{|m',n'|}^{-2(1/r-1)+2\zeta}\\
&\times\overline{B}_{m',n'}^2\varphi^{2n'}\|\pa_{x}^{m'}q^{n'}\Gamma^{n'}f\|_{L^{\mathfrak{p}_1}}^2\overline{B}_{m-m',n-n'}^2\varphi^{2n-2n'}\|\pa_{x}^{m-m'}q^{n-n'}\Gamma^{n-n'}g\|_{L^{\mathfrak{q}_1}}^2\\
=:&T_{11}+T_{12}+T_{13}.
\end{align*}
We will pick a constant $N=N(r)$ large. The estimate of the $T_{11}$-term is direct:  
\begin{align*}
T_{11}
\leq& C(N,c)\sum_{{|m,n|}=0}^{M}\sum_{{|m',n'|}=0}^{N} \mathbbm{1}_{m'\leq m,\ n'\leq n}\overline{B}_{m',n'}^2c^{2n'}\varphi^{2n'}\|\pa_x^{m'}q^{n'}\Gamma^{n'} f\|_{L^{\mathfrak{p}_1}}^2\\ &\qquad\qquad\qquad\qquad\times \overline{B}_{m-m',n-n'}^2 \varphi^{2(n-n')}\|\pa_x^{m-m'}q^{n-n'}\Gamma^{n-n'} g\|_{L^{\mathfrak{q}_2}}^2\\
\leq &C(N,c)\lf(\sum_{|m,n|=0}^M\overline{B}_{m,n}^2c^{2n}\varphi^{2n}\|\pa_{x}^{m}q^{n}\Gamma^{n}f\|_{L^{\mathfrak{p}_1}}^2\rg)\lf(\sum_{|m,n|=0}^{M}\overline{B}_{m,n}^2\varphi^{2n}\|\pa_{x}^{m}q^{n}\Gamma^{n}g\|_{L^{\mathfrak{q}_1}}^2\rg).
\end{align*}
Here in the last line, we commute the summation order. 
Now we estimate the $T_{12}$ term. We note the following relation 
\begin{align*}
\binom{|m,n|}{|m',n'|}^{-1}\mathbbm{1}_{|m',n'|\leq |m,n|/8}=\frac{|m',n'|!}{|m,n|(|m,n|-1)\cdots(|m,n|-|m',n'|+1)}\mathbbm{1}_{|m',n'|\leq |m,n|/8}\leq  {7}^{-|m',n'|}.
\end{align*}As a result, we obtain 
\begin{align*}T_{12}\lesssim& \sum_{{|m,n|}=0}^{M}\sum_{{|m',n'|}=0}^{\lfloor |m,n|/8\rfloor}\mathbbm{1}_{m'\leq m,\ n'\leq n}7^{-2|m',n'|(1/r-1-\zeta)}\overline{B}_{m',n'}^2\varphi^{2n'}\|\pa_x^{m'}q^{n'}\Gamma^{n'} f\|_{L^{\mathfrak{p}_1}}^2\\
&\times \overline{B}_{m-m',n-n'}^2 \varphi^{2(n-n')}\|\pa_x^{m-m'}q^{n-n'}\Gamma^{n-n'} g\|_{L^{\mathfrak{q}_1}}^2\\ 
\lesssim &\lf(\sum_{|m,n|=0}^M\overline{B}_{m,n}^2c^{2n}\varphi^{2n}\|\pa_{x}^{m}q^{n}\Gamma^{n}f\|_{L^{\mathfrak{p}_1}}^2\rg)\lf(\sum_{|m,n|=0}^{M}\overline{B}_{m,n}^2\varphi^{2n}\|\pa_{x}^{m}q^{n}\Gamma^{n}g\|_{L^{\mathfrak{q}_1}}^2\rg).
\end{align*} 
Finally, we estimate $T_{13}$. Note the Stirling's formula $|m,n|!\sim |m,n|^{|m,n|+1/2}e^{-|m,n|}$ implies that for ${|m,n|},{|m',n'|},{|m,n|}-{|m',n'|}\geq N\gg 1,{1/r}>1$ 
\begin{align*}&\binom{|m,n|}{|m',n'|}^{-{1/r}+1+\zeta}\lesssim \lf(\frac{{|m',n'|}^{{|m',n'|}+1/2}({|m,n|}-{|m',n'|})^{{|m,n|}-{|m',n'|}+1/2}}{{|m,n|}^{{|m,n|}+1/2}}\rg)^{{1/r}-1-\zeta}\\
&\lesssim \lf(\frac{7}{8}\rg)^{{|m,n|}({1/r}-1-\zeta)}(\sqrt{{|m,n|}})^{{1/r}-1-\zeta}\lesssim  \lf(\frac{7}{8}\rg)^{\frac{{|m,n|}({1/r}-1-\zeta)}{2}} ,\quad {|m',n'|},|{|m,n|}-{|m',n'|}|\leq 7{|m,n|}/8.
\end{align*}
Hence,  \begin{align*}T_{13}
\lesssim &\lf(\sum_{|m,n|=0}^M\overline{B}_{m,n}^2c^{2n}\varphi^{2n}\|\pa_{x}^{m}q^{n}\Gamma^{n}f\|_{L^{\mathfrak{p}_1}}^2\rg)\lf(\sum_{|m,n|=0}^{M}\overline{B}_{m,n}^2\varphi^{2n}\|\pa_{x}^{m}q^{n}\Gamma^{n}g\|_{L^{\mathfrak{q}_1}}^2\rg).
\end{align*}  Apply similar argument to the $T_2$ term yields the similar bounds. 
Combining the estimates above and taking $M\rightarrow\infty$ yield \eqref{prod_gen}. \myr{The estimate \eqref{ga_prod} can be derived using \eqref{prod_gen} and  Sobolev embedding.} The estimate for the \eqref{al_prod} is similar. 
\end{proof} 
\begin{lemma} \label{lem:ExtToInt}
  Consider $s\in(1,5/4),\, t\in[0, \nu^{-1/3-\eta}],\, \eta \leq \frac{1}{78}$. Then, 
\begin{align} 
\sum_{m+n=0}^\infty& \frac{\wt \lambda ^{2(m+n)\wt s}}{ ((m+n)!)^{2\wt s}}\lf\||k|^m q^n\Gamma_k^nf_k\rg\|_{L^2([-1/2,1/2]^c)}^2  \lesssim \sum_{m+n=0}^\infty{\bf a}^2_{m,n}(t)\|\chi_{m+n}|k|^m q^n \Gamma_k^n f_k \|_{L^2}^2\exp\lf\{ \frac{1}{\mathfrak{C}}\nu^{-2/3+\eta}\rg\},\label{glu_rl}\\
&\hspace{5cm}\frac{1}{2}<\frac{1}{\wt s}=\frac{34}{19(2s +1)},\quad\wt\lambda =\mathcal{C}(\wt s,s,\mathfrak{C},\lambda ).\n
\end{align} 
Note that if $s\leq 5/4$, we have that  $1/\wt s =\frac{34}{19(2\frac{5}{4}+{1})}\geq \frac{68}{133}>0.51>1/2$. 
\end{lemma}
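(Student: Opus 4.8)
The plan is to reduce the statement to a pointwise comparison of the two Gevrey weight sequences, and then to verify that comparison by a Stirling computation in which the precise value of $\wt s$ and the smallness of $\eta$ are exactly calibrated. First, since $\chi_{m+n}\equiv 1$ a.e. on $[-\tfrac12,\tfrac12]^c$ for every $m,n\ge 0$ — this is \eqref{chi:prop:1} together with the convention $\chi_0\equiv 1$ — we have the termwise majorization $\||k|^mq^n\Gamma_k^nf_k\|_{L^2([-1/2,1/2]^c)}\le\|\chi_{m+n}|k|^mq^n\Gamma_k^nf_k\|_{L^2}$. Hence it suffices to establish, for all $m,n\ge 0$ and all $t\le\nu^{-1/3-\eta}$, the scalar inequality
\[
\frac{\wt\lambda^{2(m+n)\wt s}}{((m+n)!)^{2\wt s}}\ \le\ C\,\bold{a}_{m,n}^2(t)\,\exp\Big\{\tfrac1{\mathfrak{C}}\nu^{-2/3+\eta}\Big\},
\]
after which the Lemma follows by writing $\tfrac{\wt\lambda^{2N\wt s}}{N!^{2\wt s}}=\big(\tfrac{\wt\lambda^{2N\wt s}/N!^{2\wt s}}{\bold{a}_{m,n}^2}\big)\bold{a}_{m,n}^2$ with $N=m+n$ and summing in $(m,n)$ (the sum of $\bold{a}_{m,n}^2\|\chi_{m+n}(\cdots)\|_{L^2}^2$ is precisely the right-hand side of \eqref{glu_rl}).

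Recalling $\bold{a}_{m,n}=(\lambda(t)^N/N!)^s\langle t\rangle^{-(1+n)}$ with $\lambda(t)\ge\lambda_0$ from \eqref{Gev:la}, $\langle t\rangle^{1+n}\le\langle t\rangle^{1+N}$, and choosing $\wt\lambda$ small enough that $\wt\lambda^{\wt s}\le\lambda_0^s$ — this is the meaning of $\wt\lambda=\mathcal C(\wt s,s,\mathfrak{C},\lambda)$ — the displayed inequality reduces (after dividing, taking square roots, and using $\lambda(t)\ge\lambda_0$) to
\[
N!^{-(\wt s-s)}\,\langle t\rangle^{1+N}\ \le\ \exp\Big\{\tfrac1{2\mathfrak{C}}\nu^{-2/3+\eta}\Big\},\qquad N\ge 0,\ \ t\le\nu^{-1/3-\eta}.
\]
For $\nu$ bounded away from $0$ this is immediate, since then $\langle t\rangle$ is bounded and $N!^{-(\wt s-s)}$ is summable; so assume $\nu$ small. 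I would then use the \emph{sharp} Stirling bound $N!\ge(N/e)^N$, i.e. $\log N!\ge N\log N-N$ — using $\log N!\gtrsim N\log N$ with any constant smaller than $1$ here would destroy the exponent budget, so this point matters — together with $\langle t\rangle\le 2\nu^{-1/3-\eta}$. Taking logarithms, the claim follows once
\[
\psi(N):=(N+1)\big(\log 2+(\tfrac13+\eta)|\log\nu|\big)-(\wt s-s)\,(N\log N-N)\ \le\ \tfrac1{2\mathfrak{C}}\nu^{-2/3+\eta}.
\]

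Optimizing $\psi$ over $N\ge1$ (solving $\psi'(N)=\log 2+(\tfrac13+\eta)|\log\nu|-(\wt s-s)\log N=0$) gives $N_\ast=2^{1/(\wt s-s)}\nu^{-(1/3+\eta)/(\wt s-s)}$ and, after the cancellation $(\wt s-s)N_\ast\log N_\ast=N_\ast(\log 2+(\tfrac13+\eta)|\log\nu|)$, one gets $\psi(N_\ast)=(\wt s-s)N_\ast+O(|\log\nu|)$; the small-$N$ regime is handled trivially since there $\psi(N)=O(|\log\nu|)$. Thus the bound closes provided $(\tfrac13+\eta)/(\wt s-s)<\tfrac23-\eta$, so that $\nu^{-(1/3+\eta)/(\wt s-s)}=o(\nu^{-2/3+\eta})$. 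This is exactly where the special choice $\tfrac1{\wt s}=\tfrac{34}{19(2s+1)}$ enters: it gives $\wt s-s=\tfrac{4s+19}{34}\in[\tfrac{23}{34},\tfrac{12}{17}]$ for $s\in(1,\tfrac54)$, whence $(\tfrac13+\eta)/(\wt s-s)\le(\tfrac13+\eta)\tfrac{34}{23}$, and a one-line computation shows $(\tfrac13+\eta)\tfrac{34}{23}<\tfrac23-\eta$ precisely when $\eta<\tfrac4{57}$, which is comfortably implied by $\eta\le\tfrac1{78}$.

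The main obstacle — and the only non-routine step — is this exponent calibration: everything hinges on the competition between the temporal growth $\langle t\rangle^{1+N}\lesssim\nu^{-(1/3+\eta)(1+N)}$ (which is why one cannot afford to keep the $\varphi^{1+n}$ weights lossily) and the Gevrey gain $N!^{-(\wt s-s)}$ coming from the strictly larger exterior index $\wt s>s$; one must optimize in $N$ and check that the super-polynomial blow-up stays below $\exp\{c\,\nu^{-2/3+\eta}\}$ over the critical time-scale. The remaining ingredients — the termwise majorization via $\chi_{m+n}\equiv1$, the reduction to a scalar weight comparison, and the use of $N!\ge(N/e)^N$ — are straightforward once this is arranged.
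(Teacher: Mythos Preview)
Your argument is correct. The termwise reduction via $\chi_{m+n}\equiv 1$ on $[-\tfrac12,\tfrac12]^c$, the scalar weight comparison, the Stirling lower bound $\log N!\ge N\log N-N$, and the one-variable optimization in $N$ all go through; the exponent check $(\tfrac13+\eta)/(\wt s-s)<\tfrac23-\eta$ with $\wt s-s=(4s+19)/34$ is exactly right and indeed yields the threshold $\eta<4/57$, comfortably covering $\eta\le 1/78$. One cosmetic point: when you say ``$N!^{-(\wt s-s)}$ is summable'' for the bounded-$\nu$ regime you really mean the sequence $N!^{-(\wt s-s)}C_0^{\,1+N}$ is bounded in $N$, which is what is needed for the scalar inequality --- but the conclusion is the same.

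Your route is genuinely different from the paper's. The paper does not take logs and optimize; instead it fixes $(m,n)$ and lower-bounds the exponential by a single Taylor term, $\exp\{\tfrac{1}{2\mathfrak C}\nu^{-2/3+\eta}\}\ge \tfrac{1}{N!}(\tfrac{1}{2\mathfrak C}\nu^{-2/3+\eta})^N$, with the \emph{integer} $N=\lfloor\tfrac{2/3+2\eta}{2/3-\eta}(m+n)\rfloor$ chosen so that the resulting power of $\nu$ exactly cancels $(1+t^2)^{-(m+n)}$. The remaining task is then to compare $N!$ with $((m+n)!)^{\beta}$ for the correct $\beta$; the paper does this via log-convexity of the Gamma function applied to $\Gamma(\lfloor\tfrac{18}{17}(m+n)\rfloor+1)$ between the nodes $m+n+1$ and $2(m+n)+1$, together with the elementary bound $(2M)!\le 2^{2M}(M!)^2$, and this is precisely where the formula $2\wt s=\tfrac{19}{17}(2s+1)$ emerges (the paper specializes to $\eta=1/78$, whence the ratio $18/17$). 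Your approach is more elementary --- Stirling and calculus in place of Gamma-function convexity --- and has the advantage that it transparently handles the full range $\eta<4/57$ rather than the endpoint. The paper's approach, on the other hand, makes the dependence of $\wt\lambda$ on $\mathfrak C$ explicit (their final coefficient carries a factor $(2\mathcal G)^{-2(m+n)}$ with $\mathcal G=2\mathfrak C$), whereas in your argument the $\mathfrak C$-dependence is hidden in the choice of the threshold $\nu_0$ separating the two regimes.
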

\begin{proof}
On the time interval  $t\in[0,\nu^{-1/3-\eta}]$, we have the following estimate:
\begin{align*}
\exp\lf\{\frac{1}{\mathfrak{C}}\nu^{-2/3+\eta}\rg\}\varphi^{2n+2}\geq &\exp\lf\{\frac{1}{\mathfrak{C}}\nu^{-2/3+\eta}\rg\}\varphi^{2(m+n)+2}\geq \frac{1}{C}\exp\lf\{\frac{1}{\mathfrak{C}}\nu^{-2/3+\eta}\rg\}(1+t^2)^{-m-n-1}\\
\geq & \frac{1}{C} \exp\lf\{\frac{1}{2\mathfrak{C}}\nu^{-2/3+\eta}\rg\}\ \frac{1}{N!}\frac{1}{(2\mathfrak{C})^N\nu^{\lf(\frac{2}{3}-\eta\rg)N}}\  \frac{1}{(1+t^2)^{ m+n }}\ \frac{1}{1+t^2}\\
\geq & \frac{1}{C}\exp\lf\{\frac{1}{4\mathfrak{C}}\nu^{-2/3+\eta}\rg\}\frac{1}{N!}\frac{1}{(2\mathfrak{C})^N2^{m+n}}\nu^{-\lf(\frac{2}{3}-\eta\rg)N+\lf(\frac{2}{3}+2\eta\rg)(m+n) }. 
\end{align*}
By setting $N=\lf\lfloor\frac{2/3+2\eta}{2/3-\eta}(m+n)\rg\rfloor$ and $\mathcal{G}=2\mathfrak C$, we have
\begin{align*}
\exp\lf\{\frac{1}{\mathfrak{C}}\nu^{-2/3+\eta}\rg\}\varphi^{2n+2}
\geq\frac{1}{C\mathcal{G}^{\lf\lfloor\frac{2/3+2\eta}{2/3-\eta}(m+n)\rg\rfloor}2^{m+n}\lf\lfloor \frac{2/3+2\eta}{2/3-\eta}(m+n)\rg\rfloor!}. 
\end{align*}
By invoking the facts that $\chi_{m+n}\equiv 1,\ \forall \ m+n\in \mathbb{N},  \ |y|\in[-1/2,1/2]^c$, we obtain the following
\begin{align}\sum_{m+n=0}^\infty&\frac{\lambda^{2(m+n)s}\varphi^{2n+2}}{((m+n)!)^{2s}}\| \chi_{m+n}|k|^m q^n \Gamma_k^n f_k \|_{L^2}^2 \exp\lf\{\frac{1}{\mathfrak{C}}\nu^{-2/3+\eta}\rg\} \n \\
\gtrsim& \sum_{ n+m=0}^\infty \frac{\lambda ^{2(m+n)s} }{\mathcal{G}^{\lf\lfloor\frac{2/3+2\eta}{2/3-\eta}(m+n)\rg\rfloor}2^{m+n}((m+n)!)^{2s}\lf\lfloor{\frac{2/3+2\eta}{2/3-\eta}(m+n)}\rg\rfloor!}\|\chi_{m+n}|k|^m \Gamma_k^nf_k\|_{L^2}^2\n \\
\gtrsim &\sum_{ n+m=0}^\infty \frac{\lambda^{2(m+n)s} }{(\sqrt{2}\mathcal{G})^{2(m+n)}\lf(\lf\lfloor\frac{2/3+2\eta}{2/3-\eta}(m+n)\rg\rfloor!\rg)^{2s+1}} \|\mathbbm{1}_{|y|\geq1/2} |k|^m q^n\Gamma_k^nf_k\|_{L^2}^2. \label{Gmm_fnc}
\end{align}

To estimate the right hand side of \eqref{Gmm_fnc}, we use the Gamma function $\Gamma(n)=(n-1)!,\quad n\in \mathbb{N}\backslash \{0\}$ and the log convexity of the Gamma function:
\begin{align*}
\Gamma(\theta x_1+(1-\theta)x_2)\leq \Gamma(x_1)^{\theta}\Gamma(x_2)^{1-\theta},\quad \theta\in[0,1], x_1,x_2>0.
\end{align*}
Since $\eta=\frac{1}{78} $,  so $\frac{2/3+2\eta}{2/3-\eta}=\frac{18}{17}$, and
\begin{align*}
\left\lfloor \frac{2/3+2\eta}{2/3-\eta}(m+n)\right\rfloor!=&\Gamma\lf(\lf\lfloor\frac{18}{17}(m+n)\rg\rfloor+1\rg)\\
\leq& \Gamma(m+n+1)^{\theta}\Gamma(2(m+n)+1)^{1-\theta},\quad \theta = \frac{2(m+n)-\lfloor\frac{18}{17}(m+n)\rfloor}{m+n}\geq\frac{16}{17}.
\end{align*}
Now we estimate the $\Gamma(2(m+n)+1)=(2(m+n))!$:
\begin{align*}
(2(m+n))!=\prod_{\ell_1=1}^{(m+n)}(2\ell_1)\prod_{\ell_2=0}^{(m+n-1)}(2\ell_2+1)\leq 2^{2(m+n)}((m+n)!)^2.
\end{align*}
Hence,
\begin{align*}
\left\lfloor \frac{2/3+2\eta}{2/3-\eta}(m+n)\right\rfloor!\leq& ((m+n)!)^{\theta}[(2(m+n))!]^{1-\theta} 
\leq  (m+n)![(2(m+n))!]^{\frac{1}{17}}\\
\leq & 2^{(m+n)\frac{ 2}{17} }((m+n)!)^{\frac{19}{17} }.
\end{align*}
Therefore, the right hand side of  \eqref{Gmm_fnc} has the following lower bound,
\begin{align*}
  \sum_{ m+n=0}^\infty &\frac{\lambda ^{2(m+n)s} }{(\sqrt{2}\mathcal{G})^{2(m+n)}\lf(\lf\lfloor\frac{2/3+2\eta}{2/3-\eta}(m+n)\rg\rfloor!\rg)^{2s+1}}
  \|\mathbbm{1}_{|y|  \geq1/2}|k|^m q^n\Gamma_k^nf_k\|_{L^2}^2\\
\gtrsim&\sum_{ m+n=0}^\infty \frac{\lambda ^{2(m+n)s} }{(\sqrt{2}\mathcal{G})^{2(m+n)} 2^{\frac{2}{17}(m+n)(2s+1)}((m+n)!)^{\frac{19}{17}(2s+1)}}\|\mathbbm{1}_{|y|\geq 1/2 } |k|^m q^n\Gamma_k^nf_k\|_{L^2}^2\\
\gtrsim&\sum_{ m+n=0}^\infty  \frac{ \lambda^{2(m+n) s}  }{ ({2}\mathcal{G})^{2(m+n)} ((m+n)!)^{\frac{19}{17}(2s+1)}}\|\mathbbm{1}_{|y|\geq 1/2} |k|^m q^n\Gamma_k^nf_k\|_{L^2}^2.
\end{align*} 
Hence the resulting Gevrey index $\wt s$ and radius of analyticity are 
$${2\wt s}=\frac{19}{17}\lf({2s}+1\rg)\Rightarrow \frac{1}{\wt s}=\frac{34}{19(2s+1)},\ \
\wt \lambda =\wt \lambda(\lambda,s,\wt s,\mathfrak C),$$ 
which completes the proof. 
\end{proof}

\ifx
\begin{lemma}Consider a function $g(\cdot)\in C_c^\infty(\mathbb{T}\times(-\pi,\pi))$. There exists a universal constant $\mathfrak{C}>1$ such that
\begin{align}\|\exp\{(|k|^2+|\eta|^2)^{r/2}\}\widehat{g}(k,\eta)\|_{L_\eta^2}^2
\lesssim\sum_{m+n=0}^\infty \frac{\mathfrak {C}^{m+n}}{((m+n)!)^{2/r}}\||k|^m|\eta|^n \widehat{g}(k,\eta)\|_{L_\eta^2}^2?
\end{align}
\end{lemma}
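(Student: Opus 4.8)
The plan is to reduce the claimed $L^2_\eta$ inequality to an elementary pointwise inequality between Fourier symbols, and then to prove that symbol inequality by comparing power-series coefficients. First I would use that every term of the series on the right is nonnegative, so Tonelli's theorem lets me interchange the sum over $(m,n)$ with the $\eta$-integral. Writing $\xi=(k,\eta)$, the right-hand side then equals $\int_{\mathbb R}M(\xi)\,|\widehat g(\xi)|^2\,d\eta$ with the symbol
\[
M(\xi):=\sum_{m,n\ge 0}\frac{\mathfrak C^{m+n}}{((m+n)!)^{2/r}}\,|k|^{2m}|\eta|^{2n},
\]
while the left-hand side equals $\int_{\mathbb R}e^{2|\xi|^{r}}|\widehat g(\xi)|^2\,d\eta$ (where $|\xi|^2=|k|^2+\eta^2$). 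Hence it suffices to establish the pointwise bound $e^{2|\xi|^{r}}\le C\,M(\xi)$ for all $\xi\in\mathbb R^2$, with $C$ absolute; the hypothesis $g\in C_c^\infty$ then plays no further role beyond legitimizing the manipulations, and the resulting inequality holds in $[0,\infty]$ (so it carries content exactly when the right-hand side is finite, i.e.\ when $g$ lies in the corresponding Gevrey class). I would next collapse the bivariate symbol to a univariate one: grouping by $N=m+n$ and using $(|k|^2+\eta^2)^N=\sum_{m+n=N}\binom Nm|k|^{2m}\eta^{2n}\le 2^N\sum_{m+n=N}|k|^{2m}\eta^{2n}$, one gets $M(\xi)\ge E(|\xi|^2)$, where $E(y):=\sum_{N\ge 0}\frac{(\mathfrak C/2)^{N}}{(N!)^{2/r}}\,y^{N}$. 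Everything then reduces to showing $E(y)\gtrsim e^{2y^{r/2}}$ for all $y\ge 0$, once $\mathfrak C$ is chosen suitably.

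For this last comparison I would argue term by term on the Taylor expansion $e^{2y^{r/2}}=\sum_{j\ge 0}\frac{2^{j}}{j!}\,y^{rj/2}$. When $y\le 1$ the left side is at most $e^2$ while $E(y)\ge 1$ (the $N=0$ term), so there is nothing to prove. When $y\ge 1$, set $N_j:=\lceil rj/2\rceil$, so that $rj/2\le N_j$ and therefore $y^{rj/2}\le y^{N_j}$; matching the $j$-th term of $e^{2y^{r/2}}$ to the $N_j$-th term of $E$ gives
\[
\frac{2^{j}}{j!}\,y^{rj/2}\ \le\ \theta_j\,\frac{(\mathfrak C/2)^{N_j}}{(N_j!)^{2/r}}\,y^{N_j},\qquad
\theta_j:=\frac{2^{j}\,(N_j!)^{2/r}}{j!\,(\mathfrak C/2)^{N_j}}.
\]
A Stirling-formula computation, using $N_j=\tfrac{r}{2}j+\mathcal O(1)$ and $\tfrac2r N_j=j+\mathcal O(1)$, reduces $\theta_j$ to $\bigl(r/(\mathfrak C/2)^{r/2}\bigr)^{j}$ times a factor algebraic in $j$; choosing $\mathfrak C$ so that $(\mathfrak C/2)^{r/2}>r$ — which, since $\sup_{r>0}r^{2/r}=e^{2/e}<2.1$, is achieved by an absolute constant (e.g.\ $\mathfrak C=5$ works for every $r>0$) — makes $\theta_j$ bounded uniformly in $j$. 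Summing over $j$ and regrouping by $N$, using that each value $N$ is attained by at most $\lceil 2/r\rceil+1$ indices $j$, yields $e^{2y^{r/2}}\le C\,E(y)$ with $C$ depending only on $r$. Combining this with the reduction and the binomial collapse above proves the lemma.

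The main (in fact the only) obstacle is the bookkeeping in the term-by-term step: one must track the Stirling corrections carefully enough to land exactly on the exponent coefficient $2$ on the left, rather than on $e^{c|\xi|^{r}}$ with an unfavorable $c$, and it is precisely the freedom in the (universal) constant $\mathfrak C$ that absorbs this mismatch. One could alternatively invoke the known asymptotics $\sum_{N}z^{N}/(N!)^{p}\asymp z^{(1-p)/(2p)}e^{p z^{1/p}}$ as $z\to\infty$ (here $p=2/r$, $z=(\mathfrak C/2)y$), but the elementary term matching keeps the argument self-contained and makes the role of $\mathfrak C$ transparent.
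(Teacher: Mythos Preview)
Your argument is correct, and it takes a genuinely different route from the paper's. The paper does not reduce to a pointwise symbol inequality; instead it first uses the subadditivity $(|k|^2+|\eta|^2)^{r/2}\le |k|^r+|\eta|^r$ to factor the exponential as $e^{|k|^r}e^{|\eta|^r}$, expands each factor as $\sum_m |k|^{rm}/m!$ and $\sum_n |\eta|^{rn}/n!$, and then applies Cauchy--Schwarz on the double sum (with a geometric weight) followed by the interpolation $\||k|^{rm}|\eta|^{rn}\widehat g\|_{L^2}^2\le \||k|^{m}|\eta|^{n}\widehat g\|_{L^2}^{2r}\|\widehat g\|_{L^2}^{2(1-r)}$ and a further H\"older step to pass from fractional to integer powers. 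The combination $m!\,n!\to (m+n)!$ is then handled via $\binom{m+n}{n}\le 2^{m+n}$.

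Your approach, by contrast, reduces everything to the single pointwise comparison $e^{2y^{r/2}}\lesssim \sum_N (\mathfrak C/2)^N y^N/(N!)^{2/r}$ and proves this by term-matching with Stirling (equivalently, by the known large-$y$ asymptotics of this Mittag--Leffler-type series). This is more direct and makes the threshold on $\mathfrak C$ explicit (namely $(\mathfrak C/2)^{r/2}>r$), at the cost of the Stirling bookkeeping you flag. The paper's route avoids Stirling entirely but is less transparent about where the constant $\mathfrak C$ comes from and relies on the somewhat hidden absorption of the factor $\|\widehat g\|_{L^2}^{2(1-r)}$ back into the $(m,n)=(0,0)$ term of the right-hand side. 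Either proof is fine; yours is arguably cleaner conceptually.
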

\begin{proof}%
First of all, we observe the relation $(x+y)^{r/2}\leq x^{r/2}+y^{r/2},\, x, y\geq 0,\, r\in(0,1)$. This is a consequence of the concavity of the function $(\cdot)^{r/2}$. 

We estimate the left hand side as follows,
\begin{align*}
\|\exp\{(|k|^2+|\eta|^2&)^{r/2}\}\widehat g( k,\eta)\|_{L^2_\eta}^2\leq \|\exp\{|k|^r+|\eta|^r\}\widehat g(k,\eta)\|_{L_\eta^2}^2\\
=&\lf\|\sum_{m=0}^\infty \sum_{n=0}^\infty \frac{|k|^{rm}}{m!}\frac{|\eta|^{rn}}{n!}\widehat{g}(k,\eta)\rg\|_{L_\eta^2}^2\\
\leq&\lf(\sum_{m=0}^\infty \sum_{n=0}^\infty \lf(\frac{2}{ 3 }\rg)^{m+n}\rg) \lf(\sum_{m=0}^\infty \sum_{n=0}^\infty\lf(\frac{3}{2}\rg)^{m+n}\frac{1}{(m!)^2(n!)^2} \lf\||k|^{rm}  |\eta|^{rn} \widehat{g}(k,\eta)\rg\|_{L_\eta^2}^2\rg)\\
\lesssim&  \lf(\sum_{m=0}^\infty \sum_{n=0}^\infty\lf(\frac{3}{2}\rg)^{m+n}\frac{1}{(m!)^2(n!)^2} \lf\||k|^{m}  |\eta|^{n} \widehat{g}(k,\eta)\rg\|_{L_\eta^2}^{2r}\|\widehat{g}(k,\eta)\|_{L_\eta^2}^{2-2r}\rg).
\end{align*}
Here in the last line, we use the interpolation, which is a direct consequence of H\"older inequality. Hence we will not lose $(m,n)$ dependent constants. As a result, 
\begin{align*}
\|\exp\{(|k|^2+|\eta|^2&)^{r/2}\}\widehat g( k,\eta)\|_{L_\eta^2}^2\\
\lesssim &\lf(\sum_{m=0}^\infty \sum_{n=0}^\infty \frac{ 3^{(m+n)/r}}{(m!)^{2/r}(n!)^{2/r}} \lf\||k|^{m}  |\eta|^{n} \widehat{g}(k,\eta)\rg\|_2^{2}\rg)^{r}\lf(\sum_{m=0}^\infty\sum_{n=0}^\infty \frac{1}{2^{(m+n)/(1-r)}}\|\widehat{g}(k,\eta)\|_{L_\eta^2}^{2}\rg)^{{1-r}}\\\lesssim &\sum_{m=0}^\infty \sum_{n=0}^\infty \frac{ 3^{(m+n)/r}}{(m+n)!^{2/r}}\underbrace{\frac{(m+n)!^{2/r}}{(m!)^{2/r}(n!)^{2/r}}}_{=\binom{m+n}{n}^{2/r}} \lf\||k|^{m}  |\eta|^{n} \widehat{g}(k,\eta)\rg\|_{L_\eta^2}^{2}.
\end{align*}
Now we note that $\sum_{\ell=0}^M \binom{M}{\ell}=2^{M}$, so $\binom{m+n}{n}\leq 2^{m+n}$. As a result,
\begin{align*}
\|\exp\{(|k|^2+|\eta|^2&)^{r/2}\}\widehat g( k,\eta)\|_{L_\eta^2}^2\lesssim \sum_{m=0}^\infty\sum_{n=0}^\infty \frac{12^{(m+n)/r}}{(m+n)!^{2/r}} \lf\||k|^{m}  |\eta|^{n} \widehat{g}(k,\eta)\rg\|_{L_\eta^2}^{2}.
\end{align*}
 \ifx
\begin{align}
\|\exp\{(|k|^2+|\eta|^2)^{r/2}\}\widehat g_k(\eta)\|_2^2=&\lf\|\sum_{M=0}^\infty\frac{(|k|^2+|\eta|^2)^{\frac{rM}{2}}}{M!}\widehat f_{k}(\eta)\rg\|_{2}^2\\
\lesssim&\sum_{M=0}^\infty\frac{(3/2)^{M}}{M!^2}\lf\|(|k|^2+|\eta|^2)^{\frac{rM}{2}}\widehat f_{k}(\eta)\rg\|_{2}^2\\
\lesssim&\sum_{\lceil rM\rceil=0}^\infty\frac{(3/2)^M}{(\lceil r M\rceil/r)!^2}\lf\|(|k|+|\eta|)^{{\lceil rM\rceil}{}}\widehat f_{k}(\eta)\rg\|_{2}^2.
\end{align}
Now we might need to use some sort of log convexity of the Gamma function to get that 
\begin{align}
\cdots\lesssim&\sum_{\lceil \wt M\rceil=0}^\infty\frac{(3/2)^{\wt M/r}}{\wt M !^{2/r}}\lf(\sum_{\ell=0}^{\wt M}\binom{\wt M}{\ell}\lf\||k|^{\ell}|\eta|^{\wt M -\ell} f_{k}(\eta)\rg\|_{2}\rg)^2\\
\lesssim&\sum_{\lceil \wt M\rceil=0}^\infty\frac{3^{\wt M/r}}{\wt M !^{2/r'}}\lf(\sum_{\ell=0}^{\wt M} \binom{\wt M}{\ell}^2\lf\||k|^{\ell}|\eta|^{\wt M -\ell} f_{k}(\eta)\rg\|_{2}^2\rg)\\
\lesssim&\sum_{\lceil \wt M\rceil=0}^\infty\frac{3^{\wt M/r}}{\wt M !^{2/r}}
\lf(\sum_{\ell=0}^{\wt M} 2^{2\wt M}\lf\||k|^{\ell}|\eta|^{\wt M -\ell} f_{k}(\eta)\rg\|_{2}^2\rg)\\
=&C\sum_{\lceil \wt M\rceil=0}^\infty\sum_{\ell=0}^{\wt M} \frac{3^{\wt M/r} 2^{2\wt M}}{\wt M !^{2/r}} \lf\||k|^{\ell}|\eta|^{\wt M -\ell} f_{k}(\eta)\rg\|_{2}^2 \\
=&C\sum_{m+n=0}^\infty \frac{\mathcal{C}^{m+n}}{(m+n) !^{2/r}} \lf\||k|^{m}\Gamma^{n } f_{k}\rg\|_{2}^2?
\end{align} \fi 
\end{proof}
\fi

\ifx
\subsubsection{Previous Argument for a Different Equation}
{\color{blue} For the $\Phi^E$, we can write down a similar equation as \eqref{T123}, 
\begin{align}
\de_k( |k|^m q^n\Gamma_k^n \Phi_k^{(E)}) 
=& |k|^mq^n\Gamma_k^n (\chi^E\ww_k)+|k|^m q^n[\pa_{yy}, \Gamma_k^n]\Phi_k^{(E)}\n \\
\n &+[\pa_{yy},q^n](|k|^m\Gamma_k^n \Phi_k^{(E)}) \\
=:&|k|^m q^n\Gamma_k^n( \chi^E\ww_k)+\mathbb{T}_{1;m,n}+\mathbb{T}_{2;m,n}.\label{PhiE_eq}
\end{align} 
Note that this time, we don't have $\chi_{m+n}$ commutators!! And that is the key. Since we have obtained the conormal almost analytic regularity of $\chi^E\ww_k$ in the exterior estimate, we can apply a similar argument to derive the Gevrey conormal regularity of $\Phi^E$ in the whole domain (since there is no $\chi_{m+n}$ cut-off.). Further note that when we really need to use this estimate of $\Phi^E$, the conormal weight $q$ is of order $1$! This will conclude the estimate of the influence from the boundary vorticity to the interior domain.  

\begin{proposition}\label{Pro:ext_to_int}
Consider the solution to the equation \eqref{PhiE_eq}. Assume the condition \eqref{sml_vy2-1}. Then the following estimate holds
\begin{align}\label{ell_extin_1}
\sum_{m+n=0}^\infty\sum_{a+b+c\leq 2}\mathbbm{1}_{\substack{a+b+c\leq n\\  \mathrm{or}\ a=0\ \ }} {\bf a}_{m,n}^2\lf\|\lf(\frac{m+n}{q}\rg)^a \pav^b|k|^c (|k|^m q^n\Gamma_k^n\Phi_k^{(E)})\rg\|_{L^2}^2\lesssim \sum_{m+n=0}^\infty {\bf a}_{m,n}^2\||k|^m q^n\Gamma_k^n(\chi^E \ww_k)\|_{L^2}^2.
\end{align}
Moreover, 
\begin{align}\label{ell_extin_2}
\sum_{m+n=0}^\infty (B_{m,n}^{in})^2\lf\|\mathbbm{1}_{|y|\leq \frac{31}{40}} |k|^m \Gamma_k^n\Phi_k^{(E)} \rg\|_{L^2}^2\lesssim \sum_{m+n=0}^\infty {\bf a}_{m,n}^2\||k|^m q^n\Gamma_k^n(\chi^E \ww_k)\|_{L^2}^2\exp\lf \{\frac{1}{\wt K\nu t}\rg\}.
\end{align}
Here $\wt K\geq 10 K$ ($K$ is defined in the weight \eqref{defndW}) is a universal constant and $B_{m,n}^{(in)}$ is defined in \eqref{B_in_mn}.  
\end{proposition}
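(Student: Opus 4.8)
The plan is to prove Proposition \ref{Pro:ext_to_int} by adapting, essentially verbatim, the elliptic scheme already developed for $\phi^{(E)}$ in Section \ref{sec:Elliptic}, while exploiting the structural simplification built into the equation \eqref{PhiE_eq} (obtained by applying $|k|^m q^n\Gamma_k^n$ to \eqref{BSlaw} restricted to the exterior): because the forcing $\chi^E\omega_k$ and the coefficient $v_y^2-1$ entering the commutators are supported where every cut-off $\chi_{m+n}$ equals $1$, \emph{no} $[\partial_{yy},\chi_{m+n}]$-commutators arise — only $\mathbb{T}_{1;m,n}$ from $[\partial_{yy},\Gamma_k^n]$ and $\mathbb{T}_{2;m,n}$ from $[\partial_{yy},q^n]$. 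First I would freeze $(m,n;k)$ and apply the elliptic maximal-regularity bound of Lemma \ref{lem:max_reg} (the Dirichlet version suffices, since $|k|^m q^n\Gamma_k^n\Phi_k^{(E)}$ vanishes at $y=\pm1$) to control $\|\,|k|^m q^n\Gamma_k^n\Phi_k^{(E)}\|_{\dot H^2_k}$ by $\|\,|k|^m q^n\Gamma_k^n(\chi^E\omega_k)\|_{L^2}+\|\mathbb{T}_{1;m,n}\|_{L^2}+\|\mathbb{T}_{2;m,n}\|_{L^2}$, and then use the equivalence \eqref{Jel_equiv} to identify the $L^2$-norms of the $\big(\tfrac{m+n}{q}\big)^a\pav^b|k|^c$-derivatives appearing on the left of \eqref{ell_extin_1} with $\dot H^j_k$-norms, $j\le2$. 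This reduces the whole statement to summing the two commutator terms against the weights $\bold{a}_{m,n}^2$.

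The commutator $\mathbb{T}_{2;m,n}$ is handled by the ICC machinery already in place: using the identities of Lemma \ref{lem:com:BA} to write $[\partial_{yy},q^n]$ in terms of $\tfrac nq\partial_y$, $\tfrac{n^2}{q^2}$, $\tfrac nq$ acting on $q^n\Gamma_k^n\Phi_k^{(E)}$, I would rewrite the resulting terms through the operators $S^{(a,b,c)}_{m,n}\Phi^{(E)}/J^{(a,b,c)}_{m,n}\Phi^{(E)}$, invoke the induction Lemma \ref{lem:induction} and the translation Lemma \ref{lem:JtoD} to bound $\{J^{(a,b,c)}_{m,n}\Phi^{(E)}\}_{a+b+c\le2}$ by the base family $\{\chi_{m+n}\partial_y^b|k|^c(|k|^mq^n\Gamma_k^n\Phi^{(E)})\}_{b+c\le2}$ up to a factor $\mathfrak C\lambda^s$, and for $\lambda$ small absorb this last family into the left-hand side of \eqref{ell_extin_1}. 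The commutator $\mathbb{T}_{1;m,n}$ is the genuinely new term: it is a Gevrey-bilinear expression in $v_y^2-1$ (equivalently $H$) and $\Gamma_k$-derivatives of $\Phi_k^{(E)}$, and by the Gevrey product rule (Lemma \ref{lem:Gprod}) together with the smallness hypothesis \eqref{sml_vy2-1} its weighted $\ell^2$-sum is $\lesssim\varepsilon$ times $\sum\bold{a}_{m,n}^2\|J^{(\le2)}_{m,n}\Phi^{(E)}\|_{L^2}^2$, again absorbable. Summing over $(m,n)$ and $k$ and invoking the standard convolution/combinatorial lemmas then yields \eqref{ell_extin_1}.

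For the interior estimate \eqref{ell_extin_2} the key point is that on $\{|y|\le31/40\}$ one has $q\equiv1$, so the co-normal Gevrey norm controlled by \eqref{ell_extin_1} (with the decaying weights $\bold{a}_{m,n}=B_{m,n}\varphi^{n+1}$) coincides there with a genuine analytic norm in $(\partial_x,\Gamma_k)$. I would then apply the gluing Lemma \ref{lem:ExtToInt} (or the variant keyed to the weight $W$ and the relevant time-scale) to trade the time-decay $\varphi^{n+1}$ for the $t$-independent Gevrey index $B^{\mathrm{in}}_{m,n}$ of \eqref{B_in_mn}, at the cost of the exponential factor $\exp\{1/(\wt K\nu t)\}$; this uses nothing beyond the pointwise weight comparison already exploited in that lemma, now applied to $f_k=\Phi^{(E)}_k$ on the interior, with right-hand side the bound from \eqref{ell_extin_1} multiplied by the exponential.

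The step I expect to be the main obstacle is the bookkeeping for $\mathbb{T}_{1;m,n}$: unlike the viscous/transport commutators of the parabolic section, which carried explicit powers of $\nu$, here one must rely purely on the exponential smallness \eqref{sml_vy2-1} of $v_y^2-1$ on the relevant support, and one must verify that the Gevrey radius $\lambda$ used for $\Phi^{(E)}$ and the radius implicit in \eqref{sml_vy2-1} (i.e.\ in $B_{0,n}$) are compatible so the product rule closes with a genuinely small constant. In particular the term where \emph{all} $\Gamma_k$-derivatives land on $\Phi^{(E)}$ is not small in Gevrey radius and must instead be controlled by $\|v_y^2-1\|_{L^\infty}\le\tfrac12$, which forces the splitting of the bilinear sum into a ``low-coefficient/high-solution'' piece and its complement exactly as in the proof of Lemma \ref{lem:Gprod}; everything else is a routine transcription of the estimates already carried out for $\phi^{(E)}$.
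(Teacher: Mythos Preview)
Your proposal is correct and follows essentially the same approach as the paper. The paper's own proof is very brief: for \eqref{ell_extin_1} it simply says the argument is ``similar to the proof of \eqref{J_ph_est1}'' (i.e.\ the $\phi^{(E)}$ elliptic scheme you describe, with the $\mathbb{T}_{3;m,n}$ commutator absent), and for \eqref{ell_extin_2} it restricts to $a=b=c=0$, uses $q\equiv1$ on $\{|y|\le 31/40\}$, and then invokes a variant of the gluing estimate \eqref{glu_rl} to trade the $\varphi^{n+1}$ weights for the interior Gevrey coefficients $B^{in}_{m,n}$ at the cost of the exponential factor $\exp\{1/(\wt K\nu t)\}$---exactly your plan.
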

\begin{remark}
The exterior vorticity has size $\leq Ce^{-\frac{5}{\wt K\nu t}}$. Hence, the contribution of $\Phi^E$ is exponentially small (${o}(e^{-\nu^{-1/9}/C})$) in $\nu$ in the  Gevrey-$1/r$ space.  %
\end{remark}
\begin{proof}
The proof of \eqref{ell_extin_1} is similar to the proof of \eqref{J_ph_est1}. \siming{(Check!)} We omit further details for the sake of brevity.

Next we focus on \eqref{ell_extin_2}. We consider the $a=b=c=0$ case in \eqref{ell_extin_1} and recall the fact that $q(y)=1,\ |y|\leq 31/40$ \myr{(???)} to obtain the bound 
\begin{align}
\sum_{m+n=0}^\infty \frac{\lambda^{2s(m+n)}}{((m+n)!)^{2s}}\varphi^{2n+2}\lf\|\mathbbm{1}_{|y|\leq 31/40}|k|^m \Gamma_k^n\Phi_k^{(E)}\rg\|_{L^2}^2\lesssim
\sum_{m+n=0}^\infty {\bf a}_{m,n}^2\||k|^mq^n\Gamma_k^n (\chi^E\ww_k)\|_2^2. \label{ell_exttoint2}
\end{align} 
Recall the definition of $B_{m,n}^{{in}}$ \eqref{B_in_mn}, we invoke the bound \eqref{ell_exttoint2} and a variant of the estimate \eqref{glu_rl} to obtain  
\begin{align*}
\sum_{m+n=0}^\infty (B_{m,n}^{in})^2\|\mathbbm{1}_{|y|\leq 31/40}|k|^m \Gamma_k^n\Phi^{E}_k\|_2^2\underbrace{\lesssim}_{\eqref{glu_rl}}&\sum_{m+n=0}^\infty {\bf a}_{m,n}^2\|\mathbbm{1}_{|y|\leq 31/40}|k|^m\Gamma_k^n\Phi^{E}_k\|_2^2\exp\lf\{\frac{1}{\wt K\nu t}\rg\}\\
\lesssim&\sum_{m+n=0}^\infty {\bf a}_{m,n}^2\||k|^m q^n\Gamma_k^n(\chi_{E}\ww_k)\|_2^2\exp\lf\{\frac{1}{\wt K\nu t}\rg\}.
\end{align*}
This concludes the proof of \eqref{ell_extin_2}.

\end{proof}
}
\fi


%
%
%
%
%
%




%
%
%
%
%
%


\vspace{2 mm}

\noindent \textbf{Acknowledgments:} JB was supported by NSF Award DMS-2108633. JB would also like to thank Ryan Arbon for helpful discussions. The work of  SH is supported in part by NSF grants DMS 2006660, DMS 2304392, DMS 2006372. SH would like to thank Ruth Luo for teaching him many fancy combinatoric tricks. SH would also like to thank Yiyue Zhang for many suggestion on Lie Algebra. The work of SI is supported by NSF DMS-2306528 and a UC Davis Society of Hellman Fellowship award. The work of FW is supported by the National Natural Science Foundation of China (No. 12101396, 12161141004, and 12331008).

\addcontentsline{toc}{section}{References}
\bibliographystyle{abbrv}
\bibliography{bibliography}

\end{document}